\newtheorem{theorem}{Theorem}[section]
\newtheorem{corollary}[theorem]{Corollary}
\newtheorem{lemma}[theorem]{Lemma}
\newtheorem{proposition}[theorem]{Proposition}
\newtheorem{properties}[theorem]{Properties}
\theoremstyle{definition}
\newtheorem{definition}[theorem]{Definition}
\newtheorem{example}[theorem]{Example}
\newtheorem{ass}[theorem]{Assumption}
\newtheorem{choices-notations}[theorem]{Choices and Notations}
\newtheorem{rem}[theorem]{Remark}
\theoremstyle{remark}
\newtheorem{data}{Data\!\!}
\newtheorem{note}{Note\!\!}
\numberwithin{section}{chapter} \numberwithin{equation}{chapter} \numberwithin{figure}{chapter}
\newcommand{\exendproof}{\renewcommand{\qed}{\relax}\end{proof}}
\newsavebox{\SmallMathBox}
\DeclareRobustCommand*{\nicefrac}[2]{\ifmmode\mathnicefrac{#1}
{ #2}%
  \else\textnicefrac{#1}{#2}\fi}
\newcommand*{\textnicefrac}[2]{\check@mathfonts%
\mbox{\raisebox{.5ex}{\fontsize\sf@size\z@\selectfont#1}\kern-.
1em%
/\kern-.1em\raisebox{- .25ex}{\fontsize\sf@size\z@\selectfont#2} }}
\newcommand*{\mathnicefrac}[2]{%
  \mathchoice
    {\m@fr@c{\scriptstyle}{#1}{#2}}
    {\m@fr@c{\scriptstyle}{#1}{#2}}
    {\m@fr@c{\scriptscriptstyle}{#1}{#2}}
    {\m@fr@c{\scriptscriptstyle}{#1}{#2}}}
\def\bbb{{\boldsymbol{\beta}}}
\def\Ci{C^\infty}
\def\dpa{\partial}
\def\fequal#1{\stackrel{#1}{=}}
\def\into{\hookrightarrow}
\def\lla{\langle}
\def\noi{\noindent}
\newcommand{\norm}[1]{\lVert#1\rVert}
\def\ol{\overline}
\def\rra{\rangle}
\def\sqm1{\sqrt{-1}}
\def\tand{\mbox{\ \rm  and }}
\def\too{\longrightarrow}
\def\wh{\widehat}
\def\wt{\widetilde}
\def\x{\times}
\def\={\cong}
\def\>{\supset}
\def\<{\subset}
\def\ii{^{-1}}
\def\12{\frac{1}{2}}
\def\0{^{\circ}}
\def\AA{{\mathbb A}}
\def\CC{{\mathbb C}}
\def\EE{{\mathbb E}}
\def\MM{{\mathbb M}}
\def\NN{{\mathbb N}}
\def\RR{{\mathbb R}}
\def\ZZ{{\mathbb Z}}
\def\Aa{{\mathcal A}}
\def\Bb{{\mathcal B}}
\def\Cc{{\mathcal C}}
\def\Ff{{\mathcal F}}
\def\Gg{{\mathcal G}}
\def\Hh{{\mathcal H}}
\def\Kk{{\mathcal K}}
\def\Ll{{\mathcal L}}
\def\Mm{{\mathcal M}}
\def\Pp{{\mathcal P}}
\def\Ss{{\mathcal S}}
\def\Tt{{\mathcal T}}
\def\GGG{{\mathfrak G}}
\def\A{\Aa}
\def\C{\CC}
\renewcommand*{\d}{\delta}
\newcommand*{\D}{\Delta}
\def\e{\varepsilon}
\def\f{\varphi}
\def\g{\gamma}
\def\K{\Kk}
\def\la{\lambda}
\def\m{\mu}
\def\R{\RR}
\def\s{\sigma}
\def\Si{\Sigma}
\def\w{\omega}
\def\W{\Omega}
\def\Z{\ZZ}
\DeclareMathOperator{\ad}{ad}
\DeclareMathOperator{\can}{can}
\DeclareMathOperator{\defi}{def}
 \DeclareMathOperator{\dist}{dist}
\DeclareMathOperator{\diag}{diag}
\DeclareMathOperator{\dom}{dom}
\DeclareMathOperator{\GL}{GL} \DeclareMathOperator{\gl}{gl} \DeclareMathOperator{\Graph}{graph}
\DeclareMathOperator{\Grass}{Grass} \DeclareMathOperator{\Green}{Green}
\DeclareMathOperator{\Hom}{Hom}
\DeclareMathOperator{\Index}{index}
  \DeclareMathOperator{\Lin}{Lin}
\DeclareMathOperator{\Mas}{Mas} \DeclareMathOperator{\mmax}{max}
\DeclareMathOperator{\nuli}{nul} \DeclareMathOperator{\ran}{im} \DeclareMathOperator{\range}{im}
 \DeclareMathOperator{\romS}{S} \DeclareMathOperator{\sa}{sa}
\DeclareMathOperator{\SF}{sf}
\DeclareMathOperator{\sign}{sign}
  \DeclareMathOperator{\Span}{span}
 \DeclareMathOperator{\supp}{supp}
\newcommand{\auindex}[1]{\index{authorindex}{#1}}
\newcommand{\symindex}[1]{\index{symbolindex}{#1}}
\newcommand{\subindex}[1]{\index{subjectindex}{#1}}
\begin{document}
\setcounter{page}{1} \setcounter{tocdepth}{2} \setcounter{figure}{0} 

\renewcommand*{\labelenumi}{%
   (\roman{enumi})}

\frontmatter

\title{The Maslov index in symplectic Banach spaces\\
}

\author[Bernhelm Booss-Bavnbek]{Bernhelm Boo{\ss}-Bavnbek}
\address{Department of Science and Environment/IMFUFA\\ Roskilde
University, 4000 Ros\-kilde, Denmark}
\email{booss@ruc.dk}
\thanks{}

\author{Chaofeng Zhu}
\address{Chern Institute of Mathematics and LPMC\\
Nankai University\\
Tianjin 300071, P. R. China} \email{zhucf@nankai.edu.cn}
\thanks{Corresponding author: CZ [\texttt{zhucf@nankai.edu.cn}]}

\date{September 3, 2014}

\subjclass[2010]{Primary 53D12; Secondary 58J30}

\keywords{Banach bundles, Calder{\'o}n projection, Cauchy data spaces, elliptic operators, Fredholm
pairs, general spectral flow formula, Lagrangian subspaces, Maslov index, symplectic reduction,
unique continuation property, variational properties, weak symplectic structure, well-posed
boundary conditions}

\dedicatory{Dedication text (use \\[2pt] for line break if necessary)}

\begin{abstract}
We consider a curve of \subindex{Fredholm pair!of Lagrangian subspaces!curve}Fredholm pairs of
\subindex{Lagrangian subspaces}Lagrangian subspaces in a fixed \subindex{Symplectic form!weak}
\subindex{Banach space!with varying weak symplectic structure}Banach space with continuously
varying \textit{weak} symplectic structures. Assuming vanishing index, we obtain intrinsically a
continuously varying splitting of the total Banach space into pairs of symplectic subspaces. Using
such decompositions we define the Maslov index of the curve by symplectic reduction to the classical
finite-dimensional case. We prove the transitivity of repeated symplectic reductions and obtain the
invariance of the Maslov index under \subindex{Symplectic reduction} symplectic reduction, while
recovering all the standard properties of the Maslov index.

As an application, we consider curves of elliptic operators which have varying \subindex{Principal
symbol}principal symbol, varying \subindex{Domain!maximal}maximal domain and are \subindex{Elliptic
differential operators!of not-Dirac type}not necessarily of Dirac type. For this class of operator
curves, we derive a desuspension spectral flow formula for varying well-posed boundary conditions
on manifolds with boundary and obtain the splitting of the spectral flow on \subindex{Partitioned
manifold}partitioned manifolds.
\end{abstract}

\maketitle

\tableofcontents
\listoffigures
%

\chapter*{Preface}

The purpose of this Memoir is to establish a universal relationship between incidence geometries in
finite and infinite dimensions. In finite dimensions, counting incidences is nicely represented by
the \textit{Maslov index}. It counts the dimensions of the intersections of a pair of curves of
Lagrangian subspaces in a symplectic finite-dimensional vector space. The concept of the Maslov
index is non-trivial: in finite dimensions, the Maslov index of a loop of pairs of Lagrangians does
not necessarily vanish. In infinite dimensions, counting incidences is nicely represented by the
\textit{spectral flow}. It counts the number of intersections of the spectral lines of a curve of
self-adjoint Fredholm operators with the zero line. In finite dimensions, the spectral flow is
trivial: it vanishes for all loops of Hermitian matrices.

Over the last two decades there have been various, and in their way successful attempts to
generalize the concept of the Maslov index to curves of Fredholm pairs of Lagrangian subspaces in
strongly symplectic Hilbert space, to establish the correspondence between Lagrangian subspaces and
self-adjoint extensions of closed symmetric operators, and to prove spectral flow formulae in
special cases, namely for curves of Dirac type operators and other curves of closed symmetric
operators with bounded symmetric perturbation and subjected to curves of self-adjoint Fredholm
extensions (i.e., well-posed boundary conditions). While these approaches vary quite substantially,
they all neglect the essentially finite-dimensional character of the Maslov index, and,
consequently, break down when one deals with operator families of \textit{varying} maximal domain.
Quite simply, there is no directly calculable Maslov index when the symplectic structures are weak
(i.e., the symplectic forms are not necessarily generated by anti-involutions $J$) and vary in an
uncontrolled way.

\smallskip

In this Memoir we show a way out of this dilemma. We develop the classical method of symplectic
reduction to yield an \textit{intrinsic} reduction to finite dimension, induced by a given curve of
Fredholm pairs of Lagrangians in a fixed Banach space with varying symplectic forms. From that
reduction, we obtain an \textit{intrinsic} definition of the Maslov index in symplectic Banach
bundles over a closed interval. This Maslov index is calculable and yields a general spectral flow
formula. In our application for elliptic systems, say of order one on a manifold $M$ with boundary
$\Si$, our fixed Banach space (actually a Hilbert space) is the Sobolev space
$H^{1/2}(\Si;E|_{\Si})$ of the traces at the boundary of the $H^1(M;E)$ sections of a Hermitian
vector bundle $E$ over the whole manifold. For $H^{1/2}(\Si;E|_{\Si})$, we have a family of
continuously varying weak symplectic structures induced by the principal symbol of the underlying
curve of elliptic operators, taken over the boundary in normal direction. That yields a symplectic
Banach bundle which is the main subject of our investigation.

Whence, the message of this Memoir is: The Maslov index belongs to finite dimensions. Its most
elaborate and most general definitions can be reduced to the finite-dimensional case in a natural
way. The key for that - and for its identification with the spectral flow - is the concept of
Banach bundles with weak symplectic structures and intrinsic symplectic reduction. From a technical
point of view, that is the main achievement of our work.

\aufm{Bernhelm Boo{\ss}-Bavnbek\\
Chaofeng Zhu}


\mainmatter
%

\addtocontents{toc}{\medskip\noi} 
\chapter*{Introduction}

\subsection*{Upcoming and continuing interest in the Maslov index} Since the legendary work of
\auindex{Maslov,\ V.P.}V.P. Maslov \cite{Maslov} in the mid 1960s and the supplementary
explanations by \auindex{Arnol'd,\ V.I}V. Arnol'd \cite{Arnold:1967}, there has been a continuing
interest in the \subindex{Maslov index}Maslov index for \subindex{Continuously varying!Fredholm
pairs of Lagrangian subspaces}curves of pairs of Lagrangians in symplectic space. As explained by Maslov and
Arnol'd, the interest arises from the study of \subindex{Dynamical systems}dynamical systems in
classical mechanics and related problems in \subindex{Morse theory}Morse theory. This same index
occurs as well in certain asymptotic formulae for solutions of the Schr{\"o}dinger equations. For a
systematic review of the basic vector analysis and geometry and for the physics background, we
refer to \auindex{Arnol'd,\ V.I}Arnol'd \cite{Arnold:1989} and \auindex{de Gosson,\ M.}M. de Gosson
\cite{Go01}.

The \subindex{Morse theory!Morse index theorem}Morse index theorem expresses the Morse index of a
geodesic by the Maslov index. Later, \auindex{Yoshida,\ T.}T. Yoshida \cite{Yo91} and
\auindex{Nicolaescu,\ L.}L. Nicolaescu \cite{Ni95,Ni97} expanded the view by embracing also
spectral problems for \subindex{Elliptic differential operators!Dirac type operators}Dirac type
operators on \subindex{Partitioned manifold}partitioned manifolds and thereby stimulating some
quite new research in that direction. For a short review, we refer to our Section \ref{s:history}
below.

\subsection*{Weak symplectic forms on Banach manifolds}
Early in the 1970s, \auindex{Chernoff,\ P.R.}\auindex{Marsden,\ J.E.}P.~Chernoff, J. Marsden
\cite{CheMa74} and \auindex{Weinstein,\ A.}A. Weinstein \cite{Weinstein:1971} called attention to
the practical and theoretical importance of \subindex{Banach manifold!symplectic forms on
it}symplectic forms on Banach manifolds. See \auindex{Swanson,\ R.C.}R.C. Swanson
\cite{Swanson1,Swanson2,Swanson:1980} for an elaboration of the achievements of that period
regarding \subindex{Banach space!linear symplectic structures on it}linear symplectic structures on
Banach spaces. It seems, however, that rigorous and operational definitions of the Maslov index of
\subindex{Continuously varying!Fredholm pairs of Lagrangian subspaces}curves of
\subindex{Lagrangian subspaces!in spaces of infinite dimension}Lagrangian subspaces in spaces of
infinite dimension was not obtained until 25 years later. Our \auindex{Boo{\ss}--Bavnbek,\
B.}\auindex{Zhu,\ C.}\cite[Section 3.2]{BooZhu:2013} gives an account and compares the various
definitions.

\begin{figure}
\includegraphics[scale=0.95]{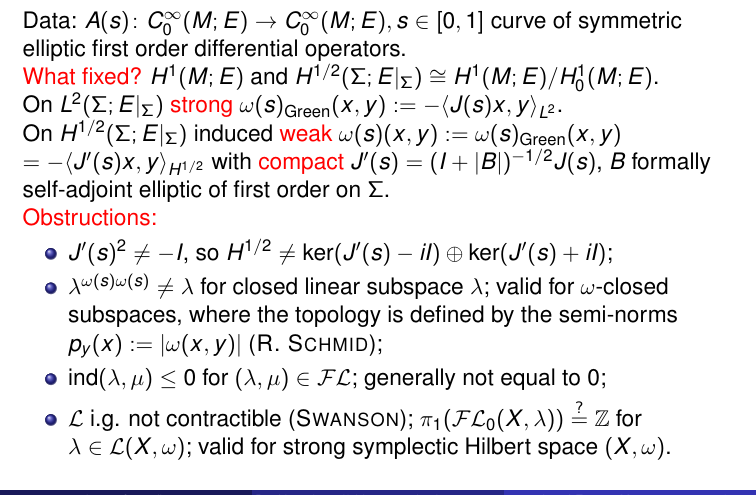}
\caption{Why going weak and what obstructions to circumvent?}\auindex{Swanson,\
R.C.}\auindex{Schmid,\ R.}\symindex{J@$J$  operator associated to symplectic
form}\subindex{Operator!associated to symplectic form}\subindex{Symplectic form!associated almost
complex generating operator}\subindex{Obstructions to straight forward generalization!generator $J$
non-invertible} \subindex{Obstructions to straight forward generalization!no symplectic splitting}
\subindex{Obstructions to straight forward generalization!double annihilator not the identity on
closed subspaces} \subindex{Obstructions to straight forward generalization!non-vanishing index of
Fredholm pair of Lagrangian subspaces} \subindex{Obstructions to straight forward
generalization!$\Ll$ not contractible}\subindex{Obstructions to straight forward
generalization!fundamental group of Fredholm Lagrangian Grassmannian unknown}\subindex{Obstructions
to straight forward generalization!no meaningful Maslov cycle}\subindex{Fredholm Lagrangian
Grassmannian} \label{f:obstructions}
\end{figure}

At the same place we emphasized a couple of rather serious obstructions (see Figure
\ref{f:obstructions}) to applying these concepts to arbitrary systems of \subindex{Elliptic
differential operators!of non-Dirac type}elliptic differential equations of non-Dirac type:
Firstly, some of the key section spaces for studying \subindex{Boundary value problems}boundary
value problems (the \subindex{Sobolev spaces!with weak symplectic forms}Sobolev space
\symindex{H@$H^{1/2}(\Si;E\mid_{\Sigma})$ weak symplectic Sobolev space}$H^{1/2}(\Si;E|_{\Si})$
containing the traces over the boundary $\Si=\partial M$ of sections over the whole manifold
\symindex{M@$M, M(s)$ smooth compact manifold!with boundary $\Si,\Si(s)$}$M$) are not carrying a
\subindex{Symplectic form!strong}strong symplectic structure, but are naturally equipped with a
\subindex{Symplectic form!weak}weak structure not admitting the rule \symindex{J@$J$  operator
associated to symplectic form}\subindex{Operator!associated to symplectic form}\subindex{Symplectic
form!associated almost complex generating operator} $J^2=-I$. Secondly, in
\auindex{Boo{\ss}--Bavnbek,\ B.}\auindex{Zhu,\ C.}\cite{BooZhu:2013} our definition of the Maslov
index in weak symplectic spaces requires a \subindex{Symplectic splitting}symplectic splitting
which does not always exist, is not canonical, and therefore, in general, not obtainable in a
continuous way for continuously varying symplectic structures. Recall that a \textit{symplectic
splitting} of a symplectic Banach space $(X,\w)$ is a decomposition $X=X^-\oplus X^+$ with $-i\w$
negative, respectively, positive definite on $X^{\mp}$ and vanishing on $X^-\times X^+$. Thirdly, a
priori, a \subindex{Symplectic reduction!finite-dimensional}symplectic reduction to finite
dimensions is not obtainable for weak symplectic structures in the setting of \cite{BooZhu:2013}.

An additional incitement to investigate weak symplectic structures comes from a stunning
observation of \auindex{Witten,\ E.}E. Witten (explained by \auindex{Atiyah,\ M.F.}M.F. Atiyah in
\cite{Atiyah:1985} in a heuristic way). He considered a weak presymplectic form on the loop space
\symindex{M@$\operatorname{Map}(S^1,M)$ loop space}\subindex{Geodesic!loop space of a
manifold}$\operatorname{Map}(S^1,M)$ of a finite-dimensional closed orientable Riemannian manifold
$M$ and noticed that a (future) thorough understanding of the infinite-dimensional symplectic
geometry of that loop space ``should lead rather directly to the \subindex{Index!Atiyah-Singer
Index Theorem!via infinite-dimensional symplectic geometry}index theorem for Dirac operators"
(l.c., p. 43). Of course, restricting ourselves to the linear case, i.e., to the geometry of
Lagrangian subspaces instead of \subindex{Lagrangian manifold}Lagrangian manifolds, we can only
marginally contribute to that program in this Memoir.

\subsection*{Symplectic reduction}\subindex{Symplectic reduction!history, origin and meaning}
In their influential paper \cite[p. 121]{Marsden-Weinstein}, \auindex{Marsden,\
J.E.}\auindex{Weinstein,\ A.}J. Marsden and A. Weinstein describe
the purpose of symplectic reduction in the following way:%
{\small
\begin{quotation}
``... when we have a \subindex{Symplectic geometry!symplectic manifold}symplectic manifold on which
a group acts symplectically, we can reduce this \subindex{Symplectic reduction!reduced phase
space}phase space to another symplectic manifold in which, roughly speaking, the symmetries are
divided
out."%
\end{quotation}}%
and {\small
\begin{quotation}
``When one has a \subindex{Hamiltonian system}Hamiltonian system on the phase space which is
invariant under the group, there is a Hamiltonian system canonically induced on the reduced phase
space."
\end{quotation}}
The basic ideas go back to the work of \auindex{Hamel,\ G.}G. Hamel \cite{Hamel:1904,Hamel:1978}
and \auindex{Carath\'eodory,\ C.}C. Carath{\'e}odory \cite{Caratheodory} in \subindex{Dynamical
systems}dynamical systems at the beginning of the last century, see also \auindex{Souriau,\
J.-M.}J.-M. Souriau \cite{Souriau}. For \subindex{Symplectic reduction!in low-dimensional
geometry}symplectic reduction in low-dimensional geometry see the monographs by
\auindex{Donaldson,\ S.K.}\auindex{Kronheimer,\ P.B.}S.K. Donaldson and P.B. Kronheimer, and by
\auindex{McDuff,\ D.}\auindex{Salamon,\ D.}D. McDuff and D. Salamon \cite{Donaldson-Kronheimer,
McDuff-Salamon}.

Our aim is less intricate, but not at all trivial: Following \auindex{Nicolaescu,\ L.}L. Nicolaescu
\cite{Ni97} and \auindex{Boo{\ss}--Bavnbek,\ B.}\auindex{Furutani,\ K.}K. Furutani \cite{BoFu99}
(joint work with the first author) we are interested in the finite-dimensional reduction of
\subindex{Fredholm pair!of Lagrangian subspaces}Fredholm pairs of Lagrangian \textit{linear}
subspaces in infinite-dimensional Banach space. The \subindex{Symplectic reduction!general
procedure}general procedure is well understood, see also \auindex{Kirk,\ P.}\auindex{Lesch,\ M.}P.
Kirk and M. Lesch in \cite[Section 6.3]{KiLe00}: let $W\< X$ be a closed \subindex{Co-isotropic
subspaces}co-isotropic subspace of a symplectic Banach space \symindex{X@$(X,\w)$ symplectic vector
or Banach space}$(X,\w)$. Then \symindex{W@$W/W^{\w}$ reduced symplectic space}$W/W^{\w}$ inherits
a symplectic form from $\w$ such that
\[\symindex{R@$R_W(\la)$ symplectic reduction of isotropic $\la$ along co-isotropic $W$}
 R_W(\la)\ :=\  \frac{(\la +W^{\w})\cap W}{W^{\w}}\
\<\ \frac W{W^{\w}} \text{ isotropic for $\la$ isotropic}.
\]
Here $W^\w$ denotes the annihilator of $W$ with respect to the symplectic form $\w$ (see Definition
\ref{d:symplectic-space}c).

In general, however, the reduced space $R_W(\la)$ does not need to be Lagrangian in $W/W^{\w}$ even
for Lagrangian $\la$ unless we have $W^\w\<\la\< W$ (see Proposition \ref{p:red-fredholm}). In
\auindex{Nicolaescu,\ L.}\auindex{Boo{\ss}--Bavnbek,\ B.}\auindex{Furutani,\ K.}\cite{Ni97,BoFu99}
a closer analysis of the \subindex{Symplectic reduction!reduction map}reduction map $R_W$ is given
within the setting of strong symplectic structures; with emphasis on the topology of the space of
Fredholm pairs of Lagrangians; and for fixed $W$. Now we drop the restriction to strong symplectic
forms; our goal is to define the Maslov index for \subindex{Continuously varying!Fredholm pairs of
Lagrangian subspaces}continuous \subindex{Fredholm pair!of Lagrangian subspaces!curve}curves $s\to
(\la(s),\mu(s))$ of Fredholm pairs of Lagrangians with respect to continuously varying symplectic
forms $\w(s)$; and, at least locally (for $s\in (t-\e,t+\e)$ around $t\in [0,1]$), we let the pair
$(\la(t),\mu(t))$ induce  the reference space $W(t)$ for the \subindex{Symplectic
reduction}symplectic reduction and the pair $(\la(s),\mu(s))$ induce the reduction map
$R^{(s)}_{W(t)}$ in a natural way. The key to finding the reference spaces $W(t)$ and defining a
suitable reduction map $R_{W(t)}$ is our Proposition \ref{p:reduction-prop}. It is on
decompositions of symplectic Banach spaces, naturally induced by a given Fredholm pair of
Lagrangians of vanishing index. It might be, as well, of independent interest. The assumption of
vanishing index is always satisfied for Fredholm pairs of Lagrangian subspaces in strong symplectic
Hilbert spaces, and by additional global analysis arguments in our applications as well.

Thus for each path $\{(\la(s),\mu(s))\}_{s\in[0,1]}$ of Fredholm pairs of Lagrangian subspaces of
vanishing index, we receive a finite-dimensional symplectic reduction \textit{intrinsically}, i.e.,
without any other assumption. The reduction transforms the given path into a path of pairs of
Lagrangians in finite-dimensional symplectic space. The main part of the Memoir is then to prove
the invariance under symplectic reduction and the independence of choices made. That permits us a
conservative view in this Memoir. Instead of defining the Maslov index in infinite dimensions via
spectral theory of unitary generators of the Lagrangians as we did in \auindex{Boo{\ss}--Bavnbek,\
B.}\auindex{Long,\ Y.}\auindex{Zhu,\ C.}\cite{BooZhu:2013}, we elaborate the concept of the
\subindex{Maslov index!in finite dimensions}Maslov index in finite dimensions and reduce the
infinite-dimensional case to the finite-dimensional case, i.e., we take the symplectic reduction as
our beginning for re-defining the Maslov index instead of deploring its missing.

\begin{figure}
\includegraphics[scale=0.95]{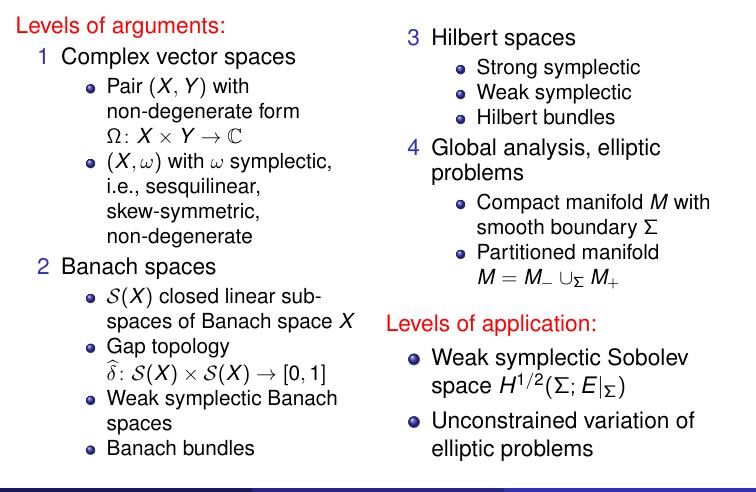}
\caption{Natural levels of treatment}\subindex{Levels of treatment}\label{f:natural-levels}
\end{figure}

\subsection*{Structure of presentation}
This Memoir is divided into four chapters and one appendix. The first three chapters present a
rigorous definition of the Maslov index in Banach bundles by symplectic reduction. In Chapter
\ref{s:banach}, we fix the notation and establish our key technical device, namely the mentioned
\subindex{Banach space!natural decomposition}natural decomposition of a symplectic Banach space
into two symplectic spaces, induced by a pair of co-isotropic subspaces with finite codimension of
their sum and finite dimension of the intersection of their annihilators. We introduce the
\subindex{Symplectic reduction}symplectic reduction of arbitrary linear subspaces via a fixed
co-isotropic subspace $W$ and prove the transitivity of the symplectic reduction when replacing $W$
by a larger co-isotropic subspace $W'$. For Fredholm pairs of Lagrangian subspaces of vanishing
\subindex{Fredholm pair!index}index, that yields an identification of the two naturally defined
symplectic reductions. In Chapter \ref{s:maslov-hilbert}, we recall and elaborate the
\subindex{Maslov index!in finite dimensions}\subindex{Maslov index!in strong symplectic Hilbert
space}Maslov index in strong symplectic Hilbert space, particularly in finite dimensions, to prove
the invariance of our definition of the Maslov index under different symplectic reductions. In
Chapter \ref{s:maslov-general}, we investigate the symplectic reduction to finite dimensions for a
given path of Fredholm pairs of Lagrangian subspaces in fixed \subindex{Banach space!with varying
weak symplectic structure}Banach space with varying symplectic structures and define the Maslov
index in the general case via \subindex{Symplectic reduction!finite-dimensional}finite-dimensional
symplectic reduction. In Section \ref{ss:sympl-invariance}, we show that the Maslov index is
invariant under symplectic reduction in the general case. For a first review of the entangled
levels of treatment see Figure \ref{f:natural-levels}.

Chapter \ref{s:gsff} is devoted to an application in \subindex{Global analysis}global analysis.
We summarize the predecessor formulae, we prove a
wide generalization of the \subindex{Yoshida-Nicolaescu's Theorem}Yoshida-Nicolaescu spectral flow
formula, namely the identity \subindex{Spectral flow}\subindex{Maslov index!spectral flow
formula}Maslov index=spectral flow, both in general terms of \subindex{Banach bundle}Banach bundles
and for \subindex{Elliptic differential operators!on smooth manifolds with boundary}elliptic
differential operators of arbitrary positive order on smooth manifolds with boundary. That involves
weak symplectic Hilbert spaces like the Sobolev space $H^{1/2}$ over the boundary. Applying
substantially more advanced results we derive a corresponding spectral flow formula in all
\subindex{Sobolev spaces}Sobolev spaces $H^{\sigma}$ for $\sigma\ge 0$, so in particular in the
familiar strong symplectic $L^2$.

In the Appendix \ref{s:closed-subspaces} on \subindex{Banach space!closed subspace}closed subspaces
in Banach spaces, we address the \subindex{Banach space!continuity of operations}continuity of
operations of linear subspaces. In \subindex{Gap topology}gap topology, we prove some sharp
estimates which might be of independent interest. E.g., they yield the following basic convergence
result for sums and intersections of permutations in the space $\Ss(X)$ of closed linear subspaces
in a Banach space $X$ in Proposition \ref{p:close-to} (\auindex{Neubauer,\ G.}\cite[Lemma 1.5 (1),
(2)]{Ne68}): Let $(M'_j)_{j=1,2.\dots}$ be a sequence in $\Ss(X)$ converging to $M\in\Ss(X)$ in the
gap topology, shortly \symindex{zz@$\to$ converging to}$M'\to M$, let similarly $N'\to N$ and $M+N$
be closed. Then $M'\cap N'\to M\cap N$ iff $M'+N'\to M+N$. For each of the three technical main
results of the Appendix, some applications are given to the \subindex{Global analysis!of elliptic
problems on manifolds with boundary}global analysis of elliptic problems on manifolds with
boundary.


\subsection*{Relation to our previous results}
With this Memoir we conclude a series of our mutually related previous approaches to
\subindex{Symplectic geometry}symplectic geometry, \subindex{Dynamical systems}dynamical systems,
and \subindex{Global analysis}global analysis; in chronological order
\auindex{Boo{\ss}--Bavnbek,\ B.}\auindex{Chen,\ G.}%
\auindex{Furutani,\ K.}\auindex{Otsuki,\ N.}\auindex{Lesch,\ M.}%
\auindex{Phillips,\ J.}\auindex{Long,\ Y.}\auindex{Zhu,\ C.}\cite{BoFu98, BoFu99, ZhLo99, Zh00,
Zh01, BoFuOt01, BoLePh01, Zh05, BoZh05, BoLeZh08, BCLZ, BooZhu:2013}.

The model for our various approaches was developed in joint work with K. Furutani and N. Otsuki in
\cite{BoFu98, BoFu99, BoFuOt01}. Roughly speaking, there we deal with a \textit{strong} symplectic
Hilbert space $(X,\lla\cdot,\cdot\rra,\w)$, so that $\w(x,y)=\lla Jx,y\rra$ with $J^*=-J\tand
J^2=-I$, possibly after continuous deformation of the inner product $\lla \cdot,\cdot\rra$. Then
the space \subindex{Lagrangian Grassmannian}\symindex{L@$\Ll(X,\w)$ Lagrangian
Grassmannian}$\Ll(X,\w)$ of all Lagrangian subspaces is contractible and, for fixed
$\la\in\Ll(X,\w)$, the fundamental group of the \subindex{Fredholm Lagrangian Grassmannian}
\symindex{FL@$\Ff\Ll(X,\w)$ Fredholm Lagrangian Grassmannian}Fredholm Lagrangian Grassmannian
$\Ff\Ll(X,\w,\la)$ of all Fredholm pairs $(\la,\mu)$ with $\mu\in\Ll(X,\w)$ is cyclic, see
\auindex{Boo{\ss}--Bavnbek,\ B.}\auindex{Furutani,\ K.}\cite[Section 4]{BoFu99} for an elementary
proof. By the induced \subindex{Symplectic splitting}{\em symplectic splitting} $X=X^+\oplus X^-$
with $X^{\pm}:= \ker(J\mp iI)$ we obtain
\begin{enumerate}\symindex{g@$\Graph(A)$ graph of operator $A$}
\item[(1)]  $\forall \la\in\Ll(X,\w) \ \exists U\colon X^+\to X^-$ unitary with $\la=\Graph(U)$;

\item[(2)] $(\la,\mu)\in\Ff\Ll(X,\w)\iff UV\ii -I_{X^-}\in\Ff(X)$; and
\item[(3)] $\Mas\{\la(s),\mu(s)\}_{s\in [0,1]}:=-\SF_{(0,\infty)}\bigl\{U_sV_s\ii\bigr\}_{s\in [0,1]}$ well
defined.
\end{enumerate}\symindex{Mas@$\Mas(\la(s),\mu(s))_{s\in [0,1]}$ Maslov index}\symindex{sfl@$\SF_{\ell}$
spectral flow through gauge curve $\ell$}%
Here \symindex{F@$\Ff(X)$ space of bounded Fredholm operators on $X$}$\Ff(X)$ denotes the space of
bounded Fredholm operators on $X$ and \symindex{FL@$\Ff\Ll(X,\w)$ Fredholm Lagrangian
Grassmannian}$\Ff\Ll(X,\w)$ the set of Fredholm pairs of Lagrangian subspaces of $(X,\w)$ (see
Definition \ref{d:fredholm-lag-pair}).

This setting is suitable for the following application in operator theory: Let $\mathcal{H}$ be a
complex separable \subindex{Hilbert space}Hilbert space and $A$ a \subindex{Closed symmetric
operator}closed symmetric operator. We extend slightly the frame of the \auindex{Birman,\
M.S.}\auindex{Krein,\ M.G.}\auindex{Vishik,\ M.I.}\subindex{Birman-Kre\u\i n-Vishik
theory}{B}irman-{K}re\u\i n-{V}ishik theory of \subindex{Extension!self-adjoint}self-adjoint
extensions of semi-bounded operators (see the review \cite{Alonso-Simon} by \auindex{Alonso,\
A.}\auindex{Simon,\ B.}A. Alonso and B. Simon). Consider the space \symindex{\beta@$\beta(A)$ space
of abstract boundary values}$\beta(A):=\dom(A^*)/\dom(A)$ of \subindex{Abstract boundary
values}abstract boundary values. It becomes a \subindex{Hilbert space!strong symplectic}strong
symplectic Hilbert space with
\[
 \omega(\gamma(x),\gamma(y))\ :=\ \lla x,A^*y\rra-\lla A^*x,y\rra,%
\]
and the projection \symindex{\gamma@$\gamma$ abstract trace map}$\gamma\colon
\dom(A^*)\to\beta(A)$, $x\mapsto [x]:=x+\dom(A)$. The inner product $\lla \gamma(x),\gamma(y)\rra$
is induced by the graph inner product $\lla x,y\rra_{\Gg} := \lla x,y\rra + \lla A^*x,A^*y\rra$
that makes $\dom(A^*)$ and, consequently, $\beta(A)$ to Hilbert spaces. Introduce the
\subindex{Cauchy data space!abstract (or reduced)}abstract \textit{Cauchy data space}
\symindex{CD@$\operatorname{CD}(A)$ abstract Cauchy data space}$\operatorname{CD}(A):=
\left(\ker(A^*)+\dom(A)\right)/\dom(A) = \{\gamma(x)\mid x\in\ker A^*\}$. From \auindex{Neumann,
von,\ J.}von Neumann's \subindex{von Neumann program}famous \cite{Neu} we obtain
the correspondence%
\[\subindex{Lagrangian subspaces!and self-adjoint extensions}\symindex{AD@$A_D$ extension
(realization) of operator $A$ with domain $D$} A_D \text{  self-adjoint extension $\ \iff\ [D]\<
\beta(A)$ Lagrangian},
\]
for $\dom(A) \< D \< \dom(A^*)$. Now let $A_D$ be a \subindex{Extension!self-adjoint
Fredholm}\textit{self-adjoint Fredholm extension}, $\{C(s)\}_{s\in [0,1]}$ a $C^0$
\subindex{Continuously varying!bounded self-adjoint operators}curve in
\symindex{Bs@$\mathcal{B}^{\sa}(\mathcal {H})$ space of bounded self-adjoint
operators}$\mathcal{B}^{\sa}(\mathcal {H})$, the space of bounded self-adjoint operators, and
assume \subindex{Weak inner Unique Continuation Property (wiUCP)}\textit{weak inner Unique
Continuation Property (UCP)}, i.e., $\ker(A^*+C(s)+\e)\cap \dom(A)=\{0\}$ for small positive $\e$.
Then, \auindex{Boo{\ss}--Bavnbek,\ B.}\auindex{Furutani,\ K.}\cite{BoFu98} shows that

\begin{enumerate}

\item[(1)] $\{\operatorname{CD}(A+C(s)),\gamma(D)\}_{s\in [0,1]}$ is a continuous curve of Fredholm pairs of Lagrangians
in the gap topology, and

\item[(2)] $\SF\{(A+C(s))_D\}_{s\in [0,1]} =
-\Mas\{\operatorname{CD}(A+C(s)),\gamma(D)\}_{s\in [0,1]}$.
\end{enumerate}

On one side, the approach of \auindex{Boo{\ss}--Bavnbek,\ B.}\auindex{Furutani,\ K.}\cite{BoFu98}
has considerable strength: It is ideally suited both to \subindex{Hamiltonian system}Hamiltonian
systems of ordinary differential equations of first order over an interval $[0,T]$ with varying
lower order coefficients, and to \subindex{Continuously varying!Dirac type operators}curves of
\subindex{Dirac type operators!curves of}Dirac type operators on a Riemannian partitioned manifold
or manifold $M$ with boundary $\Si$ with fixed Clifford multiplication and Clifford module (and so
fixed principal symbol), but symmetric bounded perturbation due to varying affine connection
(background field). Hence it explains \auindex{Nicolaescu,\ L.}\subindex{Yoshida-Nicolaescu's
Theorem}Nicolaescu's Theorem (see below Section \ref{s:history}) in purely functional analysis
terms and elucidates the decisive role of weak inner UCP. For such curves of Dirac type operators,
the $\beta$-space remains fixed and can be described as a subspace of the distribution space
$H^{-1/2}(\Si)$ with ``half" component in $H^{1/2}(\Si)$. As shown in \auindex{Boo{\ss}--Bavnbek,\
B.}\auindex{Furutani,\ K.}\cite{BoFu99}, the Maslov index constructed in this way is invariant
under finite-dimensional symplectic reduction. Moreover, the approach admits varying boundary
conditions and varying symplectic forms, as shown in \auindex{Boo{\ss}--Bavnbek,\
B.}\auindex{Lesch,\ M.}\auindex{Phillips,\ J.}\auindex{Zhu,\ C.}\cite{BoLePh01,BoZh05} and can be
generalized to a \subindex{Spectral flow!spectral flow formula}spectral flow formula in the common
$L^2(\Si)$ as shown in \auindex{Furutani,\ K.}\auindex{Otsuki,\ N.}\cite{BoFuOt01}.

Unfortunately, that approach has severe limitations since it excludes \subindex{Varying maximal
domain}varying maximal domain: there is no $\beta$-space when variation of the highest order
coefficients is admitted for the \subindex{Continuously varying!elliptic differential
operators}curve of elliptic differential operators.

The natural alternative (here for first order operators) is to work with the Hilbert space
\symindex{H@$H^{1/2}(\Si;E\mid_{\Si})$
weak symplectic Sobolev space}%
\[
H^{1/2}(\Si;E|_{\Si})\ \cong\ H^1(M;E)/H^1_0(M;E)%
\]
which remains fixed as long as we keep our underlying Hermitian vector bundle $E\to M$ fixed. So,
let $A(s)\colon \Ci_0(M;E)\to\Ci_0(M;E),\/s\in [0,1]$ be a \subindex{Continuously varying!elliptic
differential operators}\subindex{Elliptic differential operators!symmetric!curve}curve of symmetric
elliptic first order differential operators. Green's form for $A(s)$ induces on $L^2(\Si;E|_{\Si})$
a \textit{strong} symplectic form $\w(s)_{\operatorname{Green}}(x,y):=-\lla J(s)x,y\rra_{L^2}$. On
\symindex{H@$H^{1/2}(\Si;E\mid_{\Si})$ weak symplectic Sobolev space}$H^{1/2}(\Si;E|_{\Si})$ the
induced symplectic form \symindex{\omega s@$\w(s)_{\operatorname{Green}}$ by Green's form induced
symplectic form}$\w(s)(x,y):=\w(s)_{\operatorname{Green}}(x,y)= -\lla J'(s)x,y\rra_{H^{1/2}}$ is
\textit{weak}. To see that, we choose a formally self-adjoint elliptic operator $B$ of first order
on $\Si$ to generate the metric on \symindex{H@$H^{1/2}(\Si;E\mid_{\Si})$ weak symplectic Sobolev
space}$H^{1/2}$ according to \subindex{G{\aa}rding's Inequality}G{\aa}rding's Theorem. Then we find
$J'(s)=(I+|B|)^{-1/2} J(s)$, which is a compact operator and so not invertible. This we emphasized
already in our \auindex{Boo{\ss}--Bavnbek,\ B.}\auindex{Long,\ Y.}\auindex{Zhu,\ C.}\cite{BoZh04}
where we raised the following questions:

\begin{description}
\item[Q1] How to define $\Mas(\la(s),\mu(s))_{s\in [0,1]}$ for curves of Fredholm pairs of
Lagrangian subspaces?

\item[Q2] How to calculate?
\item[Q3] What for?\qquad

\item[Q4] Dispensable? Non-trivial example?
\end{description}

Questions Q3 and Q4 are addressed below in  Chapter \ref{s:gsff} (see also our
\auindex{Boo{\ss}--Bavnbek,\ B.}\auindex{Zhu,\ C.}\cite{BoZh04}). There we point to the necessity
to work with the weak symplectic Hilbert space $H^{1/2}(\Si)$. Such work is indispensable when we
are looking for \subindex{Spectral flow!spectral flow formula}spectral flow formulae for
\subindex{Partitioned manifold}partitioned manifolds with curves of \subindex{Continuously
varying!elliptic differential operators}\subindex{Elliptic differential operators!of not-Dirac
type}elliptic operators which are \textit{not} of Dirac type.

To answer questions Q1 and Q2, we recall the following list of obstructions and open problems,
partly from \auindex{Boo{\ss}--Bavnbek,\ B.}\auindex{Zhu,\ C.}\cite{BoZh04} (see also Figures
\ref{f:obstructions}, \ref{f:examples}). For simplicity, we specify for Hilbert spaces instead of
Banach spaces:

Let $(X,\w)$ be a fixed complex Hilbert space with weak symplectic form $\w(x,y)=\lla Jx,y\rra$,
and $(X(s),\w(s)),s\in[0,1]$ a \subindex{Continuously varying!symplectic
forms}\subindex{Continuously varying!Hilbert inner products}curve of weak symplectic Hilbert
spaces, parametrized over the interval $[0,1]$ (other parameter spaces could be dealt with). Then
in general we have in difference to strong symplectic forms:

\begin{enumerate}\subindex{Symplectic splitting}\subindex{Annihilator!double}
\subindex{Obstructions to straight forward generalization!generator $J$ non-invertible}
\renewcommand{\labelenumi}{(\Roman{enumi})}
\item $J^2\ne -I$;
\subindex{Obstructions to straight forward generalization!no symplectic splitting}
\item so, in general $X\ne X^-\oplus X^+$ with $X^{\pm}:=\ker(J\mp iI)$; more generally, our
Example \ref{ex:no-splitting} shows that there exist strong symplectic Banach spaces that do not
admit any symplectic splitting;
\subindex{Obstructions to straight forward generalization!symplectic splitting not continuously
varying even when existing}
\item in general, for continuously varying $\w(s)$ it does not hold that $X^{\mp}(s)$ is continuously varying;
\subindex{Obstructions to straight forward generalization!double annihilator not the identity on
closed subspaces}
\item as shown in our Example \ref{ex:double-annihilator}, we have $\la^{\w\w}\supsetneqq \la$ for some closed linear subspaces $\la$;
according to our Lemma \ref{l:double-annihilator}, the double annihilator, however, is the identity
for $\w$-closed subspaces, where the topology is defined by the semi-norms $p_y(x):=|\w(x,y)|$
(based on \auindex{Schmid,\ R.}R. Schmid, \cite{Schmid:1987});
\subindex{Obstructions to straight forward generalization!non-vanishing index of Fredholm pair of
Lagrangian subspaces}
\item by Corollary \ref{c:fred-pair-index-vanishing} we have $\Index(\la,\mu)\le 0$ for $(\la,\mu)\in \Ff\Ll(X)$;
our Example \ref{ex:negative-index} shows that there exist Fredholm pairs of Lagrangian subspaces
with truly negative index; hence, in particular, the concept of the \textit{Maslov cycle}
\symindex{M@$\Mm(X,\w,\la_0)$ Maslov cycle}\subindex{Symplectic geometry!Maslov
cycle}$\Mm(X,\w,\la_0):=\Ff\Ll(X,\la_0)\setminus\Ff\Ll^0(X,\la_0)$ of a fixed Lagrangian
subspace $\la_0$ (comprising all Lagrangians that form a Fredholm pair with $\la_0$ but do not
intersect $\la_0$ transversally) is invalidated: \subindex{Obstructions to straight forward
generalization!no meaningful Maslov cycle}we can no longer conclude complementarity of $\mu$ and
$\la_0$ from $\mu\cap\la_0=\{0\}$;
\subindex{Obstructions to straight forward generalization!$\Ll$ not contractible}
\item in general, the space $\Ll(X,\w)$ is \subindex{Counterexamples!$\Ll$ not contractible}
not contractible and even not connected according to Swanson's arguments for counterexamples
\auindex{Swanson,\ R.C.}\cite[Remarks after Theorem 3.6]{Swanson:1980}, based on \auindex{Douady,\
A.}A. Douady, \cite{Douady:1965};
\subindex{Obstructions to straight forward generalization!fundamental group of Fredholm Lagrangian
Grassmannian unknown}\subindex{Fredholm Lagrangian Grassmannian}
\item \symindex{\pi@$\pi_1(\Ff\Ll(\la,\cdot))$ fundamental group of Fredholm Lagrangian Grassmannian}
$\pi_1(\Ff\Ll_0(X,\la))\fequal{?}\ZZ$ for $\la\in\Ll(X,\w)$; valid for strong symplectic Hilbert
space $(X,\w)$.
\end{enumerate}%

\begin{figure}
\includegraphics[scale=0.95]{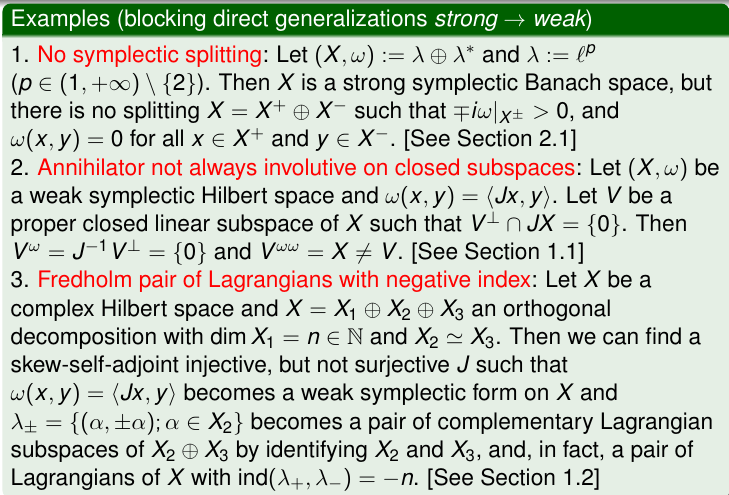}
\caption{Three counterexamples}\label{f:examples}
\end{figure}

\subsection*{Limited value of our previous pilot study}
Anyway, our previous \auindex{Boo{\ss}--Bavnbek,\ B.}\auindex{Zhu,\ C.}\cite{BooZhu:2013} deals
with a continuous family of \subindex{Continuously varying!weak symplectic forms}weak symplectic
forms $\w(s)$ on a curve of \subindex{Continuously varying!Banach spaces}Banach spaces $X(s)$,
$s\in[0,1]$. It gives a definition of the \subindex{Maslov index}Maslov index for a path
$(\lambda(s),\mu(s))_{s\in [0,1]}$ of Fredholm pairs of Lagrangian subspaces of index $0$ under the
assumption of a \subindex{Continuously varying!symplectic splittings}continuously varying
symplectic splitting $X=X^+(s)\oplus X^-(s)$. The definition is inspired by the careful
distinctions of planar intersections in \auindex{Long,\ Y.}\auindex{Zhu,\ C.}\cite{ZhLo99, Zh00,
Zh01, Zh05}. Then it is shown that all nice properties of the Maslov index are preserved for this
general case. However, that approach has four serious \subindex{Obstructions to straight forward
generalization}drawbacks which render this definition incalculable:

\begin{enumerate}\subindex{Symplectic splitting}\subindex{Annihilator!double}
\renewcommand{\labelenumi}{\arabic{enumi}.}
\item In Section \ref{ss:maslov-unitary}, our Example \ref{ex:no-splitting} provides a strong symplectic Banach space that
does not admit a symplectic splitting.

\item Even when a single \subindex{Symplectic splitting!continuously varying}symplectic splitting is
guaranteed, there is no way to establish such splitting for families in a continuous way (see also
our obstruction III above).

\item The Maslov index, as defined in \cite{BooZhu:2013} becomes independent of the choice of the splitting only for strong symplectic
forms.

\item That construction admits finite-dimensional \subindex{Symplectic reduction}symplectic reduction
only for strong symplectic forms.

\end{enumerate}

To us, our \auindex{Boo{\ss}--Bavnbek,\ B.}\auindex{Zhu,\ C.}\cite{BooZhu:2013} is a highly
valuable pilot study, but the preceding limitations explain why in this Memoir we begin again from
scratch. For that purpose, an encouraging result was obtained in \auindex{Boo{\ss}--Bavnbek,\
B.}\auindex{Zhu,\ C.}\cite{BoLeZh08} combined with \auindex{Chen,\ G.}\cite{BCLZ}: the continuous
variation of the \subindex{Calder{\'o}n projection}Calder{\'o}n projection in $L^2(\Si)$ for a
\subindex{Continuously varying!elliptic differential operators}curve of elliptic differential
operators of first order. We shall use this result in our Section \ref{ss:gsff}.

\subsection*{Acknowledgements} {\small We thank Prof. K. Furutani (Tokyo), Prof. M. Lesch
(Bonn), and Prof. R. Nest (Copenhagen) for inspiring discussions about this subject. Last but not least,
we would like to thank the referees of this paper for their critical reading and very helpful comments and suggestions.
The second author was partially supported by NSFC (No.11221091 and No. 11471169), LPMC of MOE of China.}


%
%
%

\part{Maslov index in symplectic Banach spaces}\label{part:1}

\addtocontents{toc}{\medskip\noi}
\chapter{General theory of symplectic analysis in Banach
spaces}\label{s:banach} We fix the notation and establish our key technical device in Proposition
\ref{p:reduction-prop} and Corollary \ref{c:complementary-lagrangian}, namely a natural
decomposition of a fixed symplectic vector space into two symplectic subspaces induced by a
single Fredholm pair of Lagrangians of index $0$. Reversing the order of the Fredholm pair, we obtain
an alternative symplectic reduction. We establish the transitivity of symplectic reductions in Lemma
\ref{l:red-transitive} and Corollary \ref{c:inner-red}. In Proposition \ref{p:cal-red}, we show that the two
natural symplectic reductions coincide by establishing Lemma \ref{l:red-index}. As we shall see later in Section
\ref{s:maslov-general}, that yields the symplectic reduction to finite dimensions for a given path
of Fredholm pairs of Lagrangian subspaces of index 0 in a fixed Banach space with varying
symplectic structures and the invariance of the Maslov index under different symplectic reductions.

Our assumption of vanishing index is trivially satisfied in \textit{strong} symplectic Hilbert
space. More interestingly and inspired by and partly reformulating previous work by
\auindex{Schmid,\ R.}R. Schmid, \auindex{Bambusi,\ D.}and D. Bambusi \cite{Schmid:1987, Bambusi},
we obtain in Lemma \ref{l:double-annihilator} a delicate condition for making the annihilator an
involution. In Corollary \ref{c:fred-pair-index-vanishing} we show that the index of a Fredholm pair of Lagrangian subspaces
can not be positive. In Corollary \ref{c:negative-sum-index} we derive a necessary and sufficient
condition for its vanishing for \textit{weak} symplectic forms and in the concrete set-up of our
global analysis applications in Section \ref{s:gsff}. In order to emphasize the intricacies of weak
symplectic analysis, it seems worthwhile to clarify in Lemma \ref{l:double-annihilator} a
potentially misleading formulation in \auindex{Schmid,\ R.}\cite[Lemma 7.1]{Schmid:1987}, and in
Remark \ref{r:bambusi-error}, to isolate an unrepairable error in \auindex{Bambusi,\ D.}\cite[First
claim of Lemma 3.2, pp.\/3387-3388]{Bambusi}, namely the wrong claim that the double annihilator is
the identity on all closed subspaces of reflexive weak symplectic Banach spaces.

To settle some of the ambiguities around weak symplectic forms once and for all, we provide two
counterexamples in Examples \ref{ex:double-annihilator} and \ref{ex:negative-index}. The first
gives a closed subspace where the double annihilator is not itself. The second gives a Fredholm
pair of Lagrangians with negative index.

\section{Dual pairs and double annihilators}{\label{ss:ann} Our point of departure is recognizing
the difficulties of dealing with both \textit{varying} and \textit{weak} symplectic structures, as
explained in our \auindex{Boo{\ss}--Bavnbek,\ B.}\auindex{Zhu,\ C.}{\cite{BooZhu:2013}. As shown
there, a direct way to define the Maslov index in that context requires a \subindex{Continuously
varying!symplectic splittings}continuously varying symplectic splitting. As mentioned in the
Introduction, neither the existence nor a continuous variation of such a splitting is guaranteed.
Consequently, that definition is not very helpful for calculations in applications.

To establish an intrinsic alternative, we shall postpone the use of the symplectic structures to
later sections and do as much as possible in the rather \textit{neutral} category of linear
algebra. A first taste of the use of purely algebraic arguments of linear algebra for settling open
questions of symplectic geometry is the making of a kind of \subindex{Annihilator!pair annihilator
of linear algebra}annihilator. For the true annihilator concept of symplectic geometry see below
Definition \ref{d:symplectic-space}.c.

Already here we can explain the need for technical innovations when dealing with weak symplectic
structures instead of hard ones. To give a simple example, let us consider a complex symplectic
Hilbert space $(X,\lla,\rra,\w)$ with $w(x,y)=\lla Jx,y\rra$ for all $x,y\in X$ where $J\colon X\to
X$ is a bounded, injective and skew-self-adjoint operator (for details see below Section
\ref{s:basic-concepts}). Then we get at once $\la^{\w}=(J\la)^{\bot}$ and $\la^{\w\w}\>\ol{\la}$
for all linear subspaces $\la\< X$. We denote the orthogonal complement by the common orthogonality
exponent $\bot$ and the symplectic annihilator by the exponent $\w$. Now, if we are in the
\textit{strong} symplectic case, we have $J$ surjective and $J^2=-I$, possibly after a slight
deformation of the inner product. In that case, we have immediately
\[
\la^{\w\w}=\bigl(J\left((J\la)^\bot\right)\bigr)^\bot = (\la^\bot)^\bot =\ol{\la}.
\]
Hence the double annihilator is the identity on the set of closed subspaces in strong symplectic
Hilbert space, like in the familiar case of finite-dimensional symplectic analysis. Moreover, from
that it follows directly that the index of a Fredholm pair of Lagrangians (see Definition
\ref{d:fredholm-lag-pair} and Corollaries \ref{c:fred-pair-index-vanishing} and
\ref{c:negative-sum-index}) vanishes in strong symplectic Hilbert space.

The preceding chain of arguments breaks down for the double annihilator in \textit{weak} symplectic
analysis, and we are left with two basic technical problems:

\begin{enumerate}
\item[(1)] when do we have precisely $\la^{\w\w}=\ol{\la}$, and consequently,

\item[(2)] when are we guaranteed the vanishing of the index of a Fredholm pair of Lagrangian subspaces?

\end{enumerate}

As mentioned above, we are not the first who try to determine the precise conditions for the
annihilator of an annihilator not to become larger than the closure of the original space. We are
indebted to the previous work by \auindex{Schmid,\ R.}R. Schmid \cite[Arguments of the proof of
Lemma 7.1]{Schmid:1987} and \auindex{Bambusi,\ D.}D. Bambusi \cite[Arguments around Lemmata 2.7 and
3.2]{Bambusi}. They suggested to apply a wider setting and address the \subindex{Annihilator!pair
annihilator of linear algebra}pair-annihilator concept of linear algebra. We shall follow - and
modify - some of their arguments and claims.

\begin{definition}\label{d:pair-annihilator}
Let $X$, $Y$ be two complex vector spaces. Denote by $\R$, $\C$ and $\Z$ the sets of real numbers,
complex numbers and integers, respectively. Let $h\colon \C\to\C$ be a $\R$-linear isomorphism. Let
\symindex{\Omega@$\Omega:X\times Y\to\C$ generalized $\R$-linear form on pair of  complex vector
spaces}$\Omega\colon X\times Y\to\C$ be a $\R$-linear map with $\Omega(ax,by)=ah(b)\Omega(ax,by)$
for all $a,b\in\C$ and $(x,y)\in X\times Y$.
\newline (a) For each of the subspaces $\la\subset X$ and $\mu\subset Y$, we define the right and left {\em annihilators} of $\la$
and $\mu$ as real linear subspaces of $X$ and $Y$ by
\begin{align}
\symindex{\la@$\la^{\Omega,r}, \mu^{\Omega,l}$ right and left annihilators of subspaces in
components of a pair of  complex vector spaces with generalized bilinear
form}\subindex{Annihilator!left and right}
\label{e:r-annihilator}\la^{\Omega,r}\ :&=\ \{y\in Y;\Omega(x,y)=0,\forall x\in\la\},\\
\label{e:l-annihilator}\mu^{\Omega,l}\ :&=\ \{x\in X;\Omega(x,y)=0,\forall y\in\mu\}.
\end{align}
\newline (b)
The form $\Omega$ is said to be {\em non-degenerate in} $X$ (\textit{in} $Y$) if
$X^{\Omega,r}=\{0\}$ ($Y^{\Omega,l}=\{0\}$). The form $\Omega$ is said to be just {\em
non-degenerate} if $X^{\Omega,r}=\{0\}$ and $Y^{\Omega,l}=\{0\}$. In that case one says that $X,Y$
form an {\em algebraic $\R$-dual pair} (see also \auindex{Pedersen,\ G.K.}Pedersen
\cite[2.3.8]{Pedersen}).
\newline (c)
We have the \textit{reduced form}\symindex{\Omega a@$\tilde\Omega$ induced non-degenerate reduced
form for pairs of vector spaces}
\[
\wt\Omega\colon X/Y^{\Omega,l}\times Y/X^{\Omega,r}\too\C%
\]
defined by $\wt\Omega(x+Y^{\Omega,l},y+X^{\Omega,r}):=\Omega(x,y)$ for each $(x,y)\in X\times Y$.
\newline (d) The \symindex{\Omega b@$\Omega^b$ annihilator map}\subindex{Annihilator!annihilator map}\textit{annihilator map} $\Omega^b\colon Y\to\Hom(X,\C)$ is the $\R$-linear map defined
by $\Omega^b(y)(x):=\Omega(x,y)$ for all $x\in X$.
\end{definition}

\begin{note}
By definition, the reduced form $\wt\Omega$ is always non-degenerate, since%
\begin{align*}
(X&/Y^{\W,l})^{\wt \W,r} \\
&= \ \{y+X^{\W,r}\,;\, \wt\W\left(x+Y^{\W,l},y+X^{\W,r}\right)=\W(x,y)=0\,\,\forall x\in X\}\\
&=\  X^{\W,r}\ =\ \{0\}\quad\text{ in $Y/X^{\W,r}$,}
\end{align*}
making the form $\wt\W$ non-degenerate in $X/Y^{\W,l}$. Similarly, we obtain
$\left(Y/X^{\W,r}\right)^{\wt \W,l}= Y^{\W,l}$, making the form $\wt\W$ non-degenerate in
$Y/X^{\W,r}$.
\end{note}

We list a few immediate consequences. First of all, we have $\ker_{\R}\Omega^b=X^{\Omega,r}$, as
real vector spaces. Then we have $\la+Y^{\Omega,l}\subset(\la^{\Omega,r})^{\Omega,l}$, and
$\la_1^{\Omega,r}\supset\la_2^{\Omega,r}$ if $\la_1\subset\la_2\subset X$. From that we get
$\la^{\Omega,r}\supset((\la^{\Omega,r})^{\Omega,l})^{\Omega,r}\supset \la^{\Omega,r}$, hence
\begin{equation}\label{e:three-Omega}\la^{\Omega,r}=((\la^{\Omega,r})^{\Omega,l})^{\Omega,r}.
\end{equation}

The following lemma generalizes our \auindex{Boo{\ss}--Bavnbek,\ B.}\auindex{Zhu,\ C.}\cite[Lemma
5, Corollary 1]{BooZhu:2013}. We shall use it below in the proof of Lemma \ref{l:negative-index} to
establish the general result that the index of Fredholm pairs of Lagrangians in symplectic Banach
space always is non-positive. The results also hold for the bilinear forms on vector spaces over a field.

\begin{lemma}\label{l:negative} (a) If $\dim X/Y^{\Omega,l}<+\infty$, we have%
\[
\dim Y/X^{\Omega,r}\ =\ \dim X/Y^{\Omega,l}.%
\]

\noi (b) Let $\la\subset\mu\subset X$ be two linear subspaces. If $\dim\mu/((\la^{\Omega,r})^{\Omega,l}\cap\mu)<+\infty$, we have%
\[
\dim\la^{\Omega,r}/\mu^{\Omega,r}\ =\mu/((\la^{\Omega,r})^{\Omega,l}\cap\mu) \le\ \dim \mu/(\la+Y^{\Omega,l}\cap\mu).
\]
The equality holds if and only if $(\la^{\Omega,r})^{\Omega,l}\cap\mu=\la+Y^{\Omega,l}\cap\mu$.

\noi (c) Let $\la\subset X$ be a linear subspace. If $\dim(\la+Y^{\Omega,l})/Y^{\Omega,l}<+\infty$, we have%
\[
\dim(\la+Y^{\Omega,l})/Y^{\Omega,l}\ =\ \dim Y/\la^{\Omega,r} \ \tand\
\la+Y^{\Omega,l}\ =\ (\la^{\Omega,r})^{\Omega,l}.%
\]
\end{lemma}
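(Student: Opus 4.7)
My plan is to prove (a) first by a direct dimension count on the reduced non-degenerate form, and then derive (b) and (c) by applying (a) to restrictions of $\W$. For (a), I will pass to the reduced form $\wt\W$ on $X/Y^{\W,l}\times Y/X^{\W,r}$, which is non-degenerate by the note following Definition~\ref{d:pair-annihilator}. The annihilator map $\wt\W^b$ sends $Y/X^{\W,r}$ injectively into $\Hom_\C(X/Y^{\W,l},\C)$, and the relation $\wt\W^b(by)=h(b)\wt\W^b(y)$ together with $h(\C)=\C$ shows this image is a $\C$-subspace; writing $n:=\dim X/Y^{\W,l}$, which is finite by hypothesis, the injection forces $\dim Y/X^{\W,r}\le n$. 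The symmetric $\C$-linear injection from $X/Y^{\W,l}$ into the space of $h$-conjugate linear functionals on $Y/X^{\W,r}$ gives the reverse inequality, completing (a).

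For (b), I would restrict $\W$ to $\mu\times\la^{\W,r}$. The inclusion $\la\<\mu$ forces $\mu^{\W,r}\<\la^{\W,r}$, so the right annihilator of $\mu$ inside $\la^{\W,r}$ is exactly $\mu^{\W,r}$, while its left annihilator is $(\la^{\W,r})^{\W,l}\cap\mu$. Part (a) then delivers $\dim\la^{\W,r}/\mu^{\W,r}=\dim\mu/((\la^{\W,r})^{\W,l}\cap\mu)$. The remaining inequality, together with its equality criterion, reduces to the elementary inclusion $(\la+Y^{\W,l})\cap\mu\<(\la^{\W,r})^{\W,l}\cap\mu$, which is immediate from $\la\<(\la^{\W,r})^{\W,l}$ and $Y^{\W,l}\<(\la^{\W,r})^{\W,l}$.

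For (c), I would set $\nu:=\la+Y^{\W,l}$ and use that $\nu^{\W,r}=\la^{\W,r}$, since $Y^{\W,l}$ annihilates all of $Y$. Applying part (a) to the form on $\nu\times Y$, whose left annihilator is $Y^{\W,l}$ and whose right annihilator is $\la^{\W,r}$, yields the first equality $\dim(\la+Y^{\W,l})/Y^{\W,l}=\dim Y/\la^{\W,r}=:n$. For the identity $\nu=(\la^{\W,r})^{\W,l}$, I would apply the one-sided dimension bound from (a) to the pairing on $(\la^{\W,r})^{\W,l}\times Y$: its right annihilator is $\la^{\W,r}$ by~\eqref{e:three-Omega} and its left annihilator is $Y^{\W,l}$, and the $\C$-linear injection of $(\la^{\W,r})^{\W,l}/Y^{\W,l}$ into the $n$-dimensional space of $\C$-linear functionals on $Y/\la^{\W,r}$ forces $\dim(\la^{\W,r})^{\W,l}/Y^{\W,l}\le n$; since $\nu/Y^{\W,l}$ sits inside it as an $n$-dimensional subspace, equality follows and $\nu=(\la^{\W,r})^{\W,l}$. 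The main subtle point is this last step: the one-sided bound from (a) must be invoked \emph{without} assuming finite-dimensionality of $(\la^{\W,r})^{\W,l}/Y^{\W,l}$ in advance, and careful bookkeeping of the $h$-twisted linearity is required to see that this bound remains available; this is precisely what makes the double-annihilator identity close.
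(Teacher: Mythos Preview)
Your proof is correct and follows essentially the same route as the paper. Part (a) is the same real-dimension count via the injectivity of $\wt\W^b$; part (b) is exactly the paper's restriction $f=\W|_{\mu\times\la^{\W,r}}$ followed by (a); and for (c) the paper invokes (b) twice (with $\la'=Y^{\W,l}$ and $\mu'=\la+Y^{\W,l}$, then implicitly with $\mu'=(\la^{\W,r})^{\W,l}$), which unwinds to precisely your direct application of (a) and its one-sided bound. Two minor remarks: in your last step the functionals on $Y/\la^{\W,r}$ are $h$-linear rather than $\C$-linear, but the space of such functionals still has $\C$-dimension $n$, so your bound is unaffected; and your explicit care about not assuming $\dim(\la^{\W,r})^{\W,l}/Y^{\W,l}<+\infty$ in advance is well placed---the paper's citation of (b) there glosses over exactly this point, and your direct use of the one-sided injection is the clean way to close it.
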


\begin{proof} (a) If $\dim X<+\infty$ and $X^{\Omega,r}=\{0\}$, $\Omega^b$ is injective.
Then we have $2\dim Y=\dim_{\R}Y\le\dim_{\R}\Hom(X,\C)=2\dim X$. So we have $\dim Y\le\dim X$.

If $\Omega$ is non-degenerate, we have $\dim X\le\dim Y$ and $\dim X=\dim Y$. Applying the argument
for $\wt\Omega$, we have $\dim X/Y^{\Omega,l}=\dim Y/X^{\Omega,r}$.
\newline (b) Let $f=\Omega|_{\mu\times\la^{\Omega,r}}$.
Then we have
\[
(\mu)^{f,r}\ =\ \mu^{\Omega,r}\cap\la^{\Omega,r}=\mu^{\Omega,r} \text{ and }(\la^{\Omega,r})^{f,l}\ =\
(\la^{\Omega,r})^{\Omega,l}\cap\mu.
\]
Note that $(\la^{\Omega,r})^{\Omega,l}\cap\mu\supset\la+Y^{\Omega,l}\cap\mu$. By (a), we get our results.
\newline (c) Note that $(Y^{\Omega,l})^{\Omega,r}=Y$ and $(\la+Y^{\Omega,l})^{\Omega,r}=\la^{\Omega,r}\cap Y=\la^{\Omega,r}$. By (b) we have
\[
\dim(\la+Y^{\Omega,l})/Y^{\Omega,l}\ =\ \dim Y/\la^{\Omega,r}\ \ge\ \dim (\la^{\Omega,r})^{\Omega,l}/Y^{\Omega,l}.\]
Since
$\la+Y^{\Omega,l}\subset (\la^{\Omega,r})^{\Omega,l}$, we have
$\la+Y^{\Omega,l}=(\la^{\Omega,r})^{\Omega,l}$.
\end{proof}

Assume that $\Omega$ is non-degenerate in $Y$. Then the
\symindex{F@$\Ff$ family of semi-norms on first factor of paired vector spaces}
\symindex{py@$p_y(\cdot)$ family of semi-norms}
family of semi-norms
$\Ff:=\{p_y(x):=|\Omega(x,y)|, \, x\in X\}_{y\in Y}$ is {\em separating}, i.e., for $x\neq x'$ in
$X$, there is a $y\in Y$ such that $p_y(x-x')\neq 0$. We shall denote the topology on $X$ induced
by the family $\Ff$ by
\symindex{TT@$\Tt_\W$ weak topology induced by pairing $\W$ ($\W$-topology)}
$\Tt_\W$ and call it the {\em weak topology induced by $\W$} or shortly the {\em
$\W$-topology}. By \auindex{Pedersen,\ G.K.|bind}\cite[1.5.3 and 3.4.2]{Pedersen} $(X,\Tt_{\W})$
becomes a Hausdorff separated, locally convex, topological vector space. The following two lemmata
are proved implicitly by \auindex{Schmid,\ R.}\cite[Arguments of the proof of Lemma
7.1]{Schmid:1987}. Clearly, we have

\begin{lemma}\label{l:dual-space}
Assume that $\Omega$ is non-degenerate in $Y$. Then the real linear map $\Omega^b$ maps $Y$ onto
$(X,\Tt_{\W})^*$.
\end{lemma}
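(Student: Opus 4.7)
The plan is to prove the lemma in two directions. First I show that the image of $\Omega^b$ lies in the continuous dual $(X,\Tt_\W)^*$, and then I show every continuous $\C$-linear functional on $(X,\Tt_\W)$ is of the form $\Omega^b(y)$. Throughout I use the convention that $\Omega(ax,by)=a\,h(b)\,\Omega(x,y)$, so each $\Omega^b(y)$ is $\C$-linear in $x$ (while $\Omega^b$ itself is merely $\R$-linear in $y$ because of the presence of $h$).

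For the first inclusion $\Omega^b(Y)\subset (X,\Tt_\W)^*$, the identity $|\Omega^b(y)(x)|=|\Omega(x,y)|=p_y(x)$ shows that $\Omega^b(y)$ is bounded by a single seminorm of the defining family $\Ff$, hence continuous with respect to $\Tt_\W$; of course $\Omega^b(y)$ is $\C$-linear in $x$ by the very definition of $\Omega$.

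For surjectivity, let $L\in (X,\Tt_\W)^*$ be arbitrary. Because $\Tt_\W$ is the locally convex topology generated by the family $\{p_y\}_{y\in Y}$, the general theory of locally convex topological vector spaces (cf.\ \auindex{Pedersen,\ G.K.}\cite[1.5.3 and 3.4.2]{Pedersen}) supplies finitely many $y_1,\dots,y_n\in Y$ and a constant $C>0$ such that
\[
|L(x)|\ \le\ C\,\max_{1\le i\le n} p_{y_i}(x)\ =\ C\,\max_{1\le i\le n}|\Omega(x,y_i)|\qquad (x\in X).
\]
In particular, if $\Omega(x,y_i)=0$ for $i=1,\dots,n$ then $L(x)=0$, i.e.\ $\bigcap_{i=1}^n\ker \Omega^b(y_i)\subset\ker L$.

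Now the standard linear-algebra fact on linear functionals kicks in: consider the $\C$-linear map $T:X\to\C^n$, $T(x):=(\Omega(x,y_1),\dots,\Omega(x,y_n))$. Since $\ker T\subset\ker L$, $L$ factors through $T$, and extending the induced functional on $T(X)$ to all of $\C^n$ by a dual basis yields constants $c_1,\dots,c_n\in\C$ with $L(x)=\sum_{i=1}^n c_i\,\Omega(x,y_i)$. Because $h:\C\to\C$ is an $\R$-linear isomorphism we may set $d_i:=h^{-1}(c_i)$ and define $y:=\sum_{i=1}^n d_i y_i\in Y$; then
\[
\Omega^b(y)(x)\ =\ \Omega\Bigl(x,\sum_i d_i y_i\Bigr)\ =\ \sum_i h(d_i)\,\Omega(x,y_i)\ =\ \sum_i c_i\,\Omega(x,y_i)\ =\ L(x),
\]
so $L=\Omega^b(y)$ and $\Omega^b$ is onto. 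The only delicate point is the step producing the constants $c_i$, which is a clean application of a classical lemma; the rest is routine, with non-degeneracy in $Y$ entering only implicitly to guarantee that $\Tt_\W$ is Hausdorff so that the duality-theoretic statements have their usual content.
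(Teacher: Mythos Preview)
Your proof is correct and follows the standard argument for identifying the dual of a weak topology generated by a separating family of linear functionals. The paper itself does not supply a proof: it simply states the lemma with the remark that it is ``proved implicitly by \cite[Arguments of the proof of Lemma 7.1]{Schmid:1987}'', so your explicit write-up is essentially what one would extract from that reference (continuity bound $\Rightarrow$ finitely many controlling seminorms $\Rightarrow$ kernel containment $\Rightarrow$ linear-algebraic factorisation).
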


Then the \subindex{Hahn-Banach Extension Theorem}Hahn-Banach Theorem yields
\begin{lemma}[R. Schmid, 1987]\label{l:double-annihilator}
\subindex{Schmid's Lemma}
Assume that $\Omega$ is non-degenerate in $Y$ and $\la$ is a closed linear
subspace of $(X,\Tt_{\W})$. Then we have
\begin{equation}\label{e:double-annihilator}\subindex{Annihilator!double annihilator
of linear algebra} \la=(\la^{\Omega,r})^{\Omega,l}.
\end{equation}
\end{lemma}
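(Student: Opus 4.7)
The inclusion $\la \subseteq (\la^{\Omega,r})^{\Omega,l}$ is immediate from the definitions: every $x \in \la$ pairs trivially (via $\Omega$) with every $y \in \la^{\Omega,r}$, so $x$ lies in the left annihilator of $\la^{\Omega,r}$. The content of the lemma is therefore the reverse inclusion, and the plan is to prove its contrapositive: if $x_0 \in X$ does not lie in $\la$, then $x_0 \notin (\la^{\Omega,r})^{\Omega,l}$.

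First I would fix such an $x_0$ and invoke the topological setup. Since $\Omega$ is non-degenerate in $Y$, the induced topology $\Tt_\W$ on $X$ is Hausdorff and locally convex. The subspace $\la$ is $\Tt_\W$-closed by hypothesis, and $x_0 \notin \la$. I would then apply the geometric form of the Hahn--Banach theorem to the disjoint pair consisting of the closed subspace $\la$ and the one-point compact convex set $\{x_0\}$: this produces a continuous $\R$-linear functional $\phi \in (X,\Tt_\W)^*$ which vanishes identically on $\la$ but satisfies $\phi(x_0) \neq 0$. (One handles the complex structure in the standard way, by splitting into real and imaginary parts; alternatively one just works in the category of real topological vector spaces throughout, since $\Omega$ has only been assumed $\R$-linear.)

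Next I would translate this separating functional back into the pairing. By Lemma \ref{l:dual-space}, the map $\Omega^b \colon Y \to (X,\Tt_\W)^*$ is surjective, so there exists $y \in Y$ with $\phi(x) = \Omega(x,y)$ for every $x \in X$. The two properties of $\phi$ then read: $\Omega(x,y) = 0$ for every $x \in \la$, which means $y \in \la^{\Omega,r}$; and $\Omega(x_0,y) = \phi(x_0) \neq 0$, which means $x_0$ fails to annihilate $y \in \la^{\Omega,r}$ and so $x_0 \notin (\la^{\Omega,r})^{\Omega,l}$. This contradicts the assumption that $x_0$ lies in the double annihilator and completes the reverse inclusion.

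The main obstacle, and the reason the lemma is subtle in the weak symplectic setting, is the appeal to Lemma \ref{l:dual-space}: without the identification of $(X,\Tt_\W)^*$ with the image of $\Omega^b$, the Hahn--Banach functional $\phi$ produced by separation would be abstractly continuous but might fail to be represented as a pairing with some element of $Y$, and the contradiction would not go through. Everything else is standard locally convex machinery; the non-triviality is entirely packed into the surjectivity of $\Omega^b$ onto the $\Tt_\W$-dual, which is precisely the content of the preceding lemma.
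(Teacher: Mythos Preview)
Your proof is correct and is exactly what the paper has in mind: the paper's own ``proof'' consists of the single phrase ``Then the Hahn--Banach Theorem yields'', and you have faithfully unpacked this by separating $x_0$ from the $\Tt_\W$-closed subspace $\la$ and invoking Lemma~\ref{l:dual-space} to represent the resulting functional as $\Omega(\cdot,y)$. Your closing paragraph also correctly identifies where the weight of the argument lies.
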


For later use it is worth noting the following extension of Schmid's Lemma which is the weak and
corrected version of \cite[Lemma 3.2]{Bambusi}.

\begin{lemma}\label{l:double-annihilator1}
\subindex{Schmid's Lemma}
Assume that $X,Y,\W$ as above and $\Omega$ non-degenerate in $Y$ and bounded in $X$. Assume that
$X$ is a reflexive Banach space. Then $\Omega^b(Y)$ is dense in $X^*$ and we have
\begin{equation}\label{e:double-annihilator1}\la=(\la^{\Omega,r})^{\Omega,l}
\quad\text{for any linear and $\Tt_{\W}$-closed subspace $\la\< X$}.
\end{equation}
\end{lemma}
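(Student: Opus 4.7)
The statement splits into two independent assertions: the density of $\W^b(Y)$ in $X^*$, and the double-annihilator identity for $\Tt_{\W}$-closed subspaces. Only the first is genuinely new compared with Schmid's Lemma \ref{l:double-annihilator}; the second follows verbatim from that lemma, whose argument does not rely on any completeness or special structure of $X$.

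\textbf{Density of $\W^b(Y)$ in $X^*$.} Boundedness of $\W$ in $X$ ensures $\W^b(y)\in X^*$ for every $y\in Y$, so $\W^b(Y)$ is a well-defined linear subspace of $X^*$. Suppose toward a contradiction that its norm-closure were a proper subspace of $X^*$. By Hahn--Banach, there would exist a nonzero $\Phi\in X^{**}$ vanishing on $\W^b(Y)$. Reflexivity of $X$ identifies $\Phi$ with some $x\in X$, $x\ne 0$, for which
\[
\W(x,y)\ =\ \W^b(y)(x)\ =\ \Phi(\W^b(y))\ =\ 0\quad\text{for all } y\in Y.
\]
Thus $x\in Y^{\W,l}$, and non-degeneracy of $\W$ in $Y$ forces $x=0$, a contradiction.

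\textbf{Double annihilator.} The inclusion $\la\subset(\la^{\W,r})^{\W,l}$ is immediate from the definitions. For the reverse, fix $x_0\notin\la$. Because $\la$ is closed in the locally convex space $(X,\Tt_{\W})$, Hahn--Banach separation produces a $\Tt_{\W}$-continuous linear functional $\f$ with $\f|_{\la}=0$ and $\f(x_0)\ne 0$. Lemma \ref{l:dual-space} represents $\f=\W^b(y)$ for some $y\in Y$, and this $y$ lies in $\la^{\W,r}$ while pairing non-trivially with $x_0$, so $x_0\notin(\la^{\W,r})^{\W,l}$.

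\textbf{Main obstacle.} The subtlety -- precisely the one that derails Bambusi's stronger claim in \cite[Lemma 3.2]{Bambusi} -- is that density of $\W^b(Y)$ in the norm topology of $X^*$ does \emph{not} upgrade norm-closure of subspaces in $X$ to $\Tt_{\W}$-closure. Hence the counterexample promised in Example \ref{ex:double-annihilator} remains compatible with both reflexivity of $X$ and the density statement above, and the proof must genuinely use the $\Tt_{\W}$-closure hypothesis on $\la$ rather than norm-closure; reflexivity buys only the density part, nothing more.
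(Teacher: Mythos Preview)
Your proof is correct. The paper does not supply an explicit proof for this lemma: it is stated immediately after Schmid's Lemma \ref{l:double-annihilator} as ``the weak and corrected version of \cite[Lemma 3.2]{Bambusi}'', with the tacit understanding that the double-annihilator identity \eqref{e:double-annihilator1} is nothing but a restatement of Lemma \ref{l:double-annihilator} (the $\Tt_{\W}$-closed hypothesis is identical), while the density of $\W^b(Y)$ in $X^*$ is the standard Hahn--Banach/reflexivity argument you give. Your closing paragraph is apt and captures exactly the point the paper makes in Remark \ref{r:bambusi-error}: norm-density of $\W^b(Y)$ in $X^*$ does not force norm-closed subspaces to be $\Tt_{\W}$-closed, which is why Example \ref{ex:double-annihilator} survives.
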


\medskip
\section{Basic symplectic concepts}\label{s:basic-concepts}
Before defining the Maslov index in symplectic Banach space by symplectic reduction to the
finite-dimensional case, we recall the basic concepts and properties of symplectic functional
analysis.

\begin{definition}\label{d:symplectic-space}
Let $X$ be a complex vector space.%
\newline (a) A mapping
\[\symindex{\w@$\w: X\times X\too \C$ symplectic form}\subindex{Symplectic form!weak}
  \w\colon X\times X\too \C
\]
is called a {\em symplectic form} on $X$, if it is sesquilinear, skew-symmetric, and
non-degenerate, i.e.,

\begin{enumerate}

\item $\w(x,y)$ is linear in $x$ and conjugate linear in $y$;

\item $\w(y,x)\ =\ -\ol{\w(x,y)}$;

\item $X^{\w} \ :=\  \{x\in X ;\w(x,y)\ =\ 0\text{ for all $y\in X$}\} \ =\  \{0\}$.

\end{enumerate}
Then we call $(X,\w)$ a {\em symplectic vector space}.\subindex{Symplectic structures!symplectic
vector space}
\newline (b) Let $X$ be a complex Banach space and $(X,\omega)$ a symplectic vector space. $(X,\omega)$
is called {\em (weak) symplectic Banach space}, if $\omega$ is bounded, i.e., $|\w(x,y)| \leq
C\|x\| \|y\|$ for all $x,y\in X$.\subindex{Symplectic structures!symplectic Banach space}
\newline (c) The {\em annihilator} of a subspace ${\la}$ of $X$ is defined by
\[\subindex{Annihilator}
{\la}^{\w} \ :=\  \{y\in X ; \w(x,y)\ =\ 0 \quad\text{ for all $x\in {\la}$}\} .
\]
\newline (d) A subspace ${\la}$ is called \subindex{Symplectic subspaces}{\em symplectic}, \subindex{Isotropic subspaces}
{\em isotropic}, \subindex{Co-isotropic subspaces}{\em co-isotropic}, or \subindex{Lagrangian
subspaces}{\em Lagrangian} if
\[
\la\cap{\la}^{\w}\ =\ \{0\}\,,\quad{\la} \,\<\, {\la}^{\w}\,,\quad {\la}\,\>\, {\la}^{\w}\,,\quad
{\la}\,\ =\ \, {\la}^{\w}\,,
\]
respectively.
\newline (e) \subindex{Lagrangian Grassmannian}The {\em Lagrangian Grassmannian}
\symindex{L@$\Ll(X,\w)$ Lagrangian Grassmannian}$\Ll(X,\w)$ consists of all Lagrangian subspaces of
$(X,\w)$. We write $\Ll(X):=\Ll(X,\w)$ if there is no confusion.
\end{definition}

\begin{rem}\label{r:bambusi-error}
(a) Let $(X,\w)$ be a complex weak symplectic Banach space. By definition (see below), the form
$\w\colon X\times X\to \C$ is non-degenerate. Then we have \subindex{Topologies on symplectic
Banach spaces (norm, weak, $\w$-weak)}three topologies on $X$: the {\em norm-topology}, the
canonical {\em weak topology} induced from the family $X^*$ of continuous functionals on $X$, and
the {\em $\w$-induced weak topology} $\Tt_{\w}$. The weak topology is weaker than the norm
topology; and the $\w$-induced topology is weaker than the weak topology. So, a closed subset $V\<
X$ is not necessarily weakly closed or closed in $(X,\Tt_{\w})$: the set $V$ can have more
accumulation points in the weak topology and even more in the $\w$-induced weak topology than in
the norm topology. A standard example is the unit sphere that is not weakly closed in infinite
dimensions (see, e.g., \auindex{Brezis,\ H.}H. Brezis \cite[Example 1, p. 59]{Brezis:2011}.
Fortunately, by \cite[Theorem 3.7]{Brezis:2011} every norm-closed {\em linear} subspace is weakly
closed. Hence it is natural (but erroneous) to suppose that the difference between the three
topologies does not necessarily confine severely the applicability of Schmid's Lemma, namely to
linear subspaces.

\noi (b) It seems that \auindex{Bambusi,\ D.}D. Bambusi in \cite[Lemmata 2.7,3.2]{Bambusi} supposed
erroneously that in reflexive Banach space all norm-closed subspaces are not only weakly closed but
also $\w$-weakly closed. Rightly, in spaces where that is valid, Schmid's Lemma is applicable (or
can be reproved independently).

\noi (c) Recall that a Banach space $X$ is \textit{reflexive} if the isometry
\[
\iota\colon  X\too X^{**}, \text{ given by $\iota(x)(\f):=\f(x)$ for $x\in X,\ \f\in X^*$}
\]
is surjective, i.e., its range is the whole bidual space $X^{**}$. Typical examples of reflexive
spaces are all Hilbert spaces and the $L^p$-spaces for $1<p<\infty$, but not $L^1$.

\noi (d) Unfortunately, in general the claim of \cite[Lemma 3.2]{Bambusi} (the validity of the
idempotence of the double annihilator for closed linear subspaces in complex reflexive symplectic
Banach space) is not correct. \subindex{Obstructions to straight forward generalization!double
annihilator not the identity on closed subspaces}If it was correct, then, e.g., in (automatically
reflexive) weak symplectic Hilbert space $(X,\lla\cdot,\cdot\rra, \w)$, the double annihilator
$\la^{\w\w}$ of every closed subspace $\la$ should coincide with $\la$. However, here is a
counterexample: Let $\left(X,\lla\cdot,\cdot\rra\right)$ be a complex Hilbert space and $J\colon
X\to X$ a bounded injective skew-self-adjoint operator. Then $\omega\colon X\times X\to\C$ defined
by $\omega(x,y):=\lla Jx,y\rra$ is a symplectic form on $X$. So $\ran J$ is dense in $X$. For $V\<
X$ closed subspace, denote by $V^\perp$ the orthogonal complement of $V$ with respect to the inner
product on $X$, and by \symindex{\la 2@$\la^{\omega}$ symplectic complement (annihilator) of
subspace $\la$}$V^{\omega}$ the symplectic complement (i.e., the annihilator) of $V$. Then we have
\begin{equation}\label{e:annihi-perp}
V^{\omega}\ =\ (JV)^{\perp}\ =\ J^{-1}\left(V^{\perp}\right).
\end{equation}
Now assume that $\ran J\ne X$ (like in the weak symplectic Sobolev space
$X:=H^{1/2}(\Si;E|_{\Si})$, as explained in the Introduction). Let $x\in X\setminus\ran J$ and set
$V:=\left(\Span\{x\}\right)^{\perp}$. Then we have  $J^{-1}\left(V^{\perp}\right)=\{0\}$, hence
$V^{\omega}=\{0\}$ and $V^{\omega\omega}=X\ne V$. That contradicts the first part of Equation (13) in
\cite[Lemma 3.2]{Bambusi}.

\noi (e) The preceding example contradicts \cite[Equation (11)]{Bambusi}, as well: For any closed
subspace $V\< X$ we have $J\left((JV)^{\perp}\right)\subset V^{\perp}$. Then Bambusi's Equation
(11) is equivalent to
\[
\overline{J\left((JV)^{\perp}\right)}\ =\ V^{\perp}.
\]
For our concrete example $V:=\left(\Span\{x\}\right)^{\perp}$, however, we obtain%
\[
J\left((JV)^{\perp}\right)\cap V^{\perp}=\{0\}\ \tand \ V^{\perp}=\Span\{x\}.
\]
Thus \cite[Equation (11)]{Bambusi} is incorrect.

\noi (f) For any Lagrangian subspace $\la$ in a complex symplectic Banach space $(X,\w)$ we have
$\la^{\w\w}=\la$ by definition. That follows also directly from the identity \eqref{e:three-Omega},
and, alternatively, from Schmid's Lemma, since a Lagrangian subspace is always $\w$-closed.
\end{rem}

The counterexample of the preceding Remarks d and e can be generalized in the following form.

\begin{example}[Closed subspaces different from their double annihilators]\label{ex:double-annihilator}
\subindex{Obstructions to straight forward generalization!double annihilator not the identity on
closed subspaces}\subindex{Counterexamples!double annihilator not the identity on closed subspaces}
 \subindex{Symplectic structures!symplectic Hilbert space}
Let $(X,\w)$ be a \subindex{Hilbert space!weak symplectic}weak symplectic Hilbert space and
$\w(x,y)=\lla Jx,y\rra$. Let $V$ be a proper closed linear subspace of $X$ such that $V^{\bot}\cap
JX=\{0\}$. Then $V^{\w}=J^{-1}V^{\bot}=\{0\}$ and $V^{\w\w}=X \supsetneqq  V$.
\end{example}

\begin{rem}\label{r:double-annihilator1}
(a) By definition, each one-dimensional subspace in real symplectic space is isotropic, and there
always exists a Lagrangian subspace in finite-dimensional real symplectic Banach space, namely the
maximal isotropic subspace. However, there are complex symplectic Hilbert spaces
\subindex{Counterexamples!no Lagrangian subspace}without any Lagrangian subspace. That is, in
particular, the case if $\dim X^+\ne \dim X^-$ in $\NN\cup \{\infty\}$ for a single (and hence for
all) symplectic splittings. More generally, we refer to A. \subindex{Weinstein's
Theorem}Weinstein's Theorem \auindex{Weinstein,\ A.}\cite{Weinstein:1971}(see also
\auindex{Swanson,\ R.C.}R.C. Swanson, \cite[Theorem 2.1 and Corollary]{Swanson:1980}) that relates
the existence of complemented Lagrangian subspaces to the generalized \auindex{Darboux,\
J.G.}\subindex{Darboux property}Darboux property, recalled below at the end of Subsection
\ref{ss:product-spaces}.

(b) As in the finite-dimensional case, the basic geometric concept in infinite-dimensional
symplectic analysis is the  Lagrangian subspace, i.e., a linear subspace which is isotropic and
co-isotropic at the same time. Contrary to the finite-dimensional case, however, the common
definition of a Lagrangian as a {\em maximal} isotropic space or an isotropic space of {\em half}
dimension becomes inappropriate.

(c) In symplectic Banach spaces, the annihilator ${\la}^{\w}$ is closed for any linear subspace
$\la$, and we have the trivial inclusion
\begin{equation}\label{e:double-annih}\subindex{Annihilator!double annihilator in symplectic Banach spaces}%
{\la}^{\w\w} \> \ol{\la}.
\end{equation}
In particular, all Lagrangian subspaces are closed, and trivially, as emphasized in Remark
\ref{r:bambusi-error}.f, we have an equality in the preceding \eqref{e:double-annih}.
\end{rem}

If $X$ is a complex Banach space, each symplectic form $\w$ induces a uniquely defined mapping
$J\colon X\to X^{\ad}$ such that\symindex{J@$J$ operator associated to symplectic form}
\begin{equation}\label{e:almost-complex}
\w(x,y) \ =\  (Jx,y) \quad\text{ for all $x,y\in X$},
\end{equation}
where we set $(Jx,y):=(Jx)(y)$. The induced mapping $J$ is a bounded, injective mapping $J\colon
X\to X^{\ad}$ where \symindex{X@$X^{\ad}$ space of all continuous complex-conjugate functionals on
$X$}$X^{\ad}$ denotes the (topological) dual space of continuous complex-conjugate linear
functionals on $X$.

\begin{definition} Let $(X,\omega)$ be a symplectic Banach space.
If $J$ is also surjective (hence with bounded inverse), the pair $(X,\w)$ is called a {\em strong
symplectic Banach space}.\subindex{Symplectic form!strong}
\end{definition}

\begin{lemma}\label{l:double-strong}Let $(X,\omega)$ be a strong symplectic Banach space, and $\lambda\subset X$ be a linear subspace.
Then we have $\lambda^{\omega\omega}=\overline{\lambda}$.
\end{lemma}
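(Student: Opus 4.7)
The plan is to prove the non-trivial inclusion $\lambda^{\omega\omega}\subseteq\overline{\lambda}$; the reverse inclusion is the general fact already recorded in \eqref{e:double-annih}. I would argue by contraposition, showing that any $x_0\in X\setminus\overline{\lambda}$ can be separated from $\lambda$ by the symplectic form itself, provided the induced operator $J\colon X\to X^{\mathrm{ad}}$ of \eqref{e:almost-complex} is surjective.

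First I would invoke the Hahn--Banach theorem (in its form for complex Banach spaces, producing continuous conjugate-linear functionals in $X^{\mathrm{ad}}$): since $\overline{\lambda}$ is a norm-closed proper subspace and $x_0\notin\overline{\lambda}$, there exists $\varphi\in X^{\mathrm{ad}}$ with $\varphi\evalat{\overline{\lambda}}=0$ and $\varphi(x_0)\neq 0$. Next I would use the strong symplectic hypothesis: by definition $J\colon X\to X^{\mathrm{ad}}$ is a continuous bijection (and hence, by the open mapping theorem, a topological isomorphism), so I may write $\varphi=Jy$ for a uniquely determined $y\in X$. Unpacking \eqref{e:almost-complex} yields $\omega(y,x)=(Jy)(x)=\varphi(x)=0$ for every $x\in\lambda$ and $\omega(y,x_0)=\varphi(x_0)\neq 0$. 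By the skew-symmetry axiom (ii) of Definition \ref{d:symplectic-space}, the first identity is equivalent to $\omega(x,y)=0$ for every $x\in\lambda$, i.e., $y\in\lambda^{\omega}$; the second then gives $\omega(x_0,y)=-\overline{\omega(y,x_0)}\neq 0$, witnessing $x_0\notin\lambda^{\omega\omega}$.

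Combining both inclusions gives the claimed identity $\lambda^{\omega\omega}=\overline{\lambda}$. As a sanity check, one can recover the same statement from Schmid's Lemma \ref{l:double-annihilator} applied with $\Omega=\omega$ and $Y=X$: surjectivity of $J$ means the seminorms $p_y(x)=|\omega(x,y)|$ generate exactly the weak topology on $X$ coming from $X^{\mathrm{ad}}$, so $\mathcal{T}_\omega$ coincides with the ordinary weak topology; then every norm-closed linear subspace is weakly closed (standard consequence of Hahn--Banach) and hence $\omega$-closed, so Schmid's Lemma applies directly and delivers \eqref{e:double-annihilator}.

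The only subtle point, and the place where one must be careful, is the conjugate-linear convention for $X^{\mathrm{ad}}$ together with the skew-Hermitian symmetry of $\omega$: one must verify that the separating functional produced by Hahn--Banach genuinely lies in $X^{\mathrm{ad}}$ (not $X^*$) so that $J^{-1}\varphi$ makes sense, and that the swap between $\omega(y,\cdot)$ and $\omega(\cdot,y)$ respects the conjugation. Beyond this bookkeeping the argument is entirely formal, and the essential content is the single use of surjectivity of $J$, which is exactly the strong symplectic hypothesis and the feature that fails in the weak symplectic setting (cf. Example \ref{ex:double-annihilator}).
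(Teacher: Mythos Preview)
Your argument is correct. Interestingly, your ``sanity check'' paragraph is essentially the paper's own proof: the paper observes that surjectivity of $J$ forces the $\omega$-induced weak topology $\mathcal{T}_\omega$ to coincide with the ordinary weak topology, invokes the standard fact that norm-closed linear subspaces are weakly closed, and then applies Schmid's Lemma~\ref{l:double-annihilator} to conclude. Your primary argument bypasses the explicit appeal to Schmid's Lemma by unwinding that same machinery directly---Hahn--Banach produces a separating functional, and surjectivity of $J$ lets you pull it back to an element of $\lambda^\omega$. The two routes are equivalent in content; your direct version is slightly more self-contained, while the paper's version makes the role of the $\omega$-topology (and hence the contrast with the weak symplectic case) more visible.
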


\begin{proof} Since $(X,\omega)$ be a strong symplectic Banach space, $J$ is surjective. So the weak topology of $X$ is the same as the $\omega$-induced weak topology. By \cite[Problem III.1.34]{Ka76}, $\overline{\lambda}$ is weakly closed. So it is also $\omega$-closed. By Lemma \ref{l:double-annihilator}, we have $\lambda^{\omega\omega}=\overline{\lambda}$.
\end{proof}

We have taken the distinction between {\em weak} and {\em strong} symplectic structures from P.
Chernoff and J. Marsden \auindex{Chernoff,\ P.R.}\auindex{Marsden,\ J.E.}\cite[Section 1.2, pp.
4-5]{CheMa74}. If $X$ is a Hilbert space with symplectic form $\w$, we identify $X$ and $X^*$. Then
the induced mapping $J$ defined by $\w(x,y)=\lla Jx,y\rra$ is a bounded, skew-self-adjoint operator
(i.e., $J^* =-J$) on $X$ with $\ker J=\{0\}$. As in the strong symplectic case, we then have that
$\la\< X$ is Lagrangian if and only if $\la^\perp=J\la$\,. As explained above, in Hilbert space, a
main difference between weak and strong is that we can assume $J^2=-I$ in the strong case (see
\auindex{Boo{\ss}--Bavnbek,\ B.}\auindex{Zhu,\ C.}\cite[Lemma 1]{BooZhu:2013} for the required
smooth deformation of the inner product), but not in the weak case. The importance of such an
anti-involution is well-known from symplectic analysis in finite dimensions and exploited in strong
symplectic Hilbert spaces, but, in general, it is lacking in weak symplectic analysis.

We recall the key concept to symplectic analysis in infinite dimensions:

\begin{definition}\label{d:fredholm-lag-pair}
The space of \emph{Fredholm pairs} of Lagrangian subspaces of a symplectic vector space
$(X,\omega)$ is defined by\symindex{FL@$\Ff\Ll(X,\w)$ Fredholm Lagrangian
Grassmannian}\subindex{Fredholm Lagrangian Grassmannian!of symplectic vector space}
\begin{multline}\label{e:fp-lag-alg}
\Ff\Ll(X)\ :=\ \{(\lambda,\mu)\in\Ll(X)\x\Ll(X);  \dim (\lambda\cap\mu)  <+\infty \text{
and}\\
\dim X/(\lambda+\mu)<+\infty\}
\end{multline}
with
\symindex{index@$\Index(\la,\mu)$ index of Fredholm pair}\subindex{Fredholm pair!index}
\subindex{Index!of Fredholm pair}
\begin{equation}\label{e:fp-lag-index}
\Index(\la,\mu)\ :=\ \dim(\la\cap\mu) - \dim X/(\la+\mu).
\end{equation}
For $k\in\Z$ we define\symindex{FLk@$\Ff\Ll_k(X),\Ff\Ll(X,\mu),\Ff\Ll_k(X,\mu),\Ff\Ll_0^k(X,\mu)$
subspaces of the Fredholm Lagrangian Grassmannian}
\begin{equation}\label{e:fl-index-k}
\Ff\Ll_k(X)\ :=\ \{(\lambda,\mu)\in\Ff\Ll(X); \Index(\la,\mu)=k\}.
\end{equation}
For $k\in\Z$ and $\mu\in\Ll(X)$ we define
\begin{align}
\label{e:fl-mu}\Ff\Ll(X,\mu):&=\ \{\la\in\Ll(X);(\la,\mu)\in\Ff\Ll(X)\},\\
\label{e:fl-mu-k}\Ff\Ll_k(X,\mu):&=\ \{\la\in\Ll(X);(\la,\mu)\in\Ff\Ll_k(X)\},\\
\label{e:trans-mu}\Ff\Ll_0^k(X,\mu):&=\ \{\la\in\Ff\Ll_0(X,\mu);\dim(\la\cap\mu)=k\}.
\end{align}
\end{definition}

What do we know about the index of Fredholm pairs of Lagrangian subspaces in the weak symplectic
case? Here we give another proof for the fact (proved before in our \cite[Proposition
1]{BooZhu:2013}) that Fredholm pairs of Lagrangian subspaces in symplectic vector spaces never can
have positive index.

\begin{lemma}\label{l:negative-index} Let $(X,\w)$ be a symplectic vector space and $\la_1,\ldots,\la_k$ linear subspaces of $X$.
Assume that $\dim X/(\sum_{j=1}^k\la_j)<+\infty$. Then the following holds.
\newline (a) We have
\begin{equation}\label{e:negative-index1}\dim(\bigcap_{j=1}^k\la_j^{\w})\le\dim X/(\sum_{j=1}^k\la_j).\end{equation}
The equality holds if and only if $\sum_{j=1}^k\la_j=(\sum_{j=1}^k\la_j)^{\w\w}$.
\newline (b) If $\la_j$ is isotropic for each $j$, we have
\begin{equation}\label{e:negative-index2}\dim(\bigcap_{j=1}^k\la_j)\le\dim X/(\sum_{j=1}^k\la_j).\end{equation}
The equality holds if and only if $\bigcap_{j=1}^k\la_j=\bigcap_{j=1}^k\la_j^{\w}$ and
$\sum_{j=1}^k\la_j=(\sum_{j=1}^k\la_j)^{\w\w}$.
\end{lemma}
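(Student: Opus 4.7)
The plan is to reduce the lemma to Lemma~\ref{l:negative} applied to the dual pair $(X,X)$ with $\Omega=\w$. Since $\w$ is non-degenerate, both sided annihilators of all of $X$ coincide with $X^{\w}=\{0\}$, so the hypotheses of Lemma~\ref{l:negative} simplify considerably. I would begin by recording the two identities that will do all the work:
\begin{equation*}
\Bigl(\sum_{j=1}^k \la_j\Bigr)^{\w}\ =\ \bigcap_{j=1}^k \la_j^{\w}\quad\text{and}\quad \la_j\<\la_j^{\w}\ \text{when $\la_j$ isotropic.}
\end{equation*}

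For part~(a), set $\la:=\sum_{j=1}^k\la_j$ and $\mu:=X$ in Lemma~\ref{l:negative}(b), taking $Y=X$ and $\Omega=\w$, so that $Y^{\w,l}=\{0\}$. Since $\la\<\la^{\w\w}\< X$ and $\dim X/\la<+\infty$ by hypothesis, also $\dim X/\la^{\w\w}<+\infty$, so Lemma~\ref{l:negative}(b) applies and yields
\begin{equation*}
\dim \la^{\w}/X^{\w}\ =\ \dim X/\la^{\w\w}\ \le\ \dim X/\la.
\end{equation*}
Using $X^{\w}=\{0\}$ and rewriting $\la^{\w}$ as $\bigcap_j \la_j^{\w}$ gives the desired inequality \eqref{e:negative-index1}; and the ``if and only if'' clause for equality in Lemma~\ref{l:negative}(b) specializes (with $Y^{\w,l}=\{0\}$) exactly to the condition $\sum_j\la_j=(\sum_j\la_j)^{\w\w}$.

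For part~(b), the isotropy hypothesis $\la_j\<\la_j^{\w}$ for every $j$ immediately yields $\bigcap_j\la_j\<\bigcap_j\la_j^{\w}$, so
\begin{equation*}
\dim\bigcap_{j=1}^k \la_j\ \le\ \dim\bigcap_{j=1}^k \la_j^{\w}\ \le\ \dim X/\bigl(\sum_{j=1}^k\la_j\bigr),
\end{equation*}
where the second inequality is exactly \eqref{e:negative-index1} from part~(a). Equality in \eqref{e:negative-index2} forces equality in both steps: the first collapses to $\bigcap_j\la_j=\bigcap_j\la_j^{\w}$, and the second, by part~(a), to $\sum_j\la_j=(\sum_j\la_j)^{\w\w}$. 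Conversely, assuming both of these conditions reverses the argument and produces equality.

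The only slightly delicate point is bookkeeping around the finiteness hypothesis needed to invoke Lemma~\ref{l:negative}(b): I have to verify $\dim X/\la^{\w\w}<+\infty$ before invoking it, but this is automatic from $\la\<\la^{\w\w}$. No other obstacle arises; the argument is essentially a direct translation of Lemma~\ref{l:negative} through the symplectic non-degeneracy identity $X^{\w}=\{0\}$, together with the trivial observation that isotropy embeds $\bigcap\la_j$ into $\bigcap\la_j^{\w}$.
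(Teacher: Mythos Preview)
Your proof is correct and follows essentially the same approach as the paper: both reduce part~(a) to Lemma~\ref{l:negative}(b) applied with $\la=\sum_j\la_j$, $\mu=X$, and $\Omega=\w$, using the identity $\bigl(\sum_j\la_j\bigr)^{\w}=\bigcap_j\la_j^{\w}$ and the non-degeneracy $X^{\w}=\{0\}$; and both obtain part~(b) from part~(a) together with the inclusion $\bigcap_j\la_j\subset\bigcap_j\la_j^{\w}$ coming from isotropy. Your write-up is more explicit about the bookkeeping (in particular, checking $\dim X/\la^{\w\w}<+\infty$ before invoking the lemma), but the argument is the same.
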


\begin{proof} (a) Since $\bigcap_{j=1}^k\la_j^{\w}=(\sum_{j=1}^k\la_j)^{\w}$, our result follows from Lemma \ref{l:negative}.b.
\newline (b) By (a) and $\bigcap_{j=1}^k\la_j\subset\bigcap_{j=1}^k\la_j^{\w}$.
\end{proof}

\begin{corollary}[Fredholm index never positive]\label{c:fred-pair-index-vanishing}
a) Let $X$ be a complex vector space with symplectic form $\omega$. Then each Fredholm pair
$(\lambda,\mu)$ of Lagrangian subspaces of $(X,\omega)$ has negative index or is of index $0$.

\noi b) If $(X,\w)$ is a strong symplectic Banach space, then we have
\begin{align}\label{e:double-anni-identity}\subindex{Annihilator!double annihilator in symplectic
Banach spaces}\subindex{Index!of Fredholm pair!vanishing for Lgrangian subspaces in strong
symplectic Banach space}
V^{\w\w}\ &=\ V\quad\text{ for each closed subspace $V\< X$, and}\\
\Index(\la,\mu)\ &=\ 0\quad\text{ for each $(\lambda,\mu)\in
\Ff\Ll(X,\w)$}.\label{e:vanishing-index}
\end{align}
\end{corollary}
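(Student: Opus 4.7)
The plan is to deduce both parts from the isotropic case of Lemma \ref{l:negative-index}(b), coupled with Lemma \ref{l:double-strong} for the strong symplectic statements in (b).

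For part (a), I would apply Lemma \ref{l:negative-index}(b) with $k=2$, $\la_1 = \la$, $\la_2 = \mu$. Both subspaces are Lagrangian, hence isotropic, so the hypothesis is met. The inequality delivered by the lemma is
\[
\dim(\la \cap \mu) \ \le\ \dim X/(\la+\mu),
\]
which by the definition \eqref{e:fp-lag-index} reads $\Index(\la,\mu) \le 0$, as required.

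For part (b), the first equation \eqref{e:double-anni-identity} is a direct restatement of Lemma \ref{l:double-strong}. For the vanishing of the index in \eqref{e:vanishing-index}, I would examine when equality is achieved in the inequality of Lemma \ref{l:negative-index}(b). Equality requires two conditions: firstly $\la \cap \mu = \la^\w \cap \mu^\w$, which is automatic because $\la$ and $\mu$ are Lagrangian ($\la = \la^\w$, $\mu = \mu^\w$); and secondly $\la + \mu = (\la + \mu)^{\w\w}$. Since $(X,\w)$ is strong symplectic, Lemma \ref{l:double-strong} identifies the right-hand side with $\ol{\la+\mu}$, so the second condition is equivalent to $\la+\mu$ being norm-closed.

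The main (and only non-trivial) obstacle is therefore to establish that $\la+\mu$ is closed, given that $\la$ and $\mu$ are closed, $\dim(\la\cap\mu)<\infty$, and $\dim X/(\la+\mu)<\infty$. I would handle this by a standard application of the open mapping theorem: pick any finite-dimensional algebraic complement $F$ of $\la+\mu$ in $X$ (so $X = \la+\mu+F$ and $F\cap(\la+\mu)=\{0\}$), and consider the bounded surjection $T\colon \la\oplus\mu\oplus F \to X$ sending $(a,b,c)\mapsto a+b+c$ between Banach spaces. Its kernel $K=\{(a,-a,0) : a\in\la\cap\mu\}$ is finite-dimensional and closed, so by the open mapping theorem the induced continuous bijection $(\la\oplus\mu\oplus F)/K \to X$ is a homeomorphism. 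It maps the closed subspace $(\la\oplus\mu\oplus\{0\})/K$ onto $\la+\mu$, showing that $\la+\mu$ is closed in $X$. With this closedness in hand, both equality conditions of Lemma \ref{l:negative-index}(b) are satisfied, yielding $\dim(\la\cap\mu) = \dim X/(\la+\mu)$ and hence $\Index(\la,\mu)=0$.
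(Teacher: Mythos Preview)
Your proof is correct and follows essentially the same route as the paper: part (a) via Lemma \ref{l:negative-index}(b), and part (b) via the equality case of that lemma once $(\la+\mu)^{\w\w}=\la+\mu$ is known. Two minor presentational differences are worth noting. First, for \eqref{e:double-anni-identity} you cite Lemma \ref{l:double-strong}, whereas the paper gives an independent direct argument through the induced isomorphism $J\colon X\to X^{\ad}$: writing $V^{\w}=(JV)^{\bot}=J^{-1}(V^{\bot})$ and using that $J$ is a bounded surjection (so $JV$ is closed by the Open Mapping Theorem), one computes $V^{\w\w}=J^{-1}((JV)^{\bot\bot})=J^{-1}(JV)=V$ directly. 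Second, for the closedness of $\la+\mu$ the paper simply invokes the standard fact (recorded in Remark \ref{r:vanishing-index}(b) and Remark \ref{r:redholm-pairs}) that a finite-codimensional sum of two closed subspaces is closed, while you supply an explicit open-mapping argument. Both variants are fine.
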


\begin{proof} (a) is immediate from the Lemma. To derive (b) from the Lemma, we shall summarize a
couple of \subindex{Symplectic structures!symplectic Banach space!elementary identities}elementary
concepts and identities about symplectic Banach spaces:

\noi For \eqref{e:double-anni-identity} we recall from \eqref{e:almost-complex} that any
symplectic form $\w$ on a complex Banach space $X$ induces a uniquely defined bounded, injective
mapping $J\colon X\to X^{\ad}$ such that $\w(x,y)=(Jx)(y)$ for all $x,y\in X$. Here
\symindex{X@$X^{\ad}$ space of all continuous complex-conjugate functionals on $X$}$X^{\ad}$
denotes the space \symindex{B@$\Bb^{\ad}(X,Y)$ space of all continuous linear-complex-conjugate
operators from $X$ to $Y$}$\Bb^{\ad}(X,\C)$ of all continuous complex-conjugate functionals on $X$.
For linear subspaces $W\< X$ and $Z\< X^{\ad}$, we set \symindex{\lambda_bot@$\lambda^\bot$ dual
complement of subspace $\la$}$W^\bot:=\{e\in X^{\ad}; e(x)=0 \text{ for all } x\in W\}$ and
$Z^\bot:=\{x\in X; e(x)=0 \text{ for all } e\in Z\}$, as usual. By the \subindex{Hahn-Banach
Extension Theorem}Hahn-Banach extension theorem, we have
\begin{equation}\label{e:bot-bot}
W^{\bot\bot}\ =\ \overline{W}\ \tand\ Z^{\bot\bot}\ =\ \overline{Z}.
\end{equation}
Moreover, we have the following elementary identities
\begin{equation}\label{e:annihi-perp2}
W^{\omega}\ =\ (JW)^{\perp}\ =\ J^{-1}\left(W^{\perp}\right).
\end{equation}
They correspond exactly to the identities of \eqref{e:annihi-perp}, given there only for $X$
symplectic Hilbert space.

Recall that we call $\w$ strong, if $J$ is surjective, i.e., an isomorphism. That we assume now. Then we have%
\begin{equation}\label{e:double-anni-involutive-identity-proof}
V^{\w\w}\ \fequal{i}\ J^{-1}(V^{\w\bot})\ \fequal{ii}\ J^{-1}((JV)^{\bot\bot})\ \fequal{iii}\
J^{-1}(JV)\ \fequal{iv}\ V.
\end{equation}
The identities (i) and (ii) follow from \eqref{e:annihi-perp2} and are valid also in the weak case,
while we for identity (iii) need that $J$ is bounded \textit{and} surjective, hence $JV$ is closed
by the \subindex{Open Mapping Theorem}Open Mapping Theorem. Identity (iv) is a trivial consequence
of the injectivity of $J$ and so valid also in the weak case. That proves
\eqref{e:double-anni-involutive-identity-proof}. In particular, we have $(\la+\mu)^{\w\w}=\la+\mu$ and so by
(b) of the Lemma $\dim(\la\cap\mu)=\dim X/(\la+\mu)$. In general, i.e., for weak symplectic form,
we have $\dim(\la\cap\mu)=\dim X/(\la^{\w}+\mu^{\w})$ which does not suffice to prove the vanishing
of the index.
\end{proof}

\begin{rem}\label{r:vanishing-index}
(a) The Corollary has a wider validity. Let $(\la,\mu)$ be a Fredholm pair of isotropic subspaces.
Then we have by Lemma \ref{l:negative-index}.b $\Index(\la,\mu)\le 0$. If $\Index(\la,\mu)=0$,
$\la$ and $\mu$ are Lagrangians (see \auindex{Boo{\ss}--Bavnbek,\ B.}\auindex{Zhu,\
C.}\cite[Corollary 1 and Proposition 1]{BooZhu:2013}).
\newline (b) To obtain $\Index(\la,\mu)=0$ from Lemma \ref{l:negative-index}.b for strong symplectic Banach spaces,
it was crucial that we have
\[
(\la+\mu)^{\w\w}=\la+\mu \ \tand\ \la=\la^{\w}\ \tand\  \mu=\mu^{\w}.
\]
For Lagrangian subspaces the last two equations are satisfied by definition, and the first is our
\eqref{e:double-anni-identity}, valid for strong symplectic $\w$. More generally, by Lemma
\ref{l:double-annihilator}, the first equation is satisfied if the space $\la+\mu$ is $\w$-closed,
i.e., closed in the weak topology $\Tt_\w$ (see above). In a symplectic Banach space $(X,\w)$ all
Lagrangian subspaces are norm-closed, weakly closed and $\w$-weakly closed at the same time, as
emphasized in Remark \ref{r:bambusi-error}. Since $\la$, $\mu$ are norm-closed and $\dim
X/(\la+\mu)<+\infty$, $\la+\mu$ is norm-closed by \auindex{Boo{\ss}--Bavnbek,\
B.}\auindex{Furutani,\ K.}\cite[Remark A.1]{BoFu99} and \auindex{Kato,\ T.}\cite[Problem
4.4.7]{Ka95}. However, that does not suffice to prove that $\la+\mu$ is $\w$-closed, see Remark
\ref{r:double-annihilator1}.c.
\newline (c) Our \eqref{e:vanishing-index} is well known for strong symplectic Hilbert
spaces (follow, e.g., the arguments of \auindex{Boo{\ss}--Bavnbek,\ B.}\auindex{Furutani,\
K.}\cite[Corollary 3.7]{BoFu98}). Below, in Example \ref{ex:negative-index} we give a Fredholm pair
of Lagrangian subspaces in a weak symplectic Hilbert space with negative index. Hence, we can not
take the vanishing of the index for granted for weak symplectic forms, neither in Hilbert spaces -
contrary to the well established vanishing of the index of closed (not necessarily bounded)
self-adjoint Fredholm operators in Hilbert space (\auindex{Bleecker,\
D.}\auindex{Boo{\ss}--Bavnbek,\ B.}\cite[p. 43]{BlBo13}). That may appear a bit strange: Below in
Section \ref{ss:sa-relations}, we shall consider closed operators as special instances of
\subindex{Closed linear relation}closed linear relations. Then, e.g., a closed self-adjoint
Fredholm operator $A$ in a Hilbert space $(X,\lla\cdot,\cdot\rra)$ is a self-adjoint Fredholm
relation, i.e., the pair $(\Graph A,X\times\{0\})$ is a Fredholm pair of Lagrangian subspaces of
the Hilbert space $X\times X$ with the canonical strong symplectic structure
\begin{equation}\label{e:canonical-product-sympform}
\symindex{\omega_can@$\w_{\can}$ canonical strong symplectic form on $X\times X^*$}
\subindex{Symplectic form!canonical and strong on $X\times X^*$}
\w_{\can}\colon (X\times X)\times (X\times X)\too\CC,\quad
(x_1,y_1,x_2,y_2)\mapsto\lla x_1,y_2\rra-\lla x_2,y_1\rra.
\end{equation}
That yields an alternative, namely symplectic proof of the vanishing of the index of a closed
self-adjoint Fredholm operator in Hilbert space, since $\Index A=\Index(\Graph A,X\times\{0\})$ by
\eqref{e:clr-index} and $(\Graph A,X\times\{0\})$ a Fredholm pair of Lagrangian subspaces of
$(X\times X,\w_{\can})$. The preceding arguments generalize immediately for any
\subindex{Fredholm operator!closed self-adjoint}\subindex{Index!of closed self-adjoint Fredholm
operator}
closed self-adjoint Fredholm operator $A\colon X\to X^*$ with $\dom A\< X$ and $X$ reflexive
complex Banach space. We only need to reformulate the canonical strong symplectic form in
\eqref{e:canonical-product-sympform} on the Banach space $X\times X^*$, replacing $X\times X$ by
$X\times X^*$ and $\lla x,y\rra$ by $y(x)$. That yields a strong symplectic form if and only if $X$
is reflexive. For examples of self-adjoint Fredholm operators in ``non-Hilbertable"
\subindex{Banach space!non-Hilbertable}Banach spaces we refer to self-adjoint extensions of the
Laplacian in $L^p$-spaces appearing with convex \subindex{Hamiltonian system!self-adjoint
extensions of the Laplacian}Hamiltonian systems in \auindex{Ekeland,\ I.}I. Ekeland \cite[p.
108]{Ekeland:1990}. Later in Section \ref{ss:product-spaces}, for our applications we shall
introduce a new (and weak) concept of a Fredholm operator $A\colon X\to Y$ in Banach spaces $X,Y$
that is ``self-adjoint" relative to a weak symplectic structure on $X\times Y$ induced by a
non-degenerate sesquilinear form $\W\colon X\times Y\to \C$. A priori, we can not exclude
\subindex{Counterexamples!non-vanishing index for self-adjoint Fredholm operators in Banach spaces
with non-degenerate sesquilinear forms}.
\newline (d) In view of our Example \ref{ex:negative-index}, we shall need special
assumptions below in Chapter \ref{s:gsff} to exclude intractable complications with index
calculations for arbitrary Fredholm relations and ``self-adjoint" Fredholm operators (e.g., see the
assumptions of Proposition \ref{p:sum-0}, Assumption \ref{a:reduced-space} (iv), and Assumption
\ref{a:asff} (iv)).
\newline (e) In our applications, we shall deal only with Fredholm pairs of Lagrangians where the vanishing
of the index is granted by arguments of global analysis or simply because the underlying form is
strong symplectic.
\end{rem}

Here is an example which shows that the index of a Fredholm pair of Lagrangian subspaces in weak
symplectic Banach space need not vanish.

\begin{example}[Fredholm pairs of Lagrangians with negative index]\label{ex:negative-index}
\subindex{Obstructions to straight forward generalization!non-vanishing index of Fredholm pair of
Lagrangian subspaces}\subindex{Hilbert space!weak symplectic}
\subindex{Counterexamples!non-vanishing index of Fredholm pair of Lagrangian subspaces}
Let $X$ be a complex Hilbert space and $X=X_1\oplus X_2\oplus X_3$ an orthogonal decomposition with
$\dim X_1=n\in\NN$ and $X_2\simeq X_3$. Then we can find a bounded skew-self-adjoint injective, but
not surjective $J\colon X\to X$ such that $\w(x,y)=\lla Jx,y\rra$ becomes a weak symplectic form on
$X$. Let $J$ be of the form
\begin{eqnarray*}
J=i\left(\begin{array}{ccc}A_{11} & A_{12} & \bar k A_{12} \\A_{21} & A_{22}& 0\\kA_{21} & 0&
-A_{22}\end{array}\right),
\end{eqnarray*}
where $k\in\CC, k\ne\pm 1$, $\ran A_{21}\cap \ran A_{22}=\{0\}$ and $\ker A_{21}=\ker
A_{22}=\{0\}$.

Set $V=X_2\oplus X_3$. We identify the vectors in $X_2$ and $X_3$. Then the pair $(\la_+,\la_-)$
with $\la_{\pm}:=\{(\alpha,\pm\alpha);\alpha\in X_2\}$ becomes a Fredholm pair of Lagrangian
subspaces of $(V,\w|_V)$ with $\la_+\cap\la_-=\{0\}$ and
$$
V\ =\ \la_{+}\oplus\la_{-}.
$$
We claim that $J^{-1}(X_1\oplus\la_{\pm})\subset V$. In fact, let $(x_1,x_2,x_3)\in
J^{-1}(X_1\oplus\la_{\pm})$. Then there is an $\alpha\in X_2$ such that
$A_{21}x_1+A_{22}x_2=\alpha$ and $kA_{21}x_1-A_{22}x_3=\pm\alpha$. So $(1\mp
k)A_{21}x_1+A_{22}(x_2\pm x_3)=0$. Since $\ran A_{21}\cap\ran A_{22}=0$ and $\ker A_{21}=0$, we
have $x_1=0$.

Note that $\la_{\pm}^{\bot}=X_1\oplus\la_{\mp}$ and $\la_{\pm}^{\w}\cap V=\la^{\pm}$. Then we have
$\la_{\pm}^{\w}=J^{-1}(X_1\oplus\la_{\mp})\subset V $ and $\la_{\pm}^{\w}=\la_{\pm}^{\w}\cap
V=\la^{\pm}$. So $\la_{\pm}$ are Lagrangian subspaces of $(X,\w)$. Then, by definition of $J$ they
form a Fredholm pair of Lagrangians of $X$ with $\Index(\la_+,\la_-)=-n$.
\end{example}

\begin{corollary}\label{c:negative-sum-index} Let $(X,\w)$ be a symplectic vector space and $\la,\mu$ two linear subspaces. Assume that
\[\dim X/(\la+\mu)<+\infty\text{ and }\dim X/(\la^{\w}+\mu^{\w})<+\infty.\]
Then the following holds.
\newline (a) $(\la,\mu)$ and $(\la^{\w},\mu^{\w})$ are Fredholm pairs, and we have
\begin{equation}\label{e:negative-sum-index}\Index(\la,\mu)\ +\ \Index(\la^{\w},\mu^{\w})\ \le\ 0.
\end{equation}
(b) The equality holds in (\ref{e:negative-sum-index}) if and only if $\la+\mu=(\la+\mu)^{\w\w}$,
$\la^{\w}+\mu^{\w}=(\la^{\w}+\mu^{\w})^{\w\w}$, and $\la\cap\mu=\la^{\w\w}\cap\mu^{\w\w}$.
\end{corollary}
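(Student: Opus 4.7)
The plan is to derive both parts directly from Lemma \ref{l:negative-index}(a) applied twice, combined with the elementary containment $V\subset V^{\w\w}$.

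First I would apply Lemma \ref{l:negative-index}(a) to the pair $\{\la,\mu\}$, using the hypothesis $\dim X/(\la+\mu)<+\infty$, to obtain
\[
 \dim(\la^{\w}\cap\mu^{\w})\ \le\ \dim X/(\la+\mu),
\]
with equality iff $\la+\mu=(\la+\mu)^{\w\w}$. Next, applying the same lemma to the pair $\{\la^{\w},\mu^{\w}\}$ (using $\dim X/(\la^{\w}+\mu^{\w})<+\infty$) yields
\[
 \dim(\la^{\w\w}\cap\mu^{\w\w})\ \le\ \dim X/(\la^{\w}+\mu^{\w}),
\]
with equality iff $\la^{\w}+\mu^{\w}=(\la^{\w}+\mu^{\w})^{\w\w}$. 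Finally, since $\la\<\la^{\w\w}$ and $\mu\<\mu^{\w\w}$ by \eqref{e:double-annih}, we have the trivial inclusion $\la\cap\mu\<\la^{\w\w}\cap\mu^{\w\w}$, and since the right-hand side is finite-dimensional by the previous bound, this gives
\[
 \dim(\la\cap\mu)\ \le\ \dim(\la^{\w\w}\cap\mu^{\w\w}),
\]
with equality iff $\la\cap\mu=\la^{\w\w}\cap\mu^{\w\w}$.

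For (a), chaining the last two inequalities gives $\dim(\la\cap\mu)<+\infty$, so together with the hypothesis $\dim X/(\la+\mu)<+\infty$ the pair $(\la,\mu)$ is Fredholm. Symmetrically, the first inequality gives $\dim(\la^{\w}\cap\mu^{\w})<+\infty$, so $(\la^{\w},\mu^{\w})$ is Fredholm too. Now from the definition \eqref{e:fp-lag-index} of the index,
\begin{align*}
 \Index(\la,\mu)+\Index(\la^{\w},\mu^{\w})
 \ =\ &\bigl(\dim(\la\cap\mu)-\dim X/(\la^{\w}+\mu^{\w})\bigr)\\
      &+\bigl(\dim(\la^{\w}\cap\mu^{\w})-\dim X/(\la+\mu)\bigr),
\end{align*}
and each summand is $\le 0$ by the three displayed inequalities (the first summand being estimated by chaining the last two, the second summand being the first inequality). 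This proves \eqref{e:negative-sum-index}.

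For (b), equality in \eqref{e:negative-sum-index} forces equality in each of the three intermediate estimates, and conversely. By the iff-characterizations recalled above, these three equalities are precisely $\la+\mu=(\la+\mu)^{\w\w}$, $\la^{\w}+\mu^{\w}=(\la^{\w}+\mu^{\w})^{\w\w}$, and $\la\cap\mu=\la^{\w\w}\cap\mu^{\w\w}$, which is the stated condition. I do not foresee any genuine obstacle here: the argument is essentially bookkeeping once Lemma \ref{l:negative-index}(a) is in hand, the only subtlety being the need to interpose the space $\la^{\w\w}\cap\mu^{\w\w}$ between $\la\cap\mu$ and $\dim X/(\la^{\w}+\mu^{\w})$, since in the weak symplectic setting $\la^{\w\w}\cap\mu^{\w\w}$ may be strictly larger than $\la\cap\mu$ (cf.\ Example \ref{ex:double-annihilator}).
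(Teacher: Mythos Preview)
Your proof is correct and follows essentially the same approach as the paper's own proof: apply Lemma~\ref{l:negative-index}(a) once to $(\la,\mu)$ and once to $(\la^{\w},\mu^{\w})$, interpose the inclusion $\la\cap\mu\subset\la^{\w\w}\cap\mu^{\w\w}$, regroup the index sum into two nonpositive pieces, and track the three equality conditions. The paper's argument is slightly more compressed but structurally identical.
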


\begin{proof} (a) By Lemma \ref{l:negative-index}, we have
\begin{align}
\label{e:la-mu1}&\dim(\la^{\w}\cap\mu^{\w})\le\dim X/(\la+\mu)<+\infty,\\
\label{e:la-mu2}&\dim(\la\cap\mu)\le\dim(\la^{\w\w}\cap\mu^{\w\w})\le\dim
X/(\la^{\w}+\mu^{\w})<+\infty.
\end{align}
Then $(\la,\mu)$ and $(\la^{\w},\mu^{\w})$ are Fredholm pairs, and we have
\begin{align*}\Index(\la,\mu)+\Index(\la^{\w},\mu^{\w})\ &=\ \dim(\la\cap\mu)-\dim X/(\la+\mu)\\
&\qquad+\dim(\la^{\w}\cap\mu^{\w})-\dim X/(\la^{\w}+\mu^{\w})\\
&=\ \dim(\la\cap\mu)-\dim X/(\la^{\w}+\mu^{\w})\\
&\qquad+\dim(\la^{\w}\cap\mu^{\w})-\dim X/(\la+\mu)\le 0.
\end{align*}
\newline (b) By the proof of (a), the equality in (\ref{e:negative-sum-index}) holds if and only if $\dim(\la^{\w}\cap\mu^{\w})=\dim X/(\la+\mu)$
and $\dim(\la\cap\mu)=\dim(\la^{\w\w}\cap\mu^{\w\w})=\dim X/(\la^{\w}+\mu^{\w})$. Since
$\la\cap\mu\subset \la^{\w\w}\cap\mu^{\w\w}$, by Lemma \ref{l:negative-index}, the equality in
(\ref{e:negative-sum-index}) holds if and only if $\la+\mu=(\la+\mu)^{\w\w}$,
$\la^{\w}+\mu^{\w}=(\la^{\w}+\mu^{\w})^{\w\w}$, and $\la\cap\mu=\la^{\w\w}\cap\mu^{\w\w}$.
\end{proof}

\section[Intrinsic decomposition of a symplectic vector space]
{Natural decomposition of $X$ induced by a Fredholm pair of Lagrangian subspaces with vanishing
index} \label{ss:intrinsic-decomposition}

The following lemmata are the key to the definition of the Maslov index in symplectic Banach spaces
by symplectic reduction to the finite-dimensional case. For technical reasons, in this section,
Fredholm pairs of Lagrangians are always assumed to be of index $0$.

We begin with some general facts.

\begin{lemma}\label{l:sym-subspace} Let $(X,\w)$ be a symplectic vector space and $X_0, X_1$ two linear subspaces with $X=X_0+X_1$.
Assume that $X_0\subset X_1^{\w}$. Then we have $X_0=X_1^{\w}$, $X_1=X_0^{\w}$, $X=X_0\oplus X_1$,
and $X_0,X_1$ are symplectic.
\end{lemma}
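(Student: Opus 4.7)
The plan is to extract everything from two observations: the hypothesis $X_0 \subset X_1^{\w}$ is symmetric in $X_0, X_1$ (by skew-symmetry of $\w$), and the sum $X = X_0 + X_1$ lets us write any element uniquely enough to exploit non-degeneracy of $\w$ on $X$.

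\textbf{Step 1: symmetry of the hypothesis.} Since $\w$ is skew-symmetric (up to conjugation), the condition $\w(x_1,x_0)=0$ for all $x_0\in X_0,\,x_1\in X_1$ is equivalent to $\w(x_0,x_1)=0$, so $X_0 \subset X_1^{\w}$ gives simultaneously $X_1 \subset X_0^{\w}$.

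\textbf{Step 2: the sum is direct.} I would take $x \in X_0 \cap X_1$ and show $x \in X^{\w}$, whence $x=0$ by non-degeneracy of $\w$ on $X$. For any $y \in X$, writing $y = y_0 + y_1$ with $y_i \in X_i$, the two inclusions from Step 1 give $\w(x,y_0)=0$ (since $x \in X_1,\, y_0 \in X_0 \subset X_1^{\w}$) and $\w(x,y_1)=0$ (since $x \in X_0 \subset X_1^{\w},\, y_1 \in X_1$). Hence $X = X_0 \oplus X_1$.

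\textbf{Step 3: computing the annihilators.} To prove $X_1^{\w} \subset X_0$, I would take $y \in X_1^{\w}$ and decompose $y = y_0 + y_1$ with respect to $X = X_0 \oplus X_1$. Since $y_0 \in X_0 \subset X_1^{\w}$, we get $y_1 = y - y_0 \in X_1^{\w}$; but also $y_1 \in X_1$. Now the same argument as in Step 2 applied to $y_1$ in place of $x$ shows $y_1 \in X^{\w} = \{0\}$, so $y = y_0 \in X_0$. Combined with the hypothesis $X_0 \subset X_1^{\w}$, this yields $X_0 = X_1^{\w}$. Swapping the roles of $X_0, X_1$ (legitimate by Step 1) gives $X_1 = X_0^{\w}$.

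\textbf{Step 4: symplecticity.} The equalities of Step 3 trivialize this: $X_0 \cap X_0^{\w} = X_0 \cap X_1 = \{0\}$ by Step 2, and analogously $X_1 \cap X_1^{\w} = \{0\}$, which is the defining condition (Definition \ref{d:symplectic-space}.d) for $X_0$ and $X_1$ to be symplectic subspaces.

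The only mild obstacle is Step 3, since one must resist the temptation to invoke a double-annihilator identity (which, as the preceding discussion shows, is precisely what fails in the weak setting). The argument instead uses only non-degeneracy of $\w$ on $X$ together with the direct sum decomposition already established in Step 2, so no topology or strength assumption on $\w$ enters.
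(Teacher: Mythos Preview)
Your proof is correct and follows essentially the same idea as the paper's: both hinge on showing that any element lying simultaneously in $X_1$ and in $X_1^{\w}$ (or in $X_0\cap X_1$) must annihilate both summands and hence all of $X$. The paper just packages this more compactly via the identities $X_1\cap X_1^{\w}\subset X_0^{\w}\cap X_1^{\w}=(X_0+X_1)^{\w}=\{0\}$ and $X_1^{\w}=X_1^{\w}\cap(X_0+X_1)=X_0+X_1^{\w}\cap X_1=X_0$, whereas you unpack these elementwise; the content and the order of deductions (up to a harmless reshuffling) are the same.
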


\begin{proof} Since $X_0\subset X_1^{\w}$, we have $X_1\subset X_1^{\w\w}\subset X_0^{\w}$. Since $X=X_0+X_1$, there holds
\[X_1\cap X_1^{\w}\subset X_0^{\w}\cap X_1^{\w}=(X_0+X_1)^{\w}=\{0\}.\]
So $X_1$ is symplectic, and we have $X_1^{\w}=X_1^{\w}\cap(X_0+X_1)=X_0+X_1^{\w}\cap X_1=X_0$ and
$X_1\cap X_0=X_1\cap X_1^{\w}=\{0\}$. Hence we have $X=X_0\oplus X_1$. Since $X_1\subset X_0^{\w}$
and $X=X_0+X_1$, we have $X_1=X_0^{\w}$ and $X_0$ is symplectic.
\end{proof}

\begin{lemma}\label{l:w-quotient} Let $(X,\w)$ be a symplectic vector space and $\la,V$ two linear subspaces. Assume that $\dim V<+\infty$. Then we have
\begin{equation}\label{e:w-quotient}
\dim\la/(\la\cap V^{\w})\le\dim V.
\end{equation}
The equality holds if and only if $\la+V^{\w}=X$. In this case we have $\la^{\w}\cap V=\{0\}$.
\end{lemma}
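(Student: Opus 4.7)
The plan is to exploit the $\CC$-linear map
\[
T\colon \la \too V^{\ad}, \qquad T(x)(v) := \w(x,v),
\]
where $V^{\ad}$ denotes the space of $\CC$-conjugate-linear functionals on $V$ (all automatically continuous since $\dim V<\infty$). Well-definedness uses that $\w$ is conjugate-linear in the second slot, and the kernel of $T$ is exactly $\{x\in\la:\w(x,v)=0\,\forall v\in V\}=\la\cap V^{\w}$. Hence $T$ descends to an injection $\la/(\la\cap V^{\w})\hookrightarrow V^{\ad}$, and since $\dim_{\CC} V^{\ad}=\dim V$ the bound \eqref{e:w-quotient} follows immediately.

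For the equality condition, I would extend $T$ to $S\colon X\to V^{\ad}$ by the same formula and prove that $S$ is \emph{surjective}. Fixing a basis $v_1,\dots,v_n$ of $V$ and identifying $V^{\ad}$ with $\CC^n$ via evaluation on this basis, surjectivity amounts to the map $x\mapsto(\w(x,v_1),\dots,\w(x,v_n))$ hitting all of $\CC^n$. Any linear relation $\sum c_i\w(x,v_i)=0$ for all $x$ rewrites as $\w(x,\sum\bar c_i v_i)=0$ for all $x$, so by non-degeneracy of $\w$ on $X$ and linear independence of the $v_i$ one obtains $c_i=0$; this rules out a proper image. Hence $S$ induces an isomorphism $X/V^{\w}\cong V^{\ad}$, and under it the image of $T$ corresponds to $(\la+V^{\w})/V^{\w}$. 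Consequently equality in \eqref{e:w-quotient} is equivalent to $T$ being surjective, hence to $\la+V^{\w}=X$.

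Finally, assuming $\la+V^{\w}=X$, I would check $\la^{\w}\cap V=\{0\}$ directly: for $v\in\la^{\w}\cap V$ and arbitrary $y\in X$, write $y=x+u$ with $x\in\la$ and $u\in V^{\w}$. Then $\w(x,v)=0$ (since $v\in\la^{\w}$) and $\w(u,v)=-\overline{\w(v,u)}=0$ (since $v\in V$ and $u\in V^{\w}$), so $\w(y,v)=0$ for every $y\in X$, and non-degeneracy of $\w$ forces $v=0$.

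The only delicate step is the surjectivity of $S$, which is precisely where the non-degeneracy of $\w$ on the ambient space $X$ is essential; the rest is routine linear algebra exploiting the finiteness of $\dim V$. As an alternative bookkeeping, one could invoke Lemma \ref{l:negative}.a applied to the form $\w\vert_{\la\times V}$, but the direct argument above seems the most transparent.
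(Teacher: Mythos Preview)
Your proof is correct and follows essentially the same approach as the paper's. The paper cites \cite[Corollary 1]{BooZhu:2013} for the key fact $\dim X/V^{\w}=\dim V$ and then uses the second isomorphism theorem $\la/(\la\cap V^{\w})\cong(\la+V^{\w})/V^{\w}$; you unpack this same fact by constructing the pairing map $S\colon X\to V^{\ad}$ explicitly and showing it is surjective, which makes your argument more self-contained. For the final claim $\la^{\w}\cap V=\{0\}$ the paper argues via $\la^{\w}\cap V=(\la+V^{\w})^{\w}=X^{\w}=\{0\}$ (implicitly using $V^{\w\w}=V$ for finite-dimensional $V$), whereas your direct element-wise check avoids appealing to the double-annihilator identity.
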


\begin{proof} By \auindex{Boo{\ss}--Bavnbek,\ B.}\auindex{Zhu,\ C.}\cite[Corollary 1]{BooZhu:2013}, we have $\dim X/V^{\w}=\dim V$. Hence we have
\[\dim\la/(\la\cap V^{\w})=\dim(\la+V^{\w})/V^{\w}\le\dim X/V^{\w}=\dim V.\]
The equality holds if and only if $\la+V^{\w}=X$. In this case we have $\la^{\w}\cap
V=(\la+V^{\w})^{\w}=\{0\}$.
\end{proof}

Now we turn to our key observation.\subindex{Natural decomposition of a symplectic vector space}
\subindex{Symplectic reduction!decomposition induced by a pair of co-isotropic subspaces}

\begin{proposition}\label{p:reduction-prop}  Let $(X,\omega)$ be a symplectic vector space. Let $(\lambda,\mu)$ be a pair of
co-isotropic subspaces with $\dim\la_0=\dim X/(\la+\mu)<+\infty$, where
$\lambda_0=\lambda^{\w}\cap\mu^{\w}$. Let $V$ be a linear subspace of $X$  with
$X=V\oplus(\lambda+\mu)$. Let $\lambda_1=V^{\omega}\cap\lambda$ and $\mu_1=V^{\omega}\cap\mu$. Let
$X_0=\lambda_0+V$ and $X_1=\lambda_1+\mu_1$. Then the following holds.
\newline (a) $V^{\omega}\oplus\lambda_0=X$.
\newline (b) $X_0=\lambda_0\oplus V$, $\lambda=\lambda_0\oplus\lambda_1$ and $\mu=\lambda_0\oplus\mu_1$. $X_1=\lambda_1\oplus\mu_1$ if $\la$ and $\mu$ are Lagrangian subspaces of $X$.
\newline (c) $\la_1=\la\cap X_1$, $\mu_1=\mu\cap X_1$ and $\la+\mu=\la_0+X_1$.
\newline (d) $X_1=X_0^{\omega}=V^{\w}\cap(\la+\mu)$, $X_0=X_1^{\omega}$, $X=X_0\oplus X_1$, and $X_0$ and $X_1$ are
symplectic.
\newline (e) The subspace $\la_0$ is a Lagrangian subspace of $X_0$. $\lambda_1,\mu_1$ are Lagrangian subspaces of $X_1$ if $\la$ and $\mu$ are Lagrangian subspaces of $X$.
\end{proposition}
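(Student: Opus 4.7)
The strategy is to work through the five parts in the order presented, since each relies on the preceding ones. Throughout, the key fact is that $\la,\mu$ being co-isotropic gives $\la^{\w}\subset \la$ and $\mu^{\w}\subset\mu$, whence $\la_0=\la^{\w}\cap\mu^{\w}\subset\la\cap\mu\subset\la+\mu$. Also, by Lemma \ref{l:negative}(c) applied to $V$ (noting $V^{\w,l}=V^{\w}$ since we view $\w$ as a pairing on $X\times X$), one has $\dim X/V^{\w}=\dim V=\dim X/(\la+\mu)=\dim\la_0$, the last equalities coming from $V\oplus(\la+\mu)=X$ and the hypothesis.

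For (a), I first note that taking annihilators of $V+(\la+\mu)=X$ yields $V^{\w}\cap\la_0=V^{\w}\cap(\la+\mu)^{\w}=\{0\}$. Combined with the dimension identity $\dim X/V^{\w}=\dim\la_0<\infty$ just displayed, this forces $V^{\w}\oplus\la_0=X$. For (b), the inclusion $\la_0\subset\la+\mu$ from the opening remark gives $\la_0\cap V\subset V\cap(\la+\mu)=\{0\}$, so $X_0=\la_0\oplus V$. To see $\la=\la_0\oplus\la_1$ (and symmetrically for $\mu$), one uses (a) to write any $x\in\la$ as $y+z$ with $y\in V^{\w}$, $z\in\la_0\subset\la$; then $y=x-z\in V^{\w}\cap\la=\la_1$, and directness follows from $\la_0\cap\la_1\subset\la_0\cap V^{\w}=\{0\}$. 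For the last claim of (b), in the Lagrangian case $\la_0=\la\cap\mu$, and $\la_1\cap\mu_1\subset V^{\w}\cap\la\cap\mu=V^{\w}\cap\la_0=\{0\}$.

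Part (c) is a short computation: if $x\in\la\cap X_1$, writing $x=a+b$ with $a\in\la_1,b\in\mu_1$ gives $b=x-a\in\la$, hence $b\in V^{\w}\cap\la=\la_1$, so $x=a+b\in\la_1$; the reverse inclusion and the $\mu$-version are immediate, and $\la+\mu=(\la_0+\la_1)+(\la_0+\mu_1)=\la_0+X_1$ finishes (c). For (d), I verify the four orthogonalities $\w(\la_1,V)=\w(\mu_1,V)=\w(\la_1,\la_0)=\w(\mu_1,\la_0)=0$ (each following from $\la_1,\mu_1\subset V^{\w}$ and $\la_0\subset\la^{\w}\cap\mu^{\w}$), giving $X_0\subset X_1^{\w}$ and $X_1\subset X_0^{\w}$. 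Since $X_0+X_1=V+\la_0+\la_1+\mu_1=V+(\la+\mu)=X$, Lemma \ref{l:sym-subspace} applies and produces all the equalities of (d) except $X_1=V^{\w}\cap(\la+\mu)$; this last one uses $X_1\subset V^{\w}\cap(\la+\mu)$ (already noted) together with the decomposition from (b): given $x\in V^{\w}\cap(\la+\mu)$, write $x=(a_0+b_0)+(a_1+b_1)$ with $a_0+b_0\in\la_0$, $a_1+b_1\in X_1\subset V^{\w}$, and conclude $a_0+b_0\in\la_0\cap V^{\w}=\{0\}$ by (a).

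For (e), since $X_0$ is symplectic of dimension $2\dim\la_0<\infty$, and $\la_0$ is isotropic in $X_0$ (because $\la_0\subset\la^{\w}$ so $\w(\la_0,\la_0)=0$) of half-dimension, $\la_0$ is automatically Lagrangian. The main obstacle (and the only step needing a genuine appeal to non-degeneracy rather than bookkeeping) is showing $\la_1,\mu_1$ are Lagrangian in $X_1$ in the Lagrangian case. For $\la_1$, isotropy is clear. If $x\in X_1$ with $\w(x,\la_1)=0$, write $x=a+b$ with $a\in\la_1,b\in\mu_1$; then $\w(b,\la_1)=0$. Since $\mu$ is Lagrangian, $b\in\mu=\mu^{\w}$ gives $\w(b,\mu)=0$, and $b\in V^{\w}$ gives $\w(b,V)=0$. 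As $\la+\mu+V=(\mu+\la_1)+\mu+V=\mu+\la_1+V=X$ (using $\la=\la_0+\la_1\subset\mu+\la_1$), one obtains $\w(b,X)=0$, hence $b=0$ by non-degeneracy, so $x=a\in\la_1$. The symmetric argument handles $\mu_1$.
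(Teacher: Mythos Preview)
Your proof is correct and follows the same overall architecture as the paper's proof: establish (a) by dimension counting on $V^{\w}$, deduce (b)--(d) by algebraic bookkeeping, and invoke Lemma~\ref{l:sym-subspace} for the symplectic splitting in (d). Two minor differences are worth noting. For (b), the paper proves $\la=\la_0\oplus\la_1$ by a dimension chase ($\dim\la_0\le\dim\la/\la_1\le\dim X/V^{\w}=\dim\la_0$), whereas you simply use the decomposition $X=V^{\w}\oplus\la_0$ from (a) together with $\la_0\subset\la$; your route is more direct. For (e), the paper dispatches the Lagrangian claim for $\la_1,\mu_1$ by citing \cite[Lemma~4]{BooZhu:2013} (two transversal isotropics filling a symplectic space are Lagrangian), while you give a self-contained argument using $V+\mu+\la_1=X$ and non-degeneracy; this avoids the external reference at the cost of a few extra lines.
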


\begin{proof} (a) Since $X=V\oplus(\lambda+\mu)$, we have $V\cap\la_0=\{0\}$
and $V^{\omega}\cap\lambda_0=\{0\}$. By \auindex{Boo{\ss}--Bavnbek,\ B.}\auindex{Zhu,\
C.}\cite[Corollary 1]{BooZhu:2013}, we have $\dim X/V^{\w}=\dim V=\dim\la_0$. So we have
$X=V^{\w}\oplus\la_0$.
\newline (b) Note that
\begin{align*}
\dim\lambda_0\ &\le\ \dim\lambda/(V^{\omega}\cap\lambda)\ =\ \dim(V^{\omega}+\lambda)/V^{\omega}\\
&\le\ \dim X/V^{\omega}\ \le\ \dim V\ =\ \dim\lambda_0.
\end{align*}
We have $\la_1\cap\mu_1=V^{\omega}\cap\la\cap\mu=V^{\omega}\cap\lambda_0=\{0\}$ if $\la$ and $\mu$
are Lagrangian subspaces of $X$. So (b) holds.
\newline (c) Since $X_1=\la_1+\mu_1\subset V^{\w}$, we have $\la\cap X_1\subset\la_1\subset\la\cap X_1$.
So $\la_1=\la\cap X_1$ holds. Similarly we have $\mu_1=\mu\cap X_1$. By (b) we have
\[\la+\mu=\la_0+\la_1+\la_0+\mu_1=\la_0+\la_1+\mu_1=\la_0+X_1.\]
\newline (d) Since $X=X_0+X_1$, our claim follows from Lemma \ref{l:sym-subspace} and the fact
\[X_0^{\w}=V^{\w}\cap\la_0^{\w}\supset V^{\omega}\cap(\lambda+\mu)\supset X_1.
\]
\newline (e) By definition, $\la_0$ is isotropic. Moreover, $\dim \la_0=\12\dim X_0$. So $\la_0$ is
Lagrangian in  $X_0$.

Now assume that $\la$ and $\mu$ are Lagrangian subspaces of $X$. Note that $\lambda_1$ and $\mu_1$
are isotropic. Since $X_1=\lambda_1\oplus\mu_1$, by \auindex{Boo{\ss}--Bavnbek,\ B.}\auindex{Zhu,\
C.}\cite[Lemma 4]{BooZhu:2013}, $\lambda_1$ and $\mu_1$ are Lagrangian subspaces of $X_1$.
\end{proof}

\begin{corollary}\label{c:complementary-lagrangian}
Let $(X,\omega)$ be a symplectic vector space. Let $(\lambda,\mu)$ be a Fredholm pair of Lagrangian
subspaces of index $0$. Then there exists a Lagrangian subspace $\wt\mu\< X$ such that
$X=\lambda\oplus\wt\mu$ and $\dim\mu/(\mu\cap\wt\mu)=\dim\wt\mu/(\mu\cap\wt\mu)=\dim(\la\cap\mu)$.
\end{corollary}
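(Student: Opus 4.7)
The plan is to apply Proposition \ref{p:reduction-prop} directly and then solve the finite-dimensional problem on the $X_0$-factor. Since $\lambda,\mu$ are Lagrangian we have $\lambda^\w = \lambda$ and $\mu^\w = \mu$, hence $\lambda_0 := \lambda^\w\cap\mu^\w = \lambda\cap\mu$, and the index $0$ assumption gives $\dim\lambda_0 = \dim X/(\lambda+\mu) < \infty$. So the hypotheses of Proposition \ref{p:reduction-prop} are satisfied (Lagrangians being in particular co-isotropic), and we may fix an algebraic complement $V$ of $\lambda+\mu$ in $X$ and obtain the natural decomposition
\[
X \ =\  X_0 \oplus X_1,\qquad X_0 = \lambda_0\oplus V,\quad X_1 = \lambda_1\oplus\mu_1,
\]
where $\lambda = \lambda_0\oplus\lambda_1$, $\mu = \lambda_0\oplus\mu_1$, $X_0,X_1$ are symplectic subspaces, $X_0^\w = X_1$, $\lambda_0$ is Lagrangian in $X_0$, and $\lambda_1,\mu_1$ are Lagrangian in $X_1$.

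Next I would produce a Lagrangian complement $\wt\lambda_0$ of $\lambda_0$ in the finite-dimensional symplectic space $X_0$. This is the standard finite-dimensional fact: given a Lagrangian $\lambda_0$ in a symplectic space of dimension $2\dim\lambda_0$, pick any complement $V'$ of $\lambda_0$, then modify $V'$ to be isotropic by subtracting an appropriate symmetric correction (equivalently, use the symplectic basis extension theorem), obtaining a Lagrangian complement. Having $\wt\lambda_0$ in hand, define
\[
\wt\mu \ :=\  \wt\lambda_0 \oplus \mu_1.
\]

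The verifications are then short. Since $X_0$ and $X_1$ are $\w$-orthogonal, $\wt\lambda_0$ and $\mu_1$ annihilate each other under $\w$; since each is Lagrangian in its own factor, $\wt\mu$ is isotropic, and $\dim\wt\mu = \dim\wt\lambda_0 + \dim\mu_1$ makes $\wt\mu$ Lagrangian in $X$ by the standard criterion (an isotropic subspace $\wt\mu$ with $X = \lambda\oplus\wt\mu$ for some Lagrangian $\lambda$ is automatically Lagrangian, using \cite[Lemma 4]{BooZhu:2013} applied factor-wise). Moreover
\[
\lambda\oplus\wt\mu \ =\ (\lambda_0\oplus\wt\lambda_0)\oplus(\lambda_1\oplus\mu_1)\ =\  X_0\oplus X_1\ =\  X,
\]
and
\[
\mu\cap\wt\mu \ =\ (\lambda_0\oplus\mu_1)\cap(\wt\lambda_0\oplus\mu_1)\ =\  \mu_1,
\]
since $\lambda_0\cap\wt\lambda_0 = \{0\}$ in $X_0$. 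Therefore $\dim\mu/(\mu\cap\wt\mu) = \dim\lambda_0 = \dim(\lambda\cap\mu)$, and similarly $\wt\mu/(\mu\cap\wt\mu) \cong \wt\lambda_0$ has the same dimension.

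The only nontrivial step is the construction of the Lagrangian complement $\wt\lambda_0$ in $X_0$; this is purely finite-dimensional linear symplectic algebra and presents no real obstacle. The essential work is already absorbed into Proposition \ref{p:reduction-prop}, which delivers the simultaneous decomposition $\lambda = \lambda_0\oplus\lambda_1$, $\mu = \lambda_0\oplus\mu_1$ into a finite-dimensional part carrying all of $\lambda\cap\mu$ and an infinite-dimensional part on which $\lambda$ and $\mu$ are transverse.
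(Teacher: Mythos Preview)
Your proof is correct and follows essentially the same approach as the paper: apply Proposition \ref{p:reduction-prop} to obtain the decomposition $X=X_0\oplus X_1$, choose a Lagrangian complement of $\lambda_0$ in the finite-dimensional $X_0$, and set $\wt\mu$ to be the direct sum of that complement with $\mu_1$. The paper's proof is a one-line sketch of exactly this construction; you have simply written out the verifications in more detail.
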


\begin{proof} By Proposition \ref{p:reduction-prop}, $X_0$ is symplectic and $\lambda_0$ is a Lagrangian subspace of $X_0$.
Choose a Lagrangian $\wt V$ of $X_0$ with $X_0=\la_0\oplus\wt V$. Then set $\wt\mu:=\wt V\oplus
\mu_1$.
\end{proof}

\begin{lemma}\label{l:embedding-finite} Let $(X,\w)$ be a symplectic vector space and $\la$ an isotropic subspace of $X$.
Assume that $\dim\la=n<+\infty$. Then there exists a $2n$ dimensional symplectic subspace $X_0$
such that $\la$ is a Lagrangian subspace of $X_0$, $X_0=X_0^{\w\w}$ and $X=X_0\oplus X_0^{\w}$.
\end{lemma}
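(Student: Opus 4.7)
The plan is to construct $X_0$ as the span of the given basis of $\la$ together with a dual basis extracted from $X$ using the non-degeneracy of $\w$. Choose a basis $f_1,\dots,f_n$ of $\la$. The first step is to produce $e_1,\dots,e_n\in X$ with $\w(e_i,f_j)=\delta_{ij}$. For this I consider the $\C$-linear map $T\colon X\to\C^n$, $T(y):=(\w(y,f_1),\dots,\w(y,f_n))$, and observe that its image must be all of $\C^n$: otherwise there would be $(c_1,\dots,c_n)\ne 0$ with $\w(y,\sum_j\bar c_j f_j)=0$ for all $y\in X$, forcing $\sum\bar c_j f_j\in X^\w=\{0\}$ and contradicting linear independence of the $f_j$. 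Surjectivity of $T$ yields the desired $e_i$.

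Set $X_0:=\Span\{f_1,\dots,f_n,e_1,\dots,e_n\}$. The duality relations together with isotropy of $\la$ force these $2n$ vectors to be linearly independent, so $\dim X_0=2n$. A short computation shows that $\w|_{X_0}$ is non-degenerate: for $x=\sum a_i e_i+\sum b_j f_j\in X_0\cap X_0^\w$, the equation $\w(x,f_k)=a_k=0$ first kills the $e_i$-coefficients, and then $\w(x,e_k)=0$ kills the $f_k$-coefficients using $\w(f_j,e_k)=-\delta_{jk}$. Hence $X_0$ is a $2n$-dimensional symplectic subspace, and since $\la\subset X_0$ is isotropic of half dimension, it is Lagrangian in $X_0$.

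For the remaining two identities, I would invoke Lemma \ref{l:negative}(c) with $\Omega=\w$ and $Y=X$, so that $Y^{\Omega,l}=X^\w=\{0\}$ by non-degeneracy of $\w$: applied to the finite-dimensional subspace $X_0$, that lemma gives both
\[
X_0\ =\ X_0^{\w\w}\qquad\text{and}\qquad\dim X/X_0^\w\ =\ \dim X_0\ =\ 2n.
\]
Since $X_0$ is symplectic we have $X_0\cap X_0^\w=\{0\}$, so the natural map $X_0\hookrightarrow X/X_0^\w$ is an injection between two spaces of the same finite dimension $2n$, and is therefore an isomorphism. This forces $X=X_0+X_0^\w$, and combined with $X_0\cap X_0^\w=\{0\}$ it yields the direct sum $X=X_0\oplus X_0^\w$.

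The only genuinely non-routine step is the initial extraction of the dual system $\{e_i\}$, which hinges on the non-degeneracy of $\w$ on the potentially infinite-dimensional ambient space $X$; every subsequent verification reduces to finite-dimensional linear algebra inside $X_0$ together with a single application of Lemma \ref{l:negative}.
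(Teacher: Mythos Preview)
Your proof is correct. Both you and the paper build $X_0=\la\oplus V$ for some $n$-dimensional $V$ transversal to $\la^{\w}$, then use the finite-dimensional double-annihilator/codimension count to get $X_0^{\w\w}=X_0$ and $X=X_0\oplus X_0^{\w}$. The difference is only in how $V$ is produced and how symplecticity of $X_0$ is checked: you construct $V=\Span\{e_1,\dots,e_n\}$ explicitly via a dual basis and verify non-degeneracy by a coordinate computation, whereas the paper takes \emph{any} complement $V$ to $\la^{\w}$ and obtains $X_0\cap X_0^{\w}=(\la+V)\cap\la^{\w}\cap V^{\w}=\la\cap V^{\w}=(\la^{\w}+V)^{\w}=\{0\}$ via annihilator identities. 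Your dual-basis $V$ is in fact a specific instance of the paper's complement (since your map $T$ has kernel $\la^{\w}$ and $T|_V$ is an isomorphism onto $\C^n$), so the two arguments are variants of the same construction, one coordinate-based and one coordinate-free.
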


\begin{proof} Since $\dim\la=n<+\infty$, by \auindex{Boo{\ss}--Bavnbek,\ B.}\auindex{Zhu,\ C.}\cite[Corollary 1]{BooZhu:2013}
we have $\la^{\w\w}=\la$ and $\dim X/\la^{\w}=n$. Take an $n$ dimensional linear subspace $V$ of
$X$ such that $X=V\oplus\la^{\w}$. Since $\la\subset\la^{\w}$, we have
\[\la^{\w}\cap(\la+V)=\la+\la^{\w}\cap V=\la.\]
Since $\dim V=n<+\infty$, by \auindex{Boo{\ss}--Bavnbek,\ B.}\auindex{Zhu,\ C.}\cite[Corollary
1]{BooZhu:2013} we have $V^{\w\w}=V$ and $\dim X/V^{\w}=n$. Set $X_0:=\la+V$. Then we have
\[X_0\cap X_0^{\w}=(\la+V)\cap\la^{\w}\cap V^{\w}=\la\cap V^{\w}=(\la^{\w}+V)^{\w}=\{0\}.\]
By \auindex{Boo{\ss}--Bavnbek,\ B.}\auindex{Zhu,\ C.}\cite[Corollary 1]{BooZhu:2013}, $\dim
X/X_0^{\w}=\dim X_0=2n$ and $X_0^{\w\w}=X_0$. So we have $X=X_0\oplus X_0^{\w}$. Since $\dim\la=n$
and $\la$ is isotropic, $\la$ is a Lagrangian subspace of $X_0$.
\end{proof}

\begin{lemma}\label{l:parametrize-isotropic} Let $\e$ be a positive number.
Let $(X,\w(s))$, $s\in(-\e,\e)$ be a family of symplectic Banach space with \subindex{Continuously
varying!symplectic forms}continuously varying $\w(s)$. Let $X_0(s)$, $s\in(-\e,\e)$ be a
\subindex{Continuously varying!linear subspaces}continuous family of linear subspaces of dimension
$2n<+\infty$ such that $(X_0(0),\w(0)|_{X_0(0)})$ is symplectic. Let $\la(0)$ be a Lagrangian
subspace of $(X_0(0),\w(0)|_{X_0(0)})$. Then there exist a $\delta\in(0,\e)$ and a continuous
family of linear subspaces $\la(s)$, $s\in(-\delta,\delta)$ such that $(X_0(s),\w(s)|_{X_0(s)})$ is
symplectic and $\la(s)$ is a Lagrangian subspace of $(X_0(s),\w(s)|_{X_0(s)})$ for each
$s\in(-\delta,\delta)$.
\end{lemma}

\begin{proof} Since $\dim X_0(s)=2n<+\infty$, by the proof of \auindex{Kato,\ T.}\cite[Lemma III.1.40]{Ka95}, there exists a closed subspace
$X_1$ such that $X=X_0(0)\oplus X_1$. By Proposition \ref{p:closed-spaces-dimensions}, there exists a $\delta_1\in(0,\e)$ such that
$X=X_0(s)\oplus X_1$ for each $s\in(-\delta_1,\delta_1)$. By \auindex{Kato,\ T.}\cite[Lemma I.4.10]{Ka95}, there exist a $\delta_2\in(0,\delta_1)$ and
a continuous family $U(s)\in\Bb(X)$, $s\in[0,\delta_2)$ of such that $U(s)^{-1}\in\Bb(X)$ and $U(s)X_0(0)=X_0(s)$.

Since $X_0(0)$ is symplectic, there exists a $\delta_3\in(0,\delta_2)$ such that the form
\[U(s)^*(\w(s)|_{X_0(s)})(x,y):=\w(s)(U(s)x,U(s)y),\quad x,y\in X_0(0)\]
is a symplectic form on $X_0(0)$ for each $s\in[0,\delta_3)$. Then the form $\w(s)|_{X_0(s)}$ is symplectic.

We give $X_0(0)$ an inner product $\lla\cdot,\cdot\rra$. Let $J_0(s)\in\GL(X_0(0))$ be the
operators that define the symplectic structures $U(s)^*(\w(s)|_{X_0(s)})$. Then there exists a continuous family $T(s)\in\GL(X_0(0))$,
$s\in(-\delta,\delta)$ with $\delta\in(0,\delta_3)$ such that $T(s)^*J_0(s)T(s)=J_0(0)$, where
we set $\la(s):=U(s)T(s)\la(0)$ and our result follows.
\end{proof}

\section{Symplectic reduction of Fredholm pairs}\label{ss:symplectic-red1}
We recall the general definition of symplectic reduction.

\begin{definition}\label{d:sympl-red}
Let $(X,\omega)$ be a symplectic vector space and $W$ a co-isotropic subspace.
\newline (a) The space $W/W^{\omega}$ is a symplectic vector space with induced symplectic structure
\begin{equation}\label{e:red-structure}
\wt{\omega}(x+W^{\omega},y+W^{\omega})\ :=\ \omega(x,y) \text{ for all $x,y\in W$}.
\end{equation}
We call
\symindex{W@$W/W^{\omega}$ reduced symplectic space} \symindex{\omega tilde@$\tilde\omega$ induced
form on symplectic reduction}
$(W/W^{\omega}, \wt{\omega})$ the {\em
symplectic reduction} of $X$ via $W$.
\newline (b) Let $\lambda$ be a linear subspace of $X$.
\subindex{Symplectic reduction}The {\em symplectic reduction} of $\lambda$ via $W$ is defined by%
\begin{equation}\label{e:red-subspace}\symindex{R@$R_W(\la)$ symplectic reduction of isotropic $\la$
along co-isotropic $W$} \subindex{Symplectic reduction!reduction map}
R_W(\lambda)=R_W^{\w}(\la)\ :=\ \bigl((\lambda+W^{\omega})\cap W\bigr)/W^{\omega}\ =\ \bigl(\lambda\cap  W+W^{\omega}\bigr)/W^{\omega}.%
\end{equation}
\end{definition}

Clearly, $R_W(\la)$ is isotropic if $\la$ is isotropic. If $W^{\w}\subset\la\subset W$ and $\la$ is
Lagrangian, $R_W(\la)$ is Lagrangian. We have the following lemma.

\begin{lemma}\label{l:red-lagrangian} Let $(X,\w)$ be a symplectic vector space with isotropic subspace $W_0$.
Let $\la\supset W_0$ be a linear subspace. Then $\la$ is a Lagrangian subspace of $X$ if and only
if $W_0^{\w\w}\subset\la\subset W_0^{\w}$ and $R_{W_0^{\w}}(\la)$ is a Lagrangian subspace of
$W_0^{\w}/W_0^{\w\w}$.
\end{lemma}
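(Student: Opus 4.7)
The plan is to unpack the definitions directly, relying only on the basic identity $W_0^{\omega\omega\omega}=W_0^\omega$ (a special case of \eqref{e:three-Omega}) and the fact that because $W_0$ is isotropic, $W_0\subset W_0^\omega$ and hence $W_0^{\omega\omega}\subset W_0^\omega$, so that $W_0^\omega$ is co-isotropic and the symplectic reduction $(W_0^\omega/W_0^{\omega\omega},\widetilde\omega)$ in the sense of Definition \ref{d:sympl-red} is legitimate.

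For the forward implication, assume $\lambda=\lambda^\omega$. Since $W_0\subset\lambda$, taking annihilators gives $\lambda=\lambda^\omega\subset W_0^\omega$; applying the annihilator once more yields $W_0^{\omega\omega}\subset\lambda^\omega=\lambda$. Thus $W_0^{\omega\omega}\subset\lambda\subset W_0^\omega$, and formula \eqref{e:red-subspace} reduces to $R_{W_0^\omega}(\lambda)=\lambda/W_0^{\omega\omega}$. Its annihilator in the reduced space is, by the definition of $\widetilde\omega$, the image of $W_0^\omega\cap\lambda^\omega=W_0^\omega\cap\lambda=\lambda$, so $R_{W_0^\omega}(\lambda)$ is Lagrangian.

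For the converse, suppose $W_0^{\omega\omega}\subset\lambda\subset W_0^\omega$ and $\lambda/W_0^{\omega\omega}$ is Lagrangian in $W_0^\omega/W_0^{\omega\omega}$. The inclusion $\lambda\subset\lambda^\omega$ follows at once from the isotropy of $\lambda/W_0^{\omega\omega}$, observing that $\omega$ vanishes identically on $W_0^{\omega\omega}\times\lambda$ because $\lambda\subset W_0^\omega=W_0^{\omega\omega\omega}$. For the opposite inclusion, let $x\in\lambda^\omega$; since $W_0^{\omega\omega}\subset\lambda$, we get $x\in(W_0^{\omega\omega})^\omega=W_0^{\omega\omega\omega}=W_0^\omega$, so $x+W_0^{\omega\omega}$ makes sense as an element of the reduction, and it annihilates every $y+W_0^{\omega\omega}\in\lambda/W_0^{\omega\omega}$ under $\widetilde\omega$. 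By the assumed Lagrangian property of the reduced subspace, $x+W_0^{\omega\omega}\in\lambda/W_0^{\omega\omega}$, which gives $x\in\lambda$. Hence $\lambda=\lambda^\omega$.

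There is essentially no obstacle beyond bookkeeping: the one point requiring care is invoking $W_0^{\omega\omega\omega}=W_0^\omega$ at the right moments (both to place $\lambda^\omega$ inside $W_0^\omega$ in the backward direction and to justify that $\widetilde\omega$ vanishes on the coset representatives in the forward direction). Everything else is a direct translation between statements in $X$ and statements in the reduced space $W_0^\omega/W_0^{\omega\omega}$, which is why no finite-dimensionality or splitting hypothesis is needed.
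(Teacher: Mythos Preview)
Your proof is correct and follows essentially the same approach as the paper's. The only cosmetic difference is that in the backward direction the paper computes $(\lambda/W_0^{\omega\omega})^{\widetilde\omega}=(\lambda^\omega\cap W_0^\omega)/W_0^{\omega\omega}=\lambda^\omega/W_0^{\omega\omega}$ in one stroke (using the inclusion $\lambda^\omega\subset W_0^\omega$ derived from $W_0^{\omega\omega}\subset\lambda$) and reads off $\lambda=\lambda^\omega$ from the equality of quotients, whereas you verify the two inclusions $\lambda\subset\lambda^\omega$ and $\lambda^\omega\subset\lambda$ separately by element-chasing; the underlying content is identical.
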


\begin{proof} By (\ref{e:three-Omega}) we have $W_0^{\w\w\w}=W_0^{\w}$. Since $W_0\subset W_0^{\w}$, $W_0^{\w\w}\subset W_0^{\w}$.

If $\la\in\Ll(X)$ and $\la\supset W_0$, we have $\la\subset W_0^{\w}$ and $W_0^{\w\w}\subset\la$.
Then we get $R_{W_0^{\w}}(\la)=\la/W_0^{\w\w}$ and $(\la/W_0^{\w\w})^{\wt\w}=(\la^{\w}\cap
W_0^{\w})/W_0^{\w\w}=\la/W_0^{\w\w}$, i.e., $R_{W_0^{\w}}(\la)\in\Ll(W_0^{\w}/W_0^{\w\w})$.

Assume that $W_0^{\w\w}\subset\la\subset W_0^{\w}$, we have $W_0^{\w\w}\subset\la^\w\subset
W_0^{\w}$. If $R_{W_0^{\w}}(\la)\in\Ll(W_0^{\w}/W_0^{\w\w})$, we have
\[\la/W_0^{\w\w}=(\la/W_0^{\w\w})^{\wt\w}=(\la^{\w}\cap W_0^{\w})/W_0^{\w\w}=\la^{\w}/W_0^{\w\w}.\]
So we get $\la=\la^\w$, i.e., $\la\in\Ll(X)$.
\end{proof}

\begin{lemma}[Transitivity of symplectic reduction]\label{l:red-transitive}
\subindex{Symplectic reduction!transitivity}
Let $(X,\w)$ be a symplectic vector space with two
co-isotropic subspaces $W_1\subset W_2$, hence clearly $W_1/W_2^{\w}\< W_2/W_2^{\w}$ with
$(W_1/W_2^{\w})^{\w_2} = W_1^{\w}/W_2^{\w}$, where $\w_2$ denotes the symplectic form on
$W_2/W_2^{\w}$ induced by $\w$. Then the following holds.
\newline (a) Denote by $K_{W_1,W_2}\colon W_1/W_2^{\w}\too W_1/W_1^{\w}$ the map induced by $I_{W_1}$, where $I_W$ denotes the identity map on a
space $W$. Then $K_{W_1,W_2}$ induces a symplectic isomorphism
\begin{equation}\label{e:induced-K}
\wt K_{W_1,W_2}\colon (W_1/W_2^{\w})/(W_1^{\w}/W_2^{\w})\too W_1/W_1^{\w},
\end{equation}
such that the following diagram becomes commutative:
\begin{equation}\label{dia:induced-sympl-iso}
\xymatrixcolsep{6.5pc}%
\xymatrix{%
 W_1  \ar[r]^{[\cdot\, +W_2^{\w}]} \ar[d]^{I_{W_1}}%
 &W_1/W_2^{\w} \ar[r]^{\!\![\cdot\, +W_1^{\w}/W_2^{\w}]} \ar[d]_{K_{W_1,W_2}}%
 &(W_1/W_2^{\w})/(W_1^{\w}/W_2^{\w}) \ar[ld]_{\cong}^{\wt{K}_{W_1,W_2}}%
 \\%
W_1 \ar[r]_{[\cdot\, +W_1^{\w}]}  & W_1/W_1^{\w}%
& %
}
\end{equation}
\newline (b) For a linear subspace of $\la$ of $X$, we have
\begin{equation}\label{e:trans-red}
R_{W_1/W_2^{\w}}(R_{W_2}(\la))=\wt K_{W_1,W_2}^{-1}(R_{W_1}(\la)).
\end{equation}
Differently put, the following diagram is commutative:
\begin{equation}\label{dia:induced-sympl-iso-K}
\xymatrixcolsep{7pc}%
\xymatrix{%
 \Lin(X)  \ar[r]^{R_{W_1}} \ar[d]_{R_{W_2}}%
 &\Lin(W_1/W_1^{\w}) \ar[d]^{(\wt{K}_{W_1,W_2})\ii}%
 \\%
\Lin(W_2/W_2^{\w}) \ar[r]_{R_{W_1/W_2^{\w}}}  & \Lin\bigl((W_1/W_2^{\w})/(W_1^{\w}/W_2^{\w})\bigr)%
}
\end{equation}\symindex{RW1@$R_{W_1/W_2^{\w}}$ inner symplectic reduction}\subindex{Symplectic
reduction!inner} Here \symindex{Lin@$\Lin(X)$ set of linear subspaces of the vector space
$X$}$\Lin(X)$ denotes the set of linear subspaces of the vector space $X$.
\end{lemma}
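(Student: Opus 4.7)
The plan is to exploit the standard containments that flow from $W_1\<W_2$ being co-isotropic, and then chase the third isomorphism theorem carefully, first on the level of vector spaces and then with the symplectic forms attached.

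First I would record the basic inclusions
\[
 W_2^{\w}\ \<\ W_1^{\w}\ \<\ W_1\ \<\ W_2,
\]
obtained from reversal of the annihilator under $W_1\<W_2$ together with the co-isotropy $W_i^{\w}\<W_i$. Next I would verify that $W_1/W_2^{\w}$ is itself co-isotropic in the reduced space $(W_2/W_2^{\w},\w_2)$ and that its annihilator there equals $W_1^{\w}/W_2^{\w}$. Indeed, a class $[y]\in W_2/W_2^{\w}$ annihilates $W_1/W_2^{\w}$ under $\w_2$ iff $\w(x,y)=0$ for all $x\in W_1$, i.e.\ iff $y\in W_1^{\w}\cap W_2$; and by the inclusions above, $W_1^{\w}\cap W_2\<W_1$, so $(W_1/W_2^{\w})^{\w_2}=W_1^{\w}/W_2^{\w}$ as asserted in the statement.

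For part~(a) I would then observe that the linear map $K_{W_1,W_2}$ is surjective with kernel exactly $W_1^{\w}/W_2^{\w}$, so by the third isomorphism theorem it descends to a linear isomorphism $\wt K_{W_1,W_2}$ making the diagram \eqref{dia:induced-sympl-iso} commute. The fact that $\wt K_{W_1,W_2}$ is symplectic then is a direct unwinding of definitions: the double-quotient form is computed on representatives $x,y\in W_1$ as $\wt{\wt{\w_2}}([x],[y])=\w(x,y)$, while the form on $W_1/W_1^{\w}$ is likewise $\w(x,y)$ on representatives; so $\wt K_{W_1,W_2}$ intertwines the two forms by construction.

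For part~(b) I would just compute both sides of \eqref{e:trans-red} directly. Using the second form of \eqref{e:red-subspace},
\[
 R_{W_2}(\la)\ =\ \{\ell+W_2^{\w}\,:\,\ell\in\la\cap W_2\}\ \<\ W_2/W_2^{\w}.
\]
Intersecting with $W_1/W_2^{\w}$ and using $W_2^{\w}\<W_1$ gives $R_{W_2}(\la)\cap(W_1/W_2^{\w})=(\la\cap W_1+W_2^{\w})/W_2^{\w}$. Adding the annihilator $W_1^{\w}/W_2^{\w}$ (which absorbs $W_2^{\w}$) and quotienting yields
\[
 R_{W_1/W_2^{\w}}\bigl(R_{W_2}(\la)\bigr)\ =\ \bigl((\la\cap W_1+W_1^{\w})/W_2^{\w}\bigr)\big/\bigl(W_1^{\w}/W_2^{\w}\bigr),
\]
which under $\wt K_{W_1,W_2}$ is precisely $(\la\cap W_1+W_1^{\w})/W_1^{\w}=R_{W_1}(\la)$. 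This proves \eqref{e:trans-red} and the commutativity of \eqref{dia:induced-sympl-iso-K}.

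The main obstacle, such as it is, is purely notational: keeping track of the three nested quotient spaces and verifying at each step that the relevant subspace genuinely lies where needed (e.g.\ that $W_1^{\w}\cap W_2\<W_1$, and that the ``$+W_2^{\w}$'' pieces produced in the computation of $R_{W_2}(\la)\cap(W_1/W_2^{\w})$ are absorbed into $W_1^{\w}$ in the next reduction step). Once the inclusion $W_2^{\w}\<W_1^{\w}\<W_1\<W_2$ is in hand, everything else is a mechanical application of the third isomorphism theorem together with the definitions of $R_W$ and of the induced symplectic form.
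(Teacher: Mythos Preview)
Your proposal is correct and follows essentially the same approach as the paper: record the chain $W_2^{\w}\subset W_1^{\w}\subset W_1\subset W_2$, invoke the third isomorphism theorem for (a), and for (b) carry out the same modular-law computation $(\la\cap W_2+W_2^{\w})\cap W_1=\la\cap W_1+W_2^{\w}$ followed by absorption of $W_2^{\w}$ into $W_1^{\w}$. The paper's write-up simply compresses your two-step computation in (b) into a single displayed chain and omits your explicit verification of $(W_1/W_2^{\w})^{\w_2}=W_1^{\w}/W_2^{\w}$, treating that as part of the statement's ``hence clearly''.
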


\begin{proof} (a) Since $W_1\subset W_2$ and they are co-isotropic, we have $W_2^{\w}\subset W_1^{\w}\subset W_1\subset W_2$. So $K_{W_1,W_2}$
is well-defined. Since $\ker K_{W_1,W_2}=W_1^{\w}/W_2^{\w}$, $\wt K_{W_1,W_2}$ is a linear
isomorphism. By Definition \ref{d:sympl-red}, $\wt K_{W_1,W_2}$ is a symplectic isomorphism.

(b) Note that
\begin{align*}
R_{W_2}(\la)\cap (W_1&/W_2^{\w})+W_1^{\w}/W_2^{\w}=\bigl((\la\cap W_2+W_2^{\w})\cap W_1+W_1^{\w}\bigr)/W_2^{\w}\\
&=(\la\cap W_1+W_2^{\w}+W_1^{\w})/W_2^{\w}=(\la\cap W_1+W_1^{\w})/W_2^{\w}.
\end{align*}
So (\ref{e:trans-red}) holds.
\end{proof}

\begin{corollary}\label{c:inner-red} Let $(X,\w)$ be a symplectic vector space with a co-isotropic subspace $W$, a Lagrangian subspace $\mu$
and two linear spaces $V, \la$. Assume that $\dim W^{\w}\cap\mu=\dim X/(W+\mu)=\dim V<+\infty$,
$X=V\oplus(W+\mu)$ and $W^{\w}\cap\mu\subset\la\subset W+\mu$. Set $X_0:=W^{\w}\cap\mu+V$ and
$X_1:=V^{\w}\cap W+V^{\w}\cap\mu$. Denote by $P_1\colon X\to X_1$ defined by $X=X_0\oplus X_1$ (see
Proposition \ref{p:reduction-prop}). Then the following holds.
\newline (a) $W\cap X_1=V^{\w}\cap W$, $W^{\w}\cap X_1=W^{\w}\cap V^{\w}$, $\mu\cap X_1=V^{\w}\cap\mu$, $\la=W^{\w}\cap\mu+\la\cap X_1$, and $(W\cap X_1)^{\w}=W^{\w}+V=X_0+W^{\w}\cap X_1$.
\newline (b) $P_1$ induces a symplectic isomorphism%
\[
\wt P_1\colon (W+\mu)/(W^{\w}\cap\mu)\too X_1\ \tand\ \wt P_1(R_{W+\mu}(\la))\ =\ \la\cap X_1.%
\]
\newline (c) Denote by $R^{X_1}_{V^{\w}\cap W}(\la\cap X_1)$ the symplectic reduction of $\la\cap X_1$ in $X_1$ via $V^{\w}\cap W$.
Define $\wt L_{W,W+\mu}\colon (W\cap X_1)/(W^{\w}\cap X_1)\to W/W^{\w}$ by $\wt
L_{W,W+\mu}(x+W^{\w}\cap X_1)=x+W^{\w}$ for all $x\in W\cap X_1$. Then  the following diagram is
commutative
\begin{equation}\label{dia:induced-sympl-iso-L}
\xymatrixcolsep{7pc}%
\xymatrix{%
 \Lin_{W,\mu}(X)  \ar[r]^{R_{W}} \ar[d]_{\cap X_1}%
 &\Lin(W/W^{\w}) \ar[d]_{\cong}^{(\wt{L}_{W,W+\mu})\ii}%
 \\%
\Lin(X_1) \ar[r]_{R^{X_1}_{V^{\w}\cap W}}  & \Lin\bigl((W\cap X_1)/(W^{\w}\cap X_1)\bigr)%
}
\end{equation}
and, in particular, we have
\begin{equation}\label{e:red-p1}
R^{X_1}_{V^{\w}\cap W}(\la\cap X_1)=\wt L^{-1}_{W,W+\mu}(R_W(\la)).
\end{equation}
Here $\Lin_{W,\mu}(X):=\{\la\in\Lin(X); W^{\w}\cap\mu\< \la\< W+\mu\}$.
\newline (d) $W$ is complemented (see Remark \ref{r:complemented}) in $X$ if and only if $W\cap X_1$
is complemented in $X_1$. In the case of a Banach space we require all the appeared subspaces to be
closed.
\newline (e) $W^{\w}$ is complemented in $W$ if and only if $W^{\w}\cap X_1$ is complemented in $W\cap X_1$.
In the case of a Banach space we require all the appeared subspaces to be closed.
\end{corollary}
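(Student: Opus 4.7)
The overall strategy is to specialize Proposition \ref{p:reduction-prop} to the pair $(\la,\mu)$ of that proposition set to $(W,\mu)$ of the corollary, using that the Lagrangian $\mu$ is in particular co-isotropic with $\mu^{\w}=\mu$; the hypotheses on dimensions, transversality and the definition of $V$, $X_0$, $X_1$ match line by line after the identifications $\la_0 \leftrightarrow W^{\w}\cap\mu$, $\la_1 \leftrightarrow V^{\w}\cap W$, $\mu_1 \leftrightarrow V^{\w}\cap\mu$. The proposition then delivers $X=X_0\oplus X_1$ as symplectically orthogonal direct sum with $X_0^{\w}=X_1$, the splittings $W=\la_0\oplus\la_1$ and $\mu=\la_0\oplus\mu_1$, and the fact that $\la_0$ is Lagrangian in $X_0$ (whereas $\la_1$ is only isotropic in $X_1$, since $W$ is merely co-isotropic). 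The whole corollary is then a transcription of these decompositions into the stated form.

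For part (a), the identities $W\cap X_1=V^{\w}\cap W$ and $\mu\cap X_1=V^{\w}\cap\mu$ are Proposition \ref{p:reduction-prop}(c) verbatim. For $W^{\w}\cap X_1=W^{\w}\cap V^{\w}$, I would compute $W+V=\la_0+V+\la_1=X_0+\la_1$ (using $\la_0+V=X_0$), a direct sum since $X_0\cap X_1=\{0\}$, so $W^{\w}\cap V^{\w}=(W+V)^{\w}=X_0^{\w}\cap\la_1^{\w}=X_1\cap\la_1^{\w}$; on the other hand $W^{\w}=\la_0^{\w}\cap\la_1^{\w}$, and since $X_1\subset\la_0^{\w}$ (because $\la_0\subset X_0=X_1^{\w}$), one has $W^{\w}\cap X_1=\la_1^{\w}\cap X_1$, giving equality. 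The identity $\la=W^{\w}\cap\mu+\la\cap X_1$ then comes from $\la\subset W+\mu=\la_0+X_1$ (a direct sum because $\la_0\subset X_0$) together with $\la_0\subset\la$: every $x\in\la$ decomposes uniquely as $x=a+b$ with $a\in\la_0\subset\la$ and $b\in X_1$, forcing $b=x-a\in\la\cap X_1$. Finally $(W\cap X_1)^{\w}=\la_1^{\w}$ contains both $X_0=X_1^{\w}$ and $W^{\w}\cap X_1$, and splitting any $x=x_0+x_1\in\la_1^{\w}$ along $X_0\oplus X_1$ confines $x_1$ to $\la_1^{\w}\cap X_1=W^{\w}\cap X_1$, so $(W\cap X_1)^{\w}=X_0+(W^{\w}\cap X_1)$; since $W^{\w}=\la_0\oplus(W^{\w}\cap X_1)$ and $\la_0+V=X_0$, this also equals $W^{\w}+V$.

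For part (b), $W+\mu=\la_0\oplus X_1$ with $(W+\mu)^{\w}=\la_0$, so $P_1|_{W+\mu}$ has kernel $\la_0$ and image $X_1$; this gives the linear isomorphism $\wt P_1$, and its symplecticity reduces to $\w(a+x_1,a'+y_1)=\w(x_1,y_1)$ for $a,a'\in\la_0$ and $x_1,y_1\in X_1$, which follows from the Lagrangian identity $\w(a,a')=0$ and the symplectic orthogonality of $X_0$ and $X_1$ granted by Proposition \ref{p:reduction-prop}(d). The formula $\wt P_1(R_{W+\mu}(\la))=\la\cap X_1$ is then immediate from $R_{W+\mu}(\la)=\la/\la_0$ (since $\la_0=(W+\mu)^{\w}\subset\la\subset W+\mu$) and $P_1(\la)=\la\cap X_1$ by the identity from (a).

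I expect part (c) to be the main obstacle: a careful expansion of $R_W(\la)=((\la+W^{\w})\cap W)/W^{\w}$ in $X=X_0\oplus X_1$. Writing $\la+W^{\w}=\la_0+(W^{\w}\cap X_1)+\la\cap X_1$ and intersecting with $W=\la_0\oplus\la_1$ cuts the $X_1$-component down to $\la_1$; quotienting by $W^{\w}=\la_0\oplus(W^{\w}\cap X_1)$ then yields $R_W(\la)\cong((W^{\w}\cap X_1+\la\cap X_1)\cap\la_1)/(W^{\w}\cap X_1)$, which under $\wt L_{W,W+\mu}$ identifies with $R^{X_1}_{V^{\w}\cap W}(\la\cap X_1)=((\la\cap X_1+W^{\w}\cap X_1)\cap(W\cap X_1))/(W^{\w}\cap X_1)$, using $W\cap X_1=\la_1$ and $(W\cap X_1)^{\w_1}=W^{\w}\cap X_1$ from (a); the well-definedness, bijectivity and symplecticity of $\wt L_{W,W+\mu}$ are verified by the same mechanism as for $\wt P_1$. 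Parts (d) and (e) then reduce to linear algebra in the splitting $W=\la_0\oplus\la_1$: since $X=X_0\oplus X_1$ is a topological direct sum and $X_0$ is finite-dimensional (so every subspace of $X_0$ is closed and complemented), $W$ is complemented in $X$ iff $\la_1=W\cap X_1$ is complemented in $X_1$, and analogously $W^{\w}=\la_0\oplus(W^{\w}\cap X_1)$ yields (e).
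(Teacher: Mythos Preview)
Your proof is correct and, for parts (a), (b), (d), (e), follows the paper closely (your decomposition argument for $(W\cap X_1)^{\w}$ in (a) is actually cleaner than the paper's dimension count via Lemma~\ref{l:w-quotient}). The one genuine difference is part (c): the paper observes that, under the isomorphism $\wt P_1$ of (b), the map $\wt L_{W,W+\mu}$ is precisely the map $\wt K_{W,W+\mu}$ of Lemma~\ref{l:red-transitive} applied to the co-isotropic pair $W\subset W+\mu$, so \eqref{e:red-p1} follows immediately from (b) and that transitivity lemma. You instead expand both $R_W(\la)$ and $R^{X_1}_{V^{\w}\cap W}(\la\cap X_1)$ by hand in the splitting $X=X_0\oplus X_1$ and match them. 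Both routes work; the paper's is shorter and makes explicit that (c) is simply transitivity of symplectic reduction in disguise, while yours is self-contained and effectively reproves that lemma in this special case.

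Two small points. In (c) you invoke ``$(W\cap X_1)^{\w_1}=W^{\w}\cap X_1$ from (a)'', but (a) computes the annihilator in $X$; you still need the extra step $(X_0+W^{\w}\cap X_1)\cap X_1=W^{\w}\cap X_1$, which is immediate from $X_0\cap X_1=\{0\}$ and modularity. And your converse directions in (d), (e) are compressed: the paper writes out the explicit complement $X_1\cap(W^{\w}\cap\mu+M)$, which is what makes both directness and closedness visible.
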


\begin{proof} (a) By Proposition \ref{p:reduction-prop}, we have $W\cap X_1=V^{\w}\cap W$, $\mu\cap X_1=V^{\w}\cap\mu$, $W+\mu=W^{\w}\cap\mu+X_1$,
and $X=V^{\w}+W=V^{\w}+\mu$. Since $W+V=W+X_0$ and $X_1=X_0^{\w}$, we have
\[W^{\w}\cap V^{\w}=(W+V)^{\w}=(W+X_0)^{\w}=W^{\w}\cap X_1.\]

Since $W^{\w}\cap\mu\subset\la\subset W+\mu$, we have
\[\la=\la\cap(W+\mu)=\la\cap(W^{\w}\cap\mu+X_1)=W^{\w}\cap\mu+\la\cap X_1.\]

Note that $W=W^{\w}\cap\mu\oplus W\cap X_1$. By Lemma \ref{l:negative}.b we have $\dim(W\cap
X_1)^{\w}/W^{\w}\le\dim(W^{\w}\cap\mu)=\dim V$. Moreover, we have
\[(W\cap X_1)^{\w}\supset W^{\w}+X_1^{\w}=W^{\w}+X_0=W^{\w}+V.\]
Since $W^{\w}\cap V=(W+V^{\w})^{\w}=\{0\}$, we have
\[(W\cap X_1)^{\w}=W^{\w}+V=X_0+W^{\w}\cap X_1.\]

\noi (b) Since $W^{\w}\cap\mu$ is isotropic and $W+\mu=W^{\w}\cap\mu\oplus X_1$, $P_1$ induces a
symplectic isomorphism $\wt P_1\colon (W+\mu)/(W^{\w}\cap\mu)\to X_1$. Since
$W^{\w}\cap\mu\subset\la\subset W+\mu$, we have $R_{W+\mu}(\la)=\la/(W^{\w}\cap\mu)$. So it holds
that $\wt P_1(R_{W+\mu}(\la))=\la\cap X_1$.

\noi (c) Let $\wt K_{W,W+\mu}$ denote the symplectic isomorphism defined by (\ref{e:induced-K}).
Note that $\wt K_{W,W+\mu}=\wt L_{W,W+\mu}$ under the symplectic isomorphism $\wt P_1$. So
(\ref{e:red-p1}) follows from (b) and Lemma \ref{l:red-transitive}.

\noi (d) If $W\cap X_1$ is complemented in $X_1$, there exists a linear subspace $M_1$ such that
$X_1=W\cap X_1\oplus M_1$. Since $\dim X_0<+\infty$, there exists a linear subspace $M_0$ such that
$X_0=W^{\w}\cap\mu\oplus M_0$. Take $M=M_0\oplus M_1$ and we have $X=W\oplus M$.

Conversely, if $W$ is complemented in $X$, there exists a linear subspace of $M$ such that
$X=W\oplus M$. By (a), we have $W=W\cap X_1\oplus W^{\w}\cap\mu$. So we have
\[X_1=X_1\cap(W\cap X_1+W^{\w}\cap\mu+M)=W\cap X_1\oplus X_1\cap(W^{\w}\cap\mu+M).\]

\noi (e) If $W^{\w}\cap X_1$ is complemented in $W\cap X_1$, there exists a linear subspace $N_1$
such that $W\cap X_1=W^{\w}\cap X_1\oplus N_1$. Then we have $W=W^{\w}\cap\mu\oplus W^{\w}\cap
X_1\oplus N_1=W^{\w}\oplus N_1$.

Conversely, if $W^{\w}$ is complemented in $W$, there exists a linear subspace of $N$ such that
$W=W^{\w}\oplus N$. By (a), we have
\begin{align*}
W\cap X_1&=(W\cap X_1\oplus W^{\w}\cap\mu)\cap X_1\\
&=(W^{\w}\cap X_1\oplus N\oplus W^{\w}\cap\mu)\cap X_1\\
&=(W^{\w}\cap X_1)\oplus (N\oplus W^{\w}\cap\mu)\cap X_1.\qedhere
\end{align*}
\exendproof

\begin{rem}\label{r:complemented}  A linear subspace $M$ of a vector space $X$ is called \subindex{Complemented subspace}
\symindex{Sc@$\Ss^c(X)$ set of closed complemented subspaces in Banach space $X$}{\em complemented}
in $X$ if there exists another linear subspace $N$ of $X$ such that $X=M\oplus N$. In Banach space
we require $M,N$ to be closed and write $M\in\Ss^c(X)$. Note that any linear subspace in a vector
space is complemented by \subindex{Zorn's Lemma}Zorn's lemma. Our Corollary \ref{c:inner-red} (d),
(e) is not trivial if either $X$ is a Banach space or one does not want to use Zorn's lemma.
\end{rem}

To ensure that \subindex{Symplectic reduction!invariance of class of pairs of Fredholm Lagrangian
subspaces of index 0}symplectic reduction does not lead us out of our class of pairs of Fredholm
Lagrangian subspaces of index 0, we prove Proposition \ref{p:red-fredholm} further below.

\begin{lemma}\label{l:red-index}
Let $X$ be a vector space and $W_1\subset W_2$, $\lambda,\mu$ four linear subspaces of $X$. For
each linear subspace $V$, set $R(V):=\bigl(V\cap W_2+W_1\bigr)/W_1$. Assume that
$W_1\subset\lambda\subset W_2$. Then $(\lambda,\mu)$ is a Fredholm pair of subspaces of $X$ if and
only if $(R(\lambda),R(\mu))$ is a
Fredholm pair of subspaces of $W_2/W_1$, $\dim(\mu\cap W_1)<+\infty$ and $\dim X/(W_2+\mu)<+\infty$. In this case it holds that%
\begin{align*}
\dim (R(\lambda)\cap R(\mu))\ =&\ \dim(\lambda\cap\mu)-\dim(\mu\cap W_1),\\
\dim(W_2/W_1)/(R(\lambda)+R(\mu))\ =&\ \dim X/(\lambda+\mu)-\dim X/(W_2+\mu),\\
\Index(R(\lambda),R(\mu))\ =&\ \Index(\lambda,\mu)\\
&-\dim(\mu\cap W_1)+\dim X/(W_2+\mu).
\end{align*}
\end{lemma}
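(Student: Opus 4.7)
The plan is to observe that under the hypothesis $W_1\subset\lambda\subset W_2$ we have the simplification $R(\lambda)=\lambda/W_1$, while $R(\mu)=(\mu\cap W_2+W_1)/W_1$. All the identities claimed in the lemma should reduce to two standard modular-law computations together with the appropriate third-isomorphism-theorem tricks, with no symplectic input at all (the lemma is purely linear-algebraic).

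First I would compute the intersection. Using $W_1\subset\lambda\subset W_2$ one checks $\lambda\cap(\mu\cap W_2+W_1)=\lambda\cap\mu+W_1$: any $x=y+z$ with $y\in\mu\cap W_2$, $z\in W_1$ and $x\in\lambda$ gives $y=x-z\in\lambda\cap W_2\cap\mu=\lambda\cap\mu$. Hence
\[
R(\lambda)\cap R(\mu)\ =\ (\lambda\cap\mu+W_1)/W_1\ \cong\ (\lambda\cap\mu)/(\lambda\cap\mu\cap W_1)\ =\ (\lambda\cap\mu)/(\mu\cap W_1),
\]
where the last equality uses $W_1\subset\lambda$. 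This already yields the first dimension formula, since one side is finite iff the other side is, together with $\dim(\mu\cap W_1)<+\infty$.

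Next I would compute the sum. Clearly $R(\lambda)+R(\mu)=(\lambda+\mu\cap W_2)/W_1$, so
\[
(W_2/W_1)/(R(\lambda)+R(\mu))\ \cong\ W_2/(\lambda+\mu\cap W_2).
\]
The inclusion $W_2\hookrightarrow X$ induces a map $W_2/(\lambda+\mu\cap W_2)\to X/(\lambda+\mu)$. The key verification is that this map is injective with image $(W_2+\mu)/(\lambda+\mu)$: if $x\in W_2$ and $x=a+b$ with $a\in\lambda\subset W_2$, $b\in\mu$, then $b=x-a\in W_2$, so $b\in\mu\cap W_2$, giving $W_2\cap(\lambda+\mu)=\lambda+\mu\cap W_2$; the image is $(W_2+\lambda+\mu)/(\lambda+\mu)=(W_2+\mu)/(\lambda+\mu)$ since $\lambda\subset W_2$. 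Hence
\[
(W_2/W_1)/(R(\lambda)+R(\mu))\ \cong\ (W_2+\mu)/(\lambda+\mu),
\]
and by the short exact sequence $0\to(W_2+\mu)/(\lambda+\mu)\to X/(\lambda+\mu)\to X/(W_2+\mu)\to 0$, one side is finite iff the other two are, with $\dim(W_2+\mu)/(\lambda+\mu)=\dim X/(\lambda+\mu)-\dim X/(W_2+\mu)$.

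Finally, the Fredholm equivalence follows from the two formulas together with the trivial bounds $\mu\cap W_1\subset\lambda\cap\mu$ (forcing $\dim(\mu\cap W_1)\le\dim(\lambda\cap\mu)$) and $\lambda+\mu\subset W_2+\mu$ (forcing $\dim X/(W_2+\mu)\le\dim X/(\lambda+\mu)$), so that both $\dim(\mu\cap W_1)$ and $\dim X/(W_2+\mu)$ are automatically finite when $(\lambda,\mu)$ is Fredholm. The index formula is then obtained by subtraction. I do not anticipate any serious obstacle; the only mild subtlety is to make sure that the isomorphism $W_2/(\lambda+\mu\cap W_2)\cong(W_2+\mu)/(\lambda+\mu)$ is stated and applied in the right direction so that the finiteness equivalence for the second entry of the Fredholm pair comes out cleanly in both implications.
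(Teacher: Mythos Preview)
Your proof is correct and follows essentially the same approach as the paper: both compute $R(\lambda)\cap R(\mu)=(\lambda\cap\mu+W_1)/W_1\cong(\lambda\cap\mu)/(\mu\cap W_1)$ and $(W_2/W_1)/(R(\lambda)+R(\mu))\cong W_2/(\lambda+\mu\cap W_2)=W_2/((\lambda+\mu)\cap W_2)\cong(W_2+\mu)/(\lambda+\mu)$ via the modular law. Your version is a bit more explicit about the Fredholm equivalence in both directions, but the core argument is identical.
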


\begin{proof} Since $W_1\subset\lambda\subset W_2$, we have
\begin{align*}
R(\lambda)\cap R(\mu)&=(\lambda/W_1)\cap\bigl(((\mu+W_1)\cap W_2)/W_1\bigr)=(\lambda\cap\mu+W_1)/W_1\\
&\cong(\lambda\cap\mu)/(\lambda\cap\mu\cap
W_1),%
\end{align*}
and
\begin{align*}
(W_2/W_1)&/(R(\lambda)+R(\mu))\cong\ W_2/(\lambda+\mu\cap W_2)= W_2/((\lambda+\mu)\cap
W_2)\\
&=(W_2+\lambda+\mu)/(\lambda+\mu)=(W_2+\mu)/(\lambda+\mu)\\
&\cong(X/(\lambda+\mu))/(X/(W_2+\mu)).%
\end{align*}
So our lemma follows.
\end{proof}

Now we can prove the basic calculation rule of symplectic reduction\subindex{Symplectic
reduction!basic algebraic calculation rule}:

\begin{proposition}[Symplectic quotient rule]\label{p:sum-index-0}
\subindex{Symplectic reduction!quotient rule}\subindex{Symplectic quotient rule}
Let $(X,\w)$ be a
symplectic vector space and $\la$, $\mu$, $W$ subspaces. Assume that $\la\subset W$, $\mu=\mu^{\w}$
and
\begin{equation}\label{e:sum-index-0}\Index(\la,\mu)+\Index(\la^{\w},\mu)=0.\end{equation}
Then we have $\dim(W^{\w}\cap\mu)=\dim X/(W+\mu)<+\infty$ and $W+\mu=W^{\w\w}+\mu$.
\end{proposition}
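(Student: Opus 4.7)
The approach is to exploit the observation that the hypothesis is precisely the equality case of Corollary~\ref{c:negative-sum-index}(b), and then to transport the resulting annihilator identities from the pair $(\la,\mu)$ to the pair $(W,\mu)$ via a finite-dimensional non-degenerate bilinear pairing.

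First I would observe that since $\mu=\mu^\w$, Corollary~\ref{c:negative-sum-index}(b) applied to $(\la,\mu)$ yields
\[
\la+\mu \,=\, (\la+\mu)^{\w\w},\qquad \la^\w+\mu \,=\, (\la^\w+\mu)^{\w\w},\qquad \la\cap\mu \,=\, \la^{\w\w}\cap\mu.
\]
The Fredholm hypotheses give $\dim X/(\la+\mu)<+\infty$ and $\dim(\la^\w\cap\mu)<+\infty$; since $\la\<W$ forces $W^\w\<\la^\w$, both $\dim X/(W+\mu)$ and $\dim(W^\w\cap\mu)$ are automatically finite. The task therefore reduces to proving the single equality $\dim(W^\w\cap\mu)=\dim X/(W+\mu)$: once this is established, the equality case of Lemma~\ref{l:negative-index}(a) applied to $(W,\mu)$ gives $W+\mu=(W+\mu)^{\w\w}=(W^\w\cap\mu)^\w \supset W^{\w\w}+\mu$, which combined with the trivial inclusion $W+\mu\<W^{\w\w}+\mu$ yields $W+\mu=W^{\w\w}+\mu$.

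The heart of the argument is the construction of the natural bilinear pairing
\[
\Phi\colon (W+\mu)/(\la+\mu)\ \times\ (\la^\w\cap\mu)/(W^\w\cap\mu)\too\CC,\qquad \Phi([x],[y]):=\w(x,y),
\]
and showing it is non-degenerate on both sides. Well-definedness reduces to four short checks that use $\mu=\mu^\w$: if $x=l+m'\in\la+\mu$ then $\w(x,y)$ splits as $\w(l,y)+\w(m',y)=0$ because $y\in\la^\w$ and $y,m'\in\mu$; symmetrically if $y\in W^\w\cap\mu$. Right non-degeneracy is immediate: if $y\in\la^\w\cap\mu$ pairs trivially with all of $W+\mu$, testing against $w\in W\<W+\mu$ gives $y\in W^\w$. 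The main obstacle is \emph{left} non-degeneracy: if $x\in W+\mu$ satisfies $\w(x,y)=0$ for every $y\in\la^\w\cap\mu$, then $x\in(\la^\w\cap\mu)^\w$. The identity chain $(\la^\w\cap\mu)^\w=((\la+\mu)^\w)^\w=(\la+\mu)^{\w\w}$ is legal because $\mu=\mu^\w$, and the first of the three Corollary equalities collapses this to $\la+\mu$, so $[x]=0$. This is precisely where the equality $\Index(\la,\mu)+\Index(\la^\w,\mu)=0$ is indispensable: without it the strict inclusion $\la+\mu\subsetneq(\la+\mu)^{\w\w}$ would prevent us from identifying the kernel on the left.

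To close, since the right factor of $\Phi$ is finite-dimensional, Lemma~\ref{l:negative}(a) applied to the non-degenerate $\Phi$ gives
\[
\dim (W+\mu)/(\la+\mu)\ =\ \dim (\la^\w\cap\mu)/(W^\w\cap\mu).
\]
The equality case of Lemma~\ref{l:negative-index}(a) for $(\la,\mu)$ (with $\la+\mu=(\la+\mu)^{\w\w}$) yields $\dim(\la^\w\cap\mu)=\dim X/(\la+\mu)$, so subtracting telescopes to $\dim X/(W+\mu)=\dim(W^\w\cap\mu)$. Invoking the equality case of Lemma~\ref{l:negative-index}(a) for $(W,\mu)$ now produces the required $W+\mu=W^{\w\w}+\mu$, as explained at the end of the first paragraph.
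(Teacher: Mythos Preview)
Your proof is correct and takes a genuinely different route from the paper's.

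The paper proceeds via symplectic reduction through the co-isotropic space $W+\mu$: it uses Lemma~\ref{l:red-index} to decompose each of $\Index(\la,\mu)$ and $\Index(\la^\w,\mu)$ into a reduced index plus two dimension terms, then observes that the hypothesis~\eqref{e:sum-index-0} forces three separate inequalities (from Lemma~\ref{l:negative-index} and Corollary~\ref{c:negative-sum-index}) to be equalities simultaneously, one of which is $\dim(W^\w\cap\mu)=\dim X/(W+\mu)$. It then reruns the argument with $W^{\w\w}$ in place of $W$ (using $W^{\w\w\w}=W^\w$) to obtain $\dim(W^\w\cap\mu)=\dim X/(W^{\w\w}+\mu)$ and hence $W+\mu=W^{\w\w}+\mu$.

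Your approach bypasses symplectic reduction entirely. You invoke Corollary~\ref{c:negative-sum-index}(b) once at the start to extract $(\la+\mu)^{\w\w}=\la+\mu$, and then the explicit non-degenerate pairing $\Phi$ on $(W+\mu)/(\la+\mu)\times(\la^\w\cap\mu)/(W^\w\cap\mu)$ delivers the dimension equality directly. This is more elementary and self-contained: it avoids Lemma~\ref{l:red-index} and the second pass with $W^{\w\w}$. The paper's route, on the other hand, is more in keeping with the surrounding machinery and foreshadows how Lemma~\ref{l:red-index} will be used later in Proposition~\ref{p:red-fredholm}.

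One small nitpick: when you invoke Lemma~\ref{l:negative}(a), its hypothesis as stated is $\dim X/Y^{\Omega,l}<+\infty$, which in your non-degenerate situation translates to finiteness of the \emph{left} factor $(W+\mu)/(\la+\mu)$ rather than the right factor you cite. This is harmless because both quotients are finite-dimensional (the left one sits inside $X/(\la+\mu)$), but the justification should point there.
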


\begin{proof} Since $\la\subset W$, we have $W^{\w}\subset\la^{\w}$. Since $\mu=\mu^{\w}$, we have $(W+\mu)^{\w}=W^{\w}\cap\mu\subset W+\mu$. Denote by $\wt\w$ the symplectic structure on $(W+\mu)/(W^{\w}\cap\mu)$. Then we have
\begin{align*}
&\la\cap(W+\mu)=\la,\qquad \la^{\w}+W^{\w}\cap\mu=\la^{\w},\\
&(\la+W^{\w}\cap\mu)^{\w}\cap(W+\mu)=\la^{\w}\cap(W+\mu),\\
&R_{W+\mu}(\la)=(\la+W^{\w}\cap\mu)/(W^{\w}\cap\mu),\\
&R_{W+\mu}(\la^{\w})=(\la^{\w}\cap(W+\mu))/(W^{\w}\cap\mu)=(R_{W+\mu}(\la))^{\wt\w},\\
&R_{W+\mu}(\mu)=\mu/(W^{\w}\cap\mu)=(R_{W+\mu}(\mu))^{\wt\w}.
\end{align*}
By Lemma \ref{l:red-index} and (\ref{e:sum-index-0}) we have
\begin{align*}
\Index(\la,\mu)&=\Index(R_{W+\mu}(\la),R_{W+\mu}(\mu))\\
&\quad+\dim(\la\cap W^{\w}\cap\mu)-\dim X/(W+\mu),\\
\Index(\la^{\w},\mu)&=\Index(R_{W+\mu}(\la^{\w}),R_{W+\mu}(\mu))\\
&\quad+\dim(W^{\w}\cap\mu)-\dim X/(\la^{\w}+W+\mu).
\end{align*}
Note that $(\la^{\w}+W+\mu)^{\w}=\la^{\w\w}\cap W^{\w}\cap\mu\supset\la\cap W^{\w}\cap\mu$. By
Lemma \ref{l:negative-index} and Corollary \ref{c:negative-sum-index} we have
\begin{align*}
&\Index(R_{W+\mu}(\la),R_{W+\mu}(\mu))+\Index(R_{W+\mu}(\la^{\w}),R_{W+\mu}(\mu))\le 0,\\
&\dim(\la\cap W^{\w}\cap\mu)\le \dim X/(\la^{\w}+W+\mu),\\
&\dim(W^{\w}\cap\mu)\le\dim X/(W+\mu).
\end{align*}
By (\ref{e:sum-index-0}), the above three inequalities take equalities.

By (\ref{e:three-Omega}), we have $W^{\w\w\w}=W^{\w}$. Apply the above result to $W^{\w\w}$, we
have $\dim(W^{\w}\cap\mu)=\dim X/(W^{\w\w}+\mu)$. Since $W\subset W^{\w\w}$, we have
$W+\mu=W^{\w\w}+\mu$.
\end{proof}

The following proposition is inspired by \auindex{Boo{\ss}--Bavnbek,\ B.}\auindex{Furutani,\
K.}\cite[Proposition 3.5]{BoFu98}. It gives a natural sufficient condition for preserving the
Lagrangian property under symplectic reduction.

\begin{proposition}\label{p:red-fredholm}
\subindex{Symplectic reduction!invariance of class of pairs of Fredholm Lagrangian subspaces of
index 0}
Let $(X,\omega)$ be a symplectic vector space with a co-isotropic subspace $W$. Let
$(\la,\mu)$ be a Fredholm pair of Lagrangian subspaces of $X$ with index $0$. Assume that
$W^{\omega}\subset\la\subset W$. Then we have $\dim(W^{\w}\cap\mu)=\dim X/(W+\mu)<+\infty$,
$W+\mu=W^{\w\w}+\mu$, and $(R_W(\la),R_W(\mu))$ is a Fredholm pair of Lagrangian subspaces of
$W/W^{\omega}$ with index $0$.
\end{proposition}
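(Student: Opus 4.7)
The plan is to derive parts (1) and (2) directly from the Symplectic Quotient Rule (Proposition~\ref{p:sum-index-0}), then use Lemma~\ref{l:red-index} to get that $(R_W(\lambda), R_W(\mu))$ is a Fredholm pair of index $0$ in the reduced space $W/W^{\omega}$, and finally upgrade ``isotropic'' to ``Lagrangian'' via the isotropic-Fredholm-index-zero criterion recorded in Remark~\ref{r:vanishing-index}(a).

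First, since $\lambda$ is Lagrangian we have $\lambda^{\omega}=\lambda$, so
\[
\Index(\lambda,\mu)\ +\ \Index(\lambda^{\omega},\mu)\ =\ 2\,\Index(\lambda,\mu)\ =\ 0,
\]
and $\mu=\mu^{\omega}$. Together with $\lambda\subset W$, all the hypotheses of Proposition~\ref{p:sum-index-0} are satisfied, which immediately yields
\[
\dim(W^{\omega}\cap\mu)\ =\ \dim X/(W+\mu)\ <\ +\infty\quad\text{and}\quad W+\mu\ =\ W^{\omega\omega}+\mu.
\]

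Next I would apply Lemma~\ref{l:red-index} with $W_1:=W^{\omega}$ and $W_2:=W$. The hypothesis $W^{\omega}\subset\lambda\subset W$ is precisely the chain $W_1\subset\lambda\subset W_2$ that the lemma requires, and the reduction map $R$ there is exactly $R_W$. From $(\lambda,\mu)\in\Ff\Ll(X)$ and the finiteness statements just established, Lemma~\ref{l:red-index} gives that $(R_W(\lambda),R_W(\mu))$ is a Fredholm pair in $W/W^{\omega}$, with
\[
\Index(R_W(\lambda),R_W(\mu))\ =\ \Index(\lambda,\mu)-\dim(\mu\cap W^{\omega})+\dim X/(W+\mu)\ =\ 0.
\]

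It remains to identify $R_W(\lambda)$ and $R_W(\mu)$ as Lagrangian subspaces of $(W/W^{\omega},\tilde{\omega})$. For $R_W(\lambda)$, the chain $W^{\omega}\subset\lambda\subset W$ gives $R_W(\lambda)=\lambda/W^{\omega}$, and using $\lambda^{\omega}=\lambda$ together with the general identity $(V/W^{\omega})^{\tilde{\omega}}=(V^{\omega}\cap W)/W^{\omega}$ for $W^{\omega}\subset V\subset W$, we get $R_W(\lambda)^{\tilde{\omega}}=(\lambda\cap W)/W^{\omega}=\lambda/W^{\omega}=R_W(\lambda)$. For $R_W(\mu)$, I would check isotropy by hand: any two lifts in $(\mu+W^{\omega})\cap W$ can be written as $m+w$ with $m\in\mu\cap$(ambient sum) and $w\in W^{\omega}$, and $\omega$ vanishes on each cross-term because $\mu$ is isotropic, $W^{\omega}$ is isotropic (as $W^{\omega}\subset W$), and $\omega(m,w)=\omega(w,m)=0$ whenever the $\mu$-component lies in $W$, which it does since $(\mu+W^{\omega})\cap W-W^{\omega}\subset W$.

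Finally, $(R_W(\lambda),R_W(\mu))$ is a Fredholm pair of isotropic subspaces of the symplectic space $W/W^{\omega}$ with vanishing index; by Remark~\ref{r:vanishing-index}(a) both subspaces are then Lagrangian, completing the proof. The whole argument is essentially a bookkeeping exercise in applying the three tools (Propositions~\ref{p:sum-index-0}, Lemma~\ref{l:red-index}, and the isotropic-to-Lagrangian upgrade); the only mildly delicate point is verifying isotropy of $R_W(\mu)$ once elements are written through representatives in $(\mu+W^{\omega})\cap W$, and checking the annihilator identity $(V/W^{\omega})^{\tilde{\omega}}=(V^{\omega}\cap W)/W^{\omega}$ for subspaces $V$ with $W^{\omega}\subset V\subset W$.
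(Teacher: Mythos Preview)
Your proof is correct and follows essentially the same route as the paper: apply Proposition~\ref{p:sum-index-0} for the equality $\dim(W^{\omega}\cap\mu)=\dim X/(W+\mu)$ and for $W+\mu=W^{\omega\omega}+\mu$, use Lemma~\ref{l:red-index} for the Fredholm property and the index formula, note isotropy of the reductions, and finish with the isotropic-index-zero criterion (the paper cites \cite[Proposition~1]{BooZhu:2013} directly, which is what Remark~\ref{r:vanishing-index}(a) records). Your derivation of $\Index(R_W(\lambda),R_W(\mu))=0$ straight from the index formula in Lemma~\ref{l:red-index} (plugging in the equality from the first step) is in fact slightly more direct than the paper's presentation, which also invokes the inequalities from Lemma~\ref{l:negative-index} as an intermediate squeeze; your separate verification that $R_W(\lambda)$ is Lagrangian is correct but unnecessary, since isotropy of both reductions already suffices for the final upgrade.
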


\begin{proof} By Proposition \ref{p:sum-index-0} we have $\dim(W^{\w}\cap\mu)=\dim X/(W+\mu)<+\infty$ and $W+\mu=W^{\w\w}+\mu$.

By Lemma \ref{l:red-index}, $(R_W(\la),R_W(\mu))$ is a Fredholm pair of subspaces of
$W/W^{\omega}$, $\dim(W^{\w}\cap\mu)<+\infty$, and $\dim X/(W+\mu)<+\infty$. Since $\la$ and $\mu$
are Lagrangian subspaces of $X$, $R_W(\la)$ and $R_W(\mu)$ are isotropic subspaces of
$W/W^{\omega}$. By Lemma \ref{l:negative-index}, we have $\dim(W^{\w}\cap\mu)\le\dim X/(W+\mu)$ and
$\Index(R_W(\la),R_W(\mu))\le 0$. By Lemma \ref{l:red-index}, we have $\dim(W^{\w}\cap\mu)=\dim
X/(W+\mu)$ and $\Index(R_W(\la),R_W(\mu))=0$. By \auindex{Boo{\ss}--Bavnbek,\ B.}\auindex{Zhu,\
C.}\cite[Proposition 1]{BooZhu:2013}, $R_W(\la)$ and $R_W(\mu)$ are Lagrangian subspaces of
$W/W^{\omega}$.
\end{proof}

\begin{lemma}\label{l:sum-whole-space}\footnote{Added in proof: The lemma was discovered by Li Wu and the second author \auindex{Wu,\ L.}\auindex{Zhu,\
C.}\cite{WuZh}} Let $(X,\omega)$ be a symplectic vector space with a finite-dimensional linear subspace $V$.
Let $\la$ be a Lagrangian subspace of $X$. Assume that
$V\cap\la=\{0\}$. Then we have $V^{\w}+\la=X$.
\end{lemma}

\begin{proof} Set $W:=V+\la$. Then $W^{\w}=V^{\w}\cap\la$. By Lemma \ref{l:w-quotient}, we have $\dim\la/W^{\w}\le\dim V$.
Since $V\cap\la=\{0\}$, we have $\dim W/W^{\w}=\dim V+\dim\la/W^{\w}$.

Since $W^{\w}\subset\la\subset W$, $R_W(\la)=\la/W^{\w}$ is a Lagrangian subspace of $W/W^{\w}$.
Then we have
\[\dim\la/W^{\w}=\frac{1}{2}\dim W/W^{\w}=\dim V.\]
By Lemma \ref{l:w-quotient} we have $V^{\w}+\la=X$.
\end{proof}

The following proposition gives us a \subindex{Symplectic reduction!new understanding}new
understanding of the symplectic reduction.\subindex{Symplectic reduction!invariance of two natural
choices}

\begin{proposition}\label{p:cal-red} Let $(X,\w)$ be a symplectic vector space and $\la_0,V$ linear subspaces.
Let $\la$ and $\mu$ be Lagrangian subspaces. Set $\la_1:=V^{\w}\cap\la$, $\mu_1:=V^{\w}\cap\mu$,
$X_0:=\la_0+V$ and $X_1:=\la_1+\mu_1$. Assume that
\begin{equation}\label{e:direct-sum-assumption}
X=\la_0\oplus V\oplus\la_1\oplus\mu_1=\la\oplus(V+\mu_1)=\mu\oplus(V+\la_1).
\end{equation}
Denote by \symindex{P0@$P_0,P_0(s),P_0(t,s)$ decomposition projections}$P_0\colon X\to X_0$ the
projection defined by $X=X_0\oplus X_1$. Then the following holds.
\newline (a) There exist $A_1\in\Hom(\la_0,V),A_2\in\Hom(\la_0,\mu_1),B_1\in\Hom(\la_0,V)$ and $B_2\in\Hom(\la_0,\la_1)$ such that
\begin{align}
\label{e:expression-la}\la&=\{x_0+A_1x_0+x_1+A_2x_0;x_0\in\la_0,x_1\in\la_1\},\\
\label{e:expression-mu}\mu&=\{y_0+B_1y_0+B_2y_0+y_1;y_0\in\la_0,y_1\in\mu_1\},
\end{align}
where \symindex{Hom@$\Hom(X,Y)$ space of linear maps from $X$ to $Y$}$\Hom(X,Y)$ denotes the linear
maps from $X$ to $Y$.
\newline (b) The linear maps $P_0|_{(V+\la)}$ and $P_0|_{(V+\mu)}$ induce linear isomorphisms
\symindex{Tl@$T_l,T_r$ reduction isomorphisms}$T_l\colon (V+\la)/\la_1\to X_0$ and $T_r\colon
(V+\mu)/\mu_1\to X_0$ respectively, and
\begin{equation}
\label{e:dim-intersection}\dim(\la\cap\mu)=\dim (P_0(\la)\cap P_0(\mu)).
\end{equation}
\newline (c) We have
\begin{align}
\label{e:red-la}T_l(R_{V+\la}(\la))&=T_r(R_{V+\mu}(\la))=P_0(\la),\\
\label{e:red-mu}T_l(R_{V+\la}(\mu))&=T_r(R_{V+\mu}(\mu))=P_0(\mu).
\end{align}
\newline (d) Denote by \symindex{\omega l@$\w_l, \w_r$ decomposition symplectic structures}$\w_l$ the symplectic structure of $X_0$ induced by $T_l$ from $(V+\la)/\la_1$ and $\w_r$ the symplectic structure of $X_0$ induced by $T_r$ from $(V+\mu)/\mu_1$. Then we have
\begin{align*}
\w_l(x_0+v,x_0^{\prime}+v^{\prime})&=\w(x_0+v,x_0^{\prime}+v^{\prime})-\w(x_0+A_1x_0,x_0^{\prime}+A_1x_0^{\prime})\\
=\w_r(x_0+v,x_0^{\prime}+v^{\prime})&=\w(x_0+v,x_0^{\prime}+v^{\prime})-\w(x_0+B_1x_0,x_0^{\prime}+B_1x_0^{\prime})
\end{align*}
for all $x_0,x_0^{\prime}\in\la_0$ and $v,v^{\prime}\in V$, where $A_1$ and $B_1$ are chosen like in (a). If either $\la_0\subset\la_1^{\w}$ or
$\la_0\subset\mu_1^{\w}$, we have $\w_l=\w_r=\w|_{X_0}$.
\newline (e) Assume that $V$ is isotropic. Then the sesquilinear form $Q(x_0,x_0^{\prime}):=\omega(x_0,(A_1-B_1)x_0^{\prime})$ on $\la_0$
is an Hermitian form. We call the form \symindex{Q@$Q,Q(s),Q(\la,t)$ intersection
form}\subindex{Intersection form}$Q$ the {\em intersection from of $(\la,\mu)$ on $\la_0$ at $V$}.
If $\la_0=\mu$ and $V$ is a Lagrangian subspace $W$ of $X$, we set $Q(\mu,W;\la):=Q$ (see
\auindex{Duistermaat,\ J.J.}\cite[(2.4)]{Du76}).
\newline (f) For all $x_0\in\la\cap\mu$ and $x_0^{\prime}\in\la\cap\la_0$, we have
$\w(x_0,A_1x_0^{\prime})=0$.
\newline (g) We have $X=V+\la+\mu=V^{\w}+\la=V^{\w}+\mu$.
\end{proposition}

\begin{proof} (a) Note that $\la_1=V^{\w}\cap\la\subset\la$ and $\mu_1=V^{\w}\cap\mu\subset\mu$. Our claim follows from the assumptions.
\newline (b) By (a) we have
\[V+\la=\{x_0+v+x_1+A_2x_0;x_0\in\la_0,v\in V,x_1\in\la_1\}.\]
So $P_0|_{(V+\la)}$ induces a linear map $T_l\colon (V+\la)/\la_l\to X_0$. Clearly, $T_l$ is a linear
isomorphism. Similarly we get that the map $P_0|_{(V+\mu)}$ induces a linear isomorphism $T_r\colon
(V+\mu)/\mu_1\to X_0$. The equation (\ref{e:dim-intersection}) follows from Lemma
\ref{l:red-index}.
\newline (c) By (a) and (b) we have $T_l(R_{V+\la}(\la))=P_0(\la)$. Note that
\[\mu\cap(V+\la)=\{x_0+B_1x_0+B_2x_0+A_2x_0;x_0\in\la_0\}.\]
By (a) and (b) we have $T_l(R_{V+\la}(\mu))=P_0(\mu)$. Similarly we get the result for $T_r$.
\newline (d) Since $\la_1=(V+\la)^{\w}$ and $\mu_1=(V+\mu)^{\w}$, $(V+\la)/\la_1$ and $(V+\mu)/\mu_1$ are symplectic vector spaces. Let $x_0,x_0^{\prime}\in\la_0$ and $v,v^{\prime}\in V$ be vectors in $X$. By (a) and (b), we have
\begin{align}
\nonumber\w_l(x_0+v,x_0^{\prime}+v^{\prime})=&\w(x_0+v+A_2x_0,x_0^{\prime}+v^{\prime}+A_2x_0^{\prime})\\
\nonumber=&\w(x_0+v,x_0^{\prime}+v^{\prime})+\w(x_0+v,A_2x_0^{\prime})\\
\nonumber &+\w(A_2x_0,x_0^{\prime}+v^{\prime})\\
\label{e:wl2}=&\w(x_0+v,x_0^{\prime}+v^{\prime})+\w(x_0,A_2x_0^{\prime})+\w(A_2x_0,x_0^{\prime}).
\end{align}
So we have $\w_l=\w|_{X_0}$ if $X_0=X_1^{\w}$. Note that $A_1x_0, A_1x_0^{\prime}\in V$. Then we
have
\begin{align*}
0&=\w(x_0+A_1x_0+A_2x_0,x_0^{\prime}+A_1x_0^{\prime}+A_2x_0^{\prime})\\
&=\w(x_0+A_1x_0,x_0^{\prime}+A_1x_0^{\prime})+\w(x_0,A_2x_0^{\prime})+\w(A_2x_0,x_0^{\prime}).
\end{align*}
Thus it holds
\[
\w_l(x_0+v,x_0^{\prime}+v^{\prime})=\w(x_0+v,x_0^{\prime}+v^{\prime})-\w(x_0+A_1x_0,x_0^{\prime}+A_1X_0^{\prime}).
\]
Similarly we get the expression for $\w_r$. Since $P_0(\mu)=T_l(R_{V+\la}(\mu))$ is isotropic in
$(X_0,\w_l)$, we have
\[\w(x_0+B_1x_0,x_0^{\prime}+B_1X_0^{\prime})=\w(x_0+A_1x_0,x_0^{\prime}+A_1X_0^{\prime})\]
for all $x_0,x_0^{\prime}\in\la_0$ and $v,v^{\prime}\in V$. So we have $\w_l=\w_r$.

If $\la_0\subset\mu_1^{\w}$, by (\ref{e:wl2}) we have $\w_l=\w_r=\w|_{X_0}$. Similarly, we have
$\w_l=\w_r=\w|_{X_0}$ if $\la_0\subset\la_1^{\w}$.
\newline (e) By (d).
\newline (f) Since $x_0\in\la\cap\mu$, $A_1x_0^{\prime}+A_2x_0^{\prime}\in\la$ and $A_2x_0^{\prime}\in\mu$, we have
\[0=\w(x_0, A_1x_0^{\prime}+A_2x_0^{\prime})=\w(x_0, A_1x_0^{\prime}).\]
\newline (g) Since $X=\la+V+\mu_1\subset\la+V+\mu\subset X$, we have $X=V+\la+\mu$. Since $\la=la^{\w}$, $\mu=\mu^{\w}$, $V\cap\la=V\cap\mu=\{0\}$ and $\dim V<+\infty$, by Lemma \ref{l:sum-whole-space} we have $X=V^{\w}+\la=V^{\w}+\mu$.
\end{proof}

\addtocontents{toc}{\medskip\noi} \chapter{The Maslov index in strong symplectic Hilbert
space}\label{s:maslov-hilbert}

As explained in the Introduction, the goal of this Memoir is to provide a calculable definition of
the Maslov index in weak symplectic Banach (or Hilbert) spaces. Later in Chapter
\ref{s:maslov-general} we shall achieve that in an intrinsic way, namely by providing a natural
symplectic reduction to the finite-dimensional case, based on the novel decomposition and reduction
techniques introduced in the preceding Chapter \ref{s:banach}. To get through with that plan, we
have to bring the - in principle - well understood definition and calculation of the
\subindex{Maslov index!in finite dimensions} Maslov index in finite dimensions (or, similarly, in
strong symplectic Hilbert space) into a form suitable to receive the symplectic reduction from the
weak infinite-dimensional setting. That is what this chapter is about.

\section{The Maslov index via unitary generators}\label{ss:maslov-unitary}
\subindex{Maslov index!via unitary generators}\subindex{Symplectic structures!symplectic Hilbert
space}

In \auindex{Boo{\ss}--Bavnbek,\ B.} \auindex{Furutani,\ K.}\cite{BoFu98} K. Furutani, jointly with
the first author of this Memoir, explained how the Maslov index of a curve of Fredholm pairs of
Lagrangian subspaces in strong symplectic Hilbert space can be defined and calculated as the
spectral flow of a corresponding curve of unitary operators through a control point on the unit
circle of $\CC$. In this section we give a slight reformulation and simplification, adapted to our
application. Moreover, we show why this approach can \textit{not} be generalized to weak symplectic
Banach spaces nor to weak symplectic Hilbert spaces immediately.

Let \symindex{X@$\mathbb{X}$ Hilbert or Banach bundle}$p\colon \mathbb{X}\to[0,1]$ be a
\subindex{Hilbert bundle}Hilbert bundle with fibers $X(s):=p^{-1}(s)$ for each $s\in[0,1]$. Let
$(X(s),\w(s))$, $s\in[0,1]$ be a family of \subindex{Hilbert space!strong symplectic}strong
symplectic Hilbert spaces with \subindex{Continuously varying!Hilbert inner products}continuously
varying Hilbert inner product $\lla\cdot,\cdot\rra_s$ and \subindex{Continuously varying!symplectic
forms}continuously varying symplectic form $\w(s)$. For a rigorous definition of the terms
\textit{Hilbert bundle} and \textit{continuous variation} we refer to our Appendix
\ref{ss:banach-bundles}. As usual, we assume that we can write $\w(s)(x,y)= \lla J(s)x,y\rra_s$
with invertible $J(s)\colon X(s)\to X(s)$ and $J(s)^*=-J(s)$. The fiber bundle $\mathbb{X}$ is
always trivial. So we can actually assume that $X(s)\equiv X$. By \auindex{Kato,\ T.}\cite[Lemma
I.4.10]{Ka95} and Lemma \ref{l:connectness-local}, the set of closed subspaces is a
\subindex{Hilbert manifold}Hilbert manifold and can be identified locally with bounded
linear maps from a closed subspace to its orthogonal complement.

\begin{note} Let $N\subset M\subset X$ be closed linear subspaces. Note that we then have the useful
rules $M/N\cong N^{\bot_M}=N^\bot\cap M$.
\end{note}

Denote by \symindex{X+@$X^{\mp}(s)$ symplectic splitting} $X^{\mp}(s)$ the positive (negative)
eigenspace of $iJ(s)$. Together they yield a \subindex{Symplectic splitting!canonical in strong
symplectic Hilbert space}spectral decomposition of $X$. Then the Hermitian form $-i\w(s)$ is
negative definite, respectively, positive definite on the subspaces $X^{\mp}(s)$ and we have a
symplectic splitting $X=X^-(s)\oplus X^+(s)$\/.

\begin{definition}[Oriented Maslov index in strong symplectic Hilbert space] \label{d:mas-hilbert}
\subindex{Symplectic structures!symplectic Hilbert space}
Let $\{\lambda(s),\mu(s)\}_{s\in[0,1]}$ be a path of \subindex{Continuously varying!Fredholm pairs
of Lagrangian subspaces}Fredholm pairs of Lagrangian subspaces of $(X,\w(s))$. Let \subindex{Unitary
generators}\symindex{U@$U,V$ unitary generators}$U(s),V(s)\colon X^-(s)\to X^+(s)$ be generators
for $(\lambda(s),\mu(s))$, i.e., $\lambda(s)=\Graph(U(s))$ and $\mu(s)=\Graph(V(s))$ (see
\auindex{Boo{\ss}--Bavnbek,\ B.}\auindex{Zhu,\ C.}\cite[Proposition 2]{BooZhu:2013}). Then
$U(s)V(s)^{-1}$ is a \subindex{Continuously varying!unitary operators}continuous family of unitary
operators on continuous families of Hilbert spaces $X^+(s)$ with \subindex{Hilbert space!inner
product induced by symplectic form on positive component}Hilbert structure $-i\w(s)|_{X^+(s)}$\/,
and $U(s)V(s)^{-1}-I_{X^+(s)}$ is a family of \subindex{Fredholm operator}\subindex{Continuously
varying!Fredholm operators with index $0$}Fredholm operators with index $0$. Denote by
\symindex{l@$\ell_{\pm}$ co-oriented gauge curve}$\ell_{\pm}$ the curve $(1-\e,1+\e)$ with real
$\e\in (0,1)$ and with upward (downward) \subindex{Co-orientation}co-orientation. The
\subindex{Maslov index!in strong symplectic Hilbert space}{\em oriented Maslov index}
\symindex{Mas@$\Mas_{\pm}\{\la(s),\mu(s)\}$ oriented Maslov index}$\Mas_{\pm}\{\la(s),\mu(s)\}$ of
the path \subindex{Continuously varying!Fredholm pairs of Lagrangian
subspaces}$(\lambda(s),\mu(s))$, $s\in[0,1]$ is defined by
\begin{align}\label{e:positive-mas-hilbert}\Mas\{\la(s),\mu(s)\}=\Mas_+\{\la(s),\mu(s)\}&=-\SF_{\ell_-}\{U(s)V(s)^{-1}\},\\
\label{e:negative-mas-hilbert}\Mas_-\{\la(s),\mu(s)\}&=\SF_{\ell_+}\{U(s)V(s)^{-1}\}.
\end{align}
Here we refer to \auindex{Boo{\ss}--Bavnbek,\ B.}\auindex{Zhu,\ C.}\cite[Definition 2.1]{Zh01} and
\cite[Definition 13]{BooZhu:2013} for the definition of the \subindex{Spectral flow}spectral flow
\symindex{sfl@$\SF_{\ell}$ spectral flow through gauge curve $\ell$}$\SF_{\ell}$.
\end{definition}

The following simple example shows that the preceding definition of the Maslov index can not be
generalized literally to symplectic Banach spaces or weak symplectic Hilbert spaces. It shows that
there exist strong symplectic Banach spaces that do not admit a symplectic splitting in the
preceding sense. That may seem to contradict \subindex{Symplectic splitting!non-existence}
\subindex{Zorn's Lemma}Zorn's Lemma. However, in a symplectic Banach space $(X,\w)$ Zorn's Lemma
can only provide the existence of a maximal subspace $X^+$ where the form $-i\w$ is positive
definite. Then $-i\w$ is negative definite on $X^-:=(X^+)^{\w}$ and vanishing on $X^+\times X^-$.
However, one can not show that $X=X^+\oplus X^-$. Denote by $V:=X^+\oplus X^-$, then
$V^{\w}=\{0\}$. We see from it that $\ol{V}^{\Tt} =X$, where $\Tt$ denotes the locally convex
topology defined by $\w$.

\begin{example}[Symplectic splittings do not always exist]\label{ex:no-splitting}
\subindex{Obstructions to straight forward generalization!no symplectic splitting}
\subindex{Counterexamples!no symplectic splitting}
Let $(X,\w):=\la\oplus\la^*$ and $\la:=\ell^p$ with $p\in(1,+\infty)$ and $p\ne 2$.
Then $X$ is a strong symplectic Banach space, but there is no splitting $X=X^+\oplus X^-$ such that
$\mp i\w|_{X^{\pm}}>0$, and $\w(x,y)=0$ for all $x\in X^+$ and $y\in X^-$. Otherwise we could
establish an inner product on $X$ that makes $X$ a Hilbert space.

Moreover, even when a symplectic splitting exists, there is no way to establish such splitting for
families of symplectic Banach spaces in a continuous way, as emphasized in the Introduction.

\end{example}

\section{The Maslov index in finite dimensions}\label{ss:finite-dimensions}

Consider the special case $\dim X=2n<+\infty$. Note that the eigenvalues of $U(s)V(s)^{-1}$ are on
the unit circle $S^1$. Recall that each map in $C\left([0,1],S^1\right)$ can be lifted to a map
$C\left([0,1],\R\right)$. By \auindex{Kato,\ T.}\cite[Theorem II.5.2]{Ka95}, there are $n$
continuous functions $\theta_1,\ldots, \theta_n\in C([0,1],\R)$ such that the eigenvalues of the
operator $U(s)V(s)^{-1}$ for each $s\in [0,1]$ (counting algebraic multiplicities) have the form
\[ \subindex{Parametrization of eigenvalues on the circle}
e^{i\theta_j(s)},\ j=1,\dots,n.
\]

Denote by $[a]$ the integer part of $a\in\R$. Define
\begin{equation}\label{e:function-E}
E(a):=\begin{cases} a,& a\in\Z\\
[a]+1, & a\notin \Z. \end{cases}
\end{equation}

In this case, we have

\begin{align}
\label{e:positive-mas-finite}\Mas_+\{\lambda(s),\mu(s);s\in [0,1]\}&=
\sum_{j=1}^n\left(E\bigl(\frac{\theta_j(1)}{2\pi}\bigr) -
E\bigl(\frac{\theta_j(0)}{2\pi}\bigr)\right),\\
\label{e:negative-mas-finite}\Mas_-\{\lambda(s),\mu(s);s\in [0,1]\}&= \sum_{j=1}^n
\left([\frac{\theta_j(1)}{2\pi}] - [\frac{\theta_j(0)}{2\pi}]\right).
\end{align}

By definition, $\Mas_{\pm}\{\lambda(s),\mu(s);s\in [0,1]\}$ is an integer that does not depend on
the choices of the arguments $\theta_j(s)$. By \auindex{Boo{\ss}--Bavnbek,\ B.}\auindex{Zhu,\
C.}\cite[Proposition 6]{BooZhu:2013}, it does not depend on the particular choice of the paths of
the symplectic splittings.

\section[Properties of the Maslov index in symplectic Hilbert space]{Properties of the Maslov
index in strong symplectic Hilbert space}\label{ss:mas-property-Hilbert}
 \subindex{Symplectic structures!symplectic Hilbert space}
From the properties of the spectral flow, we get all the basic properties of the Maslov index for
strong symplectic Hilbert spaces (see S. E. Cappell, R. Lee, and E. Y. Miller \auindex{Cappell,\
S.E.}\auindex{Lee,\ R.}\auindex{Miller,\ E.Y.}\cite[Section 1]{CaLeMi94} for a more comprehensive
list). Be aware that the proof of Proposition \ref{p:maslov-properties}.d is less trivial (see
\auindex{Zhu,\ C.}\cite[Corollary 4.1]{Zh05}).

The properties of the following list will first be used for establishing a rigorous and calculable
concept of the Maslov index in weak symplectic Banach space. For the Maslov index defined in that
way by symplectic finite-dimensional reduction, we shall later recover the full list of valid
properties in Theorem \ref{t:main} for the general case.

\begin{proposition}[Basic properties of the Maslov
index]\label{p:maslov-properties}\subindex{Maslov index!basic properties!in strong symplectic
Hilbert spaces} Let $p\colon\mathbb{X}\to [0,1]$, $p_j\colon\mathbb{X}_j\to [0,1]$, $j=1,2$, be
Hilbert bundles with continuously varying strong symplectic forms $\w(s)$ (respectively, $\w_j(s)$)
on $X(s):=p\ii(s)$ (respectively $X_j(s):=p_j\ii(s)$). Let $(\la,\mu),(\la_1,\mu_1),(\la_2,\mu_2)$
be curves of Fredholm pairs of Lagrangians in $\mathbb{X}, \mathbb{X}_1,\mathbb{X}_2$. Then we
have:
\subindex{Maslov index!basic properties!invariance under homotopies}

\noi (a) The Maslov index is {\em invariant under homotopies} of curves of Fredholm pairs of
Lagrangian subspaces with fixed endpoints. In particular, the Maslov index is invariant under {\em
re-parametrization} of paths.

\subindex{Maslov index!basic properties!additivity under catenation}
\noi (b) The Maslov index is additive under {\em catenation}, i.e.,
\[
\Mas_{\pm}\{\la, \mu\} = \Mas_{\pm}\{\la|_{[0,a]},\mu|_{[0,a]}\} +
\Mas_{\pm}\{\la|_{[a,1]},\mu|_{[a,1]}\}\,,
\]
for any $a\in[0,1]$.

\subindex{Maslov index!basic properties!additivity under direct sum}
\noi (c) The Maslov index is additive under {\em direct sum}, i.e.,
\[
\Mas_{\pm}\{\la_1\oplus\la_2, \mu_1\oplus \mu_2\} = \Mas_{\pm}\{\la_1,\mu_1\} +
\Mas_{\pm}\{\la_2,\mu_2\}\,,
\]
where $\{{\la}_j(s)\},\{{\mu}_j(s)\}$ are paths of Lagrangian subspaces in $(X_j,\w_j(s))$, $j=1,2$
and $\la_1\oplus\la_2$ is a path of subspaces in $(X_1\oplus X_2, \w_1(s)\oplus\w_2(s))$.

\subindex{Maslov index!basic properties!natural under symplectic action}
\noi (d) The Maslov index is {\em natural} under symplectic action: given a second Hilbert bundle
$\mathbb{X}'=\{X'(s)\}$, a path of symplectic structures $\w'(s)$ on $X'(s)$, and a path of bundle
isomorphisms $\{L(s)\in\Bb\left(X(s),X'(s)\right)\}$ such that \subindex{Symplectic
action}$L(s)^*(\w'(s))=\w(s)$, then we have
\[
\Mas_{\pm}\{\la(s),\mu(s);\w(s)\}= \Mas_{\pm}\{L(s)\la(s),L(s)\mu(s);\w'(s)\}.
\]

\subindex{Maslov index!basic properties!vanishing for constant intersection dimension}
\noi (e) The Maslov index {\em vanishes}, if $\dim (\la(s)\cap \mu(s))$ is a constant function on $s\in
[0,1]$.

\subindex{Maslov index!basic properties!flipping}
\noi (f) {\em Flipping}. We have
\begin{align*}
\Mas_+\{\la(s),\mu(s)\}&+\Mas_+\{\mu(s),\la(s)\}\\
&=\Mas_+\{\la(s),\mu(s)\}-\Mas_-\{\la(s),\mu(s)\}\\
&= \dim(\la(0)\cap\mu(0)) -\dim(\la(1)\cap\mu(1))\/,
\end{align*}
and $\Mas_{\pm}\{\la(s),\mu(s);\w(s)\}=\Mas_{\pm}\{\mu(s),\la(s);-\w(s)\}$.

\subindex{Maslov index!basic properties!local range}
\noi (g) {\em Local range}. There exists an $\e>0$ such that
\begin{equation*}0\le\Mas_+\{\la(s),\mu(s);\w(s)\}\le\dim(\la(0)\cap\mu(0))-\dim(\la(1))\cap\mu(1),
\end{equation*}
if the variation of the curves $\la,\mu$ of Lagrangians and the variation of the symplectic forms
$\w(s)$ is sufficiently small, namely if in the notations of Appendix \ref{ss:gap-topology}
\[\hat\delta(\la(s),\la(0)), \hat\delta(\mu(s),\mu(0)),\|\w(s)-\w(0)\|<\e, \text{ for all $s\in [0,1]$.}\]
\end{proposition}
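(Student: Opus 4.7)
\textbf{Proof sketch for Proposition \ref{p:maslov-properties}.}

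The uniform strategy is to derive each of (a)--(g) from the corresponding property of the spectral flow $\SF_{\ell_{\pm}}$ applied to the unitary family $U(s)V(s)\ii$, via Definition \ref{d:mas-hilbert}. The key enabling tool throughout is the independence of the Maslov index from the choice of symplectic splitting proved in \cite[Proposition 6]{BooZhu:2013}; whenever the canonical splitting $X^{\pm}(s)=\ker(iJ(s)\mp I)$ is inconvenient I shall replace it by any other continuously varying symplectic splitting. For (f) and (g) I will reduce further to finite dimensions and use the explicit formulas \eqref{e:positive-mas-finite}--\eqref{e:negative-mas-finite}; this is legitimate because $U(s)V(s)\ii - I$ is Fredholm of index $0$, so only finitely many eigenvalues of $U(s)V(s)\ii$ can meet the unit element, and the spectral flow is a finite-dimensional invariant.

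Parts (a), (b), (c), (e) are routine. Homotopy invariance and catenation-additivity of $\SF_{\ell_{\pm}}$ give (a) and (b) at once. For (c), the splitting $(X_1\oplus X_2)^{\pm} := X_1^{\pm}\oplus X_2^{\pm}$ makes the unitary generators split as $(U_1\oplus U_2)(V_1\oplus V_2)\ii = U_1V_1\ii\oplus U_2V_2\ii$, so direct-sum additivity of $\SF_{\ell_{\pm}}$ closes the argument. For (e), constancy of $\dim(\la(s)\cap\mu(s)) = \dim\ker(U(s)V(s)\ii - I)$ forbids any eigenvalue crossing of $1$ along the path, so the spectral flow vanishes.

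The main obstacle is (d), because a symplectic bundle isomorphism $L(s)\colon X(s)\to X'(s)$ in general fails to intertwine $J(s)$ with $J'(s)$: the subspaces $L(s)X^{\pm}(s)$ are neither orthogonal nor sign-definite for $-i\w'(s)$ in the native Hilbert structure of $X'(s)$. My plan is to bypass this by equipping $X'(s)$ with the auxiliary inner product $\lla x',y'\rra'_s := \lla L(s)\ii x', L(s)\ii y'\rra_s$, which is topologically equivalent to the native one (since $L(s)$ is a bounded isomorphism both ways) and turns $L(s)$ into a Hilbert-space isometry. Relative to this auxiliary structure, the operator representing $\w'(s)$ is exactly $L(s)J(s)L(s)\ii$, its $\pm i$-eigenspaces are $L(s)X^{\pm}(s)$, and the unitary generators of $L(s)\la(s)$ and $L(s)\mu(s)$ are the conjugates $L(s)U(s)L(s)\ii$ and $L(s)V(s)L(s)\ii$. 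Conjugation by the continuous isometry family $L(s)$ preserves spectral flow, so the auxiliary Maslov index equals $\Mas_{\pm}\{\la,\mu;\w\}$; splitting-independence then identifies it with $\Mas_{\pm}\{L\la, L\mu;\w'\}$, as required.

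For (f), subtracting \eqref{e:negative-mas-finite} from \eqref{e:positive-mas-finite} gives
\[
\Mas_+ - \Mas_- \ =\ \sum_{j=1}^{n} \Bigl(E\bigl(\tfrac{\theta_j(1)}{2\pi}\bigr) - \bigl[\tfrac{\theta_j(1)}{2\pi}\bigr]\Bigr) - \Bigl(E\bigl(\tfrac{\theta_j(0)}{2\pi}\bigr) - \bigl[\tfrac{\theta_j(0)}{2\pi}\bigr]\Bigr).
\]
Since $E(a) - [a]$ vanishes for $a\in\Z$ and equals $1$ otherwise, the right-hand side is the change in the number of non-unit eigenvalues of $U(s)V(s)\ii$, namely $\dim(\la(0)\cap\mu(0)) - \dim(\la(1)\cap\mu(1))$. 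The swap $(\la,\mu)\leftrightarrow(\mu,\la)$ inverts $U(s)V(s)\ii$, sending each $\theta_j(s)$ to $-\theta_j(s)$; the easy identity $E(a) + E(-a) = 1$ for $a\notin\Z$ and $E(a)+E(-a)=0$ for $a\in\Z$ then yields $\Mas_+\{\la,\mu\} + \Mas_+\{\mu,\la\} = \dim(\la(0)\cap\mu(0)) - \dim(\la(1)\cap\mu(1))$. For the sign-change identity in (f), passing from $\w$ to $-\w$ swaps $X^{\pm}$ and inverts each generator; composing with the swap $(\la,\mu)\leftrightarrow(\mu,\la)$ restores the original $U(s)V(s)\ii$, leaving $\Mas_{\pm}$ unchanged. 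Finally, (g) follows from continuous dependence of eigenvalues on $s$: for variations small enough the only $\theta_j$ whose $\theta_j(s)/(2\pi)$ can meet $\Z$ along the path are those with $\theta_j(0)\in 2\pi\Z$, and for each such index the contribution to $\Mas_+$ in \eqref{e:positive-mas-finite} is either $0$ or $1$ according as $\theta_j(1)\in 2\pi\Z$ or not, producing the stated two-sided bound.
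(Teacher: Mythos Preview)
Your approach is exactly the one the paper indicates: it states (just before the proposition) that all parts follow ``from the properties of the spectral flow'' and gives no detailed proof, merely flagging that (d) ``is less trivial'' with a pointer to \cite[Corollary 4.1]{Zh05}. Your write-up supplies the details the paper omits, and your self-contained argument for (d) via the transported inner product and splitting-independence \cite[Proposition 6]{BooZhu:2013} is a correct alternative to the cited reference.

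One small inaccuracy worth tightening in (g): the contribution of an eigenvalue with $\theta_j(0)=0$ to $\Mas_+$ is $1$ if $\theta_j(1)>0$ and $0$ if $\theta_j(1)\le 0$, not ``$0$ or $1$ according as $\theta_j(1)\in 2\pi\Z$ or not''. The upper bound $\dim(\la(0)\cap\mu(0))-\dim(\la(1)\cap\mu(1))$ then follows because every eigenvalue with $\theta_j(1)=0$ contributes $0$, and for small variation every eigenvalue at $1$ at time $s=1$ must already have been at $1$ at time $s=0$. Your conclusion is correct; only the intermediate description needs this adjustment.
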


\begin{note} To preserve the preceding basic properties for the weak symplectic case, we have to
make the explicit assumption of the vanishing of the index of the relevant Fredholm pairs of
Lagrangian subspaces in properties (a) and (g), see Assumption \ref{a:banach-fredholm-lagrange} and
Theorem \ref{t:main}.
\end{note}

\smallskip

We have the following lemma (see \auindex{Robbin,\ J.}\auindex{Salamon,\ D.}J. Robbin and D.
Salamon, \cite[Theorem 2.3, Localization]{RoSa93} for the constant symplectic structure case).

\begin{lemma}\label{l:local-mas-finite} Let $(X,\w(s))$ be a continuous family of $2n$ dimensional symplectic vector spaces
with Lagrangian subspaces $\la_0$, $\mu_0$ such that $X=\la_0\oplus\mu_0$. Let
$A(s)\in\Hom(\la_0,\mu_0)$, $s\in[0,1]$ be a path of linear maps such that $\la(s)=\Graph(A(s))$ is
a Lagrangian subspace of $(\C^{2n},\w(s))$ for each $s\in[0,1]$. Define
\symindex{Q@$Q,Q(s),Q(\la,t)$ intersection form}\subindex{Intersection
form}$Q(s)(x,y)=\w(s)(x,A(s)y)$ for all $s\in[0,1]$, $x\in\la_0$ and $y\in\mu_0$. Then $Q(s)$ is an
Hermitian form on $\la_0$ and we have
\begin{align}
\label{e:local-mas-finite-plus}\Mas_+\{\la(s),\la_0;s\in[0,1]\}&=m^+(Q(1))-m^+(Q(0)),\\
\label{e:local-mas-finite-minus}\Mas_-\{\la(s),\la_0;s\in[0,1]\}&=m^-(Q(0))-m^-(Q(1)),
\end{align}
where \subindex{Morse theory!Morse index}\symindex{m@$m^{\pm}(Q)$, $m^0(Q)$ positive (negative)
Morse index and nullity of Hermitian form $Q$}$m^{\pm}(Q)$, $m^0(Q)$ denote the positive (negative)
Morse index and the nullity of $Q$ respectively for an
Hermitian form $Q$.
\end{lemma}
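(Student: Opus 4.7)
The plan is to verify that $Q(s)$ is Hermitian, reduce via the naturality of the Maslov index to the case of a constant symplectic form, and then compute $U(s)V(s)^{-1}$ explicitly through a Cayley-type transform; the two asserted identities will then drop out of the finite-dimensional formulas \eqref{e:positive-mas-finite}--\eqref{e:negative-mas-finite}. The Hermitian character of $Q(s)$ is immediate: the Lagrangian condition $\w(s)(x+A(s)x,y+A(s)y)=0$ for $x,y\in\la_0$, together with the vanishing of $\w(s)$ on $\la_0\times\la_0$ and on $\mu_0\times\mu_0$, gives $Q(s)(x,y)+\w(s)(A(s)x,y)=0$, and the skew-conjugate symmetry of $\w(s)$ (Definition \ref{d:symplectic-space}(a)) then forces $Q(s)(y,x)=\overline{Q(s)(x,y)}$.

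To remove the $s$-dependence of $\w(s)$, I use that since $X=\la_0\oplus\mu_0$ with both summands Lagrangian for every $\w(s)$, each $\w(s)$ is fully determined by the non-degenerate sesquilinear pairing $\la_0\times\mu_0\to\C$. A direct construction (setting $T(s):=I_{\la_0}$ on the first factor and adjusting on $\mu_0$ to match pairings) then produces a continuous family $T(s)\in\GL(X)$ with $T(s)^{*}\w(s)=\w(0)$ and $T(s)\la_0=\la_0$, $T(s)\mu_0=\mu_0$. By naturality (Proposition \ref{p:maslov-properties}(d)) I may replace $(\la(s),\w(s))$ by $(T(s)^{-1}\la(s),\w(0))$; the transformed map $\tilde A(s)\colon\la_0\to\mu_0$ realizes exactly the same Hermitian form $Q(s)$ with respect to $\w(0)$.

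Now choose a basis $\{e_j\}$ of $\la_0$ with $\w(0)$-dual basis $\{f_j\}$ of $\mu_0$ (so $\w(0)(e_j,f_k)=\delta_{jk}$). In these Darboux coordinates the matrix of $\tilde A(s)$ agrees with the Hermitian matrix $H(s)$ of $Q(s)$. Taking the symplectic splitting $X^{\mp}=\Span_{\C}\{e_j\pm if_j\}$, a short check identifies $\la(s)=\Graph\,U(s)$ with
\[
U(s)\ =\ (iI+H(s))^{-1}(iI-H(s)),
\]
while $\la_0=\Graph\,V$ with $V=I$. Hence the eigenvalues of $U(s)V^{-1}=U(s)$ are $e^{i\theta_j(s)}=\tfrac{i-\alpha_j(s)}{i+\alpha_j(s)}$, where $\alpha_1(s),\dots,\alpha_n(s)$ are continuous real eigenvalues of $H(s)$ (Kato's perturbation theorem), and the Weierstrass substitution yields the continuous lift $\theta_j(s)=2\arctan\alpha_j(s)\in(-\pi,\pi)$.

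Plugging these $\theta_j(s)$ into \eqref{e:positive-mas-finite}--\eqref{e:negative-mas-finite} and reading off $E(\cdot)$ and $[\,\cdot\,]$ on $(-\tfrac12,\tfrac12)$ gives $E(\theta_j(s)/2\pi)=1$ precisely when $\alpha_j(s)>0$ (and $0$ otherwise), and $[\theta_j(s)/2\pi]=-1$ precisely when $\alpha_j(s)<0$ (and $0$ otherwise). Summing over $j$ yields $m^+(Q(s))$ and $-m^-(Q(s))$ respectively, whence the two identities of the lemma follow at once. The subtlest point I anticipate is the reduction in Step 2: arranging a continuous symplectic trivialization that simultaneously preserves \emph{both} Lagrangians while normalizing $\w(s)$ to $\w(0)$; once this is in place, the Cayley computation and the subsequent eigenvalue bookkeeping are elementary.
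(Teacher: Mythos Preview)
Your proof is correct and follows essentially the same strategy as the paper: reduce via naturality (Proposition~\ref{p:maslov-properties}(d)) to a fixed symplectic form by a transformation preserving the splitting $\la_0\oplus\mu_0$, then identify the unitary generator of $\la(s)$ as the Cayley transform of the Hermitian matrix $H(s)$ representing $Q(s)$. The only cosmetic difference is the endgame: the paper invokes the identity $\SF_{\ell_-}\{U(s)\}=-\SF\{-A(s)\}=m^+(A(1))-m^+(A(0))$ directly, whereas you compute the eigenvalue arguments $\theta_j(s)=2\arctan\alpha_j(s)$ explicitly and read off $E(\cdot)$ and $[\,\cdot\,]$ from \eqref{e:positive-mas-finite}--\eqref{e:negative-mas-finite}; also, the paper's trivialization $T(s)=\diag(K(s)^{-1},I_n)$ adjusts the $\la_0$-factor while yours adjusts the $\mu_0$-factor, but both achieve the same normalization and preserve $Q(s)$.
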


\begin{proof} Clearly, $\la(s)$ is Lagrangian if and only if $Q(s)$ is Hermitian.
By choosing a frame, we can assume that $X=\C^{2n}$, $\la_0=\C^n\times\{0\}$ and
$\mu_0=\{0\}\times\C^n$. Let $J(s)$ be defined by $\w(s)(x,y)=\lla J(s) x,y\rra$ for each
$s\in[0,1]$. Then we have $J(s)=\left(
\begin{array}{cc}0&-K(s)^*\\K(s)&0\end{array}\right)$ for some $K(s)\in\GL(n,\C)$. Set $T(s):=\diag(K(s)^{-1},I_n)$. Then we have $T(s)^*J(s)T(s)=J_{2n}:=\left(
\begin{array}{cc}0 & -I_n \\I_n & 0 \\ \end{array}\right)$.
By Proposition \ref{p:maslov-properties}.d, we can assume that $J(s)=J_{2n}$. Then we have
$X^{\pm}=\{(x,\mp ix);x\in\C^n\}$. The generator of $\la(s)$ is the map $(x,ix)\mapsto
(U(s)x,-iU(s)x)$, $x\in\C^n$. So $U(s)=(I_n+iA(s))(I_n-iA(s))^{-1}$. We have $U(s)=0$ if $A(s)=0$.
Note that $A(s)$ is a continuous family of self-adjoint operators. By the definition of the
spectral flow we have
\begin{align*}
\Mas_+\{\la(s),\la_0\}&=-\SF_{\ell_-}\{U(s)\}=-\SF\{-A(s)\}\\&
=m^+(A(1))-m^+(A(0))=m^+(Q(1))-m^+(Q(0)).
\end{align*}
Similarly we have (\ref{e:local-mas-finite-minus}).
\end{proof}

The following proposition is a slight generalization of \auindex{Boo{\ss}--Bavnbek,\
B.}\auindex{Furutani,\ K.}\cite[Theorem 4.2 and Remark 5.1]{BoFu99}, where it was shown for the
first time that the Maslov index is preserved under certain symplectic reductions. It was that
result that inspired us to base our new definition of the Maslov index in weak symplectic infinite
dimensional spaces on the concept of symplectic reduction. From a technical point of view, the
following very general proposition for strong symplectic structures together with its modifications
for weak symplectic structures in Section \ref{ss:sympl-invariance} is one of the main achievements
in this Memoir. Note that the arguments depend on our novel intrinsic decomposition techniques of
Section \ref{ss:intrinsic-decomposition} in the preceding chapter.

\begin{proposition}[Invariance of symplectic reduction in strong symplectic Hilbert space]\label{p:local-hilbert-red}
 \subindex{Symplectic structures!symplectic Hilbert space}
\subindex{Symplectic reduction!invariance in strong symplectic Hilbert space}
Let $(X,\w(s))$, $s\in(-\e,\e)$ be a family of strong symplectic Hilbert spaces with continuously
varying symplectic form $\w(s)$, where $\e>0$. Let $(\lambda(s),\mu(s))$, $s\in(-\e,\e)$ be a path
of Fredholm pairs of Lagrangian subspaces of $(X,\w(s))$. Let $V(s)$ be a path of
finite-dimensional subspaces of $X$ with $X=V(0)\oplus(\la(0)+\mu(0))$. Then there exists a
$\delta\in(0,\e)$ such that
\[X=V(0)+\la(s)+\mu(s)=V(s)^{\w(s)}+\la(s)=V(s)^{\w(s)}+\mu(s)\]
for all $s\in (-\delta,\delta)$, and
\begin{align}\subindex{Calculation of the Maslov index}
\nonumber\Mas_{\pm}&\{\la(s),\mu(s);s\in[s_1,s_2]\}\\
\label{e:maslov-local-hilbert1}&=\Mas_{\pm}\bigl\{R^{\w(s)}_{V(s)+\la(s)}(\la(s)),R^{\w(s)}_{V(s)+\la(s)}(\mu(s));s\in[s_1,s_2]\bigr\}\\
\label{e:maslov-local-hilbert2}&=\Mas_{\pm}\bigl\{R^{\w(s)}_{V(s)+\mu(s)}(\la(s)),R^{\w(s)}_{V(s)+\mu(s)}(\mu(s));s\in[s_1,s_2]\bigr\}
\end{align}
for all $[s_1,s_2]\subset(-\delta,\delta)$.
\end{proposition}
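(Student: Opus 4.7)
The plan is to establish the three set-theoretic direct-sum identities via gap-topology continuity from Appendix \ref{s:closed-subspaces}, and then to reduce the Maslov-index equality to a finite-dimensional computation using the intrinsic decomposition of Proposition \ref{p:reduction-prop} together with the identification of reduced Lagrangians with $P_0$-projections furnished by Proposition \ref{p:cal-red}. For the topological identities, the continuity results of Appendix \ref{ss:continuity-of-operations} (in particular Proposition \ref{p:close-to}) show that $\lambda(s)+\mu(s)$ varies continuously in the gap topology; combined with $X=V(0)\oplus(\lambda(0)+\mu(0))$ and $\dim V(0)<\infty$ this yields $X=V(0)+\lambda(s)+\mu(s)$ for $|s|$ small. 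The same continuity delivers $V(s)+\lambda(s)+\mu(s)=X$ and $V(s)\cap\lambda(s)=V(s)\cap\mu(s)=\{0\}$ on a neighborhood of $0$, whence Corollary \ref{c:sum-whole-space} produces $V(s)^{\omega(s)}+\lambda(s)=V(s)^{\omega(s)}+\mu(s)=X$, all on a common interval $(-\delta,\delta)$.

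By the flipping property (Proposition \ref{p:maslov-properties}(f)) applied to both sides, identities \eqref{e:maslov-local-hilbert1} and \eqref{e:maslov-local-hilbert2} are equivalent, so I would focus on the first. Set $W(s):=V(s)+\lambda(s)$; this is co-isotropic with $W(s)^{\omega(s)}=V(s)^{\omega(s)}\cap\lambda(s)=:\lambda_1(s)$, and Proposition \ref{p:red-fredholm} ensures $(R_{W(s)}(\lambda(s)),R_{W(s)}(\mu(s)))$ is a Fredholm pair of Lagrangians of index zero in the finite-dimensional symplectic space $W(s)/W(s)^{\omega(s)}$ of constant dimension $2\dim V(0)$. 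Choose $\lambda_0(s)$ continuously so that the direct-sum hypothesis \eqref{e:direct-sum-assumption} of Proposition \ref{p:cal-red} is satisfied; by Proposition \ref{p:cal-red}(b)-(c), the projection $P_0(s)\colon X\to X_0(s):=\lambda_0(s)+V(s)$ along $X_1(s):=\lambda_1(s)+\mu_1(s)$ then induces a symplectic isomorphism $T_l(s)\colon W(s)/W(s)^{\omega(s)}\to(X_0(s),\omega_l(s))$ carrying the reduced Lagrangians to $P_0(s)(\lambda(s))$ and $P_0(s)(\mu(s))$. Naturality of the Maslov index (Proposition \ref{p:maslov-properties}(d)) therefore identifies the right-hand side of \eqref{e:maslov-local-hilbert1} with $\Mas_{\pm}\{P_0(s)(\lambda(s)),P_0(s)(\mu(s));X_0(s),\omega_l(s)\}$.

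It remains to match $\Mas_{\pm}\{\lambda(s),\mu(s);X,\omega(s)\}$ with this finite-dimensional Maslov index. By Proposition \ref{p:cal-red}(a), the symplectic splitting $X=X_0(s)\oplus X_1(s)$ induces decompositions $\lambda(s)=P_0(s)(\lambda(s))\oplus\lambda_1(s)$ and $\mu(s)=P_0(s)(\mu(s))\oplus\mu_1(s)$, with $\lambda_1(s),\mu_1(s)$ Lagrangian in $X_1(s)$. Additivity under direct sum (Proposition \ref{p:maslov-properties}(c)) combined with naturality, after transporting to a fixed reference splitting, gives
\[
\Mas_{\pm}\{\lambda(s),\mu(s)\}=\Mas_{\pm}\{P_0(s)(\lambda(s)),P_0(s)(\mu(s));X_0(s)\}+\Mas_{\pm}\{\lambda_1(s),\mu_1(s);X_1(s)\}.
\]
Since $\lambda_1(s)\cap\mu_1(s)=V(s)^{\omega(s)}\cap\lambda_0(s)=\{0\}$ for all $s$ in the interval, by the complementarity $V(s)^{\omega(s)}\oplus\lambda_0(s)=X$ from Proposition \ref{p:reduction-prop}(a), the dimension of $\lambda_1(s)\cap\mu_1(s)$ is constantly zero and the vanishing property (Proposition \ref{p:maslov-properties}(e)) annihilates the $X_1$-contribution, yielding \eqref{e:maslov-local-hilbert1}. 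The principal technical obstacle is arranging continuous variation of the decomposition $X=X_0(s)\oplus X_1(s)$: although $V(s)$ varies continuously by hypothesis, the complementary choice $\lambda_0(s)$ must be made coherently and compatibly with \eqref{e:direct-sum-assumption}, which is feasible on a sufficiently small interval by combining Corollary \ref{c:parametrize-isotropic} with the continuity-of-operations results of the Appendix.
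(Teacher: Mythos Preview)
Your approach matches the paper's, but the step invoking direct-sum additivity has a gap. The decomposition $\lambda(s)=P_0(s)(\lambda(s))\oplus\lambda_1(s)$ does \emph{not} follow from Proposition~\ref{p:cal-red}(a): that proposition only gives the parametrization $\lambda(s)=\{x_0+A_1x_0+x_1+A_2x_0\}$ with $A_2(s)\colon\lambda_0(s)\to\mu_1(s)$ a priori nonzero, and then $P_0(\lambda(s))\oplus\lambda_1(s)\neq\lambda(s)$. What the paper uses (via its citation of Proposition~\ref{p:reduction-prop}.c,d) is the specific choice $X_0(s):=X_1(s)^{\omega(s)}$, so that $X=X_0(s)\oplus X_1(s)$ is a \emph{symplectic} splitting and $\lambda_0(s)\subset X_1(s)^{\omega(s)}\subset\lambda_1(s)^{\omega(s)}$. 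Pairing the $A_2x_0\in\mu_1(s)$ component against $y_1\in\lambda_1(s)$ inside the Lagrangian $\lambda(s)$ then forces $\omega(A_2x_0,y_1)=0$ for all $y_1$, whence $A_2x_0\in\lambda_1(s)\cap\mu_1(s)=\{0\}$; equivalently, one applies Lemma~\ref{l:seperate-lagrange} with the roles of $X_0,X_1$ interchanged (legitimate here since $X_1^{\omega\omega}=X_1$ in the strong symplectic case). Your definition $X_0(s):=\lambda_0(s)+V(s)$ with $\lambda_0(s)$ chosen merely to satisfy \eqref{e:direct-sum-assumption} does not guarantee $X_0(s)=X_1(s)^{\omega(s)}$, so the splitting of $\lambda(s)$ may fail.

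Separately, the displayed identity $\lambda_1(s)\cap\mu_1(s)=V(s)^{\omega(s)}\cap\lambda_0(s)$ is wrong: the left side equals $V(s)^{\omega(s)}\cap\lambda(s)\cap\mu(s)$, which has nothing to do with your freely chosen $\lambda_0(s)$. The vanishing you need is immediate from the already-established direct sum $X_1(s)=\lambda_1(s)\oplus\mu_1(s)$.
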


\begin{proof} Set $\la_0(0):=\la(0)\cap\mu(0)$, $\la_1(s):=V(s)^{\w(s)}\cap\la(s)$,
$\mu_1(s):=V(s)^{\w(s)}\cap\mu(s)$, $X_1(s):=\la_1(s)+\mu_1(s)$, and $X_0(s):=X_1(s)^{\w(s)}$. By
Proposition \ref{p:reduction-prop} we have
\[X=\la_0(0)\oplus V(0)\oplus\la_1(0)\oplus\mu_1(0),\]
$X_0(0)=\la_0(0)+V(0)$, $\la_0(0)\in\Ll(X_0(0))$, and $X_1(0)=X_0(0)^{\w}$.

By Appendix \ref{ss:continuity-of-operations} and Proposition \ref{p:reduction-prop}.g, there exists a
$\delta_1\in(0,\e)$ such that
\begin{align*}
X&=V(s)+\la(s)+\mu(s)=V(s)^{\w(s)}+\la(s)=V(s)^{\w(s)}+\mu(s)\\
&=\la(s)\oplus(V(s)+\mu_1(s))=\mu(s)\oplus(V(s)+\la_1(s)),
\end{align*}
$X_1(s)=\la_1(s)\oplus\mu_1(s)$, and $X=X_0(s)\oplus X_1(s)$ for all $s\in (-\delta,\delta)$. Set
$X_0(s):=X_1(s)^{\w(s)}$. Then we have $V(s)\subset X_0(s)$. Since $X_0(s)$ is a finitely dimensional symplectic space, by Lemma \ref{l:parametrize-isotropic},
there exist a $\delta\in(0,\delta_1)$ and a path $\la_0(s)\in\Ll(X_0(s))$, $s\in(-\delta,\delta)$ such that $X_0(s)=\la_0(s)\oplus V(s)$.

Denote by $P_0(s)\colon X\to X_0(s)$ the projection defined by $X=X_0(s)\oplus X_1(s)$. Denote by $\w_l(s)$ the symplectic form of $X_0(s)$ defined by
Proposition \ref{p:reduction-prop}.d. Since $\la_0(s)\subset\la_1(s)^{\w(s)}$, by Proposition \ref{p:reduction-prop}.d we have $\w_l(s)=\w(s)|_{x_0(s)}$. By
Proposition \ref{p:reduction-prop}.c,d and Proposition \ref{p:maslov-properties}.c,d,e, we have
\begin{align*}
\Mas_{\pm}&\{\la(s),\mu(s);s\in[s_1,s_2]\}\\
&=\Mas_{\pm}\{P_0(s)(\la(s)),P_0(s)(\mu(s));s\in[s_1,s_2]\}\\
&\quad +\Mas_{\pm}\{\la_1(s),\mu_1(s);s\in[s_1,s_2]\}\\
&=\Mas_{\pm}\bigl\{T_l(R^{\w(s)}_{V(s)+\la(s)}(\la(s))),T_l(R_{V(s)+\la(s)}(\mu(s)));s\in[s_1,s_2]\bigr\}\\
&=\Mas_{\pm}\bigl\{R^{\w(s)}_{V(s)+\la(s)}(\la(s)),R^{\w(s)}_{V(s)+\la(s)}(\mu(s));s\in[s_1,s_2]\bigr\}.
\end{align*}
Note that by Proposition \ref{p:reduction-prop} and Appendix \ref{ss:continuity-of-operations}, the
Maslov indices in the above calculations are well-defined. The equality
(\ref{e:maslov-local-hilbert2}) follows similarly.
\end{proof}

\addtocontents{toc}{\medskip\noi}  \chapter{The Maslov index in Banach bundles over a closed
interval}\label{s:maslov-general}

\section[The Maslov index by symplectic reduction]{The Maslov index by symplectic reduction to a finite-dimensional subspace}\label{ss:maslov-definition}

We need the following simple fact for the definition of the Maslov index.

\begin{lemma}\label{l:v-w-continuous} Let $X$ be a Banach space with continuously varying symplectic structures $\w(s)$, $s\in[0,1]$. Let $V(s)$, $s\in[0,1]$ be a path of finite-dimensional subspaces of $X$. Then $V(s)^{\w(s)}$, $s\in[0,1]$ is a $C^0$ path.
\end{lemma}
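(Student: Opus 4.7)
The strategy is to realize $V(s)^{\omega(s)}$ as the kernel of a continuously varying family of surjective bounded operators into a finite-dimensional space, and then invoke a standard continuity statement for such kernels in the gap topology.

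First I would linearize the symplectic data. Because $\omega(s)$ is bounded and non-degenerate, the equation \eqref{e:almost-complex} gives a bounded injective operator $J(s)\colon X\to X^{\ad}$. Since $\omega(s)$ varies continuously as a sesquilinear form, the induced $J(s)$ varies continuously in the operator norm. For the subspaces, the assumption that $V(s)$ is a continuous path of finite-dimensional subspaces forces $\dim V(s)$ to be locally constant in $s$, and moreover allows us to pick, locally near any $s_0\in[0,1]$, a continuous frame $v_1(s),\dots,v_n(s)$ for $V(s)$; both facts are classical in the gap topology (e.g.\ Kato, Lemmata I.4.10 and I.6.36, which are the tools already used in Corollary \ref{c:parametrize-isotropic}).

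Second, define $\varphi_i(s):=J(s)v_i(s)\in X^{\ad}$. These are continuous paths in $X^{\ad}$, and because $J(s)$ is injective while the $v_i(s)$ are linearly independent in $X$, the $\varphi_i(s)$ are linearly independent in $X^{\ad}$. The bounded operator
\[
T(s)\colon X\longrightarrow \C^n,\qquad T(s)x:=\bigl(\varphi_1(s)(x),\dots,\varphi_n(s)(x)\bigr),
\]
is therefore a continuous path of surjective bounded linear maps, and by the very definition of the annihilator,
\[
V(s)^{\omega(s)}\ =\ \bigcap_{i=1}^{n}\ker\varphi_i(s)\ =\ \ker T(s).
\]

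Finally, I would cite the standard fact that, for a continuous path of surjective bounded operators between Banach spaces with fixed finite-dimensional target, the kernels vary continuously in the gap topology; this is a direct consequence of the Open Mapping Theorem together with the sum/intersection continuity principle (Proposition \ref{p:close-to}) from the appendix. Applied to $T(s)$, it yields the local continuity of $V(s)^{\omega(s)}$ near every $s_0\in[0,1]$, and a standard covering argument promotes this to global $C^0$-continuity on $[0,1]$. The main obstacle is the local step of producing a continuous frame for $V(s)$ and verifying that $\dim V(s)$ is locally constant; once that is granted, the rest of the proof is bookkeeping around known gap-topology continuity results.
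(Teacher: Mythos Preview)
Your proof is correct and follows essentially the same route as the paper: both identify $V(s)^{\omega(s)}$ with the annihilator in $X$ of the continuously varying finite-dimensional subspace $J(s)V(s)\subset X^{\ad}$, and then argue continuity of that annihilator. The only difference is in the last technical step: the paper packages $J(s)V(s)$ as $\ran P(s)$ for a continuous family of projections and passes to $\ran(I-P(s)^{*})$ via Corollary~\ref{c:perp-complemented-continuous}, whereas you pick a continuous frame and realize $V(s)^{\omega(s)}$ as $\ker T(s)$ for a continuous family of surjective maps $T(s)\colon X\to\C^n$; this is precisely the construction underlying Lemma~\ref{l:finite-bot-ck}, so the two arguments are minor repackagings of one another.
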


\begin{proof} Since the problem is local, we only need to consider $|s-s_0|<<1$ for each fixed $s_0\in[0,1]$.
Note that the symplectic form $\w(s)$ on $X$ induces a uniquely defined bounded, injective
mapping $J(s)\colon X\to X^{\ad}$ such that $\w(s)(x,y)=(J(s)x)(y)$ for all $x,y\in X$. Then $J(s)$ is continuously varying. By (\ref{e:annihi-perp2}), we have $V(s)^{\w(s)}=(J(s)V(s))^{\perp}$.
Since $V(s)$, $s\in[0,1]$ is a path of finite-dimensional subspaces of $X$, $J(s)V(s)$ is continuously varying. By Lemma \ref{l:basic-complemented}, $J(s)V(s)$ is complemented and it is defined by a projection $P(s)$. By Lemma \ref{l:connectness-local} we can make $P(s)$ continuously varying on $s$. Then $P(s)^*$ is continuously varying. By Corollary \ref{c:perp-complemented-continuous}, $\ran(I- P(s)^*)=V(s)^{\w(s)}$ is continuously varying on $s$. Now by Lemma \ref{l:ck-complemented}, $V(s)^{\w(s)}$, $s\in[0,1]$ is a $C^0$ path.
\end{proof}

For this section, we fix some data and notations and make the following assumption:

\begin{ass}\label{a:banach-fredholm-lagrange}
Let $p\colon \mathbb{X}\to [0,1]$ be a
\subindex{Banach bundle}
Banach bundle. Denote by $X(s):=p^{-1}(s)$ the fiber of $p$ at $s\in[0,1]$. Let
$\{\w(s)\}_{s\in[0,1]}$ be a continuous family of symplectic structures with $\w(s)$ acting on
$X(s)\times X(s)$. Let $\{(\lambda(s),\mu(s))\}_{s\in[0,1]}$ be a path of Fredholm pairs of
Lagrangian subspaces of $(X(s),\omega(s))$ of index $0$.
\end{ass}

Here for a fiber bundle $p\colon \mathbb{X}\to[0,1]$, a \subindex{Path in Banach bundle}path
$c(s)$, $s\in[0,1]$ of $\mathbb{X}$ is a continuous map $c\colon [0,1]\to \mathbb{X}$ such that
$c(s)\in p^{-1}(s)$ for each $s\in[0,1]$. We refer to \auindex{Zaidenberg,\ M.G.}\auindex{Krein,\
S.G.}\auindex{Kucment,\ P.A.}\auindex{Pankov,\ A.A.}\cite{ZKKP:1975} for the concept of Banach
bundles; see also our summary in the Appendix \ref{ss:banach-bundles}. The fiber bundle
$\mathbb{X}$ is always trivial. So we can actually assume that $X(s)\equiv X$. By \auindex{Kato,\
T.}\cite[Lemma I.4.10]{Ka95} and Lemma \ref{l:connectness-local}, the set of complemented closed
subspaces is a \subindex{Banach manifold}Banach manifold and can be identified locally with the
general linear group \symindex{B@$\Bb^\times(X)$ general linear group of bounded invertible
operators on $X$}$\Bb^\times(X)$ of bounded invertible operators of $X$.

As shown in Example \ref{ex:negative-index}, the assumption of vanishing index is a restriction for
Fredholm pairs of Lagrangian subspaces in weak symplectic structures, even when the fibres are
Hilbert spaces.

\smallskip

To define the
\subindex{Maslov index!in symplectic Banach bundles!purely formal definition}
Maslov index via finite-dimensional symplectic reduction, we begin with a purely formal definition.

We make Assumption \ref{a:banach-fredholm-lagrange} and the following choices and notations.

\begin{choices-notations}\label{cn:formal-definition}
By the definition of Fredholm pairs, for each $t\in[0,1]$, there exists $V(t)\subset X(t)$ such
that $V(t)\oplus(\lambda(t)+\mu(t))=X(t)$. Set $\la_0(t):=\la(t)\cap\mu(t)$ and
$X_0(t):=\la_0(t)\oplus V(t)$. Then there exists for each $t$ a $\delta(t)>0$ such that
\begin{enumerate}
\item[(1)] there exists a \subindex{Banach bundle!local frame}\symindex{Lt@$L(t,s)$ local frame}local frame $L(t,s)\colon X(t)\to X(s)$,
$s\in(t-\delta(t),t+\delta(t))\cap[0,1]$ of the bundle $\mathbb{X}$,
\item[(2)] $X(s)=L(t,s)V(t)+\la(s)+\mu(s)=(L(t,s)V(t))^{\omega(s)}+\lambda(s)$ for all
$s\in(t-\delta(t),t+\delta(t))\cap[0,1]$, and
\item[(3)] we have
\begin{align}\label{e:direct-sum}
X(s)&=L(t,s)X_0(t)\oplus\la_1(t,s)\oplus\mu_1(t,s)\\
\nonumber&=\la(s)\oplus(L(t,s)V(t)+\mu_1(t,s))\\
\nonumber&=\mu(s)\oplus(L(t,s)V(t)+\la_1(t,s))
\end{align}
for all $s\in(t-\delta(t),t+\delta(t))\cap[0,1]$, where
$\la_1(t,s):=(L(t,s)V(t))^{\w(s)}\cap\la(s)$ and $\mu_1(t,s):=(L(t,s)V(t))^{\w(s)}\cap\mu(s)$.
\end{enumerate}

Denote by $X_1(t,s):=\la_1(t,s)+\mu_1(t,s)$. Denote by
\symindex{P0@$P_0,P_0(s),P_0(t,s)$ decomposition projections}
$P_0(t,s)\colon X(s)\to L(t,s)X_0(s)$ the projection defined by $X(s)=L(t,s)X_0(t)\oplus X_1(t,s)$.
Denote by \symindex{\omega l@$\w_l, \w_r$ decomposition symplectic structures}$\w_l(t,s)=\w_r(t,s)$
the symplectic structure defined by Proposition \ref{p:cal-red}.d. We have the finite-dimensional
vector space $\{(X_0(t),L(t,s)^*(\w_l(t,s))\}_{s\in [0,1]}$ with continuously varying symplectic
structure for fixed $t\in[0,1]$.
\end{choices-notations}

\begin{definition}[Maslov index by symplectic reduction]\label{d:maslov-banach}
\subindex{Maslov index!in symplectic Banach bundles!purely formal definition}
\symindex{Mas@$\Mas(\la(s),\mu(s))_{s\in [0,1]}$ Maslov index}
With the notations and choices above, let $0=a_0<a_1<...<a_n=1$ be a partition with
$[a_k,a_{k+1}]\subset(t_k-\delta(t_k),t_k+\delta(t_k))$ for some $t_k\in[0,1]$, $k=0,\dots n-1$.
Define
\begin{multline}\label{e:maslov-banach}\subindex{Maslov index!segmental}
\symindex{Mas@$\Mas_{\pm}\{\lambda(s),\mu(s)\}$ oriented Maslov index}
\Mas_{\pm}\bigl\{\lambda(s),\mu(s);s\in[0,1]\bigr\}\ :=\ \sum_{k=0}^{n-1}
\Mas^{\w_l(t_k,s)}_{\pm}\\
\bigl\{L(t_k,s)^{-1}P_0(t_k,s)(\lambda(s)),L(t_k,s)^{-1}P_0(t_k,s)(\mu(s));s\in[a_k,a_{k+1}]\bigr\},
\end{multline}
where $\Mas^{\w_l(t_k,s)}_{\pm}\{\dots\}$ denotes the oriented Maslov index for the specified
Fredholm pair of symplectically reduced Lagrangian subspaces in the finite-dimensional complex
vector space $X_0(t_k)$ with continuously varying induced symplectic structures $\w_l(t_k,s)$. That
oriented Maslov index was introduced in Definition \ref{d:mas-hilbert}. We call $\Mas_{\pm}$ the
{\em positive (negative) Maslov index}. We call the positive Maslov index $\Mas:=\Mas_+$ the {\em
Maslov index}.
\end{definition}

To lift the formal concepts of Definition \ref{d:maslov-banach} to a useful definition of the
Maslov index in Banach spaces, we prove the following theorem:

\begin{theorem}[Main Theorem]\label{t:main}
\subindex{Maslov index!in symplectic Banach bundles!independence of choices} \subindex{Maslov
index!in symplectic Banach bundles!Main Theorem}\subindex{Maslov index!basic properties!in
symplectic Banach bundles}\subindex{Main Theorem}
Under Assumption \ref{a:banach-fredholm-lagrange}, the mappings $\Mas_{\pm}$ are well-defined
(i.e., independent of the choices) and the common properties of the Maslov index (listed in
Proposition \ref{p:maslov-properties}) are preserved.
\end{theorem}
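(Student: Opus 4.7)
The plan is to prove the theorem in three stages: first well-definedness of the formal sum \eqref{e:maslov-banach} on a short interval covered by a single reduction datum; second independence of the partition and of the tuple of base points via additivity; third, derivation of the listed properties by translating each statement to its finite-dimensional counterpart through the reduction. Throughout, the reductions land in finite-dimensional symplectic vector spaces, which are automatically strong symplectic Hilbert spaces once an inner product is chosen, so all of Chapter \ref{s:maslov-hilbert} is available.

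For the local step, fix $t$ and a sub-interval $[a,b]\subset (t-\delta(t),t+\delta(t))$, and compare two admissible data $(V,L)$ and $(V',L')$ at $t$. The set of closed complements to the fixed subspace $\lambda(t)+\mu(t)$ in $X(t)$ is non-empty (by hypothesis) and convex in the natural affine parametrization as graphs over one such complement, hence path-connected; I pick a continuous path $V(r)$, $r\in[0,1]$, joining $V$ to $V'$, and after possibly shrinking $[a,b]$, Choices-Notations \ref{cn:formal-definition} give a two-parameter family of finite-dimensional reductions $(X_0(r),\omega_l(r,s))$ defined on $[a,b]\times[0,1]$. Applying the strong-symplectic reduction Proposition \ref{p:local-hilbert-red} to the $r$-parameter family (with the reduction along $V(r)$), together with catenation additivity and vanishing on constant-$s$ edges, gives equality of the two finite-dimensional Maslov indices; this uses that on the constant-$s$ edges the Fredholm pair is independent of the parameter and so contributes zero by Proposition \ref{p:maslov-properties}(e). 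The case of two different base points $t,t'$ with overlapping windows is then settled by inserting an intermediate partition point in the overlap and applying this argument twice.

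Next, partition independence is immediate: adding a splitting point inside $[a_k,a_{k+1}]$ breaks that summand into two consecutive finite-dimensional Maslov indices in the same space $(X_0(t_k),\omega_l(t_k,\cdot))$, whose sum equals the original by catenation additivity applied in $X_0(t_k)$, Proposition \ref{p:maslov-properties}(b). Combining this with the local step and a common-refinement argument proves that the right-hand side of \eqref{e:maslov-banach} depends only on the path $\{(\lambda(s),\mu(s))\}$, establishing well-definedness of $\Mas_\pm$.

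Finally, the properties transfer from the finite-dimensional case. Catenation is built into the definition. Direct-sum additivity is proved by choosing reduction data respecting the decomposition $X_1\oplus X_2$ and applying Proposition \ref{p:maslov-properties}(c) summand-wise. Naturality is proved by transporting $V(t)$ through the bundle isomorphism $L(s)$, which by Proposition \ref{p:maslov-properties}(d) applied in $X_0(t)$ gives the required identity. Vanishing for constant $\dim(\lambda(s)\cap\mu(s))$ uses Equation \eqref{e:dim-intersection} of Proposition \ref{p:cal-red}(b), which shows the reduction preserves this dimension, and then Proposition \ref{p:maslov-properties}(e) in finite dimensions. Flipping, local range, and reparametrization invariance are likewise pulled back summand-wise. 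For homotopy invariance with fixed endpoints, I cover the parameter square $[0,1]^2$ by small boxes on each of which a single reduction datum works, and argue by a standard two-dimensional refinement argument so that each edge contribution is governed by the finite-dimensional homotopy invariance of Proposition \ref{p:maslov-properties}(a). The main obstacle is the first step: constructing and controlling the two-parameter family of reductions along $V(r)$ so that Proposition \ref{p:local-hilbert-red} can be invoked on the resulting finite-dimensional bundle, and verifying that the constant-$s$ edges really do contribute zero — this rests on a careful application of the continuity results from Appendix \ref{ss:continuity-of-operations}.
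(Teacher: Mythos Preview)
Your overall architecture (local well-definedness, then common refinement, then transfer of properties) matches the paper's, and the treatment of partition-independence and of the properties (c)--(g) is fine. The local step, however, has two concrete problems.

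First, on the constant-$s$ edges of your square the reduced Fredholm pair is \emph{not} independent of the parameter $r$: the projection $P_0(r,s)$ and the target space $X_0(r)=\lambda_0(t)\oplus V(r)$ genuinely change with $r$, so the images $P_0(r,s)(\lambda(s))$, $P_0(r,s)(\mu(s))$ move. What \emph{is} constant along those edges is the intersection dimension, because by Proposition~\ref{p:cal-red}(b), equation~\eqref{e:dim-intersection}, one has $\dim\bigl(P_0(r,s)(\lambda(s))\cap P_0(r,s)(\mu(s))\bigr)=\dim(\lambda(s)\cap\mu(s))$ for every $r$. That is the correct input for Proposition~\ref{p:maslov-properties}(e), and you should state it that way.

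Second, your appeal to Proposition~\ref{p:local-hilbert-red} does not type-check: that proposition is formulated for a \emph{strong symplectic Hilbert space}, whereas here $(X,\omega(s))$ is only a weak symplectic Banach space, and there is no ambient Maslov index yet to compare with a reduced one. The paper avoids this circularity by a device you do not use: given the ``global'' complement $V$ and an arbitrary $V(t)$, it first chooses $V(t)'\subset V$ with $V(t)'\oplus(\lambda(t)+\mu(t))=X$, and then invokes the \emph{transitivity of symplectic reduction} (Lemma~\ref{l:red-transitive}) so that the reduction along $V(t)'$ factors as a further reduction \emph{inside} the finite-dimensional space $(V+\lambda(s))/\lambda_1(s)$. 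Only there---in a genuine finite-dimensional, hence strong symplectic, space---is Proposition~\ref{p:local-hilbert-red} applied. The final passage from $V(t)'$ to $V(t)$ is then the homotopy-plus-vanishing argument you sketch, now applied entirely inside finite dimensions where Proposition~\ref{p:maslov-properties}(a),(e) are already available. Your direct two-parameter homotopy can in fact be made to work once you (i) trivialize the finite-rank bundle $\{X_0(r)\}_r$ explicitly, (ii) replace the appeal to Proposition~\ref{p:local-hilbert-red} by Proposition~\ref{p:maslov-properties}(a) applied to the contractible boundary loop, and (iii) use the corrected edge argument above; but as written the local step is incomplete.
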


\begin{rem}\label{r:mas} By the definition of the spectral flow, our definition coincides with that in Definition \ref{d:mas-hilbert},
and more generally, \auindex{Boo{\ss}--Bavnbek,\ B.}\auindex{Zhu,\ C.}\cite[Definition
7]{BooZhu:2013} in their special cases. Our definition of the Maslov index generalizes the ideas in
\auindex{Swanson,\ R.C.}\auindex{Boo{\ss}--Bavnbek,\ B.}\auindex{Furutani,\
K.}\cite{BoFu99,Swanson1}.
\end{rem}

We firstly show that Theorem \ref{t:main} is true in the
\subindex{Main Theorem!local validity}
local case. For sufficiently small parameter variation, that follows from the homotopy invariance
of the Maslov index. This property was established  in Proposition \ref{p:maslov-properties}.a for
strong symplectic Hilbert spaces, and so for finite dimensions, i.e., it is valid in our case after
symplectic reduction. Besides the application of the established homotopy invariance, the point of
the following lemma is the intrinsic decomposition introduced in Section
\ref{ss:intrinsic-decomposition}: Roughly speaking, for curves of Fredholm pairs of Lagrangian
subspaces of index 0, the technique of intrinsic decomposition permits to extend the choice of a
single complementary space $V(s_0)$ for $\la(s_0)+\mu(s_0)$ in $X(s_0)$ to a continuous
decomposition of the induced finite-dimensional subspace $X_0(s_0):=(\la(s_0)\cap\mu(s_0))\oplus
V(s_0)$ of $X(s_0)$. While we can identify all Banach spaces $X(s)$ ($s\in [0,1]$) with one fixed
Banach space $X$, we can not identify the subspaces $X_0(s)$ ($s\in [0,1]$) with one
finite-dimensional subspace $X_0\< X$, in general and even not locally. The reason is that the
dimension of the intersection $\la(s)\cap\mu(s)$ is upper semi-continuous in any $s_0\in [0,1]$,
and so must be the codimension of the sum $\la(s)+\mu(s)$, having the jumps at the same parameters
like the intersection dimension due to the vanishing index.

The following lemma shows how we can get around that difficulty \textit{locally}, namely assuming
\eqref{e:plus-condition2}-\eqref{e:plus-condition3}.

\begin{lemma}[Two-parameter Maslov index]\label{l:local-mas-def}
\subindex{Maslov index!with two parameters}
We make Assumption \ref{a:banach-fredholm-lagrange} with $X(s)=X$. We assume that there exists a
finite-dimensional subspace  $V$ of $X$ that is a supplement in $X$ (not necessarily transversal)
to all sums $\la(s)+\mu(s)$. More precisely, we assume that
 \begin{align}
\label{e:plus-condition2}X&=X_0\oplus\la_1(s)\oplus\mu_1(s)\\
\label{e:plus-condition3}&=\la(s)\oplus(V+\mu_1(s)) =\mu(s)\oplus(V+\la_1(s))
\end{align}
for all $s\in[0,1]$, where $X_0:=V\oplus\la_0$, $\la_0$ is a finite-dimensional subspace of $X$,
$\la_1(s):=V^{\w(s)}\cap\la(s)$ and $\mu_1(s):=V^{\w(s)}\cap\mu(s)$.

Denote by $X_1(s):=\la_1(s)+\mu_1(s)$. Denote by
\symindex{P0@$P_0,P_0(s),P_0(t,s)$ decomposition projections}
$P_0(s)\colon X\to X_0$ the projection defined by $X=X_0\oplus X_1(s)$. Denote by \symindex{\omega
l@$\w_l, \w_r$ decomposition symplectic structures}$\w_l(s)=\w_r(s)$ the symplectic structure
defined by Proposition \ref{p:cal-red}.d. By definition of Fredholm pairs, for each $t\in[0,1]$,
there exists $V(t)$ such that $V(t)\oplus(\lambda(t)+\mu(t))=X$. Then there exists a $\delta(t)>0$
for each $t\in[0,1]$ such that, for all $[s_1,s_2]\subset(t-\delta(t),t+\delta(t))\cap[0,1]$,
\newline
(a) the properties (2) and (3) in Choices and Notations \ref{cn:formal-definition} are satisfied
with $L(t,s)=I_X$, and
\newline
(b) we have the following two-parameter formula:
\begin{align}
\nonumber\Mas_{\pm}&\bigl\{P_0(s)(\la(s)),P_0(s)(\mu(s));\w_l(s);s\in[s_1,s_2]\bigr\}\\
\label{e:maslov-local}&=\Mas_{\pm}\bigl\{P_0(t,s)(\lambda(s)),P_0(t,s)(\mu(s));\w_l(t,s);s\in[s_1,s_2]\bigr\},
\end{align}
where $P_0(t,s)$ and $\w_l(t,s)$ are given by Definition \ref{d:maslov-banach}.
\end{lemma}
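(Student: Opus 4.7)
The plan for part (a) is to use openness of direct-sum (transversality) conditions under the gap topology. At $s = t$, the identity $X = V(t) \oplus (\la(t) + \mu(t))$ together with $\la(t) + \mu(t) = \la_0(t) \oplus \la_1(t,t) \oplus \mu_1(t,t)$ follows from Proposition \ref{p:reduction-prop} applied at $t$ with $V(t)$ in place of $V$. By Lemma \ref{l:v-w-continuous}, the annihilators $V(t)^{\w(s)}$ vary continuously in $s$; by the continuity-of-operations results in Appendix \ref{ss:continuity-of-operations}, so do the intersections $\la_1(t,s) = V(t)^{\w(s)} \cap \la(s)$ and $\mu_1(t,s) = V(t)^{\w(s)} \cap \mu(s)$ as well as their algebraic sums. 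Each of the conditions (ii), (iii) of Choices and Notations \ref{cn:formal-definition} is a finite conjunction of \emph{open} conditions (a direct-sum decomposition is preserved under small perturbation of the summands, provided the dimensions stay constant, which they do by the vanishing-index hypothesis combined with the existence of the global supplement $V$). Hence there is a $\delta(t) > 0$ on which all of (ii), (iii) hold with $L(t,s) = I$.

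For part (b), the idea is to deform $V$ into $V(t)$ through a continuous one-parameter family of finite-dimensional subspaces and invoke homotopy invariance of the Maslov index in finite dimensions. First, $V$ and $V(t)$ have the same dimension, namely $\dim(\la(t)\cap\mu(t))$, since vanishing index together with $X = V \oplus (\la(s) + \mu(s))$ forces $\dim V = \dim(\la(s)\cap\mu(s))$ to be constant on $[s_1,s_2]$. Using the Banach-manifold structure on the Grassmannian of complemented subspaces (Lemma \ref{l:ck-complemented}), I choose a continuous path $\{V_\tau\}_{\tau \in [0,1]}$ with $V_0 = V$ and $V_1 = V(t)$. By compactness of $\tau \in [0,1]$ combined with part (a), after shrinking $\delta(t)$ we may assume that for every $(\tau, s) \in [0,1] \times (t-\delta(t), t+\delta(t))$ the analogues of \eqref{e:plus-condition2}--\eqref{e:plus-condition3} hold with $V_\tau$ in place of $V$.

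For each such $\tau$, Proposition \ref{p:cal-red} produces a projection $P_0^\tau(s) : X \to X_0^\tau(s)$ onto a finite-dimensional subspace endowed with a symplectic form $\w_l^\tau(s)$, and a pair of Lagrangian subspaces $(P_0^\tau(s)(\la(s)), P_0^\tau(s)(\mu(s)))$ in $(X_0^\tau(s), \w_l^\tau(s))$. Jointly this assembles into a two-parameter continuous family of finite-dimensional symplectic vector spaces with a continuous family of Fredholm pairs of Lagrangian subspaces, parametrized by $(\tau, s) \in [0,1] \times [s_1, s_2]$. By homotopy invariance and naturality under symplectic action (Proposition \ref{p:maslov-properties}(a), (d)), the Maslov index of the curve $s \mapsto (P_0^\tau(s)(\la(s)), P_0^\tau(s)(\mu(s)))$ is independent of $\tau$. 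Evaluation at $\tau = 0$ gives the left-hand side of \eqref{e:maslov-local} and at $\tau = 1$ the right-hand side.

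The main obstacle is guaranteeing simultaneous validity of the openness conditions along the whole homotopy $\tau \in [0,1]$ \emph{and} continuous variation of the resulting finite-dimensional symplectic data. The first is settled by compactness in $\tau$ combined with part (a); the second requires verifying that the maps $\tau \mapsto V_\tau$, $(\tau,s) \mapsto V_\tau^{\w(s)}$, $(\tau,s) \mapsto V_\tau^{\w(s)} \cap \la(s)$ and $(\tau,s) \mapsto V_\tau^{\w(s)} \cap \mu(s)$ are continuous, which again reduces to Lemma \ref{l:v-w-continuous} and Appendix \ref{ss:continuity-of-operations} once the relevant direct sums are in place.
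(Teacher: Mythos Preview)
Your part~(a) is essentially fine, but part~(b) contains a genuine error that breaks the argument.

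You assert that $X = V \oplus (\la(s)+\mu(s))$ for all $s$, and hence that $\dim V = \dim(\la(s)\cap\mu(s))$ is constant and equals $\dim V(t)$. This contradicts the very hypothesis of the lemma: the statement emphasizes that $V$ is a supplement \emph{not necessarily transversal} to $\la(s)+\mu(s)$. The only thing guaranteed (via Proposition~\ref{p:cal-red}.f) is $X = V + \la(s) + \mu(s)$, a sum, not a direct sum. In fact, if your direct-sum claim held, the intersection dimension $\dim(\la(s)\cap\mu(s))$ would be constant on $[s_1,s_2]$, and by Proposition~\ref{p:maslov-properties}(e) both sides of \eqref{e:maslov-local} would vanish---the lemma would be trivial. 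The whole point of the fixed ``big'' $V$ (of dimension $\dim\la_0 = \tfrac12\dim X_0$) is that it dominates all the jumping codimensions $\dim X/(\la(s)+\mu(s))$, whereas $V(t)$ has dimension exactly $\dim(\la(t)\cap\mu(t)) \le \dim\la_0$, typically strictly smaller. So you cannot connect $V$ to $V(t)$ by a path of subspaces of a common dimension, and the homotopy you describe does not exist.

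The paper handles this mismatch in two steps. First, since $X = V + \la(t) + \mu(t)$, one can choose a \emph{subspace} $V(t)' \subset V$ with $V(t)' \oplus (\la(t)+\mu(t)) = X$; because $V(t)' \subset V$, the reduction via $V(t)' + \la(s)$ sits inside the reduction via $V + \la(s)$, and transitivity of symplectic reduction (Lemma~\ref{l:red-transitive}) together with Proposition~\ref{p:local-hilbert-red} in the finite-dimensional quotient $(V+\la(s))/\la_1(s)$ shows that the two Maslov indices agree. Second, $V(t)'$ and the given $V(t)$ \emph{do} have the same dimension (both complement $\la(t)+\mu(t)$), so they can be joined by a path in $G(X,\la(t)+\mu(t))$ (Lemma~\ref{l:connectness-local}), and homotopy invariance finishes the argument. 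The missing ingredient in your approach is precisely this intermediate $V(t)'$ and the transitivity step bridging the dimension gap.
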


\begin{proof} Let $s\in[0,1]$. By Proposition \ref{p:cal-red}.g, we have $X=V+\la(s)+\mu(s)=V^{\omega(s)}+\lambda(s)=V^{\w(s)}+\mu(s)$. Set
$W_l(s):=V+\la(s)$. Then we have $W_l(s)^{\w(s)}=\la_1(s)$. By Proposition \ref{p:cal-red}.b, we
have a linear isomorphism $T_l(s)\colon W_l(s)/\la_1(s)\to X_0$ induced by $P_0$. So $X_0$ is
symplectic. Denote by $\tilde\w(s)$ the induced symplectic structure on $W_l(s)/\la_1(s)$. By
Proposition \ref{p:cal-red}.d, the symplectic structure on $X_0$ induced from $\tilde\w(s)$ by
$T_l(s)$ is given by $\w_l(s)$.

Let $t\in[0,1]$. Since $X=V+\la(t)+\mu(t)$, there exists a linear subspace $V(t)^{\prime}$ of $V$
such that $V(t)^{\prime}\oplus(\lambda(t)+\mu(t))=X$ for each $t\in[0,1]$. Recall that
$\la_0(t)=\la(t)\cap\mu(t)$. For $s\in[0,1]$ with small $|s-t|$, we set
\begin{align*}&\la_1(t,s))^{\prime}:=(V(t)^{\prime})^{\w(s)}\cap\la(s),\quad\mu_1(t,s))^{\prime}:=(V(t)^{\prime})^{\w(s)}\cap\mu(s),\\
&X_0(t)^{\prime}:=V(t)^{\prime}\oplus\la_0(t),\quad
X_1(t,s)^{\prime}:=\la_1(t,s))^{\prime}\oplus\mu_1(t,s))^{\prime}.
\end{align*}
Denote by $P_0(t,s)^{\prime}$ the projection onto $X_0(t)^{\prime}$ defined by
\begin{equation}\label{e:decomposition-prime}
X=X_0(t)^{\prime}\oplus X_1(t,s).
\end{equation}
We denote by $\w_l(t,s)^{\prime}$ the symplectic structure on $X_0(t)$ defined by
\eqref{e:decomposition-prime} and Proposition \ref{p:cal-red}.d.

Set
\begin{align*}
\tilde W(t,s)\ :&=\ R^{\w(s)}_{V+\la(s)}(V(t)^{\prime}+\la(s))\ =\ \frac{V(t)^{\prime}+\la(s)}{\la_1(s)},\\
\tilde V(t,s)\ :&=\ R^{\w(s)}_{V+\la(s)}(V(t)^{\prime})\ =\ \frac{V(t)^{\prime}+\la_1(s)}{\la_1(s)},\\
\tilde \la(s)\ :&=\ R^{\w(s)}_{V+\la(s)}(\la(s))\ =\ \frac{\la(s)}{\la_1(s)},\\
\tilde \mu(s)\ :&=\ R^{\w(s)}_{V+\la(s)}(\mu(s))\ =\ \frac{\mu(s)\cap(V+\la(s))
+\la_1(s)}{\la_1(s)}.
\end{align*}
Since $V(t)^{\prime}\subset V$, by Lemma \ref{l:lin-alg} we have
\begin{align*}
\tilde V(t,t)&+\tilde\la(t)+\tilde\mu(t)\ =\ \frac{V(t)^{\prime}+\la(t)+\mu(t)\cap (V+\la(t))}{\la_1(t)}\\
&=\ \frac{(V(t)^{\prime}+\la(t)+\mu(t))\cap (V+\la(t))}{\la_1(t)}\\
&=\ \frac{X\cap (V+\la(t))}{\la_1(t)}\ =\ \frac{V+\la(t)}{\la_1(t)}.
\end{align*}
By Proposition \ref{p:cal-red}, Lemma \ref{l:red-transitive} and Proposition
\ref{p:local-hilbert-red}, there exists a $\delta_1(t)>0$ for each $t$ such that, for
$[s_1,s_2]\subset(t-\delta_1(t),t+\delta_1(t))\cap[0,1]$, we have
\begin{align*}
\Mas_{\pm}&\bigl\{P_0(s)(\la(s)),P_0(s)(\mu(s));\w_l(s);s\in[s_1,s_2]\bigr\}\\
&=\ \Mas_{\pm}\bigl\{\tilde\la(s),\tilde\mu(s);\tilde\w(s);s\in[s_1,s_2]\bigr\}\\
&=\ \Mas_{\pm}\bigl\{R^{\tilde\w(s)}_{\tilde W(t,s)}(\tilde\la(s)),R^{\tilde\w(s)}_{\tilde W(t,s)}(\tilde\mu(s));s\in[s_1,s_2]\bigr\}\\
&=\ \Mas_{\pm}\bigl\{R^{\w(s)}_{V(t)^{\prime}+\la_1(s)}(\la(s)),R^{\w(s)}_{V(t)^{\prime}+\la_1(s)}(\mu(s));s\in[s_1,s_2]\bigr\}\\
&=\
\Mas_{\pm}\bigl\{P_0(t,s)^{\prime}(\lambda(s)),P_0(t,s)^{\prime}(\mu(s));\w_l(t,s)^{\prime};s\in[s_1,s_2]\bigr\}.
\end{align*}

For any closed subspace $N\< X$ we denote by
\symindex{G@$G(X,N)$ set of closed subspaces of $X$ complementary to $N$}
$G(X,N)$ the (possibly empty) set of closed subspaces of $X$ that are transversal and complementary
to $N$ in $X$. By Lemma \ref{l:connectness-local}, it is an open affine space. Hence in our case
there exists a path $f(t,\cdot)\colon [0,1]\to G(X,\la(t)+\mu(t))$ with $f(t,0)=V(t)$ and
$f(t,1)=V(t)^{\prime}$ for each $t\in[0,1]$. For $t,a\in [0,1]$ and $s$ "close" to $t$ (to be
specified at once), set $\la_1(t,a,s):=f(t,a)^{\w(s)}\cap\la(s)$, and
$\mu_1(t,a,s):=f(t,a)^{\w(s)}\cap\mu(s)$. By Lemma \ref{l:sum-whole-space}, we have $X=f(t,a)^{\w(t)}+\la(t)=f(t,a)^{\w(t)}+\mu(t)$.
By Proposition \ref{p:reduction-prop} and Appendix
\ref{ss:continuity-of-operations}, there exists $\delta(t)\in(0,\delta_1(t))$ for each $t$ such
that
\begin{align*}
X\ &=\ \la_0(t)\oplus f(t,a)\oplus\la_1(t,a,s)\oplus\mu_1(t,a,s)\\
&=\ \la(s)\oplus\left(f(t,a)+\mu_1(t,a,s)\right)\ =\ \mu(s)\oplus\left(f(t,a)+\la_1(t,a,s)\right)
\end{align*}
for all $s\in(t-\delta(t),t+\delta(t))\cap[0,1]$. That proves properties (2) and (3) of our
Choices and Notations \ref{cn:formal-definition}, i.e., our claim (a).

To (b), we observe that in our case the symplectic reduction does not change the dimension of the
intersection of Lagrangian subspaces. By Lemma A.4.5, we can find a path connecting $\la_0$ and
$\la_0^{\prime}$ in $G(X, V\oplus X_1(t))$. By Proposition \ref{p:maslov-properties}.a, the left
hand side of \eqref{e:maslov-local} remains unchanged in a small interval $[s_1,s_2]$ if we replace
$V(t)$ by $V(t)^{\prime}$ and $\la_0$ by $\la_0^{\prime}$. Then (\ref{e:maslov-local}) holds.
\end{proof}

\begin{note}
We emphasize that for fixed $t$, $\{f(t,a)\}_{a\in [0,1]}$ is a path of finite-dimensional
subspaces of $X$. For each $t,a\in[0,1]$, $f(t,a)$ satisfies that $X=f(t,a)\oplus(\la(t)+\mu(t))$,
$f(t,0)=V(t)^{\prime}$, and $f(t,1)=V(t)$. So by the homotopy invariance and the vanishing of the
Maslov index (in the finite-dimensional case), the Maslov index is unchanged along the path
$f(t,a)$ (fix $t$). The difference between $V(t)$ and $V(t)^{\prime}$ is that $V(t)^{\prime}\subset
V$, while $V(t)$ can vary widely.
\end{note}

\begin{proof}[Proof of Theorem \ref{t:main}]
By taking a common refinement of the partitions, the first part of the Theorem follows from Lemma
\ref{l:local-mas-def}. The second part of the Theorem is a repetition of the list of properties
given in Proposition \ref{p:maslov-properties} for the case of a strong symplectic Hilbert space.
The validity in the general case follows from the proposition and our definition of the Maslov
index.
\end{proof}

We have the following lemma from (\auindex{Boo{\ss}--Bavnbek,\ B.}\auindex{Zhu,\ C.}\cite[Lemma
8]{BooZhu:2013}:

\begin{lemma}\label{l:boxplus}
Let $(X,\w)$ be a symplectic vector space. Let \symindex{\Delta@$\D$ diagonal (i.e., the canonical
Lagrangian) in product symplectic space}$\D$ denote the diagonal (i.e., the canonical Lagrangian)
in the product symplectic space $(X\oplus X,(-\w)\oplus \w)$, and $\la,\mu$ are linear subspaces of
$(X,\w)$. Then
\[
(\la,\mu)\in\Ff\Ll(X) \iff (\la\oplus\mu,\D)\in\Ff\Ll(X\oplus X)
\]
and
\[
\Index(\la,\mu)=\Index(\la\oplus\mu,\D),
\]
where $\la\oplus\mu:=\{(x,y); x\in \la,y\in\mu\}$.
\end{lemma}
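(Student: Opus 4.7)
The plan is to reduce both sides via the natural surjection $\pi\colon X\oplus X\to X$, $(a,b)\mapsto a-b$, whose kernel is precisely $\D$. This single map will capture both the Fredholm condition and the index identity simultaneously.

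First I would verify that $\la\oplus\mu$ and $\D$ are Lagrangian in $(X\oplus X,(-\w)\oplus\w)$. For $\D$, a pair $(u,v)$ lies in its annihilator iff $-\w(x,u)+\w(x,v)=0$ for all $x\in X$, i.e., $\w(x,v-u)=0$ for all $x$; non-degeneracy of $\w$ forces $v=u$, so $\D^{(-\w)\oplus\w}=\D$. For $\la\oplus\mu$, testing against $(x,0)$ and $(0,y)$ separately shows the annihilator equals $\la^\w\oplus\mu^\w$, which equals $\la\oplus\mu$ exactly when $\la,\mu\in\Ll(X,\w)$. Note that the sign flip on the first factor is precisely what makes both identifications correct; this is the one point requiring a little care.

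Next I would compute the two dimensions entering the Fredholm index. The intersection $(\la\oplus\mu)\cap\D$ consists of pairs $(x,x)$ with $x\in\la\cap\mu$ and is canonically isomorphic to $\la\cap\mu$. On the other hand, $\pi$ induces an isomorphism $(X\oplus X)/\D\cong X$ under which the image of $\la\oplus\mu$ equals $\{x-y:x\in\la,\,y\in\mu\}=\la+\mu$ (since $\mu$ is a linear subspace, $-\mu=\mu$). Passing to a further quotient yields
\[
(X\oplus X)\bigl/\bigl((\la\oplus\mu)+\D\bigr)\ \cong\ X/(\la+\mu).
\]
Combining, $(\la,\mu)$ is a Fredholm pair iff both $\la\cap\mu$ and $X/(\la+\mu)$ are finite-dimensional iff $(\la\oplus\mu,\D)$ is a Fredholm pair, and
\[
\Index(\la\oplus\mu,\D)\ =\ \dim(\la\cap\mu)-\dim X/(\la+\mu)\ =\ \Index(\la,\mu),
\]
by Definition \ref{d:fredholm-lag-pair}. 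There is no real obstacle here: once the sign convention in the product symplectic form is handled correctly in the first step, the remainder is a direct linear-algebra calculation via $\pi$.
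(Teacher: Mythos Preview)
Your proof is correct. The paper does not actually prove this lemma; it simply cites it from \cite[Lemma 8]{BooZhu:2013}. Your argument via the surjection $\pi\colon X\oplus X\to X$, $(a,b)\mapsto a-b$, together with the direct verification that $\D$ and $\la\oplus\mu$ are Lagrangian, is the standard elementary route and is complete as written.

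One minor remark: the sign flip on the first factor is essential for $\D$ to be Lagrangian (with $\w\oplus\w$ the annihilator of $\D$ would be the anti-diagonal), but it is irrelevant for the Lagrangian-ness of $\la\oplus\mu$, since $\la^{-\w}=\la^{\w}$ as sets. Your comment that the sign ``makes both identifications correct'' is thus slightly overstated for the second one, though this does not affect the argument.
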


The following proposition generalizes \auindex{Boo{\ss}--Bavnbek,\ B.}\auindex{Zhu,\
C.}\cite[Proposition 4 (b)]{BooZhu:2013}.

\begin{proposition}\label{p:boxplus} Denote by $\D(s)$ the diagonal of $X(s)\times X(s)$.
Under Assumption \ref{a:banach-fredholm-lagrange}, we have
\begin{align}\label{e:maslov1}
\Mas&\{\la(s)\oplus\mu(s),\D(s);\w(s)\oplus(-\w(s))\}
=\Mas\{\la(s),\mu(s);\w(s)\}\\
&=\Mas\{\mu(s),\la(s);-\w(s)\}
\label{e:maslov3}\\
&=\Mas\{\D(s),\la(s)\oplus\mu(s);(-\w(s))\oplus \w(s)\}.\label{e:maslov4}
\end{align}
\end{proposition}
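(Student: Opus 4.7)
The plan is to reduce all three stated equalities to the main equality \eqref{e:maslov1}, which contains the essential content. The equality \eqref{e:maslov3} follows at once from the flipping property (Proposition \ref{p:maslov-properties}.f), whose validity in our general Banach bundle setting is guaranteed by Theorem \ref{t:main}: applied with $\pm=+$ and paired with the form $-\w$, it gives $\Mas\{\la(s),\mu(s);\w(s)\} = \Mas\{\mu(s),\la(s);-\w(s)\}$, the boundary correction terms cancelling because $\dim(\la(s)\cap\mu(s))$ is intrinsic to the pair. The equality \eqref{e:maslov4} then follows from \eqref{e:maslov1} by the same flipping property applied to the pair $(\la\oplus\mu,\D)$, since replacing $\w\oplus(-\w)$ by its negative yields $(-\w)\oplus\w$.

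For the main equality \eqref{e:maslov1}, I would work locally: by additivity under catenation (Proposition \ref{p:maslov-properties}.b, extended by Theorem \ref{t:main}), it is enough to prove the equality on arbitrarily small subintervals of $[0,1]$. Fix $t_0\in[0,1]$ and use the Fredholm assumption to choose a finite-dimensional subspace $V\subset X(t_0)$ with $X(t_0)=V\oplus(\la(t_0)+\mu(t_0))$. A direct computation, essentially the identification $(X\oplus X)/((\la\oplus\mu)+\D)\cong X/(\la+\mu)$ via $(u,v)\mapsto u-v$ that underlies Lemma \ref{l:boxplus}, shows that $V':=V\oplus\{0\}$ is a complement to $(\la(t_0)\oplus\mu(t_0))+\D(t_0)$ in $X(t_0)\oplus X(t_0)$. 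By Appendix \ref{ss:continuity-of-operations}, both $V$ and $V'$ extend to continuous families on a neighborhood of $t_0$, so Definition \ref{d:maslov-banach} applies on both sides.

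Next, apply Proposition \ref{p:reduction-prop} and Proposition \ref{p:cal-red} to the two setups $(\la,\mu)$ in $(X,\w)$ and $(\la\oplus\mu,\D)$ in $(X\oplus X,\w\oplus(-\w))$ with the respective complements $V$ and $V'$. Write $\la_0=\la(t_0)\cap\mu(t_0)$, so that $(\la(t_0)\oplus\mu(t_0))\cap\D(t_0)=\D_{\la_0}:=\{(x,x):x\in\la_0\}$. The natural decompositions of Proposition \ref{p:reduction-prop} produce finite-dimensional symplectic spaces $X_0=\la_0\oplus V$ and $X_0'=\D_{\la_0}\oplus V'$, both of dimension $2\dim\la_0$. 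Define a linear map $\Phi\colon X_0\to X_0'$ by $\Phi(a+v):=(a+v,a)$ for $a\in\la_0$, $v\in V$, and extend it to a continuously varying family using the local trivializations. My plan is to show that $\Phi$ is a symplectic isomorphism from $(X_0,\w_l(t_0,s))$ onto $(X_0',\w_l'(t_0,s))$ and that it carries the reduced Lagrangian pair $(P_0(t_0,s)\la(s),P_0(t_0,s)\mu(s))$ to $(P_0'(t_0,s)(\la(s)\oplus\mu(s)),P_0'(t_0,s)\D(s))$. Once this is established, the equality of the two local Maslov indices is an immediate consequence of the naturality of the Maslov index (Proposition \ref{p:maslov-properties}.d) in the finite-dimensional case.

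The hard part will be verifying the symplectomorphism property $\Phi^*\w_l'(t_0,s)=\w_l(t_0,s)$ together with the matching of reduced Lagrangians under $\Phi$. Both identities can be attacked by tracking the operators $A_1,A_2,B_1,B_2$ of Proposition \ref{p:cal-red}.a through both reductions and exploiting the intersection form identities \eqref{e:intersection-form-restriction1}--\eqref{e:intersection-form-restriction2}. A cleaner route, which I would try first, is to choose $V$ so that the decomposition from Proposition \ref{p:reduction-prop} satisfies $\la_0\<\la_1^{\w}$ (for instance by slight perturbation inside the affine space of complements); then Proposition \ref{p:cal-red}.d forces $\w_l=\w|_{X_0}$ and analogously $\w_l'=(\w\oplus(-\w))|_{X_0'}$, reducing the verification to the direct calculation $(\w\oplus(-\w))((a+v,a),(a'+v',a'))=\w(a+v,a'+v')-\w(a,a')=\w(v,a')+\w(a,v')+\w(v,v')=\w(a+v,a'+v')$ on $X_0$, which is exactly $\w_l$, and to tracking the projections along the decomposition $X\oplus X=X_0'\oplus X_1'$.
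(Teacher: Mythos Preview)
Your proposal is correct, but it takes a much more hands-on route than the paper. The paper's proof is two lines: the finite-dimensional case is \cite[Proposition 4(b)]{BooZhu:2013}, and since the Maslov index in the general Banach-bundle setting is by Definition \ref{d:maslov-banach} a sum of finite-dimensional Maslov indices (with well-definedness guaranteed by Theorem \ref{t:main}), the identity propagates term by term. No explicit isomorphism between the two reductions is written down; the freedom in the choices is enough. Your approach, by contrast, builds an explicit symplectic isomorphism $\Phi$ between the finite-dimensional reduced spaces and verifies that it intertwines the reduced Lagrangian pairs. This makes explicit precisely the compatibility that the paper leaves to the reader, at the cost of substantially more computation.

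Two remarks on the details. First, your derivation of \eqref{e:maslov3} via Proposition \ref{p:maslov-properties}.f is right, but the relevant identity is the \emph{second} line there, $\Mas_{\pm}\{\la,\mu;\w\}=\Mas_{\pm}\{\mu,\la;-\w\}$, which holds on the nose; no ``boundary correction terms'' enter. Second, your ``cleaner route'' for \eqref{e:maslov1} (forcing $\w_l=\w|_{X_0}$ via $\la_0\subset\la_1^{\w}$) works at $s=t_0$ because there $X_0=X_1^{\w}$ by Proposition \ref{p:reduction-prop}.d, but for $s\neq t_0$ the role of $\la_0$ is played by $L(t_0,s)\la_0(t_0)$, which need not lie in $\la_1(t_0,s)^{\w(s)}$. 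So you would indeed have to fall back on tracking $A_1,B_1$ through both reductions, as you indicate. That is doable but tedious; the paper sidesteps it entirely.
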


\begin{proof} By \auindex{Boo{\ss}--Bavnbek,\ B.}\auindex{Zhu,\ C.}\cite[Proposition 4 (b)]{BooZhu:2013},
our results hold in the finite-dimensional case. The general case follows from the definition of
the Maslov index.
\end{proof}

\section{Calculation of the Maslov index}\label{ss:cal-maslov}
\subindex{Calculation of the Maslov index}
We start with the general case. We fix our data and make some choices.

\begin{data}
Let $\e>0$ be a positive number. Let $X$ be a (complex) Banach space with continuously varying
symplectic structure $\omega(s)$, $s\in(-\e,\e)$. Let $(\lambda(s),\mu(s))$ be a path of Fredholm
pairs of Lagrangian subspaces of $(X,\omega(s))$ of index $0$. Let $V(s)$ and $\la_0(s)$ be a path
of finite-dimensional subspaces of $X$ with $\la_0(0)=\la(0)\cap\mu(0)$ and
$V(0)\oplus(\lambda(0)+\mu(0))=X$. Set $\la_1(s):=V(s)^{\w(s)}\cap\la(s)$,
$\mu_1(s):=V(s)^{\w(s)}\cap\mu(s)$, $X_0(s):=\la_0(s)+V(s)$, $X_1(s):=\la_1(s)+\mu_1(s)$.
\end{data}

\begin{theorem}[Intrinsic decompositions and representations]\label{t:mas-local} There
exists a $\delta>0$ such that for each $s\in(-\delta,\delta)$ and each subinterval
$[s_1,s_2]\subset(-\delta,\delta)$ we have the following intrinsic decompositions, representations,
and formulae:
\newline (a) $X=V(s)+\la(s)+\mu(s)=V(s)^{\omega(s)}+\lambda(s)=V(s)^{\w(s)}+\mu(s)$;
\newline (b) $X=\la_0(s)\oplus V(s)\oplus \la_1(s)\oplus\mu_1(s)=V(s)^{\w(s)}\oplus\la_0(s)$;
\newline (c) $\la(s)$ and $\mu(s)$ are expressed by
\begin{align}
\label{e:local-la}\la(s)&=\Graph\Bigl(\left(\begin{array}{cc}A_1(s) & 0 \\A_2(s) & 0 \\
\end{array}\right)\colon \la_0(s)\oplus\la_1(s)\to V(s)\oplus \mu_1(s)\Bigr),\\
\label{e:local-mu}\mu(s)&=\Graph\Bigl(\left(\begin{array}{cc}B_1(s) & 0 \\B_2(s) & 0
\\\end{array}\right)\colon \la_0(s)\oplus\mu_1(s)\to V(s)\oplus \la_1(s)\Bigr)
\end{align}
with continuous families $\{A_j(s)\}_{s\in[0,1]},\,\{B_j(s)\} _{s\in[0,1]}$
of linear operators and $A_j(0)= B_j(0) =0$ for $j=1,2$;
\newline (d)
Setting
\begin{align}
\label{e:local-wl}\w_l(s):&=\w(s)|_{X_0(s)}-\left(\begin{array}{cc}I_{\la_0(s)} & 0 \\A_1(s) & 0 \\\end{array}\right)^*(\w(s)|_{X_0(s)}), \text{ and}\\
\label{e:local-wr}\w_r(s):&=\w(s)|_{X_0(s)}-\left(\begin{array}{cc}I_{\la_0(s)} & 0 \\B_1(s) & 0
\\\end{array}\right)^*(\w(s)|_{X_0(s)}),
\end{align}
we obtain $\w_l(s)=\w_r(s)$; moreover, the images $P_0(s)(\mu(s))$ are Lagrangian subspaces of the
symplectic vector space $(X_0(s),\w_l(s))$, where $P_0(s)\colon X\to X_0(s)$ denotes the projection
defined by $X=X_0(s)\oplus X_1(s)$ like before.
\newline (e) The following equalities hold for the \subindex{Maslov index!segmental}segmental Maslov
indices and intersection dimensions:
\begin{multline}
\Mas_{\pm}\{\la(s),\mu(s);s\in[s_1,s_2]\}\\
\label{e:mas-local}=\Mas_{\pm}\{P_0(s)(\la(s)),P_0(s)(\mu(s));\w_l(s);s\in[s_1,s_2]\},
\end{multline}
and
\begin{equation} \label{e:dim-intersection-local} \dim(\la(s)\cap\mu(s))\ =\ \dim
\bigl(P_0(s)(\la(s))\cap P_0(s)(\mu(s))\bigr).
\end{equation}
\end{theorem}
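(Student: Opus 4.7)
The strategy is to establish the theorem at $s=0$ using the intrinsic decomposition results of Chapter \ref{s:banach}, and then propagate everything to a neighborhood by the continuity results of the Appendix, before reducing the Maslov-index identity in (e) to the already-proved two-parameter version.

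First, at $s=0$, I apply Proposition \ref{p:reduction-prop} to the Fredholm pair of Lagrangian subspaces $(\lambda(0),\mu(0))$ of index $0$ with complement $V(0)$. This yields both $X = \lambda_0(0) \oplus V(0) \oplus \lambda_1(0) \oplus \mu_1(0)$ and $X = V(0)^{\omega(0)} \oplus \lambda_0(0)$, together with the refinements $\lambda(0) = \lambda_0(0) \oplus \lambda_1(0)$ and $\mu(0) = \lambda_0(0) \oplus \mu_1(0)$. Corollary \ref{c:sum-whole-space} supplies $V(0)^{\omega(0)} + \lambda(0) = X = V(0)^{\omega(0)} + \mu(0)$, which is (a) at $s=0$, while (b) at $s=0$ is immediate from Proposition \ref{p:reduction-prop}.a,b. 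To extend to a neighborhood, Lemma \ref{l:v-w-continuous} tells us that $s \mapsto V(s)^{\omega(s)}$ is continuous, and since $\lambda_1(s) = V(s)^{\omega(s)} \cap \lambda(s)$ and $\mu_1(s) = V(s)^{\omega(s)} \cap \mu(s)$ are intersections under what becomes a transversality condition at $s=0$, the continuity-of-operations results in Appendix \ref{ss:continuity-of-operations} make $\lambda_1(s), \mu_1(s)$ continuous near $0$. The direct-sum and spanning conditions in (a) and (b) are open in the gap topology, so they persist on some interval $(-\delta,\delta)$.

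For (c), once (b) is established on $(-\delta,\delta)$, each of $\lambda(s)$ and $\mu(s)$ satisfies the hypotheses \eqref{e:direct-sum-assumption} of Proposition \ref{p:cal-red}, and therefore admits the graph representations \eqref{e:expression-la} and \eqref{e:expression-mu}, which are exactly \eqref{e:local-la} and \eqref{e:local-mu}. At $s=0$, the identities $\lambda(0) = \lambda_0(0) \oplus \lambda_1(0)$ and $\mu(0) = \lambda_0(0) \oplus \mu_1(0)$ force the vanishing $A_1(0) = A_2(0) = B_1(0) = B_2(0) = 0$. Continuity of the block projections relative to the direct-sum decomposition of (b) then gives the continuous dependence of the blocks. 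Statement (d) is then a direct application of Proposition \ref{p:cal-red}.d: the stated formulae for $\omega_l(s)$ and $\omega_r(s)$ coincide, and $P_0(s)(\lambda(s)), P_0(s)(\mu(s))$ are Lagrangian in $(X_0(s), \omega_l(s))$.

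Finally, for (e), the dimension equality \eqref{e:dim-intersection-local} is precisely formula \eqref{e:dim-intersection} of Proposition \ref{p:cal-red}.b. The Maslov index formula \eqref{e:mas-local} requires more care, since Lemma \ref{l:local-mas-def} is stated for a \emph{fixed} supplement $V$, whereas here $V(s)$ also varies. The plan is to fix the value $V(0)$ on a sufficiently small subinterval $[s_1,s_2] \subset (-\delta,\delta)$, apply Lemma \ref{l:local-mas-def}.b to the reduction via $V(0)$, and then transfer to the reduction via $V(s)$ by combining the transitivity of symplectic reduction (Lemma \ref{l:red-transitive}) with the homotopy invariance (Proposition \ref{p:maslov-properties}.a) of the Maslov index in the finite-dimensional reduced space. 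I expect this last step to be the main obstacle: one must verify that along the linear interpolation from $V(0)$ to $V(s)$ inside $G(X,\lambda(s)+\mu(s))$ the reduced Lagrangian pair remains a continuous curve of Fredholm pairs of Lagrangians and that the Maslov index is unchanged. This is settled by Proposition \ref{p:local-hilbert-red} applied in the reduction parameter, exactly as in the proof of Lemma \ref{l:local-mas-def}.b where the fixed supplement $V$ was replaced by the variable $V(t)$.
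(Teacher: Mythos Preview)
Your treatment of (a)--(d) is correct and matches the paper's: the base case $s=0$ is Proposition~\ref{p:reduction-prop} and Corollary~\ref{c:sum-whole-space}, the graph representations and the equality $\omega_l=\omega_r$ are Proposition~\ref{p:cal-red}, and propagation to small $s$ is by the continuity results of Appendix~\ref{ss:continuity-of-operations}. The dimension formula in (e) is likewise Proposition~\ref{p:cal-red}.b.

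The Maslov-index identity in (e), however, has a gap in your transfer step. Your ``linear interpolation from $V(0)$ to $V(s)$ inside $G(X,\lambda(s)+\mu(s))$'' is not well-defined: in general neither $V(0)$ nor $V(s)$ lies in $G(X,\lambda(s)+\mu(s))$, because for $s\ne 0$ one typically has $\dim X/(\lambda(s)+\mu(s))<\dim V(s)=\dim V(0)$ by upper semicontinuity of the intersection, so the sum $V(s)+\lambda(s)+\mu(s)=X$ in (a) is \emph{not} direct. Moreover, the mechanism behind the proof of Lemma~\ref{l:local-mas-def}.b that you invoke relies on choosing $V(t)'\subset V$ inside a \emph{fixed} big supplement $V$; here there is no common ambient $V$ containing both $V(0)$ and the varying $V(s)$, so that argument does not transfer directly, and Proposition~\ref{p:local-hilbert-red} by itself does not bridge the two reductions.

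The paper's route avoids this obstruction. After treating the case $V(s)\equiv V(0)$, $\lambda_0(s)\equiv\lambda_0(0)$ directly from Definition~\ref{d:maslov-banach} and Proposition~\ref{p:cal-red}.c, it uses naturality (Proposition~\ref{p:maslov-properties}.d) with a trivialization $L(s)\colon X_0(0)\to X_0(s)$ to pull the projected pairs back to the fixed finite-dimensional space $X_0(0)$. Then it homotopes by reparametrization, replacing $\lambda_0(s),V(s)$ by $\lambda_0(ts),V(ts)$ for $t\in[0,1]$; this is always admissible since the hypotheses (a)--(d) are open and hold at $s=0$ for every $t$. Because $\dim\bigl(P_0(s)(\lambda(s))\cap P_0(s)(\mu(s))\bigr)=\dim(\lambda(s)\cap\mu(s))$ for \emph{any} admissible choice of $(\lambda_0,V)$, the intersection dimension along the side edges $s=s_1,s_2$ is constant in $t$, so those edges contribute zero Maslov index (Proposition~\ref{p:maslov-properties}.e), and the homotopy square gives the desired equality.
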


\begin{proof} (a) By Proposition \ref{p:cal-red}.g and Appendix \ref{ss:continuity-of-operations}.

(b), (c), (d) By (a), Proposition \ref{p:cal-red} and Appendix \ref{ss:continuity-of-operations}.

(e) If $V(s)\equiv V(0)$, our result follows from the definition of the Maslov index and
Proposition \ref{p:cal-red}.c.

In the general case, by \auindex{Kato,\ T.}\cite[Lemma I.4.10]{Ka95} there exists a path of bounded
invertible map $L(s)\in\Bb(X)$ such that $L(s)X_0=X_0(s)$ with $L(0)=I_X$. By Proposition
\ref{p:maslov-properties}.d we have
\begin{align}&\nonumber\Mas_{\pm}\{P_0(s)(\la(s)),P_0(s)(\mu(s));\w_l(s);s\in[s_1,s_2]\}\\
\label{e:finite-homotopy}=&\Mas_{\pm}\{L(s)^{-1}P_0(s)(\la(s)),L(s)^{-1}P_0(s)(\mu(s));
L(s)^*\w_l(s);s\in[s_1,s_2]\}.
\end{align}

Note that $L(s)$, $P_0(s)$ and $\w_l(s)$ depend \subindex{Continuously varying}continuously on
$\la_0(s)$, $V(s)$, $\la(s)$, $\mu(s)$ and $\w(s)$. Replacing $\la_0(s)$ by $\la_0(ts)$ and $V(s)$
by $V(ts)$ for $t\in[0,1]$, we get a homotopy of the right hand side of (\ref{e:finite-homotopy}).
Note that in our case $\dim(P_0(s)(\la(s))\cap P_0(s)(\mu(s)))=\dim(\la(s)\cap\mu(s))$. Then our
result follows from the special case and Proposition \ref{p:maslov-properties}.
\end{proof}

By Corollary \ref{c:complementary-lagrangian} and \auindex{Neubauer,\ G.}\cite[Lemma 0.2]{Ne68}, we
have a path $\la_0(s)\subset\mu(s)$ with $\la_0(0)=\la(0)\cap\mu(0)$. By Lemma
\ref{l:parametrize-isotropic} and Proposition \ref{p:reduction-prop}.d, we have a Lagrangian path
$V(s)\in X_0(s)$ of $X_0(s)$. We have the following corollary.

\begin{corollary}\label{c:fixed-mu-local} Assume that $\la_0(s)\subset\mu(s)$ as in the data before Theorem \ref{t:mas-local} and
let $\delta>0$ be found correspondingly. Then for each $s\in(-\delta,\delta)$ and
$[s_1,s_2]\subset(-\delta,\delta)$, we have $\w_l(s)=\w(s)|_{X_0(s)}$, and
\begin{align}
\nonumber \Mas_{\pm}&\{\la(s),\mu(s);s\in[s_1,s_2]\}\\
\label{e:fixed-mu-local1}&=\Mas_{\pm}\{P_0(s)(\la(s)),\la_0(s);\w(s)|_{X_0(s)};s\in[s_1,s_2]\}.
\end{align}
\end{corollary}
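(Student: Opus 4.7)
The plan is to derive Corollary \ref{c:fixed-mu-local} as a direct specialization of Theorem \ref{t:mas-local}, exploiting the hypothesis $\la_0(s)\subset\mu(s)$ to kill the two ``off-diagonal'' blocks in the graph representation (\ref{e:local-mu}) of $\mu(s)$ and simultaneously to trivialize the correction term in the formula (\ref{e:local-wr}) for $\w_r(s)$.

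First I would verify that $\w_l(s)=\w(s)|_{X_0(s)}$. Since $\mu(s)$ is Lagrangian, it is isotropic, so $\mu(s)\subset\mu(s)^{\w(s)}$. Because $\mu_1(s)=V(s)^{\w(s)}\cap\mu(s)\subset\mu(s)$ and $\la_0(s)\subset\mu(s)$ by assumption, we get
\[
\w(s)(x,y)\;=\;0\qquad\text{for all }x\in\la_0(s),\ y\in\mu_1(s),
\]
i.e.\ $\la_0(s)\subset\mu_1(s)^{\w(s)}$. The final clause of Proposition \ref{p:cal-red}.d then gives $\w_l(s)=\w_r(s)=\w(s)|_{X_0(s)}$.

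Next I would show $P_0(s)(\mu(s))=\la_0(s)$. By Theorem \ref{t:mas-local}.c every element of $\mu(s)$ has the unique form
\[
y_0+y_1+B_1(s)y_0+B_2(s)y_0,\qquad y_0\in\la_0(s),\ y_1\in\mu_1(s),
\]
relative to the direct sum decomposition $X=\la_0(s)\oplus V(s)\oplus\la_1(s)\oplus\mu_1(s)$ from Theorem \ref{t:mas-local}.b. Picking an arbitrary $x\in\la_0(s)\subset\mu(s)$ and matching components in this direct sum forces $y_0=x$, $y_1=0$, $B_1(s)x=0$, and $B_2(s)x=0$. Since $x\in\la_0(s)$ was arbitrary and $B_1(s),B_2(s)$ are defined on $\la_0(s)$, we conclude $B_1(s)=0$ and $B_2(s)=0$ identically, whence $\mu(s)=\la_0(s)\oplus\mu_1(s)$. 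Because $\mu_1(s)\subset X_1(s)=\ker P_0(s)$, this yields $P_0(s)(\mu(s))=\la_0(s)$.

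Combining these two observations with the Maslov reduction formula (\ref{e:mas-local}) of Theorem \ref{t:mas-local}.e gives immediately
\[
\Mas_{\pm}\{\la(s),\mu(s);s\in[s_1,s_2]\}=\Mas_{\pm}\{P_0(s)(\la(s)),\la_0(s);\w(s)|_{X_0(s)};s\in[s_1,s_2]\},
\]
which is (\ref{e:fixed-mu-local1}). I do not expect a real obstacle here: everything is a direct bookkeeping consequence of the intrinsic decomposition already in place. The only point to keep in mind is that the continuity and actual existence of a path $\la_0(s)\subset\mu(s)$ with $\la_0(0)=\la(0)\cap\mu(0)$ is not part of what must be proved but is an input, supplied by Corollary \ref{c:complementary-lagrangian} together with Neubauer's lemma cited just before the statement.
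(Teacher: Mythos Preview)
Your proof is correct and follows essentially the same route as the paper: from $\la_0(s)\subset\mu(s)$ you deduce $B_1(s)=B_2(s)=0$, hence $P_0(s)(\mu(s))=\la_0(s)$ and $\w_l(s)=\w(s)|_{X_0(s)}$, and then invoke Theorem~\ref{t:mas-local}.e. The only cosmetic difference is that you obtain $\w_l(s)=\w(s)|_{X_0(s)}$ via the last clause of Proposition~\ref{p:cal-red}.d (using $\la_0(s)\subset\mu_1(s)^{\w(s)}$), whereas the paper reads it off directly from $B_1(s)=0$ in \eqref{e:local-wr}; both amount to the same observation that $\la_0(s)$ is isotropic.
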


\begin{proof} In this case we have $B_1(s)=B_2(s)=0$, $P_0(s)(\mu(s))=\la_0(s)$ and $\w_l(s)=\w(s)|_{X_0(s)}$.
By Theorem \ref{t:mas-local}, our results follow.
\end{proof}

\begin{proposition}\label{p:mas-local1} Assume that $V(s)$ is isotropic in Theorem \ref{t:mas-local}.
 Let $\delta$ be as given there. Then for each $s\in(-\delta,\delta)$ and $[s_1,s_2]\subset(-\delta,\delta)$ we have an Hermitian
 form \symindex{Q@$Q,Q(s),Q(\la,t)$ intersection form}\subindex{Intersection form}$Q(s)$ such that
\begin{align}
\label{e:mas-local-plus1} \Mas_+\{\la(s),\mu(s);s\in[s_1,s_2]\}&=m^+(Q(s_2))-m^+(Q(s_1)),\\
\label{e:mas-local-minus1} \Mas_-\{\la(s),\mu(s);s\in[s_1,s_2]\}&=m^-(Q(s_1))-m^-(Q(s_2)),\\
\label{e:dim-intersection-local1} \dim(\la(s)\cap\mu(s))&=m^0(Q(s)),
\end{align}
where $Q(s)(x,y):=\omega(s)(x,(A_1(s)-B_1(s))y)$ for all $x,y\in\la_0(s)$ and
$s\in(-\delta,\delta)$.
\end{proposition}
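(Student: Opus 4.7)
The plan is to apply Theorem \ref{t:mas-local}(e) to reduce to a finite-dimensional Maslov-index calculation in $(X_0(s),\w_l(s))$, then use a symplectic shear to turn the pair of varying Lagrangian graphs into a single varying graph against a reference Lagrangian, and finally invoke Lemma \ref{l:local-mas-finite} after a local trivialization.

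First I would apply Theorem \ref{t:mas-local}(e) and the representations \eqref{e:local-la}--\eqref{e:local-mu}, which yield
\[
\tilde\la(s):=P_0(s)\la(s)=\{x_0+A_1(s)x_0:x_0\in\la_0(s)\},\qquad \tilde\mu(s):=P_0(s)\mu(s)=\{x_0+B_1(s)x_0:x_0\in\la_0(s)\}
\]
as Lagrangian subspaces of $(X_0(s),\w_l(s))$. Since $V(s)$ is isotropic in $(X,\w(s))$, a short computation using Proposition \ref{p:cal-red}(d),(e) together with the inclusions $\la_0\subset\la\cap\mu$ and the isotropy of $\la,\mu$ shows that $\w_l(s)=\w(s)|_{X_0(s)}$ and that both $\la_0(s)$ and $V(s)$ are Lagrangian subspaces of $(X_0(s),\w_l(s))$ of equal dimension. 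In particular $\tilde\la(s)\cap\tilde\mu(s)\cong\ker(A_1(s)-B_1(s))$, and the non-degenerate pairing of $\la_0(s)$ and $V(s)$ via $\w(s)$ identifies this kernel with the null space of $Q(s)$; combined with \eqref{e:dim-intersection-local} this gives \eqref{e:dim-intersection-local1}.

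Next I would introduce the shear
\[
T(s)\colon X_0(s)\too X_0(s),\qquad T(s)(x_0+v)\ :=\ x_0+v-B_1(s)x_0
\]
along $X_0(s)=\la_0(s)\oplus V(s)$, and verify that it is a continuously varying symplectic automorphism of $(X_0(s),\w_l(s))$. The identity $\w_l(s)(T(s)\xi,T(s)\eta)=\w_l(s)(\xi,\eta)$ boils down, after expanding in the splitting, to the vanishing of two terms: the isotropy of $V(s)$ supplies $\w(V,V)=0$, while the cross term $\w(x_0,B_1(s)x_0')+\w(B_1(s)x_0,x_0')$ vanishes for $x_0,x_0'\in\la_0(s)$ by the Hermitian identity \eqref{e:intersection-form-restriction2} coming from $\tilde\mu(s)$ being Lagrangian (Proposition \ref{p:cal-red}(e)). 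Under $T(s)$ the pair $(\tilde\la(s),\tilde\mu(s))$ transforms into $(\Graph(A_1(s)-B_1(s)),\la_0(s))$, and Proposition \ref{p:maslov-properties}(d) gives
\[
\Mas_{\pm}\{\tilde\la(s),\tilde\mu(s);\w_l(s)\}\ =\ \Mas_{\pm}\{\Graph(A_1(s)-B_1(s)),\la_0(s);\w_l(s)\}.
\]

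Finally I would locally trivialize the finite-dimensional symplectic bundle $(X_0(s),\w_l(s))$ together with its Lagrangian splitting. Kato \cite[Lemma I.4.10]{Ka95} furnishes a continuous family of linear isomorphisms $\phi(s)\colon X_0(s)\to X_0(0)$ with $\phi(s)\la_0(s)=\la_0(0)$ and $\phi(s)V(s)=V(0)$, and the pulled-back forms $\tilde\w(s):=(\phi(s)^{-1})^*\w_l(s)$ still have $\la_0(0)$ and $V(0)$ as Lagrangians. Lemma \ref{l:local-mas-finite} then applies directly to the transported graph $\phi(s)\Graph(A_1(s)-B_1(s))\subset X_0(0)$ against the fixed Lagrangian $\la_0(0)$, and its intersection form agrees, under the identification induced by $\phi$, with $Q(s)$; this yields \eqref{e:mas-local-plus1}--\eqref{e:mas-local-minus1}. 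The principal obstacle is the verification that the shear $T(s)$ is symplectic, which is precisely where the hypothesis that $V(s)$ is isotropic is essential; the remaining steps are an orchestration of the already-developed reduction and finite-dimensional machinery.
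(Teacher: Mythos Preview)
There is a genuine gap. You assert that $\la_0(s)$ is Lagrangian in $(X_0(s),\w_l(s))$ and that the shear $T(s)$ is $\w_l(s)$-symplectic, invoking ``the inclusions $\la_0\subset\la\cap\mu$'' and \eqref{e:intersection-form-restriction2}. But the data preceding Theorem \ref{t:mas-local} impose only $\la_0(0)=\la(0)\cap\mu(0)$; for $s\ne 0$ the path $\la_0(s)$ is an arbitrary continuous perturbation and need not lie in $\la(s)$ or $\mu(s)$ (indeed $\dim(\la(s)\cap\mu(s))$ may drop). Hence Proposition \ref{p:cal-red}(e)(ii) does not apply, and a direct computation from \eqref{e:local-wl} gives $\w_l(s)(x_0,x_0')=-\bigl(\w(s)(x_0,A_1(s)x_0')+\w(s)(A_1(s)x_0,x_0')\bigr)$ for $x_0,x_0'\in\la_0(s)$, which has no reason to vanish. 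A four-dimensional example with $\la\equiv\mu$ a fixed Lagrangian, $V$ a fixed Lagrangian complement, and $\la_0(s)$ a non-isotropic perturbation of $\la_0(0)=\la$ already exhibits $\w_l(s)(x_0,x_0')\ne 0$. Since your $T(s)$ carries the Lagrangian $P_0(\mu(s))$ onto $\la_0(s)$, it cannot be symplectic unless $\la_0(s)$ is Lagrangian. (The companion claim $\w_l(s)=\w(s)|_{X_0(s)}$ is also unjustified in general; it requires an extra hypothesis such as $\la_0(s)\subset\mu(s)$, cf.\ Corollary \ref{c:fixed-mu-local}.)

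The paper sidesteps this by never treating $\la_0(s)$ as Lagrangian. It uses instead the splitting $X_0(s)=P_0(\mu(s))\oplus V(s)$, both summands of which \emph{are} $\w_l(s)$-Lagrangian: $P_0(\mu(s))$ by Theorem \ref{t:mas-local}(d), and $V(s)$ because it is isotropic of half dimension. The identity $x+A_1(s)x=(x+B_1(s)x)+(A_1(s)-B_1(s))x$ exhibits $P_0(\la(s))$ as the graph of $x+B_1(s)x\mapsto(A_1(s)-B_1(s))x\in V(s)$, and one checks $\w_l(s)\bigl(x+B_1(s)x,(A_1(s)-B_1(s))x'\bigr)=\w(s)\bigl(x,(A_1(s)-B_1(s))x'\bigr)=Q(s)(x,x')$. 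Lemma \ref{l:local-mas-finite} then yields the Morse-index formulas, with $Q(s)$ on $\la_0(s)$ identified with the intersection form on $P_0(\mu(s))$ via the isomorphism $x\mapsto x+B_1(s)x$. Your shear-then-trivialize strategy becomes correct if you replace the target $\la_0(s)$ of the shear by $P_0(\mu(s))$---which amounts to dropping the shear altogether.
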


\begin{proof} Since $V(s)\subset X_0(s)$ is isotropic, $P_0(\mu(s))$ and $V(s)$ is Lagrangian in $(X_0(s),\w_l(s))$.
We have $X_0(s)=P_0(\mu(s))\oplus V(s)$ and $Q(s)$ is an Hermitian form. For each $x\in\la_0(s)$, we
have
\begin{align*}&x+A_1(s)x=x+B_1(s)x+(A_1(s)-B_1(s))x,\text{ and}\\
&\w_l(s)(x+A_1(s)x,(A_1(s)-B_1(s))x)\\
&\qquad=\w(s)(x+A_1(s)x,(A_1(s)-B_1(s))x)=Q(s)(x,x).
\end{align*}
By Theorem \ref{t:mas-local}, Lemma \ref{l:local-mas-finite} and Proposition
\ref{p:maslov-properties}.b, our results follow.
\end{proof}

We now calculate $Q(s)$.

\begin{lemma}\label{l:cal-Q} Let $(X,\w)$ be a symplectic vector space with Lagrangian subspaces $\la$, $\mu$,
isotropic spaces $\alpha_0$, $V$ and a linear subspace $\la_0$. Assume that
$\dim\alpha_0=\dim\la_0=\dim V<+\infty$. Set $\la_1:=V^{\w}\cap\la$ and $\mu_1:=V^{\w}\cap\mu$.
Let $\alpha_1,\beta_1\subset V^{\w}$ be isotropic subspaces. Assume that
\[
X=\alpha_0\oplus V\oplus\alpha_1\oplus\beta_1=\la_0\oplus V\oplus\la_1\oplus\mu_1.
\]
Assume that $\la=\Graph(A)=\Graph({\wt A})$ and $\mu=\Graph(B)$, where
\begin{align}
\label{e:local-la1}A&=\left(\begin{array}{cc}A_{11} & A_{12} \\A_{21} & A_{22} \\\end{array}\right)\colon \alpha_0\oplus\alpha_1\to V\oplus \beta_1,\\
\label{e:local-la2}\wt A&=\left(\begin{array}{cc}A_1 & 0 \\A_2 & 0 \\\end{array}\right)\colon \la_0\oplus\la_1\to V\oplus \mu_1,\\
\label{e:local-mu1}B&=\left(\begin{array}{cc}B_{11} & B_{12} \\B_{21} & B_{22}
\\\end{array}\right)\colon \alpha_0\oplus\beta_1\to V\oplus\alpha_1.
\end{align}
Then the following holds.
\newline (a) $V^{\w}=V\oplus\alpha_1\oplus\beta_1$ and $\la_1=\ran(A_{12}+I_{\alpha_1}+A_{22})$.
\newline (b) If $\mu=\alpha_0\oplus\beta_1$ and $\la_0=\alpha_0$ hold, we have $\mu_1=\beta_1$, $A_1=A_{11}$
and $A_2=A_{22}$.
\newline (c) Set
\[f:=I_{\alpha_0}+B_{11}+B_{21}\colon\alpha_0\to X,\quad g\colon=B_{12}+B_{22}+I_{\beta_1}\colon \beta_1\to X.\]
Assume that $\la_0=f(\alpha_0)$ and $I_{\beta_1}-A_{22}B_{22}$ is invertible. Then we have
\begin{align}\label{e:aab1}A_1f=&A_{11}-B_{11}+A_{12}B_{21}-(B_{12}-A_{12}B_{22})\\
\nonumber&(I_{\beta_1}-A_{22}B_{22})^{-1}(A_{21}+A_{22}B_{21}),\\
 \label{e:aab2}A_2f=&g(I_{\beta_1}-A_{22}B_{22})^{-1}(A_{21}+A_{22}B_{21}).
\end{align}
\end{lemma}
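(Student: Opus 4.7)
The proof is purely algebraic: parametrize all the subspaces that appear in the statement by coordinates adapted to the fine decomposition $X=\alpha_0\oplus V\oplus\alpha_1\oplus\beta_1$, and then equate components across the coarser decomposition $X=\la_0\oplus V\oplus\la_1\oplus\mu_1$.

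For (a) I first show $V^{\w}=V\oplus\alpha_1\oplus\beta_1$. The inclusion $\supseteq$ is immediate because $V$ is isotropic and $\alpha_1,\beta_1\subset V^{\w}$ by hypothesis. For the reverse inclusion, since $V$ is finite-dimensional, \cite[Corollary 1]{BooZhu:2013} yields $\dim X/V^{\w}=\dim V$, so both sides of the claimed identity have dimension $\dim\alpha_1+\dim\beta_1+\dim V$. For $\la_1=\la\cap V^{\w}$ I write a generic element of $\Graph(A)$ as
\[
\ell=x_0+x_1+A_{11}x_0+A_{12}x_1+A_{21}x_0+A_{22}x_1,
\]
with coordinates in $\alpha_0,\alpha_1,V,\beta_1$ respectively; membership in $V^{\w}$ forces the $\alpha_0$-component to vanish, giving $\la_1=\ran(I_{\alpha_1}+A_{12}+A_{22})$.

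For (b), the identity $\mu=\alpha_0\oplus\beta_1=\Graph(B)$ forces $B=0$, hence $\mu_1=V^{\w}\cap\mu=(V\oplus\alpha_1\oplus\beta_1)\cap(\alpha_0\oplus\beta_1)=\beta_1$. With $\phi(\eta):=\eta+A_{12}\eta+A_{22}\eta$ parametrizing $\la_1$ by (a), the new decomposition becomes $X=\alpha_0\oplus V\oplus\la_1\oplus\beta_1$. Setting $y_0=u_0\in\la_0$ and $y_1=\phi(u_1)\in\la_1$, the graph element $y_0+y_1+A_1y_0+A_2y_0$ of $\wt A$ must coincide with the original $\ell$. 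Subtracting $\phi(u_1)=u_1+A_{12}u_1+A_{22}u_1$, matching the $V$- and $\beta_1$-components yields the identities for $A_1$ and $A_2$ on $\la_0=\alpha_0$.

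For (c), which is the main computation, I parametrize $\la_1=\phi(\alpha_1)$ as in (a) and $\mu_1=g(\beta_1)$ by applying (a) to $\mu$ (note that $g$ is built analogously to $\phi$). I write a generic $\ell\in\la$ two ways,
\[
\ell\ =\ u_0+u_1+A_{11}u_0+A_{12}u_1+A_{21}u_0+A_{22}u_1\ =\ f(x_0)+v+\phi(\eta)+g(\xi),
\]
for $(u_0,u_1)\in\alpha_0\oplus\alpha_1$ and $(x_0,v,\eta,\xi)\in\alpha_0\oplus V\oplus\alpha_1\oplus\beta_1$. Equating components in $\alpha_0,V,\alpha_1,\beta_1$ yields
\begin{align*}
x_0&=u_0,\\
B_{11}x_0+v+A_{12}\eta+B_{12}\xi&=A_{11}u_0+A_{12}u_1,\\
B_{21}x_0+\eta+B_{22}\xi&=u_1,\\
A_{22}\eta+\xi&=A_{21}u_0+A_{22}u_1.
\end{align*}
To read off $A_1f(u_0)$ and $A_2f(u_0)$ I impose $y_1=\phi(\eta)=0$, i.e., $\eta=0$; then $A_1f(u_0)=v$ and $A_2f(u_0)=g(\xi)$. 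The last two equations collapse to the linear system $u_1=B_{21}u_0+B_{22}\xi$ and $\xi=A_{21}u_0+A_{22}u_1$; eliminating $u_1$ gives $(I_{\beta_1}-A_{22}B_{22})\xi=(A_{21}+A_{22}B_{21})u_0$, which inverts by hypothesis to the formula \eqref{e:aab2}. Substituting this $\xi$ and $u_1=B_{21}u_0+B_{22}\xi$ into the $V$-equation produces \eqref{e:aab1} after collecting terms.

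The sole obstacle is careful bookkeeping of where each block map lands: $B_{21},B_{22}$ take values in $\alpha_1$, while $A_{21},A_{22}$ take values in $\beta_1$, so only $A_{22}B_{22}$ gives an endomorphism of $\beta_1$ (hence its invertibility is the natural hypothesis). No symplectic input beyond the dimension formula $\dim X/V^{\w}=\dim V$ used in (a) is needed.
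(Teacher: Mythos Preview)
Your proof is correct and matches the paper's approach: the paper dismisses (a) and (b) as ``by definition'' and for (c) starts directly from the element $\wt x+A_1\wt x+A_2\wt x\in\la$ (your $\eta=0$ specialization), arriving at the same pair of equations you solve. One small wording fix in (a): since $\alpha_1,\beta_1$ may be infinite-dimensional, replace the total-dimension count by the codimension argument already implicit in what you wrote, namely $V\oplus\alpha_1\oplus\beta_1\subset V^{\w}$ with both sides of codimension $\dim\alpha_0=\dim V$ in $X$.
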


\begin{proof} (a), (b) by definition.
\newline (c) Let $x\in\alpha_0$. By (a) we have $\ran g=\mu_1$. Set $\wt x:=f(x)$ and $w:=g^{-1}(A_2\wt x)$.
Then we have $\wt x+A_1\wt x+A_2\wt x\in\la$. Since $\la=\Graph(A)$, we have
\[\left(\begin{array}{c}B_{11}x+A_1\wt x+B_{12}w \\w \\\end{array}\right)=\left(\begin{array}{cc}A_{11} & A_{12} \\A_{21} & A_{22} \\
\end{array}\right)\left(\begin{array}{c}x\\B_{21}x+B_{22}w \\\end{array}\right).\]
By direct calculations we get (\ref{e:aab1}) and (\ref{e:aab2}).
\end{proof}

We generalize the notion of crossing forms of \auindex{Robbin,\ J.}\auindex{Salamon,\
D.}\cite{RoSa93} to the case of  $C^1$ varying symplectic structures.

Let $(X,\w)$ be a symplectic Banach space and let $\la=\{\lambda(s)\}_{s\in[0,1]}$ be a $C^1$ curve
of Lagrangian subspaces. Assume that $\la(t)$ is complemented. Let $W$ be a fixed Lagrangian
complement of $\lambda(t)$. The form
\begin{equation}\label{e:crossing1}
Q(\lambda,t):= Q(\lambda,W,t)=\frac{d}{ds}|_{s=t}Q(\la(t),W;\la(s))
\end{equation}
on $\la(t)$ is independent of the choice of $W$, where $Q(\alpha,\beta;\gamma)$ is defined by
Proposition \ref{p:cal-red}.e (see \auindex{Duistermaat,\ J.J.}\cite[(2.3)]{Du76}).

If $X=\alpha\oplus W=\la(s)\oplus W$ for two Lagrangian subspaces $\alpha$ and $W$ and $|s-t|<<1$,
then, by Lemma \ref{l:connectness-local}, there exists a path $A(s)\in\Bb(\alpha,W)$ with
$\la(s)=\{x+A(s)x;x\in\alpha\}$. By definition we have
\begin{equation}\label{e:cal-crossing}\symindex{Q@$Q,Q(s),Q(\la,t)$ intersection form}\subindex{Intersection form}
Q(\la,W,t)(x+A(t)x,y+A(t)y)=\frac{d}{ds}|_{s=t}Q(\alpha,W;\la(s))(x,y).
\end{equation}

\begin{lemma}[Crossing form independence]\label{l:crossing-form-independence} Let $(X,\w(s))$ $s\in(-\e,\e)$, be a $C^1$ path
of symplectic Banach spaces with two $C^1$ families of Lagrangian subspaces $\alpha(s)$,
$\beta(s)$. Assume that $X=\alpha(s)\oplus\beta(s)$. Let $x(s),y(s)\in\alpha(s)$ be two $C^1$
paths. Let $A(s),B(s),C(s)\in\Bb(\alpha(s),\beta(s))$ and $D(s)\in\Bb(\beta(s),\alpha(s))$ are
$C^1$ families of bounded linear maps with $A(0)=B(0)=C(0)=0$. Set $\la(s):=\Graph(A(s))$,
$\mu(s):=\Graph(B(s))$, $\wt\alpha(s):=\Graph(C(s))$ and $\wt\beta(s):=\Graph(D(s))$. As above we set $\lambda:=\{\lambda(s)\}_{s\in[0,1]}$ and $\mu:=\{\mu(s)\}_{s\in[0,1]}$. Then the following holds.
\newline (a) There exists a $\delta\in(0,\e)$ such that for all $s\in(-\delta,\delta)$, we have
\[X=\wt\alpha(s)\oplus\wt\beta(s)=\la(s)\oplus\wt\beta(s)=\mu(s)\oplus\wt\beta(s).\]
\newline (b) For $s\in(-\delta,\delta)$, let $u(s),v(s)\in\beta(s)$ be such that
\[y(s)+C(s)y(s)+w_1(s)\in\la(s),y(s)+C(s)y(s)+w_2(s)\in\mu(s),\]
where $w_1(s):=u(s)+D(s)u(s)$, $w_2(s):=v(s)+D(s)v(s)$. Then we have
\begin{align}\symindex{\Gamma@$\Gamma(\la,\mu,t),\Gamma(\la,\mu,\la_0,V,t)$ crossing form}\subindex{Crossing!crossing form}
\nonumber\Gamma(\la,\mu,0)&(x(0),y(0)):\\
\label{e:crossing-form-definition}&=\frac{d}{ds}|_{s=0}\w(s)(x(s)+C(s)x(s),w_1(s)-w_2(s))\\
\label{e:crossing-form-independence}&=\frac{d}{ds}|_{s=0}\w(0)(x(0),(A(s)-B(s))y(s)).
\end{align}
\newline (c) The form $\Gamma(\la,\mu,0)$
 is an Hermitian form on $\la(0)=\mu(0)=\alpha(0)$ if $\la(s)$ and $\mu(s)$ are Lagrangian subspaces of $(X,\w(s))$.
\end{lemma}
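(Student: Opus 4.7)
The plan splits into the three parts (a)--(c) of the statement.

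For part (a), I would use that $A(0)=B(0)=C(0)=0$, so at $s=0$ the four Lagrangians $\la(0),\mu(0),\wt\alpha(0)$ coincide with $\alpha(0)$, while $\wt\beta(0)=\Graph(D(0))$. The decomposition $X=\alpha(0)\oplus\wt\beta(0)$ is direct: given $w=w_\alpha+w_\beta$ with respect to $X=\alpha(0)\oplus\beta(0)$, one checks $w=(w_\alpha-D(0)w_\beta)+(w_\beta+D(0)w_\beta)$ is the unique $\alpha(0)\oplus\wt\beta(0)$ splitting. Since transversality is open in the gap topology (Appendix \ref{ss:continuity-of-operations}) and the families $\la(s),\mu(s),\wt\alpha(s),\wt\beta(s)$ are $C^1$, there is a $\delta\in(0,\e)$ on which the three claimed direct sum decompositions persist.

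For part (b), I would make the membership conditions $y(s)+C(s)y(s)+w_1(s)\in\la(s)$ and $y(s)+C(s)y(s)+w_2(s)\in\mu(s)$ explicit. Writing $w_1(s)=u(s)+D(s)u(s)$ with $u(s)\in\beta(s)$ and projecting $y(s)+C(s)y(s)+u(s)+D(s)u(s)=z(s)+A(s)z(s)$ onto $\alpha(s)$ and $\beta(s)$ yields $z(s)=y(s)+D(s)u(s)$ together with $(I-A(s)D(s))u(s)=(A(s)-C(s))y(s)$, and analogously $(I-B(s)D(s))v(s)=(B(s)-C(s))y(s)$. The operators $I-A(s)D(s)$ and $I-B(s)D(s)$ are invertible near $0$ since $A(0)=B(0)=0$, so $u,v$ are $C^1$ in $s$ with $u(0)=v(0)=0$ and $u'(0)-v'(0)=(A'(0)-B'(0))y(0)$. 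Hence $w_1(0)-w_2(0)=0$ and $(w_1-w_2)'(0)=(I+D(0))\bigl(u'(0)-v'(0)\bigr)$. Differentiating the right-hand side of \eqref{e:crossing-form-definition} and exploiting $w_1(0)-w_2(0)=0$ reduces it to $\w(0)\bigl(x(0),(I+D(0))(u'(0)-v'(0))\bigr)$; the $D(0)$-piece lies in $\alpha(0)$, which is Lagrangian for $\w(0)$, so it pairs to zero with $x(0)\in\alpha(0)$, leaving $\w(0)\bigl(x(0),(A'(0)-B'(0))y(0)\bigr)$. The right-hand side of \eqref{e:crossing-form-independence} equals the same quantity after noting that $A(0)=B(0)=0$ kills the $y'(0)$ cross-term.

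For part (c), I would work from the cleaner formula \eqref{e:crossing-form-independence} and show that each of $(x,y)\mapsto\w(0)(x,A'(0)y)$ and $(x,y)\mapsto\w(0)(x,B'(0)y)$ is Hermitian on $\alpha(0)$. Pick $C^1$ paths $u(s),v(s)\in\alpha(s)$ with $u(0)=x$, $v(0)=y$ and differentiate the Lagrangian identity $\w(s)(u(s)+A(s)u(s),v(s)+A(s)v(s))\equiv 0$ at $s=0$. Using $A(0)=0$, $\alpha(0)$ Lagrangian (so $\w(0)(u,v)$ and all $s$-independent cross-terms vanish at leading order), and $\beta(0)$ Lagrangian (killing any $\w(0)(A'(0)u,A(0)v)$-type piece), one obtains $\w(0)(x,A'(0)y)+\w(0)(A'(0)x,y)=0$. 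Combining with the skew-Hermitian property $\w(0)(\cdot,\cdot)=-\overline{\w(0)(\cdot,\cdot)}$ yields $\w(0)(x,A'(0)y)=\overline{\w(0)(y,A'(0)x)}$, and the same argument applies to $B$, giving $\Gamma(\la,\mu,0)(x,y)=\overline{\Gamma(\la,\mu,0)(y,x)}$.

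The main obstacle I anticipate is the bookkeeping in (b): one must verify both that $(I-A(s)D(s))$ and $(I-B(s)D(s))$ are invertible on a common interval (so that $u,v$ are genuinely $C^1$ and the derivative computations are legitimate), and that the $D(0)$-contribution to $(w_1-w_2)'(0)$ really drops out against $x(0)$ via the Lagrangian property of $\alpha(0)$. Once this identification is in place, parts (a) and (c) are comparatively formal.
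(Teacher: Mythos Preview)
Your proposal is correct and follows essentially the same approach as the paper: both derive the explicit formula $u(s)=(I-A(s)D(s))^{-1}(A(s)-C(s))y(s)$, use the Lagrangian property of $\alpha(s)$ and $\beta(s)$ to strip away the $C$- and $D$-contributions, and reduce to $\w(0)(x(0),(A'(0)-B'(0))y(0))$. The only organizational difference is that the paper handles the $w_1$- and $w_2$-terms separately (obtaining $\frac{d}{ds}|_{s=0}\w(0)(x(0),(A(s)-C(s))y(s))$ and its $B$-analogue before subtracting), whereas you form $w_1-w_2$ first; and for (c) the paper simply invokes that $\w(s)(x,A(s)y)$ is Hermitian on $\alpha(s)$ when $\la(s)$ is Lagrangian (established earlier), which is exactly the computation you write out explicitly.
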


\begin{proof} (a) By the continuity of the given families.
\newline (b) By the definitions we have $y(s)+D(s)u(s)+u(s)+C(s)y(s)\in\la(s)$. Then we have $u(s)+C(s)y(s)=A(s)(y(s)+D(s)u(s))$, and
\[u(s)=(I_{\beta(s)}-A(s)D(s))^{-1}(A(s)-C(s))y(s).\]
Since $\alpha(s)$ and $\beta(s)$ are Lagrangian subspaces, we have
\begin{align}
\nonumber\frac{d}{ds}|_{s=0}&\w(s)(x(s)+C(s)x(s),w_1(s))\\
\nonumber&=\frac{d}{ds}|_{s=0}\w(s)(x(s),u(s))\\
\nonumber&=\frac{d}{ds}|_{s=0}\w(s)(x(s),(A(s)-C(s))y(s))\\
\label{e:crossing-form-independence-la}&=\frac{d}{ds}|_{s=0}\w(0)(x(0),(A(s)-C(s))y(s)).
\end{align}
Applying (\ref{e:crossing-form-independence-la}) for $\mu(s)$, we obtain
(\ref{e:crossing-form-independence}).
\newline (c) If $\la(s)$ and $\mu(s)$ are Lagrangian subspaces of $(X,\w(s))$,
then the two forms $\w(s)(x(s),A(s)y(s))$ and $\w(s)(x(s),B(s)y(s))$ are Hermitian and the form
$\Gamma(\la,\mu,0)$ becomes Hermitian.
\end{proof}

\begin{lemma}[Crossing form calculation]\label{l:crossing-form}
\subindex{Crossing!crossing form!calculation}
We use the data of Theorem \ref{t:mas-local}. Assume that $\w(s)$, $V(s)$, $\la(s)$, $\mu(s)$ with
$s\in(-\delta,\delta)$ are $C^1$ families and $\la_0(0)=\la(0)\cap\mu(0)$. Then the following
holds.
\newline (a) The families $V(s)^{\w(s)}$, $\la_1(s):=V(s)^{\w(s)}\cap\la(s)$,
$\mu_1(s):=V(s)^{\w(s)}\cap\mu(s)$ are $C^1$ families.
\newline (b) The form
\begin{equation}\label{e:crossing3}\symindex{\Gamma@$\Gamma(\la,\mu,t),\Gamma(\la,\mu,\la_0,V,t)$ crossing form}\subindex{Crossing!crossing form}
\Gamma(\la,\mu,\la_0,V,0)(x(0),y(0)):=\frac{d}{ds}|_{s=0}\omega(s)(x(s),(A_1(s)-B_1(s))y(s))
\end{equation}
on $\la_0(0)$ is Hermitian, where $x(s),y(s)\in\la_0(s)$ are two $C^1$ paths.
\newline (c) The form $\Gamma(\la,\mu,0):=\Gamma(\la,\mu,\la_0,V,0)$ does not depend on the
choices of paths $\la_0$ and $V$. It coincides with the form $\Gamma(\la,\mu,0)$ defined above in \eqref{e:crossing-form-definition} if $\la(0)=\mu(0)$. In particular,
\begin{itemize}
\item[(i)] if $V(s)$ is a $C^1$ isotropic path and $Q(s)$ is defined as in Proposition \ref{p:mas-local1}, we have
\begin{equation}\label{e:crossing4}
\Gamma(\lambda,\mu,0)(x(0),y(0))=\frac{d}{ds}|_{s=0}Q(s)(x(s),y(s));
\end{equation}
\item[(ii)] for $A_{11}(s)$ and $B_{11}(s)$ as defined in Lemma \ref{l:cal-Q}, we have
\begin{equation}\label{e:crossing5}
\Gamma(\lambda,\mu,0)(x(0),y(0))=\frac{d}{ds}|_{s=0}\w(s)(x(s),(A_{11}(s)-B_{11}(s))y(s));
\end{equation}
\item[(iii)] if $\w(s)\equiv\w$ is fixed, we have
\begin{equation}\label{e:crossing2}
\Gamma(\lambda,\mu,0)=(Q(\lambda,0)-Q(\mu,0))|_{\lambda(0)\cap\mu(0)}.
\end{equation}
\end{itemize}
\end{lemma}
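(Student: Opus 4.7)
My plan is to prove the three parts in sequence, using Theorem~\ref{t:mas-local} as the principal input, together with the observation that $A_1(0)=B_1(0)=0$ from Theorem~\ref{t:mas-local}(c).

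\emph{Part (a).} The $C^1$ regularity is inherited from the $C^1$ regularity of the associated operator-valued map $J(s)\colon X \to X^{\ad}$ defined by $\w(s)(x,y) = (J(s)x)(y)$. Since $V(s)^{\w(s)} = (J(s)V(s))^{\perp}$ and $J(s)V(s)$ is a $C^1$-family of finite-dimensional subspaces of $X^{\ad}$, the argument of Lemma~\ref{l:v-w-continuous} carries over (with the local projection representation of Appendix~\ref{s:closed-subspaces} upgraded to $C^1$ via composition with the $C^1$ maps $J(s)$) to give the $C^1$ regularity of $V(s)^{\w(s)}$. The intersections $\la_1(s) = V(s)^{\w(s)} \cap \la(s)$ and $\mu_1(s) = V(s)^{\w(s)} \cap \mu(s)$ are then $C^1$ by the continuity of intersection operations with finite-codimensional factors from the Appendix, again upgraded to $C^1$.

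\emph{Part (b).} Applying the product rule to $\w(s)(x(s), (A_1(s)-B_1(s))y(s))$ at $s=0$ and using $A_1(0)=B_1(0)=0$, every term except $\w(0)(x(0), (\dot A_1(0) - \dot B_1(0))y(0))$ vanishes; hence $\Gamma$ is well-defined on $\la_0\times\la_0$ (independent of the choice of $C^1$-extensions). For the Hermitian property I would differentiate the two identities
\[
\w(s)(x(s)+A_1(s)x(s)+A_2(s)x(s),\,y(s)+A_1(s)y(s)+A_2(s)y(s)) = 0,
\]
\[
\w(s)(x(s)+B_1(s)x(s)+B_2(s)x(s),\,y(s)+B_1(s)y(s)+B_2(s)y(s)) = 0,
\]
which hold because $\la(s)$ and $\mu(s)$ are Lagrangian. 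Subtracting these at $s=0$, where all four matrix-valued maps vanish, leaves $\w(0)(x,(\dot A_1-\dot B_1)y) + \w(0)((\dot A_1-\dot B_1)x,y)$ plus the extra terms $\w(0)(\dot A_2(0)x,y)$, $\w(0)(x,\dot A_2(0)y)$, $\w(0)(\dot B_2(0)x,y)$, $\w(0)(x,\dot B_2(0)y)$. But $\dot A_2(0)$ has image in $\mu_1(0)\<\mu(0)$ and $\dot B_2(0)$ has image in $\la_1(0)\<\la(0)$; since $\la_0\<\la(0)\cap\mu(0)$ and both $\la(0),\mu(0)$ are Lagrangian, these pairings all vanish. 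What survives is exactly the Hermitian condition $\w(0)(x,(\dot A_1-\dot B_1)y) + \w(0)((\dot A_1-\dot B_1)x,y)=0$.

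\emph{Part (c).} By part~(b) the value $\Gamma(0)(x,y)$ depends only on $(\dot A_1 -\dot B_1)(0)$ modulo components landing in Lagrangians containing $\la_0$. For two admissible choices $(V,\la_0)$ and $(V',\la_0')$, a $C^1$-homotopy through admissible pairs exists by Lemma~\ref{l:connectness-local} and the continuity results of Appendix~\ref{s:closed-subspaces}; along such a homotopy $\Gamma(0)$ is constant by the same cancellation mechanism as in (b), proving independence. For the sub-cases: (i) with $V(s)$ isotropic, Proposition~\ref{p:mas-local1} gives $Q(s)(x,y) = \w(s)(x,(A_1(s)-B_1(s))y)$, so differentiating at $s=0$ yields~\eqref{e:crossing4} directly from the definition of $\Gamma$; (ii) specialising the two decompositions of Lemma~\ref{l:cal-Q} so that they coincide at $s=0$ (possible because $A_1(0)=0$ forces $\la\cap\mu$-adapted coordinates to be compatible), the off-diagonal entries $A_{12},A_{22},B_{12},B_{21},B_{22},A_{21}$ vanish at $s=0$, so differentiating~\eqref{e:aab1} collapses to $\dot A_1(0) = \dot A_{11}(0) - \dot B_{11}(0)$ at the level relevant for $\Gamma$, and similarly for $\dot B_1$; (iii) for fixed $\w$, the Robbin--Salamon crossing form $Q(\la,0)$ is $\w(x,\dot A(0)y)$ with $\la(s)=\Graph(A(s))$ over a Lagrangian complement of $\la(0)$, and the analogous formula for $\mu$, whose difference restricted to $\la_0$ matches $\Gamma$ because any non-$V$ components of the graphs pair trivially with $\la_0$ inside the Lagrangians containing it.

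The \emph{main obstacle} I anticipate is the bookkeeping in part~(b) to identify exactly which derivative terms survive. The subtle point is that $A_1, A_2$ (and $B_1, B_2$) have different codomains with different pairing behaviour against $\la_0$ under $\w(0)$; the Hermitian property emerges precisely from the Lagrangian property of $\la(0),\mu(0)$ combined with the fact that the tangent directions $\dot A_2(0), \dot B_2(0)$ land in Lagrangian subspaces containing $\la_0$.
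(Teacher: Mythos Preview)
Your arguments for parts~(a) and~(b) are essentially correct. For~(a) the paper cites Lemma~\ref{l:finite-bot-ck} directly, which packages the $C^k$ upgrade you sketch. For~(b) your route differs slightly from the paper's: you differentiate the two isotropy identities in $(X,\w(s))$ and subtract, while the paper differentiates the single identity $\w_l(s)(x+B_1x,y+B_1y)=0$ in the reduced space $(X_0(s),\w_l(s))$ and expands $\w_l$ via Theorem~\ref{t:mas-local}(d). Both arrive at the same Hermitian relation; your cancellation of the $\dot A_2,\dot B_2$ terms via $\mu_1(0)\subset\mu(0)$, $\la_1(0)\subset\la(0)$ is valid because $A_2(0)=B_2(0)=0$ forces the $s$-derivatives of $A_2(s)x(s)$ and $B_2(s)x(s)$ to land in $\mu_1(0)$ and $\la_1(0)$ respectively.

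Part~(c), however, has a genuine gap. The homotopy argument you propose does not work as stated: for each auxiliary pair $(V_t,\la_{0,t})$ along a homotopy you obtain \emph{different} operators $A_1^t,B_1^t$ and hence a different form $\Gamma_t(0)(x,y)=\w(0)(x,(\dot A_1^t(0)-\dot B_1^t(0))y)$. Constancy in $t$ would require $\tfrac{\partial}{\partial t}(\dot A_1^t(0)-\dot B_1^t(0))$ to pair trivially with $\la_0$ under $\w(0)$, and nothing in your part~(b) gives that---the cancellation mechanism there compares $A_1$ with $B_1$ for one \emph{fixed} choice, not two $A_1$'s (or $B_1$'s) arising from different auxiliary data. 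The paper proceeds entirely differently: it invokes Lemma~\ref{l:crossing-form-independence} twice (first to replace $\la_0$ by a path $\wt\la_0(s)\subset\mu(s)$, then to replace $V$ by a Lagrangian $\wt V(s)$ of $\wt X_0(s)$), and then handles the remaining comparison between two isotropic choices $\wt V$ and $\overline V$ by a direct computation: since both $\wt x+\wt A_1\wt x+\wt A_2\wt x$ and $\wt y+\overline A_1\wt y+\overline A_2\wt y$ lie in the Lagrangian $\la(s)$, their $\w(s)$-pairing vanishes identically, and differentiating this at $s=0$ yields exactly $\Gamma(\la,\mu,\wt\la_0,\overline V,0)=\Gamma(\la,\mu,\wt\la_0,\wt V,0)$. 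This pairing trick---using that two different graph-representations of the \emph{same} Lagrangian $\la(s)$ are mutually $\w(s)$-orthogonal---is the key idea you are missing. Once the independence is established this way, subcases~(ii) and~(iii) follow immediately by choosing specific $(\la_0,V)$: take $\la_0(s)=\alpha_0(s)$ for~(ii), and take $\la_0(s)\equiv\la_0(0)$ with constant isotropic $V(s)\equiv V(0)$ for~(iii). Your direct attempts at~(ii) and~(iii) are therefore unnecessary detours that also presuppose the unproved independence.
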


\begin{proof} (a) By Lemma \ref{l:finite-bot-ck}.
\newline (b) By (a), the families $A_1(s),A_2(s),B_1(s),B_2(s)$ are of class $C^1$.
Note that $A_1(0)=A_2(0)=B_1(0)=B_2(0)=0$. Then we have
\begin{align*}
0=&\frac{d}{ds}|_{s=0}\w_l(s)(x(s)+B_1(s)x(s),y(s)+B_1(s)y(s)))\\
=&\frac{d}{ds}|_{s=0}\w(s)((B_1(s)-A_1(s))x(s),y(s))
\\&+\frac{d}{ds}|_{s=0}\w(s)(x(s),(B_1(s)-A_1(s))y(s)).
\end{align*}
Since $\w(s)$ is symplectic, we get our result.
\newline (c) Fix a $C^1$ path $\wt\la_0(s)\subset\mu(s)$ with $\wt\la_0(0)=\la_0(0)$.
Consider the $C^1$ symplectic space $(V(s)+\mu(s))/\mu_1(s)$. By Proposition \ref{p:cal-red} we have
\begin{equation}\label{e:crossing-form-invariant0}
\Gamma(\la,\mu,\la_0,V,0)=\Gamma(P_0(\la),P_0(\mu),0;\w_r)=(T_r(0)^{-1})^*\Gamma(R_{V+\mu}(\la),R_{V+\mu}(\mu),0),
\end{equation}
where $T_r(0)$ denotes the isomorphism $T_r(0)\colon  (V(0) + \mu(0))/ \mu_1(0) \to X_0$ of Theorem \ref{t:mas-local}; similarly, go back to Definition \ref{d:maslov-banach} for checking the definitions of $P_0(\la),P_0(\mu)$ and to Proposition \ref{p:cal-red} for $R_{V+\mu}(\la)$, $R_{V+\mu}(\mu)$, $T_r$, $\w_r$.
By Lemma \ref{l:crossing-form-independence} and (\ref{e:crossing-form-invariant0}), we have
\begin{equation}\label{e:crossing-form-invariant1}
\Gamma(\la,\mu,\la_0,V,0)=\Gamma(\la,\mu,\wt\la_0,V,0).
\end{equation}
Take a $C^1$ path of Lagrangian subspaces $\wt V(s)$ of symplectic subspaces $\wt
X_0(s):=\wt\la_0(s)+V(s)$. Denote by $\tilde w_l$ the symplecic form defined by Proposition \ref{p:cal-red}.d for $\tilde\la_0$ and $\tilde V$. Then we have $\tilde\w_r(s)=\w(s)|_{\wt X_0(s)}$. By Lemma
\ref{l:crossing-form-independence} and (\ref{e:crossing-form-invariant0}), we have
\begin{equation}\label{e:crossing-form-invariant2}
\Gamma(\la,\mu,\wt\la_0,V,0)=\Gamma(\la,\mu,\wt\la_0,\wt V,0).
\end{equation}
Fix a $C^1$ isotropic path $\overline V(s)$ with $X(0)=\overline V(0)\oplus(\la(0)+\mu(0))$. Fix
$C^1$ paths $\wt x(s),\wt y(s)\in\wt\la_0(s)$. Let $\wt A_1(s),\wt A_2(s)$ and $\overline
A_1(s),\overline A_2(s)$ be $C^1$ paths defined by Theorem \ref{t:mas-local} for
$(\la,\mu,\wt\la_0,\wt V)$ and $(\la,\mu,\wt\la_0,\overline V)$. Since $\wt x(s)+\wt A_1(s)\wt
x(s)+\wt A_2(s)\wt x(s), \wt y(s)+\overline A_1(s)\wt y(s)+\overline A_2(s)\wt y(s)\in\la(s)$, we
have
\begin{align*}
0=&\frac{d}{ds}|_{s=0}\w(s)(\wt x(s)+\wt A_1(s)\wt x(s)+\wt A_2(s)\wt x(s),\wt y(s)+\overline A_1(s)\wt y(s)+\overline A_2(s)\wt y(s))\\
=&\frac{d}{ds}|_{s=0}\w(s)(\wt x(s), \wt y(s)+\overline A_1(s)\wt y(s)+\overline A_2(s)\wt y(s))\\
&+\frac{d}{ds}|_{s=0}\w(s)(\wt x(s)+\wt A_1(s)\wt x(s)+\wt A_2(s)\wt x(s), \wt y(s))\\
=&\frac{d}{ds}|_{s=0}\left(\w(s)(\wt x(s),\overline A_1(s)\wt y(s))+\w(s)(\wt A_1(s)\wt x(s),\wt y(s))\right)\\
=&\Gamma(\la,\mu,\wt\la_0,\overline V,0)(x(0),y(0))-\Gamma(\la,\mu,\wt\la_0,\wt V,0)(x(0),y(0)).
\end{align*}
By (\ref{e:crossing-form-invariant1}) and (\ref{e:crossing-form-invariant2}), we obtain
\begin{equation}\label{e:crossing-form-invariant3}
\Gamma(\la,\mu,\la_0,V,0)=\Gamma(\la,\mu,\wt\la_0,\overline V,0).
\end{equation}
For the special cases, (i) is clear, (ii) by taking $\la_0(s)=\alpha_0(s)$, and (iii) by taking
$\la_0(s)\equiv\la_0(0)$ and $V(s)\equiv V(0)$ to be an isotropic subspace.
\end{proof}

Let $p\colon \mathbb{X}\to[0,1]$ be a $C^1$ Banach bundle with $p^{-1}(s)=X(s)$. Let
$\{(\lambda(s),\mu(s))\}$, $0\le s\le 1$ be a curve of Fredholm pairs of Lagrangian subspaces of
$C^1$ family $(X(s),\w(s))$ of index $0$. By Corollary \ref{c:complementary-lagrangian},
$\lambda(s)$ and $\mu(s)$ are complemented. For $t\in[0,1]$, the \subindex{Crossing!crossing
form}{\em crossing form} $\Gamma(\lambda,\mu,t)$ is an Hermitian form on $\lambda(t)\cap\mu(t)$
defined by Lemma \ref{l:crossing-form}.

A \subindex{Crossing}{\em crossing} is a time $t\in[0,1]$ such that $\lambda(t)\cap\mu(t)\ne\{0\}$.
A crossing is called \subindex{Crossing!regular}{\em regular} if $\Gamma(\lambda,\mu,t)$ is
non-degenerate. It is called \subindex{Crossing!simple}{\em simple} if it is regular and
$\lambda(t)\cap\mu(t)$ is one-dimensional. As before, we shall denote by \symindex{m@$m^{\pm}(Q)$,
$m^0(Q)$ positive (negative) Morse index and nullity of Hermitian form $Q$}$m^+,m^-,m^0$
the positive Morse index, the negative Morse index and the nullity of the form
respectively.

Now we give a method for using the crossing form to calculate Maslov indices (see \auindex{Robbin,\
J.}\auindex{Salamon,\ D.}\cite{RoSa93} for the fixed finite-dimensional symplectic vector space
case, \auindex{Boo{\ss}--Bavnbek,\ B.}\auindex{Furutani,\ K.}\cite[Theorem 2.1]{BoFu98} and
\auindex{Zhu,\ C.}\cite[Proposition 4.1]{Zh05} for the fixed strong symplectic Hilbert space case).

\begin{proposition}[Calculation of the Maslov index] \label{p:cal-mas}
\subindex{Calculation of the Maslov index}
Let $(X(s),\omega(s))$ be a $C^1$ family of symplectic Banach space
and $\{(\lambda(s),\mu(s))\}$, $0\le s\le 1$ be a $C^1$ curve of Fredholm pairs of Lagrangian
subspaces of $X$ of index $0$ with only regular crossings. Then we have
%
\begin{equation} \label{e:cal-mas}\symindex{sig@$\sign$ signature of Hermitian form}
\Mas\{\lambda,\mu\}=m^+(\Gamma(\lambda,\mu,0))
-m^-(\Gamma(\lambda,\mu,1))+\sum_{0<t<1}\sign(\Gamma(\lambda,\mu,t)).
\end{equation}
\end{proposition}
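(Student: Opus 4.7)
The plan is to reduce the problem to the finite-dimensional calculation of how the signature of a $C^1$ family of Hermitian forms changes at isolated regular zeros of its determinant, then exploit the additivity of the Maslov index under catenation (Theorem \ref{t:main}).

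First I would show that regular crossings are isolated. Fix any crossing $t_0 \in [0,1]$. Choose locally a $C^1$ isotropic path $V(s)$ complementary to $\la(t_0)+\mu(t_0)$ at $s=t_0$, using Lemma \ref{l:embedding-finite} followed by Corollary \ref{c:parametrize-isotropic} to guarantee a continuously varying (indeed $C^1$) isotropic family on a neighborhood of $t_0$. Proposition \ref{p:mas-local1} then provides a $C^1$ family of Hermitian forms $Q(s)$ on the (fixed-dimensional) space $\la_0(s)$ with $m^0(Q(s)) = \dim(\la(s)\cap\mu(s))$, and Lemma \ref{l:crossing-form}(c)(i) identifies $\dot Q(t_0)$ restricted to $\ker Q(t_0) = \la(t_0)\cap\mu(t_0)$ with the crossing form $\Gamma(\la,\mu,t_0)$. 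By regularity, $\Gamma(\la,\mu,t_0)$ is non-degenerate, so by the Kato selection theorem the eigenvalues of $Q(s)$ at $t_0$ that vanish have nonzero derivatives, hence stay nonzero on a punctured neighborhood of $t_0$. Thus crossings are isolated, and on the compact interval $[0,1]$ there are only finitely many of them, say $0 \le t_0 < t_1 < \cdots < t_N \le 1$.

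Next I would partition $[0,1]$ into closed subintervals, each containing at most one crossing in its interior or at an endpoint, and each lying in a local chart where Proposition \ref{p:mas-local1} applies with isotropic $V(s)$. By additivity (Proposition \ref{p:maslov-properties}(b) extended to our setting via Theorem \ref{t:main}) and vanishing on crossing-free subintervals (Proposition \ref{p:maslov-properties}(e)), the total Maslov index equals the sum of local contributions at the crossings. On a subinterval $[s_1,s_2]$ isolating a single crossing $t_k$ (with $Q(s_1)$ and $Q(s_2)$ non-degenerate when $t_k$ is interior), the identity $\Mas_+\{\la,\mu; [s_1,s_2]\} = m^+(Q(s_2))-m^+(Q(s_1))$ from Proposition \ref{p:mas-local1} reduces the computation to counting how many eigenvalues of $Q(s)$ pass from negative to positive (respectively positive to negative) as $s$ crosses $t_k$. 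By the $C^1$ eigenvalue selection and the identification $\dot Q(t_k)|_{\ker Q(t_k)} = \Gamma(\la,\mu,t_k)$, these numbers equal $m^+(\Gamma(\la,\mu,t_k))$ and $m^-(\Gamma(\la,\mu,t_k))$ respectively; the interior contribution is their difference $\sign(\Gamma(\la,\mu,t_k))$.

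Finally I would handle the endpoints separately. If $t_0 = 0$, only the right side of the crossing is counted: the forward-moving kernel eigenvalues contribute $+m^+(\Gamma(\la,\mu,0))$ to $m^+(Q(s_2))-m^+(Q(0))$, while the negative-moving ones simply stay outside $m^+$; hence the contribution is $m^+(\Gamma(\la,\mu,0))$. Symmetrically, if $t_N=1$, only the left side is counted and one obtains $-m^-(\Gamma(\la,\mu,1))$. Summing the three kinds of contributions yields formula (\ref{e:cal-mas}). The main obstacle is the local eigenvalue perturbation step: producing a $C^1$ isotropic reducing subspace $V(s)$ near each crossing and correctly matching the derivative of the resulting finite-dimensional Hermitian family $Q(s)$ with the intrinsic crossing form $\Gamma(\la,\mu,t_k)$. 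The first is taken care of by Corollary \ref{c:parametrize-isotropic}; the second is precisely the content of Lemma \ref{l:crossing-form}(c)(i), together with the invariance of $\Gamma$ under choices guaranteed by Lemma \ref{l:crossing-form}(c).
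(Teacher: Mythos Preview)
Your approach is essentially the same as the paper's, and correct. Two simplifications in the paper are worth noting. First, at a crossing $t$ the paper chooses a \emph{fixed} isotropic subspace $V$ with $X=V\oplus(\la(t)+\mu(t))$ (via Proposition~\ref{p:reduction-prop}) rather than a $C^1$ family $V(s)$; this is trivially $C^1$ and sidesteps the issue that Corollary~\ref{c:parametrize-isotropic} only delivers a continuous family. Second, because $\la_0=\la(t)\cap\mu(t)$ and $A_1(0)=B_1(0)=0$ (Theorem~\ref{t:mas-local}.c), one has $Q(0)=0$ on \emph{all} of $\la_0$, not merely on a kernel subspace; hence $\dot Q(0)=\Gamma(\la,\mu,t)$ on the whole space, and nondegeneracy of $\Gamma$ gives directly that $Q(s)$ and $s\,\Gamma(\la,\mu,t)$ share the same Morse indices for small $|s|\ne 0$. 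This replaces the Kato eigenvalue-selection argument by the one-line observation $m^+(Q(s_2))-m^+(Q(s_1))=m^+(\Gamma)-m^-(\Gamma)=\sign\Gamma$ for interior crossings, and $m^+(\Gamma(\la,\mu,0))$ respectively $-m^-(\Gamma(\la,\mu,1))$ at the endpoints.
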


\begin{proof} For each crossing $t\in[0,1]$, we consider the path $(\la(s+t),\mu(s+t))$ for $|s|<<1$.
By Proposition \ref{p:reduction-prop}.c, we can take an isotropic $V$ with
$X=V\oplus(\la(t)+\mu(t))$. Then the assumptions of Lemma \ref{l:cal-Q} can be satisfied. Let
$Q(s)$ be defined in Proposition \ref{p:mas-local1}. By Lemma \ref{l:crossing-form}.c we have
\[
\frac{d}{ds}|_{s=0}Q(s)\ =\ \Gamma(\lambda,\mu,t).
\]
Since the crossing $t$ is regular, for $0<|s|<<1$, by (\ref{e:crossing4}) $Q(s)$ and
$s\Gamma(\lambda,\mu,t)$ are non-degenerate and they have the same positive (negative) Morse index.
Thus the set of crossings is discrete (and then finite, for $[0,1]$ is compact). By Proposition
\ref{p:mas-local1} and Proposition \ref{p:maslov-properties}.b, our results hold.
\end{proof}

We recall (see \auindex{Bott,\ R.}\cite[Definition 3.1]{Bott:1956} for the finite-dimensional
case).

\begin{definition}\label{d:positive-curve}
Let $(X,\w)$ be a symplectic Banach space and let $\la:=\{\la(s)\}_{0\in [0,1]}$, be a $C^1$ curve
of complemented Lagrangian subspaces. We call the \subindex{Lagrangian subspaces!(semi-)positive
path}\subindex{Path in Banach bundle!(semi-)positive path of Lagrangian subspaces}curve $\la$ {\em
(semi-)positive} at $t\in[0,1]$, if $Q(\lambda,t)$ is positive definite, respectively semi-positive
definite. The curve $\{\la(s)\}$ is called {\em (semi-)positive} if it is (semi-)positive at all
$t\in[0,1]$, respectively.
\end{definition}

\begin{lemma}\label{l:positive} Let $(X,\w)$ be a symplectic Banach space and
$\{\la(s)\}_{0\in [0,1]}$ a $C^1$ curve with a Lagrangian complement $W$. Then $\{\la(s)\}$ is
(semi-) positive if and only if the path of Hermitian forms $Q(\la(0)),W;\la(s))$ is strictly
increasing (respectively, increasing).
\end{lemma}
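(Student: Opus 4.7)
The plan is as follows. Since $W$ is a Lagrangian complement of $\la(s)$ for every $s\in[0,1]$ and the curve is $C^1$, Lemma A.3.? (or a direct argument using the projections $P:X\to \la(0)$ and $I-P:X\to W$ along the splitting $X=\la(0)\oplus W$) yields a unique $C^1$ family of bounded operators $A(s)\colon \la(0)\to W$ with $\la(s)=\Graph(A(s))$ and $A(0)=0$. By the formula of Proposition \ref{p:cal-red}.e, specialized to the situation $\la_0=\mu=\la(0)$, $V=W$ (so that $X_0=X$, $X_1=\{0\}$, $B_1\equiv 0$), the form $Q(\la(0),W;\la(s))$ on $\la(0)$ is precisely
\[
Q(\la(0),W;\la(s))(x,y)\ =\ \w(x,A(s)y),\qquad x,y\in\la(0).
\]
Differentiating in $s$ and using that the inner product structure is independent of $s$ gives
\begin{equation}\label{e:plan-dQ}
\frac{d}{ds}Q(\la(0),W;\la(s))(x,y)\ =\ \w(x,A'(s)y).
\end{equation}

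Next I would identify \eqref{e:plan-dQ} with the crossing form $Q(\la,t)$ via the isomorphism $P|_{\la(t)}\colon \la(t)\to\la(0)$. For $s$ near $t$, write $\la(s)=\Graph(\tilde A(s))$ with $\tilde A(s)\colon\la(t)\to W$, $\tilde A(t)=0$. For any $x\in\la(0)$, set $w:=x+A(t)x\in\la(t)$; matching the two parametrizations $x+A(s)x = w+\tilde A(s)w$ and differentiating at $s=t$ gives $\tilde A'(t)w = A'(t)x$. Since $A'(t)x\in W$ and $W$ is Lagrangian, $\w(A(t)x,A'(t)x)=0$, hence by (\ref{e:cal-crossing}),
\[
Q(\la,t)(w,w)\ =\ \w(w,\tilde A'(t)w)\ =\ \w(x+A(t)x,A'(t)x)\ =\ \w(x,A'(t)x).
\]
Thus $Q(\la,t)$ is the pullback via the isomorphism $P|_{\la(t)}$ of the derivative form in \eqref{e:plan-dQ}, so the two Hermitian forms have the same signature.

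To conclude, suppose $\la$ is positive (respectively semi-positive). Then by the previous step $\w(\cdot,A'(t)\cdot)$ is positive definite (respectively semi-definite) on $\la(0)$ for every $t\in[0,1]$. Given $s_1<s_2$ in $[0,1]$, integrating \eqref{e:plan-dQ} yields
\[
Q(\la(0),W;\la(s_2))(x,x)-Q(\la(0),W;\la(s_1))(x,x)\ =\ \int_{s_1}^{s_2}\w(x,A'(s)x)\,ds,
\]
which is strictly positive (respectively non-negative) for every $x\ne 0$, so the path of Hermitian forms is strictly increasing (respectively increasing). Conversely, if the path is strictly increasing (respectively increasing), dividing by $s_2-s_1$ and letting $s_2\downarrow s_1=t$ shows that $\w(\cdot,A'(t)\cdot)$ is positive definite (respectively semi-definite) on $\la(0)$, hence by the identification above $Q(\la,t)$ is positive definite (respectively semi-definite), so $\la$ is (semi-)positive at $t$.

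The only real obstacle is the bookkeeping step identifying the crossing form $Q(\la,t)$ with the pointwise derivative of $Q(\la(0),W;\la(s))$; once that identification is done the equivalence reduces to the elementary fact that a $C^1$ path of Hermitian forms is strictly (respectively weakly) increasing iff its derivative is positive definite (respectively semi-definite) pointwise.
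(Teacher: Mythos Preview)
Your approach is essentially the paper's: the entire proof there is the single reference ``By (\ref{e:cal-crossing}),'' and your steps amount to reproving that identity in this setting --- the crossing form $Q(\lambda,t)$, pulled back to $\lambda(0)$ via $x\mapsto x+A(t)x$, equals the $s$-derivative at $t$ of $Q(\lambda(0),W;\lambda(s))$. Your final paragraph supplies the calculus step the paper leaves implicit.

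One small gap in the converse for the \emph{strict} case: dividing by $s_2-s_1$ and letting $s_2\downarrow t$ only gives $\omega(x,A'(t)x)\ge 0$, not $>0$, since a strictly increasing $C^1$ scalar function may have vanishing derivative at a point (e.g.\ $s\mapsto s^3$). So ``strictly increasing $\Rightarrow$ positive at every $t$'' does not follow from your limit argument. The paper's one-line proof glosses over this as well; in the applications (Proposition~\ref{p:cal-mas-positive}) only the semi-positive case and the forward implication are actually used, and those you have established correctly.
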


\begin{proof} By (\ref{e:cal-crossing}).
\end{proof}

\begin{lemma}[Morse-positive path]\label{l:morse-positive-path} Let $X$ be a finite-dimensional Hilbert space
and%
\[
A\colon (-\e,\e)\too\Bb^{\sa}(X)%
\]
a family of self-adjoint operators. Assume that $A(s_1)\le A(s_2)$ for all $-\e<s_1\le s_2<\e$.
Then the following holds.
\newline (a) There exists a $\delta\in(-\e,\e)$ such that the functions
$m^{\pm,0}(A(s))$ are constant for $s\in(-\delta,0)$ or $s\in(0,\delta)$.
\newline (b) Assume that $A(s)$ is continuous at $s$. Then we have
\begin{equation}
\label{e:minus-side} m^+(A(s))=m^+(A(0)),m^-(A(s))-m^-(A(0))=m^0(A(0))-m^0(A(s))
\end{equation}
for $s\in(-\delta,0)$, and
\begin{equation}\label{e:positive-side}
m^-(A(s))=m^-(A(0)),m^+(A(s))-m^+(A(0))=m^0(A(0))-m^0(A(s))
\end{equation}
for $s\in(0,\delta)$.
\end{lemma}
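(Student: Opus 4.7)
The plan is to reduce everything to the scalar behavior of the ordered eigenvalues of $A(s)$. Since $\dim X = n < \infty$, for each $s$ let $\mu_1(s)\le\mu_2(s)\le\cdots\le\mu_n(s)$ denote the eigenvalues of $A(s)$ counted with multiplicity. The Courant--Fischer (min--max) characterization expresses each $\mu_i(s)$ as an extremum of the Rayleigh quotient $\langle A(s)x,x\rangle/\|x\|^2$ over subspaces of a prescribed dimension. Since the hypothesis $A(s_1)\le A(s_2)$ means $\langle A(s_1)x,x\rangle\le\langle A(s_2)x,x\rangle$ for all $x$, this immediately gives $\mu_i(s_1)\le\mu_i(s_2)$ for all $i$. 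Thus each $\mu_i\colon(-\e,\e)\to\RR$ is a non-decreasing real-valued function, with no continuity assumed on $(-\e,0)\cup(0,\e)$.

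First I will establish (a). A monotone function $\mu_i$ has one-sided limits $\mu_i(0^{\pm})\in[-\infty,+\infty]$ at $0$. A short case analysis on $\sign(\mu_i(0^-))$ shows that $\sign\mu_i(s)$ is constant for $s\in(-\delta_i,0)$ for some $\delta_i>0$: if $\mu_i(0^-)\ne0$ this is immediate, and if $\mu_i(0^-)=0$ then monotonicity forces $\mu_i(s)\le0$ on some left-neighborhood, and further monotonicity forces $\mu_i$ to be either identically $0$ or strictly negative on some smaller interval $(-\delta_i,0)$. The analogous statement holds on $(0,\delta_i)$. Setting $\delta:=\min_i\delta_i>0$, each of the three counting functions $m^{+}(A(s))$, $m^{-}(A(s))$, $m^{0}(A(s))$ is constant on $(-\delta,0)$ and on $(0,\delta)$, which is (a).

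For (b), assume in addition that $A$ is continuous at $0$; then each $\mu_i$ is continuous at $0$, so $\mu_i(0^{\pm})=\mu_i(0)$. Partition $\{1,\dots,n\}$ according to $\sign\mu_i(0)$. For $s\in(-\delta,0)$, monotonicity gives $\mu_i(s)\le\mu_i(0)$, so an $i$ with $\mu_i(0)\le0$ automatically satisfies $\mu_i(s)\le0$; combined with the fact that $\mu_i(s)>0$ whenever $\mu_i(0)>0$ (by continuity at $0$, after shrinking $\delta$), this yields
\[
m^{+}(A(s))=\#\{i:\mu_i(0)>0\}=m^{+}(A(0)),
\]
and
\[
m^{-}(A(s))+m^{0}(A(s))=\#\{i:\mu_i(0)\le0\}=m^{-}(A(0))+m^{0}(A(0)),
\]
which is the identity claimed in \eqref{e:minus-side}. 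The symmetric argument for $s\in(0,\delta)$ uses $\mu_i(s)\ge\mu_i(0)$: indices with $\mu_i(0)\ge0$ give $\mu_i(s)\ge0$, and indices with $\mu_i(0)<0$ give $\mu_i(s)<0$ for $s$ close to $0$. This produces $m^{-}(A(s))=m^{-}(A(0))$ (the printed ``$m^{-}(A(s))=m^{+}(A(0))$'' is a typo) and $m^{+}(A(s))+m^{0}(A(s))=m^{+}(A(0))+m^{0}(A(0))$, which is \eqref{e:positive-side}.

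The only mild obstacle is the handling of eigenvalues that vanish at $0$: they are precisely the indices that transfer between $m^{0}$ on one side and $m^{\pm}$ on the other, and the balance is captured exactly by the compensating identities in \eqref{e:minus-side}--\eqref{e:positive-side}. Everything else is bookkeeping on the finitely many monotone scalar functions $\mu_i$.
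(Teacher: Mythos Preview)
Your proof is correct and your identification of the typo in \eqref{e:positive-side} is right. Your route, however, differs from the paper's.

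You use the Courant--Fischer min--max principle to make each ordered eigenvalue $\mu_i(s)$ a non-decreasing scalar function, then do a sign analysis on the individual $\mu_i$'s near $0$. The paper works one level up: it observes directly that if $A(s_1)\le A(s_2)$ and $A(s_1)|_V>0$ for some subspace $V$, then $A(s_2)|_V>0$; hence $m^+(A(s))$ is non-decreasing and, symmetrically, $m^-(A(s))$ is non-increasing. Since both are bounded integer-valued monotone functions, they are locally constant on each side of $0$, which gives (a). For (b), the paper uses lower semi-continuity of $m^{\pm}$ at a continuity point of $A$ together with the monotonicity of $m^{\pm}$ just established to pin down the equalities.

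The trade-off: the paper's argument avoids min--max entirely and never mentions individual eigenvalues, which is cleaner and more in the spirit of the Morse-index formalism used elsewhere in the Memoir. Your approach is more concrete and makes the mechanism (eigenvalues crossing $0$) explicit. One small point: your parenthetical ``after shrinking $\delta$'' in (b) is unnecessary---the $\delta$ produced in your (a) already suffices, since the constant sign of $\mu_i$ on $(-\delta,0)$ is determined by $\mu_i(0^-)$, which under the continuity hypothesis equals $\mu_i(0)$.
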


\begin{proof} (a) For all linear subspace $V$ of $X$, and $-\e<s_1\le s_2<\e$,
we have that $A(s_1)|_V>0$ implies $A(s_2)>0$. So $m^+(A(s))$ is an increasing function. Similarly,
$m^-(A(s))$ is a decreasing function. Since the two functions are bounded integer valued, they have
finitely many discontinuous points. Since $m^0(A(s))=\dim X-m^+(A(s))-m^-(A(s))$, the same result
holds for $m^0(A(s))$. So we obtain (a).
\newline (b) Since $A(s)$ is continuous at $s$, we have $m^{\pm}(A(0))\le m^{\pm}(A(s))$. Note that $m^+(A(s))$ is
an increasing function and $m^-(A(s))$ is a decreasing function. Then the first equalities of
(\ref{e:minus-side}) and (\ref{e:positive-side}) follow. The second equalities of
(\ref{e:minus-side}) and (\ref{e:positive-side}) follow from the first ones.
\end{proof}

\begin{proposition}[Counting rule for (semi-)positive paths] \label{p:cal-mas-positive} Let $(X,\omega)$ be a symplectic Banach space
and let $\{(\lambda(s),\mu)\}$, $0\le s\le 1$ be a $C^1$ curve of Fredholm pairs of Lagrangian
subspaces of $X$ of index $0$ with a semi-positive path $\la$ and a constant path $\mu$. Then
$\dim(\la(s)\cap\mu)$ is locally constant except for finitely many points $s\in[0,1]$, and we have
\begin{equation} \label{e:cal-mas-positive}
\Mas\{\lambda,\mu\}=\sum_{0\le t< 1}(\dim(\lambda(t)\cap\mu)-\lim_{s\to
t^+}\dim(\lambda(s)\cap\mu)).
\end{equation}
\end{proposition}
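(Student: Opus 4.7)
The plan is to localize around each $t\in[0,1]$ via finite-dimensional symplectic reduction and then exploit the monotonicity inherited from semi-positivity. First, using that $\mu$ is a constant Lagrangian and $(\lambda(t),\mu)$ is a Fredholm pair of index zero, I apply Corollary~\ref{c:fixed-mu-local} with the constant choices $\lambda_0\equiv\lambda(t)\cap\mu\subset\mu$ and an isotropic (Lagrangian-in-$X_0$) complement $V$ of $\lambda(t)+\mu$ in $X$; such a $V$ exists by Proposition~\ref{p:reduction-prop} and Corollary~\ref{c:complementary-lagrangian}. Proposition~\ref{p:mas-local1} then produces, on an open neighborhood $I_t$ of $t$, a $C^1$ family of Hermitian forms $Q(s)$ on the fixed finite-dimensional space $\lambda_0$ satisfying
\[
\dim(\lambda(s)\cap\mu)=m^0(Q(s)),\qquad \Mas_+\{\lambda,\mu;[s_1,s_2]\}=m^+(Q(s_2))-m^+(Q(s_1))
\]
for all $[s_1,s_2]\subset I_t$. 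Since $\mu$ is constant with $\lambda_0\subset\mu$, the graph maps $B_1,B_2$ in~\eqref{e:expression-mu} vanish, so $Q(s)(x,y)=\omega(x,A_1(s)y)$ with $A_1(s)\in\Hom(\lambda_0,V)$ and $A_1(t)=0$.

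The key technical step is to show that $Q(s)$ is a non-decreasing family of Hermitian forms on $\lambda_0$. By Lemma~\ref{l:positive} applied to the semi-positive $\lambda$, the family $s\mapsto Q(\lambda(t),W;\lambda(s))$ is non-decreasing on $\lambda(t)$ for every Lagrangian complement $W$ of $\lambda(t)$. I would take $W:=V\oplus\mu_1(t)$; using isotropy of $V$, the inclusions $\lambda_1(t),\mu_1(t)\subset V^{\omega}$, and the Lagrangian property of $\mu$, one verifies that $W$ is Lagrangian in $X$ and transversal to $\lambda(t)=\lambda_0\oplus\lambda_1(t)$. Expanding $Q(\lambda(t),W;\lambda(s))$ via the graph representation~\eqref{e:expression-la} and restricting to $\lambda_0\subset\lambda(t)$, the cross terms $\omega(x_0,A_2(s)y_0)$ (vanishing since $\lambda_0,\mu_1\subset\mu$ Lagrangian) and $\omega(x_1,A_1(s)y_0)$ (vanishing since $\lambda_1\subset V^{\omega}$ and $A_1(s)y_0\in V$) drop out, leaving exactly $Q(s)$. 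Because restriction of Hermitian forms preserves $A(s_1)\le A(s_2)$, $Q(s)$ inherits non-decreasingness on $I_t$.

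With $Q(\cdot)$ monotone, Lemma~\ref{l:morse-positive-path} applies on a subinterval $(t-\delta,t+\delta)\subset I_t$: the functions $m^{\pm,0}(Q(s))$ are constant on each of the two open half-intervals, so $\dim(\lambda(s)\cap\mu)=m^0(Q(s))$ is locally constant off a discrete set; compactness of $[0,1]$ then reduces this to a finite set of exceptional points. The explicit identities furnished by that lemma, combined with $\Mas_+\{\lambda,\mu;[t-\delta,t+\delta]\}=m^+(Q(t+\delta))-m^+(Q(t-\delta))$, compute the local Maslov contribution at each exceptional $t\in(0,1]$ as $m^0(Q(t))$ minus the appropriate one-sided limit of $m^0(Q(s))=\dim(\lambda(s)\cap\mu)$. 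Additivity of the Maslov index under catenation (Proposition~\ref{p:maslov-properties}.b) and its vanishing on intervals of constant intersection dimension (Proposition~\ref{p:maslov-properties}.e) then assemble these local contributions into the claimed global sum~\eqref{e:cal-mas-positive}.

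The principal obstacle is paragraph two: identifying the reduced form $Q(s)$ with a restriction of the globally non-decreasing family $Q(\lambda(t),W;\lambda(s))$ requires a judicious choice of Lagrangian complement $W$ and careful tracking of the pairings among the four symplectic summands $\lambda_0,V,\lambda_1(t),\mu_1(t)$ so that exactly the unwanted cross terms vanish.
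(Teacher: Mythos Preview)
Your proof is correct and takes essentially the same approach as the paper: localize via Proposition~\ref{p:mas-local1} with $\lambda_0\subset\mu$, identify the reduced form $Q(s)$ with the restriction of $Q(\lambda(t),W;\lambda(t+s))$ to $\lambda(t)\cap\mu$ for a suitable Lagrangian complement $W$, invoke Lemma~\ref{l:positive} for monotonicity, and finish with Lemma~\ref{l:morse-positive-path}. Your choice $W=V\oplus\mu_1(t)$ is in fact the correct one; the paper writes $W:=V+\lambda_1$, which is evidently a typo since $\lambda_1(t)\subset\lambda(t)$ would obstruct $X=\lambda(t)\oplus W$.
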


\begin{proof} Let $t\in[0,1]$ and consider the path $(\la(s+t),\mu)$ for $|s|<<1$. By
Proposition \ref{p:reduction-prop}, there exists an isotropic $V$ such that
$X=V\oplus(\la(s)+\mu)$. Set $\la_1(t+s)=V^{\w}\cap\la(t+s)$, $\mu_1:=V^{\w}\cap\mu$ and
$W:=V+\la_1$. Then $W$ is a Lagrangian subspace and $X=\la(t+s)\oplus W$. Let $Q(s)$ be defined by
Proposition \ref{p:mas-local1} in our case. Then we have
\[
Q(s)=Q(\la(t),W;\la(t+s))|_{\la(t)\cap\mu}.
\]
By Lemma \ref{l:positive}, the family of forms $Q(s)$ is increasing. By Proposition
\ref{p:mas-local1} and Lemma \ref{l:morse-positive-path}, we obtain our results.
\end{proof}

The following theorem axiomatizes the well-known technique of identifying the Maslov indices of two
one-parameter curves of Fredholm pairs of Lagrangian subspaces by suitable two-parameter extensions
to topological trivial families with vanishing {\em top} edge and calculable {\em side} edges (see
\auindex{Boo{\ss}--Bavnbek,\ B.}\auindex{Furutani,\ K.}\cite[Section 5]{BoFu98}).

\begin{theorem}[Comparing two-parameter families]\label{t:comparision}
Let $p\colon \mathbb{X}\to [0,1]$ and $\wt p\colon \wt{\mathbb{X}}\to[0,1]$ be two Banach bundles
with $X(s):=p^{-1}(s)$, $\wt X(s):=\wt p^{-1}(s)$ for each $s\in[0,1]$. Let
$\{\w(s)\},\{\wt\w(s)\}$ be paths of symplectic forms for $X(s)$, respectively, $\wt X(s)$, $0\le
s\le 1$. For $0\le a\le \delta$, $\delta>0$, we are given continuous two-parameter families
\begin{align}\label{e:two-parameter}
&\{(\la(s,a),\mu(s))\in \Ll^2(X(s),\w(s))\}\text{ and}\\
\nonumber&\{(\wt\la(s,a),\wt\mu(s))\in\Ll^2(\wt X(s),\wt\w(s))\}.
\end{align}
We assume that
\begin{align}
\label{e:comparision0} &(\la(s,0),\mu(s))\in \Ff\Ll_0(X(s))\text{ and }
(\wt\la(s,0),\wt\mu(s))\in\Ff\Ll_0(\wt X(s)),\\
\label{e:comparisionI} &\{\la(s,a)\} \text{ differentiable in $a$ and semi-positive for fixed $s$},\\
\label{e:comparisionII} &\{\wt\la(s,a)\} \text{ differentiable in $a$ and positive for fixed $s$},\\
\label{e:comparisionIII} &\dim(\la(s,a)\cap\mu(s)) - \dim(\wt\la(s,a)\cap\wt\mu(s)) = c(a).
\end{align}
Then we have
\begin{equation}\label{e:comparisionIV}
\Mas\{\la(s,0),\mu(s);\w(s)\} = \Mas\{\wt\la(s,0),\wt\mu(s);\wt\w(s)\} .
\end{equation}
\end{theorem}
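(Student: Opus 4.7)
My plan is to apply the homotopy invariance of the Maslov index to the boundary of the rectangle $R=[0,1]\times[0,\delta]$ in parameter space, carried out in parallel for both families, and then to cancel the contributions from the two vertical sides using hypothesis~\eqref{e:comparisionIII}. By the openness of $\Ff\Ll_0$ in the gap topology together with the local constancy of the Fredholm index, after possibly shrinking $\delta$ I may assume all pairs $(\la(s,a),\mu(s))$ and $(\wt\la(s,a),\wt\mu(s))$ on $R$ belong to $\Ff\Ll_0$, so the Maslov indices of all four edges are well defined. Additivity under catenation (Proposition~\ref{p:maslov-properties}(b)) and homotopy invariance with fixed endpoints (Proposition~\ref{p:maslov-properties}(a), in its Banach-bundle form of Theorem~\ref{t:main}) applied to the boundary loop of $R$ give, for each family, the rectangle identity
\[
\Mas\{\la(\cdot,0),\mu\} - \Mas\{\la(\cdot,\delta),\mu\} = \Mas\{\la(0,\cdot),\mu(0)\} - \Mas\{\la(1,\cdot),\mu(1)\},
\]
together with the analogous identity with tildes throughout.

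Next I evaluate the two vertical sides. Each is a (semi-)positive path in the parameter $a$ paired with the constant Lagrangian $\mu(s_0)$, respectively $\wt\mu(s_0)$, for $s_0\in\{0,1\}$. By Proposition~\ref{p:cal-mas-positive} its Maslov index equals a sum of jumps of $a\mapsto\dim(\la(s_0,a)\cap\mu(s_0))$, and similarly for $\wt\la$. Hypothesis~\eqref{e:comparisionIII} at fixed $s=s_0$ forces these two intersection-dimension functions on $[0,\delta]$ to differ by the constant $c(s_0)$, so their jumps coincide pointwise and the two side Maslov indices agree for each $s_0\in\{0,1\}$. Subtracting the two rectangle identities cancels the side terms and yields
\[
\Mas\{\la(\cdot,0),\mu\} - \Mas\{\wt\la(\cdot,0),\wt\mu\} = \Mas\{\la(\cdot,\delta),\mu\} - \Mas\{\wt\la(\cdot,\delta),\wt\mu\}.
\]

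The main obstacle is to show that the top-edge difference on the right vanishes. Since the rectangle identity holds verbatim on every sub-rectangle $[0,1]\times[0,\delta']$ with $\delta'\in(0,\delta]$, the above top difference is independent of $\delta'$, so I may let $\delta'\to 0^+$ and exploit the asymmetric hypothesis that $\wt\la$ is \emph{strictly} positive in $a$. By Lemma~\ref{l:positive} together with the compactness of $[0,1]$ and the $C^1$-dependence on $a$, the strict positivity of the crossing forms gives a uniform threshold $\delta_0>0$ such that $\dim(\wt\la(s,a)\cap\wt\mu(s))\equiv 0$ for every $s\in[0,1]$ and every $a\in(0,\delta_0]$; then Proposition~\ref{p:maslov-properties}(e) yields $\Mas\{\wt\la(\cdot,\delta'),\wt\mu\}=0$ for every such $\delta'$. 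By \eqref{e:comparisionIII} the $\la$-top intersection dimension at $a=\delta'$ is then identically $c(s)$, and I analyze the remaining term using the crossing-form calculus of Lemma~\ref{l:crossing-form} and Proposition~\ref{p:cal-mas}, comparing it against an auxiliary trivial deformation realizing the same intersection-dimension pattern to conclude that $\Mas\{\la(\cdot,\delta'),\mu\}=0$ as well. The technical heart, and the hardest step, is securing the uniform-in-$s$ transversality estimate for the strictly positive family $\wt\la$, which is exactly where the asymmetric ``strictly positive vs.\ semi-positive'' hypothesis becomes indispensable; once that is in hand, the remaining bookkeeping is finite-dimensional and is forced to cancel by \eqref{e:comparisionIII}.
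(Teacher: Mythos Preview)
Your overall plan—apply the rectangle identity to both families, cancel the vertical sides via~\eqref{e:comparisionIII} and Proposition~\ref{p:cal-mas-positive}, then show the top contributions vanish—is exactly the paper's strategy, and your treatment of the side terms is correct.

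The genuine gap is the uniform transversality claim. You assert a single $\delta_0>0$ with $\wt\la(s,a)\cap\wt\mu(s)=\{0\}$ for all $s\in[0,1]$ and all $a\in(0,\delta_0]$. Positivity of $a\mapsto\wt\la(s,a)$ only guarantees that for each \emph{fixed} $s$ the crossings in $a$ are isolated and the intersection is trivial on some interval $(0,\delta(s)]$; it does not prevent the first positive crossing from accumulating to $0$ as $s$ varies, so there need be no single level $a=\delta'$ that is crossing-free for all $s$ simultaneously. Compactness of $[0,1]$ does not help here, because the local estimate comes from a positive-definite form on $\wt\la(s,0)\cap\wt\mu(s)$, whose dimension jumps with $s$, so there is no continuous family of forms on which to invoke a uniform spectral lower bound.

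The paper circumvents this by \emph{localizing in $s$} rather than seeking a global level: for each $t$ one first chooses $\delta(t)$ from positivity of $\wt\la(t,\cdot)$, then uses continuity in $s$ at the \emph{fixed} level $a=\delta(t)$ to find $\varepsilon(t)>0$ with $\wt\la(s,\delta(t))\cap\wt\mu(s)=\{0\}$ for $|s-t|<\varepsilon(t)$, and finally covers $[0,1]$ by finitely many such intervals. The rectangle argument is then run on each sub-box $[s_{k-1},s_k]\times[0,\delta(t_k)]$, each with its own top level. On that top segment the $\wt\la$ intersection is zero, so by~\eqref{e:comparisionIII} the $\la$ intersection dimension is constant there and property~(e) gives the vanishing of the $\la$ top directly—no ``auxiliary trivial deformation'' is needed. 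Inserting this partition step repairs your argument.
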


\begin{proof} Since $[0,1]$ is compact, after making $\delta$ smaller,
we may assume that the given two families \eqref{e:two-parameter} are families of Fredholm pairs of
index $0$.

Fix $t\in[0,1]$. Since $\wt\la(t,a)$ is differentiable in $a$ and positive, by Proposition
\ref{p:cal-mas} there exists a $\delta(t)\in(0,\delta)$ such that $\wt\la(t,a)\cap\wt\mu(t)=\{0\}$
for $a\in(0,\delta(t)]$. From the continuity of our family $(\wt\la(s,a),\wt\mu(s))$, there exists
an $\e(t)>0$ such that
\begin{equation}\label{e:cap-wt}\wt\la(s,\delta(t))\cap\wt\mu(s)=\{0\}\;\text{for
$s\in(t-\e(t),t+\e(t))\cap[0,1]$}.\end{equation} By compactness of $[0,1]$, there exists a
partition $0=s_0<s_1<\ldots<s_n=1$ of $[0,1]$ and $t_1,\ldots t_n\in[0,1]$ with
$s_{k-1},s_k\in(t_k-\e(t_k),t_k+\e(t_k))$ for $k=1,\ldots,n$.

We now prove the formula \eqref{e:comparisionIV} for a small interval $[s_{k-1},s_k]$. We consider
the two-parameter families \eqref{e:two-parameter} for $s\in[s_{k-1},s_k]$ and
$a\in[0,\delta(t_k)]$. Because of the homotopy invariance of Maslov index, both integers
$\Mas\{\la(s,a),\mu(s)\}$ and $\Mas\{\wt\la(s,a),\wt\mu(s)\}$ must vanish for the boundary loop
going counter clockwise around the rectangular domain from the corner point $(s_{k-1},0)$ via the
corner points $(s_k,0)$, $(s_k,\epsilon(t_k))$, and $(s_{k-1},\epsilon(t_k))$ back to
$(s_{k-1},0)$.

Moreover, by \eqref{e:comparisionIII} and \eqref{e:cap-wt}, for all $s\in[s_{k-1},s_k]$ we have
\[\dim(\la(s,\delta(t_k))\cap\mu(s))=c(\delta(t_k))\tand \wt\la(s,\delta(t_k))\cap\wt\mu(s)=\{0\}.\]
Hence, our two Maslov indices must vanish on the top segment of our box.

Finally, by Proposition \ref{p:cal-mas-positive} and \eqref{e:comparisionIII} we have
\begin{align*}&\Mas\{\la(s_{k-1},a),\mu(s_{k-1});a\in[0,\delta(t_k)]\}\\
&\quad-\Mas\{\wt\la(s_{k-1},a),\wt\m(s_{k-1});a\in[0,\delta(t_k)]\}\\
&=\sum_{0\le a< \delta(t_k)}(\dim(\la(s_{k-1},a)\cap\mu(s_{k-1})-\lim_{b\to a^+}\dim(\la(s_{k-1},b)\cap\mu(s_{k-1}))\\
&\quad-\sum_{0\le a< \delta(t_k)}\dim(\wt\la(s_{k-1},a)\cap\wt\mu(s_{k-1}))\\
&=\sum_{0\le a< \delta(t_k)}(c(a)-\lim_{b\to a^+}c(b))\\
&=\sum_{0\le a< \delta(t_k)}(\dim(\la(s_k,a)\cap\mu(s_k))-\lim_{b\to a^+}\dim(\la(s_k,b)\cap\mu(s_k)))\\
&\quad-\sum_{0\le a< \delta(t_k)}\dim(\wt\la(s_k,a)\cap\wt\mu(s_k))\\
&=\Mas\{\la(s_k,a),\mu(s_k);a\in[0,\delta(t_k)]\}\\
&\quad-\Mas\{\wt\la(s_k,a),\wt\mu(s_k);a\in[0,\delta(t_k)]\}.
\end{align*}

By additivity under catenation, the formula \eqref{e:comparisionIV} holds for the small interval
$[s_{k-1},s_k]$. Again by additivity under catenation, the formula \eqref{e:comparisionIV} holds
for the whole interval $[0,1]$.
\end{proof}

\section{Invariance of the Maslov index under symplectic operations}\label{ss:sympl-invariance}
In this section we show that the Maslov index is invariant under symplectic reduction and
symplectic embedding under natural conditions.

The first theorem generalizes our previous set-up of Choices and Notations
\ref{cn:formal-definition} and our previous Definition \ref{d:maslov-banach}. We begin with a
lemma. It transfers the purely algebraic arguments of Definition \ref{d:sympl-red} to the case of
Banach spaces and gives a sufficient condition for the symplectic reduction becoming a symplectic
Banach space.

\begin{lemma}\label{l:sym-red-banach} Let $(X,\w)$ be a symplectic Banach space and $W\subset X$ a co-isotropic subspace.
Assume that $W$ is a Banach space (not necessarily induced by the norm on $X$) such that the
injection $j\colon W\into X$ is bounded. Then the symplectic reduction
\symindex{W@$W/W^{\omega}$ reduced symplectic space} \symindex{\omega tilde@$\tilde\omega$ induced
form on symplectic reduction}\subindex{Symplectic reduction!in symplectic Banach space}
$(W/W^{\w},\wt\w)$ is a symplectic Banach space.
\end{lemma}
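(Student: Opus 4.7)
The plan is to verify one by one the defining properties of a symplectic Banach space for the triple $(W/W^{\w},\|\cdot\|_{W/W^{\w}},\wt\w)$: namely that the quotient carries a Banach space structure, and that $\wt\w$ is a bounded, sesquilinear, skew-symmetric, non-degenerate form.

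First I would check that $W^{\w}$ is a closed subspace of $W$ in the intrinsic $W$-topology, so that the quotient $W/W^{\w}$ becomes a Banach space under the usual quotient norm. Because $\w\colon X\times X\to\C$ is bounded on $X$, for each fixed $y\in X$ the linear functional $\w(\cdot,y)$ is continuous on $X$, and hence $W^{\w}=\bigcap_{y\in W}\{x\in X;\w(x,y)=0\}$ is norm-closed in $X$. Since by hypothesis $W\subset W^{\w}$ (i.e.\ $W$ is co-isotropic) and the inclusion $j\colon W\into X$ is bounded, the preimage $j^{-1}(W^{\w})=W^{\w}$ is closed in $W$. Therefore $W/W^{\w}$, equipped with the quotient norm $\|[x]\|:=\inf_{u\in W^{\w}}\|x+u\|_W$, is a Banach space.

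Next I would observe that $\wt\w(x+W^{\w},y+W^{\w}):=\w(x,y)$ is well defined on $W/W^{\w}\times W/W^{\w}$: if $u,v\in W^{\w}$ and $x,y\in W$ then $\w(x+u,y+v)=\w(x,y)+\w(x,v)+\w(u,y)+\w(u,v)=\w(x,y)$, since $u,v$ annihilate all of $W$ and in particular $x,y,u,v\in W$. The sesquilinearity and skew-symmetry of $\wt\w$ are inherited immediately from $\w$. Non-degeneracy is essentially the content of Definition \ref{d:sympl-red}: if $\wt\w([x],[y])=0$ for every $[y]\in W/W^{\w}$, then $\w(x,y)=0$ for every $y\in W$, so $x\in W^{\w}$ and $[x]=0$; the analogous argument on the right-hand slot works identically.

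Finally, for boundedness I would exploit the factorization through the quotient. Let $C:=\|\w\|$ be the norm of $\w$ as a continuous bilinear form on $X$. For any $x,y\in W$ and any $u,v\in W^{\w}$ we have
\[
|\wt\w([x],[y])|\ =\ |\w(x+u,y+v)|\ \le\ C\,\|j(x+u)\|_X\,\|j(y+v)\|_X\ \le\ C\,\|j\|^2\,\|x+u\|_W\,\|y+v\|_W.
\]
Taking the infimum over $u,v\in W^{\w}$ yields $|\wt\w([x],[y])|\le C\|j\|^2\,\|[x]\|\,\|[y]\|$, so $\wt\w$ is bounded. Combined with the previous steps this shows that $(W/W^{\w},\wt\w)$ is a symplectic Banach space. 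The only even mildly delicate point is the very first one, namely using the boundedness of $j$ to transfer closedness of $W^{\w}$ from $(X,\|\cdot\|_X)$ to $(W,\|\cdot\|_W)$; everything else is a routine consequence of the definitions.
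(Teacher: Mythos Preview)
Your proof is correct and follows essentially the same approach as the paper's: show $W^{\w}$ is closed in $X$, pull this back via the continuous injection $j$ to get $W^{\w}$ closed in $W$ (the paper cites its Lemma~\ref{l:closed-subset} for this step), and then verify that $\wt\w$ is well defined, bounded, and non-degenerate on the quotient. One small slip: you wrote ``$W\subset W^{\w}$ (i.e.\ $W$ is co-isotropic)'', but co-isotropic means $W^{\w}\subset W$; this reversed inclusion is what you actually use when you write $j^{-1}(W^{\w})=W^{\w}$ and when you argue $\w(u,v)=0$ for $u,v\in W^{\w}$.
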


\begin{proof} Note that $W^{\w}$ is closed in $X$. Since $j$ is continuous, $W^{\w}$ is closed in $W$. So
the quotient $W/W^{\w}$ with the norm induced by $W$ is a Banach space. Since $j$ is bounded, the
induced form $\wt \w$ is bounded on $W/W^{\w}$. Then $(W/W^{\w})^{\wt\w}=W^{\w}/W^{\w}=\{0\}$. So
$\wt\w$ is non-degenerate. Since $W$ is a Banach space, $(W/W^{\w},\wt\w)$ is a symplectic Banach
space.
\end{proof}

In the following we shall parameterize the symplectic reduction.

\begin{ass}\label{a:mas-red} We make the following assumptions.
\newline (1)
\symindex{W@$\mathbb{W},\mathbb{W}_0,\tilde{\mathbb{W}}$ Banach subbundles for parametrization of
symplectic reduction}
$q_0\colon \mathbb{W}_0\to[0,1]$, $q\colon \mathbb{W}\to [0,1]$, and $p\colon \mathbb{X}\to [0,1]$ with fibers $q_0^{-1}(s):=W_0(s)$, $q^{-1}(s):=W(s)$,
and $p^{-1}(s):=X(s)$ for each $s\in[0,1]$, respectively. Assume that we
have Banach subbundle maps%
\[
\mathbb{W}_0\ \into\ \mathbb{W}, \quad \mathbb{W}\
\into\ \mathbb{X}.
\]
\newline (2) We are given a path of symplectic structures $\{\w(s)\}_{s\in[0,1]}$ on $X(s)$.
\newline (3) We have a path $\{(\lambda(s),\mu(s))\}_{s\in[0,1]}$ of Fredholm pairs of Lagrangian
subspaces of $(X(s),\omega(s))$, all of index $0$.
\newline (4) We assume that%
\begin{itemize}
\item  $W_0(s)=W(s)^{\w(s)}$,
\item $W(s)^{\w(s)}\subset\la(s)\subset W(s)$,
\item $\dim (W(s)^{\w(s)}\cap\mu(s))=k$ for each $s\in[0,1]$, and
\item $\{W(s)+\mu(s)\}_{s\in[0,1]}$ is a path of closed subspaces of $X(s)$ (it holds automatically if
$W(s)$ is closed in $X(s)$).
\end{itemize}
\end{ass}

\begin{theorem}[Invariance under symplectic reduction]\label{t:mas-red}
\subindex{Maslov index!invariance under symplectic reduction}\subindex{Symplectic
reduction!invariance of the Maslov index}
Under Assumption \ref{a:mas-red}, we have the following:
\newline (a) For each $s\in[0,1]$, we have $\dim X(s)/(W(s)+\mu(s))=k$ and
$W(s)+\mu(s)=W(s)^{\w(s)\w(s)}+\mu(s)$.
\newline (b) The family
\[
\bigl\{\bigl(R_W(s)^{\w(s)}(\la(s)),R_W(s)^{\w(s)}(\mu(s))\bigr)\bigr\}_{s\in[0,1]}
\]
is a path of Fredholm pairs of Lagrangian subspaces of
\[(W(s)/W(s)^{\w(s)},\wt\omega(s))\]
of index $0$.
\newline (c) We have
\begin{equation}\label{e:mas-red}
\Mas_{\pm}\{\la(s),\mu(s)\}=\Mas_{\pm}\{R_{W(s)}^{\w(s)}(\la(s)),R_{W(s)}^{\w(s)}(\mu(s))\}.
\end{equation}
\end{theorem}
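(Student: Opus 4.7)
The plan is to derive parts (a) and (b) as pointwise consequences of Proposition \ref{p:red-fredholm} together with the continuity results of Appendix \ref{ss:continuity-of-operations}, and then to reduce part (c) to a local statement governed by the transitivity of symplectic reduction (Corollary \ref{c:inner-red}). For (a), Assumption \ref{a:mas-red}(iv) places us exactly in the hypotheses of Proposition \ref{p:red-fredholm}, which at once yields both claims fiberwise; in particular $\dim(W(s)^{\omega(s)} \cap \mu(s)) = \dim X(s)/(W(s)+\mu(s)) = k$ is constant. The same proposition supplies the pointwise content of (b): at each $s$, $(R_{W(s)}(\lambda(s)), R_{W(s)}(\mu(s)))$ is a Fredholm pair of Lagrangians of index $0$ in $W(s)/W(s)^{\omega(s)}$. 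Continuity of this family would follow by noting that $R_{W(s)}(\lambda(s)) = \lambda(s)/W(s)^{\omega(s)}$, since $W(s)^{\omega(s)} \subset \lambda(s)$, and then combining Lemma \ref{l:v-w-continuous} with the quotient-and-intersection continuity results of the appendix.

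The heart of the argument is (c). By additivity under catenation (Theorem \ref{t:main}) it suffices to prove \eqref{e:mas-red} on a small interval around each $t \in [0,1]$. Fix such $t$. Using $\wt W(t) + \mu(t) = X(t)$ from Assumption \ref{a:mas-red}(v) together with the finite codimension $k$ established in part (a), I would choose a finite-dimensional subspace $V(t) \subset \wt W(t)$ with $X(t) = V(t) \oplus (W(t) + \mu(t))$ and extend it to a continuous finite-dimensional path $V(s) \subset \wt W(s)$ for $s$ near $t$ via the continuity results of the appendix. With this $V(s)$, Corollary \ref{c:inner-red} applies to the quadruple $(W(s), \mu(s), V(s), \lambda(s))$: the required chain of inclusions $W(s)^{\omega(s)} \cap \mu(s) \subset \lambda(s) \subset W(s) + \mu(s)$ holds by Assumption \ref{a:mas-red}(iv). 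Setting $X_0(s) := W(s)^{\omega(s)} \cap \mu(s) + V(s)$ and $X_1(s) := V(s)^{\omega(s)} \cap W(s) + V(s)^{\omega(s)} \cap \mu(s)$, Proposition \ref{p:reduction-prop} then furnishes the intrinsic splitting $X(s) = X_0(s) \oplus X_1(s)$, and the commutative diagram \eqref{dia:induced-sympl-iso-L} of Corollary \ref{c:inner-red} identifies the outer reduction $R_{W(s)}^{\omega(s)}$ on the relevant range with the inner symplectic reduction along $V(s)^{\omega(s)} \cap W(s)$ inside the finite-codimensional $X_1(s)$, via the symplectic isomorphism $\wt L_{W(s), W(s)+\mu(s)}$.

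With this identification in place, both sides of \eqref{e:mas-red} will be computed, via Definition \ref{d:maslov-banach}, as the Maslov index of the very same finite-dimensional path in $(X_0(s), \w_l(s))$: on the left, directly, using $V(s)$ as the finite-dimensional complement of $\lambda(s) + \mu(s)$ in $X(s)$; on the right, by first reducing via $W(s)$ and then applying Definition \ref{d:maslov-banach} in the reduced Banach bundle, with complement taken to be the image of $V(s)$ under the quotient map. The diagram \eqref{dia:induced-sympl-iso-L} forces both computations to yield the same finite-dimensional reduction, hence the same finite-dimensional Maslov index; additivity under catenation then finishes the local-to-global step. The main obstacle will be the coherent continuous choice of $V(s)$ inside the possibly non-closed ambient $\wt W(s) \subset X(s)$, together with the simultaneous continuity of the induced splittings $X(s) = X_0(s) \oplus X_1(s)$ and of the symplectic isomorphism $\wt L_{W(s), W(s)+\mu(s)}$. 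These points are not automatic because $\wt W(s)$ may carry a genuinely finer topology than the subspace topology from $X(s)$, and their resolution will rely on combining Proposition \ref{p:reduction-prop} with the sharp gap-topology estimates of the appendix.
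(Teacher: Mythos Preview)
Your treatment of (a) and the pointwise part of (b) via Proposition \ref{p:red-fredholm} is correct. The continuity of $R_{W(s)}(\mu(s))$ is more delicate than you indicate: since $R_W(\mu) = (\mu \cap W + W^{\omega})/W^{\omega}$, you need continuity of $\mu(s) \cap W(s)$, and this is exactly where the intermediate bundle $\wt{\mathbb W}$ of Assumption \ref{a:mas-red}(v) is used, via Corollary \ref{c:finite-close-to} applied inside $\wt W(s)$.

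Your argument for (c) has a genuine gap. You choose $V(s)$ as a complement of $W(s) + \mu(s)$, so $\dim V(s) = k$. But a complement of $\lambda(s) + \mu(s)$ in $X(s)$ has dimension $\dim(\lambda(s) \cap \mu(s)) \geq \dim(W(s)^{\omega(s)} \cap \mu(s)) = k$ (using $W(s)^{\omega(s)} \subset \lambda(s)$), with strict inequality in general. Hence $V(s)$ cannot serve as the finite-dimensional complement required by Definition \ref{d:maslov-banach} for the left-hand side. Furthermore, the diagram \eqref{dia:induced-sympl-iso-L} identifies $R_{W}(\la)$ with the reduction $R^{X_1}_{V^{\omega} \cap W}(\la \cap X_1)$ inside the \emph{infinite-dimensional} piece $X_1$, not with anything in the finite-dimensional $X_0$; it does not by itself produce a common finite-dimensional target for both Maslov indices.

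The paper fixes this by a two-stage reduction. In Step 2 it uses your $V(s)$ and the splitting $X(s) = X_0(s) \oplus X_1(s)$, but then observes that $\la(s) = (W(s)^{\omega(s)} \cap \mu(s)) \oplus (\la(s) \cap X_1(s))$ and likewise for $\mu(s)$; the direct-sum additivity and vanishing properties (Proposition \ref{p:maslov-properties}(c),(e)) then give $\Mas_{\pm}\{\la(s),\mu(s)\} = \Mas_{\pm}\{\la(s) \cap X_1(s),\, \mu(s) \cap X_1(s)\}$, reducing to the case $W(s) + \mu(s) = X(s)$ (i.e., $k=0$). In Step 3, with $k=0$, one now chooses a \emph{second} finite-dimensional $V_1(t) \subset W(t)$ complementary to $\la(t) + \mu(t)$; then $V_1(t,s) + \la(s) \subset W(s)$ are nested co-isotropics, and transitivity (Lemma \ref{l:red-transitive}) together with Theorem \ref{t:mas-local} matches both sides of \eqref{e:mas-red} to the common finite-dimensional reduction via $V_1(t,s) + \la(s)$. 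The missing idea in your proposal is precisely this separation into two distinct finite-dimensional choices.
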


\begin{proof} We divide the proof into three steps.

Step 1. By Proposition \ref{p:red-fredholm}, (a) and (b) hold.

Since the map $W(s)\into X(s)$ is continuous, $\la(s)$, $\mu(s)\cap W(s)$, and $W(s)^{\w(s)}$ are closed in
$W(s)$ for each $s\in[0,1]$. Since $R_W(s)^{\w(s)}(\mu(s))$ is closed in $W(s)/W(s)^{\w(s)}$,
$\mu(s)\cap W(s)+W(s)^{\w(s)}$ is closed in $W(s)$ for each $s\in[0,1]$. By Corollary
\ref{c:continuous-lift}, $\{\la(s)\}_{s\in[0,1]}$ is a path in $\Ss(W(s))$. Here we use the symbol
\symindex{S@$\Ss(X)$ set of closed linear subspaces in Banach space $X$}$\Ss(X)$ to denote the set
of all closed subspaces of a Banach space $X$, equipped with the gap topology (see our Appendix
\ref{ss:gap-topology}). By Lemma \ref{l:quotient-delta}, $\{R_W(s)^{\w(s)}(\la(s))\}_{s\in[0,1]}$
is a path in $\Ss(W(s)/W(s)^{\w(s)})$.

Since $W(s)+\mu(s)$ is a finite-dimensional extensions of the closed subspace
$\la(s)+\mu(s)\subset X(s)$, by Proposition \ref{p:finite-extension} they are closed in $X(s)$.

By Corollary \ref{c:finite-close-to},
\[
\{\mu(s)\cap W(s)\}_{s\in[0,1]}, \quad \{\mu(s)\cap W(s)^{\w(s)}\}_{s\in[0,1]}, \tand \{\mu(s)\cap
W(s)+W(s)^{\w}(s)\}_{s\in[0,1]}%
\]
are paths of closed subspaces of $W(s)$. By Lemma \ref{l:quotient-delta},
$\{R_W(s)^{\w(s)}(\mu(s))\}_{s\in[0,1]}$ is a path in $\Ss(W(s)/W(s)^{\w(s)})$. Therefore the
Maslov index on the right hand side of (\ref{e:mas-red}) is well-defined.

Step 2. Reduce to the case of $W(s)+\mu(s)=X(s)$.

Since $W(s)+\mu(s)$, $s\in[0,1]$ is a path of closed subspaces of $X(s)$ of finite codimension, we
have $W(s)+\mu(s)\in\Ss^c(X(s))$. Recall from Remark \ref{r:complemented} that we denote the space
of complemented subspaces of a Banach space $X$ by $\Ss^c(X)$. By Lemma \ref{l:ck-complemented}
(see also \auindex{Neubauer,\ G.}\cite[Lemma 0.2]{Ne68}), $\bigcup_{s\in[0,1]}(W(s)+\mu(s))$ is a
Banach bundle over $[0,1]$, and there exists a finite-dimensional Banach subbundle
$\bigcup_{s\in[0,1]}V(s)$ of $\mathbb{X}$ such that $V(s)\oplus(W(s)+\mu(s))=X(s)$.

We use the notations from Corollary \ref{c:inner-red}. Set
$\mathbb{X}_j:=\bigcup_{s\in[0,1]}X_j(s)$, $j=0,1$. Then
$\{X_0(s):=W(s)^{\w(s)}\cap\mu(s)\oplus V(s)\}_{s\in[0,1]}$ is a path of $\Ss(X(s))$. By Proposition \ref{p:reduction-prop}, we have $X(s)=X_0(s)\oplus X_1(s)$,
$V(s)^{\w(s)}+W(s)^{\w(s)}\cap\mu(s)=X$, and
\[X_1(s):=V(s)^{\w(s)}\cap W(s)+V(s)^{\w(s)}\cap\mu(s)=V(s)^{\w(s)}\cap(W(s)+\mu(s)).\]
Then $W(s)^{\w(s)}\cap\mu(s)$, $X_0(s)$ and $X_1(s)$ are paths of $\Ss^c(X(s))$. By Lemma
\ref{l:ck-complemented} (see also \auindex{Neubauer,\ G.}\cite[Lemma 0.2]{Ne68}), $\mathbb{X}_0$ and $\mathbb{X}_1$ are
Banach subbundles of $\mathbb{X}$. Set $W_{01}(s):=W(s)^{\w(s)}\cap V(s)^{\w(s)}$,
$W_1(s):=V(s)^{\w(s)}\cap W(s)$, and $\mathbb{W}_j(s):=\bigcup_{s\in[0,1]}W_j(s)$ for $j=01,1$. By Lemma \ref{l:w-quotient}, we have
\[\dim W_0(s)/W_{01}(s)=\dim W(s)/W_1(s)=\dim V(s)=k.\]
By Lemma \ref{l:finite-bot-ck}, $\mathbb{W}_{01}$ is a Banach subbundle of $\mathbb{W}_0$,
$\mathbb{W}_1$ is a Banach subbundle of $\mathbb{W}$, and $\mathbb{W}_{01}$ is a Banach subbundle
of $\mathbb{W}_1$. Then we can replace $X_1(s)$ for $X(s)$.

Set $\la_0(s)=\mu_0(s):=W(s)^{\w(s)}\cap\mu(s)$, $\la_1(s):=\la(s)\cap X_1(s)$ and
$\mu_1(s):=\mu(s)\cap X_1(s)$. By Proposition \ref{p:maslov-properties}, for a local path
$s\in[s_1,s_2]\subset(t-\delta(t),t+\delta(t))$ we have
\begin{align*}\Mas_{\pm}\{\la(s),\mu(s)\}&=\Mas_{\pm}\{\la_0(s),\mu_0(s)\}+\Mas_{\pm}\{\la_1(s),\mu_1(s)\}\\
&=\Mas_{\pm}\{\la_1(s),\mu_1(s)\}.
\end{align*}
Then our result follows from the compactness of $[0,1]$ and Definition \ref{d:maslov-banach}.

Step 3. The case of $W(s)+\mu(s)=X(s)$.

Fix $t\in[0,1]$. Let $V_1(t)\subset W(t)$ be a linear subspace such that
$X(s)=V_1(t)\oplus(\la(t)+\mu(t))$. Let $L(t,s)\colon W(t)\to W(s)$ be the local frame of the
bundle $\mathbb{W}$. Set $V_1(t,s):=L(t,s)V_1(t)\subset W(s)$. By Lemma \ref{l:red-transitive} and
Theorem \ref{t:mas-local}, for a local path $s\in[s_1,s_2]\subset(t-\delta(t),t+\delta(t))$ we have
\begin{align*}\Mas_{\pm}\{\la(s),\mu(s)\}&=\Mas_{\pm}\{R_{V_1(t,s)+\la(s)}(\la(s)),R_{V_1(t,s)+\la(s)}(\mu(s))\}\\
&=\Mas_{\pm}\{R_{W(s)}(\la(s)),R_{W(s)}(\mu(s))\}.
\end{align*}
Then our result follows from the compactness of $[0,1]$ and Definition \ref{d:maslov-banach}.
\end{proof}

We have the following side results:

\begin{corollary}\label{c:closed-W-reduction} The equation (\ref{e:mas-red}) holds if we replace Assumption \ref{a:mas-red}.1 of Theorem \ref{t:mas-red} by
a new Assumption (5):
\newline (5) $\{W(s)\}_{s\in[0,1]}$ is a path in
$\Ss^c(X(s))$ and $\{W(s)^{\w(s)}\}_{s\in[0,1]}$ a path in $\Ss^c(W(s))$.
\end{corollary}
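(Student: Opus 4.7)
The plan is to reduce the corollary to Theorem \ref{t:mas-red} by constructing the auxiliary intermediate bundle $\wt{\mathbb{W}}$ locally, on subintervals whose union covers $[0,1]$, and then patching by additivity under catenation. The two new hypotheses, namely $W(s)\in\Ss^c(X(s))$ and $W(s)^{\w(s)}\in\Ss^c(W(s))$, are precisely what is needed to carry out this local construction continuously.

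First I would fix an arbitrary $t\in[0,1]$ and argue as follows. By Assumption \ref{a:mas-red}(iv), the intersection $W(t)^{\w(t)}\cap\mu(t)$ is $k$--dimensional, and Theorem \ref{t:mas-red}(a) (applied pointwise via Proposition \ref{p:sum-index-0}) gives $\dim X(t)/(W(t)+\mu(t))=k$. Because $W(t)$ is closed (even complemented) in $X(t)$ and $W(t)+\mu(t)$ has finite codimension, Proposition \ref{p:finite-extension} shows that $W(t)+\mu(t)$ is closed in $X(t)$. Choose a $k$--dimensional algebraic complement $V(t)$ with $V(t)\oplus(W(t)+\mu(t))=X(t)$. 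Using a local frame of $\mathbb{X}$ around $t$ and Lemma \ref{l:ck-complemented}, extend $V(t)$ to a continuous family $\{V(s)\}$ of $k$--dimensional subspaces of $X(s)$ on a neighborhood $I(t):=(t-\delta(t),t+\delta(t))\cap[0,1]$. After possibly shrinking $\delta(t)$, the gap--continuity of $W(s)+\mu(s)$ (which follows from $W(s)\in\Ss^c(X(s))$, $\mu(s)$ closed, and the continuity of the given families, together with Corollary \ref{c:finite-close-to}) ensures that $V(s)\oplus(W(s)+\mu(s))=X(s)$ for every $s\in I(t)$.

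Next I would set $\wt W(s):=W(s)+V(s)$ for $s\in I(t)$. Since $V(s)$ is finite--dimensional and $W(s)\in\Ss^c(X(s))$, this sum is direct and closed in $X(s)$, so $\wt W(s)\in\Ss^c(X(s))$, and $W(s)$ is automatically closed in $\wt W(s)$. The family $\{\wt W(s)\}_{s\in I(t)}$ forms a Banach subbundle of $\mathbb{X}|_{I(t)}$ (Lemma \ref{l:ck-complemented}) containing $\{W(s)\}_{s\in I(t)}$ as a Banach subbundle. By construction, $\wt W(s)+\mu(s)=W(s)+V(s)+\mu(s)=X(s)$. Hence all of Assumption \ref{a:mas-red} is verified on $I(t)$, and Theorem \ref{t:mas-red} applies to give the identity (\ref{e:mas-red}) for the segmental Maslov indices on any closed subinterval of $I(t)$.

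Finally I would cover $[0,1]$ by finitely many such intervals $I(t_1),\dots,I(t_N)$ (compactness), pick a partition $0=s_0<s_1<\cdots<s_N=1$ subordinate to this cover, apply the local identity on each $[s_{j-1},s_j]$, and add the contributions using the additivity of $\Mas_\pm$ under catenation (Proposition \ref{p:maslov-properties}(b), now valid in the present generality by Theorem \ref{t:main}) on both sides of (\ref{e:mas-red}). The right--hand side is well defined globally because $\{W(s)^{\w(s)}\}\in\Ss^c(W(s))$ ensures that each reduced space $W(s)/W(s)^{\w(s)}$ is a Banach space varying continuously in $s$ (Lemma \ref{l:sym-red-banach} together with Lemma \ref{l:quotient-delta}), so the reduced Lagrangian pair on the right forms a genuine curve of Fredholm pairs of index $0$. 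The main technical obstacle is verifying gap--continuity of $\wt W(s)$, $W(s)$ and of the reduced pair $(R_{W(s)}(\la(s)),R_{W(s)}(\mu(s)))$ on each $I(t)$; this is where the complementation hypotheses are used decisively, via the bundle criteria of Appendix \ref{ss:banach-bundles} and the quotient estimates of Lemma \ref{l:quotient-delta}.
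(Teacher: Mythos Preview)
Your proof is correct, but it takes a more laborious route than the paper's own argument. The paper's proof is essentially two lines: since $\{W(s)\}$ is a path in $\Ss^c(X(s))$ and $\{W(s)^{\w(s)}\}$ is a path in $\Ss^c(W(s))$, Lemma \ref{l:ck-complemented} makes $\mathbb{W}:=\bigcup_s W(s)$ a Banach subbundle of $\mathbb{X}$ and $\mathbb{W}_0:=\bigcup_s W(s)^{\w(s)}$ a Banach subbundle of $\mathbb{W}$; then one invokes Theorem \ref{t:mas-red} directly, since closedness of $W(s)$ in $X(s)$ is precisely the third special case listed in Assumption \ref{a:mas-red}(v), under which the intermediate bundle $\wt{\mathbb W}$ is manufactured \emph{inside} the proof of Theorem \ref{t:mas-red} (Step~2 sets $\wt W(s):=W(s)+V(s)$ with $V(s)$ a global continuous family of finite-dimensional complements of $W(s)+\mu(s)$).

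Your approach reproduces that internal Step~2 construction by hand, but only locally, and then patches via catenation. This works, and the continuity verifications you sketch (via Corollary \ref{c:finite-close-to} and Lemma \ref{l:quotient-delta}) are the right ones. What it costs you is the local/global bookkeeping and the catenation argument, both of which the paper avoids entirely by noting that the complementation hypotheses already yield \emph{global} subbundle structures and trigger the special case of (v). In short: you rebuilt machinery that Theorem \ref{t:mas-red} already contains.
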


\begin{proof} Set $\mathbb{W}:=\bigcup_{s\in[0,1]}W(s)$ and $\mathbb{W}_0:=\bigcup_{s\in[0,1]}W(s)^{\w(s)}$. By Lemma \ref{l:ck-complemented}
(see also \auindex{Neubauer,\ G.}\cite[Lemma 0.2]{Ne68}), $\mathbb{W}$ is a Banach subbundle of
$\mathbb{X}$, and $\mathbb{W}_0$ is a Banach subbundle of $\mathbb{W}$. By Theorem \ref{t:mas-red},
our result follows.
\end{proof}

\begin{corollary}\label{c:finite-dimension-reduction} The equation (\ref{e:mas-red})
holds for $W(s)=V(s)\oplus\la(s)$ if the path $\{V(s)\}_{s\in[0,1]}$ is a path of finite-dimensional linear
subspaces of $X(s)$ and it holds that
\begin{equation}\label{e:sum-condition}
X(s)=V(s)+\la(s)+\mu(s).
\end{equation}
\end{corollary}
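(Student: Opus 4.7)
The plan is to verify the hypotheses of Corollary~\ref{c:closed-W-reduction}, namely that $\{W(s)\}_{s\in[0,1]}$ is a continuous path in $\Ss^c(X(s))$ and that $\{W(s)^{\w(s)}\}_{s\in[0,1]}$ is a continuous path in $\Ss^c(W(s))$. Once both conditions are checked, the formula \eqref{e:mas-red} follows directly.

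For the first condition: since $\la(s)$ is closed (being Lagrangian) and $V(s)$ is finite-dimensional with $V(s)\cap\la(s)=\{0\}$, Proposition~\ref{p:finite-extension} gives that $W(s)=V(s)\oplus\la(s)$ is closed. To see that $W(s)$ is complemented in $X(s)$, first observe that the canonical map $W(s)\cap\mu(s)\to W(s)/\la(s)\cong V(s)$ has kernel $\la(s)\cap\mu(s)$, so $(W(s)\cap\mu(s))/(\la(s)\cap\mu(s))$ is finite-dimensional; combined with $\dim(\la(s)\cap\mu(s))<+\infty$ from the Fredholm property, this shows $W(s)\cap\mu(s)$ is finite-dimensional. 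Since any finite-dimensional subspace of the Banach subspace $\mu(s)$ is complemented there by a closed $M(s)\subset\mu(s)$, the hypothesis $X(s)=V(s)+\la(s)+\mu(s)=W(s)+\mu(s)$ together with $W(s)\cap M(s)\subset W(s)\cap\mu(s)\cap M(s)=\{0\}$ yields $X(s)=W(s)\oplus M(s)$, so $W(s)\in\Ss^c(X(s))$.

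For the second condition: since $\la(s)$ is Lagrangian,
\[
W(s)^{\w(s)}\ =\ (V(s)+\la(s))^{\w(s)}\ =\ V(s)^{\w(s)}\cap\la(s)^{\w(s)}\ =\ V(s)^{\w(s)}\cap\la(s).
\]
From $V(s)+\la(s)+\mu(s)=X(s)$ and $V(s)\cap\la(s)=\{0\}$, Corollary~\ref{c:sum-whole-space} gives $V(s)^{\w(s)}+\la(s)=X(s)$, and then the equality case of Lemma~\ref{l:w-quotient} yields $\dim\la(s)/(V(s)^{\w(s)}\cap\la(s))=\dim V(s)$. Hence $W(s)^{\w(s)}$ has codimension exactly $2\dim V(s)$ in $W(s)$; being closed (as an annihilator) and of finite codimension, it is automatically complemented, so $W(s)^{\w(s)}\in\Ss^c(W(s))$.

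Continuity of both paths in the gap topology follows from the Appendix: $V(s)+\la(s)$ varies continuously by the sum/intersection machinery of Proposition~\ref{p:close-to}; $V(s)^{\w(s)}$ varies continuously by Lemma~\ref{l:v-w-continuous}; and since the sum $V(s)^{\w(s)}+\la(s)=X(s)$ is trivially closed with the intersection finite-codimensional in $\la(s)$, another application of Proposition~\ref{p:close-to} (or of Corollary~\ref{c:finite-close-to} for finite-dimensional variation) gives gap-continuity of $V(s)^{\w(s)}\cap\la(s)$. With both hypotheses of Corollary~\ref{c:closed-W-reduction} verified, the conclusion \eqref{e:mas-red} follows. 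The main subtlety is not an algebraic one — the computations reduce to the two cited lemmas — but rather the bookkeeping of gap-continuity of the constructed subspaces, which is handled by the Appendix apparatus on continuous operations on closed subspaces.
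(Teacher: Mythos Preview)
Your proof is correct and follows essentially the same route as the paper: verify that $\{W(s)\}$ is a path in $\Ss^c(X(s))$ and $\{W(s)^{\w(s)}\}$ is a path in $\Ss^c(W(s))$, then invoke the reduction result (you cite Corollary~\ref{c:closed-W-reduction}, the paper cites Theorem~\ref{t:mas-red} directly; these amount to the same thing). The only minor difference is in establishing $W(s)\in\Ss^c(X(s))$: you construct an explicit complement inside $\mu(s)$, while the paper simply invokes Proposition~\ref{p:finite-extension}(b), using that $\la(s)$ is already complemented as one member of a Fredholm pair (Lemma~\ref{l:fp-complemented}).
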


\begin{proof} Since $V(s)\cap\la(s)=\{0\}$, by Lemma \ref{l:sum-whole-space}
we have $V(s)^{\w(s)}+\la=X(s)$. Note that $X(s)=W(s)+\mu(s)$ and $V(s)\cap\la(s)=\{0\}$. By Lemma
\ref{l:w-quotient}, we have%
\[
\dim W(s)/\la(s)\ =\ \dim \la(s)/W(s)^{\w(s)}\ =\ \dim V(s).%
\]
By Proposition \ref{p:finite-extension}, $W(s)\in\Ss^c(X)$. Clearly, $W(s)^{\w(s)}\in\Ss^c(W(s))$
since it is closed and of finite codimension. Since $\{V(s)^{\w(s)}\}_{s\in[0,1]}$ and
$\{\la(s)\}_{s\in[0,1]}$ are paths and $V(s)^{\w(s)}+\la(s)=X$, by Proposition \ref{p:close-to},
$\{V(s)^{\w(s)}\cap\la(s)\}_{s\in[0,1]}$ is also a path. By Theorem \ref{t:mas-red}, our result
follows.
\end{proof}

\begin{rem}\label{r:multiform-sympl-reduction}
In Section \ref{ss:symplectic-red1} we established the invariance and transitivity of symplectic
reduction in purely algebraic terms. That was more general - and simpler than our present situation
in Section \ref{ss:maslov-definition} and in this section. Here topological considerations come
into play.
\newline a) Proposition \ref{p:cal-red} is the model for the symplectic reductions in Theorem \ref{t:main},
based on the Choices and Notations \ref{cn:formal-definition} and Definition \ref{d:maslov-banach}.
We have the uniform decomposition $X(s) = L(t,s) X_0(t) \oplus X_1(t,s)$ of Equation
\eqref{e:direct-sum} and make a reduction to the finite-dimensional spaces $X_0(s)$. To consider
$R_W(\cdot)$ we have two choices of $W$: We may take $W_l:= L(t,s) V(t)\oplus\la(s)$ and $W_r:=
L(t,s) V(t)\oplus\mu(s)$. In both cases we have $W/W^{\w(t)}=L(t,s)X_0(t)$. Their symplectic
structures are the same (see Proposition \ref{p:cal-red}.d).
\newline b) The $W(s)$ of the present section is just $W_l$. In the preceding Corollary
\ref{c:finite-dimension-reduction}, however, we impose an alternative condition: $\dim
W(s)/W(s)^{\w(s)}$ is finite. Note that the corresponding Condition \eqref{e:sum-condition} is
weaker than the former conditions \eqref{e:plus-condition2}-\eqref{e:plus-condition3}.
\end{rem}

\smallskip

Our second theorem (Theorem \ref{t:mas-emb}) shows the invariance of the Maslov index under
symplectic embedding. It generalizes \auindex{Boo{\ss}--Bavnbek,\ B.}\auindex{Zhu,\ C.}\cite[Lemma
12]{BooZhu:2013}. We make some preparations for it.

\begin{lemma}\label{l:seperate-lagrange} Let $(X,\w)$ be a symplectic vector space and $X_0, X_1$
two symplectic subspaces with $X=X_0\oplus X_1$ and $X_0=X_1^{\w}$. Let $\la\subset X$ be a
Lagrangian subspace of $X$. Assume that $\la\cap X_0$ is a Lagrangian subspace of $X_0$. Then
$\la\cap X_1$ is a Lagrangian subspace of $X_1$, and we have
\begin{equation}\label{e:seperate-lagrange}
\la=\la\cap X_0\oplus\la\cap X_1.
\end{equation}
\end{lemma}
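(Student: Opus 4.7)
The plan is straightforward once one exploits the symmetry given by Lemma \ref{l:sym-subspace}. Note first that since $X=X_0+X_1$ and $X_0=X_1^{\w}$, Lemma \ref{l:sym-subspace} yields automatically $X_1=X_0^{\w}$ and $\w(X_0,X_1)=0$. So the two factors annihilate each other symplectically.

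The key step will be the decomposition \eqref{e:seperate-lagrange}. Take $x\in\la$ and write $x=x_0+x_1$ with $x_0\in X_0$, $x_1\in X_1$. For any $z\in\la\cap X_0$ we have $z\in\la$, so $\w(x,z)=0$ because $\la$ is Lagrangian; but $\w(x_1,z)=0$ since $x_1\in X_1=X_0^{\w}$ and $z\in X_0$. Therefore $\w(x_0,z)=0$ for every $z\in\la\cap X_0$. Because $\la\cap X_0$ is assumed to be Lagrangian in $(X_0,\w|_{X_0})$, this forces $x_0\in\la\cap X_0$, and hence $x_1=x-x_0\in\la\cap X_1$. This proves the inclusion $\la\subset(\la\cap X_0)+(\la\cap X_1)$; the reverse inclusion is trivial, and the sum is direct because $X_0\cap X_1=\{0\}$.

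It remains to check that $\la\cap X_1$ is Lagrangian in $X_1$. Isotropy is immediate, since $\la\cap X_1\subset\la\subset\la^{\w}$ so $\la\cap X_1\subset(\la\cap X_1)^{\w|_{X_1}}$. For the reverse inclusion, let $y\in X_1$ with $\w(y,z)=0$ for every $z\in\la\cap X_1$. For any $w\in\la$, write $w=w_0+w_1$ with $w_0\in\la\cap X_0$, $w_1\in\la\cap X_1$ by the decomposition just established; then
\[
\w(y,w)=\w(y,w_0)+\w(y,w_1)=0+0=0,
\]
the first term vanishing because $y\in X_1=X_0^{\w}$ and $w_0\in X_0$, the second by hypothesis on $y$. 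Hence $y\in\la^{\w}=\la$, and since $y\in X_1$, we get $y\in\la\cap X_1$.

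There is no real obstacle: the whole argument rests on the orthogonality $\w(X_0,X_1)=0$ provided by Lemma \ref{l:sym-subspace}, which splits any symplectic pairing with $\la$ into its two components and lets the hypothesis on $\la\cap X_0$ do the rest. The only thing to be careful about is that the isotropy/co-isotropy tests are performed inside the correct ambient space ($X_0$ or $X_1$ versus $X$), but this is precisely what the splitting $X=X_0\oplus X_1$ with mutually annihilating factors allows.
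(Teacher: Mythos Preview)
Your proof is correct and follows essentially the same logic as the paper's: both use Lemma~\ref{l:sym-subspace} to get $X_1=X_0^{\w}$, then exploit that the $X_0$-component of any $x\in\la$ must annihilate $\la\cap X_0$ inside $X_0$, hence lie in $\la\cap X_0$ by the Lagrangian hypothesis. The only difference is presentational: the paper phrases this at the subspace level, computing $(\la\cap X_0)^{\w}=\la\cap X_0+X_1$ via Corollary~\ref{c:whole-space} and using $\la=\la^{\w}\subset(\la\cap X_0)^{\w}$, whereas you carry out the same argument element by element without invoking that corollary.
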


\begin{proof} By Lemma \ref{l:sym-subspace}, we have $X_1=X_0^{\w}$. So there holds $(\lambda\cap X_0)^{\omega}\supset X_0^{\omega}=X_1$.
Since $X=X_0\oplus X_1$, By Corollary \ref{c:whole-space} we have
$$(\lambda\cap X_0)^{\omega}=(\lambda\cap X_0)^{\omega}\cap X_0+X_1.$$
Since $\la\cap X_0$ is a Lagrangian subspace of $X_0$, we have $(\la\cap X_0)^{\w}\cap X_0=\la\cap X_0\subset\la$. Then we
have $\lambda=\lambda^{\omega}\subset(\lambda\cap X_0)^{\omega}=\lambda\cap X_0+X_1$. Thus there holds
\begin{align*}
\la&=\la\cap(\la\cap X_0+X_1)=\la\cap X_0+\la\cap X_1, \text{ and}\\
\la^{\w}&=(\la\cap X_0)^{\w}\cap X_0+(\la\cap X_1)^{\w}\cap X_1.
\end{align*}
Consequently, $(\la\cap X_1)^{\w}\cap X_1=\la\cap X_1$ and $\la\cap X_1$ is a Lagrangian subspace
of $X_1$.
\end{proof}

\begin{proposition}[Decomposition of the Maslov index into parts]\label{p:seperate-mas}
\subindex{Maslov index!decomposition into parts}
Let $p\colon \mathbb{X}\to [0,1]$ be a Banach bundle with $X(s):=p^{-1}(s)$ for each $s\in[0,1]$. Let $\w(s)$ be a path of symplectic structures on $X(s)$. Let
$(\lambda(s),\mu(s))$ be a path of Fredholm pairs of Lagrangian subspaces of $(X,\omega(s))$ of
index $0$. Let $p_j\colon \mathbb{X}_j\to [0,1]$ be two Banach subbundles of $p\colon \mathbb{X}\to
[0,1]$ with $X_j(s):=p_j^{-1}(s)$, $s\in[0,1]$, $j=1,2$. We assume that
\begin{enumerate}
\item[(1)] $\w(s)|_{X_j(s)}$ are continuously varying for $j=0,1$,
\item[(2)] $X(s)=X_0(s)\oplus X_1(s)$ and $X_0(s)=X_1(s)^{\w(s)}$, and
\item[(3)] $\{(\la(s)\cap X_0(s),\mu(s)\cap X_0(s))\}_{s\in[0,1]}$ is a path of pairs of Lagrangian subspaces in $(X_0(s),\w(s)|_{X_0(s)})$.
\end{enumerate}
Then $(\la(s)\cap X_j(s),\mu(s)\cap X_j(s))$ is a path of Fredholm pairs of Lagrangian subspaces in
$(Y(s),\w(s)|_{X_j(s)})$ of index $0$, $j=0,1$, and
\begin{eqnarray}\label{e:seperate-mas}
\Mas_{\pm}\{\la(s),\mu(s)\}&=&\Mas_{\pm}\{\la(s)\cap X_0(s),\mu(s)\cap X_0(s)\}\\
\nonumber& &+\Mas_{\pm}\{\la(s)\cap X_1(s),\mu(s)\cap X_1(s)\}.
\end{eqnarray}
\end{proposition}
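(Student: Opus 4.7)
The plan is to reduce the statement to a pointwise application of Lemma \ref{l:seperate-lagrange} combined with the direct-sum additivity of the Maslov index already established in Proposition \ref{p:maslov-properties}.c and Theorem \ref{t:main}. First, since $X_0(s)=X_1(s)^{\w(s)}$ and $X(s)=X_0(s)\oplus X_1(s)$, the form $\w(s)$ vanishes on $X_0(s)\times X_1(s)$, so $\w(s)$ splits as $\w(s)|_{X_0(s)}\oplus \w(s)|_{X_1(s)}$ and each summand is symplectic by assumption (1). Then Lemma \ref{l:seperate-lagrange} applied in each fibre gives that $\la(s)\cap X_1(s)$ and $\mu(s)\cap X_1(s)$ are Lagrangian subspaces of $X_1(s)$, together with the decompositions
\[
\la(s)\ =\ (\la(s)\cap X_0(s))\oplus(\la(s)\cap X_1(s)), \quad \mu(s)\ =\ (\mu(s)\cap X_0(s))\oplus(\mu(s)\cap X_1(s)).
\]

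Next I would verify the Fredholm-pair-of-index-$0$ property of $(\la(s)\cap X_j(s),\mu(s)\cap X_j(s))$ in $X_j(s)$, for $j=0,1$. From the splitting above,
\[
\la(s)\cap\mu(s)\ =\ \bigoplus_{j=0,1}(\la(s)\cap\mu(s)\cap X_j(s)),\qquad \tfrac{X(s)}{\la(s)+\mu(s)}\ \cong\ \bigoplus_{j=0,1}\tfrac{X_j(s)}{(\la(s)\cap X_j(s))+(\mu(s)\cap X_j(s))},
\]
so both dimensions are finite and the indices add to $\Index(\la(s),\mu(s))=0$. Since each summand is $\leq 0$ by Corollary \ref{c:fred-pair-index-vanishing}, both summands vanish, giving index $0$ in each subspace. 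For $j=0$ this is the hypothesis; for $j=1$ it is the new information.

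The remaining ingredient is to show that $\{\la(s)\cap X_1(s)\}$ and $\{\mu(s)\cap X_1(s)\}$ are genuine continuous paths of closed subspaces in the Banach subbundle $\mathbb{X}_1$. Let $P_1(s)\colon X(s)\to X_1(s)$ be the projection along $X_0(s)$; by assumption (2) and the Banach bundle structure it varies continuously in $s$. The decomposition above shows $P_1(s)(\la(s))=\la(s)\cap X_1(s)$ and similarly for $\mu$. Since both $\la(s)$ and $P_1(s)$ are continuous, and since $\la(s)\cap X_0(s)=\ker(P_1(s))\cap\la(s)$ is a continuous path of complemented subspaces of $\la(s)$ by hypothesis (3), an application of Lemma \ref{l:ck-complemented} and the closed-graph type results of Appendix \ref{s:closed-subspaces} (in particular Proposition \ref{p:close-to}) yields that $\la(s)\cap X_1(s)$ is a continuous path in $\Ss(X_1(s))$; the same reasoning handles $\mu$.

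Once continuity is in hand, Equation \eqref{e:seperate-mas} is immediate from Proposition \ref{p:maslov-properties}.c, valid now in the Banach bundle setting by Theorem \ref{t:main}, applied to the path of direct-sum decompositions $(\la,\mu)=(\la\cap X_0,\mu\cap X_0)\oplus(\la\cap X_1,\mu\cap X_1)$ in $(X_0,\w|_{X_0})\oplus(X_1,\w|_{X_1})$. The main obstacle will be the rigorous verification of this continuity step: formally separating a continuously varying Lagrangian into its two intersection components requires care in the gap topology, especially because the intersection dimensions may jump, and one must ensure that the projection formula $P_1(s)(\la(s))=\la(s)\cap X_1(s)$ preserves continuity rather than merely upper semicontinuity.
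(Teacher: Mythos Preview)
Your approach is essentially the paper's: apply Lemma~\ref{l:seperate-lagrange} fibrewise to get the splitting $\la(s)=(\la(s)\cap X_0(s))\oplus(\la(s)\cap X_1(s))$ (and likewise for $\mu$), use additivity of the index together with nonpositivity (Lemma~\ref{l:negative-index} or Corollary~\ref{c:fred-pair-index-vanishing}) to force index $0$ in each summand, and then invoke direct-sum additivity of the Maslov index (Proposition~\ref{p:maslov-properties}.c).

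The one place where the paper is cleaner is precisely your ``main obstacle'', continuity of $s\mapsto\la(s)\cap X_j(s)$. Rather than arguing via projections and Appendix estimates, the paper observes that assumption~(ii) makes the sum map
\[
f(s)\colon \Ss(X_0(s))\times\Ss(X_1(s))\too\Ss(X(s)),\qquad (M,N)\mapsto M+N,
\]
an injective continuous map that is a homeomorphism onto its image. Since $\la(s)$ lies in that image by the splitting lemma and varies continuously, so do its two components $\la(s)\cap X_j(s)$; no separate worry about jumping intersection dimensions arises. Your projection argument is in effect a proof that $f(s)^{-1}$ is continuous, so it is not wrong, just more laborious than necessary.
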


\begin{proof} By (2), we have an injective continuous map%
\[
f(s)\colon \Ss(X_0(s))\times \Ss(X_1(s))\too \Ss(X(s)),\quad \text{defined by $f(M,N)\ :=\ M+N$},
\]
for all $s\in[0,1]$, and $f(s)$ is a homeomorphism onto its image. By Lemma
\ref{l:seperate-lagrange}, $\{(\la(s)\cap X_j(s),\mu(s)\cap X_j(s))\}_{s\in[0,1]}$ is a path of
Fredholm pairs of Lagrangian subspaces in $(Y(s),\w(s)|_{X_j(s)})$, $j=0,1$, and
\begin{eqnarray*}
\Index(\la(s),\mu(s))& =&\Index(\la(s)\cap X_0(s),\mu(s)\cap X_0(s))\\
& &+\Index(\la(s)\cap X_1(s),\mu(s)\cap X_1(s))
\end{eqnarray*}
for all $s\in[0,1]$. By Lemma \ref{l:negative-index}, we have $\Index(\la(s)\cap X_j(s),\mu(s)\cap
X_j(s))\le 0$, $j=0,1$. So we have $\Index(\la(s)\cap X_j(s),\mu(s)\cap X_j(s))=0$, $j=0,1$. Then
the Equation (\ref{e:seperate-mas}) follows from Proposition \ref{p:maslov-properties}.c.
\end{proof}

\smallskip

The following theorem is the second major result of this section. It strengthens the claim of the
preceding proposition in the following sense: The second term on the right hand side of Euqation
\ref{e:seperate-mas} is vanishing, if the intersection dimensaions of the Fredholm pairs diminishes
by a constant under the decomposition.In general, the reduced intersection dimensions will
\textit{not} become constant. Otherwise the Maslov index must vanish by Proposition
\ref{p:maslov-properties}.e and its analogue Theorem \ref{t:main}.

Intuitively, the claim of Theorem \ref{t:mas-emb} seems unquestionable. As often in
infinite-dimensional analysis, however, intuition can not be trusted. As a matter of fact, the
following rigorous proof of the theorem may appear quite involved and is definitely not straight
forward.

\begin{theorem}[Invariance under symplectic embedding]\label{t:mas-emb}
\subindex{Maslov index!invariance under symplectic embedding}\subindex{Symplectic embedding}
Let $p\colon \mathbb{X}\to [0,1]$ be a Banach bundle. Denote by $X(s):=p^{-1}(s)$ the fiber of $p$
at $s\in[0,1]$. Let $\w(s)$ be a path of symplectic structures on $X(s)$. Let $(\lambda(s),\mu(s))$
be a path of Fredholm pairs of Lagrangian subspaces of $(X,\omega(s))$ of index $0$. Let $p_1\colon
\mathbb{Y}\to [0,1]$ be a second Banach bundle which is a linear subbundle of $p\colon
\mathbb{X}\to [0,1]$ (in general the inclusion $Y(s)\hookrightarrow X(s)$ is neither continuous nor
dense), where $Y(s):=p_1^{-1}(s)$. We assume that
\begin{itemize}
\item[(1)] $\w(s)|_{Y(s)}$ is continuously varying,
\item[(2)] $\{(\la(s)\cap Y(s),\mu(s)\cap Y(s))\}_{s\in[0,1]}$ is a path of Fredholm pairs of Lagrangian subspaces in $(Y(s),\w(s)|_{Y(s)})$ of index $0$, and
\item[(3)] $\dim(\la(s)\cap\mu(s))-\dim(\la(s)\cap\mu(s)\cap Y(s))$ is  a constant $k$.
\end{itemize}
Then we have
\begin{equation}\label{e:mas-emb}
\Mas_{\pm}\{\la(s),\mu(s)\}=\Mas_{\pm}\{\la(s)\cap Y(s),\mu(s)\cap Y(s)\}.
\end{equation}
\end{theorem}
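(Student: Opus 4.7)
The plan is to reduce the identity \eqref{e:mas-emb} to a finite-dimensional comparison that takes place inside a single symplectic space common to both sides, and then to isolate an ``excess'' $k$-dimensional piece whose Maslov contribution vanishes because of the constancy-of-intersection hypothesis. By additivity under catenation (Theorem \ref{t:main} together with the corresponding Proposition \ref{p:maslov-properties}.b), it suffices to establish the identity on an arbitrarily small neighborhood of each fixed $t\in[0,1]$, so I work locally throughout.

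Near such a $t$, the finite-dimensional reduction in the sense of Definition \ref{d:maslov-banach} for $(\la(s),\mu(s))$ in $X(s)$ needs a complement $V(t)$ with $X(t)=V(t)\oplus(\la(t)+\mu(t))$; analogously for $(\la(s)\cap Y(s),\mu(s)\cap Y(s))$ in $Y(s)$ one needs $V_Y(t)\subset Y(t)$ complementing $\la(t)\cap Y(t)+\mu(t)\cap Y(t)$. The key step is to pick these two complements compatibly. Choose $V_Y(t)\subset Y(t)$ first; the index-zero hypothesis in $Y(t)$ gives $\dim V_Y(t)=\dim(\la(t)\cap\mu(t)\cap Y(t))$, and the constant-codimension hypothesis gives $\dim X(t)/(\la(t)+\mu(t))=\dim V_Y(t)+k$. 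After a small perturbation of $V_Y(t)$ inside $Y(t)$ one arranges $V_Y(t)\cap(\la(t)+\mu(t))=\{0\}$ in $X(t)$, and then enlarges by a $k$-dimensional $V_N(t)\subset X(t)$ so that $V(t):=V_Y(t)\oplus V_N(t)$ is a genuine complement of $\la(t)+\mu(t)$ in $X(t)$. Writing $\la(t)\cap\mu(t)=\la_0^Y(t)\oplus\la_0^N(t)$ with $\la_0^Y(t):=\la(t)\cap\mu(t)\cap Y(t)$ and $\dim\la_0^N(t)=k$, Proposition \ref{p:reduction-prop} yields continuously varying intrinsic decompositions
\[
X(s)=X_0(s)\oplus X_1(s),\qquad Y(s)=Y_0(s)\oplus Y_1(s),
\]
with $X_0(s)=\la_0^Y(s)\oplus\la_0^N(s)\oplus V(s)$ and $Y_0(s)=\la_0^Y(s)\oplus V_Y(s)$ both symplectic, where continuous variation is secured by the gap-topology results of Appendix \ref{ss:continuity-of-operations} and by Lemma \ref{l:ck-complemented}.

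By Definition \ref{d:maslov-banach} (or equivalently by the invariance under symplectic reduction from Theorem \ref{t:mas-red}), the local Maslov index $\Mas_\pm\{\la(s),\mu(s)\}$ equals the Maslov index of the symplectically reduced pair in $(X_0(s),\w_l(s))$, and similarly $\Mas_\pm\{\la(s)\cap Y(s),\mu(s)\cap Y(s)\}$ equals the Maslov index of the reduced pair in $(Y_0(s),\w(s)|_{Y_0(s)})$. Inside $X_0(s)$ the subspace $Y_0(s)$ is symplectic, and I would arrange, by a further modification of $V_N(s)$ inside $X_0(s)$ using Lemma \ref{l:sym-subspace}, that its symplectic complement $N_0(s):=Y_0(s)^{\w|_{X_0(s)}}$ satisfies $X_0(s)=Y_0(s)\oplus N_0(s)$ with $\la_0^N(s)\subset N_0(s)$. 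Proposition \ref{p:seperate-mas} then splits
\[
\Mas_\pm\{\la(s),\mu(s)\}=\Mas_\pm\{\la(s)\cap Y_0(s),\mu(s)\cap Y_0(s)\}+\Mas_\pm\{\la(s)\cap N_0(s),\mu(s)\cap N_0(s)\},
\]
the first summand is the $Y$-Maslov index, and the second summand has intersection dimension $\equiv k$ for every $s$, hence vanishes by Proposition \ref{p:maslov-properties}.e (inside Theorem \ref{t:main}).

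The principal obstacle I anticipate is the simultaneous continuous control of the finite-dimensional data $V_Y(s)$, $V_N(s)$, $\la_0^N(s)$ in two \emph{different} Banach-bundle topologies — that of $\mathbb Y$ and that of $\mathbb X$ — given that the inclusion $Y(s)\hookrightarrow X(s)$ is neither continuous nor dense in general. A secondary delicate point is arranging $N_0(s)$ to be the genuine symplectic complement of $Y_0(s)$ in $X_0(s)$ (as opposed to an arbitrary algebraic complement), since this is required for Proposition \ref{p:seperate-mas} to apply. Both difficulties are controlled by the constant-$k$ hypothesis, which prevents the ``excess'' piece $N_0(s)$ from jumping in dimension, and by Lemma \ref{l:finite-bot-ck} together with Corollary \ref{c:continuous-lift}, which promote pointwise complementation to continuous, locally trivial subbundles in both ambient topologies.
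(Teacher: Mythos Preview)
Your overall plan — reduce both sides to finite dimensions, locate the $Y$-reduction inside the $X$-reduction, split off a complementary symplectic piece carrying the constant-$k$ intersections, and kill that piece via Proposition \ref{p:maslov-properties}.e — has the right shape and is exactly how the paper finishes. But there is a real gap in the middle.

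You assume that $Y_0(s)=\la_0^Y(s)\oplus V_Y(s)$ sits inside $X_0(s)$ as a symplectic subspace and that the reduced pair $(P_0(s)\la(s),P_0(s)\mu(s))$ restricted to $Y_0(s)$ coincides with the $Y$-side reduction $(P_0^Y(s)\la_Y(s),P_0^Y(s)\mu_Y(s))$. This is not justified. First, the relevant form on $X_0(s)$ is $\w_l(s)$ of Proposition \ref{p:cal-red}.d, not $\w(s)|_{X_0(s)}$, and its restriction to $Y_0(s)$ has no reason to equal the $Y$-side induced form. Second, the two projections are along incompatible complements: $P_0(s)$ projects along $X_1(s)=(V^{\w(s)}\cap\la(s))\oplus(V^{\w(s)}\cap\mu(s))$, while $P_0^Y(s)$ projects along $Y_1(s)=(V_Y^{\w_Y(s)}\cap\la_Y(s))\oplus(V_Y^{\w_Y(s)}\cap\mu_Y(s))$. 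An element of $\la_Y(s)$ annihilating $V_Y$ need not annihilate $V_N$, so in general $Y_1(s)\not\subset X_1(s)$ and the projections do not agree on $Y(s)$. Your appeal to Lemma \ref{l:sym-subspace} and to the constant-$k$ hypothesis controls dimensions, not this alignment.

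The paper avoids this obstacle by \emph{not} realising $Y_0\subset X_0$ as literal subspaces. It reduces each side via Corollary \ref{c:finite-dimension-reduction} (through $V+\la(s)$ in $X$ and $V_1+\la_Y(s)$ in $Y$, with $V=V_1\oplus V_2$ chosen much as you do), and then builds a chain of symplectic linear maps between the resulting \emph{quotient} spaces,
\[
\frac{V_1+\la_Y(s)}{V_1^{\w_Y(s)}\cap\la(s)}\ \xrightarrow{\ f(s)\ }\ \frac{V_1+\la(s)}{V_1^{\w(s)}\cap\la(s)}\ \xrightarrow{\ g(s)\ }\ \frac{V+\la(s)}{V^{\w(s)}\cap\la(s)},
\]
where $f(s)$ (induced by $\la_Y(s)\hookrightarrow\la(s)$) is a symplectic isomorphism by a dimension count, and $g(s)$ (induced by $V_1\hookrightarrow V$) is a symplectic injection. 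Transitivity of symplectic reduction (Lemma \ref{l:red-transitive}) together with Lemma \ref{l:red-index} and Proposition \ref{p:red-fredholm} shows that under $g(s)\circ f(s)$ the reduced Lagrangians on the $Y$-side go to the corresponding reductions on the $X$-side. Only now, with a genuine finite-dimensional symplectic embedding in hand (where the inclusion is automatically continuous and the symplectic complement exists), does the paper invoke Proposition \ref{p:seperate-mas} and the constant-$k$ vanishing — your final step.
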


\begin{proof} Since $[0,1]$ is compact, by Definition \ref{d:maslov-banach} we need only consider the local case.
In this case the bundles $\mathbb{X}$ and $\mathbb{Y}$ are both trivial, i.e., we can assume that
$X(s)=X$ and $Y(s)=Y$.

Fix $t\in[0,1]$. Set $\la_Y(s):=\la(s)\cap Y$, $\mu_Y(s):=\mu(s)\cap Y$, and
$\w_Y(s):=\w(s)|_{Y\times Y}$ for all $s\in[0,1]$. By the Fredholm properties, there exist
finite-dimensional linear subspaces $V_1\subset Y$ and $V_2\subset X$ such that
\[Y=V_1\oplus (\la(t)\cap Y+\mu(t)\cap Y),\quad X=V_2\oplus(Y+\la(t)+\mu(t)).\]
Set $V:=V_1\oplus V_2$. Then we have
\[V+\la(t)+\mu(t)=V_1+V_2+\la(t)+\mu(t)=V_2+Y+\la(t)+\mu(t)=X.\]

Note that
\begin{align*}
V\cap\la(t)&=(V_1+V_2)\cap(V_1+\la(t))\cap\la(t)\\
&=(V_1+V_2\cap(V_1+\la(t))\cap\la(t)\\
&=V_1\cap\la(t)=V_1\cap Y\cap\la(t)=\{0\}.
\end{align*}
By Appendix \ref{ss:continuity-of-operations}, there exists a $\delta>0$ such that for
$s\in(-\delta,\delta)\cap[0,1]$, we have $V+\la(s)+\mu(s)=X$, $V_1+\la_Y(s)+\mu_Y(s)=Y$,
and $V\cap\la(s)=V_1\cap\la_Y(s)=\{0\}$.
 By Corollary \ref{c:finite-dimension-reduction}, for all paths $[s_1,s_2]\subset(-\delta,\delta)\cap[0,1]$ we have
\begin{align*}
\Mas_{\pm}\{\la(s),\mu(s)\}&=\Mas_{\pm}\{R_{V+\la(s)}^{\w(s)}(\la(s)),R_{V+\la(s)}^{\w(s)}(\mu(s))\},\\
\Mas_{\pm}\{\la_Y(s),\mu_Y(s)\}&=\Mas_{\pm}\{R_{V_1+\la_Y(s)}^{\w_Y(s)}(\la(s)),R_{V+\la_Y(s)}^{\w_Y(s)}(\mu(s))\}.
\end{align*}

Consider the symplectic linear maps
\begin{equation}\label{dia:composition}
\xymatrix@C=1cm{ \frac{V_1+\la_Y(s)}{V_1^{\w_Y(s)}\cap\la(s)} \ar^{f(s)}[r] &
\frac{V_1+\la(s)}{V_1^{\w(s)}\cap\la(s)} \ar^{g(s)}[r] & \frac{V+\la(s)}{V^{\w(s)}\cap\la(s)},}
\end{equation}
where $f(s)$ is induced by the embedding $\la_Y(s)\into \la(s)$ and $g(s)$ is induced by the
embedding $V_1\into V$. Note that a symplectic linear map is an injection. By comparing dimensions,
each $f(s)$ is an isomorphism and each $g(s)$ is an injection. For any linear subspace $M$ of $X$,
we have
\begin{equation}\label{e:subspace}
f(s)(R_{V_1+\la_Y(s)}^{\w_Y(s)}(M\cap Y))\subset R_{V_1+\la(s)}^{\w(s)}(M).
\end{equation}
If $M=\la(s)$ or $M=\mu(s)$, then
\begin{itemize}
\item $R_{V_1+\la_Y(s)}^{\w_Y(s)}(M\cap Y)$ is a Lagrangian subspace in the reduced space
$(V_1+\la_Y(s)) / (V_1^{\w_Y(s)}\cap\la(s))$, and

\item $R_{V_1+\la(s)}^{\w(s)}(M)$ is a Lagrangian subspace in $(V_1+\la(s))/(V_1^{\w(s)}\cap\la(s))$.
\end{itemize}
So (\ref{e:subspace}) is an equality of two Banach bundles of finite fiber dimension for each
$s\in[0,1]$. Then we can apply Lemma \ref{l:red-transitive}, Lemma \ref{l:red-index} and
Proposition \ref{p:red-fredholm}. Our problem is then reduced to a path of symplectic embeddings
$g(s)\circ f(s)$, which replace the linear embedding of the bundles.

Now we are in the finite-dimensional case, i.e., we can assume that $\dim X<+\infty$. In this case,
the embedding is always continuous, and $X=Y(s)\oplus Y(s)^{\w(s)}$. By Proposition
\ref{p:seperate-mas} and Proposition \ref{p:maslov-properties}.e we have
\begin{align*}
\Mas_{\pm}\{\la(s),\mu(s)\}\ &=\ \Mas_{\pm}\{\la(s)\cap Y(s),\mu(s)\cap Y(s)\}\\
&\qquad+\ \Mas_{\pm}\{\la(s)\cap Y(s)^{\w(s)},\mu(s)\cap Y(s)^{\w(s)}\}\\
&=\ \Mas_{\pm}\{\la(s)\cap Y(s),\mu(s)\cap Y(s)\}.
\end{align*}
Our result is then proved.
\end{proof}
\section{The H{\"o}rmander index}\label{ss:hormander-index}

In this section we fix the symplectic Banach space $(X,\w)$. Firstly we give some topological and
calculatory preparations. Recall from Definition \ref{d:fredholm-lag-pair}: for $k,m\in\Z$ and
$\mu\in\Ll(X)$, we define
\begin{align*}
\Ff\Ll_k(X)\ :&=\ \{(\lambda,\mu)\in\Ff\Ll(X); \Index(\la,\mu)=k\}, \\
\Ff\Ll_k(X,\mu)\ :&=\ \{\la\in\Ll(X);(\la,\mu)\in\Ff\Ll_k(X)\},\\
\Ff\Ll_0^m(X,\mu)\ :&=\ \{\la\in\Ff\Ll_0(X,\mu);\dim(\la\cap\mu)=m\}.
\end{align*}

\begin{lemma}\label{l:pi0-fl-mu} Let $\mu\in\Ll(X)$. Then we have that
\subindex{Fredholm Lagrangian Grassmannian!topology}
\newline (a) $\Ff\Ll_0^0(X,\mu)$ is an affine space (hence contractible),
\newline (b) $\Ff\Ll_0^0(X,\mu)$ is dense in $\Ff\Ll_0(X,\mu)$ and $\Ff\Ll_0(X,\mu)$ is path connected.
\end{lemma}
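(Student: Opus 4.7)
The plan is to settle (a) by parametrizing $\Ff\Ll_0^0(X,\mu)$ via graphs over a fixed base point, and to derive (b) by a perturbation argument built on the intrinsic decomposition of Proposition~\ref{p:reduction-prop}.

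For (a) I first observe that $\Ff\Ll_0^0(X,\mu)\neq\emptyset$ whenever $\Ff\Ll_0(X,\mu)\neq\emptyset$: given any $\la\in\Ff\Ll_0(X,\mu)$, Corollary~\ref{c:complementary-lagrangian} applied to the reversed pair $(\mu,\la)$ produces a Lagrangian $\tilde\la$ with $X=\mu\oplus\tilde\la$, i.e.\ $\tilde\la\in\Ff\Ll_0^0(X,\mu)$. Fix such a base point $\la_0\in\Ff\Ll_0^0(X,\mu)$, so $X=\la_0\oplus\mu$. Every element of $\Ff\Ll_0^0(X,\mu)$ is then uniquely of the form $\Graph(A)$ for some $A\in\Bb(\la_0,\mu)$, and $A\mapsto\Graph(A)$ is a homeomorphism onto the open set of closed complements of $\mu$ in the gap topology (Lemma~\ref{l:connectness-local}). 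Expanding the isotropy condition on $\Graph(A)$ and using that $\la_0$ and $\mu$ are themselves Lagrangian reduces it to the single requirement that
\[
Q_A(x_1,x_2)\ :=\ \w(x_1,Ax_2)
\]
be a Hermitian form on $\la_0$, a real-linear constraint on $A$. Hence $\Ff\Ll_0^0(X,\mu)$ is parametrized by a real linear subspace of $\Bb(\la_0,\mu)$, and is therefore an affine (contractible) space.

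For the density claim in (b), fix $\la\in\Ff\Ll_0(X,\mu)$ with $k:=\dim(\la\cap\mu)=\dim X/(\la+\mu)$ (nothing to prove if $k=0$). Choose any $k$-dimensional supplement $V$ of $\la+\mu$ and invoke Proposition~\ref{p:reduction-prop} with $\la_0:=\la\cap\mu$ to obtain the intrinsic splitting $X=X_0\oplus X_1$ in which $X_0=\la_0+V$ is symplectic of dimension $2k$, $X_1=X_0^\w$ is symplectic, and $\la=\la_0\oplus\la_1$, $\mu=\la_0\oplus\mu_1$ with $\la_1,\mu_1\subset X_1$ transversal Lagrangians of $X_1$. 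Inside the finite-dimensional $X_0$, pick a Lagrangian complement $V_0$ of $\la_0$ together with any invertible Hermitian $S_0\colon\la_0\to V_0$ (say, the one corresponding to a positive-definite form), and set
\[
\la_0(t)\ :=\ \Graph(tS_0)\subset X_0,\qquad \la(t)\ :=\ \la_0(t)+\la_1\subset X,\qquad t\in[0,1].
\]
A direct computation based on $X_0=X_1^\w$ and the Lagrangian properties of $\la_0(t)$ and $\la_1$ (in the spirit of Lemma~\ref{l:seperate-lagrange}) shows that each $\la(t)\in\Ll(X)$. For $t>0$ one has $\la_0(t)\cap\la_0=\{0\}$, hence, using the direct-sum decomposition $X_0\oplus X_1$ together with $\la_1\cap\mu_1=\{0\}$, one obtains $\la(t)\cap\mu=\{0\}$ and $\la(t)+\mu=X$, so that $\la(t)\in\Ff\Ll_0^0(X,\mu)$. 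Continuity of $t\mapsto\la_0(t)$ at $t=0$ is elementary in the finite-dimensional $X_0$, and the continuity of $\la(t)=\la_0(t)+\la_1$ in the gap topology of $X$ will follow from the sum--intersection stability of Appendix~\ref{s:closed-subspaces} (Proposition~\ref{p:close-to}) against the fixed $\la_1$; letting $t\to 0^+$ then yields density.

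Path-connectedness of $\Ff\Ll_0(X,\mu)$ is an immediate consequence: the path $\{\la(t)\}_{t\in[0,1]}$ just constructed lies in $\Ff\Ll_0(X,\mu)$ throughout and connects the arbitrary point $\la$ to a point of the affine (hence path-connected) space $\Ff\Ll_0^0(X,\mu)$ from (a), and concatenation with a straight line segment in that affine space connects any two points of $\Ff\Ll_0(X,\mu)$. The main obstacle I anticipate is not algebraic but topological: verifying that the graph parametrization in (a) is a gap-topology homeomorphism onto the complements of $\mu$, and that the sum $\la_0(t)+\la_1$ in (b) depends continuously on $t$ in the gap topology of the ambient $X$. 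Both steps require careful bookkeeping with the continuity-of-operations results collected in Appendix~\ref{s:closed-subspaces}, but no new ideas beyond them.
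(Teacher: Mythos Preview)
Your proposal is correct and follows essentially the same approach as the paper: parametrize $\Ff\Ll_0^0(X,\mu)$ by graphs over a fixed transversal Lagrangian for (a), and for (b) use the intrinsic splitting $X=X_0\oplus X_1$ of Proposition~\ref{p:reduction-prop} to reduce to a finite-dimensional perturbation $\Graph(tS_0)\oplus\la_1$ connecting $\la$ to $\Ff\Ll_0^0(X,\mu)$. The only simplification in the paper's version is that it chooses the complement $V$ of $\la+\mu$ to be isotropic from the outset, so that $V$ itself serves as the Lagrangian complement of $\la_0$ in $X_0$ and your separate choice of $V_0$ becomes unnecessary; your topological worries about continuity of $\la_0(t)+\la_1$ are handled implicitly by the bounded direct-sum decomposition $X=X_0\oplus X_1$.
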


\begin{proof} (a) Let $\la\in\Ff\Ll_0^0(\mu)$. By Lemma \ref{l:connectness-local}, we have
\begin{align*}\Ff\Ll_0^0&(X,\mu)=\{\Graph(A)\in\Ll(X);A\in\Bb(\la,\mu)\}\\
&=\{\Graph(A);A\in\Bb(\la,\mu),\w(x,Ay)+\w(Ax,y)=0,\forall x,y\in\la\}.
\end{align*}
So (a) is proved.
\newline (b) Let $\la\in \Ff\Ll_0(X,\mu)$. By Proposition \ref{p:reduction-prop}, we have $X=X_0\oplus X_1$,
where $X_0:= V\oplus\la_0$, $X_1:=\la_1\oplus\mu_1$, $\la_0:=\la\cap\mu$, $\la_1=V^{\w}\cap\la$,
$\mu_1=\mu^{\w}\cap\mu$, and $V$ is chosen to be isotropic. We have $X_0=X_1^{\w}$ is of finite
dimension, and $X_0$, $X_1$ are symplectic. Note that $V,\la_0\in\Ll(X_0)$ and
$\la_1,\mu_1\in\Ll(X_1)$. Let $A\colon \la_0\to V$ be a linear isomorphism with
$\w(x,Ay)+\w(Ax,y)=0,\forall x,y\in\la_0$. Set $c_1(s):=\Graph(sA)$, $s\in[0,1]$ and
$c(s)=c_1(s)\oplus\la_1$. The $c(0)=\la$ and $c(s)\in\Ff\Ll_0^0(X,\mu)$. By (a), we get (b).
\end{proof}

\begin{lemma}\label{l:pi0-cp-la} Let $\la,\mu\in\Ll(X)$ and $X=\la\oplus\mu$. Let $\Cc\Pp_0(X,\la)$ be defined in Corollary \ref{c:pi0-fpc0}.
Then we have
\newline (a) $\Ff\Ll_0^0(X,\mu)\cap\Cc\Pp_0(X,\la)$ is an affine space (hence contractible),
\newline (b) $\Ff\Ll_0^0(X,\mu)\cap\Cc\Pp_0(X,\la)$ is dense in $\Ff\Ll_0(X,\mu)\cap\Cc\Pp_0(X,\la)$,
and $\Ff\Ll_0(X,\mu)\cap\Cc\Pp_0(X,\la)$ is path connected.
\end{lemma}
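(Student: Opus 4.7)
The plan is to imitate the proof structure of Lemma \ref{l:pi0-fl-mu}, combining it with the defining topological properties of $\Cc\Pp_0(X,\la)$ from Corollary \ref{c:pi0-fpc0}. The hypothesis $X=\la\oplus\mu$ is crucial because it certifies that $\la$ itself is a natural base point of the intersection: indeed $\la\cap\mu=\{0\}$ and $\la+\mu=X$, so $\la\in\Ff\Ll_0^0(X,\mu)$; also $\la\in\Cc\Pp_0(X,\la)$ by definition (the component containing $\la$ itself). So the intersection is non-empty, and we fix $\la$ as the base point for the graph parametrization.

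For part (a), I would first invoke the parametrization established in Lemma \ref{l:pi0-fl-mu}(a), now using $\la$ as the reference Lagrangian, to identify
\[
\Ff\Ll_0^0(X,\mu)\ =\ \bigl\{\Graph(A)\colon A\in\Bb(\la,\mu),\ \w(x,Ay)+\w(y,Ax)=0\ \forall x,y\in\la\bigr\},
\]
which is an affine space modelled on the real vector space of such symmetric operators. The key step is then to show that imposing membership in $\Cc\Pp_0(X,\la)$ does not destroy this affine structure. The homotopy $t\mapsto\Graph(tA)$, $t\in[0,1]$, is a continuous path in $\Ss(X)$ (in the gap topology, by Appendix \ref{ss:continuity-of-operations}) from $\la$ to $\Graph(A)$, and stays inside the Fredholm-pair/complementarity class defining $\Cc\Pp_0(X,\la)$. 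Hence every $\Graph(A)\in\Ff\Ll_0^0(X,\mu)$ automatically lies in $\Cc\Pp_0(X,\la)$, and the intersection coincides with $\Ff\Ll_0^0(X,\mu)$, which is an affine space by Lemma \ref{l:pi0-fl-mu}(a).

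For part (b), I would transcribe the density argument of Lemma \ref{l:pi0-fl-mu}(b), tracking the additional $\Cc\Pp_0(X,\la)$ condition along the constructed deformation. Given $\nu\in\Ff\Ll_0(X,\mu)\cap\Cc\Pp_0(X,\la)$, apply Proposition \ref{p:reduction-prop} to the Fredholm pair $(\nu,\mu)$ of index $0$ to produce a symplectic splitting $X=X_0\oplus X_1$ with $X_0$ finite-dimensional containing $\nu_0:=\nu\cap\mu$ and an isotropic complement $V$, and with Lagrangian parts $\nu_1,\mu_1\in\Ll(X_1)$. Choose a linear isomorphism $A\colon\nu_0\to V$ with $\w(x,Ay)+\w(y,Ax)=0$, as in the proof of Lemma \ref{l:pi0-fl-mu}(b), and form
\[
c(s)\ :=\ \Graph(sA)\oplus\nu_1,\qquad s\in[0,1].
\]
Then $c(0)=\nu$ and $c(s)\in\Ff\Ll_0^0(X,\mu)$ for all $s>0$. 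The path $\{c(s)\}_{s\in[0,1]}$ is continuous in the gap topology and remains in $\Cc\Pp_0(X,\la)$: this is because $\Cc\Pp_0(X,\la)$ is open-and-closed in the appropriate Fredholm-pair space (it is a connected component by Corollary \ref{c:pi0-fpc0}), and the path starts there. This simultaneously gives density of $\Ff\Ll_0^0(X,\mu)\cap\Cc\Pp_0(X,\la)$ in $\Ff\Ll_0(X,\mu)\cap\Cc\Pp_0(X,\la)$ (take $s\to 0^+$ to approximate $\nu$) and path-connectedness of the latter: any two elements can be connected via the affine subspace from (a) after first being deformed into $\Ff\Ll_0^0(X,\mu)\cap\Cc\Pp_0(X,\la)$ by the paths $c(\cdot)$.

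The main obstacle is verifying that the deformation $c(s)$ genuinely stays within $\Cc\Pp_0(X,\la)$. This reduces to the topological input that $\Cc\Pp_0(X,\la)$ is closed under continuous deformation through Fredholm pairs with $\la$, which is the content of Corollary \ref{c:pi0-fpc0}. Once this component-invariance is established, the algebraic machinery of Proposition \ref{p:reduction-prop} and the graph parametrization transfer verbatim from Lemma \ref{l:pi0-fl-mu} to yield both assertions.
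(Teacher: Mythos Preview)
Your argument for part (a) contains a genuine error. You claim that every $\Graph(A)\in\Ff\Ll_0^0(X,\mu)$ automatically lies in $\Cc\Pp_0(X,\la)$, so that the intersection coincides with all of $\Ff\Ll_0^0(X,\mu)$. This is false. Recall from Corollary \ref{c:pi0-fpc0} that $\Cc\Pp_0(X,\la)=\{W\in\Ss^c(X);\,W\sim^c\la,\ [W-\la]=0\}$ is defined via the \emph{compact perturbation} relation, not as a connected component of some Fredholm-pair space. The path $t\mapsto\Graph(tA)$ certainly connects $\la$ to $\Graph(A)$ in $\Ss^c(X)$, but path-connectedness to $\la$ does not imply $W\sim^c\la$: for a generic bounded (non-compact) $A\in\Bb(\la,\mu)$, the graph $\Graph(A)$ is \emph{not} a compact perturbation of $\la$. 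The correct characterization, paralleling the proof of Corollary \ref{c:pi0-fpc0}(a), is
\[
\Ff\Ll_0^0(X,\mu)\cap\Cc\Pp_0(X,\la)\ =\ \bigl\{\Graph(A):A\in\Bb(\la,\mu)\text{ compact},\ \w(x,Ay)+\w(y,Ax)=0\bigr\},
\]
which is an affine space because compactness is a linear condition and the symmetry constraint is linear. This is why the paper points to Corollary \ref{c:pi0-fpc0} rather than to Lemma \ref{l:pi0-fl-mu}.

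For part (b), your path $c(s)=\Graph(sA)\oplus\nu_1$ is the right construction, but your justification that it remains in $\Cc\Pp_0(X,\la)$ is again based on the unfounded premise that $\Cc\Pp_0(X,\la)$ is open-and-closed in a Fredholm-pair space. The correct reason is algebraic: $c(s)$ differs from $\nu$ only in the finite-dimensional summand $X_0$, so $c(s)\sim^f\nu$ with $[c(s)-\nu]=0$; since $\nu\sim^c\la$ with $[\nu-\la]=0$, transitivity (Proposition \ref{p:compact-perturb}(a),(d)) gives $c(s)\in\Cc\Pp_0(X,\la)$. With this repair, your density and connectedness arguments go through and match the method the paper has in mind.
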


As explained in the Appendix (Corollary \ref{c:pi0-fpc0}), the set $\Cc\Pp_0(X,\la)$ consists of
all complemented finite changes of $\la$ in $X$ of vanishing relative index. For the notion of
\textit{finite change} and \textit{relative index} see Definition \ref{d:compact-perturb}.
\subindex{Finite change}\subindex{Relative index}\subindex{Index!relative}
\symindex{Zz@$\sim^c,\sim^f$ compact, finite change}\symindex{Zz@$[\cdot-\cdot]$ relative index}
\begin{proof} The proof of Lemma \ref{l:pi0-cp-la} is similar to that of Corollary \ref{c:pi0-fpc0} and we omit it.
\end{proof}

\begin{corollary}\label{c:mas-loop-independence}
Let $\mu_1,\mu_2\in\Ll(X)$ such that $\mu_1\sim^c\mu_2$ and $[\mu_1-\mu_2]=0$. Let
$\{\la_j(s)\}_{s\in[0,1]}$  be two paths in $\Ff\Ll_0(X,\mu_1)$ with the same endpoints for
$j=1,2$. Then $\{\la_j(s)\}_{s\in[0,1]}$ is a path in $\Ff\Ll_0(X,\mu_2)$ and we have
\begin{equation}\label{e:mas-loop-independence}
\Mas\{\la_1,\mu_2\}-\Mas\{\la_1,\mu_1\}=\Mas\{\la_2,\mu_2\}-\Mas\{\la_2,\mu_1\}.
\end{equation}
\end{corollary}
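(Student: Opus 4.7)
The plan is to reduce \eqref{e:mas-loop-independence} to a loop identity and then apply homotopy invariance of the Maslov index to a contractible two-parameter family in which $\mu_1$ is deformed to $\mu_2$.

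First I would verify that each $\{\la_j(s)\}_{s\in[0,1]}$ is indeed a path in $\Ff\Ll_0(X,\mu_2)$. By hypothesis $\mu_2\in\Cc\Pp_0(X,\mu_1)$, i.e., $\mu_2$ is a complemented compact (here finite) perturbation of $\mu_1$ with $[\mu_1-\mu_2]=0$. Standard stability of Fredholm pairs of Lagrangian subspaces under compact perturbations, combined with additivity of the relative index, gives $(\la_j(s),\mu_2)\in\Ff\Ll_0(X)$ for each $s$, with $\Index(\la_j(s),\mu_2)=\Index(\la_j(s),\mu_1)+[\mu_2-\mu_1]=0$; continuity in $s$ is immediate since $\mu_2$ is constant. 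Next I would form the loop $\Lambda$ obtained by concatenating $\la_1$ with the reversal of $\la_2$; since $\la_1$ and $\la_2$ share endpoints, $\Lambda$ is a loop in $\Ff\Ll_0(X,\mu_1)\cap\Ff\Ll_0(X,\mu_2)$. By additivity under catenation and the sign change of the Maslov index under path reversal (Theorem \ref{t:main}, inheriting from Proposition \ref{p:maslov-properties}), the desired identity \eqref{e:mas-loop-independence} is equivalent to the loop identity
\[
\Mas\{\Lambda,\mu_1\}\ =\ \Mas\{\Lambda,\mu_2\}.
\]

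To prove this loop identity, I would use Lemma \ref{l:pi0-cp-la} together with Corollary \ref{c:pi0-fpc0} to pick a continuous path $\{\mu(t)\}_{t\in[0,1]}$ of Lagrangian subspaces with $\mu(0)=\mu_1$, $\mu(1)=\mu_2$, and $\mu(t)\in\Cc\Pp_0(X,\mu_1)$ for every $t$. For every $(s,t)\in[0,1]^2$ the pair $(\Lambda(s),\mu(t))$ is then a Fredholm pair of Lagrangian subspaces of index $0$, by the same compact-perturbation stability argument. Applying the homotopy invariance of the Maslov index from Theorem \ref{t:main} to the boundary loop of the contractible rectangle $[0,1]^2$, the Maslov index of this boundary loop vanishes, and its four sides contribute
\[
\Mas\{\Lambda(\cdot),\mu_1\}+\Mas\{\Lambda(1),\mu(\cdot)\}-\Mas\{\Lambda(\cdot),\mu_2\}-\Mas\{\Lambda(0),\mu(\cdot)\}=0.
\]
Since $\Lambda(0)=\Lambda(1)$, the two vertical contributions are Maslov indices of identical paths of Fredholm pairs and cancel, yielding $\Mas\{\Lambda,\mu_1\}=\Mas\{\Lambda,\mu_2\}$.

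The main technical obstacle is producing the connecting path $\mu(t)$ so that the \emph{entire} two-parameter family $(\Lambda(s),\mu(t))$ stays in the space of Fredholm pairs of Lagrangian subspaces of index $0$. This is exactly where the hypotheses $\mu_1\sim^c\mu_2$ and $[\mu_1-\mu_2]=0$, together with the path-connectedness statement of Lemma \ref{l:pi0-cp-la}, intervene decisively: compact perturbation secures the Fredholm property against every $\Lambda(s)$, while the vanishing relative index, preserved along $\Cc\Pp_0(X,\mu_1)$, keeps the index of $(\Lambda(s),\mu(t))$ uniformly zero so that Theorem \ref{t:main} is legitimately applicable to the whole rectangle.
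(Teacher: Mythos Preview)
Your proof is correct and follows essentially the same strategy as the paper: both produce a connecting path $\mu(t)\in\Cc\Pp_0(X,\mu_1)$ via Lemma \ref{l:pi0-cp-la}, invoke Proposition \ref{p:compact-perturb}.g to ensure $(\la_j(s),\mu(t))\in\Ff\Ll_0(X)$ for all $(s,t)$, and then apply homotopy invariance over the rectangle. The only cosmetic difference is that the paper runs the rectangle argument separately for $\la_1$ and $\la_2$ and matches the vertical sides via $\la_1(0)=\la_2(0)$, $\la_1(1)=\la_2(1)$, whereas you first concatenate to a loop $\Lambda$ and run a single rectangle; these are equivalent repackagings.
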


\begin{proof} By Lemma \ref{l:pi0-cp-la}, there is a path $\mu(s)$ with $\mu(0)=\mu_1$ and $\mu(1)=\mu_2$, $\mu(s)\sim^c\mu_2$ and $[\mu_1-\mu(s)]=0$.
By Proposition \ref{p:compact-perturb}.g, we have $\Ff_{0,\mu_1}(X)=\Ff_{0,\mu_2}(X)$ and
$\Ff\Ll_0(X,\mu_1)=\Ff\Ll_0(X,\mu(s))$. Then we have $(\la_j(s),\mu(s))\in\Ff\Ll_0(X)$.
 Then we have two homotopies $(\la_j(s),\mu(t))\in\Ff\Ll_0(X)$, $(s,t)\in[0,1]$ for $j=1,2$. By Proposition \ref{p:maslov-properties} we have
\begin{align*}
\Mas\{\la_1,\mu_2\}&-\Mas\{\la_1,\mu_1\}=\Mas\{\la_1(1),\mu\}-\Mas\{\la_1(0),\mu\}\\
&=\Mas\{\la_2(1),\mu\}-\Mas\{\la_2(0),\mu\}\\
&=\Mas\{\la_2,\mu_2\}-\Mas\{\la_2,\mu_1\}.\qedhere
\end{align*}
\exendproof

Now we are in the position of defining the H{\"o}rmander index for a quadruple of Lagrangian
subspaces in symplectic Banach space, so in particular in \textit{weak} symplectic Hilbert space.
Formally, our definition reminds the definition given by K. Furutani and the first author in
\auindex{Boo{\ss}--Bavnbek,\ B.}\auindex{Furutani,\ K.}\cite[Definition 5.2]{BoFu99} for strong
symplectic Hilbert space. The novelty of the following definition is that we need two additional
conditions for the weak symplectic case, namely
\[
\Index(\la_1,\mu_1)\ =\ \Index(\la_2,\mu_1)\ = \ 0 \ \tand\  [\mu_1-\mu_2]\ =\ 0.
\]
Note that these conditions are always satisfied in the strong symplectic case.

\begin{definition} Let $\la_1,\la_2,\mu_1,\mu_2\in\Ll(X)$ be Lagrangian subspaces of $X$.
Assume that $\la_1,\la_2\in \Ff\Ll_0(X,\mu_1)$, $\mu_1\sim^c\mu_2$ and $[\mu_1-\mu_2]=0$. By Lemma
\ref{l:pi0-fl-mu}, there is a path $\la\colon [0,1]\to\Ff\Ll_0(X,\mu_1)$ with $\la(0)=\la_1$,
$\la(1)=\la_2$. By Lemma \ref{c:mas-loop-independence}, we can define the {\em H{\"o}rmander index}
$\sigma(\la_1,\la_2;\mu_1,\mu_2)$ by
\subindex{Index!H{\"o}rmander index}\subindex{H{\"o}rmander index}%
\symindex{s@$s(\cdot,\cdot;\cdot,\cdot)$ H{\"o}rmander index}
\begin{equation}\label{e:hormander}
s(\la_1,\la_2;\mu_1,\mu_2)=\Mas\{\la,\mu_2\}-\Mas\{\la,\mu_1\}.
\end{equation}
\end{definition}

We note that the condition $[\mu_1-\mu_2]=0$ is automatically satisfied under the assumptions of
the following lemma.

\begin{lemma}\label{l:0-relative-index}

Let $(X,\w)$ be a symplectic Banach space with three closed subspaces $\la,\mu_1,\mu_2$. Let
$(\la,\mu_1),(\la,\mu_2)\in\Ff\Ll(X)$. Assume that $\mu_1\sim^c\mu_2$.  If
$\Index(\la,\mu_1)=\Index(\la,\mu_2)=0$, we have $[\mu_1-\mu_2]=0$.
\end{lemma}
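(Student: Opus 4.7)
The plan is to reduce the claim to a characterization of the relative index $[\mu_1-\mu_2]$ that is (presumably) established in Definition \ref{d:compact-perturb} and Proposition \ref{p:compact-perturb} in the Appendix. Concretely, when $\mu_1\sim^c\mu_2$, the integer $[\mu_1-\mu_2]$ is designed to measure precisely the failure of Fredholm index invariance: it should satisfy
\[
  \Index(\la,\mu_1)-\Index(\la,\mu_2)\ =\ [\mu_1-\mu_2]
\]
for \emph{every} closed subspace $\la\subset X$ such that both $(\la,\mu_1)$ and $(\la,\mu_2)$ lie in $\Ff\Ll(X)$ (or, more generally, are Fredholm pairs). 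Once this is in hand, the lemma is immediate: the hypothesis provides such a $\la$, and plugging in $\Index(\la,\mu_1)=\Index(\la,\mu_2)=0$ gives $[\mu_1-\mu_2]=0$.

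Accordingly, the first step I would take is to locate and cite the precise formulation of this identity from the Appendix, namely the part of Proposition \ref{p:compact-perturb} that states the invariance of the Fredholm index under compact perturbations of one component, together with the defining formula for $[\mu_1-\mu_2]$. If the identity is not stated verbatim in this directional form, I would derive it in one line by connecting $\mu_1$ and $\mu_2$ through the compact-perturbation relation: choose a reference $\la_0$ (any Lagrangian transverse to both, which exists up to a further finite perturbation) so that the defining formula for $[\mu_1-\mu_2]$ is unwound to a difference of indices, then observe that the common $\la$ in the hypothesis plays exactly that role.

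The second step is the actual substitution. Since $(\la,\mu_1),(\la,\mu_2)\in\Ff\Ll(X)$ with $\mu_1\sim^c\mu_2$, and both indices vanish by assumption, the cited identity yields $[\mu_1-\mu_2]=0$.

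The only real obstacle is a bookkeeping one: making sure the sign convention and the choice of ``reference'' in the Appendix's definition of $[\,\cdot-\cdot\,]$ actually let us read off the relative index from a \emph{single} witness $\la$. If the Appendix instead defines $[\mu_1-\mu_2]$ via a specific (e.g.\ complementary) reference $\la_0$, then I would insert an intermediate lemma showing independence of the witness: given two witnesses $\la$ and $\la_0$, the difference $\Index(\la,\mu_1)-\Index(\la,\mu_2)$ equals $\Index(\la_0,\mu_1)-\Index(\la_0,\mu_2)$. This independence follows from the additivity of the Fredholm index under small perturbations (Appendix \ref{ss:continuity-of-operations}) combined with the compactness of the change $\mu_1\sim^c\mu_2$; no new symplectic input is needed. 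With that in place, the conclusion $[\mu_1-\mu_2]=0$ is a one-line substitution.
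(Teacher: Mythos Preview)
Your approach is correct and matches the paper's: the identity $\Index(\la,\mu_1)-\Index(\la,\mu_2)=[\mu_1-\mu_2]$ is exactly Proposition \ref{p:compact-perturb}.g (which states $\Ff_{k+[M-N],M}(X)=\Ff_{k,N}(X)$ whenever $M\sim^c N$ and $M\in\Ss^c(X)$), and the complementedness hypothesis $\mu_1\in\Ss^c(X)$ is guaranteed by Lemma \ref{l:fp-complemented} since $(\la,\mu_1)\in\Ff\Ll(X)$. All your contingency planning about choosing a transversal $\la_0$ or proving witness-independence is unnecessary---the paper's proof is literally the one line ``special case of Proposition \ref{p:compact-perturb}.g''.
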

Note that by Proposition \ref{p:compact-perturb}.g,%
\[
(\la,\mu_1)\in\Ff\Ll(X)\ \tand\ \mu_1\sim^c\mu_2\ \Longrightarrow\ (\la,\mu_2)\in\Ff\Ll(X).
\]

\begin{proof}
The Lemma is just a special case of Proposition \ref{p:compact-perturb}.g.
\end{proof}

\begin{rem}\label{r:de-Gosson}
In \cite{Gosson:2009}, M. de Gosson\auindex{de Gosson,\ M.} gave a very elegant definition of the
H{\"o}rm\-ander index in finite dimensions in great generality. His definition differs slightly
from ours. By admitting half-integer indices, it  yields more simple proofs, but may be more
difficult to apply in concrete applications in Morse theory.
\end{rem}

\addtocontents{toc}{\medskip\noi}
\part{Applications in global analysis}\label{part:2}
\addtocontents{toc}{\medskip\noi}
\chapter{The desuspension spectral flow formula}\label{s:gsff}

In this section, we study self-adjoint Fredholm extensions of symmetric operators, and prove a
general spectral flow formula under the assumption of a certain weak inner unique continuation
property (wiUCP).

\section{Short account of predecessor formulae}\label{s:history}

To begin with, we describe the topological and analytic background of our applications.

\subsection{The spectral flow}
In various branches of mathematics one is interested in the calculation of the \subindex{Spectral
flow!wider context}spectral flow of a continuous family of closed densely defined (not necessarily
bounded) self-adjoint \subindex{Fredholm operator!curves of closed self-adjoint densely defined
Fredholm operators}Fredholm operators in a fixed Hilbert space. Roughly speaking, the spectral flow
is an intersection number between the spectrum and the real line and counts the net number of
eigenvalues changing from the negative real half axis to the nonnegative one.

The spectral flow for a one parameter family of linear self-adjoint Fredholm operators was
introduced by \auindex{Atiyah,\ M.F.}\auindex {Patodi,\ V.K.}\auindex{Singer,\ I.M.}M. Atiyah, V.
Patodi, and I. Singer \cite{AtPaSi76} in their study of \subindex{Index!index theory on manifolds
with boundary}index theory on manifolds with boundary. Since then, other significant applications
have been found; many of them were inspired by \auindex{Vafa,\ C.}\auindex{Witten,\ E.}C. Vafa and
E. Witten's use of the spectral flow to estimate uniform bounds for the \subindex{Spectral
gap}spectral gap of \subindex{Dirac type operators}Dirac operators in \auindex{Vafa,\
C.}\auindex{Witten,\ E.}\cite{VaWi84}. The spectral flow was implicit already in Atiyah and Singer
\auindex{Atiyah,\ M.F.}\auindex{Singer,\ I.M.}\cite{Atiyah-Singer:1969} as the isomorphism from the
fundamental group of the non-trivial connected component of bounded self-adjoint Fredholm operators
in complex Hilbert space onto the integers. Later this notion was made rigorous for not necessarily
closed curves of bounded self-adjoint Fredholm operators in \auindex{Phillips,\ J.}J. Phillips
\cite{Ph96} and for \subindex{Continuously varying!gap-continuous curves of self-adjoint (generally
unbounded) Fredholm operators in Hilbert spaces}\subindex{Gap topology}gap-continuous curves of
self-adjoint (generally unbounded) Fredholm operators in Hilbert spaces in
\auindex{Boo{\ss}--Bavnbek,\ B.}\auindex{Lesch,\ M.}\auindex{Phillips,\ J.}\cite{BoLePh01} by the
Cayley transform. The notion was generalized to the higher dimensional case in \auindex{Dai,\
X.}\auindex{Zhang,\ W.}X. Dai and W. Zhang \cite{DaZh98} for \subindex{Continuously
varying!Riesz-continuous curves of self-adjoint (generally unbounded) Fredholm operators in Hilbert
spaces}\subindex{Riesz topology}Riesz-continuous families, and to more general operators by
\auindex{Wojciechowski,\ K.P.}K.P. Wojciechowski and \auindex{Zhu,\ C.}\auindex{Long,\ Y.}C. Zhu
and Y. Long in \cite{Wo85,Zh01,ZhLo99}.

\subsection{Switch between symmetric and symplectic category}
In this section we derive \subindex{Spectral flow!spectral flow formula $\to$ \textit{Desuspension
spectral flow formula}}\subindex{Desuspension spectral flow formula!switch between symmetric and
symplectic category}spectral flow formulae in the following sense. We are given a continuous curve
of self-adjoint Fredholm operators, or more generally, a \subindex{Fredholm
relation}\subindex{Continuously varying!self-adjoint Fredholm relations}continuous curve of
self-adjoint Fredholm relations. Such curves arise typically from a family of elliptic operators
over a compact manifold with boundary with smoothly varying coefficients and smoothly varying
regular boundary conditions. Then we consider two mutually related invariants: within the {\em
symmetric} \subindex{Symmetric category}\subindex{Symmetric category!$\to$ \textit{Morse
index}}\subindex{Symmetric category!$\to$ \textit{Spectral flow}}category, we have the number of
negative eigenvalues or, more generally, the spectral flow; that is our first invariant. Basically,
it is an intersection number of the \subindex{Spectral lines}spectral lines with the real axis. It
is well defined, but, being a spectral invariant, difficult to determine in general. To define the
second invariant, we switch from the symmetric category to the {\em symplectic} category. We notice
that self-adjoint extensions are characterized by Lagrangian subspaces in corresponding symplectic
Hilbert spaces coming from the domains, i.e., from the boundary values. That consideration yields
another \subindex{Symplectic structures!symplectic invariant}intersection number, the Maslov index.
The Maslov index does not arise from the spectrum, but can be calculated directly from the boundary
values of the solutions. Speaking roughly, the Maslov index counts the changes of the intersection
dimensions of two curves of Lagrangians. In our case, the one curve is made of the continuously
varying Lagrangians coming from the \subindex{Domain!Fredholm}Fredholm  domains. The other curve is
made of the \subindex{Cauchy data space}Cauchy data spaces, which also form Lagrangians and vary
continuously under suitable assumptions. Then the type of spectral flow formulae we are interested
in are formulae where the spectral flow of a given curve of self-adjoint Fredholm relations or
operators is expressed by a related Maslov index. Here the point is that the calculation of the
Maslov index is different from the calculation of the spectral flow, and, in general, easier.

\subsection{Origin and applications in Morse theory}
The first spectral flow formula was the classical \subindex{Desuspension spectral flow
formula!origin and applications in Morse theory}Morse index theorem (cf. \auindex{Morse,\ M.}M.
Morse \cite{Mo}) for \subindex{Geodesic}geodesics on \subindex{Riemannian manifold}Riemannian
manifolds. It was extended by \auindex{Ambrose,\ W.}W. Ambrose \cite{Am} in 1961 to more general
\subindex{Boundary value problems!boundary condition}boundary conditions, which allowed the two
endpoints of the geodesics to vary in two submanifolds of the manifold. In 1976,
\auindex{Duistermaat,\ J.J.}J.J. Duistermaat \cite{Du76} completely solved the problem of
calculating the Morse index for the \subindex{Variational calculus!one-dimensional
problems}one-dimensional variational problems, where the positivity of the second order terms was
required. In 2000-2002, \auindex{Piccione,\ P.}\auindex{Tausk,\ D.V.}P. Piccione and D.V. Tausk
\cite{PiT1,PiT2} were able to prove the Morse index theorem for \subindex{Semi-Riemannian
manifold}semi-Riemannian manifolds for the same boundary conditions as in \auindex{Ambrose,\
W.}\cite{Am}, and certain non-degeneracy conditions were needed. In 2001, the second author
\auindex{Zhu,\ C.}\cite{Zh01} was able to solve the general problem for the calculation of the
Morse index of index forms for \subindex{Dynamical systems!regular Lagrangian system}regular
Lagrangian systems. See also the work of \auindex{Musso,\ M.}\auindex{Pejsachowicz,\
J.}\auindex{Portaluri,\ A.}M. Musso, J. Pejsachowicz, and A. Portaluri on a Morse index theorem for
perturbed geodesics on semi-Riemannian manifolds in \cite{MuPePo05} which has in particular lead
\auindex{Waterstraat,\ N.}N. Waterstraat to a $K$-theoretic proof of the Morse Index Theorem in
\cite{Wa12}.

\subsection{From ordinary to partial differential equations}
In 1988, \auindex{Floer,\ A.}A. Floer \cite{Fl88} emphasized that the notion of a Morse index of a
function on a finite-dimensional manifold cannot be generalized directly to the \subindex{Morse
theory!symplectic action function}symplectic action function \symindex{\alpha@$\alpha$ symplectic
action function}$\alpha$ on the \subindex{Geodesic!loop space of a manifold}loop space of a
manifold. He defined for any pair of \subindex{Critical points!of symplectic action
function}critical points of $\alpha$ a \subindex{Morse theory!Morse index!relative}relative Morse
index, which corresponds to the difference of the two Morse indices in finite dimensions. It is
based on the spectral flow of the Hessian of $\alpha$. That paper opened another line of studying
spectral flow formulae, namely for partial differential operators:

Let \symindex{M@$M, M(s)$ smooth compact manifold!with boundary $\Si,\Si(s)$}
$\{A(s):\Ci(M;E)\to\Ci(M;E)\}_{s\in [0,1]}$ be a family of continuously varying formally
self-adjoint linear \subindex{Elliptic differential operators!on smooth manifolds with
boundary}elliptic differential operators of first order over a smooth compact Riemannian manifold
$M$ with boundary $\Si$, acting on sections of a Hermitian vector bundle $E$ over $M$. Fixing a
unitary bundle isomorphism between the original bundle and a product bundle in a collar
neighborhood $N$ of the boundary, the operators $A(s)$ can be written in the form
\begin{equation}\label{e:intro-dirac}
\symindex{J@$J,J_t,J_{s,t}$ skew-self-adjoint bundle isomorphisms}
\symindex{B@$B,B_t,B_{s,t}$ tangential operators}\symindex{t@$t$ inward normal coordinate}
\symindex{s@$s$ variational parameter}
A(s)|_N = J_{s,t}(\frac{\dpa}{\dpa t}+B_{s,t})
\end{equation}
with skew-self-adjoint bundle isomorphisms $J_{s,t}$ and first order elliptic
differential operators $B_{s,t}$ on $\Si$. Here $t$ denotes the inward normal coordinate in $N$.
For details see \auindex{Boo{\ss}--Bavnbek,\ B.}\auindex{Lesch,\ M.}\auindex{Zhu,\ C.}B.
Boo{\ss}-Bavnbek, M. Lesch and C. Zhu \cite[Section 1]{BoLeZh08}.

Following another seminal paper by \auindex{Floer,\ A.}A. Floer \cite{Fl88b}, in 1991
\auindex{Yoshida,\ T.}T. Yoshida \cite{Yo91} \subindex{Desuspension spectral flow
formula!predecessors}elaborated on \subindex{Floer homology!of 3-manifolds}Floer homology of
3-manifolds by studying a curve \symindex{A@$\{A(s)\}$ curve!of Dirac operators}$\{A(s)\}$ of Dirac
operators with invertible ends, such that over the boundary, the bundle isomorphisms $J_{s,t}=J(s)$
are unitary and the tangential operators $B_{s,t}=B(s)$ symmetric in the preceding notation. In
1995, \auindex{Nicolaescu,\ L.}L. Nicolaescu \cite{Ni95} generalized Yoshida's
results\subindex{Yoshida-Nicolaescu's Theorem} to arbitrary $\dim M$. One year later,
\auindex{Cappell,\ S.E.}\auindex{Lee,\ R.}\auindex{Miller,\ E.Y.}S.E. Cappell, R. Lee, and E.Y.
Miller \cite[Theorem G]{CaLeMi96b} found a somewhat intricate spectral flow formula for curves of
arbitrary elliptic operators of first order under the conditions of constant coefficients close to
the boundary in normal direction and symmetric induced tangential operators. In 2000,
\auindex{Daniel,\ M.}M. Daniel \cite{Da00} removed the nondegenerate conditions in
\auindex{Nicolaescu,\ L.}\cite{Ni95}. In 1998-2001, the first author, jointly with
\auindex{Boo{\ss}--Bavnbek,\ B.}\auindex{Furutani,\ K.}\auindex{Otsuki,\ N.}K. Furutani and N.
Otsuki \cite{BoFu99,BoFuOt01} proved the case that the $A(s)$ differ by 0th order operators, and
the boundary condition is fixed. In 2001, \auindex{Kirk,\ P.}\auindex{Lesch,\ M.}P. Kirk and M.
Lesch \cite[Theorem 7.5]{KiLe00} proved the case that $A(s)$ is of Dirac type, $J_{s,t}$ is fixed
unitary, and $B_{s,t}=B(s)$ symmetric. Later in this section we shall only assume that each $A(s)$
satisfies  \subindex{Weak inner Unique Continuation Property (wiUCP)}weak inner unique continuation
property (wiUCP), i.e., $\ker A(s)|_{H_0^1(M;E)}=\{0\}$.

The formulae are  of varying generality: Some deal with a fixed (elliptic) differential operator
with varying self-adjoint extensions (i.e., varying boundary conditions); others keep the boundary
condition fixed and let the operator vary. An example for a path of operators with fixed principal
symbol is a curve of Dirac operators on a manifold with fixed Riemannian metric and Clifford
multiplication but varying defining connection (varying background field which is a zero-order
perturbation and as such does not inflict the principal symbol). See also the results by the
present authors in \auindex{Boo{\ss}--Bavnbek,\ B.}\auindex{Zhu,\ C.}\cite{BoZh05} for varying
operator and varying boundary conditions but fixed maximal domain. Recently, \auindex{Prokhorova,\
M.}M. Prokhorova \cite{Pro13} considered a path of Dirac operators on a two-dimensional disk with a
finite number of holes subjected to local elliptic boundary conditions of \subindex{Boundary value
problems!chiral bag type}chiral bag type. She obtained a beautiful explicit formula for the
spectral flow (respectively, the Maslov index) which recently was re-proved and generalized by
\auindex{Katsnelson,\ M.I.}\auindex{Nazaikinskii,\ V.E.}M. Katsnelson, V. Nazaikinskii,
\auindex{Gorokhovsky,\ A.}\auindex{Lesch,\ M.}A. Gorokhovsky, and M. Lesch in
\cite{KaNa12,GorLes13}.

\subsection{Our contribution in this Memoir}
In this Memoir we have substantially expanded and settled the validity range of the predecessor
formulae. Roughly speaking, we have achieved the following results:
\begin{enumerate}
\subindex{Desuspension spectral flow formula!novelty of this Memoir}
\item[(1)] In the language of Banach bundles we present the list of assumptions on operator families
that yield an abstract general spectral flow formula. The list can be found in Assumption
\ref{a:asff} and the obtained formulae in Equations \eqref{e:asff1} and \eqref{e:asff2} of Theorem
\ref{t:asff}. This result expands substantially the validity of the functional analytic spectral
flow formula of \auindex{Boo{\ss}--Bavnbek,\ B.}\auindex{Furutani,\ K.} \cite{BoFu98}. The novelty
of the approach is the replacement of a fixed strong symplectic Hilbert space (the $\beta$-space of
the quotients of maximal and minimal domain of a closed symmetric operator) by the quotient of a
fixable intermediate domain with the minimal domain, equipped with varying weak symplectic forms.

\item[(2)] In Section \ref{ss:gsff} we turn to the geometric setting. We consider a smooth
family of formally self-adjoint elliptic differential operators of fixed order acting on sections
of varying vector bundles over varying manifolds with boundary and impose varying well-posed
self-adjoint boundary conditions. Under a technical condition that generalizes weak inner UCP, we
obtain an array of general spectral flow formulae in Equations \eqref{e:gsff1} and \eqref{e:gsff2}
of Theorem \ref{t:gsff} in all Sobolev spaces over the boundaries of non-negative order. This
result removes the restriction of previous formulae to curves of Dirac type operators or curves
with only lower order variation.

\item[(3)] In Theorem \ref{t:split} we give the conditions for the validity of two formulae for the spectral flow of a curve of formally
self-adjoint elliptic differential operators over a curve of closed partitioned manifolds
$M(s)=M(s)^+\cup_{\Si(s)} M(s)^-$ with separating hypersurfaces $\Si(s)$, $s\in [0,1]$. The first
Formula \eqref{e:sf-partition} expresses the spectral flow over the whole manifold(s) in terms of a
spectral flow of a canonically associated curve of well posed boundary problems over one part. The
second Formula \eqref{e:sf-mas-partition} expresses the spectral flow over the whole manifold(s) by
the Maslov index of the corresponding Cauchy data spaces from both sides along the separating
hypersurface(s). This result generalizes the splitting formulae\subindex{Yoshida-Nicolaescu's
Theorem} of \auindex{Yoshida,\ T.}T. Yoshida \cite{Yo91} and \auindex{Nicolaescu,\ L.}L. Nicolaescu
\cite{Ni95} and determines the limits of their validity.
\end{enumerate}

\subsection{Spectral flow formulae also for higher order operators}\label{sub:higher-oder}
Usually one is only interested in the spectral flow of elliptic differential operators of first
order.  \subindex{Desuspension spectral flow formula!for higher order operators}Typically, elliptic
differential operators of second order like the various Laplacians are essentially positive (or
essentially negative). For such operators the spectral flow of loops must vanish and the spectral
flow of curves is just the difference between the number of negative (respectively positive)
eigenvalues at the endpoints, hence trivial. However, the formula $\SF=\Mas$ is not trivial and can
give radically new insight also in the case of second order operators. Therefore, we have not
restricted our treatment to operators of order one.

For first order operators, in most applications the formula $\SF=\Mas$ will be read as a
\textit{desuspension}-type formula, namely expressing the spectral flow (a kind of quantum variable
arising from the spectrum) over a manifold by the Maslov index (a kind of classical variable
arising from solution spaces) over a submanifold of codimension 1. Then for essentially positive
second order elliptic differential operators, in the applications we have in mind (e.g., a higher
order Morse index theorem) the formula $\SF=\Mas$ should be read as a  \subindex{Desuspension
spectral flow formula!read as suspension type}\textit{suspension}-type formula, namely expressing
the a-priori unknown Maslov index by the in that case trivial spectral flow via the introduction of
an additional parameter.

\subsection{Partitioned manifolds in topology, geometry, and analysis}
In topology, the interest in \subindex{Partitioned manifold}partitioned manifolds is connected to
the name of \auindex{Heegaard,\ P.} P. Heegaard who in his dissertation \cite{Heegaard:1898}
introduced ways of splitting 3-manifolds and gaining corresponding graphs for algebraic
investigation. In that way he could point to essential differences between homology and homotopy
theory that had been missed by \auindex{Poincar{\'e},\ J.H.} H. Poincar{\'e} (see, e.g., the
elementary presentation by \auindex{Scharlemann,\ M.}M. Scharlemann in \cite{Scharlemann:2003}).
Later his ideas were lavishly generalized in the concepts of cobordism, surgery, and cutting and
pasting of the 1950-60s, see \auindex{Wall,\ C.T.C.}C.T.C. Wall \cite{Wall:1999}. In spite of the
great expectations, the concept of partitioned manifolds has not proved valuable for proving
\auindex{Poincar{\'e},\ J.H.} \subindex{Poincar{\'e}'s Conjecture}Poincar{\'e}'s Conjecture. Years
before \auindex{Perelman,\ G.}G. Perelman's final proof of the Conjecture, \auindex{Floer,\ A.}A.
Floer expressed in \cite{Fl88b} his expectation that the approach via \subindex{Heegaard
splitting}Heegaard splittings or more general decompositions most probably would not solve the
Poincar{\'e} Conjecture but would support the complementary topological program, namely to
determine all groups that can show up as fundamental groups of 3-manifolds.

In geometry, the interest in partitioned manifolds is connected both to the concept of
\subindex{Global analysis!coarse geometry}coarse geometry and to the geometry of \subindex{Singular
spaces}singular spaces. In the first case one separates arduous, but topologically uninteresting
parts out of complete (non-compact) manifolds, e.g., in the relative index theorems of
\auindex{Gromov,\ M.|bind}\auindex{Lawson,\ H.B.}M. Gromov and H.B. Lawson \cite{Gromov-Lawson}. In
the second case one focuses on the geometry around singularities by separating them out.

In analysis, the interest in partitioned manifolds is connected with the \subindex{Riemann-Hilbert
Problem}Riemann-Hilbert Problem of complex analysis. Classically, one looks for pairs of functions
where one is holomorphic inside, and the other outside the disc and that are linearly conjugated by
a transmission condition along the circle, see, e.g., \auindex{Muskhelishvili,\ N.I.} N.I.
Muskhelishvili \cite{Mu77}. In \cite{Bojarski2} \auindex{Bojarski,\ B.}B. Bojarski conjectured the
\subindex{Bojarski Conjecture}general validity of a Riemann-Hilbert type index formula for elliptic
operators on even-dimensional closed partitioned manifolds in terms of the index of the Fredholm
pair of Cauchy data spaces along the separating hypersurface. That Bojarski Conjecture was proved
by K.P. Wojciechowski and the first author in \auindex{Boo{\ss}--Bavnbek,\
B.}\auindex{Wojciechowski,\ K.P.}\cite[Chapter 24]{BoWo93}. We missed the odd-dimensional
case\subindex{Yoshida-Nicolaescu's Theorem} which was then treated by \auindex{Nicolaescu,\ L.}L.
Nicolaescu in \cite{Ni95}. While his result is restricted to \subindex{Dirac type operators}Dirac
type operators it served as the model for the present treatise.

There is a remarkable difference between the topological and the analytic approach to invariants of
partitioned manifolds. The topological approach is characterized by the ease of achieving
additivity formulae for topological invariants like the Euler characteristic or the signature
solely by means of singular homology. Deriving the same results by analytic means, e.g., via the
\subindex{Index!Atiyah-Singer Index Theorem}Atiyah-Singer Index Theorem is much more demanding. For
finer topological invariants and spectral or differential invariants, homology theory may not
suffice and harder means are demanded either from homotopy theory or, after all, from analysis. On
the analysis level, there is clearly no recognizable splitting of the spectrum of a Dirac or
Laplace operator on a partitioned manifold in its components from the parts. For the index (the
chiral multiplicity of the zero-eigenvalues) we have both topological and analytical
\subindex{Partitioned manifold!splitting formulae}splitting formulae (\auindex{Boo{\ss}--Bavnbek,\
B.}\auindex{Wojciechowski,\ K.P.}\cite[Chapters 23-25]{BoWo93}). For the analytic torsion we have a
topological splitting formula by \auindex{L{\"u}ck,\ W.}W. L{\"u}ck in \cite{Luck:1993}. For the
$\eta$-invariant we have an analytic splitting formula by \auindex{Wojciechowski,\ K.P.}K.P.
Wojciechowski in \cite{Wo99}. Similarly, our Theorem \ref{t:split} should be considered an
\subindex{Analytic splitting formula}analytic splitting formula for the spectral flow.

\subsection{Wider perspectives} In this section, we focus solely on the intertwining of the symmetric
category (here the spectral flow) and the anti-symmetric category (here symplectic analysis).
Clearly, each side deserves independent investigations and poses puzzles of their own.

\noi\textit{Symplectic error terms in global analysis of singular manifolds}
\subindex{Symplectic geometry!error terms}\subindex{Singular space}
One such puzzle is to find the correct place of symplectic invariants (like the Maslov index and
the H{\"o}rmander index) in the hierarchy of invariants in global analysis, compared with the
index, the $\eta$-invariant, and the $\zeta$-regularized determinant. We meet the {\em index} of
Fredholm operators as the \subindex{Index!of elliptic problems}index of elliptic problems on closed
manifolds, on manifolds with boundary, and on manifolds with singularities. From the viewpoint of
global analysis, however, the index of elliptic problems on \textit{closed} manifolds is
distinguished because there the index can be expressed by an integral over an integrand that is
locally expressed by the coefficients of the operator. The \textit{$\eta$-invariant} arises in
\textit{boundary value problems}. It is not given by an integral, not by a local formula. It
depends, however, only on finitely many terms of the symbol of the resolvent and will not change
when one changes or removes a finite number of eigenvalues. Its derivative is local.

Keeping this difference in mind, we meet a question repeatedly put forward by \auindex{Gelfan'd,\
I.M.}I.M. Gelfand: ``what comes next?" To this, \auindex{Singer,\ I.M.}I.M. Singer remarked in
personal communication \cite{Singer:1999}: ``Just as $\eta$ arises in boundary value problems for
smooth boundaries, I think the next level will come from corner contributions when the boundary has
corners." Indeed, when the boundary has corners, a third term, the \auindex{H{\"o}rmander,\
L.}\subindex{Symplectic structures!symplectic invariant!H{\"o}rmander index}\textit{H{\"o}rmander
index} of symplectic analysis appears, see C.T.C. Wall \cite{Wall:1969}\auindex{Wall,\ C.T.C.}. He
noticed the non-additivity of the signature for the Hopf bundle with fibre $D^2$ over $S^2$ having
signature $\pm 1$ depending on the choices of sign: this is the union of the induced bundles over
the upper and lower hemispheres of $S^2$, each of which (being contractible) has signature zero.
Wall's observation was in striking contradiction to the common wisdom in topology, first observed
by \auindex{Novikov,\ S.P.}S.P. Novikov: If two manifolds are glued by an orientation-preserving
diffeomorphism of their boundaries, then the signature of their union is the sum of their
signatures. So, Wall found that this additivity property does not hold for the more general
situation where one glues two $4k$-manifolds $Y_\pm$ along a common submanifold $X_0$ of the
boundaries, which itself has a boundary $Z$.  That yields an abstract Zaremba problem (see also
\auindex{Schulze,\ B.-W.|bind}\auindex{Chang,\ D.-C.|bind}\auindex{Habal,\ N.|bind} B.-W. Schulze,
C.-C. Chang, and N. Habal \cite{Schulze:2014}). Wall determined the non-additivity term as the
H{\"o}rmander index of three associated Lagrangian subspaces of an induced finite-dimensional
symplectic vector space. His result was extended to the gluing of $\eta$-invariants by U. Bunke in
\cite{Bunke:1995, Bunke:2009}\auindex{Bunke,\ U.}.

That supports the claim of a \textit{hierarchy of asymmetry invariants}, placing the index of
elliptic problems on closed manifolds at the bottom; placing the $eta$-invariant a little higher,
namely as an error term for smooth boundary value problems; and placing symplectic invariants even
higher, namely as error terms for boundary value problems with corners. In this Memoir, we shall
not follow that line of thoughts any further and content with placing the Maslov index on the level
of smoothly partitioned manifolds for now.

\subsection{Other approaches to the spectral flow}\label{sub:other-approaches}
It may be worth mentioning that there is a multitude of {\em other} formulae involving spectral
flow, e.g., as error term under cutting and pasting of the index (see the first author with K.P.
Wojciechowski \auindex{Boo{\ss}--Bavnbek,\ B.}\auindex{Wojciechowski,\ K.P.}\cite[Chapter
25]{BoWo93}) or under pasting of the eta-invariant as in \auindex{Kirk,\ P.}\auindex{Lesch,\
M.}\cite{KiLe00}. Whereas these \subindex{Suspension spectral flow formula}formulae typically
relate the spectral flow of a family on a closed manifold of dimension $n-1$ to the index or the
eta-invariant of a single operator on a manifold of dimension $n$, this Memoir addresses the
opposite direction, namely how to express the spectral flow of a family over a manifold of
dimension $n$ by objects (here by the Maslov index) defined on a hypersurface of dimension $n-1$.
See also our Subsection \ref{sub:higher-oder} above.

\section{Spectral flow for closed self-adjoint Fredholm relations}\label{ss:sa-relations}
In this section, we show that one can easily obtain a formula expressing the \subindex{Spectral
flow!of curves of closed linear self-adjoint Fredholm relations}spectral flow for curves of linear
self-adjoint Fredholm relations in Hilbert spaces by the Maslov index on a very basic and abstract
level (Proposition \ref{p:basic-sff}). This leads us to a general definition of the spectral flow
for curves of closed linear self-adjoint Fredholm relations of index $0$ in Banach spaces
(Definition \ref{d:sf}).

\subsection{Basic facts and notions of linear relations}\subindex{Linear relation!basic facts and notions}\label{ss:linear-relations}
First we have to explain the terms \textit{linear relation, closed linear relation, self-adjoint
linear relation}, and \textit{Fredholm relation}.

Recall that a \subindex{Linear relation}{\em linear relation} $A$ between two linear spaces $X$ and
$Y$ is a linear subspace of $X\times Y$. We use the notions of linear relations and spectral flow
in \auindex{Boo{\ss}--Bavnbek,\ B.}\auindex{Zhu,\ C.}\cite[Appendix A.2, A.3]{BooZhu:2013}. For
additional details on linear relations, see \auindex{Cross,\ R.}Cross \cite{Cr98}. In this Memoir,
however, we are not directly interested in the many applications of the concept of linear relations
in the fields of multi-valued functions and singular spaces. We rather employ the concept of linear
relations to clarify the \textit{algebraic character} of geometric incidence numbers like the
Maslov index and the spectral flow.

To begin with, we summarize the basic algebraic notions of linear relations: As usual, we define
the \subindex{Domain!of linear relation}{\em domain}, the \subindex{Range!of linear relation}{\em
range}, the \subindex{Kernel!of linear relation}{\em kernel} and the \subindex{Indeterminant
part!of linear relation}{\em indeterminant part} of $A$ by
\begin{eqnarray*}
\symindex{dom@$\dom A$ domain of relation $A$}
\symindex{im@$\ran A$ range of relation $A$}
\symindex{ker@$\ker A$ kernel of relation $A$}
\dom(A)&\ :=\ &\{x\in X;  \;\mbox{there exists}\;y\in Y
\;\mbox{such that}\; (x,y)\in A\}, \\
\ran A&\ :=\ &\{y\in Y;  \;\mbox{there exists}\;x\in X \;\mbox{such
that}\; (x,y)\in A\},\\
\ker A&\ :=\ &\{x\in X; \;(x,0)\in A\},\\
A(0)&\ :=\ &\{y\in Y; \;(0,y)\in A\},
\end{eqnarray*}
respectively.

Then the \textit{sum} $A+B$ and the \textit{composition} $C\cdot A$ are defined by
\begin{align}
A+B\ &:=\ \{(x,y+z)\in X\times Y; \;(x,y)\in A, (x,z)\in B\},\label{e:a-add}\\
C\cdot A\ &:=\ \{(x,z)\in X\times Z; \;\exists y\in Y\;\mbox{such that}\; (x,y)\in A, (y,z)\in
C\}\label{e:a-comp}.
\end{align}
Here $X,Y,Z$ are three vector spaces, $A,B$ are linear relations between $X$ and $Y$, and $C$ is a
linear relation between $Y$ and $Z$.

The \subindex{Inverse!of linear relation}{\em inverse} $A^{-1}$ of a linear relation $A$ is always
defined. It is the linear relation between $Y,X$ defined by
\begin{equation}\label{e:inverse-clr}
A^{-1}=\{(y,x)\in Y\times X;(x,y)\in A\}.
\end{equation}

A linear relation $A$ is (the graph of) an \textit{operator} if and only if $A(0)=\{0\}$. In that
case we identify $A$ and the \symindex{g@$\Graph(A)$ graph of operator $A$}graph of $A$.

We get more interesting relations by incorporating topological aspects. Let $X,Y$ be two Banach
spaces. A \subindex{Closed linear relation}\subindex{Closed linear relation!$\to$ also
\textit{Linear relation}}{\em closed linear relation} between $X,Y$ is an element of
\symindex{S@$\Ss(X)$ set of closed linear subspaces in Banach space $X$}$\Ss(X\times Y)$, i.e., a
closed linear subspace of $X\times Y$. It is called \subindex{Closed linear relation!bounded
invertible}{\em bounded invertible}, if $A\ii$ is the graph of a bounded operator from $Y$ to $X$,
shortly: $A^{-1}\in\Bb(Y,X)$. Then the \subindex{Resolvent set!of closed linear
relation}\textit{resolvent set} \symindex{\rho@$\rho(A)$ resolvent set of closed linear
relation}$\rho(A)$ of a closed linear relation $A$ consists of all $z\in\C$ where $A-z$ is bounded
invertible. As usual, we define the \subindex{Spectrum!of closed linear relation}\textit{spectrum}
by \symindex{\sigma@$\s(A)$ spectrum of closed linear relation}$\s(A):=\C\setminus\rho(A)$.

By definition, the graph of a closed operator is a closed linear relation.

A closed linear relation $A\in\Ss(X\times Y)$ is called \subindex{Fredholm relation}{\em Fredholm},
if $\dim\ker A<+\infty$, $\ran A$ is closed in $Y$ and $\dim (Y/\ran A)<+\infty$. In this case, we
define the \subindex{Index!of Fredholm relation}{\em index} of $A$ to be
\begin{equation}\label{e:index-clr}
\symindex{index@$\Index A$ index of Fredholm relation}
\Index A\ :=\ \dim\ker A-\dim (Y/\ran A).
\end{equation}

By \auindex{Boo{\ss}--Bavnbek,\ B.}\auindex{Zhu,\ C.}\cite[Lemma 16 (a)]{BooZhu:2013}, a closed
linear relation $A$ is Fredholm if and only if $(A,X\times\{0\})$ is a Fredholm pair of elements of
$\Ss(X\times Y)$. By Remark \ref{r:redholm-pairs}, the closedness of $\ran A$ follows from its finite codimension in $Y$ in
combination with the closedness of $A$. In that case, we have
\begin{equation}\label{e:clr-index}
\Index A=\Index(A,X\times\{0\}),
\end{equation}
where the index on the left is that defined in \eqref{e:index-clr} and the index on the right that
defined in \eqref{e:fp-lag-index}. Moreover, a closed linear relation $A$ between $X$ and $Y$ is
bounded invertible, if and only if $X\times Y$ is the direct sum of $A$ and $X\times\{0\}$.
These two results follow from the fact that
\begin{multline}\label{e:fredholm}
A\cap (X\times\{0\})\ =\ \ker A\times\{0\},\ \tand\\ A+\left(X\times\{0\}\right)\ =\
\left(\{0\}\times\range A\right)+\left(X\times\{0\}\right).
\end{multline}
We define a purely \subindex{Fredholm relation!algebraic}\textit{algebraic Fredholm relation} by
dropping the requirement that $A$ is closed and that $\ran A$ is closed. Then $\Index A$ is still
well defined in \eqref{e:index-clr} and the equations \eqref{e:clr-index} and \eqref{e:fredholm}
remain valid.

\subsection{Induced symplectic forms on product spaces}\label{ss:product-spaces}
Our next goal is a purely algebraic, respectively, symplectic characterization of symmetric and
self-adjoint relations. Inspired by \auindex{Bennewitz,\ Ch.}C. Bennewitz \cite{Be72} and
\auindex{Ekeland,\ I.}I. Ekeland \cite{Ekeland:1990}, we define

\begin{definition}\label{d:symm-rel}
Let $X$, $Y$ be two complex vector spaces and \symindex{\Omega@$\Omega:X\times Y\to\C$
non-degenerate sesquilinear map}$\Omega\colon X\times Y\to \C$ be a non-degenerate sesquilinear
map. Set $Z:=X\times Y$ and
\begin{equation}\label{e:Omega-omega}
\symindex{\omega@$\omega: Z\times Z\to\CC$ induced symplectic form on $Z:=X\times Y$}
\subindex{Induced symplectic forms on product spaces}
\omega((x_1,y_1),(x_2,y_2))\ :=\ \Omega(x_1,y_2)-\overline{\Omega(x_2,y_1)}.
\end{equation}
Then $(Z,\omega)$ is a symplectic vector space with two canonical Lagrangian subspaces
$X\times\{0\}$ and $\{0\}\times Y$. We call $\omega$ on $Z$ the {\em symplectic structure induced
by $\Omega$}. The \subindex{Linear relation!adjoint relation}{\em adjoint} of a linear relation
$A\<Z$ is defined to be the \subindex{Annihilator!defining the adjoint relation}annihilator
$A^{\w}$. A linear relation $A\<Z$ is called \subindex{Linear relation!symmetric}{\em symmetric},
respectively \subindex{Linear relation!self-adjoint}{\em self-adjoint}, if $A\< (Z,\w)$ is
isotropic, respectively Lagrangian. Note that we admit that $Y$ is different of $X$. If $A$ is
symmetric, we define a form $Q(A)$ on $\dom(A)$ by $Q(A)(x_1,x_2):=\Omega(x_1,z)$ for all
$x_1,x_2\in\dom(A)$ and $z\in A(x_2):=\{y\in Y;(x_2,y)\in A\}$. Since $A$ is an isotropic subspace of $X\times Y$, the value
$\Omega(x_1,z)$ is independent of the choice of $z\in A(x_2)$. Moreover, the form
\symindex{Q@$Q(A)$ Hermitian form associated to a symmetric relation}\subindex{Hermitian form
associated to a symmetric relation}$Q(A)$ is a well-defined Hermitian form. We call the form $Q(A)$
the {\em Hermitian form associated to $A$}.
\end{definition}

Note that $\Omega$ corresponds to a conjugate-linear injection
\symindex{\tau@$\tau:Y\to X^*$ conjugate-linear injection}
$\tau\colon Y\to X^*$ such that $\bigcap_{y\in Y}\ker\tau(y)=\{0\}$ by
\begin{equation}\label{e:Omega-tau}
\Omega(x,y)\ =\ (\tau(y))(x),\text{ for all }x\in X, y\in Y.
\end{equation}

We consider the case when $X$ is a Banach space and $\tau\colon Y\to X^*$ is an $\R$-linear
isomorphism. Note that so far, $Y$ is only a vector space. So $Y$ is identified with $X^*$ by
$\tau$. In the real case we set $Y:=X^*$ and $\tau=I_{Y}$. If $X$ is a complex Hilbert space, we
set $Y:=X$ and $\tau(y)(x):=\lla x,y\rra$. The space $Y$ becomes a Banach space with the norm
$\|y\|_Y:=\|\tau(y)\|_{X^*}$. Then $(X\times Y,\w)$ is a symplectic Banach space for
\begin{equation}\label{e:X+Y-symplectic}
\w((x_1,y_1),(x_2,y_2))\ :=\ (\tau(y_2))(x_1)-\overline{(\tau(y_1))(x_2)}.
\end{equation}
Note that $X\times\{0\}$ and $\{0\}\times Y$ are two natural Lagrangian subspaces of $X\times Y$.
The symplectic structure $\w$ is strong if and only if $X$ is reflexive (see \auindex{Swanson,\
R.C.}R.C. Swanson, \cite{Swanson:1980}). In this case we call $(X\times Y,\w)$ \auindex{Darboux,\
J.G.}\subindex{Darboux property}{\em Darboux}, following \auindex{Weinstein,\ A.}A. Weinstein,
\cite{Weinstein:1971}.

\begin{rem}\label{r:cross-adjoint}
(a) There is an alternative and fully natural way to introduce the adjoint relation $A^*\in
\Ss(Y^*\times X^*)$ of a closed linear relation $A\in\Ss(X\times Y)$, where  $X,Y$ are Banach
spaces and $X^*,Y^*$ denote the norm-dual spaces, see \auindex{Cross,\ R.} \cite[Chapter
III.1]{Cr98}. Contrary to that definition, our construction of the adjoint relation as the
annihilator $A^\w$ stays within $\Ss(X\times Y)$. It is less general, though, since it depends of
the choice of $\W$ inducing $\w$, respectively, the existence of a conjugate-linear injection
$\tau:Y\to X^*$. Of course, such $\tau$ is naturally given for $X=Y$ a Hilbert space. In that case,
we have $A^*=A^{\w_{\can}}$ with the canonical strong symplectic form $\w_{\can}$ defined in
\eqref{e:canonical-product-sympform}. In this Memoir, our applications deal with elliptic operators
on manifolds with boundary which induce other symplectic forms than the strong form $\w_{\can}$ for
dealing with adjoints. In so far, defining adjoint relations by $A^\w$ is more flexible than the
standard definition $A^*$.

\noi (b) As explained in Section \ref{ss:ann}, Example \ref{ex:double-annihilator} the double
adjoint $A^{\w\w}$ of a closed linear relation $A$ is not necessarily the original $A$, unless the
form$\w$ is strong symplectic.

\noi (c) For strong symplectic $\w$, the adjoint $A^\w$ of a Fredholm relation $A$ is again a
Fredholm relation, and we have $\Index A +\Index A^\w=0$. For weak symplectic forms radically new
features appear: (i) The adjoint $A^\w$ of a Fredholm relation $A$ is not necessarily a Fredholm
relation. (ii) Even when it is a Fredholm relation, we have not necessarily $\Index A +\Index
A^\w=0$. (iii) As explained in Section \ref{s:basic-concepts}, Example \ref{ex:negative-index},
the index of a Fredholm operator or relation that is self-adjoint relative to a weak form $\w$ does
not necessarily vanish.
\end{rem}

\subsection{Natural coincidence of spectral flow and Maslov index}\label{ss:coincidence}

Let $X$ be a Hilbert space. Clearly, a closed operator $A\colon X\supset\dom(A)\to X$ is
{symmetric}, respectively {self-adjoint}, if and only if $\Graph(A)$ is a {symmetric}, respectively
{self-adjoint} closed linear relation. Inspired by \auindex{Boo{\ss}--Bavnbek,\ B.}\auindex{Lesch,\
M.}\auindex{Phillips,\ J.}\cite[Theorem 1.1b and Remark 1.4]{BoLePh01} we have the following
results about the Cayley image of various types of closed relations in Hilbert space.

\begin{lemma}\label{l:inverse-bounded}
Let $X$ be a Hilbert space and $A\in \Ss(X\times X)$ a symmetric relation. Then we have:
\newline (a) The {\em Cayley transform}%
\subindex{Cayley transform}\symindex{\kappa@$\kappa$ Cayley transform}
\[
\kappa(A)\ :=\ (A-iI_X)(A+iI_X)\ii%
\]
is a well-defined operator; it is the graph of a partial isometry on $X$. Moreover, we have
$\sigma(\kappa(A))=\kappa(\sigma(A))$, where $\kappa\colon z\mapsto\frac{z-i}{z+i}$.
\newline (b) If $A$ is self-adjoint, then $\kappa(A)\in\Bb(X)$ and it is unitary.
\newline (c) Moreover, in that case we have $\sigma(A)\subset\R$.
\newline (d) The number $-1$ is discrete in $\sigma(\kappa(A))\cup\{-1\}$ if $A$ is self-adjoint Fredholm.
\end{lemma}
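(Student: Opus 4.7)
The plan is to compute $\kappa(A)$ explicitly as the linear subspace $\{(z+ix,\, z-ix) : (x,z)\in A\}\subset X\times X$, and then to exploit the Kre\u\i n-type identity
\[
\|z+ix\|^2 \ =\ \|z\|^2 + \|x\|^2\ =\ \|z-ix\|^2 \quad\text{whenever } (x,z)\in A,
\]
which follows at once from symmetry: $A\subset A^\w$ applied to the diagonal pair $((x,z),(x,z))$ yields $\lla z,x\rra=\lla x,z\rra\in\RR$, and the cross terms $\pm i\lla z,x\rra\mp i\lla x,z\rra$ then cancel. For (a), this identity forces $\ker(A+i)=\{0\}$, so $(A+i)^{-1}$ is a well-defined map with domain $\ran(A+i)$, and $\kappa(A)$ is a well-defined isometry $\ran(A+i)\to \ran(A-i)$, hence (after zero-extension) the graph of a partial isometry on $X$. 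The spectral identity $\sigma(\kappa(A))=\kappa(\sigma(A))$ reduces to the algebraic observation that for $\lambda\neq -i$ one has $A-\lambda=\bigl((A-i)-\lambda(A+i)(A+i)^{-1}(A+i)\bigr) = \bigl(\kappa(A)-\kappa(\lambda)\bigr)\cdot c\cdot(A+i)$ for a nonzero scalar $c$, so $A-\lambda$ is bounded invertible if and only if $\kappa(A)-\kappa(\lambda)$ is.

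For (b) I first show $\ran(A+i)$ is closed: if $z_n+ix_n\to y$, then the isometry identity applied pairwise gives $\|z_n-z_m\|^2+\|x_n-x_m\|^2=\|(z_n+ix_n)-(z_m+ix_m)\|^2\to 0$, so both sequences converge; closedness of $A$ delivers the limit $(x,z)\in A$ with $z+ix=y$. For the density step, suppose $y\perp \ran(A+i)$. Rewriting $\lla y,z+ix\rra = 0$ for all $(x,z)\in A$ as $\lla y,z\rra = \lla iy, x\rra$ is exactly the defining condition $(y,iy)\in A^\w$. Self-adjointness $A^\w=A$ then gives $(y,iy)\in A$, and evaluating the symmetry pairing on the single pair $(y,iy)$ yields $\lla iy,y\rra=\lla y,y\rra$, i.e.\ $\|y\|^2=0$. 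Hence $\ran(A+i)=X$; applying the same argument to $A-i$ yields $\ran(A-i)=X$ and $\kappa(A)$ is a unitary operator on $X$.

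Part (c) is a variant of the same argument applied to $A-\lambda$ for $\lambda=a+ib$ with $b\neq 0$: reality of $\lla w-ax,x\rra$ (from symmetry) gives $\|w-\lambda x\|^2=\|w-ax\|^2+b^2\|x\|^2$ for every $(x,w)\in A$, so $\ker(A-\lambda)=\{0\}$ and $\ran(A-\lambda)$ is closed with bounded inverse of norm $\leq 1/|b|$; the orthogonality argument from (b) (with $i$ replaced by $-i\,\mathrm{sign}(b)$) gives $\ran(A-\lambda)^\perp=\{0\}$, so $\lambda\in\rho(A)$ and $\sigma(A)\subset\RR$.

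The main obstacle is (d). Here I will exploit the two algebraic identities
\[
\kappa(A)-I \ =\ -2i\,(A+i)^{-1}, \qquad \kappa(A)+I\ =\ 2A(A+i)^{-1},
\]
both bounded operators by (b). From these one reads off $\ker(\kappa(A)+I)\cong \ker A$, while the Fredholm hypothesis on the pair $(A,X\times\{0\})$ together with self-adjointness ($A(0)=(\dom A)^\perp$) controls the cokernel side. The identity $\kappa(A)+I=2A(A+i)^{-1}$ then exhibits $\kappa(A)+I$ as a bounded Fredholm operator with the same kernel and (finite-dimensional) cokernel as $A$. Unitarity of $\kappa(A)$ from (b) places $\sigma(\kappa(A))$ on the unit circle, and Fredholmness of $\kappa(A)+I$ forces $-1=\kappa(0)$ to be an isolated point of $\sigma(\kappa(A))$; via the M\"obius correspondence $\sigma(\kappa(A))=\kappa(\sigma(A))$ from (a), the isolated point $-1$ on the Cayley side corresponds to an isolated point in $\sigma(A)\cup\{1\}$ after inversion (the spectrum inherits discreteness at the correct image point). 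The delicate step will be to handle the multi-valued part $A(0)$ carefully, since for proper relations $A(0)\ne\{0\}$ and a separate argument, using closedness of $\ran A$ from the Fredholm property and self-adjointness $A(0)=(\dom A)^\perp$, is required to keep $\dim A(0)<\infty$ under control.
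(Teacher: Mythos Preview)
Your proofs of (a)--(c) are correct and more explicit than the paper's. Where you compute directly with the norm identity $\|z+ix\|=\|z-ix\|$ (from $\lla z,x\rra\in\RR$) and run the classical von Neumann closedness/density argument for $\ran(A\pm i)$, the paper instead recognizes $X\times X = X^-\oplus X^+$ with $X^\mp=\{(x,\pm ix)\}$ as a symplectic splitting for $\omega_{\can}$, so that any isotropic $A$ is the graph of a partial isometry $U$ between $X^-$ and $X^+$ (this step is quoted from \cite[Lemma 3]{BooZhu:2013}), and then identifies $U=-\kappa(A)$ via $y=(x-ix')/2$, $z=(x+ix')/2$. These are the same computation in different packaging; your version is self-contained. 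One caveat: your parametrization $\kappa(A)=\{(z+ix,z-ix):(x,z)\in A\}$ coincides with the generator $U$ coming from the splitting, but for honestly multi-valued relations (with $A(0)\neq\{0\}$) the relational composition $(A-i)(A+i)^{-1}$ can be strictly larger than this set --- test it on $A=\{0\}\times V$. Both you and the paper gloss over this; it does not affect the operator case that drives the applications.

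For (d) your mechanism is the right one: $\kappa(A)+I = 2A(A+i)^{-1}$, and since $(A+i)^{-1}\colon X\to\dom A$ is a bijection you get $\ker(\kappa(A)+I)\cong\ker A$ and $\ran(\kappa(A)+I)=\ran A$, so $\kappa(A)+I$ is bounded Fredholm and $-1=\kappa(0)$ is isolated in $\sigma(\kappa(A))$. This is precisely what the subsequent proposition uses (the generator is $U=-\kappa(A)$, so one needs eigenvalues of $-\kappa(A)$ discrete near $+1$, i.e., eigenvalues of $\kappa(A)$ discrete near $-1$). But your last sentence does not make sense: $\kappa^{-1}(-1)=0$, so what you have actually proved is that $0$ is isolated in $\sigma(A)$, and there is no way to parlay this into ``$1$ discrete in $\sigma(A)\cup\{1\}$''. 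The printed statement appears to contain a typo (the intended spectrum is $\sigma(\kappa(A))$, and the relevant point is $\kappa(0)=-1$ rather than $1$); say so outright instead of hand-waving a M\"obius correspondence that does not land where you claim. The paper's own ``By (a), (b)'' is presumably just this: Fredholmness of $A$ makes $0$ isolated in $\sigma(A)$, and the spectral mapping in (a) then makes $\kappa(0)$ isolated in $\sigma(\kappa(A))$.
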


\begin{proof} (a) and (b) We have a symplectic decomposition $X\times X=X^-\oplus X^+$ with
\begin{equation*}
X^{\mp}:=\{(x,\pm ix);x\in X\}.
\end{equation*}
Define the relation $U:=\{(y,z)\in X\times X;(y,iy) + (z,-iz)\in A\}$. By
\auindex{Boo{\ss}--Bavnbek,\ B.}\auindex{Zhu,\ C.}\cite[Lemma 3]{BooZhu:2013}, $U$ is a partial
isometry. If $A$ is self-adjoint, $\dom(U)=\ran U=X$ and $U\in\Bb(X)$ is unitary.

Given $(x,x')\in A$\/, $x,x'\in X$, we decompose
\[
(x,x') = (y,iy) + (z,-iz) \text{ with suitable $y,z\in X$}.
\]

We obtain at once $y=\frac{x-ix'}2\tand z=\frac{x+ix'}2$, or, equivalently in the language of
linear relations,
\begin{equation}\label{e:basic-relations}
(x,y)\in \frac{I_X-iA}2 \tand (x,z)\in \frac{I_X+iA}2 \,.
\end{equation}
Inverting the $y$--formula in \eqref{e:basic-relations} yields
\begin{equation}\label{e:U-expressing}
(y,z)\in (I_X-iA)\ii (I_X+iA).
\end{equation}
Conversely, if (\ref{e:U-expressing}) holds, we get $(x,x')\in A$. Then we have $U=(I_X-iA)\ii
(I_X+iA)=-\kappa(A)$. By functional calculus we have $\sigma(\kappa(A))=\kappa(\sigma(A))$.
\newline (c) and (d) By (a), (b).
\end{proof}

We have the following \subindex{Basic coincidence of spectral flow and Maslov index}basic
coincidence of spectral flow and Maslov index.

\begin{proposition}\label{p:basic-sff}
Let $\mathbb{X}\to [0,1]$ be a Hilbert bundle with $X(s):=p^{-1}(s)$, continuous varying inner
product $\lla\cdot,\cdot\rra_s$ on $X(s)$, and $\{s\mapsto A(s)\in \Ss^c(X(s)\times
X(s))\}_{s\in[0,1]}$ a continuous curve of self-adjoint Fredholm relations of $X(s)$. Then we have
\[
\SF\{A(s)\}\ =\ \Mas_-\left\{A(s),X(s)\times\{0\};\w_{\can}(s)\right\},
\]
where $\w_{\can}(s)$ denotes the canonical strong symplectic form on $X(s)\times X(s)$.
\end{proposition}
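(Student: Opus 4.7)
The plan is to reduce everything to Definition \ref{d:mas-hilbert} by explicitly computing the unitary generators of the two Lagrangian paths and then relating the spectral flow of the Cayley-transformed family to the required gauge curve spectral flow. Concretely, I would identify the canonical symplectic splitting $X(s)\times X(s)=X^-(s)\oplus X^+(s)$ with
\[
X^{\mp}(s)\ :=\ \{(x,\pm i x);x\in X(s)\},
\]
which is exactly the splitting induced by the operator $J(s)(x,y)=(-iy,ix)$ representing $\w_{\can}(s)$, and note that $-i\w_{\can}(s)$ is positive (resp.\ negative) definite on $X^{\pm}(s)$.

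Next I would write both Lagrangian curves as graphs of unitary maps $X^-(s)\to X^+(s)$ as in \cite[Proposition~2]{BooZhu:2013}. For the Lagrangian $A(s)$, the calculation performed in the proof of Lemma \ref{l:inverse-bounded}(a)-(b) gives precisely $A(s)=\Graph(U(s))$ with $U(s)=-\kappa(A(s))$, which by Lemma \ref{l:inverse-bounded}(b) is bounded unitary. For the second Lagrangian $X(s)\times\{0\}$, decomposing $(x,0)=(x/2,ix/2)+(x/2,-ix/2)$ shows that $X(s)\times\{0\}=\Graph(V(s))$ with $V(s)$ corresponding to the identity under the natural identification of $X^-(s)$ and $X^+(s)$ with $X(s)$. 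Consequently $U(s)V(s)^{-1}=-\kappa(A(s))$, and Definition \ref{d:mas-hilbert}, equation \eqref{e:negative-mas-hilbert}, yields
\[
\Mas_-\bigl\{A(s),X(s)\times\{0\};\w_{\can}(s)\bigr\}\ =\ \SF_{\ell_+}\bigl\{-\kappa(A(s))\bigr\}.
\]

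The remaining and main point is to verify the orientation, i.e.\ that the ordinary spectral flow $\SF\{A(s)\}$ of the self-adjoint Fredholm family (counting eigenvalues crossing $0\in\mathbb R$ from negative to non-negative) equals $\SF_{\ell_+}\{-\kappa(A(s))\}$. For a real parameter $\la$ near $0$ one has $-\kappa(\la)=\bigl((1-\la^2)+2i\la\bigr)/(\la^2+1)$, so $-\kappa(0)=1$ and $-\kappa(\la)$ crosses the real point $1$ from below ($\la<0$) to above ($\la>0$) as $\la$ increases through $0$. This is precisely the \emph{upward} co-orientation defining $\ell_+$, so the Cayley transform identifies the two notions of spectral flow one-to-one (with multiplicities) on each regular crossing. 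By the homotopy invariance of the spectral flow and a routine reduction to regular crossings via small affine perturbations (using that $A(s)$ is Fredholm, hence $0$ is isolated in $\sigma(A(s))\cup\{0\}$ by Lemma \ref{l:inverse-bounded}(d)), the identity $\SF\{A(s)\}=\SF_{\ell_+}\{-\kappa(A(s))\}$ propagates to arbitrary continuous curves.

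The hard part will not be any single step but the orientation bookkeeping around $1$ on the unit circle; it is easy to get a sign wrong when composing $\kappa$ with the negation coming from the identification $-\kappa(A)=(I-iA)^{-1}(I+iA)$ in the proof of Lemma \ref{l:inverse-bounded}. Once this is pinned down on a local model (e.g.\ finite dimensions, a single eigenvalue crossing), the global statement follows by the homotopy and catenation properties of both sides.
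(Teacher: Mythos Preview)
Your proposal is correct and follows essentially the same route as the paper: identify the canonical splitting $X^\mp(s)=\{(x,\pm ix)\}$, use Lemma~\ref{l:inverse-bounded} to get the unitary generator $U(s)=-\kappa(A(s))$ for $A(s)$, observe that $X(s)\times\{0\}=\Graph(0)$ has generator $V(s)=-\kappa(0)=1$, and apply Definition~\ref{d:mas-hilbert}. The paper compresses all of this into two lines, writing $U(s)V(s)^{-1}=-\kappa(A(s))(-\kappa(0))^{-1}$ and invoking the Cayley-transform definition of $\SF$ for self-adjoint Fredholm relations from \cite{Zh01,BooZhu:2013} directly, rather than redoing the orientation check on a local model as you do; your explicit computation of $-\kappa(\la)$ near $\la=0$ is a correct and more self-contained substitute for that citation.
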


\begin{proof} Note that $X(s)\times\{0\}=\Graph(0)$. By Lemma \ref{l:inverse-bounded} and Definition \ref{d:mas-hilbert} we have
\begin{align*}
\SF\{A(s)\}\ &=\ \SF_{\ell_+}\{-\kappa(A(s))\}\ =\ \SF_{\ell_+}\left\{(-\kappa(A(s))(-\kappa(0))^{-1}\right\}\\
&=\ \Mas_-\left\{A(s),X(s)\times\{0\};\w_{\can}(s)\right\}.\qedhere
\end{align*}
\exendproof

The proposition leads to the following definition.

\begin{definition}\label{d:sf} Let $p\colon \mathbb{X}\to[0,1]$, $q\colon \mathbb{Y}\to[0,1]$ be
Banach bundles with fibers $X(s):=p^{-1}(s)$, $Y(s):=q^{-1}(s)$ for each $s\in[0,1]$ respectively.
Let $\Omega(s)\colon X(s)\times Y(s)\to\C$ be a path of bounded non-degenerate sesquilinear forms,
and let $\omega(s)$ denote the weak symplectic structure on $Z(s):=X(s)\times Y(s)$ induced by
$\Omega(s)$. Let $A(s)$, $s\in[0,1]$ be a path of linear self-adjoint Fredholm relations of index
$0$. By \auindex{Boo{\ss}--Bavnbek,\ B.}\auindex{Zhu,\ C.}\cite[Lemma 16]{BooZhu:2013}, we have
$\Index(A(s),X(s)\times\{0\})=0$. The {\em spectral flow} of $A(s)$ is defined by
\begin{equation}\label{e:definition-sf-selfadjoint}
\symindex{sf@$\SF\{A(s)\}$ spectral flow of a curve of self-adjoint Fredholm relations}
\subindex{Spectral flow!of a curve of self-adjoint Fredholm relations}
\SF\{A(s)\}\ :=\ \Mas_-\left\{A(s),X(s)\times\{0\};\w(s)\right\}.
\end{equation}
\end{definition}

\begin{rem}[Reversing the order between spectral flow and Maslov index]\label{r:order-reversing}
\subindex{Maslov index!and spectral flow!logical order}\subindex{Spectral flow!and Maslov
index!logical order}
(a) In modern times, any rigorous definition
of the spectral flow begins with the careful partitioning and local hedging of eigenvalues of
\auindex{Phillips,\ J.}J. Phillips \cite{Ph96}. From that, in \auindex{Boo{\ss}--Bavnbek,\
B.}\auindex{Furutani,\ K.}\cite{BoFu98} and followers, a rigorous definition of the Maslov index is
derived for curves of Fredholm pairs of Lagrangian subspaces in strong symplectic Hilbert space via
the spectral flow of an associated curve of unitary generators. That also is the path we chose in
Chapter \ref{s:maslov-hilbert}, followed by our definition of the Maslov index in weak symplectic
Banach spaces in Chapter \ref{s:maslov-general} via symplectic reduction to the finite-dimensional
strong case. So far, we followed the usual view which considers the concept of the spectral flow
for more fundamental than the concept of the Maslov index which is derived from it.

However, with the preceding definition we reverse the order: Now the spectral flow for curves of
self-adjoint Fredholm relations is defined via the general Maslov index as it was introduced in
Chapter \ref{s:maslov-general}. In this way it turns out at the bottom line that the concept of the
Maslov index now becomes more fundamental than the spectral flow which has to be defined via the
Maslov index.

\noi (b) It is informative to compare the present Definition \ref{d:sf} of the spectral flow with
its intricate definition in \auindex{Boo{\ss}--Bavnbek,\ B.}\auindex{Zhu,\
C.}\cite[Appendix]{BooZhu:2013}. In one respect, our present definition is more general than our
previous one since we dealt in \cite[Appendix]{BooZhu:2013} only with relations in $X\times X$
while we admit now relations in $X\times Y$ with possibly $Y\ne X$. In another perhaps more
relevant respect, our present definition is much less general since we restrict ourselves to
self-adjoint relations relative to the choice of a curve $\W(s)$ of non-degenerate sesquilinear
forms. This restriction explains why we here can avoid the intricate considerations regarding the
spectral admissibility of the curves of Fredholm relations of our previous paper.

As shown in Proposition \ref{p:basic-sff}, the two definitions coincide for curves of self-adjoint
Fredholm relations in complex Hilbert space.
\end{rem}

\section{Symplectic analysis of operators and relations}\label{ss:sa-extensions}

We are now ready to follow the \auindex{Neumann, von,\ J.}famous \subindex{von Neumann program}von
Neumann program \cite{Neu} of investigating all self-adjoint extensions for our special case of
Fredholm relations. We are inspired by the \auindex{Birman,\ M.S.}\auindex{Krein,\
M.G.}\auindex{Vishik,\ M.I.}\subindex{Birman-Kre\u\i n-Vishik theory}{B}irman-{K}re\u\i n-{V}ishik
theory of \subindex{Extension!self-adjoint}self-adjoint extensions of semi-bounded operators (see
the review \cite{Alonso-Simon} by \auindex{Alonso,\ A.}\auindex{Simon,\ B.}A. Alonso and B. Simon),
modified in \auindex{Boo{\ss}--Bavnbek,\ B.}\auindex{Furutani,\ K.}\auindex{Otsuki,\
N.}\cite{BoFu98,BoFu99,BoFuOt01} for the spectral theory of curves of self-adjoint Fredholm
extensions of symmetric operators on a Hilbert space. As explained in the Introduction (see also
our Remark \ref{r:why-we-drop-beta} below), we need a slightly broader setting for our
applications, based on our concept of $\w$-symmetric linear relations.

To begin with, we shall determine a few preconditions for our calculus with adjoint Fredholm
relations in a weak symplectic setting. Let $X$, $Y$ be two complex vector spaces and let
$\Omega\colon X\times Y\to \C$ be a non-degenerate sesquilinear map. Set $Z:=X\times Y$. Let $\w$
denote the symplectic form induced by $\W$ and defined by (\ref{e:Omega-omega}). As observed there,
$(Z,\omega)$ is a symplectic vector space with two Lagrangian
subspaces $X\times\{0\}$ and $\{0\}\times Y$. We shall determine conditions to transfer the classical dualities of kernel and cokernel
from adjoint Fredholm operators to adjoint linear relations. As emphasized above (see also Remark
\ref{r:vanishing-index}), contrary to the easy calculus with strong symplectic forms, for a weak
symplectic form $\w$ the double adjoint $A^{\w\w}$ of a closed linear relation $A$ is not
necessarily the original $A$, and the adjoint $A^\w$ of a Fredholm relation $A$ is not necessarily
a Fredholm relation. Even when it is a Fredholm relation, in the weak symplectic case we have not
necessarily $\Index A +\Index A^\w=0$ and the index of a self-adjoint Fredholm relation does not
necessarily vanish. Finally, we must recall from Remark \ref{r:vanishing-index}.c, that the index
of a Fredholm operator in Banach space that is self-adjoint relative to a weak form $\w$ does
neither necessarily vanish.

That explains why we need special assumptions to exclude intractable complications with index
calculations for self-adjoint Fredholm relations. Fortunately it turns out that these special
assumptions are naturally satisfied in our applications. That is what this section is about.

\begin{proposition}[Calculus with adjoint Fredholm relations]\label{p:sum-0}
\subindex{Fredholm relation!dimension calculus with adjoint Fredholm relation}
Let $A\subset W\subset Z$ be two linear relations. Assume that $A,A^\w$ are algebraic Fredholm
relations with $\Index A+\Index A^{\w}=0$. Then we have $\dim\ker W^{\w}=\dim Y/\ran W$ and $\ran
W=\ran W^{\w\w}$.
\end{proposition}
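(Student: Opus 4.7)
The plan is to reduce the claim to the already proved symplectic quotient rule (Proposition \ref{p:sum-index-0}) by encoding the relation-theoretic quantities $\ker$ and $\ran$ as intersection-, respectively, sum-dimensions with the canonical Lagrangian $X\times\{0\}\subset Z$. This is the same book-keeping that is recorded in \eqref{e:fredholm} and \eqref{e:clr-index}; the present proposition is essentially a relation-theoretic repackaging of the quotient rule.

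Concretely, I would set $(X_{\text{old}},\w):=(Z,\w)$, $\la:=A$, $\mu:=X\times\{0\}$, and take $W$ as in the statement. Since $X\times\{0\}$ is one of the two canonical Lagrangian subspaces noted after Definition \ref{d:symm-rel}, we have $\mu=\mu^{\w}$, and $\la\subset W$ by hypothesis. The hypothesis that $A$ and $A^{\w}$ are algebraic Fredholm relations means exactly that $(\la,\mu)$ and $(\la^{\w},\mu)$ are (algebraic) Fredholm pairs in $(Z,\w)$ by \eqref{e:fredholm}, and by \eqref{e:clr-index} the index equality $\Index A+\Index A^{\w}=0$ translates into the hypothesis $\Index(\la,\mu)+\Index(\la^{\w},\mu)=0$ of Proposition \ref{p:sum-index-0}. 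Applying that proposition yields
\begin{equation*}
\dim\bigl(W^{\w}\cap(X\times\{0\})\bigr)=\dim Z/\bigl(W+X\times\{0\}\bigr)<+\infty
\end{equation*}
and $W+(X\times\{0\})=W^{\w\w}+(X\times\{0\})$.

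The remaining step is the purely formal dictionary between these two identities and the claims in the statement. Using \eqref{e:fredholm} applied to $W$ and to $W^{\w\w}$ (both of which are linear relations in $Z$), we have
\begin{equation*}
W^{\w}\cap(X\times\{0\})=\ker W^{\w}\times\{0\},
\end{equation*}
so the first identity becomes $\dim\ker W^{\w}=\dim Y/\ran W$, which is the first claim. For the second claim, again by \eqref{e:fredholm},
\begin{equation*}
W+(X\times\{0\})=(\{0\}\times\ran W)+(X\times\{0\}),
\end{equation*}
and similarly with $W$ replaced by $W^{\w\w}$, so the identity $W+(X\times\{0\})=W^{\w\w}+(X\times\{0\})$ collapses to $\ran W=\ran W^{\w\w}$ after projecting onto the $Y$-factor.

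The only real point requiring care is the verification that the Fredholm and index hypotheses on the relations $A,A^{\w}$ genuinely transport to the hypotheses of Proposition \ref{p:sum-index-0} as formulated there (which is stated at the purely algebraic level and so does not require $\w$ to be strong, topological closedness of $W$, or reflexivity of $X$); this is where \eqref{e:clr-index} does the work, and it is the main \emph{conceptual} rather than technical obstacle. Once that translation is in place, the proof is just a matter of writing out the two dictionary identities above.
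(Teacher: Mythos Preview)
Your proof is correct and follows essentially the same route as the paper's own proof: set $\la:=A$, $\mu:=X\times\{0\}$ in Proposition \ref{p:sum-index-0}, then translate the conclusion back to the relation language via \eqref{e:fredholm} and \eqref{e:clr-index}. Your write-up simply spells out the dictionary steps more explicitly than the paper does.
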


\begin{proof} We apply Proposition \ref{p:sum-index-0}, taking our $Z$ as the underlying
symplectic vector space and setting $\la:=A$ and $\mu:=X\times\{0\}$. Then we have $\dim(W^{\w}\cap
(X\times\{0\}))=\dim(Z/(W+X\times\{0\}))$ and $W+X\times\{0\}=W^{\w\w}+X\times\{0\}$. By
\eqref{e:clr-index} and \eqref{e:fredholm}, our results follow.
\end{proof}

\smallskip

Let $X,Y$ be two complex Banach spaces and $\W\colon X\times Y\to\C$ a bounded non-degenerate
sesquilinear map with induced symplectic form $\w$ on $X\times Y$. Let $\Cc(X,Y)$ denote the space
of closed linear operators from $X$ to $Y$ and let \symindex{A@$A_m$ closed symmetric
operator}$A_m\in\Cc(X,Y)$ with $(\dom(A_m))^{\Omega,r}=\{0\}$ with this right annihilator defined
in \ref{e:r-annihilator}. (If $X=Y$ a Hilbert space with $\W:=\lla\cdot,\cdot\rra$, the vanishing
of the right annihilator of the domain of a closed linear operator $A_m$ means just that $A_m$ is
densely defined. We shall come back to this condition in our Assumption \ref{a:reduced-space}.2
for a revised von-Neumann setting of abstract boundary value problems and Assumption \ref{a:asff}.2
for the proof of our abstract spectral flow formula.) We consider $A_m$ as a closed linear
relation. Then the adjoint relation \symindex{A@$A_m^\w$ adjoint operator (maximal
extension)}$A_m^{\w}$ is (the graph of) a closed operator. We assume that $A_m$ is symmetric, i.e.,
$\Graph (A_m)$ is an isotropic subspace of the symplectic product space $(X\times Y,\w)$; in
operator notation that means shortly $A_m^{\w}\>A_m$. Then $A_m^{\w}$ is a closed co-isotropic
subspace of $(X\times Y,\w)$ and by Lemma \ref{l:sym-red-banach}, the quotient space
$\left(\Graph(A_m^{\w})/\Graph(A_m^{\w\w});\wt\w\right)$ is a naturally symplectic Banach space
with the reduced form $\wt\w$ induced by $\w$. We denote the domains of $A_m$ by \symindex{Dm@$D_m$
minimal domain}$D_m$ (the {\it minimal} domain) and of $A_m^{\w}$ by \symindex{Dm@$D_{\mmax}$
maximal domain}$D_{\mmax}$ (the {\it maximal} domain). For these data, we have, as in
\auindex{Boo{\ss}--Bavnbek,\ B.}\auindex{Furutani,\ K.}\cite{BoFu98,BoFu99}:

\begin{enumerate}

\item[(1)]
The space $D_{\mmax}$ is a Banach space with the \subindex{Graph norm}graph norm
\begin{equation}\label{e:graph-norm}
\symindex{x@$\norm{x}_{\GGG}$ graph norm}
\Vert x\Vert_{\GGG}\ :=\ \|x\|_X + \|A_m^{\w}x\|_Y \quad\text{ for $x\in D_{\mmax}$}\,.
\end{equation}

\item[(2)] The space $D_m$ is a closed subspace in the graph norm and the quotient
space
\subindex{von-Neumann space of abstract boundary values}\symindex{Dm@$D_{\mmax}/D_m$ von-Neumann
space of abstract boundary values}
$D_{\mmax}/D_m$ is a Banach space with the minus Green's form
\begin{equation}\label{e:symplectic-minus-green1}
\symindex{\omega s@$\w(s)_{\operatorname{Green}}$ by Green's form induced symplectic form}
-{\w}_{\operatorname{Green}}(x+D_m,y+D_m)\ :=\
\Omega(x,A_m^{\w}y)-\overline{\Omega(y,A_m^{\w}x)}\text{ for }x,y\in D_{\mmax}\,,
\end{equation}
where both $A_m$ and $\W$ enter into the definition. The form is symplectic if and only if
$A_m^{\w\w}=A_m$.

\item[(3)] Let $B\supset A_m$ be an \subindex{Extension!self-adjoint}extension of $A_m$. By Lemma \ref{l:red-lagrangian}, the operator
$B$ is self-adjoint if and only if $A_m^{\w\w}\subset B\subset A_m^{\w}$, and for the symplectic
reduction of $\Graph(B)\<X\times Y$ via the co-isotropic $\Graph(A_m^{\w})$, there holds
\[
R_{\Graph(A_m^{\w})}(\Graph(B))\ \in\ \Ll\left(\Graph(A_m^{\w})/\Graph(A_m^{\w\w});\wt\w\right).
\]

\item[(4)] We denote by \subindex{Trace map}\symindex{\gamma@$\g$ abstract trace map}$\g$ the natural projection
\[
\g\colon D_{\mmax}\too D_{\mmax}/D_m\/.
\]
For any linear subspace $D\subset X$, we set
\[\g(D)\ :=\ (D\cap D_{\mmax}+D_m)/D_m.\]
\end{enumerate}

\begin{rem}\label{r:why-we-drop-beta}
In our applications, we consider families of self-adjoint Fredholm operators with \textit{varying
domain} and \subindex{Continuously varying!maximal domains}\textit{varying maximal domain}. To us,
there is no natural way to identify the different symplectic spaces and to define continuity of
Lagrangian subspaces and continuity of symplectic forms in these varying symplectic spaces.
Fortunately, in most applications the minimal domain is fixed and also an intermediate Hilbert
space \symindex{Dm@$D_M$ intermediate Banach space}$D_M$\,, typically the Sobolev space $H^d$ for
elliptic differential operators of order $d$.
\end{rem}

We shall show that meaningful modifications of the preceding statements can be obtained when we
replace $D_{\mmax}$ by this intermediate space $D_M$ under the following assumptions.

\begin{ass}[Revised von-Neumann setting for abstract boundary value problems]\label{a:reduced-space}
\subindex{Revised von-Neumann setting}

\noi (1) As in the preceding statements, we let $X$, $Y$ be two Banach spaces and $\Omega\colon
X\times Y\to \C$ a bounded non-degenerate sesquilinear map. We set $Z:=X\times Y$ and let $\w$ be
defined by (\ref{e:Omega-omega}).
\newline (2) Our data are now four Banach spaces with continuous inclusions
\[
\symindex{Dm@$D_M$ intermediate Banach space}
D_m\into D_M \into D_{\mmax} \into X,
\]
where the Banach space structure is given on $D_{\mmax}$ and $D_m$ by the graph inner product of a
fixed closed symmetric operator $A_m\in\Cc(X,Y)$ with $\dom(A_m)=D_m$. Assume that
$(D_m)^{\Omega,r}=\{0\}$.
\newline (3) We assume that the $(A_M)^{\w}=A_m$, where \symindex{Am@$A_M$ intermediate extension}
\subindex{Extension!fixed intermediate domain}$A_M:=A_m^{\w}|_{D_M}$.
\newline (4) Finally, we assume that there exists a \subindex{Extension!self-adjoint Fredholm}self-adjoint
Fredholm extension \symindex{AD@$A_D$ extension (realization) of operator $A$ with domain
$D$}\subindex{Extension!self-adjoint Fredholm}$A_D$ of $A_m$ of index $0$ with domain $D_m\< D \<
D_M$\,.
\end{ass}

Assumption \ref{a:reduced-space}.2 implies
\begin{equation}\label{e:am-bounded}
\norm{x}_{\GGG} = \norm{x}_X + \norm{A_Mx}_X\leq C_1\norm{x}_{D_M} \text{ for all }x\in D_M\,.
\end{equation}
In particular, it follows that $A_M \colon  D_M\to X$ is bounded.

We have an injection $j:D_M\to D_{\max}$ with $D_m\subset D_M$ and $D_m$ closed in $D_{\max}$\/.
Therefore, by Proposition \ref{p:embedding-continuous1}.a, $D_m$ is closed in $D_M$, and on $D_m$
the graph norm and the norm induced by the Banach space $D_M$ are equivalent. Then we have the
opposite estimate to \eqref{e:am-bounded}, namely a \subindex{G{\aa}rding's Inequality}G{\aa}rding
type inequality, known from the study of \subindex{Elliptic differential
operators!regularity}elliptic regularity:
\begin{equation}\label{e:am-and-gaarding}
\norm{x}_{D_M} \leq C_2\bigl( \norm{x}_X + \norm{A_Mx}_X\bigr) = C_2\norm{x}_{\GGG} \text{ for all
}x\in D_m\,.
\end{equation}

By (\ref{e:three-Omega}), Assumption \ref{a:reduced-space}.3 implies $A_m^{\w\w}=A_m$.

\begin{lemma}\label{l:weak-symplectic-boundary-space}
Under Assumption \ref{a:reduced-space}.1-3, the quotient space \symindex{Dm@$D_M/D_m$
reduced weak symplectic von-Neumann space}\subindex{von-Neumann space of abstract boundary
values!reduced}$D_M/D_m$ is a weak symplectic Banach space with the symplectic form
$-\w_{\operatorname{Green}}$ induced by the minus Green's form on $D_{\mmax}$ defined in
\eqref{e:symplectic-minus-green1}.
\end{lemma}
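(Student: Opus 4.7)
The plan is to verify in turn that $D_M/D_m$ carries a Banach space structure, that the form descends to a bounded sesquilinear form on the quotient, that it is skew-symmetric, and finally that it is non-degenerate. The heart of the argument, and the only place where Assumption \ref{a:reduced-space} (iii) is used decisively, is the last step.

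First I would establish that $D_M/D_m$ is a Banach space. The inclusion $D_m\hookrightarrow D_M$ is continuous by Assumption \ref{a:reduced-space} (ii), and $D_m$ is closed in $D_{\mmax}$ in the graph norm (being the domain of $A_m$, viewed as the graph of the closed operator $A_m$). Since $D_M\hookrightarrow D_{\mmax}$ is continuous, the preimage $D_m\cap D_M = D_m$ is closed in $D_M$. The quotient of a Banach space by a closed subspace is a Banach space, so $D_M/D_m$ is a Banach space. Moreover, by \eqref{e:am-bounded} the operator $A_M\colon D_M\to X$ is bounded.

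Second, I would check that $-\w_{\operatorname{Green}}(x+D_m,y+D_m):=\Omega(x,A_My)-\overline{\Omega(y,A_Mx)}$ is well defined on the quotient. Assumption \ref{a:reduced-space} (iii) reads $A_M^{\w}=A_m$, i.e.\ $\Graph(A_m)=\Graph(A_M)^{\w}$, so $\w$ vanishes on $\Graph(A_m)\times\Graph(A_M)$. Unfolding that via \eqref{e:Omega-omega}, one obtains for every $x\in D_m$ and every $y\in D_M$ the identity $\Omega(x,A_My)=\overline{\Omega(y,A_mx)}$, and symmetrically for $y\in D_m$. Plugging $x\in D_m$ (respectively $y\in D_m$) into the defining formula of $\w_{\operatorname{Green}}$ shows that the form vanishes, so it factors through $D_M/D_m\times D_M/D_m$. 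Boundedness of the resulting form on the quotient follows from the boundedness of $\Omega$ and of $A_M$: for any representatives $x',y'$ of the cosets we have $|\w_{\operatorname{Green}}(\cdot,\cdot)|\le 2\|\Omega\|\,\|A_M\|\,\|x'\|_{D_M}\|y'\|_{D_M}$, and taking infima over the two cosets in turn yields the required bound in the quotient norm. Sesquilinearity and skew-symmetry are immediate from \eqref{e:symplectic-minus-green1} and the corresponding properties of $\Omega$.

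Third, and this is the key step, I would verify non-degeneracy. Suppose $x\in D_M$ satisfies $\w_{\operatorname{Green}}(x+D_m,y+D_m)=0$ for every $y\in D_M$. By the definition of $\w$ in \eqref{e:Omega-omega}, this says precisely that $\w\bigl((x,A_Mx),(y,A_My)\bigr)=0$ for all $y\in D_M$, i.e., $(x,A_Mx)\in \Graph(A_M)^{\w}$. Invoking Assumption \ref{a:reduced-space} (iii) once more, $\Graph(A_M)^{\w}=\Graph(A_m)$, so $x\in D_m$ and hence $x+D_m=0$. This is exactly the non-degeneracy condition (iii) of Definition \ref{d:symplectic-space}, and combined with the preceding steps it makes $(D_M/D_m,\w_{\operatorname{Green}})$ a (weak) symplectic Banach space in the sense of Definition \ref{d:symplectic-space}(b).

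The main obstacle is essentially bookkeeping: everything hinges on translating the relational identity $A_M^{\w}=A_m$ into the pointwise identity $\Omega(x,A_My)=\overline{\Omega(y,A_mx)}$ for $x\in D_m$, $y\in D_M$, and then using the same identity \emph{twice}—once to make the form descend to the quotient, and once to conclude non-degeneracy. I would not expect to be able to upgrade the form to a \emph{strong} symplectic structure in general, since the induced map $D_M/D_m\to (D_M/D_m)^{\ad}$ need not be surjective; this is consistent with the Memoir's emphasis on the unavoidable weakness of the symplectic structure in the Sobolev setting.
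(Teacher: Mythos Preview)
Your proof is correct. The paper's own proof is a one-line citation of Lemma~\ref{l:sym-red-banach}: it views $\Graph(A_M)$ as a co-isotropic subspace of the symplectic Banach space $(X\times Y,\w)$ (co-isotropic because Assumption~\ref{a:reduced-space}(iii) gives $\Graph(A_M)^{\w}=\Graph(A_m)\subset\Graph(A_M)$), notes that the inclusion $\Graph(A_M)\hookrightarrow X\times Y$ is bounded since $A_M$ is, and concludes that the symplectic reduction $\Graph(A_M)/\Graph(A_m)$ is a symplectic Banach space. The identification of this quotient with $(D_M/D_m,\w_{\operatorname{Green}})$ via the first-coordinate projection is immediate from the definitions (and is made explicit in the next Lemma~\ref{l:red-cauchy}).

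Your direct verification is essentially the proof of Lemma~\ref{l:sym-red-banach} unpacked in this specific instance: your closedness argument for $D_m\subset D_M$ matches the ``$W^{\w}$ closed in $W$'' step there, and your non-degeneracy argument is exactly the computation $(W/W^{\w})^{\wt\w}=W^{\w}/W^{\w}=\{0\}$. Both routes use Assumption~(iii) at the same two places (well-definedness and non-degeneracy), so the mathematical content is identical; the paper simply packages it as an instance of symplectic reduction, which pays off later when the same lemma is reused in Theorem~\ref{t:mas-red}.
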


\begin{proof} By Lemma \ref{l:sym-red-banach}.
\end{proof}

The lemma shows that any intermediate space $D_M$ satisfying Assumption \ref{a:reduced-space}.1-3
is big enough to permit a meaningful symplectic analysis on the reduced quotient space
$D_M/D_m$\,. The point of this construction is that the norm in $D_M/D_m$ does not come from the
graph norm in $D_{\mmax}$ but from the norm of $D_M$\,. Therefore, it can be kept fixed even when
our operator varies. The {\em symplectic structure} of $D_M/D_m$\,, however, is induced by the
minus Green's form and therefore will change with varying operators.

In \auindex{Boo{\ss}--Bavnbek,\ B.}\auindex{Furutani,\ K.}\cite[Proposition 3.5]{BoFu98}, in the
spirit of the classical von-Neumann program, self-adjoint Fredholm extensions were characterized by
the property that their domains, projected down into the strong symplectic space
\symindex{\beta@$\beta(A)$ space of abstract boundary values}$\bbb(A_m):=D_{\mmax}/D_m$ of abstract
boundary values, make Fredholm pairs of Lagrangian subspaces with the \subindex{Cauchy data
space!abstract (or reduced)}abstract, reduced \subindex{Cauchy data space!abstract (or
reduced)}Cauchy data space $(\ker A_m^*+D_m)/D_m$\/. Immediately, this does not help for operator
families with varying maximal domain. Surprisingly, however, the arguments generalize to the weak
symplectic space \symindex{Dm@$D_M/D_m$ reduced weak symplectic von-Neumann
space}\subindex{von-Neumann space of abstract boundary values!reduced} intrinsically, i.e., without
additional topological conditions.

For the following Lemma \ref{l:red-cauchy} and Proposition \ref{p:cauchy} we exploit our Assumption
\ref{a:reduced-space}.4, i.e., the existence of a self-adjoint Fredholm extension $A_D$\/.

\begin{lemma}\label{l:red-cauchy} Denote by $P_X\colon  Z=X\times Y\to X$ the projection onto the first component.
Set $W:=\Graph(A_M)$, $\la:=\Graph(A_D)$ and $\mu:=X\times\{0\}$. Then $P_X$ induces a symplectic
Banach isomorphism $\wt P_X\colon  \bigl(W/W^{\w},\wt \w\bigr)\to \bigl(D_M/D_m,-\w_{\Green}\bigr)$,
and we have
\[\wt P_X(R_W(\la))\ =\ \g(D),\qquad \wt P_X(R_W(\mu))\ =\ \g(\ker A_M).\]
Here $\w$ denotes the symplectic structure on $X\times Y$ induced by the given non-degenerate
sesquilinear form $\W\colon X\times Y\to\C$ and defined in \eqref{e:Omega-omega}, $\wt\w$ denotes
the corresponding symplectic structure on the reduced space, and $-\w_{\Green}$ denotes the form
defined in Equation \eqref{e:symplectic-minus-green1} and established as symplectic form on
$D_M/D_m$ in Lemma \ref{l:weak-symplectic-boundary-space}.

\end{lemma}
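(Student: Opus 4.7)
The strategy is to transport the abstract symplectic reduction of $W = \Graph(A_M)$ along the co-isotropic subspace $W^\w \< Z$ to the reduced boundary-value space $D_M/D_m$ via the first-factor projection $P_X$, and then read off the images of $\la = \Graph(A_D)$ and $\mu = X\times\{0\}$ directly from the definitions of the symplectic reduction and of $\g$.

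First I would identify $W^\w$. Since $A_m \< A_M$ as relations (because $D_m \< D_M$ and $A_M = A_m^\w|_{D_M}$), we have $\Graph(A_m) \< \Graph(A_M) = W$, so $W$ is co-isotropic; moreover by Assumption \ref{a:reduced-space}(iii), $W^\w = \Graph(A_M)^\w = \Graph(A_m)$. Next I would check that $P_X$ restricted to $W$ is a bounded linear bijection $W \to D_M$ with inverse $x \mapsto (x, A_M x)$ (this is bounded by \eqref{e:am-bounded}), and that it carries $W^\w = \Graph(A_m)$ bijectively onto $D_m$. Passing to quotients therefore yields a Banach space isomorphism $\wt P_X\colon W/W^\w \to D_M/D_m$.

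Now I would verify that $\wt P_X$ is a symplectic isomorphism. Writing $w_j = (x_j, A_M x_j) \in W$, Definition \ref{d:sympl-red}(a) and \eqref{e:Omega-omega} give
\[
\wt\w([w_1],[w_2]) \ =\ \w(w_1,w_2)\ =\ \Omega(x_1,A_M x_2)-\overline{\Omega(x_2,A_M x_1)},
\]
and this is exactly the right-hand side of \eqref{e:symplectic-minus-green1} evaluated on $\g(x_1),\g(x_2)$, since $A_M x_j = A_m^\w x_j$. Hence the forms correspond under $\wt P_X$ (up to the sign convention displayed in \eqref{e:symplectic-minus-green1}), so $\wt P_X$ is the asserted symplectic Banach isomorphism.

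Finally I would compute the two reduced spaces. For $\la = \Graph(A_D)$, the containments $A_m \< A_D \< A_M$ give $\la + W^\w = \la$ and $\la \cap W = \la$, so by \eqref{e:red-subspace} $R_W(\la) = \Graph(A_D)/\Graph(A_m)$, whose image under $\wt P_X$ is $(D+D_m)/D_m = \g(D)$ (using $D_m\< D \< D_M$). For $\mu = X\times\{0\}$, a direct computation shows $\mu + W^\w = X\times \ran A_m$, so
\[
(\mu+W^\w)\cap W\ =\ \{(x,A_M x) : x\in D_M,\ A_M x \in \ran A_m\}.
\]
The main small lemma to verify here is the identity $\{x\in D_M : A_M x\in \ran A_m\} = \ker A_M + D_m$: the inclusion $\supset$ is clear, and for $\subset$, given $x\in D_M$ with $A_M x = A_m x'$ for some $x'\in D_m$, use $A_M|_{D_m} = A_m$ to deduce $x - x' \in \ker A_M$, hence $x = (x-x') + x' \in \ker A_M + D_m$. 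Quotienting by $\Graph(A_m)$ and applying $\wt P_X$ yields $\wt P_X(R_W(\mu)) = (\ker A_M + D_m)/D_m = \g(\ker A_M)$. The only mildly delicate step is this last kernel identification; everything else is a bookkeeping exercise once $W^\w$ has been identified.
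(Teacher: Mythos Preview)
Your proof is correct and is exactly the ``direct calculation'' the paper has in mind; the paper's own proof is literally the single line ``By definition and direct calculation.'' Your identification $W^\w=\Graph(A_m)$ from Assumption~\ref{a:reduced-space}(iii), the passage to quotients via $P_X$, the form check against \eqref{e:symplectic-minus-green1}, and the computation of $R_W(\la)$ and $R_W(\mu)$ (including the small kernel identity $\{x\in D_M:A_Mx\in\ran A_m\}=\ker A_M+D_m$) are all the right steps, so there is nothing to add beyond noting that the sign you flag is indeed just the paper's convention for the minus Green form.
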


\begin{proof} By definition and direct calculation.
\end{proof}

\begin{proposition}[Abstract regularity]\label{p:cauchy}
\subindex{Abstract regularity}\subindex{Cauchy data space!abstract (or reduced)} Under
Assumption \ref{a:reduced-space}, the quotient space $D/D_m$ and the {\em reduced Cauchy data
space} $(\ker A_M+D_m)/D_m$ form a Fredholm pair of Lagrangian subspaces of the (weak) symplectic
Banach space $(D_M/D_m,\w_{\Green})$ with index $0$, and $\dim\ker A_m=\dim Y/(\ran A_M)$.
Moreover, it follows that $\range A_M=\range A_m^{\omega}$\,.
\end{proposition}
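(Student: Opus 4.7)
The plan is to reduce Proposition \ref{p:cauchy} to a direct application of Proposition \ref{p:red-fredholm} by choosing the right co-isotropic $W$, Fredholm pair $(\la,\mu)$, and then transporting everything through the symplectic Banach isomorphism $\wt P_X$ already provided by Lemma \ref{l:red-cauchy}. Set $W:=\Graph(A_M)$, $\la:=\Graph(A_D)$, and $\mu:=X\times\{0\}$, all viewed as subspaces of the symplectic Banach space $(X\times Y,\w)$ with $\w$ induced by $\W$ as in \eqref{e:Omega-omega}.

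First I would verify the algebraic preconditions. By Assumption \ref{a:reduced-space}(iii), $A_M^{\w}=A_m$, so $W^{\w}=\Graph(A_m)$; since $A_m\subset A_M$ this gives $W^{\w}\<W$, i.e.\ $W$ is co-isotropic. The inclusions $A_m\<A_D\<A_M$ (which hold because $D_m\<D\<D_M$) translate into $W^{\w}\<\la\<W$, which is the domain condition of Proposition \ref{p:red-fredholm}. The pair $\mu=X\times\{0\}$ is Lagrangian by Definition \ref{d:symm-rel}. Self-adjointness of $A_D$ gives $\la^{\w}=\la$, so $\la\in\Ll(X\times Y,\w)$. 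Finally, using \eqref{e:clr-index}, the fact that $A_D$ is a Fredholm operator of index $0$ is equivalent to $(\la,\mu)\in\Ff\Ll_0(X\times Y,\w)$.

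Now I apply Proposition \ref{p:red-fredholm}. Its conclusions say
\[
\dim(W^{\w}\cap\mu)\ =\ \dim(X\times Y)/(W+\mu)\ <+\infty,\qquad W+\mu\ =\ W^{\w\w}+\mu,
\]
and $(R_W(\la),R_W(\mu))$ is a Fredholm pair of Lagrangians of index $0$ in $(W/W^{\w},\wt\w)$. Lemma \ref{l:red-cauchy} furnishes the symplectic Banach isomorphism
\[
\wt P_X\colon (W/W^{\w},\wt\w)\ \too\ (D_M/D_m,\w_{\Green})
\]
with $\wt P_X(R_W(\la))=\g(D)$ and $\wt P_X(R_W(\mu))=\g(\ker A_M)$. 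Transport of structure then yields the first assertion: $(\g(D),\g(\ker A_M))$ is a Fredholm pair of Lagrangian subspaces of $(D_M/D_m,\w_{\Green})$ of index $0$.

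For the two remaining identities I would simply compute the intersections and sums in $X\times Y$. Direct inspection gives
\[
W^{\w}\cap\mu\ =\ \Graph(A_m)\cap(X\times\{0\})\ =\ \ker(A_m)\times\{0\},\qquad W+\mu\ =\ X\times\ran A_M,
\]
so the first conclusion of Proposition \ref{p:red-fredholm} becomes exactly $\dim\ker A_m=\dim Y/\ran A_M$. For the last claim, Assumption \ref{a:reduced-space}(iii) combined with \eqref{e:three-Omega} gives $W^{\w\w}=\Graph(A_m)^{\w}=\Graph(A_m^{\w})$, and then $W+\mu=W^{\w\w}+\mu$ reads $X\times\ran A_M=X\times\ran A_m^{\w}$, hence $\ran A_M=\ran A_m^{\w}$. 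The only non-routine point in the argument is recognising that Assumption \ref{a:reduced-space}(iii) gives us precisely the two identifications $W^{\w}=\Graph(A_m)$ and $W^{\w\w}=\Graph(A_m^{\w})$ that are needed so that Proposition \ref{p:red-fredholm} outputs both the Lagrangian Fredholm property and the sharp dimension/range equalities simultaneously.
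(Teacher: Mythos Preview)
Your proof is correct and follows essentially the same approach as the paper: set $W=\Graph(A_M)$, $\la=\Graph(A_D)$, $\mu=X\times\{0\}$, apply Proposition \ref{p:red-fredholm}, and transport via the isomorphism $\wt P_X$ of Lemma \ref{l:red-cauchy}. Your version simply spells out the verifications and the translations of the intersection, sum, and range identities that the paper's one-line proof leaves implicit.
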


\begin{proof} Take $Z$ as the symplectic vector space, $W:=\Graph(A_M)$, $\la:=\Graph(A_D)$ and $\mu:=X\times\{0\}$.
By Lemma \ref{l:red-cauchy} and Proposition \ref{p:red-fredholm}, our results follow.
\end{proof}

\begin{rem}\label{r:alg}
(a) By definition it is clear that $\range A_M\<\range A_m^{\omega}$\,. The point of the preceding
Proposition is the opposite inclusion. That inclusion is well known for any elliptic differential
operator $A$ acting from sections of a bundle $E$ over a compact manifold $M$ to sections of a
bundle $F$ over $M$, say of order $d=1$: Let us denote the maximal extension of $A$ on $L^2(X;E)$
by $A_{\mmax}$.

Then, as one aspect of the Lifting Jack (the existence of parametrices) for \subindex{Elliptic
differential operators!regularity}elliptic operators, for any $w\in\range A_{\mmax}$ we have by
definition a $v\in D_{\mmax}$ and therefore by elliptic regularity a $v'\in D_M$ such that
$A_{\mmax} v=A_M v'=w$. In general, there will be many different $v \in D_{\mmax}$ with $A_{\mmax}
v = w$, and not all such $v$ belong to $D_M$, but some of them will. In the classical theory of
well-posed elliptic boundary value problems, those are typically elements that satisfy an extra
condition at the boundary (as specified in \auindex{Boo{\ss}--Bavnbek,\ B.}\auindex{Wojciechowski,\
K.P.}\cite[Theorem 19.6]{BoWo93} for elliptic operators of first order and in \auindex{Frey,\ C.}
\cite[Theorem 2.2.1]{Frey:2005} for elliptic operators of higher order).

\noi (b) If we remove the topological requirements in Assumption \ref{a:reduced-space}, the
algebraic results of Lemma \ref{l:weak-symplectic-boundary-space}, Lemma \ref{l:red-cauchy} and
Proposition \ref{p:cauchy} still hold.
\end{rem}

\section{Proof of the abstract spectral flow formula}\label{ss:proof-asff}

In this section we prove an abstract spectral flow formula by Theorem \ref{t:mas-red}. We shall
make the following new assumptions. They are all natural in our applications, as we shall see later
in Section \ref{ss:gsff}.

\begin{ass}\label{a:asff}
\symindex{G@$\mathbb{G},\mathbb{G}_0,\mathbb{X},\mathbb{Y}$ Banach bundles over $[0,1]$}
(1) Let $r_0\colon \mathbb{G}_0\to[0,1]$, $r\colon \mathbb{G}\to [0,1]$, $p\colon \mathbb{X}\to
[0,1]$, and $q\colon \mathbb{Y}\to [0,1]$ be Banach bundles with fibers $r_0^{-1}(s):=D_m(s)$,
$r^{-1}(s):=D_M(s)$, $p^{-1}(s):=X(s)$ and $q^{-1}(s):=Y(s)$ for each $s\in[0,1]$ respectively.
Assume that we have Banach subbundle maps $\mathbb{G}_0\to \mathbb{G}$, $\mathbb{G}\to \mathbb{X}$.
\newline (2) Let
\[
\left\{\Omega(s)\colon X(s)\times Y(s)\too\C\right\}_{s\in [0,1]}
\]
be a path of bounded non-degenerate sesquilinear forms, and denote by $\omega(s)$ the weak
symplectic structure on $Z(s):=X(s)\times Y(s)$ induced by $\Omega(s)$ and defined by
\eqref{e:Omega-omega}. Assume that $(D_m(s))^{\Omega,r}=\{0\}$, which implies that the adjoint of
an operator with domain $D_m(s)$ is an operator.
\newline (3) Let
\[
\left\{A_m(s)\colon X(s)\supset D_m(s)\too Y(s)\right\}_{s\in [0,1]}
\]
be a family of closed symmetric operators such that the norm on $D_m(s)$ is equivalent to the graph
norm of $D_m(s)$ defined by $A_m(s)$. Assume that there exists a constant integer $k$ such that
$\dim\ker A_m(s)=k$, i.e., we assume \subindex{Weak inner Unique Continuation Property (wiUCP)}weak
inner unique continuation property (wiUCP) up to a finite constant dimension. Set
$A_M(s):=A_m(s)^{\omega(s)}|_{D_M(s)}$. Assume that $(A_M(s))^{\omega(s)}=A_m(s)$, and
\[
\left\{A_M(s)\in\Bb(D_M(s),Y(s))\right\}_{s\in [0,1]}
\]
is a path of bounded operators.
\newline (4) Let $\left\{D(s)\in\Ss(D_M(s))\right\}_{s\in [0,1]}$ be a path of closed subspaces with $D_m(s)\subset D(s)\subset D_M(s)$.
Assume that $\left\{A(s,D(s)):=A_M(s)|_{D(s)}\right\}_{s\in [0,1]}$ is a family of self-adjoint
Fredholm operators of index $0$.
\end{ass}

\begin{theorem}[Abstract spectral flow formula]\label{t:asff}
\subindex{Desuspension spectral flow formula!abstract}
Under Assumption \ref{a:asff}, we have the following.
\newline (a) We have $\ran A_M(s)=\ran (A_m(s))^{\omega(s)}$ and $\dim Y(s)/(\ran A_M(s))=k$ holds for each $s\in[0,1]$.
\newline (b) The family
\[
\left\{\bigl(\gamma(D(s)),\gamma(\ker A_M(s))\bigr)\right\}_{s\in [0,1]}
\]
is a path of Fredholm pairs of Lagrangian subspaces of the symplectic Banach space
$\bigl(D_M(s)/D_m(s),\w_{\Green}(s)\bigr)$ of index $0$.
\newline (c) We have
\begin{align}
\label{e:asff1}\SF\{A(s,D(s))\}&=-\Mas\{\gamma(D(s)),\gamma(\ker A_M(s));\w_{\Green}(s)\}\\
\label{e:asff2}&=-\Mas\{\gamma(\ker A_M(s),\gamma(D(s)));-\w_{\Green}(s)\}.
\end{align}
\end{theorem}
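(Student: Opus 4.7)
First I would dispatch parts (a) and (b) by a pointwise application of Proposition~\ref{p:cauchy}, followed by a continuity argument. At each $s\in[0,1]$, the data $(X(s),Y(s),\Omega(s),D_m(s),D_M(s),A_m(s),D(s))$ satisfy Assumptions~\ref{a:reduced-space}: items (i)--(iii) are the pointwise content of Assumption~\ref{a:asff}(i)--(iii), and the self-adjoint Fredholm realization demanded in (iv) is supplied by $A(s,D(s))$. Hence Proposition~\ref{p:cauchy} yields both $\ran A_M(s)=\ran A_m(s)^{\w(s)}$ and $\dim Y(s)/\ran A_M(s)=\dim\ker A_m(s)$, which equals $k$ by Assumption~\ref{a:asff}(iii), as well as the Fredholm-Lagrangian property of index $0$ for the pair in (b). For the continuity in (b), that of $\gamma(D(s))$ follows from the continuity of the path $D(s)\supset D_m(s)$ via the gap-topology quotient results of the appendix (e.g.\ Lemma~\ref{l:quotient-delta}), while that of $\gamma(\ker A_M(s))$ follows by writing $\ker A_M(s)\times\{0\}=\Graph(A_M(s))\cap(X(s)\times\{0\})$ as the intersection of a continuous Banach subbundle with a constant one whose intersection dimension is constant in $s$ thanks to part~(a).

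The formula (c) is the heart of the theorem; I would derive it from Theorem~\ref{t:mas-red} (invariance of the Maslov index under symplectic reduction) applied in the Banach bundle $\mathbb{Z}:=\mathbb{X}\times\mathbb{Y}$ equipped with the family of strong symplectic forms $\w(s)$ from \eqref{e:Omega-omega}. Set $W(s):=\Graph(A_M(s))$; Assumption~\ref{a:asff}(iii) gives $W(s)^{\w(s)}=\Graph(A_m(s))$, and the inclusions $W(s)^{\w(s)}\subset\Graph(A(s,D(s)))\subset W(s)$ reflect $D_m(s)\subset D(s)\subset D_M(s)$. To satisfy Assumption~\ref{a:mas-red}(v), I would use part~(a): because $\ran A_M(s)$ is closed of constant finite codimension $k$ and varies continuously, a continuous $k$-dimensional complement $V(s)\subset Y(s)$ can be selected and the intermediate bundle $\wt W(s):=W(s)+(\{0\}\times V(s))$ satisfies $\wt W(s)+(X(s)\times\{0\})=X(s)\times Y(s)$.

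Setting $\la(s):=\Graph(A(s,D(s)))$ and $\mu(s):=X(s)\times\{0\}$, Theorem~\ref{t:mas-red}(c) gives $\Mas_\pm\{\la,\mu;\w\}=\Mas_\pm\{R_W(\la),R_W(\mu);\wt\w\}$. Lemma~\ref{l:red-cauchy} then furnishes a symplectic Banach isomorphism $(W(s)/W(s)^{\w(s)},\wt\w(s))\to(D_M(s)/D_m(s),\w_{\Green}(s))$ carrying $R_W(\la(s))$ to $\gamma(D(s))$ and $R_W(\mu(s))$ to $\gamma(\ker A_M(s))$. Combined with Definition~\ref{d:sf}, namely $\SF\{A(s,D(s))\}=\Mas_-\{\la(s),\mu(s);\w(s)\}$, this yields
\[
\SF\{A(s,D(s))\}\;=\;\Mas_-\bigl\{\gamma(D(s)),\gamma(\ker A_M(s));\w_{\Green}(s)\bigr\}.
\]

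Finally, to pass from $\Mas_-$ to $-\Mas$, I would apply the flipping identity of Proposition~\ref{p:maslov-properties}(f), valid in full generality via Theorem~\ref{t:main}; rearranged, it yields $\Mas_-\{\alpha,\beta;\sigma\}=-\Mas\{\beta,\alpha;\sigma\}$, which, combined with $\Mas\{\alpha,\beta;\sigma\}=\Mas\{\beta,\alpha;-\sigma\}$, produces both \eqref{e:asff1} and \eqref{e:asff2}. The main technical obstacle is verifying all continuity hypotheses of Theorem~\ref{t:mas-red}, especially the construction of the continuous intermediate subbundle $\wt{\mathbb{W}}$, which rests on the closed-range and constant-codimension information from part~(a) together with the gap-topology tools of the appendix; tracking the precise sign convention between $\wt\w$ and $\w_{\Green}$ in Lemma~\ref{l:red-cauchy} is a secondary but delicate point, any ambiguity being absorbed by the flipping identity in the last step.
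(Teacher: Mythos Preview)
Your approach is essentially the paper's: invoke Proposition~\ref{p:cauchy} pointwise for (a) and (b), then for (c) apply Theorem~\ref{t:mas-red} with $W(s)=\Graph(A_M(s))$, transport via Lemma~\ref{l:red-cauchy}, and finish with the flipping identities of Proposition~\ref{p:maslov-properties}(f). The one difference worth noting is your choice of intermediate bundle: the paper takes simply $\wt W(s):=D_M(s)\times Y(s)$, which is already a Banach subbundle of $\mathbb{Z}$ by Assumption~\ref{a:asff}(i), trivially satisfies $\wt W(s)+\mu(s)=Z(s)$, and contains $W(s)=\Graph(A_M(s))$ as a closed complemented subspace because $A_M(s)\in\Bb(D_M(s),Y(s))$. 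This sidesteps your construction of a continuous $k$-dimensional complement $V(s)$ altogether.

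Two small corrections. First, the form $\omega(s)$ on $Z(s)$ is not strong in general (only when $X(s)$ is reflexive), though this is irrelevant since Theorem~\ref{t:mas-red} applies to weak forms. Second, in your continuity sketch for $\gamma(\ker A_M(s))$ you write that the ``intersection dimension is constant thanks to part~(a)''; in fact the intersection $\Graph(A_M(s))\cap(X(s)\times\{0\})=\ker A_M(s)\times\{0\}$ is typically infinite-dimensional. What part~(a) supplies is constant \emph{codimension of the sum}, namely $\dim Z(s)/(\Graph(A_M(s))+X(s)\times\{0\})=\dim Y(s)/\ran A_M(s)=k$, and it is this that feeds Corollary~\ref{c:finite-close-to}. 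The paper does not argue this continuity separately; it obtains the continuity of the reduced pair directly from Theorem~\ref{t:mas-red}(b) once the bundle hypotheses are checked. Your sign worry is warranted: following \eqref{e:symplectic-minus-green1} one finds that $\wt\omega$ corresponds to $-\omega_{\Green}$ under $\wt P_X$, and the paper's chain accordingly passes through $\Mas_-\{\gamma(D),\gamma(\ker A_M);-\omega_{\Green}\}$ before two applications of the flipping identity land on \eqref{e:asff1} and \eqref{e:asff2}.
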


\begin{proof} Let $s\in[0,1]$. By \auindex{Boo{\ss}--Bavnbek,\ B.}\auindex{Zhu,\ C.}\cite[Lemma 16]{BooZhu:2013}, the pair
\[(\Graph(A(s,D(s)),X(s)\times\{0\})\]
is a Fredholm pair of Lagrangian subspaces of the symplectic Banach space $Z(s)$ with the form
$\omega(s)$. By Lemma \ref{l:weak-symplectic-boundary-space}, the quotient space $D_M(s)/D_m(s)$ is
a weak symplectic Banach space with the symplectic form induced by the minus Green's form on
$D_{\mmax}(s)$. By Proposition \ref{p:cauchy}, (a) holds and the pair
\[
\bigl(\gamma(D(s)), \gamma(\ker A_M(s))\bigr)%
\]
is a Fredholm pair of Lagrangian subspaces of the symplectic Banach space%
\[
(D_M(s)/D_m(s),\w_{\Green}(s))\text{ of index $0$ for each $s\in[0,1]$}.%
\]
By Proposition \ref{p:embedding-continuous1}, the norm on $D_m(s)$ is uniformly equivalent to the
graph norm of $D_m(s)$ defined by $A_m(s)$.
\newline By Lemma \ref{l:quotient-delta}, $\left\{\gamma(D(s))\subset\Ss(D_M(s)/D_m(s))\right\}_{s\in
[0,1]}$ is a continuous path.
\newline By Corollary \ref{c:continuous-operator}, $\left\{A(s,D(s))\in\Cc(X(s),Y(s))\right\}_{s\in
[0,1]}$ is a continuous family.

We shall use the following notations:
\begin{gather*}
W_0(s):=\Graph(A_m(s)),\ W(s):=\Graph(A_M(s)),\ W_1(s):=D_m(s)\times Y(s)\\
\wt W(s):=D_W(s)\times Y(s),\quad\mu(s):=X(s)\times\{0\},\\
\mathbb{W}_0:=\bigcup_{s\in[0,1]}W_0(s),\quad \mathbb{W}:=\bigcup_{s\in[0,1]}W(s),\quad \mathbb{W}_1:=\bigcup_{s\in[0,1]}W_1(s),\\
\wt{\mathbb{W}}:=\bigcup_{s\in[0,1]}\wt W(s),\quad\mathbb{Z}:=\bigcup_{s\in[0,1]}Z(s).
\end{gather*}
Then we have $W_0(s)$ is closed and complemented in $W_1(s)$, in short, $W_0(s)\in\Ss^c(W_1(s))$,
and similarly $W(s)\in\Ss^c(\wt W(s))$. By Lemma \ref{l:ck-complemented}.b (see also
\auindex{Neubauer,\ G.}\cite[Lemma 0.2]{Ne68}), $\mathbb{W}_0$ is a subbundle of $\mathbb{W}_1$,
and $\mathbb{W}$ is a subbundle of $\wt{\mathbb{W}}$. The bundle $\wt{\mathbb{W}}$ is a subbundle
of $\mathbb{Z}$, and we have $\wt W(s)+\mu(s)=Z(s)$. By Definition \ref{d:sf}, Theorem
\ref{t:mas-red}, Lemma \ref{l:red-cauchy} and Proposition \ref{p:maslov-properties}.d,f, we have
\begin{align*}
\SF&\{A(s,D(s))\}=\Mas_-\{\Graph(A(s,D(s)),X(s)\times\{0\})\}\\
&=\Mas_-\{R_W(s)^{\omega(s)}(\Graph(A(s,D(s))),R_W(s)^{\omega(s)}(X(s)\times\{0\})\}\\
&=\Mas_-\{\gamma(D(s)),\gamma(\ker A_M(s));-\w_{\Green}(s)\}\\
&=-\Mas\{\gamma(\ker A_M(s)),\gamma(D(s));-\w_{\Green}(s)\}\\
&=-\Mas\{\gamma(D(s)),\gamma(\ker A_M(s));\w_{\Green}(s)\}.\qedhere
\end{align*}
\exendproof

\begin{rem}\label{r:simple_case} Consider the special case when $X(s)$ is a Hilbert space with Hilbert structure $\Omega(s)$ for each $s$. If
we assume that $\dim\ker A_m(s)=0$ for each $s$, the method in the proof of \cite[Theorem
2.13]{BoZh04}, \cite[Theorem 1.3]{BoZh05} works here. The method is not applicable in the case that
$\dim\ker A_m(s)=k>0$ for each $s$.
\end{rem}
\medskip

\section[Desuspension spectral flow formula for elliptic problems]{An application:
A general desuspension formula for the spectral flow of families of elliptic boundary value
problems}\label{ss:gsff} Having expanded weak symplectic linear algebra and analysis to some length
and detail in the two preceding sections, we shall turn to the geometric setting and the geometric
applications.

\subsection{Parametrization of vector bundles over manifolds with boundary and domains in Sobolev chains}\label{sss:sobolev_spaces}
Consider a (big) Hermitian vector bundle \symindex{E@$\EE$ (big) Hermitian vector bundle
over a (big) compact Hausdorff space $\MM$}$\EE$ over a \symindex{M@$\MM$ (big) compact Hausdorff
space}(big) compact Hausdorff space $\MM$. We assume that $\MM$ itself is a fiber bundle over the
interval $[0,1]$ such that $\MM$ is a continuous family of compact smooth Riemannian manifolds
\symindex{M@$M,M(s)$ smooth compact manifold!with boundary $\Si,\Si(s)$}\symindex{j@$j,j(s)$
embedding}$j(s)\colon M(s) \into \MM$ with boundary \symindex{\Si@$\Si,\Si(s)$ smooth closed
boundary part}$\Si(s)$, \symindex{s@$s$ variational parameter}$s\in [0,1]$. We require that the
\subindex{Vector bundle structure}vector bundle structure is compatible with the boundary part.
More precisely, we shall have a \subindex{Trivialization}trivialization
\[\symindex{\f@$\f$ trivialization}
\f\colon M(0)\times [0,1]\simeq \MM
\]
such that \symindex{\pi@$\pi$ natural projection}$\pi\circ\f\ii\circ j(s)\colon  (M(s),\Si(s))\to
(M(0),\Si(0))$ is a diffeomorphism. Here $\pi$ denotes the natural projection $\pi\colon M(0)\times
[0,1]\to M(0)$\,. We do not assume that $M(s)$ or $\Si(s)$ are connected. Note that the
trivialization defines smooth structures on $\ran\f|_{(M(0)\setminus \Si(0))\times [0,1]}$ and so
on $\MM$ and $\EE$.

Let $E(s)\to M(s)$ be the induced bundle, i.e., the pull back $j(s)^*(\EE)$. Denote by
$\Ci_0(M(s);E(s))$ the space of smooth sections with support in the interior \symindex{M@$M(s)^0$
interior of manifold with boundary}$M(s)^0:=M(s)\setminus \Si(s)$ of $M(s)$\,. Assume that
\symindex{d@$d$ order of differential operator}$d>0$ is a positive integer and
\symindex{\sigma@$\sigma$ order of Sobolev space}$\sigma\ge 0$ a non-positive real (on manifolds
with boundary, Sobolev spaces of negative order are a nuisance and shall be avoided here). We
define the Hilbert (before choices rather ``Hilbertable") space
\[\symindex{H@$H^{\sigma}(M;E), H_0^{\sigma}(M;E)$ Sobolev spaces}\subindex{Sobolev spaces}
  H^{\sigma}_0(M(s);E(s))\ :=\ \ol{\Ci_0(M(s);E(s))}^{H^{\sigma}(M(s);E(s))}\,.
\]
Here $H^{\sigma}(M(s);E(s))$ denotes the Sobolev space of order $\sigma$ defined in
\auindex{Boo{\ss}--Bavnbek,\ B.}\auindex{Wojciechowski,\ K.P.}\cite[Chapter 11]{BoWo93} as the
restrictions to $M(s)$ of sections belonging to $H^{\sigma}(\wt{M(s)};\wt{E(s)})$, where
$\wt{E(s)}\to \wt{M(s)}$ is a smooth extension of the given vector bundle ${E(s)}\to {M(s)}$ over a
smooth closed extension $\wt{M(s)}$ of $M(s)$, e.g., the closed double. C. Frey\auindex{Frey,\ C.}
\cite[p. 14]{Frey:2005} has shown that these definitions of $H^{\sigma}(M(s);E(s)),
H^{\sigma}_0(M(s);E(s))$ coincide with the definitions given in \auindex{Lions,\
J.-L.}\auindex{Magenes,\ E.}J.-L. Lions and E. Magenes \cite[Chapter 9]{Lions-Magenes}. The inner
product is given by the Sobolev inner product. Set
\begin{gather*}
\symindex{D@$D_m(s;\sigma),D_M(s;\sigma)$ two-parameter domains in Sobolev chain}
\symindex{S@$S(s;\sigma)$ two-parameter boundary traces in Sobolev chain}
D_m(s;\sigma):= H^{\sigma+d}_0(M(s);E(s)), \quad D_M(s;\sigma) := H^{\sigma+d}(M(s);E(s)),\\
X(s)=Y(s) := L^2(M(s);E(s)), \quad S(s;\sigma):= \sum_{j=1}^dH^{\sigma+d-j}(\Si(s);E(s)|_{\Si(s)}),\\
\mathbb{G}_0 := \bigcup_{s\in[0,1]}D_m(0,s), \quad \mathbb{G} := \bigcup_{s\in[0,1]}D_M(0,s),\quad
\mathbb{X} := \bigcup_{s\in[0,1]}X(s).
\end{gather*}
Then $\mathbb{G}_0$, $\mathbb{G}$, $\mathbb{X}$ have Banach bundle structures over $[0,1]$, and the
natural inclusions $\mathbb{G}_0\to \mathbb{G}$, $\mathbb{G}\to \mathbb{X}$ are Banach subbundle
maps. So Assumption \ref{a:asff}.1 holds.

By the \subindex{Trace Theorem}trace theorem, for $\sigma>-1/2$ we have
\begin{equation}\label{e:strong-cauchy}
S(s;\sigma+1/2)\ \cong \ D_M(s;\sigma)\/ /\/ D_m(s;\sigma).
\end{equation}

Let \symindex{\W@$\W,\Omega(s)$ $L^2$ inner product}$\Omega(s)\colon X(s)\times X(s)\to\C$ denote
the $L^2$ inner product, and let $\omega(s)$ be the strong symplectic structure on
$Z(s):=X(s)\times X(s)$ induced by $\Omega(s)$. Since $D_m(0,s)$ is dense in $X(s)$, we have
$(D_m(0,s))^{\Omega,r}=(D_m(0,s))^{\bot}=\{0\}$. So Assumption \ref{a:asff}.2 holds. For any
closed operator $T\in\Cc(X(s))$, we have $T^*=T^{\w(s)}$.

\subsection{Curves of elliptic differential operators and their Calder{\'o}n projections and Cauchy data spaces}
We consider a smooth linear differential operator \symindex{A@$\AA$ (big) smooth linear
differential operator}$\AA\colon \Ci(\MM;\EE)\to\Ci(\MM;\EE)$ which induces a smooth family of
elliptic differential operators $A(s)$ of order $d>0$
\begin{equation}\label{e:as-original}
A(s)\colon \Ci_0(M(s);E(s)) \too \Ci(M(s);E(s)).
\end{equation}
For each $s\in[0,1]$ and $\sigma\ge 0$, the operator $A(s)$ extends to a bounded operator
\begin{align}\symindex{A@$A_m(s;\sigma),A_M(s;\sigma)$ bounded two-parameter extensions in Sobolev chain of elliptic differential operator}
\label{e:as-hd1}&A_m(s;\sigma)\colon D_m(s;\sigma) \to H^{\sigma}(M(s);E(s)),\\
\label{e:as-hd2}& A_M(s;\sigma)\colon D_M(s;\sigma) \to H^{\sigma}(M(s);E(s)).
\end{align}
For each $s\in[0,1]$, by the \subindex{Elliptic differential operator!interior elliptic estimate}interior elliptic estimate for $A(s)$, the operator%
\[
A_m(s;0)\colon X(s)\supset D_m(s;0)\ \too\ X(s)%
\]
is a closed operator, and the graph norm on $D_m(s;0)$ defined by $A_m(s;0)$ is equivalent to the
Sobolev norm (see, e.g., \auindex{Boo{\ss}--Bavnbek,\ B.}\auindex{Wojciechowski,\
K.P.}\cite[Proposition 20.7]{BoWo93} for the first order case and \auindex{Frey,\
C.}\cite[Proposition 1.1.1]{Frey:2005} in the higher order case). The family%
\[
\left\{A_M(s;0)\in\Bb(D_M(s;0),X(s))\right\}_{s\in[0,1]}
\]
is a path of bounded operators.

We have the following \cite[Proposition 1.1.2]{Frey:2005}.

\begin{proposition}\label{p:minus_Green_form_diff} Denote by $\nu$ the inner normal vector field on $\Sigma(s)$,
$\nu^{\flat}$ the dual of $\nu$, and $\hat {A(s)}$ the principal symbol of $A(s)$. Then the minus Green's form on $S(s;\sigma)$ is given by
\begin{equation}\label{e:minus_Green_form_diff}-\w_{\Green}(s)(u,v)=\lla J(s)u,v\rra_{L^2(\Si(s);E(s)^d|_{\Si(s)})},
\end{equation}
where $J(s)$ is a matrix of differential operators $J_{k,j}(s))$ of order $d+1-k-j$,
$k,j=1,\ldots,d$. Moreover,
\begin{equation}\label{e:minus_Green_form_J}
J_{k,j}(s)\ =\ \begin{cases} i^d(-1)^{d+1-k}\hat{A(s)}(\nu^{\flat}), & \text{if
$k+j=d+1$},\\
0, & \text{if $k+j>d+1$}. \end{cases}
\end{equation}
\end{proposition}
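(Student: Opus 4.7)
The strategy is the classical one: reduce the minus Green's form to a boundary integral via iterated integration by parts in the normal direction, and then read off the matrix entries $J_{k,j}(s)$ from the orders of normal derivatives appearing in the resulting boundary expression. First I would note that, since $\omega_{\Green}(s)$ descends to the quotient $D_M(s)/D_m(s)$, the value of $-\omega_{\Green}(s)(u,v)$ depends only on the Cauchy data $(\gamma_0 u,\dots,\gamma_{d-1}u)$ and $(\gamma_0 v,\dots,\gamma_{d-1}v)$, where $\gamma_i := \partial_t^{\,i}|_{t=0}$ and $t$ is the inward normal coordinate. Hence we may replace $u,v$ by representatives supported in a tubular neighborhood $N\cong \Sigma(s)\times [0,\varepsilon)$ of the boundary obtained from the collar structure fixed in \eqref{e:intro-dirac}. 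In this collar, the operator can be written in normal form
\[
A(s)\ =\ \sum_{j=0}^{d} B_{j}(s,t)\,\partial_{t}^{j},
\]
where each $B_{j}(s,t)$ is a tangential differential operator of order $d-j$ acting on sections of $E(s)|_{\Sigma(s)}$ (depending smoothly on $t$), and where the top coefficient satisfies $B_d(s,0)=i^{-d}\,\hat{A(s)}(\nu^{\flat})$ by definition of the principal symbol and our conventions $D=-i\partial$.

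Next I would perform integration by parts in $t$ on each summand of
\[
-\omega_{\Green}(s)(u,v)\ =\ \langle A(s)u,v\rangle_{L^{2}(M(s))}\ -\ \langle u,A(s)v\rangle_{L^{2}(M(s))}.
\]
Iterating the identity $\int_{0}^{\varepsilon}(\partial_{t}\phi)\,\overline{\psi}\,dt=-\int_{0}^{\varepsilon}\phi\,\overline{\partial_{t}\psi}\,dt-\phi(0)\,\overline{\psi(0)}$ exactly $j$ times inside the $j$-th term, the tangential pieces $B_{j}(s,\cdot)$ pass through without producing further boundary contributions, and one collects a finite sum of boundary terms of the form
\[
\bigl\langle C_{j,a,b}(s)\,\partial_{t}^{\,a}u\bigr|_{t=0},\ \partial_{t}^{\,b}v\bigr|_{t=0}\bigr\rangle_{L^{2}(\Sigma(s))}
\qquad\text{with}\qquad a+b\ \leq\ j-1,
\]
where the coefficient $C_{j,a,b}(s)$ is a tangential differential operator of order $d-j+$ (order lost by the remaining tangential derivatives on either factor), built from $B_{j}(s,0)$ and its $t$-derivatives at $t=0$. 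Regrouping these terms by setting $k=b+1$, $j_{\mathrm{new}}=a+1$ (so $j_{\mathrm{new}}$ is the index of the derivative on $u$ and $k$ on $v$) yields exactly the matrix representation
\[
-\omega_{\Green}(s)(u,v)\ =\ \sum_{k,j=1}^{d}\bigl\langle J_{k,j}(s)\,\gamma_{j-1}u,\,\gamma_{k-1}v\bigr\rangle_{L^{2}(\Sigma(s))},
\]
with $J_{k,j}(s)$ a tangential operator whose order counts the ``derivatives missing on $\Sigma(s)$'', namely $d-(a+b)-1=d+1-k-j$; in particular $J_{k,j}(s)=0$ when $k+j>d+1$, which is the second half of \eqref{e:minus_Green_form_J}.

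Finally, for the anti-diagonal $k+j=d+1$, only the $j_{\text{op}}=d$ summand of $A(s)$ can contribute (every lower-order summand produces boundary terms with $a+b<d-1$), and among its boundary terms only the one with the full $d-1$ normal derivatives distributed as $a=j-1$ on $u$ and $b=k-1$ on $v$ enters. A single integration by parts turning $\partial_{t}^{d-k}u$ into a $\partial_{t}^{\,0}v$-pairing picks up $(d-k)$ cancellations; the combined sign from the $d-k$ integrations by parts needed to free $\partial_{t}^{k-1}v$ is $(-1)^{d-k}$, which together with the extra minus sign from the subtraction of $\langle u,Av\rangle$ in $-\omega_{\Green}$ yields the prefactor $(-1)^{d+1-k}$. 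Combined with $B_d(s,0)=i^{-d}\hat{A(s)}(\nu^{\flat})$ and the normalization convention used in \eqref{e:minus_Green_form_diff} (which absorbs a factor $i^{d}$ relating $B_d$ to the principal symbol), this gives $J_{k,j}(s)=i^{d}(-1)^{d+1-k}\hat{A(s)}(\nu^{\flat})$, as claimed. The main obstacle will be the careful bookkeeping of signs and powers of $i$ in the last step: the convention choices ($\partial_{t}$ versus $D_{t}=-i\partial_{t}$, inward versus outward normal, the sign of the inner product in the minus Green's form) all enter the final factor $i^{d}(-1)^{d+1-k}$, so the proof must be executed with a single fixed convention throughout, matching the one used to define $\hat{A(s)}(\nu^{\flat})$ in the statement.
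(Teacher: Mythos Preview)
The paper does not give its own proof of this proposition; it simply records it as \cite[Proposition 1.1.2]{Frey:2005}. Your integration-by-parts derivation in a boundary collar is exactly the standard argument one would expect to find in such a reference, and the order-counting that yields $\mathrm{ord}\,J_{k,j}(s)=d+1-k-j$ (hence vanishing for $k+j>d+1$) as well as the identification of the anti-diagonal entries with the top-order coefficient $B_d(s,0)$ are correct in outline.

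One cautionary remark: your sign for the minus Green's form is opposite to the paper's convention. From \eqref{e:symplectic-minus-green1} with $\Omega$ the $L^2$ inner product one has $-\omega_{\Green}(s)(u,v)=\langle u,A(s)v\rangle-\langle A(s)u,v\rangle$, not $\langle A(s)u,v\rangle-\langle u,A(s)v\rangle$ as you wrote. This only affects the overall sign bookkeeping in the last paragraph, which you already flagged as the delicate step; just make sure you start from the paper's convention when tracking the factor $i^d(-1)^{d+1-k}$. Also, you reuse the letter $j$ for both the summation index in $A(s)=\sum_j B_j\partial_t^j$ and the matrix column index $j$ in $J_{k,j}$, which makes the middle paragraph hard to parse; rename one of them.
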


On $S(s;\sigma)$ we have the family of the minus Green's form
$\left\{-\w_{\Green}(s)\right\}_{s\in[0,1]}$ which is a continuous path of bounded non-degenerate
sesquilinear forms for $\sigma\ge\frac{1-d}{2}$. The forms are invertible for $\sigma=\frac{1-d}{2}$, and they are not
well-defined for $\sigma<\frac{1-d}{2}$. We have $(A_M(s;0))^*=A_m^t(s;0)$, where \symindex{A@$A^t,A^t(s)$
formal adjoint of differential operator}\subindex{Formal adjoint of differential operator}$A^t(s)$
denotes the formal adjoint of $A(s)$.

Let $Q(s;\sigma)\colon S(s;\sigma)\to S(s;\sigma)$ denote the projection defined by the orthogonal
pseudo-differential \subindex{Calder{\'o}n projection}\symindex{Q@$Q,Q(s),Q(s;\sigma)$ Calder{\'o}n
projection}Calder{\'o}n projection $Q(s)$ belonging to the operator $A(s)$. For the construction of
the Calder{\'o}n projection (first depending on choices and then orthogonalized), we refer to
\auindex{Seeley,\ R.T.}R.T. Seeley \cite[Section 4]{Seeley:1966}, \cite[Theorem 1]{Seeley:1968}.
There it is shown that it is a pseudo-differential idempotent with the Cauchy data space as its
range. Recall that the \subindex{Cauchy data space}Cauchy data space of a differential operator $A$
of order $d$ consists of the closure in $S(\sigma)$ of the array of all derivatives of sections in
$\ker A$ up to order $d-1$ in normal direction along the boundary. For operators of Dirac type,
Seeley's definition was worked out and made canonical in \cite[Chapter 12]{BoWo93}, see also
\auindex{Frey,\ C.}\cite[Section 2.3]{Frey:2005} for elliptic differential operators of arbitrary
order.

\begin{proposition}[Continuity of Calder{\'o}n projection]\label{p:ccp} Assume that%
\[
\dim\ker A_m(s;0)=k \tand \dim\ker A_m^t(s;0)=l%
\]
are independent of $s$ and $\sigma\ge 1/2-d$. Then the family $\left\{\ran
Q(s;\sigma)\right\}_{s\in[0,1]}$ is continuous.
\end{proposition}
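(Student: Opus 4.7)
The plan is to show that the orthogonal Calder\'on projections $Q(s;\sigma)$ depend continuously on $s$ in the operator norm on $S(s;\sigma)$; continuity of $\ran Q(s;\sigma)$ in the gap topology will then follow from \cite[Lemma I.4.10]{Ka95} together with our Lemma \ref{l:ck-complemented}. I would attack this in three steps: reduce to first-order operators, invoke the $L^2$-level continuity known from \cite{BoLeZh08, BCLZ}, and propagate operator-norm continuity from the base Sobolev level to the whole scale $\sigma\ge 1/2-d$ via the pseudo-differential calculus.

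\textbf{Reduction to $d=1$ and the base case.} First, pass from the order-$d$ operator $A(s)$ to a first-order elliptic system $\wt A(s)$ on the rank-$d$ bundle $E(s)^d$ by the standard construction in a collar of $\Sigma(s)$. The $d$-jet map $u\mapsto(u,\partial_\nu u,\ldots,\partial_\nu^{d-1}u)$ provides a bundle isomorphism between the Cauchy data space of $A(s)$ in $S(s;\sigma)$ and the Cauchy data space of $\wt A(s)$ in $H^{\sigma+d-1}(\Sigma(s);E(s)^d|_{\Sigma(s)})$ that depends continuously on $s$; moreover the hypotheses $\dim\ker A_m(s;0)=k$ and $\dim\ker A_m^t(s;0)=l$ transfer to the corresponding hypotheses for $\wt A(s)$. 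Gap-continuity of one family of Cauchy data spaces is then equivalent to that of the other. For the resulting first-order problem, continuity of the orthogonal Calder\'on projection on $L^2(\Sigma(s);E(s)|_{\Sigma(s)})$ under constant kernel and cokernel dimensions is precisely the combined content of \cite{BoLeZh08, BCLZ}, cited in the Introduction as the precursor underlying this whole application.

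\textbf{Sobolev propagation and main obstacle.} To extend from the base Sobolev level to arbitrary $\sigma\ge 1/2-d$, I would exploit that $Q(s)$ is a pseudo-differential idempotent of order zero whose full symbol depends continuously on $s$ through the metric on $\Sigma(s)$ and the principal symbol of $A(s)$. Conjugation by a continuously varying elliptic generator $\Lambda(s)^r$ of the Sobolev structure on $\Sigma(s)$ transports operator-norm continuity from the base level to $S(s;\sigma)$ modulo a family of strictly lower-order (hence compact) corrections whose continuity in the appropriate operator norms is automatic from smoothness of the full symbol. The hard part will be this Sobolev propagation, because the orthogonality of $Q(s)$ refers to the $L^2$ inner product and is not preserved by Sobolev conjugation: one recovers only a non-orthogonal projection with the correct range, and must re-orthogonalize continuously. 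The constant-kernel-dimension assumption is decisive precisely here, since it precludes jumps in $\ran Q(s;\sigma)$, so that the standard formula passing from a continuously varying non-orthogonal projection to the orthogonal projection onto its range remains well-defined and continuous in $s$.
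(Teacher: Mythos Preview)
Your strategy is reasonable and close in spirit to how one might argue from a pure pseudo-differential standpoint, but the paper takes a rather different route, especially for $\sigma\ge 1/2$. There, the paper avoids the pseudo-differential Sobolev propagation entirely and instead uses the identification $\ran Q(s;\sigma)=\gamma\bigl(\ker A_M(s;\sigma-1/2)\bigr)$ together with the functional-analytic machinery from the Appendix: constancy of $l=\dim\ker A_m^t(s;0)$ gives continuity of $\ker A_M(s;\sigma-1/2)$ via Corollary~\ref{c:continuous-ker}, and constancy of $k=\dim\ker A_m(s;0)$ then gives continuity of the quotient $\gamma(\ker A_M)$ via Corollary~\ref{c:quotient}. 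This is considerably more elementary than conjugating by $\Lambda(s)^r$ and tracking lower-order remainders, and it makes the role of the two hypotheses $k,l$ constant completely transparent. Your approach, by contrast, gives a single mechanism for all $\sigma$, at the cost of invoking the full pseudo-differential calculus.

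One gap in your attribution: you write that continuity on $L^2$ ``under constant kernel and cokernel dimensions is precisely the combined content of \cite{BoLeZh08, BCLZ}''. In fact those references only treat $d=1$ and $k=l=0$ (weak inner UCP). The paper acknowledges this and defers the full statement to forthcoming work, merely asserting that the proof ``can be transferred to the general case'' (invoking also \cite{Grubb:2012a}). So neither your proposal nor the paper's argument is complete here; but you should not claim the cited references already cover general constant $k,l$. Also, your concern about re-orthogonalization is somewhat of a red herring: the claim is gap-continuity of $\ran Q(s;\sigma)$, for which any continuous family of (possibly non-orthogonal) projections with those ranges suffices, so you need not preserve orthogonality under Sobolev conjugation.
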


\begin{rem}\label{r:calderon-dirac}
(a) If $d=1$, $A(s)$ is of Dirac type and $M(s)$ has product structure near $\Si(s)$ compatible with
the fiber structure for each $s$, then we have weak Unique Continuation Property (UCP, see
\auindex{Boo{\ss}--Bavnbek,\ B.}\auindex{Wojciechowski,\ K.P.}\cite[Chapter 8]{BoWo93}) and, by the
construction, the projectors $Q(s)$ form a continuous family of pseudo-differential projectors and
the family $\{\ran Q(s;\sigma)\}_{s\in[0,1]}$ is continuous for all real $\sigma$.
\newline (b) In \auindex{Schwartz,\ L.}\cite{Schw:56} L. Schwartz conjectured $l=k$. As a student,
the first author gave arguments in support for the \subindex{Schwartz Conjecture}Schwartz
Conjecture in \cite{Boo:65}. A rigorous proof (or a striking counterexample), though, is still
missing.
\end{rem}

\begin{proof} The complete proof of our result will appear in \auindex{Boo{\ss}--Bavnbek,\ B.}\auindex{Deng,\ J.}\auindex{Deng,\
    J.}\auindex{Zhou,\ Y.}\auindex{Zhu,\ C.}\cite{BDZZ}. Here we prove some special cases.

By (\ref{e:strong-cauchy}), for $\sigma>0$ we have
\begin{equation}\label{e:Q-gamma}\ran Q(s;\sigma)=\gamma(\ker A_M(s;\sigma-1/2)).\end{equation}
Let $\sigma\ge 1/2$. Since $A(s)$ is elliptic,%
\[
\ker A_m(s;\sigma-1/2)\ =\ \ker A_m(s;0)\ \tand\ \ker A_m^t(s;\sigma-1/2)\ =\ \ker A_m^t(s;0)%
\]
consist of smooth sections. Since $\dim\ker A_m^t(s;0)=l$ is constant, by Corollary
\ref{c:continuous-ker}, $\{\ker A_M(s;\sigma-1/2)\}_{s\in[0,1]}$ is a continuous family . Since
$\ran Q(s;\sigma)$ is closed, the subspace $\ker A_M(s;\sigma-1/2)+D_m(s;\sigma-1/2)$ is closed in
$D_M(s;\sigma-1/2)$. Since $\dim\ker A_m(s;0)=k$ is constant, by Corollary \ref{c:quotient}, the
family $\{\ran Q(s;\sigma)\}_{s\in[0,1]}$ is continuous.

By our \auindex{Boo{\ss}--Bavnbek,\ B.}\auindex{Chen,\ G.}\auindex{Lesch,\
    M.}\auindex{Zhu,\ C.}\cite[Theorem 5.3]{BCLZ}, based on our \cite[Theorem 7.2b]{BoLeZh08}, the Cauchy data family
$\{\ran Q(s;\sigma)\}_{s\in[0,1]}$ is continuous for $\sigma\in[1/2-d,1/2]$. Then $\{\ran
Q(s;\sigma)\}_{s\in[0,1]}$ is a continuous family for all $\sigma\ge 1/2-d$. Note that the
continuity is only proved there for the case when $d=1$ and $k=l=0$. The proof can be somewhat
simplified by \auindex{Grubb,\ G.}G. Grubb \cite{Grubb:2012a}, and the proof can be
transferred to the general case.
\end{proof}

We assume that all $A(s)$ are \subindex{Formally self-adjoint}formally self-adjoint, i.e.,
$A_m(s;0)\< (A_m(s;0))^*$. Note that we make no assumptions about product structures near the
boundary $\Si(s)$. Then $(A_M(s;0))^*=A_m(s;0)$. Assume that there exists a constant integer $k$
such that $\dim\ker A_m(s;0)=k$. Then Assumption \ref{a:asff}.3 holds.

\subsection{Desuspension spectral flow formula for curves of self-adjoint well-posed elliptic
boundary value problems}
For each $s$ we choose a \subindex{Well-posed self-adjoint boundary condition}well-posed
self-adjoint boundary condition $P(s)\in\Grass_{\sa}(A(s))$ in the sense of \auindex{Seeley,\
R.T.}R.T. Seeley \cite[Definition 3 and Theorem 7]{Seeley:1968}, worked out in our
\auindex{Boo{\ss}--Bavnbek,\ B.}\auindex{Wojciechowski,\ K.P.}\cite[Definition 18.1 and Proposition
20.3]{BoWo93} and further expanded by \auindex{Br{\"u}ning,\ J.}\auindex{Lesch,\ M.}J. Br{\"u}ning
and M. Lesch \cite{BrLe01} for the first order case and by \auindex{Frey,\ C.}C. Frey
\cite[Definition 1.2.5]{Frey:2005}   for $d\ge 1$. It is a self-adjoint projection $P(s;\sigma)\colon
S(s;\sigma)\to S(s;\sigma)$ defined by a pseudo-differential projection $P(s)$ satisfying a
certain conjugacy condition between $I-P(s)$ and $P(s)$ such that it yields a
\subindex{Extension!self-adjoint Fredholm}\symindex{A@$A(s,P(s))$ self-adjoint Fredholm
extension}self-adjoint Fredholm extension $A(s,P(s))$ in $X(s)$ with
\[
\dom A(s,P(s))\ =\ D(s)\ :=\ \{x\in D_M(s;0); P(s;1/2)(\gamma(x))=0\}.
\]

Fix $\sigma\ge\frac{1-d}{2}$. We assume $\{P(s;1/2)\}_{s\in[0,1]}$ and $\{P(s;\sigma)\}_{s\in[0,1]}$ are
continuous families. By Lemma \ref{l:quotient-delta}, $\{D(s)\}_{s\in[0,1]}$ is a continuous
family. Then Assumption \ref{a:asff}.4 holds.

We then have the following spectral flow formula.

\begin{theorem}[Desuspension spectral flow formula]\label{t:gsff} Under the above assumptions, we have the following.
\newline (a) We have that
\[
\ran A_M(s;0)=\ran \left(A_m(s;0)\right)^* \tand \dim X(s)/\left(\ran A_M(s;0)\right)=k%
\]
holds for each $s\in[0,1]$.
\newline (b) The family
\[
\left\{\bigl(\ker P(s;\sigma),\ran Q(s;\sigma)\bigr)\right\}_{s\in[0,1]}
\]
is a path of Fredholm pairs of Lagrangian subspaces of the symplectic Banach space
$(S(s;\sigma),\w_{\Green}(s))$ of index $0$.
\newline (c) We have
\begin{align}
\label{e:gsff1}\SF\{A(s,D(s))\}&=-\Mas\{\ker P(s;\sigma),\ran Q(s;\sigma);\w_{\Green}(s)\}\\
\label{e:gsff2}&=-\Mas\{\ran Q(s;\sigma),\ker P(s;\sigma);-\w_{\Green}(s)\}.
\end{align}
\end{theorem}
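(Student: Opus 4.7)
The plan is to reduce the theorem to Theorem \ref{t:asff} via the Sobolev-theoretic framework built up in Subsection \ref{sss:sobolev_spaces}, first establishing the result at the natural Sobolev exponent $\sigma = \tfrac{1}{2}$ dictated by the $L^2$ level of the operators, and then transporting the Maslov--index identity across the Sobolev chain by the invariance results of Section \ref{ss:sympl-invariance}.

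First I would verify that the geometric data fulfil Assumption \ref{a:asff} at the level $\sigma = 0$ for the operators. Clause (i) is immediate from the construction of $\mathbb{G}_0, \mathbb{G}, \mathbb{X}$ as Banach bundles, together with the natural subbundle inclusions. Clause (ii) uses density of $\Ci_0(M(s); E(s))$ in $L^2(M(s); E(s))$, so that $(D_m(s;0))^{\Omega, r} = \{0\}$ and the strong canonical symplectic form $\omega(s)$ on $X(s) \times X(s)$ is non-degenerate. For clause (iii), closedness of $A_m(s;0)$ together with equivalence of its graph norm and the Sobolev norm is the interior elliptic estimate; formal self-adjointness combined with the identity $(A_M(s;0))^{*} = A_m(s;0)$ from Subsection \ref{sss:sobolev_spaces} gives symmetry in the required sense; and continuity of $\{A_M(s;0)\}_{s \in [0,1]}$ in $\Bb(D_M(s;0), X(s))$ follows from smoothness of the family $\AA$. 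Clause (iv) follows from the definition of well-posed self-adjoint projections $P(s) \in \Grass_{\sa}(A(s))$, which produce self-adjoint Fredholm extensions $A(s, P(s))$; the constancy of index $0$ is a general feature of self-adjoint Fredholm operators in Hilbert space (see Remark \ref{r:vanishing-index}.c). Continuity of $\{D(s)\}_{s \in [0,1]}$ in the gap topology of $D_M(s;0)$ follows from continuity of $\{P(s; 1/2)\}$ via Lemma \ref{l:quotient-delta} and the isomorphism (\ref{e:strong-cauchy}).

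With Assumption \ref{a:asff} in hand, Theorem \ref{t:asff}(a) yields part (a) of the present theorem directly. For parts (b) and (c) at $\sigma = \tfrac{1}{2}$, I would invoke the canonical identification (\ref{e:strong-cauchy}): $S(s; 1/2) \cong D_M(s;0)/D_m(s;0)$, under which $\gamma(D(s))$ is carried to $\ker P(s; 1/2)$ by definition of $D(s)$, and $\gamma(\ker A_M(s;0))$ is carried to $\ran Q(s; 1/2)$ by the Seeley characterisation of the Calder{\'o}n projection's range as the Cauchy data space. Proposition \ref{p:minus_Green_form_diff} identifies the reduced symplectic form on $D_M(s;0)/D_m(s;0)$ with $\w_{\Green}(s)$ on $S(s; 1/2)$. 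Theorem \ref{t:asff}(b),(c) then gives both the Fredholm-Lagrangian property with vanishing index and the desuspension identities (\ref{e:gsff1}),(\ref{e:gsff2}) at $\sigma = \tfrac{1}{2}$.

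The final phase is to transport the identity from $\sigma = \tfrac{1}{2}$ to arbitrary $\sigma \ge \tfrac{1-d}{2}$. Continuity of the Cauchy data family $\{\ran Q(s; \sigma)\}_{s \in [0,1]}$ in $S(s;\sigma)$ is furnished by Proposition \ref{p:ccp}; continuity of $\{\ker P(s;\sigma)\}$ is our standing assumption. To identify the Maslov indices across Sobolev exponents, I would apply the invariance under symplectic embedding, Theorem \ref{t:mas-emb}, to the linear subbundle $\bigcup_s S(s;\sigma) \hookrightarrow \bigcup_s S(s; 1/2)$ (for $\sigma \ge 1/2$) and to the dual configuration $\bigcup_s S(s; 1/2) \hookrightarrow \bigcup_s S(s;\sigma)$ for $\sigma < 1/2$. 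The key input for Theorem \ref{t:mas-emb} is the constancy of the dimension defect $\dim(\ker P(s;\sigma) \cap \ran Q(s;\sigma)) - \dim(\ker P(s; 1/2) \cap \ran Q(s;1/2))$, which vanishes because any element of such an intersection corresponds to a solution of $A(s)u = 0$ with $u \in D(s)$; by ellipticity and the smoothness of the relevant boundary symbols, such solutions are smooth, so the intersection is the same vector space at every Sobolev level.

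The main obstacle I anticipate is the symplectic-embedding step in the range $\sigma \in [\tfrac{1-d}{2}, \tfrac{1}{2})$, because Theorem \ref{t:mas-emb} requires verifying that the restricted symplectic form $\w_{\Green}(s)$ on the smaller Sobolev space is continuously varying and that the ambient Fredholm--Lagrangian pair descends (or extends) with vanishing index. For $d = 1$ and $\sigma = 0$ the form $\w_{\Green}(s)$ is strong symplectic on $L^2(\Sigma(s); E(s)|_{\Sigma(s)})$ so this step is routine, but for higher order $d$ the form $\w_{\Green}(s)$ on $S(s;\sigma)$ with $\sigma < \tfrac{1}{2}$ is genuinely weak and the continuity of the pair along the Sobolev chain requires the sharper continuity of $Q(s;\sigma)$ proved in Proposition \ref{p:ccp}, combined with the abstract regularity (Proposition \ref{p:cauchy}) on each slice. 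Once this is accomplished, additivity and the calculation rules of Section \ref{ss:cal-maslov}, together with part (f) of Proposition \ref{p:maslov-properties} for the flip between $\w_{\Green}(s)$ and $-\w_{\Green}(s)$, yield both identities (\ref{e:gsff1}) and (\ref{e:gsff2}) in full generality.
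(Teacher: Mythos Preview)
Your approach is essentially the paper's own: reduce to Theorem \ref{t:asff} at $\sigma=\tfrac12$ via the trace isomorphism \eqref{e:strong-cauchy}, then transport to general $\sigma\ge\tfrac{1-d}{2}$ by Theorem \ref{t:mas-emb}, splitting into the two embedding directions $S(s;\sigma)\subset S(s;1/2)$ and $S(s;1/2)\subset S(s;\sigma)$.

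The one place where the paper is sharper than your plan is precisely what you flag as the ``main obstacle.'' You only argue equality of the intersection dimensions across Sobolev levels (via elliptic regularity of solutions), and leave the Fredholm--Lagrangian property at general $\sigma$ as something still to be verified. The paper disposes of this in one stroke by invoking \cite[Theorem 2.1.4]{Frey:2005}: well-posedness plus elliptic regularity give \emph{both}
\[
\dim\bigl(\ker P(s;\sigma)\cap\ran Q(s;\sigma)\bigr)=\dim\bigl(\ker P(s;1/2)\cap\ran Q(s;1/2)\bigr)
\]
and the matching codimension equality. Hence $(\ker P(s;\sigma),\ran Q(s;\sigma))$ is an isotropic Fredholm pair of index $0$ in $S(s;\sigma)$, and is therefore Lagrangian by \cite[Proposition 1]{BooZhu:2013}. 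This simultaneously establishes (b) and supplies exactly the constant-defect hypothesis of Theorem \ref{t:mas-emb}; no separate treatment of the weak-symplectic range $\sigma<\tfrac12$ is needed beyond this. Your appeal to Proposition \ref{p:maslov-properties}(f) for the second identity is fine but unnecessary, since Theorem \ref{t:asff} already delivers both forms \eqref{e:asff1}--\eqref{e:asff2}.
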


\begin{rem}\label{r:L2-spectral-flow-formula}
To us, the preceding Formulae \eqref{e:gsff1} and \eqref{e:gsff2} are most natural for $\sigma =
1/2$, i.e., when we evaluate the Maslov index on the right side of the formulae in the weak
symplectic quotient spaces
\symindex{H@$H^{1/2}(\Si;E\mid_{\Sigma})$ weak symplectic Sobolev space}
$S(s;1/2) = H^{d}\bigl(M(s);E(s)\bigr)$
/$H_0^{d}\bigl(M(s);E(s))$. In that case the arguments are most easily derived from the abstract
spectral flow formula in the preceding section. Note, however, that the two formulae remain valid
for all $\sigma \ge\frac{1-d}{2}$, so, in particular also for $\sigma=\frac{1-d}{2}$, i.e., for calculating the Maslov
index in the continuous family of the common strong symplectic Hilbert spaces
$S(s;\frac{1-d}{2})$. The arguments are getting more involved, though, as
indicated by the double continuity requirement for the boundary projections $P(s)$, namely
requiring continuity both in $S(s;1/2)$ and $S(s;\sigma)$.
\end{rem}

\begin{proof} Since $X(s)$ is a Hilbert space and $A(s,P(s))$ is a self-adjoint Fredholm operator,
$\Index A(s,P(s))=0$ and $P(s)$ is a well-posed boundary value condition for $A(s)$ in the sense of
\auindex{Frey,\ C.}\cite[Definition 1.2.5]{Frey:2005}. By Theorem \ref{t:asff}, (b), (c) hold for
$\sigma=1/2$ and (a) holds.

By \auindex{Frey,\ C.}\cite[Theorem 2.1.4]{Frey:2005} and the regularity theory for elliptic
operators, we have
\begin{equation}
\dim \left(\ker P(s;\sigma)\cap\ran Q(s;\sigma)\right) \label{e:sigma-intersection1}\ =\
\dim\left(\ker P(s;1/2)\cap\ran Q(s;1/2)\right)
\end{equation}
and
\begin{multline}
\dim  S(s;\sigma)/\left(\ker P(s;\sigma)\cap\ran Q(s;\sigma)\right) \label{e:sigma-intersection2}\\
=\dim S(s;1/2)/\left(\ker P(s;1/2)\cap\ran Q(s;1/2)\right).
\end{multline}
Then the pair $\bigl(\ker P(s;\sigma),\ran Q(s;\sigma)\bigr)$ is a Fredholm pair of isotropic
subspaces of the symplectic Banach space $S(s;\sigma)$ of index $0$, so it is a Lagrangian pair by
\auindex{Boo{\ss}--Bavnbek,\ B.}\auindex{Zhu,\ C.}\cite[Proposition 1]{BooZhu:2013}. Then (b)
holds.

Note that we have $S(s;\sigma)\subset S(s;1/2)$ for $\sigma\ge 1/2$ and $S(s;\sigma)\supset
S(s;1/2)$ for $\sigma\in[\frac{1-d}{2}, 1/2]$. By (\ref{e:sigma-intersection1}) and
(\ref{e:sigma-intersection2}), we can apply Theorem \ref{t:mas-emb} and obtain (c).
\end{proof}
\medskip

\subsection{General spectral splitting formula on partitioned manifolds}
Now we assume that the manifold \symindex{M@$M, M(s)$ smooth compact manifold!closed and
partitioned with hypersurface $\Si,\Si(s)$}$M(s)=M(s)^+\cup_{\Si(s)} M(s)^-$ is a partitioned
closed manifold with a hypersurface $\Si(s)$. Let $\sigma\ge \frac{1-d}{2}$. We denote the
restrictions of $A(s)$ to the parts by $A(s)^{\pm}$\,. Note that we now have a pair of Calder{\'o}n
projections $(Q(s)^+,Q(s)^-)$ for each $s\in [0,1]$ with $\range Q(s;\sigma)^\pm$ Lagrangian
subspaces in $S(s;\sigma)$ with symplectic form again defined by the minus Green's form
$-\w_{\Green}(s)$. Then we have the following generalization of the Yoshida-Nicolaescu splitting
formula\subindex{Yoshida-Nicolaescu's Theorem} for the spectral flow:

\begin{theorem}[General Yoshida-Nicolaescu Splitting Formula]\label{t:split}
\subindex{Partitioned manifold!desuspension spectral flow formula}\subindex{Desuspension spectral
flow formula}
For the partitioned case we assume that $\sigma\ge \frac{1-d}{2}$ and
\[\dim\ker A^{\pm}_m(s;0)=k^{\pm}.\]
Then we have
\begin{align}
\label{e:sf-partition}\SF\{A(s)\} &= \SF\{A^-(s,I-Q^+(s))\}\\
\label{e:sf-mas-partition} &= -\Mas\{\range Q^-(s,\sigma),\range Q^+(s,\sigma);\w_{\Green}(s)\}.
\end{align}
\end{theorem}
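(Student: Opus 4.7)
The plan is to derive the two equalities in sequence, using Theorem \ref{t:gsff} for the second and a cut-and-paste identification for the first. Throughout, I write $\w_{\Green}^\pm(s)$ for the minus Green's form on $S(s;\sigma)$ induced by viewing $\Si(s)$ as the boundary of $M(s)^\pm$. Because the inward normal field flips sign when we pass from $M(s)^+$ to $M(s)^-$, Proposition \ref{p:minus_Green_form_diff} gives $\w_{\Green}^-(s) = -\w_{\Green}^+(s)$; set $\w_{\Green}(s):=\w_{\Green}^+(s)$.

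First I would verify that $P(s):=I-Q^+(s)$ is a well-posed self-adjoint boundary condition for $A^-(s)$ and that the resulting family satisfies Assumption \ref{a:asff}. Self-adjointness follows because $\range Q^+(s;\sigma)$ is Lagrangian in $(S(s;\sigma),\w_{\Green}^+(s))$, hence also in $(S(s;\sigma),\w_{\Green}^-(s))=(S(s;\sigma),-\w_{\Green}^+(s))$. The Fredholm property of $A^-(s,I-Q^+(s))$ and vanishing index follow from Theorem \ref{t:gsff}(b) applied to the partition $(\range Q^-(s;\sigma),\range Q^+(s;\sigma))$ once we know that pair is Fredholm Lagrangian of index $0$ (see step three). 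Continuity of $\{P(s;\sigma)\}_{s\in[0,1]}$ and $\{P(s;1/2)\}_{s\in[0,1]}$ in $S(s;\sigma)$, respectively $S(s;1/2)$, is exactly Proposition \ref{p:ccp}, which requires the constancy of $\dim\ker A_m^\pm(s;0)$ imposed in the hypothesis (noting that $\ker (A^\pm)^t=\ker A^\pm$ by formal self-adjointness).

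Next, for the identification \eqref{e:sf-partition}, I would produce a canonical isomorphism
\[
\ker\bigl(A(s)-\lambda\bigr)\ \cong\ \ker\bigl(A^-(s,I-Q^+(s))-\lambda\bigr)
\]
valid for all $\lambda$ in a neighborhood of the real line where both sides are defined: restriction from $M(s)$ to $M(s)^-$ gives a map in one direction, while in the opposite direction one uses that a section $u^-\in\dom A^-(s,I-Q^+(s))$ has $\gamma(u^-)\in\range Q^+(s;\sigma)$, hence extends (by solving the elliptic problem $A^+u^+=\lambda u^+$ on $M(s)^+$ with Cauchy data $\gamma(u^-)$) to a section of $\ker(A(s)-\lambda)$; weak inner UCP-free smoothness at $\Si(s)$ follows from elliptic regularity since the Cauchy data match. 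This natural identification makes the spectral decompositions of the two curves of self-adjoint Fredholm operators correspond on a neighborhood of $0$, giving the equality of their spectral flows by Definition \ref{d:sf} and Proposition \ref{p:basic-sff}.

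Finally, to prove \eqref{e:sf-mas-partition} I apply Theorem \ref{t:gsff}(c) to the curve $\{A^-(s,I-Q^+(s))\}_{s\in[0,1]}$ on $M(s)^-$, whose Cauchy data space is $\range Q^-(s;\sigma)$ and whose boundary-condition kernel is $\ker(I-Q^+(s;\sigma))=\range Q^+(s;\sigma)$, with symplectic form $\w_{\Green}^-(s)=-\w_{\Green}(s)$:
\[
\SF\{A^-(s,I-Q^+(s))\}\ =\ -\Mas\bigl\{\range Q^+(s;\sigma),\range Q^-(s;\sigma);-\w_{\Green}(s)\bigr\}.
\]
By the flipping property (Proposition \ref{p:maslov-properties}(f), extended via Theorem \ref{t:main}) one has $\Mas\{\la,\mu;-\w\}=\Mas\{\mu,\la;\w\}$, and this turns the right-hand side into $-\Mas\{\range Q^-(s;\sigma),\range Q^+(s;\sigma);\w_{\Green}(s)\}$, which is precisely the claim.

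The main obstacle is the first identification: one must show rigorously that the cut-and-paste map between $\ker(A(s)-\lambda)$ and $\ker(A^-(s,I-Q^+(s))-\lambda)$ is a bijection preserving a neighborhood of the real spectrum in a gap-continuous way, so that the two spectral flows genuinely agree and not merely the pointwise kernels at $\lambda=0$. This rests on the \emph{range} of the Calderón projection being exactly the Cauchy data of solutions on $M(s)^+$, together with continuity of $\range Q^+(s;\sigma)$ from Proposition \ref{p:ccp} (and hence of the extension operator in parameters) — both of which are available under the standing hypothesis $\dim\ker A^\pm_m(s;0)=k^\pm$ constant.
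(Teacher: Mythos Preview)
Your derivation of \eqref{e:sf-mas-partition} from \eqref{e:sf-partition} via Theorem \ref{t:gsff} applied to $A^-(s,I-Q^+(s))$ together with the flipping identity is correct and coincides with the last two steps of the paper's proof.

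The gap is in your argument for \eqref{e:sf-partition}. Your proposed bijection $\ker(A(s)-\lambda)\cong\ker(A^-(s,I-Q^+(s))-\lambda)$ breaks down for $\lambda\neq 0$. If $u\in\ker(A(s)-\lambda)$, then $u^+:=u|_{M^+(s)}$ solves $(A^+(s)-\lambda)u^+=0$, so $\gamma(u^-)=\gamma(u^+)$ lies in the Cauchy data space of $A^+(s)-\lambda$, i.e., in $\range Q^+_\lambda(s)$, \emph{not} in $\range Q^+(s)=\range Q^+_0(s)$. Hence the restriction $u\mapsto u|_{M^-(s)}$ does not land in $\dom A^-(s,I-Q^+(s))$ when $\lambda\neq 0$, and your extension in the other direction likewise produces a $u^+$ in $\ker A^+(s)$, not in $\ker(A^+(s)-\lambda)$, so the glued section is not an eigenfunction of $A(s)$ for $\lambda$. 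Even at $\lambda=0$ the map is only surjective with kernel isomorphic to $\ker A^+_m(s;0)$, so the two kernels differ by $k^+$. A pointwise kernel identity at $\lambda=0$ alone does not determine the spectral flow.

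The paper circumvents this by passing to the cut manifold $M^\sharp(s)=M^+(s)\sqcup M^-(s)$ with the \emph{diagonal} boundary condition $\gamma^+(x)=\gamma^-(y)$: this yields an \emph{exact} operator identity $A(s)=A^\sharp(s,D^\sharp(s))$ for every $\lambda$, so $\SF\{A(s)\}=\SF\{A^\sharp(s,D^\sharp(s))\}$ trivially. Theorem \ref{t:gsff} applied to $A^\sharp$ then gives a Maslov index of $(\Delta(s;\sigma),\range Q^+\oplus\range Q^-)$ in $S\times S$, and Proposition \ref{p:boxplus} (the diagonal reduction \eqref{e:maslov3}--\eqref{e:maslov4}) collapses this to $\Mas\{\range Q^-,\range Q^+;\w_{\Green}\}$, after which Theorem \ref{t:gsff} for $A^-$ recovers \eqref{e:sf-partition}. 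The diagonal-on-the-double trick is exactly what replaces your failed cut-and-paste; as noted in Remark \ref{r:sf-partition}, a direct proof of \eqref{e:sf-partition} is possible, but it requires a two-parameter homotopy argument (Kirk--Lesch), not a kernel bijection.
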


\begin{proof} Let \symindex{M@$M^{\sharp},M^{\sharp}(s)$ cut partitioned manifold}$M^{\sharp}(s)$ denote the compact manifold
\[M^+(s)\sqcup M^-(s)=
\bigl(M(s)\setminus \Si(s)\bigr)\,\cup\, \bigl((\Si(s)\sqcup(-\Si(s))\bigr)\] with boundary
\[\dpa M^{\sharp}(s) =\dpa M^+(s)\sqcup\dpa M^-(s)=\Si(s)
\sqcup(-\Si(s))=:\Si^{\sharp}(s)\] and $E^{\sharp}(s)\to M^{\sharp}(s)$ the corresponding Hermitian
bundle. Over $M(s)$, $M^a(s)$  and $\Si^a(s)$ with $a\in\{\pm,\sharp\}$ we have specified section
spaces with the notations $X(s)$, $X^a(s)$, $D_M(s;\sigma)$, $D_M^a(s;\sigma)$, $D_m(s;\sigma)$,
$D_m^a(s;\sigma)$, $S(s;\sigma)$, and $S^{\sharp}(s;\sigma)$. Fixing $\Si(s)$ induces a
decomposition
\[
X(s)\cong X^+(s)\oplus X^-(s) =X^{\sharp}(s),
\]
and for the Sobolev space
\[
D_M^+(s;\sigma)\oplus D_M^-(s;\sigma)=D_M^{\sharp}(s;\sigma),\;D_m^+(s;\sigma)\oplus
D_m^-(s;\sigma)=D_m^{\sharp}(s;\sigma).
\]
We have the symplectic decomposition
\[(S^{\sharp}(s;\sigma),\w_{\Green}^{\sharp}(s))=(S(s;\sigma)\times S(s,\sigma),\w_{\Green}(s)\oplus(-\w_{\Green}(s))).\]
Correspondingly, we obtain an operator \symindex{A@$A^{\sharp},A(s)^{\sharp}$ induced operator on
cut manifold}$A(s)^{\sharp}$ for each $s\in [0,1]$ which is a formally self-adjoint elliptic
differential operator of order $d$ according to the assumptions made for Theorem \ref{t:split}.

For the Calder{\'o}n projection of $A^{\sharp}$ we have
\begin{equation}\label{e:calderon-partition}
\range Q^{\sharp}(s)=\range Q^+(s)\oplus \range Q^-(s).
\end{equation}

Let \symindex{\D@$\D(s;\sigma)$ diagonal in $S(s;\sigma)\times S(s;\sigma)$}$\D(s;\sigma)$
denote the diagonal in $S(s;\sigma)\times S(s;\sigma)$. By Lemma \ref{l:boxplus}, for each
$s\in[0,1]$,  the diagonal $\D(s;\sigma)$ is a Lagrangian subspace of $S^{\sharp}(s,\sigma)$ with
respect to $\w_{\Green}^{\sharp}(s)$ and makes a Fredholm pair with each $\ran Q^{\sharp}(s)$\/. By
\auindex{Frey,\ C.}\cite[Theorem 2.1.4]{Frey:2005}, the projection of $S^{\sharp}(s)$ onto $\D(s)$
is well-posed for $A^{\sharp}(s)$ in the sense of \cite[Definition 1.2.5]{Frey:2005} (even if it is
not a pseudo-differential operator over the manifold $\Si^{\sharp}(s)$, as noticed in
\auindex{Kirk,\ P.}\auindex{Lesch,\ M.}\cite[Section 5]{KiLe00} in the $d=1$ case).

Consequently, we have on the manifold $M^{\sharp}(s)$ a natural self-adjoint elliptic boundary
condition (in the sense of our Theorem \ref{t:gsff}) defined for $A^{\sharp}(s)$ by the {\it
pasting} domain
\begin{align}\label{e:pasting}
\symindex{D@$D^\sharp(s)$ pasting domain for induced operator on cut partitioned manifold}
D^\sharp(s) :&=\{(x,y)\in D_M^{\sharp}(s;0) ; (\g^+(s))(x)=(\g^-(s))(y)\}\\
&= \{(x,y)\in D_M^{\sharp}(s;0)  ;  (\g^{\sharp}(s))(x,y)\in\D(s;1/2)\},
\end{align}
where $\g^a(s)\colon D_M^{\sharp}(s;0)\to S^{\sharp}(s,1/2)$ denotes the trace maps for $s\in[0,1]$
and $a=\pm,\sharp$. Let $A^{\sharp}(s,D^{\sharp}(s))$ denote the operator which acts like
$A^{\sharp}(s)$ and has domain $D^{\sharp}(s)$\/.

By these definitions and applying Proposition \ref{p:boxplus}.b
 and Theorem \ref{t:gsff} to the
operator family $\{A^{\sharp}(s,D^{\sharp}(s))\}$ we obtain
\begin{align*}
\SF&\{A(s)\}= \SF\{A^{\sharp}(s,D^{\sharp}(s))\}\\
&\fequal{Th. \ref{t:gsff}}-\Mas\{\D(s;\sigma),\range Q^+(s;\sigma)\oplus \range Q^-(s,\sigma);\omega_{\Green}(s))\oplus(-\omega_{\Green}(s))\}\\
&\fequal{\eqref{e:maslov4}} -\Mas\{\range Q^-(s;\sigma), \range Q^+(s;\sigma);\omega_{\Green}(s)\}\\
&\fequal{\eqref{e:maslov3}} -\Mas\{\range Q^+(s;\sigma), \range Q^-(s;\sigma);-\omega_{\Green}(s)\} \\
&\fequal{Th. \ref{t:gsff}}  \SF\{A^-(s,I-Q^+(s))\}.\qedhere
\end{align*}
\exendproof

\begin{rem}\label{r:sf-partition}
If one is only interested in the equality (\ref{e:sf-partition}), one need not argue with the
Maslov index, as we do, but can find a direct proof in \auindex{Kirk,\ P.}\auindex{Lesch,\
M.}\cite[Corollary 5.6]{KiLe00} based solely on the homotopy invariance of the spectral flow of a
related two-parameter family.
\end{rem}
%
%
%


%


%
%
%


\pdfbookmark[-1]{Backmatter}{someuniquename}
{\appendix
%
%
%


\addtocontents{toc}{\medskip\noi}
\chapter{Perturbation of closed subspaces in Banach
spaces}\label{s:closed-subspaces}

This appendix serves as an introduction to the topology of closed linear subspaces in Banach spaces
with applications to families of closed operators with nested domains and perturbations of Fredholm
pairs. Denote by \symindex{S@$\Ss(X)$ set of closed linear subspaces in Banach space $X$}$\Ss(X)$
(\symindex{Sc@$\Ss^c(X)$ set of closed complemented subspaces in Banach space $X$}$\Ss^c(X)$) the
set of all (complemented) closed linear subspaces of a Banach space $X$. Denote by $\Bb(X,Y)$
($\Cc(X,Y)$) the set of all bounded operators (closed, not necessarily bounded operators) between
Banach spaces $X$ and $Y$. Let $\Ss(X),\Ss^c(X)$ be equipped with the gap topology (see below
Section \ref{ss:gap-topology}). Then, we shall solve the following problems:

\begin{enumerate}
\renewcommand{\labelenumi}{(\Roman{enumi})}
\item Under what conditions do the elementary linear operations (intersection, sum and
making quotients) become continuous for pairs of closed subspaces?

\item Under what conditions do we obtain a continuous mapping $(A,D)\mapsto A_D$, where the
operator $A$ varies continuously in $\Bb(X,Y)$, the domain $D$ varies continuously in $\Ss(X)$, and
$A_D$ denotes the restriction of $A$ to the domain $D$ and varies in $\Cc(X,Y)$?

\item How can we control changes of a space of Fredholm pairs under finite or compact perturbation
of one factor?
\end{enumerate}

Question (I) will be answered in Propositions \ref{p:closed-spaces-dimensions} and
\ref{p:close-to}. Question (II) will be answered in Corollary \ref{c:continuous-operator}. Question
(III) will be answered in Proposition \ref{p:compact-perturb}.

These results will be formulated and proved in general terms. We shall emphasize, however, the
various applications to solving variational problems of the global analysis of elliptic operators
on manifolds with boundary. Problem (I) has two immediate applications: The first application is
the local stability of weak inner UCP, see Corollary \ref{c:ls-wiucp}. The second application is
the continuous variation of the Cauchy data spaces under variation of the operator under the
assumption of weak inner UCP (or fixed dimension of the inner solution spaces), see Corollary
\ref{c:continuous-ker}.

Problem (II) settles the intricate delicacies of independent variation of operator and boundary
condition, yielding continuous variation of the induced Fredholm extension.

Problem (III) addresses the changes, roughly speaking, when we replace one boundary condition by
another one under \textit{small} perturbation. Here \textit{small} means by finite or compact
change of the domain, to be defined rigorously below. To give an idea of what kind of changes we
are dealing with, we refer to the Grassmannian of pseudo-differential projections with the same
principal symbols, that define large classes of well-posed and mutually intimately related boundary
problems, as in \auindex{Boo{\ss}--Bavnbek,\ B.}\auindex{Wojciechowski,\ K.P.}\cite{BoWo93}.

This program requires rather detailed investigations of the topology of graphs and domains of
closed operators. Our topological approach is based on the \subindex{Gap topology}\textit{gap}
$\wh{\delta}\colon \Ss(X)\times \Ss(X)\to \R_+$ and the \subindex{Angular distance}\textit{angular
distance} $\wh{\gamma}\colon \Ss(X)\times \Ss(X)\to [0,1]$ (also called \textit{minimum gap}), see
Definition \ref{d:closed-distance} below. According to \auindex{Berkson,\ E.}E. Berkson in
\cite{Berkson:1963}, the concept of \textit{opening} (as the \textit{gap} was called in the 1940s
and 1950s) was first introduced in Hilbert space in 1947 by \auindex{Krein,\
M.G.}\auindex{Krasnosel'skii,\ M.A.}M. G. Krein and M.A. Krasnosel'ski in \cite{Krein-Krasno:1947}.
The definition was one year later extended to arbitrary Banach spaces in
\cite{Krein-Krasno-Mil:1948} by \auindex{Krein,\ M.G.}\auindex{Krasnosel'skii,\
M.A.}\auindex{Mil'man,\ D.P.}M.G. Krein, M.A. Krasnosel'ski, and D.P. Mil'man. Ten years later, it
was supplemented by the definition of the \textit{minimum gap/angular distance}  $\wh{\gamma}$  in
\cite{Gohberg-Markus:1959} by \auindex{Gohberg,\ I.}\auindex{Markus,\ A.S.}I. Gohberg and A.S.
Markus.

We shall use \auindex{Kato,\ T.}T. Kato's \cite[Chapter IV]{Ka95} as our general reference. We
shall apply considerable diligence to the estimates to guarantee the sharpest versions of our
invariance results. Some of the results, often in different and weaker form, can be found in the
quoted original papers and the classical treatises
\cite{CorLab,Gohberg-Krein:1957,Massera-Sch:1958,Ne65,Ne68,Ne51} by \auindex{Cordes,\
H.O.}\auindex{Labrousse,\ J.-P.} H.O. Cordes and J.-P. Labrousse, \auindex{Gohberg,\
I.}\auindex{Krein,\ M.G.}I. Gohberg and M.G. Krein, \auindex{Massera,\ J.L.}\auindex{Sch{\"a}ffer,\
J.J.} J.L. Massera and J.J. Sch{\"a}ffer, \auindex{Neubauer,\ G.}G. Neubauer, and
\auindex{Newburgh,\ J.D.} J.D. Newburgh.

\section{Some algebra facts}\label{ss:linear-algebra}

We have the following elementary fact of algebra.

\begin{lemma}\label{l:lin-alg} Let $X$ be an additive group and $V_1,V_2,V_3$ three subgroups. If $V_1\subset V_3$, we have
\begin{equation}\label{e:sum-cap}
(V_1+V_2)\cap V_3=V_1+V_2\cap V_3.
\end{equation}
\end{lemma}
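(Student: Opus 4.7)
The plan is to prove the identity by verifying both inclusions, with the hypothesis $V_1 \subset V_3$ entering in only one direction. This is the classical modular (Dedekind) law for subgroups, so the proof should be a short, purely algebraic verification; I do not expect any genuine obstacle.

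First I would establish the inclusion $V_1 + V_2 \cap V_3 \subset (V_1 + V_2) \cap V_3$. Any element of the left-hand side has the form $v_1 + v_2$ with $v_1 \in V_1$ and $v_2 \in V_2 \cap V_3$. Clearly $v_1 + v_2 \in V_1 + V_2$. Moreover, since $V_1 \subset V_3$ by hypothesis, $v_1 \in V_3$, and $v_2 \in V_3$ as well; hence $v_1 + v_2 \in V_3$. This gives membership in $(V_1 + V_2) \cap V_3$.

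For the reverse inclusion $(V_1 + V_2) \cap V_3 \subset V_1 + V_2 \cap V_3$, I would take an arbitrary element $x$ of the intersection and write $x = v_1 + v_2$ with $v_1 \in V_1$, $v_2 \in V_2$. The key step, and the only place the hypothesis $V_1 \subset V_3$ is used, is to deduce $v_2 \in V_3$: from $x \in V_3$ and $v_1 \in V_1 \subset V_3$ we get $v_2 = x - v_1 \in V_3$ (using that $V_3$ is a subgroup). Thus $v_2 \in V_2 \cap V_3$, and $x = v_1 + v_2 \in V_1 + V_2 \cap V_3$.

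Combining the two inclusions yields the claimed equality \eqref{e:sum-cap}. There is no real hard part; I would only take care to note explicitly that subtraction is legitimate because $X$ is assumed to be a group (not merely a monoid), which is precisely what allows the passage from $x, v_1 \in V_3$ to $v_2 = x - v_1 \in V_3$.
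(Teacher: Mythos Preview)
Your proof is correct; this is precisely the standard verification of the modular (Dedekind) law for subgroups. The paper itself does not supply a proof for this lemma, stating it simply as an ``elementary fact of algebra,'' so your argument is a clean way to fill in what the authors left as understood.
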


\begin{corollary}\label{c:whole-space} Let $X$ be an additive group and $V, X_0,X_1$ three subgroups with $X=X_0\oplus X_1$.
Denote by $P_0\colon X\to X_0$ the projection defined by the decomposition $X=X_0\oplus X_1$.
Assume that $V\supset X_1$. Then we have $V=P_0V+X_1$ and $P_0V=V\cap X_0$. In particular, we have $V=X$ if $P_0V=X_0$.
\end{corollary}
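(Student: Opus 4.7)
The plan is to apply Lemma A.1.1 once the two inclusions $P_0V\subset V\cap X_0$ and $V\cap X_0\subset P_0V$ are dispatched.

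First I would establish $P_0V=V\cap X_0$. Let $P_1:=I-P_0\colon X\to X_1$ denote the complementary projection. For any $x\in V$, we have $P_1x\in X_1\subset V$, so $P_0x=x-P_1x$ lies in $V$ (as $V$ is a subgroup) and clearly in $X_0$; hence $P_0V\subset V\cap X_0$. Conversely, if $y\in V\cap X_0$, then $P_0y=y$, so $y\in P_0V$. This gives the equality $P_0V=V\cap X_0$.

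Next I would derive $V=P_0V+X_1$ by applying Lemma A.1.1 with $V_1:=X_1$, $V_2:=X_0$, and $V_3:=V$. The hypothesis $V\supset X_1$ provides the required inclusion $V_1\subset V_3$, and $V_1+V_2=X_1+X_0=X$, so
\[
V\ =\ X\cap V\ =\ (X_1+X_0)\cap V\ =\ X_1+X_0\cap V\ =\ X_1+P_0V,
\]
where the last equality uses the identification proved in the previous step.

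Finally, the ``in particular'' clause is immediate: if $P_0V=X_0$, the formula just proved gives $V=X_0+X_1=X$. There is no genuine obstacle here; the whole argument is a bookkeeping exercise combining the decomposition $X=X_0\oplus X_1$ with Lemma A.1.1, and no continuity or topological considerations enter since $X$ is only an additive group.
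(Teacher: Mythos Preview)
Your proof is correct and essentially the same as the paper's: both apply Lemma~\ref{l:lin-alg} with $V_1=X_1$, $V_2=X_0$, $V_3=V$ to obtain $V=(X_1+X_0)\cap V=X_1+X_0\cap V$. The only cosmetic difference is the order of the two conclusions: the paper first invokes the lemma to get $V=V\cap X_0+X_1$ and then reads off $P_0V=V\cap X_0$ by applying $P_0$ to both sides, whereas you establish $P_0V=V\cap X_0$ directly from the subgroup property before invoking the lemma.
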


\begin{proof} Since $V\supset X_1$, by Lemma \ref{l:lin-alg} we have $V=V\cap(X_0+X_1)=V\cap X_0+X_1$.
So we have $P_0V=V\cap X_0$, and $V=P_0V+X_1$. If $P_0V=X_0$, we have $V=X$.
\end{proof}
\section{The gap topology}\label{ss:gap-topology}

Let $X$ be a Banach space. Let $M,N$ be two closed linear subspaces of $X$, i.e., $M,N\in\Ss(X)$.
Denote by \symindex{SM@$\romS_M$ unit sphere of linear subspace in Banach space}$\romS_M$ the unit
sphere of $M$. We recall three common definitions of distances in $\Ss(X)$ (see also
\auindex{Kato,\ T.}\cite[Sections IV.2.1 and IV.4.1]{Ka95}):
\begin{itemize}

\item  the \textit{Hausdorff metric} $\hat d$;

\item the \textit{aperture (gap distance)} $\hat\delta$, that is not a metric since it does not in general
satisfy the triangle inequality, but defines the same topology as the metric $\hat d$, called
\textit{gap topology}, and is easier to estimate than $\hat d$; and

\item the \textit{angular distance (minimum gap)} $\wh\gamma$, that is useful in our estimates, though not
defining any suitable topology.

\end{itemize}

\begin{definition}[The gap between subspaces]\label{d:closed-distance}
\subindex{Distances in $\Ss(X)$}
(a) We set \begin{multline*}\subindex{Hausdorff metric}\symindex{d@$d$ Hausdorff metric}%
 d(M,N)\ =\
d(\romS_M,\romS_N)\\
:= \begin{cases} \max\left\{\begin{matrix}\sup_{u\in \romS_M}\dist(u,\romS_N),\\ \sup_{u\in
\romS_N}\dist(u,\romS_M)\end{matrix}\right\},&\text{ if both $M\ne 0$ and $N\ne 0$},\\
0,&\text{ if $M=N=0$},\\
2,&\text{ if either $M= 0$ and $N\ne 0$ or vice versa}.
\end{cases}
\end{multline*}
\newline (b) We set
\begin{eqnarray*}\subindex{Gap topology!gap distance}\symindex{dhat@$\hat{d}$ gap distance}
\delta(M,N)\ &:=&\ \begin{cases}\sup_{u\in \romS_M}\dist(u,N),& \text{if $M\ne\{0\}$},\\
0,& \text{if $M=\{0\}$},\end{cases}\\
\hat\delta(M,N)\ &:=&\ \max\{\delta(M,N),\delta(N,M)\}.
\end{eqnarray*}
$\hat\delta(M,N)$ is called the {\em gap} between $M$ and $N$.
\newline (c) We set
\begin{eqnarray*}\subindex{Minimum gap (angular distance)}\symindex{\gamma@$\hat{\gamma}$ minimum gap (angular distance)}
\gamma(M,N)\ :&=&\ \begin{cases} \inf_{u\in M\setminus N}\frac{\dist(u,N)}{\dist(u,M\cap N)}\ (\le
1), & \text{if $M\nsubseteq N$},\\
1, & \text{if $M\subset N$},\end{cases}\\
\hat\gamma(M,N)\ :&=&\ \min\{\gamma(M,N),\gamma(N,M)\}.
\end{eqnarray*}
$\hat\gamma(M,N)$ is called the {\em minimum gap} between $M$ and $N$. If $M\cap N=\{0\}$, we have
\[\gamma(M,N)\ =\ \inf_{u\in \romS_M}\dist(u,N).\]
\end{definition}

In this Memoir we shall impose the gap topology on the space $\Ss(X)$ of all closed linear
subspaces of a Banach space $X$ and its subset $\Ss^c(X)$ of complemented subspaces.

\smallskip

We recall the following two results on finite-dimensional variation. For the second see
\auindex{Brezis,\ H.}\cite[Proposition 11.4]{Brezis:2011}. For the first see (\auindex{Kato,\
T.}\cite[Lemma III.1.9]{Ka95}.

\begin{proposition}[Finite extension]\label{p:finite-extension} Let $X$ be a Banach space and $M$ be a closed subspace of $X$.
Let $M^{\prime}\supset M$ be a linear subspace of $X$ with $\dim M^{\prime}/M<+\infty$. Then we
have
\newline (a) $M^{\prime}$ is closed, and
\newline (b) $M'\in\Ss^c(X)$ if and only if $M\in\Ss^c(X)$.
\end{proposition}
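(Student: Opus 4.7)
The plan is to treat parts (a) and (b) in order, using the quotient space $X/M$ as the main tool for (a) and then exploiting the resulting direct sum decomposition for (b).

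For part (a), I would first fix representatives $v_1,\dots,v_n\in M'$ of a basis of the finite-dimensional quotient $M'/M$, so that $M'=M+\Span\{v_1,\dots,v_n\}$. Let $\pi\colon X\to X/M$ be the canonical projection onto the Banach space $X/M$. Then $\pi(M')=\Span\{\pi(v_1),\dots,\pi(v_n)\}$ is a finite-dimensional subspace of $X/M$, hence closed. Since $\pi$ is continuous and $M'=\pi^{-1}(\pi(M'))$, $M'$ is closed in $X$.

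For part (b), I would first record the auxiliary fact that $M$ is complemented inside the Banach space $M'$: lifting a basis of $M'/M$ to vectors $v_1,\dots,v_n\in M'$ gives $V':=\Span\{v_1,\dots,v_n\}$, finite-dimensional hence closed, with $M'=M\oplus V'$ as a topological direct sum (algebraic complement $+$ finite-dimensional is automatic). Then the equivalence proceeds by two symmetric constructions:

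\noindent\emph{$M\in\Ss^c(X)\Rightarrow M'\in\Ss^c(X)$.} Write $X=M\oplus N$ with $N$ closed, and let $P\colon X\to N$ be the continuous projection along $M$. Set $W:=P(V')\subset N$; then $W$ is finite-dimensional and $M+V'=M\oplus W$. Choose any algebraic complement $N'$ of $W$ in $N$; since $W$ is finite-dimensional, $N=W\oplus N'$ topologically with $N'$ closed. Consequently $X=M\oplus W\oplus N'=M'\oplus N'$.

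\noindent\emph{$M'\in\Ss^c(X)\Rightarrow M\in\Ss^c(X)$.} Write $X=M'\oplus N$ with $N$ closed. Using $M'=M\oplus V'$ gives $X=M\oplus(V'+N)$, and $V'+N$ is a finite-dimensional extension of the closed subspace $N$, hence closed by part (a). Thus $M$ is complemented by $V'+N$.

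The routine check is the topological direct sum claim for $N=W\oplus N'$ (and symmetrically for $M'=M\oplus V'$), which I expect to be the only place requiring care: this rests on the standard fact that a finite-dimensional subspace of a Banach space is always complemented, which follows from choosing a dual basis of continuous functionals via Hahn--Banach. Everything else is purely formal manipulation of direct sums together with the closedness guaranteed by (a).
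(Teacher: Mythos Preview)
Your proof is correct. The paper does not actually prove this proposition: it is stated as a recalled result, with part (a) referred to Kato \cite[Lemma III.1.9]{Ka95} and part (b) to Brezis \cite[Proposition 11.4]{Brezis:2011}. Your argument supplies a clean self-contained proof where the paper has none. The quotient-map argument for (a) and the direct-sum juggling for (b) are exactly the standard approaches found in those references, so in spirit you are reproducing what Kato and Brezis do.
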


\begin{definition}\label{d:fredholm-pair}
(a) The space of (algebraic) \emph{Fredholm pairs} of linear subspaces of a vector space $X$ is
defined by
\begin{equation}\label{e:fp-alg}
\Ff^{2}_{\operatorname{alg}}(X):=\{(M,N);  \dim (M\cap N)  <+\infty\;\; \text{and} \dim
X/(M+N)<+\infty\}
\end{equation}
with
\begin{equation}\label{e:fp-index}
\Index(M,N):=\dim(M\cap N) - \dim X/(M+N).
\end{equation}

\noi (b) In a Banach space $X$, the space of (topological) \emph{Fredholm pairs} is defined by
\begin{multline}\label{e:fp}
\Ff^{2}(X):=\{(M,N)\in\Ff^2_{\operatorname{alg}}(X); M,N, \tand M+N \subset X \text{ closed}\}.
\end{multline}
A pair $(M,N)$ of closed subspaces is called {\em semi-Fredholm} if $M+N$ is closed, and at least
one of $\dim (M\cap N)$ and $\dim\left(X/(M+N)\right)$ is finite.

\noi (c) Let $X$ be a Banach space, $M\in\Ss(X)$ and $k\in\Z$. We define
\begin{align}
\label{e:fp-M}\Ff_M(X):&=\{N\in\Ss(X);(M,N)\in\Ff^2(X)\},\\
\label{e:fp-kM}\Ff_{k,M}(X):=&\{N\in\Ss(X);(M,N)\in\Ff^2(X),\Index(M,N)=k\}.
\end{align}
\end{definition}

\begin{rem}\label{r:redholm-pairs}
Actually, in Banach space the closedness of $\la+\mu$ follows from its finite codimension in $X$ in
combination with the closedness of $\la,\mu$ (see \auindex{Boo{\ss}--Bavnbek,\
B.}\auindex{Furutani,\ K.}\cite[Remark A.1]{BoFu99} and \auindex{Kato,\ T.}\cite[Problem
IV.4.7]{Ka95}).
\end{rem}

The following lemma is from \auindex{Kato,\ T.}\cite[Problem IV.4.6]{Ka95}.

\begin{lemma}\label{l:finite-diff-index} Let $X$ be a vector space and $M^{\prime},M,N$ be linear subspaces. Assume that
$M^{\prime}\supset M$ and $\dim M^{\prime}/M=n<+\infty$. Then we have
$\Index(M^{\prime},N)=\Index(M,N)+n$.
\end{lemma}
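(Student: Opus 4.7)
The plan is to reduce the identity to a purely algebraic counting argument using the modular (Dedekind) law for subspaces of a vector space. Write $\Index(M,N) = \dim(M\cap N) - \dim X/(M+N)$ and similarly for $M'$. Then the claim rearranges to
\[
  \dim\frac{M'\cap N}{M\cap N} \;+\; \dim\frac{M'+N}{M+N} \;=\; n.
\]
Both summands are finite because both quotients embed (see below) into subquotients of $M'/M$, which has dimension $n$. So the task is to produce a chain of subspaces between $M$ and $M'$ whose graded dimensions are exactly these two quantities.

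First I would set $L := M + (M'\cap N)$ and consider the chain $M \subset L \subset M'$. The second isomorphism theorem gives $L/M = (M + M'\cap N)/M \cong (M'\cap N)/\bigl((M'\cap N)\cap M\bigr) = (M'\cap N)/(M\cap N)$, which handles the intersection term. For the quotient term, the key input is the modular law: since $M \subset M'$, one has $M' \cap (M+N) = M + (M'\cap N) = L$. Hence
\[
  \frac{M'+N}{M+N} \;=\; \frac{M' + (M+N)}{M+N} \;\cong\; \frac{M'}{M'\cap(M+N)} \;=\; \frac{M'}{L}.
\]
Combining, the additivity of dimension along the chain $M \subset L \subset M'$ reads
\[
  n \;=\; \dim M'/M \;=\; \dim M'/L \;+\; \dim L/M \;=\; \dim\frac{M'+N}{M+N} \;+\; \dim\frac{M'\cap N}{M\cap N},
\]
which is exactly the identity needed. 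Substituting back into the definition of the index yields $\Index(M',N) = \Index(M,N)+n$.

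The main obstacle is really only bookkeeping: one must be careful that the statement is interpreted so that $\Index(M,N)$ and $\Index(M',N)$ are either both finite (in which case the above argument gives the equality as finite integers) or related in the extended sense when one of $\dim(M\cap N)$ or $\dim X/(M+N)$ is infinite. In the latter case one reads the above isomorphisms to conclude that the corresponding invariant for $M'$ differs from that for $M$ by a summand bounded by $n$, so finiteness transfers between the two pairs and the formula persists. No topological input (closedness, completeness, Banach structure) is used anywhere; the result is purely an identity of dimensions of quotients of vector spaces, justified entirely by the modular law and the two standard isomorphism theorems.
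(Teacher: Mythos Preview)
Your proof is correct. The paper does not give its own proof of this lemma; it simply cites it as \cite[Problem IV.4.6]{Ka95}. Your argument via the modular law and the chain $M \subset L := M + (M'\cap N) \subset M'$ is exactly the standard elementary verification, and your remark about the finiteness bookkeeping (each of the two quotients embeds into a subquotient of $M'/M$, so $(M,N)$ is an algebraic Fredholm pair iff $(M',N)$ is) is the right way to make the statement precise.
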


We give the following elementary fact.

\begin{lemma}\label{l:fp-complemented} Let $X$ be a Banach space and $(M,N)\in\Ff^{2}(X)$. Then we have $M,N\in\Ss^c(X)$.
\end{lemma}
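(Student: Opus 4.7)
The plan is to use the finite-dimensional nature of both $M\cap N$ and $X/(M+N)$ to extract a concrete closed complement for $M$ (and, symmetrically, for $N$).

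First, since $M\cap N$ is finite-dimensional, it is complemented in $N$: choose a closed subspace $N_{1}\subset N$ with $N=(M\cap N)\oplus N_{1}$ (finite-dimensional subspaces are always complemented in any Banach space via a Hahn--Banach argument). Then $M+N_{1}=M+N$ and $M\cap N_{1}=\{0\}$. The space $M+N$ is closed (either by hypothesis or by Remark~\ref{r:redholm-pairs}), so $M+N_{1}$ is a Banach space in its own right. The continuous bijection $M\times N_{1}\to M+N_{1}$, $(x,y)\mapsto x+y$, is then a homeomorphism by the open mapping theorem, so the algebraic direct sum $M\oplus N_{1}$ is actually topological.

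Next, since $X/(M+N)$ is finite-dimensional, pick a finite-dimensional subspace $V\subset X$ with $X=(M+N)\oplus V$. By Proposition~\ref{p:finite-extension}.a, the sum $N_{1}+V$ is closed, as it is a finite-dimensional extension of the closed subspace $N_{1}$. I would then verify $X=M\oplus(N_{1}+V)$: any $x\in X$ decomposes as $x=m+n_{1}+v$ with the obvious summands, giving $X=M+N_{1}+V$; and if $m=n_{1}+v$ with $m\in M$, $n_{1}\in N_{1}$, $v\in V$, then $v=m-n_{1}\in V\cap(M+N)=\{0\}$, whence $m=n_{1}\in M\cap N_{1}=\{0\}$. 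Thus $M\in\Ss^{c}(X)$. The argument for $N$ is symmetric.

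The only delicate point is the step converting the algebraic splitting $M\oplus N_{1}=M+N$ into a topological splitting; this is where the closedness of $M+N$ (together with the open mapping theorem) is essential, and it is also what allows the concluding finite-dimensional enlargement by $V$ to remain a closed subspace complementary to $M$. No additional machinery beyond Proposition~\ref{p:finite-extension} and standard Banach space theory is required.
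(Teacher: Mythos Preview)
Your proof is correct and follows essentially the same route as the paper: split off $M\cap N$ inside $N$ to get a closed $N_1$, take a finite-dimensional $V$ complementary to $M+N$, and exhibit $N_1\oplus V$ as a closed complement of $M$. The paper records the slightly more symmetric four-term decomposition $X=(M\cap N)\oplus M_1\oplus N_1\oplus V$ (also splitting $M$), but the content is the same. One small remark: the open mapping argument showing that $M\oplus N_1=M+N$ is a \emph{topological} direct sum is not actually needed for the conclusion---closedness of $N_1+V$ follows already from $N_1$ being closed in $X$ (closed in $N$, $N$ closed in $X$) together with Proposition~\ref{p:finite-extension}, and then $X=M\oplus(N_1+V)$ with both summands closed suffices.
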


\begin{proof} Since $(M,N)\in\Ff^{2}(X)$, there exist closed linear subspaces $M_1\subset M$, $N_1\subset N$ and a
finite-dimensional linear subspace $V\subset X$ such that
\[M=M\cap N\oplus M_1, N=M\cap N\oplus N_1, X=V\oplus(M+N).\]
Then we have $N_1\cap M=N_1\cap M\cap N=\{0\}$, and
\begin{equation}\label{e:directsum-decomposition}
X=M\cap N\oplus M_1\oplus N_1\oplus V.
\end{equation}
So $M,N\in\Ss^c(X)$ holds.
\end{proof}
\section{Continuity of operations of linear subspaces}\label{ss:continuity-of-operations}
We study the continuity of $M/L$, $M\cap N$ and $M+N$ for varying closed subspaces $M$ and $N$ and
fixed closed subspace of a Banach space $X$.

For the quotient space, we have the following lemma.

\begin{lemma}\label{l:quotient-delta} Let $X$ be a Banach space with closed subspaces $M,N,L\in\Ss(X)$
such that $M,N\supset L$. Denote by $p$ the natural map $p\colon X\to X/L$.  Then we have
\newline (a) $d(p(u),p(M))=d(u,M)$ for $u\in X$,
\newline (b) $\gamma(p(M),p(N))=\gamma(M,N)$,
\newline (c) $d(u,N)\le d(u,L)\delta(M,N)$ for $u\in M$, and
\newline (d) $\delta(M,N)=\delta(p(M),p(N))$.
\end{lemma}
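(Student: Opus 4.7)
The lemma is essentially a transfer principle: passing to the quotient $X/L$ preserves all the relevant metric data for subspaces that contain $L$. My plan is to observe that for any $u \in X$ and any closed subspace $W \supset L$, the quotient norm $\|p(u)-p(w)\| = \inf_{\ell \in L}\|u-w-\ell\|$ collapses nicely because $L \subset W$ means that $w-\ell$ ranges over all of $W$ as $w$ does. This immediately gives part (a): $d(p(u),p(M)) = \inf_{v\in M,\ \ell \in L}\|u-v-\ell\| = \inf_{w \in M}\|u-w\| = d(u,M)$, using $M+L=M$.

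For (b) I would first check that $p(M)\cap p(N)=p(M\cap N)$: the inclusion $\supset$ is trivial, and for $\supset$, if $p(m)=p(n)$ with $m\in M,n\in N$, then $m-n\in L\subset M\cap N$, so $m\in N$ and hence $m\in M\cap N$. Similarly, $p(M)\subset p(N)\iff M\subset N+L=N$. Thus in the nontrivial case one has $p(M)\setminus p(N) = p(M\setminus N)$, and applying part (a) both to $N$ and to $M\cap N$ turns the infimum defining $\gamma(p(M),p(N))$ into the infimum defining $\gamma(M,N)$.

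For (c) the key trick is translation by elements of $L$: for any $u\in M$ and $\ell\in L$, the vector $u-\ell$ still lies in $M$, and since $L\subset N$ we have $d(u,N)=d(u-\ell,N)\le \|u-\ell\|\,\delta(M,N)$; infimizing over $\ell\in L$ yields $d(u,N)\le d(u,L)\,\delta(M,N)$. This is a strengthening of the obvious bound $d(u,N)\le \|u\|\,\delta(M,N)$ and is exactly what part (d) needs.

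Finally (d) follows by combining (a) and (c). For the inequality $\delta(p(M),p(N))\le \delta(M,N)$, a unit vector of $p(M)$ is $p(u)$ with $u\in M$ and $d(u,L)=1$, so by (a) and (c), $d(p(u),p(N))=d(u,N)\le d(u,L)\,\delta(M,N)=\delta(M,N)$. For the reverse, a unit vector $u\in S_M$ has $\|p(u)\|=d(u,L)\le 1$, and (a) gives $d(u,N)=d(p(u),p(N))\le \|p(u)\|\,\delta(p(M),p(N))\le \delta(p(M),p(N))$, with the trivial case $p(u)=0$ (i.e.\ $u\in L\subset N$) handled separately since then $d(u,N)=0$. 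I do not expect any real obstacle; the only mild care needed is in handling the degenerate cases $M=\{0\}$, $M\subset N$, and $p(u)=0$ so that the conventions in Definition A.2.1 are respected.
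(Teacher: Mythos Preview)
Your proof is correct and follows essentially the same approach as the paper's. The paper defers (a) and (b) to Kato's \emph{Perturbation Theory}, Theorem~IV.4.2, while you spell them out directly; for (c) the paper picks a near-best approximant $v\in L$ with $d(u,L)\ge(1-\varepsilon)\|u-v\|$ and lets $\varepsilon\to 0$, whereas your direct infimum over $\ell\in L$ is slightly cleaner and avoids the $\varepsilon$-dance; part (d) is argued identically in both.
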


\begin{proof} (a), (b) By the last paragraph of the proof of \auindex{Kato,\ T.}\cite[Theorem IV.4.2]{Ka95}.
\newline (c) Let $\e\in(0,1)$. By \auindex{Kato,\ T.}\cite[Lemma III.1.12]{Ka95}, for any $u\in M$, there exists a $v\in L$ such that
$d(u,L)\ge (1-\e)\|u-v\|$. Since $L\subset N$, we have
\[d(u,N)=d(u-v,N)\le\|u-v\|\delta(M,N)\le(1-\e)^{-1}d(u,L)\delta(M,N).\]
Let $\e\to 0$, and we have $d(u,N)\le d(u,L)\delta(M,N)$.
\newline (d) If $M=L$, we have $\delta(M,N)=\delta(p(M),p(N))=0$. Assume that $M\ne L$. By definition and the first equality we have
\begin{align*}\delta(M,N)&=\sup\{d(u,N);u\in \romS_M\}\\
&\le\sup\{d(u,N);u\in M,d(u,L)\le 1\}=\delta(p(M),p(N)).
\end{align*}
By (a) and (c) we have
\[\delta(p(M),p(N))=\sup\{d(u,N);u\in M,d(u,L)=1\}\le\delta(M,N).\]
Thus we obtain (d).
\end{proof}

Firstly, we consider the case of $\dim(M\cap N)<+\infty$. We need the following uniform estimate of
the given Banach norm by the coefficients with regard to a basis for finite-dimensional subspaces.

\begin{lemma}\label{l:ball-distances}
Let $X$ be a complex  Banach space and $u_1,\ldots,u_n\in \operatorname{S}_X$. Set%
\[
V_k:=\begin{cases} \{0\},& \text{for $k=0$},\\
 \Span\{u_1,\dots,u_k\}, & \text{for $k=1\ldots,n$}.
 \end{cases}
 \]
 Assume that $\dist(u_k,V_{k-1})\ge \delta$ for $k=1,\ldots,n-1$ and $\delta>0$. Then we
have $\delta\le 1$, $\dim V_k=k$, and%
\[
\frac{1}{n}\biggl(\frac{\delta}{1+\delta}\biggr)^{n-1}\sum_{k=1}^n|a_k|\ \le\ \|\sum_{k=1}^na_ku_k\|\ \le\ \sum_{k=1}^n|a_k|%
\]
for all $a_1,\ldots,a_k\in\C$.
\end{lemma}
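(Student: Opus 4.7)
The plan is to prove the lemma by induction on $n$, after disposing of the two preliminary claims. For the inequality $\delta\le 1$, simply observe that taking $k=1$ in the hypothesis gives $\delta\le\dist(u_1,V_0)=\|u_1\|=1$. For the claim $\dim V_k=k$, note that the condition $\dist(u_k,V_{k-1})\ge\delta>0$ forces $u_k\notin V_{k-1}$, so a straightforward induction on $k$ yields linear independence of $\{u_1,\ldots,u_k\}$. The upper bound $\|\sum a_k u_k\|\le\sum|a_k|$ is just the triangle inequality applied to unit vectors and requires no further work.

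The heart of the proof is the lower bound, which I would establish by induction on $n$. The base case $n=1$ reads $\|a_1 u_1\|=|a_1|$, which holds with equality since the prefactor $\frac{1}{1}\bigl(\frac{\delta}{1+\delta}\bigr)^{0}=1$. For the inductive step, given $w=\sum_{k=1}^{n}a_k u_k$, write $w=v+a_n u_n$ with $v:=\sum_{k=1}^{n-1}a_k u_k\in V_{n-1}$. The key observation is that when $a_n\neq 0$,
\[
\|w\|=|a_n|\,\bigl\|u_n+a_n^{-1}v\bigr\|\ \ge\ |a_n|\,\dist(u_n,V_{n-1})\ \ge\ |a_n|\,\delta,
\]
hence $|a_n|\le\|w\|/\delta$. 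From $v=w-a_n u_n$ this in turn yields
\[
\|v\|\ \le\ \|w\|+|a_n|\ \le\ \|w\|\Bigl(1+\tfrac{1}{\delta}\Bigr)\ =\ \|w\|\,\tfrac{1+\delta}{\delta}.
\]

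To finish, I would apply the induction hypothesis to $v\in V_{n-1}$ (noting that $u_1,\ldots,u_{n-1}$ still satisfy the distance hypothesis with the same $\delta$) to obtain
\[
\sum_{k=1}^{n-1}|a_k|\ \le\ (n-1)\Bigl(\tfrac{1+\delta}{\delta}\Bigr)^{n-2}\|v\|\ \le\ (n-1)\Bigl(\tfrac{1+\delta}{\delta}\Bigr)^{n-1}\|w\|,
\]
while the bound on $|a_n|$ gives $|a_n|\le\|w\|/\delta\le\bigl(\tfrac{1+\delta}{\delta}\bigr)^{n-1}\|w\|$ (using $\delta\le 1$, so the factor $\ge 1$). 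Adding these two estimates produces $\sum_{k=1}^{n}|a_k|\le n\bigl(\tfrac{1+\delta}{\delta}\bigr)^{n-1}\|w\|$, which is exactly the claimed inequality after rearrangement.

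I do not anticipate any serious obstacle: the argument is a standard bookkeeping induction, and the only delicate point is ensuring that the constants propagate correctly through the induction step. The slight subtlety is that one must bound $|a_n|$ not by $\|w\|/\delta$ directly but by $(\frac{1+\delta}{\delta})^{n-1}\|w\|$ in order for the sum with the inductive bound for $v$ to telescope cleanly to the prefactor $n\bigl(\tfrac{1+\delta}{\delta}\bigr)^{n-1}$; this is where $\delta\le 1$ is quietly used.
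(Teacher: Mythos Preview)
Your proof is correct and follows essentially the same inductive mechanism as the paper: peel off the top coefficient via $\|w\|\ge\delta|a_n|$, bound $\|v\|\le\tfrac{1+\delta}{\delta}\|w\|$, and recurse. The paper presents the induction by tracking individual $\max$-bounds $\|w\|\ge\bigl(\tfrac{\delta}{1+\delta}\bigr)^{n-k}\delta\,|a_k|$ before summing, whereas you induct directly on the aggregate sum; the arithmetic is the same (one small note: your bound $\tfrac{1}{\delta}\le\bigl(\tfrac{1+\delta}{\delta}\bigr)^{n-1}$ holds for all $\delta>0$ and $n\ge 2$, so $\delta\le 1$ is not actually needed there).
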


\begin{proof} Only the left inequality needs a proof. It is a Banach space variant of
\subindex{Bessel's Inequality}Bessel's Inequality of harmonic analysis (see, e.g., \cite[Section
I.6.3]{Ka95}\auindex{Kato,\ T.}). Certainly, our precise version of the left inequality will be
well known in functional analysis. For the convenience of the reader we give, however, an
elementary proof.

Clearly we have $1=\|u_1\|=\dist(u_1,V_0)\ge \delta$. Since $\delta>0$, we have $u_k\notin
V_{k-1}$, and by induction we have $\dim V_k=k$.
\newline Also by induction:  $\|a_1u_1\|=|a_1|$, and so%
\begin{align*}
\|a_1u_1+a_2u_2\|\ &\ge\ \max\{\delta\,|a_2|,|a_1|-|a_2|\}\\
&\ge\ \max\biggl\{\frac{\delta}{1+\delta}|a_1|,\delta|a_2|\biggr\},\ldots,\\
\|a_1u_1+\ldots+a_nu_n\|\ &\ge
\max\biggl\{\biggl(\frac{\delta}{1+\delta}\biggr)^{n-1}|a_1|,\biggl(\frac{\delta}{1+\delta}\biggr)^{n-k}\delta|a_k|;\\
&\qquad\qquad\qquad\qquad\qquad\qquad\qquad k=2,\ldots,n\biggr\}\\
&\ge\ \frac{1}{n}\biggl(\frac{\delta}{1+\delta}\biggr)^{n-1}\sum_{k=1}^n|a_k|.
\end{align*}
Since $u_1,\ldots,u_n\in \romS_X$, we have $\|\sum_{k=1}^na_ku_k\|\ \le\ \sum_{k=1}^n|a_k|$.
\end{proof}

In general, the distances $\d(M,N)$ and $\d(N,M)$ can be very different and, even worse, behave
very differently under small perturbations. However, for finite-dimensional subspaces of the same
dimension in a Hilbert space we can estimate $\d(M,N)$ by $\d(N,M)$ in a uniform way. We can give
the following generalization of \auindex{Boo{\ss}--Bavnbek,\ B.}\auindex{Zhu,\ C.}\cite[Lemma
14]{BooZhu:2013}, which is different from \auindex{Neubauer,\ G.}\cite[Lemma 1.7]{Ne68}:

\begin{lemma}\label{l:finite-dim-delta} Let $X$ be a Banach space and $M,N$ be two linear subspaces with $\dim M=\dim N=n$.
Then we have
\[
\delta(M,N)\ \le\ \frac {2^{n-1}n\delta(N,M)} {(1-\delta(N,M))^n}\,,
\]
if $1-\delta(N,M)>0$.
\end{lemma}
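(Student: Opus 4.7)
The plan is to construct a near-identity linear transfer $T\colon N\to M$, show it is invertible with controlled distortion, and then use its inverse to approximate $M$-vectors by $N$-vectors. Throughout, write $\alpha:=\delta(N,M)$, assumed strictly less than $1$.

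Exploiting that $N$ is $n$-dimensional, I would first apply Riesz's lemma iteratively (using compactness of the unit balls of the finite-dimensional quotients $N/V_{k-1}$) to produce a basis $v_1,\dots,v_n$ of $N$ with $\|v_k\|=1$ and $\dist(v_k,V_{k-1})=1$, where $V_{k-1}:=\Span\{v_1,\dots,v_{k-1}\}$. Lemma~\ref{l:ball-distances} applied with $\delta=1$ then gives the coordinate control
\[
\sum_{k=1}^n|b_k|\ \le\ n\,2^{n-1}\,\Bigl\|\sum_{k=1}^n b_kv_k\Bigr\|\qquad\text{for every }v=\sum b_kv_k\in N.
\]
Since $M$ is finite-dimensional, hence closed, the infimum in $\dist(v_k,M)\le\alpha\|v_k\|=\alpha$ is attained: I pick $u_k\in M$ with $\|v_k-u_k\|\le\alpha$ and extend linearly to define $T\colon N\to M$, $Tv_k:=u_k$. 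In particular $\|u_k\|\ge 1-\alpha>0$, and the combined estimates yield $\|v-Tv\|\le\alpha\sum|b_k|\le n\,2^{n-1}\alpha\,\|v\|$ for every $v\in N$.

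The heart of the proof — and the main technical obstacle — is to promote this crude global bound to a sharp one by transferring the Riesz separation of the $v_k$'s to the normalized $u_k/\|u_k\|$'s. Concretely, I would prove by induction on $k$ the separation
\[
\dist\!\Bigl(\frac{u_k}{\|u_k\|},\ U_{k-1}\Bigr)\ \ge\ \frac{1-\alpha}{1+\alpha},
\qquad U_{k-1}:=\Span\{u_1,\dots,u_{k-1}\}.
\]
The naive triangle bound $\|u_k-\sum_{j<k}t_ju_j\|\ge\|v_k-\sum_{j<k}t_jv_j\|-\alpha(1+\sum|t_j|)$ degrades uselessly when $\sum|t_j|$ is large, so the estimate has to be split into two regimes according to the size of $\sum|t_j|$: in the small-$|t|$ range one exploits $\dist(v_k,V_{k-1})=1$, while in the large-$|t|$ range one invokes Lemma~\ref{l:ball-distances} on $v_1,\dots,v_k$ itself to obtain a lower bound on $\|v_k-\sum t_jv_j\|$ that grows with $\sum|t_j|$. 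Balancing the two regimes so that the loss per dimension is exactly a factor $(1-\alpha)/(1+\alpha)$ is the delicate point.

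Granting that separation, Lemma~\ref{l:ball-distances} applied to the normalized basis $\{u_k/\|u_k\|\}$ of $M$ with $\delta:=(1-\alpha)/(1+\alpha)$ (so $(1+\delta)/\delta=2/(1-\alpha)$), combined with $\|u_k\|\ge 1-\alpha$ via the rescaling $\tilde a_k=a_k\|u_k\|$, yields
\[
\sum_{k=1}^n|a_k|\ \le\ \frac{n}{1-\alpha}\Bigl(\frac{2}{1-\alpha}\Bigr)^{\!n-1}\|u\|\ =\ \frac{n\,2^{n-1}}{(1-\alpha)^n}\,\|u\|
\qquad\text{for every }u=\sum a_ku_k\in M.
\]
For an arbitrary $u\in\romS_M$ the element $v:=\sum a_kv_k\in N$ is then a concrete approximant, and
\[
\dist(u,N)\ \le\ \|u-v\|\ =\ \Bigl\|\sum a_k(u_k-v_k)\Bigr\|\ \le\ \alpha\sum_k|a_k|\ \le\ \frac{n\,2^{n-1}\alpha}{(1-\alpha)^n}.
\]
Taking the supremum over $\romS_M$ finishes the proof.
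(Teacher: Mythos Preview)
Your overall architecture matches the paper's, and steps 4--5 are carried out correctly once step 3 is granted. The problem is step 3: the separation bound $\dist\bigl(u_k/\|u_k\|,\,U_{k-1}\bigr)\ge(1-\alpha)/(1+\alpha)$ does \emph{not} follow from your two-regime sketch. With your construction the only control you have on $\|u_k-\sum_{j<k}t_ju_j\|$ is
\[
\max\Bigl\{\,1-\alpha-\alpha\textstyle\sum|t_j|,\ \ c_{k-1}\textstyle\sum|t_j|-(1+\alpha)\,\Bigr\},
\]
where $c_{k-1}$ is the coefficient-control constant already obtained inductively on $u_1,\dots,u_{k-1}$. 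Minimizing over $\sum|t_j|$ gives a bound strictly below $1-\alpha$ whenever $\alpha>0$, and the loss compounds through the induction; you do not recover the constant $(1-\alpha)^n$ in the denominator. ``Balancing the two regimes so that the loss per dimension is exactly $(1-\alpha)/(1+\alpha)$'' is not something the construction delivers --- it is the entire content of the lemma, and you have pushed it into an unjustified claim.

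The paper sidesteps this completely by \emph{interleaving} the construction rather than building the Riesz basis of $N$ first. At step $k$, already having $u_1,\dots,u_{k-1}\in M$ and $V_{k-1}:=\Span\{u_1,\dots,u_{k-1}\}$, one chooses $v_k\in\romS_N$ with $\dist(v_k,V_{k-1})=1$ (separation from the $u$-span, not the $v$-span), and only then picks $u_k\in M$ with $\|u_k-v_k\|\le\alpha+\e$. The triangle inequality gives $\dist(u_k,V_{k-1})\ge 1-\alpha-\e$ in one line, with no regime-splitting and no inductive degradation. Your ``main technical obstacle'' is thus an artifact of having fixed the $v_k$'s too early; reorder the choices and it evaporates.
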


\begin{proof}
Take $\e\in(0,1-\delta(N,M))$. By induction and \auindex{Kato,\ T.}\cite[Lemma IV.2.3]{Ka95}, there
exist
$v_1,\ldots,v_n\in \romS_N$ and $u_1,\ldots u_n\in M$ such that%
\[
\dist(v_k,V_{k-1})\ =\ 1 \tand \|u_k-v_k\|\ \le\ \delta(N,M)+\e
\]
for
\[
V_k:=\begin{cases} \{0\},& \text{for $k=0$},\\
 \Span\{u_1,\dots,u_k\},  &\text{for $k=1\ldots,n$}.
 \end{cases}
\]
Then $1-\delta(N,M)-\e\le\|u_k\|\le 1+\delta(N,M)+\e$ and $\dist(u_k,V_{k-1})\ge 1-\delta(N,M)-\e$.
By Lemma \ref{l:ball-distances}, $V_n=M$. For any $u\in \romS_M$, there exist $a_1,\ldots,a_n\in\C$
with $u=\sum_{k=1}^na_ku_k$. By Lemma \ref{l:ball-distances}, we also have
\begin{align*}
1\ &=\ \|\sum_{k=1}^n a_ku_k\|\ \ge\
\frac{1}{n}\biggl(\frac{1-\delta(N,M)-\e} {2}\biggr)^{n-1}\,\sum_{k=1}^n\,|a_k|\|u_k\|\\
&\ge\ \frac{(1-\delta(N,M)-\e)^n} {2^{n-1}n}\,\sum_{k=1}^n|a_k|.%
\end{align*}
Set $v:=\sum_{k=1}^na_kv_k$. Then we have:
\begin{align*}
\|u-v\|\ &=\ \|\sum_{k=1}^na_k(u_k-v_k)\|\ \le\ \sum_{k=1}^n|a_k|\delta(N,M)\\
&\le\ \frac{2^{n-1}n\delta(N,M)}{(1-\delta(N,M)-\e)^n}\/.%
\end{align*}
So $\delta(M,N)\le \frac{2^{n-1}n\delta(N,M)}{(1-\delta(N,M)-\e)^n}$\/. Let $\e\to 0$, then we have
$\delta(M,N) \le \frac{2^{n-1}n\delta(N,M)}{(1-\delta(N,M))^n}$.
\end{proof}

The diligence with the preceding estimates pays back with the following Proposition
\ref{p:closed-spaces-dimensions} that confines possible changes of the dimensions of intersections
and the co-dimensions of sums of pairs of closed linear subspaces under variation. For that, we
shall use the concepts of approximate nullity (approximate deficiency) defined by \auindex{Kato,\
T.}\cite[\S{IV.4}]{Ka95}:

\begin{definition}
Let $M,N$ be closed linear manifolds (i.e., closed subspaces) of a Banach space $Z$.
\newline (a) We define the \textit{approximate nullity} of the pair $M,N$,  denoted by $\nuli'(M,N)$, as the
least upper bound of the set of integers $m$ ($m=+\infty$ being permitted) with the property that,
for any $\e>0$, there is an $m$-dimensional closed linear subspace $M_{\e}\subset M$ with
$\delta(M_{\e},N)<\e$.
\newline (b) We define the \textit{approximate deficiency} of the pair $M,N$, denoted by $\defi'(M,N)$, by
$\defi'(M,N):=\nuli'(M^{\bot},N^{\bot})$.
\end{definition}

\begin{note} While $\nuli(M,N):=\dim M\cap N$ and $\defi(M,N):=\dim Z/(M+N)$ are defined in a
purely algebraic fashion, the definitions of $\nuli'(M,N)$ and $\defi'(M,N)$ depend on the
topology of the underlying space $Z$. Moreover, it is easy to show (see l.c., \auindex{Kato,\ T.}\cite[Theorems IV.4.18 and
IV.4.19]{Ka95}) that
\begin{align*}
\nuli'(M,N) \ &=\ \begin{cases} \nuli(M,N),&\text{for $M+N$ closed,}\\ +\infty,
&\text{else,}\end{cases}%
\tand\\%
\defi'(M,N)\ &=\ \begin{cases} \defi(M,N),&\text{for $M+N$ closed,}\\ +\infty, &\text{else.}\end{cases}
\end{align*}
\end{note}

We are now ready for the first main result of this appendix:

\begin{proposition}\label{p:closed-spaces-dimensions}
Let $Z$ be a Banach space and $M,N,M^{\prime},N^{\prime}$ be closed linear subspaces. Assume that
$M+N$ is closed. Then $\gamma(M,N)>0$ by \auindex{Kato,\ T.}\cite[Theorem IV.4.2]{Ka95}, and we
have

\begin{enumerate}
\renewcommand{\labelenumi}{(\alph{enumi})}

\item $\delta(M^{\prime}\cap N^{\prime},M\cap N)\ \le\ \frac{2}{\gamma(M,N)}
(\delta(M^{\prime},M)+\delta(N^{\prime},N))$,

\item $\dim(M^{\prime}\cap N^{\prime})\le\nuli^{\prime}(M^{\prime},N^{\prime})\le\dim(M\cap N)$ \linebreak if
$\delta(M^{\prime},M)(1+\gamma(M,N))+\delta(N^{\prime},N)< \gamma(M,N)$,

\item $\dim Z/(M^{\prime}+N^{\prime})\le\defi^{\prime}(M^{\prime},N^{\prime})\le\dim Z/(M+N)$  \linebreak if
$\delta(M,M^{\prime})+\delta(N,N^{\prime})(1+\gamma(M,N))< \gamma(M,N)$, and

\item $M^{\prime}\cap N^{\prime}\to M\cap N$ if $\dim(M^{\prime}\cap N^{\prime})=\dim(M\cap
N)<+\infty$ and $\delta(M^{\prime},M)+\delta(N^{\prime},N)\to 0$.
\end{enumerate}
\end{proposition}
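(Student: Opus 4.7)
The plan is to prove the four parts in order, with the \emph{minimum-gap inequality} $d(x, M\cap N) \le \gamma(M,N)^{-1}\,d(x,N)$ for $x\in M$ (immediate from the definition of $\gamma(M,N)$, and nontrivial precisely because $M+N$ is closed, which by Kato IV.4.2 forces $\gamma(M,N)>0$) serving as the central analytic tool throughout.

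For (a), I would fix $u\in \romS_{M'\cap N'}$ and, for any $\varepsilon>0$, select $v_M\in M$ with $\|u-v_M\|\le \delta(M',M)+\varepsilon$ and $v_N\in N$ with $\|u-v_N\|\le \delta(N',N)+\varepsilon$. Then $v_M\in M$ satisfies $d(v_M,N)\le \|v_M-v_N\|\le \delta(M',M)+\delta(N',N)+2\varepsilon$, and the minimum-gap inequality converts this into a bound on $d(v_M, M\cap N)$ by $\gamma(M,N)^{-1}$ times the same quantity. The triangle inequality combined with $\gamma(M,N)\le 1$ and letting $\varepsilon\to 0$ then gives exactly the asserted $2/\gamma(M,N)$ factor.

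For (b), I would run the same scheme on an arbitrary finite-dimensional witness $M_\varepsilon\subset M'$ of the approximate nullity, i.e., $\dim M_\varepsilon = \nuli'(M',N')$ with $\delta(M_\varepsilon, N')<\varepsilon$. A standard two-step triangle estimate, based on normalizing approximants in $N'$, yields $\delta(M_\varepsilon, N)\le \delta(M_\varepsilon, N')+(1+\delta(M_\varepsilon, N'))\,\delta(N',N)$, and the (a)-type calculation applied to the pair $(M_\varepsilon, N)$ produces the bound $\delta(M_\varepsilon, M\cap N)\le \delta(M',M)+\gamma(M,N)^{-1}\bigl(\delta(M',M)+\delta(N',N)\bigr)+O(\varepsilon)$. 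Dividing the hypothesis of (b) by $\gamma(M,N)$ shows that the $O(\varepsilon)$-free part of this upper bound is strictly less than $1$, so for small $\varepsilon$ we obtain $\delta(M_\varepsilon, M\cap N)<1$. The classical gap-dimension fact (strict gap $<1$ to a finite-dimensional target forces dimension at most that of the target, provable via Lemma \ref{l:ball-distances} and F.~Riesz) then yields $\nuli'(M',N')\le \dim(M\cap N)$; the companion inequality $\dim(M'\cap N')\le \nuli'(M',N')$ is trivial by taking $M_\varepsilon = M'\cap N'$.

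Part (c) follows from (b) by passing to annihilators in $Z^*$: the standard duality identities $\delta({M'}^\perp, M^\perp)=\delta(M,M')$ and $\gamma(M^\perp, N^\perp)=\gamma(M,N)$ (Kato IV.2, IV.4) turn the hypothesis of (c) into that of (b) for the annihilator pair, while $(M'+N')^\perp={M'}^\perp\cap{N'}^\perp$ and the very definition $\defi'(\cdot,\cdot)=\nuli'(\cdot^\perp,\cdot^\perp)$ translate the dimension statement. Part (d) is then a routine consequence: under the stated hypotheses, (a) directly gives $\delta(M'\cap N', M\cap N)\to 0$, and Lemma \ref{l:finite-dim-delta} (the reverse-$\delta$ estimate for equidimensional finite-dimensional subspaces) upgrades this to $\hat\delta(M'\cap N', M\cap N)\to 0$. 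The principal obstacle is the bookkeeping inside (b): the two-layer approximation from $M_\varepsilon$ through $M$ into $M\cap N$ requires a clean compounding of $\varepsilon$-errors, and one must recognize that the numerical hypothesis $\delta(M',M)(1+\gamma(M,N))+\delta(N',N)<\gamma(M,N)$ is tuned precisely so that the final bound crosses the threshold $1$ at which the gap begins to control the dimension.
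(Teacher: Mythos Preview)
Your proposal is correct and follows essentially the same route as the paper's proof: the paper dispatches (a) by citing Kato's Lemma IV.4.4 (which is precisely the minimum-gap inequality you unpack), handles (b) and (c) by pointing to the proof of Kato's Theorem IV.4.24 (your approximate-nullity witness argument and its annihilator dual), and derives (d) from the reverse-$\delta$ estimate of Lemma~\ref{l:finite-dim-delta}, exactly as you do. You have simply written out in detail what the paper delegates to Kato; one small bookkeeping point in (c) is that the identity $\gamma(N^{\perp},M^{\perp})=\gamma(M,N)$ (rather than $\gamma(M^{\perp},N^{\perp})$) is what matches the asymmetric hypothesis, so the roles of $M$ and $N$ swap when passing to annihilators---but since $\nuli'$ and $\defi'$ are symmetric in their arguments this causes no trouble.
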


\begin{proof} (a) If $M^{\prime}\cap N^{\prime}=\{0\}$, we have%
\[
\delta(M^{\prime}\cap N^{\prime},M\cap N)=0 \le\ \frac{2}{\gamma(M,N)}
(\delta(M^{\prime},M)+\delta(N^{\prime},N)).%
\]
If $M^{\prime}\cap N^{\prime}\ne\{0\}$, (a) follows from \auindex{Kato,\ T.}\cite[Lemma
IV.4.4]{Ka95}.
\newline (b) and (c) Similar to the proof of \auindex{Kato,\ T.}\cite[Theorem IV.4.24]{Ka95}.
\newline (d) By Lemma \ref{l:finite-dim-delta}.
\end{proof}

We give a first application of the preceding proposition.

\begin{ass}\label{a:ell}Assume that the following data are given:
\begin{itemize}
\item a compact smooth Riemannian manifold $(M,g)$ with smooth boundary $\Sigma:=\partial M$,
\item Hermitian vector bundles $(E,h^E)$ and $(F,h^F)$ over $M$,
\item an order $d>0$ elliptic differential operator
\begin{equation}\label{e:diff-op}
A\colon \Ci(M;E)\too\Ci(M;F),
\end{equation}
\item $A^t$ denotes the formal adjoint of $A$ with respect to the metrices $g$, $h^E$, $h^F$.
\item Let $\sigma\ge 0$. Then $A_{m,\sigma}$ denotes the operator $A\colon H_0^{d+\sigma}(M;E)\to H^{\sigma}(M;E)$ (see Subsection \ref{sss:sobolev_spaces} for the definition of the Sobolev spaces on the manifold with boundary), and $A_{M,\sigma}$
denotes the operator $A\colon H^{d+\sigma}(M;E)\to H^{\sigma}(M;E)$.
\end{itemize}
\end{ass}

The following lemma is standard in elliptic operator theory.

\begin{lemma} Let $A$ satisfy Assumption \ref{a:ell}. Then $A_{m,\sigma}$ and $A_{M,\sigma}$ are semi-Fredholm operators,
$\ker A_{m,\sigma}=\ker A_{m,0}$ consists of smooth sections, and we have $\dim
(H^{\sigma}(M;E))/(\ran A_{M,\sigma})=\dim\ker A^t_{m,0}$.
\end{lemma}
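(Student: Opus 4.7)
The plan is to combine standard elliptic regularity with the a priori estimates available from Assumption \ref{a:ell}, and then identify the cokernel of $A_{M,\sigma}$ by a duality argument using $A^t$.

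First I would analyze $\ker A_{m,\sigma}$. If $u\in H_0^{d+\sigma}(M;E)$ satisfies $Au=0$, then the zero extension $\tilde u$ to a smooth closed double $\widetilde M$ of $M$ lies in $H^{d+\sigma}(\widetilde M;\widetilde E)$, and the vanishing of all traces of order $\le d-1$ (which is part of membership in $H_0^d$) ensures by Green's formula that $\tilde u$ solves $\widetilde A \tilde u = 0$ distributionally on $\widetilde M$, where $\widetilde A$ is any smooth elliptic extension of $A$ past $\Sigma$. Interior elliptic regularity on the closed manifold $\widetilde M$ then gives $\tilde u\in\Ci(\widetilde M;\widetilde E)$, so $u=\tilde u|_M\in\Ci(M;E)$ with vanishing Cauchy data on $\Sigma$; in particular $u\in H_0^{d+\sigma'}$ for every $\sigma'\ge 0$, so $\ker A_{m,\sigma}=\ker A_{m,0}\subset \Ci(M;E)$.

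Next I would obtain the semi-Fredholm property of $A_{m,\sigma}$ from the G\aa rding-type a priori estimate $\|u\|_{H^{d+\sigma}}\le C(\|Au\|_{H^\sigma}+\|u\|_{L^2})$ valid on $H_0^{d+\sigma}$. Applied on $\ker A_{m,0}$, this estimate together with the Rellich compactness $H^d\hookrightarrow L^2$ makes the unit ball of the kernel precompact in $L^2$, so $\dim\ker A_{m,0}<+\infty$. Fixing a closed complement $D_0$ of $\ker A_{m,0}$ in $H_0^{d+\sigma}$ (which exists since the kernel is finite-dimensional), the estimate improves to $\|u\|_{H^{d+\sigma}}\le C'\|Au\|_{H^\sigma}$ on $D_0$ by the standard "compact error absorption" trick, and closedness of $\ran A_{m,\sigma}$ follows at once. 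The identical argument applied to $A^t$ (also elliptic of order $d$) shows $\ker A^t_{m,0}\subset \Ci(M;F)$ is finite-dimensional and $\ran A^t_{m,\sigma}$ is closed in $H^\sigma(M;F)$.

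Finally I would establish semi-Fredholmness of $A_{M,\sigma}$ together with the cokernel identity. Since $A_{M,\sigma}$ extends $A_{m,\sigma}$, its range contains the closed subspace $\ran A_{m,\sigma}$; to show $\ran A_{M,\sigma}$ is closed and to compute its codimension I would use the duality
\[
(\ran A_{M,\sigma})^{\perp}\ =\ \{v\in H^{\sigma}(M;F)^*;\ \lla Au,v\rra =0\ \forall u\in H^{d+\sigma}(M;E)\}.
\]
Restriction to $u\in\Ci_0(M^0;E)$ gives $A^t v=0$ in $M^0$ distributionally, while Green's formula applied to arbitrary $u\in\Ci(M;E)$ forces all boundary terms in the Green pairing to vanish for all admissible Cauchy data of $u$; by the non-degeneracy of the Green symplectic form (Proposition \ref{p:minus_Green_form_diff}) this is exactly the condition that every trace of $v$ up to order $d-1$ vanishes, i.e.\ $v\in H_0^d(M;F)$ with $A^t v=0$, hence $v\in\ker A^t_{m,0}$. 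Since this kernel consists of smooth sections by the first paragraph, the pairing $\lla\cdot,\cdot\rra_{L^2}$ makes it a subspace of every $H^{-\sigma}$-dual, and the natural map $\ker A^t_{m,0}\to (H^\sigma/\ran A_{M,\sigma})^*$ is injective; finite-dimensionality of $\ker A^t_{m,0}$ then yields $\dim H^\sigma/\ran A_{M,\sigma}\le\dim\ker A^t_{m,0}<+\infty$, and the reverse inequality follows because every $v\in\ker A^t_{m,0}$ produces a nontrivial continuous functional on $H^\sigma$ vanishing on $\ran A_{M,\sigma}$. The hardest step will be verifying that a distributional solution $v$ of $A^t v=0$ orthogonal to $\ran A_{M,\sigma}$ genuinely acquires the boundary regularity needed to lie in $H_0^{d}(M;F)$; this is where I would invoke the Lions--Magenes trace theory for elliptic operators of arbitrary order (cf.\ Frey \cite[Theorem 2.1.4]{Frey:2005}) to legitimize the integration-by-parts manipulation in the weak setting.
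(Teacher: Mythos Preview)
Your treatment of $A_{m,\sigma}$ is correct and essentially matches the paper's (G{\aa}rding estimate plus interior regularity), with your doubling argument giving a more explicit justification of smoothness of the kernel than the paper's one-line appeal to ``regularity''.

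For $A_{M,\sigma}$ the two approaches diverge, and yours has a gap. The paper does \emph{not} argue by duality; instead it invokes the Calder{\'o}n projection $C_+(A)$ and sets $D_\sigma:=\{u\in H^{d+\sigma}(M;E):C_+(A)\gamma(u)=0\}$. The restriction $A_{D_\sigma}\colon D_\sigma\to H^\sigma$ is Fredholm (this is the defining property of a well-posed boundary condition), so $\ran A_{D_\sigma}$ is closed of finite codimension. Since $\ran A_{M,\sigma}\supset\ran A_{D_\sigma}$, the larger range is a finite extension of a closed subspace and is therefore itself closed with finite codimension; the identification of that codimension with $\dim\ker A^t_{m,0}$ is then the standard adjoint computation $(A_{M,0})^*=A^t_{m,0}$, lifted to $\sigma>0$ by regularity.

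Your duality route, by contrast, identifies $(\ran A_{M,\sigma})^{\perp}$ with $\ker A^t_{m,0}$, but this only bounds the codimension of the \emph{closure} $\overline{\ran A_{M,\sigma}}$; the inference ``$\dim H^\sigma/\ran A_{M,\sigma}\le\dim\ker A^t_{m,0}$'' is not justified without first knowing the range is closed. And closedness does not follow from the containment $\ran A_{M,\sigma}\supset\ran A_{m,\sigma}$ either, since $\ran A_{m,\sigma}$ has infinite codimension in $H^\sigma$ (its annihilator is $\ker A^t_{\max}$, not $\ker A^t_{m,0}$). To close the gap you need exactly the kind of solvability statement---for every $f\perp\ker A^t_{m,0}$ there exists $u\in H^{d+\sigma}$ with $Au=f$---that a well-posed boundary problem provides. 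Your proposed appeal to Frey \cite[Theorem 2.1.4]{Frey:2005} is precisely a statement of this type, so once you invoke it you are effectively reproducing the paper's Calder{\'o}n argument rather than bypassing it.
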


\begin{proof} By \auindex{Frey,\ C.}\cite[Proposition A.1.4]{Frey:2005} and G{\aa}rding's inequality, $A_{m,\sigma}$ is left-Fredholm,
i.e., $\dim\ker A_{m,\sigma}<+\infty$ and $\ran A_{m,\sigma}$ is closed in $H^{\sigma}(M;E)$. By
the regularity, $\ker A_{m,\sigma}$ consists of smooth sections and hence $\ker A_{m,\sigma}=\ker
A_{m,0}$.

Denote by $C_+(A)$ the Calder{\'{o}}n projection of $A$. Denote by $\gamma$ the trace map. Set
\[D_{\sigma}:=\{u\in H^{d+\sigma}(M;E); C_+(A)(\gamma(u))=0\}.\]
Denote by $A_{D_{\sigma}}$ the operator $A\colon D_{\sigma}\to H^{\sigma}(M;E)$. Then
$A_{D_{\sigma}}$ is a Fredholm operator. Since $\ran A_{M,\sigma}\supset\ran A_{D_{\sigma}}$, the
space $\ran A_{D_{\sigma}}$ is closed and we have $\dim (H^{\sigma}(M;E))/(\ran
A_{D_{\sigma}})<+\infty$. Then we have
\[\dim (H^{\sigma}(M;E))/(\ran A_{M,\sigma})=\dim\ker A^t_{m,\sigma}=\dim\ker A^t_{m,0}.\qedhere
\]
\exendproof

\begin{definition}\label{d:wiucp} Let $A$ satisfy Assumption \ref{a:ell}. The elliptic operator $A$ is said to have
{\em weak inner unique continuation property (UCP)} if $\ker A_{m,0}=\{0\}$.
\end{definition}

\begin{corollary}[Local stability of weak inner UCP]\label{c:ls-wiucp} Let $X,Y$ be Banach spaces and $A\in\Bb(X,Y)$ a bounded operator.
Assume that $\ker A=\{0\}$ and $\ran A$ is closed in $Y$. Then there exists a $\delta>0$ such that
for all $A^{\prime}\in\Bb(X,Y)$ and $\|A^{\prime}-A\|<\delta$, we have $\ker A=\{0\}$.
\end{corollary}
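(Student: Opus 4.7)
The plan is to translate the question about kernels into a question about graphs in the product space $X\times Y$, and then to apply the semicontinuity of the intersection dimension established in Proposition \ref{p:closed-spaces-dimensions}(b). Set $Z:=X\times Y$, $M:=\Graph(A)$, $M':=\Graph(A')$, and $N:=X\times\{0\}$. Then $\ker A'=\{0\}$ is equivalent to $M'\cap N=\{0\}$, so it suffices to control $\dim(M'\cap N)$ under small variations of $A$.

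First I would verify the hypothesis needed to invoke Proposition \ref{p:closed-spaces-dimensions}(b), namely that $M+N$ is closed. A direct computation shows
\[
M+N\ =\ \{(x,Ax)+(y,0)\,;\,x,y\in X\}\ =\ X\times\ran A,
\]
since for any $z\in X$ and $w=Ax\in\ran A$ one can write $(z,w)=(x,Ax)+(z-x,0)$. Because $\ran A$ is closed in $Y$ by assumption, $M+N$ is closed in $Z$. Moreover $M\cap N=\ker A\times\{0\}=\{0\}$, so $\dim(M\cap N)=0$. Hence by \cite[Theorem IV.4.2]{Ka95} (already cited in the proposition) we have $\gamma(M,N)>0$.

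Next I would estimate $\delta(M',M)$ in terms of $\|A'-A\|$. For any unit vector $(x,A'x)\in M'$ (with $\|x\|^2+\|A'x\|^2=1$ in the chosen product norm on $Z$), the point $(x,Ax)$ belongs to $M$ and
\[
\|(x,A'x)-(x,Ax)\|\ =\ \|(A'-A)x\|\ \le\ \|A'-A\|\,\|x\|\ \le\ \|A'-A\|.
\]
Thus $\delta(M',M)\le \|A'-A\|$, and the same trivial bound gives $\delta(N,N)=0$. Consequently, for all $A'$ with $\|A'-A\|$ sufficiently small, the assumption
\[
\delta(M',M)\bigl(1+\gamma(M,N)\bigr)+\delta(N,N)\ <\ \gamma(M,N)
\]
of Proposition \ref{p:closed-spaces-dimensions}(b) is satisfied.

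Applying that part of the proposition then yields
\[
\dim(M'\cap N)\ \le\ \dim(M\cap N)\ =\ 0,
\]
i.e.\ $\ker A'=\{0\}$, which is exactly the claim. The one place requiring a little care is the reduction of operator-norm closeness to gap closeness of graphs; this is essentially immediate with the product norm on $Z$, but one must be slightly attentive to the normalization convention in the definition of $\delta$. No deeper obstacle arises: the corollary is really a direct application of Proposition \ref{p:closed-spaces-dimensions}(b) to the Fredholm-type pair $(\Graph A,X\times\{0\})$ associated with a bounded injective operator of closed range. (Note that the statement in the excerpt contains an obvious typo: the conclusion should read $\ker A'=\{0\}$, which is what the argument above delivers.)
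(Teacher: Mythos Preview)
Your proof is correct and follows essentially the same approach as the paper: set $Z=X\times Y$, $M=\Graph(A)$, $M'=\Graph(A')$, $N=X\times\{0\}$, and apply Proposition~\ref{p:closed-spaces-dimensions}(b). The paper's proof is terser, simply citing the proposition together with the proof of \cite[Lemma~16]{BooZhu:2013} for the identifications $M\cap N=\ker A\times\{0\}$ and $M+N=X\times\ran A$, whereas you spell these out directly; you also correctly flag the typo in the conclusion.
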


\begin{proof} Set $Z:=X\times Y$, $M:=\Graph(A)$, $M^{\prime}:=\Graph(A^{\prime})$ and $N=N':=X\times\{0\}$.
By Proposition  \ref{p:closed-spaces-dimensions}.b and the proof of \auindex{Boo{\ss}--Bavnbek,\
B.}\auindex{Zhu,\ C.}\cite[Lemma 16]{BooZhu:2013}, our result follows.
\end{proof}

Now we refine our estimates to investigate the deformation behavior a bit further.

\begin{lemma}\label{l:complement-estimate} Let $X$ be a Banach space and $M,N$ be closed subspaces of $X$.
Assume that $M\nsubseteq N$. Then for any $\varepsilon\in(0,1)$ and $u\in M\setminus N$, there
exists a $u_0\in M\setminus N$ such that $\dist(u_0,N)=\dist(u,N)$ and $\dist(u_0,M\cap
N)=\dist(u,M\cap N)\ge(1-\varepsilon)\|u_0\|$.
\end{lemma}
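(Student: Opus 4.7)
The plan is to look for $u_0$ of the form $u_0 := u - v$ with $v \in M\cap N$, i.e., to translate $u$ by an element of the intersection. This is the natural ansatz because both required distance identities will then come for free from elementary properties of the distance function.

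First I would verify these invariance properties. Since $v \in N$, translating by $v$ does not change the distance to $N$: for every $w \in N$, $\|u-v-w\| = \|u-(v+w)\|$ and $v+w$ ranges over all of $N$ as $w$ does, so $\dist(u_0,N) = \dist(u,N)$. Similarly, since $v \in M\cap N$, $\dist(u_0, M\cap N) = \dist(u-v, M\cap N) = \dist(u, M\cap N)$. Next I would note that $u_0 \in M\setminus N$: it lies in $M$ because both $u$ and $v$ do, and if $u_0$ were in $N$ we would have $u = u_0 + v \in N$, contradicting $u \in M\setminus N$. In particular $u_0 \neq 0$.

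The heart of the argument is the choice of $v$. Since $M\cap N$ is closed and $u \notin M\cap N$, the number $r := \dist(u,M\cap N)$ is strictly positive. By definition of the infimum, for the given $\varepsilon \in (0,1)$ there exists $v \in M\cap N$ with
\[
\|u-v\| \;\le\; \frac{r}{1-\varepsilon}.
\]
Equivalently, $(1-\varepsilon)\|u_0\| \le r = \dist(u,M\cap N) = \dist(u_0, M\cap N)$, which is the third required inequality.

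I do not anticipate a genuine obstacle here: everything reduces to the standard observation that the distance to a subspace is translation-invariant under shifts by elements of that subspace, together with the definition of infimum. The only mild subtlety is checking that $\dist(u,M\cap N) > 0$ so that the inequality $\|u-v\| \le r/(1-\varepsilon)$ is meaningful (giving in particular $u_0 \ne 0$); this is immediate from the closedness of $M\cap N$ and the hypothesis $u \in M\setminus N$. The closedness of $M$ and $N$ is not otherwise used in the argument, but is what ensures that the ambient geometric setup makes sense.
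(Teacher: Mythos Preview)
Your proof is correct and follows exactly the same approach as the paper: choose $v\in M\cap N$ with $\|u-v\|\le \dist(u,M\cap N)/(1-\varepsilon)$ and set $u_0:=u-v$. Your write-up is in fact more careful than the paper's one-line argument, explicitly verifying the translation invariance of the two distances and that $u_0\in M\setminus N$.
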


\begin{proof} There exists $v\in M\cap N$ such that $\dist(u,M\cap N)\le(1-\varepsilon)\|u-v\|$. Set $u_0:=u-v$.
\end{proof}

We have the following estimate. See \auindex{Neubauer,\ G.}\cite[(1.4.2)]{Ne68} for a different
estimate.

\begin{lemma} Let $X$ be a Banach space and $M,N,M^{\prime}, N^{\prime}$ be closed linear subspaces.
Assume that $\frac{1-\delta(M^{\prime}\cap N^{\prime},M\cap N)}{1+\delta(M^{\prime}\cap
N^{\prime},M\cap N)}>\delta(M,M^{\prime})$. Then we have
\begin{equation}\label{e:gamma-estimate}
\gamma(M^{\prime},N^{\prime})\ \le\
\frac{(1+\delta(N,N^{\prime}))\gamma(M,N)+\delta(M,M^{\prime})+\delta(N,N^{\prime})}{\frac{1-\delta(M^{\prime}\cap
N^{\prime},M\cap N)}{1+\delta(M^{\prime}\cap N^{\prime},M\cap N)}-\delta(M,M^{\prime})}.%
\end{equation}
\end{lemma}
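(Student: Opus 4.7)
\emph{Strategy.} The stated inequality bounds $\gamma(M',N')$ from above, so I would exhibit a specific $u_0'\in M'\setminus N'$ whose ratio $\dist(u_0',N')/\dist(u_0',M'\cap N')$ realises the claimed bound. The natural candidate is obtained by transferring a near--$\gamma(M,N)$--optimal vector $u_0\in M$ to $M'$ via the gap $\delta(M,M')$. Assume $M\not\subset N$ (otherwise $\gamma(M,N)=1$ and the argument needs only minor adaptation). Fix $\eta,\varepsilon>0$ small. Choose $u\in M\setminus N$ with $\dist(u,N)\le(\gamma(M,N)+\eta)\dist(u,M\cap N)$, apply Lemma~\ref{l:complement-estimate} to produce $u_0\in M\setminus N$ having the same two distances and satisfying $\dist(u_0,M\cap N)\ge(1-\varepsilon)\|u_0\|$, and rescale to $\|u_0\|=1$. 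Then pick $u_0'\in M'$ with $\|u_0-u_0'\|\le\delta(M,M')+\varepsilon$, which exists by the very definition of $\delta(M,M')$.

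\emph{Numerator bound.} For every $n\in N$ one has $\dist(n,N')\le\delta(N,N')\|n\|$, so the triangle inequality and $\|n\|\le\|u_0\|+\|u_0-n\|$ yield, after taking the infimum over $n\in N$,
\[
\dist(u_0,N')\ \le\ (1+\delta(N,N'))\dist(u_0,N)+\delta(N,N')\|u_0\|\ \le\ (1+\delta(N,N'))(\gamma(M,N)+\eta)+\delta(N,N').
\]
Adding $\|u_0-u_0'\|$ then gives
\[
\dist(u_0',N')\ \le\ \delta(M,M')+(1+\delta(N,N'))\gamma(M,N)+\delta(N,N')+O(\eta+\varepsilon).
\]

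\emph{Denominator bound.} Set $D:=\dist(u_0',M'\cap N')$ and $\delta_I:=\delta(M'\cap N',M\cap N)$. Choose $w'\in M'\cap N'$ with $\|u_0'-w'\|\le D+\varepsilon$ and $w\in M\cap N$ with $\|w'-w\|\le\delta_I\|w'\|+\varepsilon$; note $\|w'\|\le\|u_0'\|+D+\varepsilon\le(1+\delta(M,M'))+D+O(\varepsilon)$. Using $\|u_0-w\|\ge\dist(u_0,M\cap N)\ge1-\varepsilon$ and the triangle inequality $\|u_0-w\|\le\|u_0-u_0'\|+\|u_0'-w'\|+\|w'-w\|$, after letting $\varepsilon\to 0$ I obtain
\[
1\ \le\ \delta(M,M')+D+\delta_I(1+\delta(M,M')+D)\ =\ \delta(M,M')(1+\delta_I)+\delta_I+D(1+\delta_I),
\]
whence $D\ \ge\ \dfrac{1-\delta_I}{1+\delta_I}-\delta(M,M')$. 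The standing hypothesis makes this positive, so $u_0'\notin M'\cap N'$, hence $u_0'\in M'\setminus N'$, so that $\gamma(M',N')\le\dist(u_0',N')/D$. Combining the two bounds and sending $\eta,\varepsilon\to 0$ gives the stated inequality.

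\emph{Main obstacle.} The delicate point is not any single estimate but the bookkeeping needed so that the various triangle inequalities go in compatible directions and the remainders in $\varepsilon,\eta$ disappear cleanly. In particular, verifying that $u_0'$ genuinely lies in $M'\setminus N'$---required for it to contribute to the infimum defining $\gamma(M',N')$---is exactly what forces the hypothesis $\tfrac{1-\delta(M'\cap N',M\cap N)}{1+\delta(M'\cap N',M\cap N)}>\delta(M,M')$, and it is here that the one-sided quantitative version of Lemma~\ref{l:complement-estimate} (normalising $\dist(u_0,M\cap N)$ against $\|u_0\|$) is indispensable.
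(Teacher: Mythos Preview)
Your proof is correct and follows essentially the same route as the paper's: pick a near-optimal $u_0\in M$ via Lemma~\ref{l:complement-estimate}, transfer it to $u_0'\in M'$ using $\delta(M,M')$, then bound $\dist(u_0',N')$ from above and $\dist(u_0',M'\cap N')$ from below. The only cosmetic difference is that the paper cites Kato's Lemma~IV.2.2 for both the numerator and denominator estimates, whereas you reprove those inequalities inline via explicit triangle-inequality chains; also, the paper first writes the ratio bound for a general $u\in M\setminus N$ and only then specialises to the near-optimal vector, while you specialise immediately. Neither change affects the substance.
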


\begin{proof} \textbf{1.} If $M\subset N$, we have $\gamma(M^{\prime},N^{\prime})\le 1=\gamma(M,N)$. So (\ref{e:gamma-estimate}) holds.
\newline \textbf{2.} Assume that $M\nsubseteq N$. Then for any $\varepsilon>0$ and $u\in M\setminus N$, there exists
$u^{\prime}\in M^{\prime}$ such that $\|u-u^{\prime}\|\le\|u\|(\delta(M,M^{\prime})+\varepsilon)$.
By \auindex{Kato,\ T.}\cite[Lemma IV.2.2]{Ka95} we have
\begin{eqnarray*}\dist(u^{\prime},M^{\prime}\cap N^{\prime})&\ge&\dist(u,M^{\prime}\cap N^{\prime})-\|u-u^{\prime}\|\\
&\ge&\frac{\dist(u,M\cap N)-\|u\|\delta(M^{\prime}\cap N^{\prime},M\cap N)}{1+\delta(M^{\prime}\cap N^{\prime},M\cap N)}\\
& &\qquad\qquad\qquad\qquad\qquad -\|u\|(\delta(M,M^{\prime})+\varepsilon).
\end{eqnarray*}
If the right side of the inequality is larger than $0$, we have $u^{\prime}\in M^{\prime}\setminus
N^{\prime}$. By \auindex{Kato,\ T.}\cite[Lemma IV.2.2]{Ka95} we also have
\begin{multline*}\gamma(M^{\prime},N^{\prime})\ \le\ \frac{\dist(u^{\prime},N^{\prime})}{\dist(u^{\prime},M^{\prime}\cap N^{\prime})}\\
\le\
\frac{(1+\delta(N,N^{\prime}))\dist(u,N)+\|u\|\delta(N,N^{\prime})+\|u\|(\delta(M,M^{\prime})+\varepsilon)}{\frac{\dist(u,M\cap
N)-\|u\|\delta(M^{\prime}\cap N^{\prime},M\cap N)}{1+\delta(M^{\prime}\cap N^{\prime},M\cap
N)}-\|u\|(\delta(M,M^{\prime})+\varepsilon)}.
\end{multline*}
Let $\varepsilon\to 0$, and we have
$$\gamma(M^{\prime},N^{\prime})\le\frac{(1+\delta(N,N^{\prime}))\dist(u,N)+\|u\|(\delta(M,M^{\prime})+\delta(N,N^{\prime}))}
{\frac{\dist(u,M\cap N)-\|u\|\delta(M^{\prime}\cap N^{\prime},M\cap N)}{1+\delta(M^{\prime}\cap
N^{\prime},M\cap N)}-\|u\|\delta(M,M^{\prime})}.
$$

By Lemma \ref{l:complement-estimate}, for any%
\[
\varepsilon_1\in(0,1-\delta(M^{\prime}\cap N^{\prime},M\cap
N)-\delta(M,M^{\prime})(1+\delta(M^{\prime}\cap N^{\prime},M\cap N)),%
\]
there exists $v\in M$ such that%
\[
\dist(v,N)\le(\gamma(M,N)+\varepsilon_1)\dist(v,M\cap N)\le\|v\|(\gamma(M,N)+\varepsilon_1)%
\]
and $\dist(v,M\cap N)\ge(1-\varepsilon_1)\|v\|$. Then we have
\begin{align*}\gamma&(M^{\prime},N^{\prime})\\
&\le\ \frac{(1+\delta(N,N^{\prime}))\|v\|(\gamma(M,N)+\varepsilon_1)
+\|v\|(\delta(M,M^{\prime})+\delta(N,N^{\prime}))}
{\frac{\|v\|(1-\varepsilon_1)-\|v\|\delta(M^{\prime}\cap N^{\prime},M\cap N)}{1+\delta(M^{\prime}\cap N^{\prime},M\cap N)}-\|v\|\delta(M,M^{\prime})}\\
&=\ \frac{(1+\delta(N,N^{\prime}))(\gamma(M,N)+\varepsilon_1)
+\delta(M,M^{\prime})+\delta(N,N^{\prime})} {\frac{1-\varepsilon_1-\delta(M^{\prime}\cap
N^{\prime},M\cap N)}{1+\delta(M^{\prime}\cap N^{\prime},M\cap N)}-\delta(M,M^{\prime})}.
\end{align*}
Let $\varepsilon_1\to 0$, and we have
\[
\gamma(M^{\prime},N^{\prime})\le \frac{(1+\delta(N,N^{\prime}))\gamma(M,N)
+\delta(M,M^{\prime})+\delta(N,N^{\prime})} {\frac{1-\delta(M^{\prime}\cap N^{\prime},M\cap
N)}{1+\delta(M^{\prime}\cap N^{\prime},M\cap N)}-\delta(M,M^{\prime})}.\qedhere
\]
\exendproof

By \auindex{Kato,\ T.}\cite[Theorem IV.4.2 and Lemma IV.4.4]{Ka95}, we have

\begin{corollary}
Let $X$ be a Banach space and $M,N$ be closed linear subspaces of $X$. Then we have
\newline (a) $\limsup_{M^{\prime}\to M, N^{\prime}\to N}\gamma(M^{\prime},N^{\prime})\le\gamma(M,N)$ if $M+N$ is closed, and
\newline (b) $\lim_{M^{\prime}\to M, N^{\prime}\to N,M^{\prime}\cap N^{\prime}\to M\cap N}\gamma(M^{\prime},N^{\prime})=\gamma(M,N)$.
\end{corollary}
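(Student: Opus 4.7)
The plan is to derive both parts of the Corollary directly from inequality \eqref{e:gamma-estimate} of the preceding Lemma, combined with the two quoted Kato results (Theorem IV.4.2, which tells us that $M+N$ closed iff $\gamma(M,N)>0$, and Lemma IV.4.4, which supplies the quantitative estimate used in Proposition \ref{p:closed-spaces-dimensions}(a)).

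For (a), fix a pair $M',N'$ close to $M,N$ in the gap topology. Since $M+N$ is closed, Kato's Theorem IV.4.2 gives $\gamma(M,N)>0$, so Proposition \ref{p:closed-spaces-dimensions}(a) applies and yields
\[
\delta(M'\cap N',M\cap N)\ \le\ \frac{2}{\gamma(M,N)}\bigl(\delta(M',M)+\delta(N',N)\bigr).
\]
As $M'\to M$ and $N'\to N$, the right-hand side tends to $0$, so the hypothesis $\frac{1-\delta(M'\cap N',M\cap N)}{1+\delta(M'\cap N',M\cap N)}>\delta(M,M')$ of the preceding Lemma is satisfied for all sufficiently close $M',N'$. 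Applying \eqref{e:gamma-estimate} and taking $\limsup$, the denominator tends to $1$ and the numerator to $\gamma(M,N)$, so $\limsup\gamma(M',N')\le\gamma(M,N)$.

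For (b), the upper bound $\limsup\gamma(M',N')\le\gamma(M,N)$ is just (a). For the matching lower bound I would invoke the preceding Lemma symmetrically, swapping the roles of $(M,N)$ and $(M',N')$: this yields
\[
\gamma(M,N)\ \le\ \frac{(1+\delta(N',N))\gamma(M',N')+\delta(M',M)+\delta(N',N)}{\frac{1-\delta(M\cap N,M'\cap N')}{1+\delta(M\cap N,M'\cap N')}-\delta(M',M)},
\]
provided the corresponding hypothesis $\frac{1-\delta(M\cap N,M'\cap N')}{1+\delta(M\cap N,M'\cap N')}>\delta(M',M)$ holds. Under the additional hypothesis $M'\cap N'\to M\cap N$ (equivalently $\hat\delta(M'\cap N',M\cap N)\to 0$), together with $M'\to M,\ N'\to N$, this is satisfied for $M',N'$ close enough to $M,N$. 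Passing to $\liminf$ in the inequality, the denominator tends to $1$ while the perturbation terms in the numerator vanish, giving $\gamma(M,N)\le\liminf\gamma(M',N')$. Combined with (a), this delivers (b).

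The only non-routine point is justifying the symmetric application of \eqref{e:gamma-estimate} in (b): the inequality was proved for arbitrary closed subspaces without assuming $M+N$ (or $M'+N'$) is closed, and its sole side-condition on the two gaps is forced to be satisfied for close $M',N'$ by the very assumption that $M'\cap N'\to M\cap N$. No further closedness hypothesis on $M'+N'$ is needed, so the two inequalities combine cleanly to give the stated limit.
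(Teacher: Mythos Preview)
Your approach is exactly the one the paper has in mind: the preceding estimate \eqref{e:gamma-estimate} together with Kato's Theorem IV.4.2 and Lemma IV.4.4 (the latter being the content of Proposition \ref{p:closed-spaces-dimensions}(a)). Your argument for (a) and your symmetric application of \eqref{e:gamma-estimate} for the lower bound in (b) are correct.

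There is one small slip. In (b) you write that ``the upper bound $\limsup\gamma(M',N')\le\gamma(M,N)$ is just (a)'', but (a) carries the extra hypothesis that $M+N$ is closed, which is \emph{not} assumed in (b). The fix is immediate and you essentially already have it: apply \eqref{e:gamma-estimate} directly. The only reason the closedness of $M+N$ was used in (a) was to force $\delta(M'\cap N',M\cap N)\to 0$ via Proposition \ref{p:closed-spaces-dimensions}(a); in (b) this convergence is part of the hypothesis $M'\cap N'\to M\cap N$, so the side-condition of \eqref{e:gamma-estimate} is satisfied for close $M',N'$ and the right-hand side tends to $\gamma(M,N)$ just as before. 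With that one-line adjustment your proof is complete and coincides with the paper's intended argument.
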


Now we are ready to investigate the deformation behavior, following some lines of
\auindex{Neubauer,\ G.}\cite[Lemma 1.5 (1), (2)]{Ne68}:

\begin{proposition}\label{p:close-to}
Let $(M'_j)_{j=1,2,\dots}$ be a sequence in $\Ss(X)$ converging to $M\in\Ss(X)$ in the gap
topology, shortly $M'\to M$, let similarly $N'\to N$  and let $M+N$ be closed. Then $M'\cap N'\to
M\cap N$ if and only if $M'+N'\to M+N$. Differently put, we prove in the gap topology that
$\cap\colon\Ss(X)^2_{\operatorname{cl}}\to\Ss(X)$ is continuous if and only if
$+\colon\Ss(X)^2_{\operatorname{cl}}\to\Ss(X)$ is continuous, where $\Ss(X)^2_{\operatorname{cl}}$
denotes the set $\{(M,N)\in\Ss(X)\times\Ss(X);\, M+N \text{ closed}\}$.
\end{proposition}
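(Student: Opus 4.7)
My plan is to identify both sides of the equivalence with a common third condition, namely the uniform lower bound on the minimum gap
\begin{equation*}
(\mathrm{G})\qquad \liminf_{j\to\infty}\gamma(M'_j,N'_j)\;>\;0.
\end{equation*}
Under the standing hypotheses, two one-sided gap estimates come for free. First, $\delta(M'_j\cap N'_j,\,M\cap N)\to 0$ is immediate from Proposition \ref{p:closed-spaces-dimensions}(a) together with $\gamma(M,N)>0$, which follows from $M+N$ being closed by Kato \cite[Theorem IV.4.2]{Ka95}. Second, $\delta(M+N,\,M'_j+N'_j)\to 0$ would follow from a direct decomposition argument: each $u\in S_{M+N}$ splits as $u=m+n$ with $\|m\|,\|n\|$ uniformly bounded by roughly $2/\gamma(M,N)$, and the two-sided gap convergences $\delta(M,M'_j),\,\delta(N,N'_j)\to 0$ let me approximate $m$ by some $m'_j\in M'_j$ and $n$ by some $n'_j\in N'_j$, so that $m'_j+n'_j\in M'_j+N'_j$ is close to $u$.

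The content of the proposition thus reduces to showing that the two ``hard'' one-sided gaps $\delta(M\cap N,\,M'_j\cap N'_j)\to 0$ and $\delta(M'_j+N'_j,\,M+N)\to 0$ are each equivalent to (G). The implications starting from (G) are quick: for the intersection side, I would apply Proposition \ref{p:closed-spaces-dimensions}(a) with the roles of $(M,N)$ and $(M',N')$ swapped, and for the sum side, rerun the decomposition argument above with roles swapped, using $\gamma(M'_j,N'_j)\ge c>0$ to keep the summands of elements of $M'_j+N'_j$ bounded. Conversely, from intersection convergence back to (G), I would apply the estimate \eqref{e:gamma-estimate} with the pairs swapped: its denominator tends to $1$ under $\delta(M\cap N,\,M'_j\cap N'_j)\to 0$ and $M'_j\to M$, and the resulting inequality rearranges to $\liminf_j\gamma(M'_j,N'_j)\ge \gamma(M,N)>0$.

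The main obstacle will be the remaining implication, from sum convergence back to (G). The plan is first to combine the hypothesized $\delta(M'_j+N'_j,\,M+N)\to 0$ with the automatic $\delta(M+N,\,M'_j+N'_j)\to 0$ to obtain full gap convergence $\hat\delta(M'_j+N'_j,\,M+N)\to 0$, and then to establish a quantitative propagation of closedness: proximity to $M+N$ in the gap topology, together with $\gamma(M,N)>0$, forces $M'_j+N'_j$ to be closed with $\gamma(M'_j,N'_j)$ bounded below by any prescribed constant strictly less than $\gamma(M,N)$. This last step I would prove by a direct computation from the definitions of $\delta$ and $\gamma$: each unit vector of $M'_j+N'_j$ is first approximated by an element of $M+N$, which admits a bounded splitting there, and that splitting is transported back to $M'_j+N'_j$ via the gap convergences $M\to M'_j$ and $N\to N'_j$, yielding a bounded decomposition in $M'_j+N'_j$ and hence a lower bound on $\gamma(M'_j,N'_j)$.
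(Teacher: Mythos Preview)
Your approach is genuinely different from the paper's and largely sound, but one step is not yet closed.

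The paper's proof is a duality reduction. By Kato's Theorem~IV.4.8 one has $\gamma(N^\bot,M^\bot)=\gamma(M,N)$ and $M^\bot+N^\bot$ closed in $X^*$; together with $\hat\delta(V,W)=\hat\delta(W^\bot,V^\bot)$, passing to annihilators exchanges $\cap$ with $+$ while preserving all the relevant quantities. The proposition is therefore self-dual, and it suffices to extract one implication from ``the above lemma'' (the estimate~\eqref{e:gamma-estimate}): intersection convergence forces $\gamma(M'_j,N'_j)\to\gamma(M,N)>0$, which is exactly your condition (G), and then Proposition~\ref{p:closed-spaces-dimensions}(a) in $X^*$ gives sum convergence. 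Your route, by contrast, stays entirely inside $X$ and proves all four implications $[\cap\text{-conv}]\Leftrightarrow(\mathrm G)\Leftrightarrow[+\text{-conv}]$ by hand. That is more elementary and more transparent, at the price of redoing work that duality does in one stroke.

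The soft spot is your last implication, $[+\text{-conv}]\Rightarrow(\mathrm G)$. The transported splitting you describe only produces $m'\in M'_j$, $n'\in N'_j$ with $\|u-(m'+n')\|$ small; it does \emph{not} decompose $u$ itself, and it is an \emph{exact} bounded decomposition that is equivalent to a lower bound on $\gamma(M'_j,N'_j)$. The fix is standard but needs to be said: iterate on the remainder $u-(m'+n')\in M'_j+N'_j$ and sum the resulting geometric series inside the closed subspaces $M'_j$ and $N'_j$; for $j$ large the contraction ratio is $<1$ and you obtain an honest decomposition with controlled norms. Alternatively, this is precisely where the paper's trick saves effort: sum convergence in $X$ is intersection convergence of annihilators in $X^*$, and you have already shown (via the swapped~\eqref{e:gamma-estimate}) that intersection convergence yields (G); applying this in $X^*$ and invoking $\gamma((N'_j)^\bot,(M'_j)^\bot)=\gamma(M'_j,N'_j)$ gives (G) without any decomposition argument.
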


\begin{proof}
By \auindex{Kato,\ T.}\cite[Theorem IV.4.8]{Ka95}, we have $\gamma(N^{\bot},M^{\bot})=\gamma(M,N)$.
Here $N^{\bot},M^{\bot}\< X^*$ denote the annihilators in the dual space $X^*$\/. By
the cited theorem, $M^{\bot}+N^{\bot}$ is closed. So the proposition
follows from the above lemma.
\end{proof}

\begin{corollary}\label{c:finite-close-to}
Let $M'\to M$, $N'\to N$ and let $M+N$ be closed. Assume that $\dim(M'\cap N')=\dim(M\cap
N)<+\infty$ or $\dim X/(M'+N')=\dim X/(M+N)<+\infty$. Then we have $M'\cap N'\to M\cap N$ and
$M'+N'\to M+N$.
\end{corollary}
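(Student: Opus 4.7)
The plan is to dispatch the corollary by splitting on which of the two alternative dimension hypotheses holds, handling the first case directly from the tools already established and reducing the second case to the first by passing to annihilators in $X^{*}$.

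First I would treat the case $\dim(M'\cap N')=\dim(M\cap N)<+\infty$. This is exactly the hypothesis of Proposition \ref{p:closed-spaces-dimensions}(d), which immediately gives $M'\cap N'\to M\cap N$ in the gap topology. Since $M+N$ is assumed closed and $M'\to M$, $N'\to N$, Proposition \ref{p:close-to} then yields $M'+N'\to M+N$. So this case is essentially automatic.

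For the second case, $\dim X/(M'+N')=\dim X/(M+N)<+\infty$, the plan is to reduce to the first case via the annihilator map $V\mapsto V^{\perp}\subset X^{*}$. The first thing to verify is that $M'+N'$ is closed: Proposition \ref{p:closed-spaces-dimensions}(c) sandwiches $\dim X/(M'+N')\le\defi'(M',N')\le\dim X/(M+N)$, and equality of the outer terms together with the equality alternative in the definition of $\defi'$ forces $\defi'(M',N')=\dim X/(M'+N')$, hence $M'+N'$ is closed. Next I would invoke Kato's theorem on annihilators (already used in the proof of Proposition \ref{p:close-to}) to conclude that $V\mapsto V^{\perp}$ is an isometry from $(\Ss(X),\hat\delta)$ into $(\Ss(X^{*}),\hat\delta)$, so $M'^{\perp}\to M^{\perp}$ and $N'^{\perp}\to N^{\perp}$, and moreover $M^{\perp}+N^{\perp}$ is closed since $M+N$ is. Using $(M'+N')^{\perp}=M'^{\perp}\cap N'^{\perp}$ and the analogous identity for the unprimed spaces, the codimension hypothesis translates into
\[
\dim(M'^{\perp}\cap N'^{\perp})=\dim X/(M'+N')=\dim X/(M+N)=\dim(M^{\perp}\cap N^{\perp})<+\infty.
\]
Thus the hypotheses of the already-proved first case apply to the quadruple $(M'^{\perp},N'^{\perp},M^{\perp},N^{\perp})$, giving $M'^{\perp}\cap N'^{\perp}\to M^{\perp}\cap N^{\perp}$, i.e. $(M'+N')^{\perp}\to (M+N)^{\perp}$. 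A second use of the annihilator isometry (legitimate because $M'+N'$ and $M+N$ are closed) returns $M'+N'\to M+N$, and Proposition \ref{p:close-to} then supplies $M'\cap N'\to M\cap N$.

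The only real obstacle is bookkeeping: making sure Kato's annihilator identities give a genuine isometry of the gap and not merely an inequality on the minimum gap $\hat\gamma$, and confirming once and for all that the codimension hypothesis in Case 2 indeed forces closedness of $M'+N'$ via the $\defi'$ bound. Once these two points are pinned down, the symmetry between intersections-and-sums on one side and annihilators on the other collapses the second case into the first.
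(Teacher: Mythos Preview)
Your proposal is correct and follows essentially the same route as the paper's proof: handle the intersection-dimension case directly via Proposition~\ref{p:closed-spaces-dimensions}(d) and Proposition~\ref{p:close-to}, then reduce the codimension case to the first by passing to annihilators in $X^{*}$ (using Kato's gap isometry $\hat\delta(V,W)=\hat\delta(V^{\perp},W^{\perp})$ and the identity $(M'+N')^{\perp}=M'^{\perp}\cap N'^{\perp}$), and finally pull back via the double annihilator. The one extra step you take---verifying closedness of $M'+N'$ through the $\defi'$ sandwich---is valid but unnecessary: since $M',N'\in\Ss(X)$ and $\dim X/(M'+N')<+\infty$ by hypothesis, closedness of $M'+N'$ already follows from Remark~\ref{r:redholm-pairs} (equivalently \cite[Problem IV.4.7]{Ka95}).
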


\begin{proof} If $\dim(M'\cap N')=\dim(M\cap N)<+\infty$, by Proposition \ref{p:closed-spaces-dimensions}.d we have
$ M'\cap N'\to M\cap N$. By Proposition \ref{p:close-to} we have $M'+N'\to M+N$. If $\dim
X/(M'+N')=\dim X/(M+N)<+\infty$, by \auindex{Kato,\ T.}\cite[Section IV.4.11]{Ka95} we have
$\dim((M')^{\bot}\cap (N')^{\bot})=\dim(M^{\bot}\cap N^{\bot})<+\infty$. By \auindex{Kato,\
T.}\cite[Theorem 4.2.9, Theorem 4.4.8]{Ka95} and the above arguments we have $(M')^{\bot}\cap
(N')^{\bot}\to M^{\bot}\cap N^{\bot}$ and $(M')^{\bot}+(N')^{\bot}\to M^{\bot}+N^{\bot}$. By
\auindex{Kato,\ T.}\cite[Theorem 4.2.9, Theorem 4.4.8]{Ka95} we have $M'\cap N'\to M\cap N$ and
$M'+N'\to M+N$.
\end{proof}

\begin{corollary}\label{c:quotient} Let $X$ be a Banach space with closed subspaces $M,M',L\in\Ss(X)$.
Denote by $p$ the natural map $p\colon X\to X/L$. Let $(M'_j)_{j=1,2,\dots}$ be a sequence in $\Ss(X)$ converging to $M\in\Ss(X)$ in the gap
topology, shortly $M'\to M$. Assume that $M+L$ is closed, and $M'\to M$. If $M'\cap L\to M\cap L$, or $\dim(M'\cap L)=\dim(M\cap
N)<+\infty$, or $\dim X/(M'+L)=\dim X/(M+L)<+\infty$, we have $p(M')\to p(M)$.
\end{corollary}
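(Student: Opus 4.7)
The plan is to reduce the convergence $p(M'_j)\to p(M)$ in $\Ss(X/L)$ to the convergence $M'_j+L\to M+L$ in $\Ss(X)$, and then to invoke the continuity results for sums already established.

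First I would observe that $p(M'_j)=p(M'_j+L)$ and $p(M)=p(M+L)$, since adding $L$ does not change images under $p$. Both $M'_j+L$ and $M+L$ contain $L$, so Lemma \ref{l:quotient-delta}(d) applies and gives
\[
\hat\delta\bigl(p(M'_j),p(M)\bigr)\ =\ \hat\delta\bigl(M'_j+L,\,M+L\bigr).
\]
Consequently $p(M'_j)\to p(M)$ in the gap topology of $\Ss(X/L)$ if and only if $M'_j+L\to M+L$ in the gap topology of $\Ss(X)$.

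Next, I would apply Proposition \ref{p:close-to} to the two sequences $\{M'_j\}$ and the constant sequence $\{L\}$. Since $M+L$ is closed by hypothesis, $M'_j\to M$ by hypothesis, and $L\to L$ trivially, that proposition yields the equivalence
\[
M'_j\cap L\to M\cap L\ \iff\ M'_j+L\to M+L.
\]
Now I would dispatch the three alternative hypotheses in turn. In case (i) the left-hand convergence $M'_j\cap L\to M\cap L$ is assumed, so the equivalence gives the right-hand one directly. In case (ii), $\dim(M'_j\cap L)=\dim(M\cap L)<+\infty$, and Corollary \ref{c:finite-close-to} (with $N'_j=N=L$) yields simultaneously $M'_j\cap L\to M\cap L$ and $M'_j+L\to M+L$. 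In case (iii), $\dim X/(M'_j+L)=\dim X/(M+L)<+\infty$ triggers the dual clause of Corollary \ref{c:finite-close-to} to the same conclusion. Combining with the first paragraph, $p(M'_j)\to p(M)$ in all three cases.

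I do not expect any serious obstacle. The only minor point to verify is that $M'_j+L$ is closed (so an element of $\Ss(X)$) for all large $j$; this is automatic in case (ii)-(iii) by the approximate-deficiency bounds of Proposition \ref{p:closed-spaces-dimensions}(c) together with Proposition \ref{p:finite-extension}, and in case (i) it follows from Proposition \ref{p:close-to} itself, since the sum gap $\hat\delta(M'_j+L,M+L)\to 0$ forces $M'_j+L$ to equal its own closure once $j$ is large enough.
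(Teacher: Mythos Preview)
Your proof is correct and follows essentially the same route as the paper: first reduce $p(M'_j)\to p(M)$ to $M'_j+L\to M+L$ via Lemma~\ref{l:quotient-delta}(d), then obtain the latter from Proposition~\ref{p:close-to} and Corollary~\ref{c:finite-close-to}. The paper's proof is the one-line version of exactly this argument.

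One remark on your final paragraph: the claim that $\hat\delta(M'_j+L,M+L)\to 0$ ``forces $M'_j+L$ to equal its own closure'' is not correct as stated---a dense proper subspace of $M+L$ has gap zero from $M+L$ without being closed. In cases~(ii) and~(iii) closedness of $M'_j+L$ does follow for large $j$ from the semi-Fredholm stability (Proposition~\ref{p:kato-perturb}), but in the bare case~(i) you would need a different justification (or simply note that the formula $\delta(p(M),p(N))=\delta(M,N)$ in Lemma~\ref{l:quotient-delta}(d) is proved without using closedness of $M,N$, so the gap computation goes through regardless). The paper does not comment on this point either, so your level of rigor matches it.
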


\begin{proof} By Proposition \ref{p:close-to} and Corollary \ref{c:finite-close-to}, our condition implies that $M'+L\to M+L$. By Lemma \ref{l:quotient-delta} we have $p(M')=p(M'+L)\to p(M+L)=p(M)$.
\end{proof}

Combined with the preceding corollary, the following corollary generalizes
\auindex{Boo{\ss}--Bavnbek,\ B.}\auindex{Furutani,\ K.}\cite[Theorem 3.8]{BoFu98} and yields
Proposition \ref{p:ccp}.

\begin{corollary}[Continuity of the family of the inner solution spaces and the solution spaces]\label{c:continuous-ker}
Let $X$, $Y$ be Banach spaces and $A',A\in\Ss(X\times Y)$ be closed linear relations with $A'\to A$
and $\ran A$ closed. If $\dim\ker A'=\dim\ker A<+\infty$ or $\dim Y/\ran A'=\dim Y/\ran A<+\infty$,
we have $\ker A'\to\ker A$ and $\ran A\to\ran A'$.
\end{corollary}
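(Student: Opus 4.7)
My plan is to reduce the statement to an immediate application of Corollary \ref{c:finite-close-to} by embedding everything into the product Banach space $X\times Y$.

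\textbf{Step 1: Rephrase in the product space.} Setting $N:=X\times\{0\}\in\Ss(X\times Y)$, I observe the two standard identities
\[
  A\cap N \;=\; \ker A\times\{0\}, \qquad A+N \;=\; X\times\ran A,
\]
with analogous identities for $A'$. In particular $A+N$ is closed by hypothesis, and the dimension/codimension hypotheses translate directly into
\[
  \dim(A'\cap N)=\dim(A\cap N)<+\infty \quad\text{or}\quad \dim((X\times Y)/(A'+N))=\dim((X\times Y)/(A+N))<+\infty.
\]

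\textbf{Step 2: Apply Corollary \ref{c:finite-close-to}.} With the constant sequence $N'\equiv N$ (trivially $N'\to N$) and the given $A'\to A$, Corollary \ref{c:finite-close-to} yields simultaneously
\[
  A'\cap N \;\to\; A\cap N \qquad\text{and}\qquad A'+N \;\to\; A+N
\]
in the gap topology on $\Ss(X\times Y)$.

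\textbf{Step 3: Transfer convergence back to $X$ and $Y$.} To conclude $\ker A'\to\ker A$ in $\Ss(X)$ and $\ran A'\to \ran A$ in $\Ss(Y)$, I need the two natural maps
\[
  \iota_X\colon \Ss(X)\to \Ss(X\times Y),\ \ M\mapsto M\times\{0\},\qquad
  \iota_Y\colon \Ss(Y)\to \Ss(X\times Y),\ \ L\mapsto X\times L,
\]
to be homeomorphisms onto their images in the gap topology. For $\iota_X$ this is immediate from the definition of $\delta$ because the unit sphere of $M\times\{0\}$ is literally $\romS_M\times\{0\}$ and distances in the product are comparable to distances in the factor. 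For $\iota_Y$ one uses the identification $(X\times Y)/(X\times\{0\})\cong Y$ together with Lemma \ref{l:quotient-delta}(d) applied to $L_0:=X\times\{0\}\subset X\times \ran A$ and $L_0\subset X\times \ran A'$, giving $\hat\delta(X\times \ran A',X\times\ran A)=\hat\delta(\ran A',\ran A)$.

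\textbf{Main obstacle.} The only nontrivial point is the last step: verifying that gap-convergence of the embedded subspaces $X\times \ran A'\to X\times\ran A$ in $\Ss(X\times Y)$ really implies gap-convergence $\ran A'\to \ran A$ in $\Ss(Y)$. The direct estimate via $\delta$ is simple, but the cleanest route is to quotient out the common closed subspace $X\times\{0\}$ and invoke Lemma \ref{l:quotient-delta}, which was established precisely to handle such situations. Once this identification is in place the conclusion is automatic from Step 2.
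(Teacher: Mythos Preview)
Your proof is correct and follows precisely the paper's approach: reduce to Corollary~\ref{c:finite-close-to} via the identities $A\cap(X\times\{0\})=\ker A\times\{0\}$ and $A+(X\times\{0\})=X\times\ran A$. Your Step~3 spells out the passage from convergence in $\Ss(X\times Y)$ back to $\Ss(X)$ and $\Ss(Y)$ (using Lemma~\ref{l:quotient-delta}(d) for the range part), a detail the paper leaves implicit.
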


\begin{proof} We have
\[\ker A\times\{0\}=A\cap(X\times\{0\}),\quad X\times\ran A=A+X\times\{0\}.\]
By Corollary \ref{c:finite-close-to}, our results follows.
\end{proof}

Similar to the proof in \auindex{Kato,\ T.}\cite[Section IV.4.5]{Ka95}, we have (see \cite[Remark
IV.4.31]{Ka95} for discussions):

\begin{proposition}\label{p:kato-perturb} Let $X$ be a Banach space and let $(M,N)$ be a (semi-)Fredholm pair. Then there is a $\delta>0$
such that $\hat\delta(M^{\prime},M)+\hat\delta(N^{\prime},N)<\delta$ implies that
$(M^{\prime},N^{\prime})$ is a (semi-)Fredholm pair and
\[\Index(M^{\prime},N^{\prime})\ =\ \Index(M,N).\]
\end{proposition}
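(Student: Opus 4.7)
My strategy is to reduce the statement to the stability of the index of a bounded Fredholm operator under small perturbations, via the well-known device of encoding a pair of subspaces by a single linear map. I will handle the Fredholm case first and postpone the (easier) adjustments for semi-Fredholm pairs to the end.

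First, I will use Proposition~\ref{p:closed-spaces-dimensions} to obtain upper semicontinuity. In the Fredholm case, $\gamma(M,N)>0$ by Kato~\cite[Theorem~IV.4.2]{Ka95}, so the hypotheses of parts~(b) and (c) of Proposition~\ref{p:closed-spaces-dimensions} are both satisfied whenever $\hat\delta(M',M)+\hat\delta(N',N)<\delta_1$ for some explicit $\delta_1>0$ depending only on $\gamma(M,N)$. Part~(b) then gives
\[
\nuli'(M',N')\ \le\ \dim(M\cap N)\ <\ +\infty,
\]
which by the note following the definition of approximate nullity forces $M'+N'$ to be closed and $\nuli(M',N')=\nuli'(M',N')$. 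Applying (c) simultaneously yields $\dim X/(M'+N')\le\dim X/(M+N)$. Hence $(M',N')$ is a Fredholm pair and $\Index(M',N')\le\Index(M,N)$.

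Next comes the main point: showing $\Index(M',N')=\Index(M,N)$. I will translate the index of a pair into the index of an operator by introducing
\[
T(M,N)\colon M\oplus N\ \too\ X,\qquad (m,n)\ \longmapsto\ m-n.
\]
This is bounded, $\ker T(M,N)\cong M\cap N$, and $\operatorname{coker} T(M,N)\cong X/(M+N)$, so it is Fredholm of index $\Index(M,N)$ exactly when $(M,N)$ is a Fredholm pair. To compare $T(M',N')$ with $T(M,N)$, I will fix a pair of bounded linear isomorphisms
\[
U_{M'}\colon M\ \too\ M',\qquad U_{N'}\colon N\ \too\ N',
\]
with $\|U_{M'}-\iota_M\|$, $\|U_{N'}-\iota_N\|$ controlled by $\hat\delta(M',M)$ and $\hat\delta(N',N)$; these can be constructed from the standard parametrization of subspaces close to a given complemented one as graphs of small bounded operators (Kato~\cite[Theorem~IV.2.9]{Ka95}; cf.\ Lemma~\ref{l:connectness-local} and the proof of Lemma~\ref{l:basic-complemented} in the appendix, which also yield that for $\hat\delta(M',M)$ small enough $M'$ is still complementary to the same space as $M$). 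Then the composition $T(M',N')\circ (U_{M'}\oplus U_{N'})\colon M\oplus N\to X$ is a continuous $\Bb(M\oplus N,X)$-deformation of $T(M,N)$ on the segment $\hat\delta(M',M)+\hat\delta(N',N)<\delta_2$, so by the norm-continuity of the index on bounded Fredholm operators we obtain
\[
\Index(M',N')\ =\ \Index\bigl(T(M',N')\circ (U_{M'}\oplus U_{N'})\bigr)\ =\ \Index(T(M,N))\ =\ \Index(M,N).
\]
Choosing $\delta=\min\{\delta_1,\delta_2\}$ finishes the Fredholm case.

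For the semi-Fredholm case one repeats the argument, using only the relevant half of Proposition~\ref{p:closed-spaces-dimensions}: if $\dim(M\cap N)<+\infty$, part~(b) again forces $M'+N'$ to be closed and $\nuli(M',N')\le\nuli(M,N)<+\infty$, so $(M',N')$ is semi-Fredholm; the stability of the index (possibly with value $-\infty$) again follows from the operator interpretation above, since $T(M,N)$ is a semi-Fredholm operator and the index of semi-Fredholm operators is locally constant in the norm topology (Kato~\cite[Theorem~IV.5.17]{Ka95}). The case $\dim X/(M+N)<+\infty$ is symmetric, using part~(c). The main obstacle is the construction of the controlled isomorphisms $U_{M'}$ and $U_{N'}$ in the second paragraph: one needs the passage from a gap estimate $\hat\delta(M',M)<\varepsilon$ to an operator-norm estimate on a canonical parametrizing bijection $M\to M'$, which requires that $M$ be complemented in $X$ by a space that stays complementary to $M'$ as well; this is ensured by Lemma~\ref{l:fp-complemented} together with the local stability of complemented direct sum decompositions in the gap topology.
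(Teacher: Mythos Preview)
The paper does not give its own proof of this proposition; it simply states ``Similar to the proof in \cite[Section~IV.4.5]{Ka95}'' and refers the reader to Kato. So your proposal is not to be compared against a detailed argument, but against Kato's direct gap-estimate approach.

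Your strategy via the operator $T(M,N)\colon M\oplus N\to X$, $(m,n)\mapsto m-n$, is a clean and genuinely different route. For the Fredholm case it is correct: Lemma~\ref{l:fp-complemented} ensures $M,N\in\Ss^c(X)$, the graph parametrization of Lemma~\ref{l:connectness-local} then furnishes the isomorphisms $U_{M'},U_{N'}$ with controlled norm, and the reduction to norm-stability of the Fredholm index is rigorous. This is arguably more transparent than Kato's argument, which proceeds through the approximate nullity/deficiency machinery and a connectedness argument.

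There is, however, a gap in the semi-Fredholm case. When $\dim(M\cap N)<+\infty$ but $\dim X/(M+N)=+\infty$, the subspace $M+N$ need not be complemented in $X$, and hence neither $M$ nor $N$ need lie in $\Ss^c(X)$. Your invocation of Lemma~\ref{l:fp-complemented} at the end does not cover this, since that lemma assumes $(M,N)\in\Ff^2(X)$. Without complementation you cannot build the parametrizing isomorphisms $U_{M'},U_{N'}$ from the graph description, and the comparison of $T(M',N')$ with $T(M,N)$ breaks down. Kato's approach sidesteps this entirely by never leaving the gap framework; the semi-Fredholm index stability there follows from the upper semicontinuity of $\nuli'$ and $\defi'$ combined with a continuity argument along a connecting path of subspaces, with no complementation hypothesis needed. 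You could either adopt that route for the semi-Fredholm case, or observe that in the dual case $\dim X/(M+N)<+\infty$ complementation \emph{does} hold (finite codimension), and try to reduce the remaining case to the dual via annihilators---though the latter requires care in non-reflexive $X$.
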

\section{Smooth family of closed subspaces in Banach spaces}\label{ss:facts-banach}
We begin with the definition.

\begin{definition}\label{d:smooth} Let $X$ be a Banach space and $B$ a $C^k$ manifold, $k$ is a nonnegative integer or $+\infty$ or $\w$.
A map $f\colon B\to\Ss(X)$ is called {\em $C^k$ at $b_0\in B$} if there exist a neighborhood $U$ of
$b_0$ and a $C^k$ map $L\colon U\to\Bb(X)$ such that $L(b)$ is invertible and $L(b)f(b_0)=f(b)$ for
each $b\in U$. $f$ is called a {\em $C^k$ map} if and only if $f$ is $C^k$ at each point $b\in B$.
For the $C^0$ case we need $B$ to be a topological space only.
\end{definition}

By the definition we have

\begin{lemma}\label{l:c0-continuous} Let $X$ be a Banach space and $B$ a topological space. Let $f\colon B\to\Ss(X)$ be a map.
If $f$ is $C^0$ at $b_0\in B$, $f$ is continuous at $b_0$.
\end{lemma}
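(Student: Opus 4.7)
The plan is to unwrap Definition A.4.1 and estimate the gap $\hat\delta(f(b),f(b_0))$ directly. By hypothesis, there is a neighborhood $U$ of $b_0$ and a continuous $L\colon U\to\Bb(X)$ with $L(b)$ invertible and $L(b)f(b_0)=f(b)$ for all $b\in U$. Since continuity in the gap topology on $\Ss(X)$ means $\hat\delta(f(b),f(b_0))\to 0$ as $b\to b_0$, I only need to bound each of the two one-sided distances $\delta(f(b),f(b_0))$ and $\delta(f(b_0),f(b))$ by something that tends to zero.

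For the first direction, I would take $u\in f(b)$ with $\|u\|=1$, write $v:=L(b)^{-1}u\in f(b_0)$, and note that $L(b_0)v\in L(b_0)f(b_0)=f(b_0)$. Then
\[
\dist(u,f(b_0))\ \le\ \|u-L(b_0)v\|\ =\ \|(L(b)-L(b_0))v\|\ \le\ \|L(b)-L(b_0)\|\,\|L(b)^{-1}\|,
\]
so $\delta(f(b),f(b_0))\le\|L(b)-L(b_0)\|\,\|L(b)^{-1}\|$. For the opposite direction, given $w\in f(b_0)$ with $\|w\|=1$, I would set $y:=L(b)L(b_0)^{-1}w$; since $L(b_0)^{-1}w\in L(b_0)^{-1}f(b_0)=f(b_0)$, the vector $y$ lies in $L(b)f(b_0)=f(b)$, and
\[
\dist(w,f(b))\ \le\ \|w-y\|\ =\ \|(L(b_0)-L(b))L(b_0)^{-1}w\|\ \le\ \|L(b)-L(b_0)\|\,\|L(b_0)^{-1}\|,
\]
giving $\delta(f(b_0),f(b))\le\|L(b)-L(b_0)\|\,\|L(b_0)^{-1}\|$.

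To close the argument I would use that the invertible operators form an open subset of $\Bb(X)$ on which inversion is continuous; hence there is a smaller neighborhood $U'\subset U$ of $b_0$ on which $\sup_{b\in U'}\|L(b)^{-1}\|=:C<\infty$. Combining the two estimates then yields
\[
\hat\delta(f(b),f(b_0))\ \le\ C\,\|L(b)-L(b_0)\|\quad\text{for all $b\in U'$,}
\]
and the right-hand side tends to $0$ as $b\to b_0$ by continuity of $L$, which is exactly the desired continuity of $f$ at $b_0$.

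There is no real obstacle here; the only subtle point worth flagging is that $L(b_0)$ is not assumed to act as the identity on $f(b_0)$, only to preserve it as a set, which is why in the second estimate one must pre-compose with $L(b_0)^{-1}$ rather than naively use $L(b)w$ as the comparison element in $f(b)$.
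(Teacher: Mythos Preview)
Your proof is correct and is precisely the unpacking of what the paper means by its one-line ``By the definition we have'': you estimate $\hat\delta(f(b),f(b_0))$ directly from $f(b)=L(b)f(b_0)$ and the continuity of $L$ and of inversion. The subtlety you flag about $L(b_0)$ only preserving $f(b_0)$ setwise (rather than acting as the identity on it) is handled correctly; one could alternatively sidestep it by replacing $L(b)$ with $\tilde L(b):=L(b)L(b_0)^{-1}$ so that $\tilde L(b_0)=I$, but this is cosmetic.
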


The converse is not true in general (see \auindex{Neubauer,\ G.}\cite[Lemma 0.2]{Ne68}).

Recall from Remark \ref{r:complemented} that $\Ss^c(X)$ denotes the set of complemented subspaces
of a Banach space $X$. We omit the proof of the following standard facts.

\begin{lemma}\label{l:basic-complemented} Let $X$ be a Banach space and $M\in\Ss(X)$. We have
\newline (a) $M\in\Ss^c(X)$ if and only if there exists a $P\in\Bb(X)$ such that $P^2=P$ and $\ran P=M$, and
\newline (b) $M\in\Ss^c(X)$ if either $\dim M<+\infty$ or $\dim X/M<+\infty$.
\end{lemma}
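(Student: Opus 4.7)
The proof plan is to treat the two parts essentially separately, with (b) reducing to (a) in the finite-dimensional case and reducing directly to an algebraic splitting in the finite-codimensional case.

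For part (a), the idea is the standard equivalence between projections and direct sum decompositions. In one direction, assume $M\in\Ss^c(X)$, so there exists $N\in\Ss(X)$ with $X=M\oplus N$. I would define $P\colon X\to X$ by sending $x=m+n$ to $m$; this is well-defined and linear with $P^2=P$ and $\ran P=M$. To show $P\in\Bb(X)$ I would invoke the closed graph theorem: if $x_k\to x$ and $Px_k\to y$, then $x_k-Px_k\to x-y$, and both $Px_k\in M$ and $x_k-Px_k\in N$, so since $M,N$ are closed, $y\in M$ and $x-y\in N$; by uniqueness of decomposition $y=Px$. Conversely, given $P\in\Bb(X)$ with $P^2=P$ and $\ran P=M$, note $M=\ran P=\ker(I-P)$ is closed (as $I-P$ is bounded), and $\ker P\in\Ss(X)$ as well, and the identity $x=Px+(I-P)x$ shows $X=M+\ker P$, while $M\cap\ker P=\{0\}$ follows from $P^2=P$. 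Hence $M\in\Ss^c(X)$.

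For part (b), consider first the case $\dim M=n<+\infty$. Choose a basis $e_1,\ldots,e_n$ of $M$. By the Hahn--Banach theorem applied in $M$ (using that finite-dimensional norms are equivalent to coordinate norms) together with a norm-preserving extension to $X$, there exist continuous functionals $\varphi_1,\ldots,\varphi_n\in X^*$ satisfying $\varphi_i(e_j)=\delta_{ij}$. Define $P\colon X\to X$ by $Px:=\sum_{i=1}^n\varphi_i(x)\,e_i$; then $P\in\Bb(X)$, $P^2=P$ and $\ran P=M$, so part (a) yields $M\in\Ss^c(X)$. Next, suppose $\dim X/M=n<+\infty$. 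Pick $x_1,\ldots,x_n\in X$ whose cosets form a basis of $X/M$, and set $N:=\Span\{x_1,\ldots,x_n\}$. Then $N$ is finite-dimensional, hence closed, and a direct check using the basis property gives $X=M\oplus N$, so $M\in\Ss^c(X)$.

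I do not expect a genuine obstacle here; the only slightly delicate point is the appeal to the closed graph theorem to get boundedness of $P$ in the first direction of (a), and the use of Hahn--Banach to produce the biorthogonal functionals $\varphi_i$ in the finite-dimensional subcase of (b). Both are standard tools, so the proof should be short.
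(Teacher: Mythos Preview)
Your proof is correct and entirely standard. The paper actually omits the proof of this lemma, stating only ``We omit the proof of the following standard facts,'' so your argument is exactly the kind of routine verification the authors had in mind.
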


\begin{lemma}\label{l:connectness-finite} Let $X$ be a Banach space. Let $n\ge 0$ be an integer. Set $G(n,X):=\{V\in\Ss(X);\dim V=n\}$. Then
the set $G(n,X)$ is open and path connected in $\Ss(X)$.
\end{lemma}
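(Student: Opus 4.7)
The proof splits into two essentially independent parts: openness and path-connectedness, and both reduce to facts already established in the paper.

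For openness, the plan is to use the fact that any finite-dimensional subspace is complemented (Lemma A.4.3.b) together with the quantitative dimension-stability of Fredholm pairs from Proposition A.2.6. Fix $V\in G(n,X)$ and choose a closed complement $W$ with $X=V\oplus W$, so that $(V,W)$ is a Fredholm pair with $V\cap W=\{0\}$, $V+W=X$, and hence $\gamma(V,W)>0$ by Kato, Theorem~IV.4.2. Applying Proposition~A.2.6.b with $(M,N,M',N')=(V,W,V',W)$ gives $\dim(V'\cap W)\le\dim(V\cap W)=0$, and Proposition~A.2.6.c with the same data gives $\dim X/(V'+W)\le\dim X/(V+W)=0$; both conclusions hold as soon as $\delta(V',V)<\gamma(V,W)/(1+\gamma(V,W))$. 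Thus $V'\oplus W=X$, which forces $\dim V'=\dim V=n$. So every $V\in G(n,X)$ has an explicit gap-neighborhood contained in $G(n,X)$.

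For path-connectedness, the plan is to reduce the problem to the classical finite-dimensional Grassmannian. Given $V_0,V_1\in G(n,X)$, set $W:=V_0+V_1$: this is a finite-dimensional (in particular closed) subspace of $X$ of some dimension $m$ with $n\le m\le 2n$, and both $V_0$ and $V_1$ lie in the ordinary Grassmannian $G(n,W)$ of $n$-planes in $W$. Since the complex Grassmannian $G(n,W)\cong U(m)/(U(n)\times U(m-n))$ is a connected smooth manifold, there exists a continuous path $t\mapsto V_t$ in $G(n,W)$ joining $V_0$ to $V_1$.

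It remains to check that this path is continuous into $(\Ss(X),\hat\delta)$. This is immediate from the observation that the gap distance is invariant under isometric inclusion of the ambient space: for closed $M,N\subset W\subset X$, the unit spheres $\romS_M,\romS_N$ are the same viewed in $W$ or in $X$, and $\dist_X(u,N)=\dist_W(u,N)$ for each $u\in\romS_M$, hence $\hat\delta_X(M,N)=\hat\delta_W(M,N)$. Since the gap topology on $G(n,W)$ coincides with its standard manifold topology, the path $t\mapsto V_t$ is gap-continuous as a map into $\Ss(X)$.

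The main (and essentially only) obstacle is bookkeeping: one must verify that the smallness constraints in Proposition~A.2.6.b,c can be met simultaneously (they can, since $\gamma(V,W)/(1+\gamma(V,W))\le\gamma(V,W)$ so the stricter of the two bounds suffices for both), and that gap distance is preserved by isometric ambient inclusion (immediate from the definitions). No deeper tool from perturbation theory is required.
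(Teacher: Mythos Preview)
Your proof is correct and, for path-connectedness, essentially identical to the paper's: both reduce to the finite-dimensional Grassmannian inside $V_0+V_1$, and you helpfully spell out why gap-continuity survives the isometric inclusion $W\hookrightarrow X$, a point the paper leaves implicit.

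For openness the two arguments differ in packaging. The paper simply invokes Kato, Corollary~IV.2.6 (if $\hat\delta(M,N)<1$ and one of $M,N$ is finite-dimensional then $\dim M=\dim N$), which settles the matter in one line. You instead choose a closed complement $W$ and use the dimension-stability Proposition~\ref{p:closed-spaces-dimensions} (your ``A.2.6''; note it actually lives in Section~A.3) to force $V'\oplus W=X$ for nearby $V'$, whence $\dim V'=\operatorname{codim} W=n$. Your route is longer but stays within the machinery the paper has already built, whereas the paper's one-line citation is more direct. One minor bookkeeping point: condition~(b) constrains $\delta(V',V)$ while condition~(c) constrains $\delta(V,V')$, so to satisfy both you should phrase the neighborhood in terms of $\hat\delta(V',V)<\gamma(V,W)/(1+\gamma(V,W))$ rather than the one-sided $\delta$; this is harmless since the gap topology is defined via $\hat\delta$ anyway.
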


\begin{proof} By \auindex{Kato,\ T.}\cite[Corollary IV.2.6]{Ka95}, the set $G(n,X)$ is open. Let $V_1$ and $V_2$ be in $G(n,X)$. Since $G(n,V_1+V_2)$ is
path connected, $V_1$ and $V_2$ can be joined by a path in $G(n,V_1+V_2)$. So our result follows.
\end{proof}

\begin{lemma}\label{l:connectness-local} Let $X$ be a Banach space with a closed linear subspace $X_1$. Set $G(X,X_1):=\{M\in\Ss(X);X=M\oplus X_1\}$
(can be an empty set). Then the set $G(X,X_1)$ is an open affine subspace of $\Ss(X)$.
\end{lemma}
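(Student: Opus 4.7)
The plan is to handle the two claims — openness in the gap topology and the affine structure — separately, then tie them together via a graph parametrization. I may as well assume $G(X,X_1)\neq\emptyset$ (otherwise the conclusion is vacuous) and fix a basepoint $M_0\in G(X,X_1)$.

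For openness, I would apply Proposition~\ref{p:closed-spaces-dimensions}.(b),(c) with $N=N'=X_1$. Since $M_0\cap X_1=\{0\}$ and $M_0+X_1=X$, the pair $(M_0,X_1)$ is Fredholm with $\gamma(M_0,X_1)>0$ by \cite[Theorem IV.4.2]{Ka95}. With $\delta(N',N)=0$ the conditions in (b),(c) become a single smallness condition on $\delta(M,M_0)$, and they force $\dim(M\cap X_1)\le\dim(M_0\cap X_1)=0$ and $\dim X/(M+X_1)\le\dim X/(M_0+X_1)=0$ for $M$ in a gap-neighborhood of $M_0$. This places a whole gap-neighborhood inside $G(X,X_1)$.

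For the affine structure, I would introduce the graph parametrization
\[
\Psi\colon\Bb(M_0,X_1)\too\Ss(X),\qquad \Psi(A):=\{m+Am\,:\,m\in M_0\},
\]
and verify it is a bijection onto $G(X,X_1)$. The membership $\Psi(A)\in G(X,X_1)$ is direct: $m+Am\in X_1$ forces $m\in M_0\cap X_1=\{0\}$, and any $x=m+x_1\in M_0\oplus X_1$ decomposes as $x=(m+Am)+(x_1-Am)$. Injectivity is immediate from the uniqueness of the $M_0\oplus X_1$-decomposition. For surjectivity, given $M\in G(X,X_1)$, the projection $P_{M_0}$ onto $M_0$ along $X_1$ restricts to a bounded bijection $M\to M_0$ (injectivity because $M\cap X_1=\{0\}$; surjectivity because any $m_0\in M_0\subset X=M+X_1$ satisfies $m_0=P_{M_0}(m)$ for its $M$-component $m$); then by Banach's theorem $(P_{M_0}|_M)^{-1}$ is bounded, and $A:=P_{X_1}\circ(P_{M_0}|_M)^{-1}\in\Bb(M_0,X_1)$ recovers $M=\Psi(A)$. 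The resulting bijection $\Bb(M_0,X_1)\cong G(X,X_1)$ endows $G(X,X_1)$ with a Banach (hence affine) structure, independent of the basepoint up to translation.

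Finally I would check that $\Psi$ is a homeomorphism with respect to the operator-norm topology on $\Bb(M_0,X_1)$ and the gap topology on $\Ss(X)$, so the affine structure is compatible with the ambient topology. Continuity of $\Psi$ comes from a routine estimate $\hat\delta(\Psi(A),\Psi(A'))\le C_0\,\|A-A'\|$ with $C_0$ depending only on $M_0,X_1$. Continuity of $\Psi^{-1}$ follows because the data $P_{M_0}|_M$ and $P_{X_1}|_M$ depend continuously on $M$ in the gap topology by standard Kato-type estimates (cf.\ \cite[Lemma I.4.10]{Ka95}), once $M$ lies in the open neighborhood supplied by the first step so that $P_{M_0}|_M$ is invertible. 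I expect the main bookkeeping obstacle to be precisely this last continuity step, since one needs a uniform bound on $\|(P_{M_0}|_M)^{-1}\|$ on a gap-neighborhood of $M_0$ — but this is exactly the quantitative content of the openness already established, so the two parts of the argument close on each other.
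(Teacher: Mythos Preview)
Your proposal is correct and follows essentially the same approach as the paper: graph parametrization $M\leftrightarrow A\in\Bb(M_0,X_1)$ for the affine structure, with openness handled separately. The paper is terser---it cites \cite[Lemma IV.4.29]{Ka95} directly for openness and invokes the closed graph theorem for the bijection $G(X,X_1)=\{\Graph(A):A\in\Bb(M_0,X_1)\}$, then simply asserts that the two topologies coincide---so your use of Proposition~\ref{p:closed-spaces-dimensions} and your explicit continuity check for $\Psi$ and $\Psi^{-1}$ amount to unpacking exactly what the paper delegates to Kato.
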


\begin{proof} If $G(X,X_1)=\emptyset$, our results hold. Now we assume that $G(X,X_1)\ne\emptyset$. By \auindex{Kato,\ T.}\cite[Lemma IV.4.29]{Ka95}, the
set $G(X,X_1)$ is open. Let $X_0\in G(X,X_1)$. Denote by $\Graph(A):=\{x+Ax;x\in X_0\}$ for all
bounded operators $A\in\Bb(X_0,X_1)$. By the \subindex{Closed Graph Theorem}closed graph theorem,
we have
\begin{equation}\label{e:set-H}
G(X,X_1)=\{\Graph(A);A\in\Bb(X_0,X_1)\}.
\end{equation}
Since the topology of $G(X,X_1)$ coincides with that of $\Bb(X_0,X_1)$, our results follow.
\end{proof}

\begin{corollary}\label{c:complemented-manifold} The set $\Ss^c(X)$ is a \subindex{Banach manifold}Banach manifold. The local chart at $X_0\in\Ss^c(X)$ is
defined by the equation (\ref{e:set-H}).
\end{corollary}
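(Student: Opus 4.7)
The plan is to use Lemma \ref{l:connectness-local} as the source of local charts, and then verify that overlapping charts are related by smooth (in fact real-analytic) transition maps in the sense of Banach manifolds.

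First I would observe that $\Ss^c(X)$ is covered by the open sets $G(X,X_1)$: given any $X_0\in\Ss^c(X)$, by the very definition of $\Ss^c(X)$ we may pick a closed complement $X_1$, so $X_0\in G(X,X_1)$. Moreover, every $M\in G(X,X_1)$ is complemented (by $X_1$), hence $G(X,X_1)\subset\Ss^c(X)$, and by Lemma \ref{l:connectness-local} the set $G(X,X_1)$ is open in $\Ss(X)$, hence open in $\Ss^c(X)$. Thus these sets form an open cover of $\Ss^c(X)$.

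Next, for each chosen pair $(X_0,X_1)$, the map
\[
\F_{X_0,X_1}\colon \Bb(X_0,X_1)\too G(X,X_1),\qquad A\longmapsto \Graph(A),
\]
is a bijection (this is exactly Equation \eqref{e:set-H}, a consequence of the Closed Graph Theorem), and by the proof of Lemma \ref{l:connectness-local} it is a homeomorphism when $G(X,X_1)$ carries the gap topology and $\Bb(X_0,X_1)$ its Banach space topology. Hence $\F_{X_0,X_1}^{-1}$ is a candidate chart, with model space the Banach space $\Bb(X_0,X_1)$.

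The decisive step is to verify that the transition maps are real-analytic. Suppose $M\in G(X,X_1)\cap G(X,X_1')$ with $X=X_0\oplus X_1=X_0'\oplus X_1'$, written as $M=\Graph(A)=\Graph(A')$ with $A\in\Bb(X_0,X_1)$ and $A'\in\Bb(X_0',X_1')$. Let $P,P'\in\Bb(X)$ denote the projections onto $X_0,X_0'$ along $X_1,X_1'$ respectively. For $y\in X_0'$, the unique $x\in X_0$ with $P'(x+Ax)=y$ satisfies $(P'|_{X_0}+P'A)x=y$, so
\[
x\ =\ \bigl(P'|_{X_0}+P'A\bigr)^{-1}y,\qquad A'y\ =\ (I-P')\bigl(x+Ax\bigr)\ =\ (I-P')(I+A)\bigl(P'|_{X_0}+P'A\bigr)^{-1}y.
\]
The map $A\mapsto P'|_{X_0}+P'A$ is continuous affine into $\Bb(X_0,X_0')$, and on the (open) set where it is invertible, inversion is real-analytic by the Neumann series; composition with the affine map $A\mapsto (I-P')(I+A)$ into $\Bb(X_0,X_1')$ is also real-analytic. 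Hence the transition map $A\mapsto A'$ is real-analytic on the open subset of $\Bb(X_0,X_1)$ where both representations exist, and symmetrically for the inverse direction.

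Putting the three ingredients together, $\{(G(X,X_1),\F_{X_0,X_1}^{-1})\}$ is a real-analytic atlas on $\Ss^c(X)$ with Banach model spaces $\Bb(X_0,X_1)$, which proves the Corollary. The main obstacle, as expected, is the transition-map computation: one must explicitly identify the change of graph parametrization with an invertible affine map composed with a Neumann-series inversion, which then delivers analyticity (and in particular smoothness) automatically. Everything else is a direct quotation of Lemma \ref{l:connectness-local} and Lemma \ref{l:basic-complemented}.
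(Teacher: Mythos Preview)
Your proposal is correct and follows exactly the route the paper intends: the corollary is stated without proof in the paper, as an immediate consequence of Lemma~\ref{l:connectness-local} and the chart description \eqref{e:set-H}. You supply the details the paper suppresses --- in particular the explicit transition-map computation and its analyticity via Neumann series --- which is precisely what a complete proof requires and is consistent with the paper's implicit argument.
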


\begin{corollary}\label{c:pi0-fp0} Let $X$ be a Banach space with a closed linear subspace $X_1$. Then the set $G(X,X_1)$ is
dense in $\Ff_{0,X_1}(X)$, and the set $\Ff_{0,X_1}(X)$ is path connected.
\end{corollary}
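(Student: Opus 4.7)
The plan is to construct, for each $N\in \Ff_{0,X_1}(X)$, an explicit continuous path in $\Ff_{0,X_1}(X)$ starting at $N$ and entering $G(X,X_1)$ after every positive time; this will simultaneously yield density of $G(X,X_1)$ in $\Ff_{0,X_1}(X)$ and path connectedness of $\Ff_{0,X_1}(X)$, since $G(X,X_1)$ is an open affine (hence path connected) subspace of $\Ss(X)$ by Lemma \ref{l:connectness-local}.

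First I would set $k:=\dim(N\cap X_1)=\dim X/(N+X_1)$ (equal because $\Index(X_1,N)=0$). Using Lemma \ref{l:basic-complemented}, pick a closed complement $N_1\< N$ of $N\cap X_1$ inside $N$, so that $N=N_1\oplus(N\cap X_1)$, and note $N_1\cap X_1=\{0\}$ and $N_1+X_1=N+X_1$ is closed of codimension $k$. Next choose a finite-dimensional subspace $V\< X$ with $V\oplus(N_1+X_1)=X$, and a linear isomorphism $\phi\colon N\cap X_1\to V$. This provides the intrinsic decomposition $X=V\oplus N_1\oplus (N\cap X_1)\oplus X_1'$, where $X_1'$ is any closed complement of $N\cap X_1$ in $X_1$ (which exists by Lemma \ref{l:basic-complemented} since $\dim(N\cap X_1)<\infty$).

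Next I would define for $t\in[0,1]$ the family
\[
N_t\ :=\ N_1\,+\,\Span\{e+t\phi(e)\,;\,e\in N\cap X_1\},
\]
and exhibit it as the image $L_t(N)$ of the continuous family of bounded invertible operators $L_t\in\Bb^{\times}(X)$ defined by $L_t(v+n_1+e+x_1'):=(v+t\phi(e))+n_1+e+x_1'$ in the decomposition above. This $L_t$ is upper triangular with identity on the diagonal, hence invertible with $L_t^{-1}=L_{-t}$, and $t\mapsto L_t$ is norm-continuous (indeed real-analytic); by Definition \ref{d:smooth}, $\{N_t\}_{t\in[0,1]}$ is therefore a $C^\infty$-path in $\Ss(X)$, and in particular continuous in the gap topology (Lemma \ref{l:c0-continuous}).

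Then I would check by direct inspection that $N_0=N$ and, for every $t\ne 0$, $N_t\cap X_1=\{0\}$ and $N_t+X_1=X$: any vector $n_1+\sum c_i(e_i+t\phi(e_i))\in X_1$ has zero $V$-component and zero $N_1$-component in the chosen decomposition, forcing all $c_i=0$ and $n_1=0$; surjectivity is immediate from $V\subset N_t+X_1$ modulo $X_1$. Hence $N_t\in G(X,X_1)\subset \Ff_{0,X_1}(X)$ for $t\in(0,1]$, while $N_0=N$, and the whole path stays inside $\Ff_{0,X_1}(X)$ (the endpoint at $t=0$ has index $k-k=0$, the other endpoints have index $0-0=0$). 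This proves density of $G(X,X_1)$ in $\Ff_{0,X_1}(X)$. For path connectedness, observe that given any two points $N,N'\in\Ff_{0,X_1}(X)$, the construction produces paths joining them to $N_1\in G(X,X_1)$ and $N_1'\in G(X,X_1)$ respectively; concatenating with a straight-line path inside the affine space $G(X,X_1)$ (Lemma \ref{l:connectness-local}) yields the desired path in $\Ff_{0,X_1}(X)$.

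The only nontrivial point is the verification that $N_t$ is continuous in the gap topology, but this is handled cleanly by exhibiting $N_t$ as $L_t(N)$ for a continuous family of invertible operators on $X$; the rest reduces to finite-dimensional bookkeeping within the decomposition $X=V\oplus N_1\oplus(N\cap X_1)\oplus X_1'$.
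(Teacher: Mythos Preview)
Your proposal is correct and follows essentially the same approach as the paper's own proof: both decompose $N=(N\cap X_1)\oplus N_1$, choose a finite-dimensional complement $V$ of $N+X_1$ and an isomorphism $\phi\colon N\cap X_1\to V$, and then tilt $N\cap X_1$ into $V$ via the graph path $\{e+t\phi(e)\}$ to obtain $N_t\in G(X,X_1)$ for $t>0$. Your explicit construction of the invertible operators $L_t$ is a slight elaboration over the paper, which simply writes the path as $c(s)=\Graph(sA)\oplus M_1$ and leaves continuity implicit; otherwise the arguments coincide.
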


\begin{proof} Let $M\in\Ff_{0,X_1}(X)$. Then there exist closed subspaces $M_1$, $X_2$ and a finite-dimensional subspace $V$
such that $M=M\cap X_1\oplus M_1$, $X_1=M\cap X_1\oplus X_2$, and $X=V\oplus(M+X_1)$. By
(\ref{e:directsum-decomposition}) we have
\[X=M\cap X_1\oplus M_1\oplus X_2\oplus V.\]
Since $\Index(M,X_1)=0$, we have $\dim M\cap X_1=\dim V$. Let $A\colon M\cap X_1\to V$ be a linear
isomorphism. Set $c_1(s):=\Graph(sA)$ for $s\in[0,1]$. Then the path $c_1\colon [0,1]\to \Ss(M\cap
X_1\oplus V)$ satisfies that $c_1(0)=M\cap X_1$ and $M\cap X_1\oplus V=M\cap X_1\oplus c_1(s)$ for
each $s\in(0,1]$. Set $c(s):=c_1(s)\oplus M_1$. Then we have $c(0)=M$ and $c(s)\in G(X,X_1)$ for
$s\in(0,1]$. So the set $G(X,X_1)$ is dense in $\Ff_{0,X_1}(X)$. By Lemma
\ref{l:connectness-local}, the set $\Ff_{0,X_1}(X)$ is path connected.
\end{proof}

By \cite[Theorem IV.4.8]{Ka95} we have

\begin{lemma}\label{l:perp-complemented}Let $X$ be a Banach space with two closed linear subspaces $M$, $N$. Then $X=M\oplus N$ if and only if $X^*=M^{\bot}+N^{\bot}$. In this case, the projection on $M$ defined by $X=M\oplus N$ is $P$ if and only if the projection on $M^{\bot}$ defined by $X^*=M^{\bot}+N^{\bot}$ is $I-P^*$.
\end{lemma}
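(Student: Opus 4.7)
The plan is to establish the forward direction concretely via the adjoint of the canonical projection, and the converse by appealing to the Kato duality (Theorem IV.4.8) already used in Section~\ref{ss:continuity-of-operations}, which relates sums/intersections in $X$ to those in $X^{*}$ through annihilators.

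First I would assume $X = M \oplus N$ and let $P \in \Bb(X)$ be the associated projection onto $M$ along $N$ (boundedness is automatic from the closed graph theorem, since $M,N$ are closed). Then $P^{2} = P$, $\ran P = M$ and $\ker P = N$, so that $P^{*} \in \Bb(X^{*})$ is again an idempotent. The standard dual identities give $\ker P^{*} = (\ran P)^{\bot} = M^{\bot}$, and $\ran P^{*} = (\ker P)^{\bot} = N^{\bot}$ (the latter equality uses that $\ran P = M$ is already norm-closed, so no weak-$*$ closure is needed). Consequently $I - P^{*}$ is a bounded idempotent on $X^{*}$ with range $M^{\bot}$ and kernel $N^{\bot}$, which both establishes the decomposition $X^{*} = M^{\bot} \oplus N^{\bot}$ and identifies $I - P^{*}$ as the projection onto $M^{\bot}$ along $N^{\bot}$.

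For the converse, I would assume $X^{*} = M^{\bot} + N^{\bot}$ (direct sum, as is implicit in speaking of the associated projection). Then $M^{\bot} \cap N^{\bot} = (M + N)^{\bot} = \{0\}$, so $M + N$ is weakly dense in $X$; moreover $M^{\bot} + N^{\bot}$ is trivially closed in $X^{*}$, so by Kato's Theorem IV.4.8 (duality of sums/minimum gaps under annihilation, as quoted in the proof of Proposition~\ref{p:close-to}) $M + N$ is also closed, hence $M + N = X$. The dual identity $M \cap N \subset (M^{\bot} + N^{\bot})^{\bot} = (X^{*})^{\bot} = \{0\}$ forces $M \cap N = \{0\}$, so $X = M \oplus N$. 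Applying the forward direction now produces a projection $I - P^{*}$ onto $M^{\bot}$ along $N^{\bot}$; since a projection is determined by its range and kernel, it must agree with the given projection, yielding the stated correspondence $P \leftrightarrow I - P^{*}$.

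The one delicate point is the converse: deriving the closedness of $M + N$ from that of $M^{\bot} + N^{\bot}$ (here the latter equals all of $X^{*}$). This is exactly where Kato's duality enters, and it is the only non-elementary ingredient; the rest is manipulation of annihilators and the closed graph theorem. Note that the identity $\ran P^{*} = (\ker P)^{\bot}$, though elementary, also tacitly uses the closedness of $\ran P$ to avoid taking a weak-$*$ closure, which is the reason the statement is formulated for closed $M, N$ from the outset.
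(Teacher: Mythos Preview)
Your argument is correct. The paper does not give an independent proof of this lemma at all: it simply records the lemma as a consequence of \cite[Theorem IV.4.8]{Ka95} without further elaboration. Your write-up supplies precisely the details behind that citation---Kato IV.4.8 for the equivalence of closedness of $M+N$ and $M^{\bot}+N^{\bot}$ and the annihilator identities $(M+N)^{\bot}=M^{\bot}\cap N^{\bot}$, $(M\cap N)^{\bot}=M^{\bot}+N^{\bot}$---and in addition spells out the projection correspondence $P\leftrightarrow I-P^{*}$ via the standard identities $\ker P^{*}=(\ran P)^{\bot}$ and $\ran P^{*}=(\ker P)^{\bot}$, which is not explicitly contained in Kato's theorem. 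So your approach is the same as the paper's, only made explicit.
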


By \cite[Lemma I.4.10]{Ka95} we have

\begin{corollary}\label{c:perp-complemented-continuous} Let $X$ be a Banach space and $B$ be a topological space with a family $M\colon B\to\Ss^c(X)$. Then $M$ is a continuous family if and only if $M^{\bot}$ is a continuous family. In this case both of them are $C^0$ families.
\end{corollary}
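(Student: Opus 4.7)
The plan is to build everything on the correspondence provided by Lemma \ref{l:perp-complemented}: if $P\in\Bb(X)$ is the projection for $X=M\oplus N$, then $I-P^*\in\Bb(X^*)$ is the projection for $X^*=M^{\bot}\oplus N^{\bot}$. Combined with Kato's local trivialization \cite[Lemma I.4.10]{Ka95} for continuous families of complemented subspaces, this will let us transport both continuity and the $C^0$ structure of Definition \ref{d:smooth} between $M$ and $M^{\bot}$.

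For the forward direction, I would fix $b_0\in B$, assume $M$ is continuous at $b_0$, and invoke \cite[Lemma I.4.10]{Ka95} to obtain a neighborhood $U$ of $b_0$ and a continuous family of idempotents $P\colon U\to\Bb(X)$ with $\ran P(b)=M(b)$. The adjoint map $T\mapsto T^*$ is an isometry $\Bb(X)\to\Bb(X^*)$, so $Q(b):=I-P(b)^*$ is a continuous family of idempotents in $\Bb(X^*)$, and Lemma \ref{l:perp-complemented} identifies $\ran Q(b)=M(b)^{\bot}$. Thus $M^{\bot}$ is already continuous at $b_0$ (it is represented by a continuous projection-valued family, which forces gap continuity via the standard estimate $\hat{\delta}(\ran Q(b),\ran Q(b_0))\to 0$).

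To upgrade this to a $C^0$ structure in the sense of Definition \ref{d:smooth}, I would use the standard intertwining trick: define
\[
L(b)\ :=\ Q(b)\,Q(b_0)\ +\ (I-Q(b))(I-Q(b_0))\ \in\ \Bb(X^*).
\]
A direct computation gives $L(b_0)=Q(b_0)+(I-Q(b_0))=I_{X^*}$, so continuity of $L$ forces $L(b)\in\Bb^{\times}(X^*)$ on some smaller neighborhood $U'\subset U$. A second direct computation yields the key intertwining identity $Q(b)L(b)=Q(b)Q(b_0)=L(b)Q(b_0)$; rearranged as $L(b)^{-1}Q(b)=Q(b_0)L(b)^{-1}$ it shows that $L(b)$ maps $\ran Q(b_0)=M(b_0)^{\bot}$ isomorphically onto $\ran Q(b)=M(b)^{\bot}$. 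This is exactly the $C^0$ chart required by Definition \ref{d:smooth}, so $M^{\bot}$ is $C^0$ at $b_0$.

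For the converse, if $M^{\bot}$ is continuous at $b_0$, then by Kato's symmetry of the gap under annihilation (\cite[Theorem IV.2.9]{Ka95}, giving $\hat{\delta}(V_1,V_2)=\hat{\delta}(V_1^{\bot},V_2^{\bot})$), $M$ itself is gap-continuous at $b_0$. Since $M(b)\in\Ss^c(X)$ throughout, a direct application of \cite[Lemma I.4.10]{Ka95} to $M$ supplies a continuous family of projections onto $M(b)$, and the same intertwining argument as above produces the $L$ trivializing $M$. The only subtle step is the equality (not merely inclusion) $L(b)\ran Q(b_0)=\ran Q(b)$; this is precisely what the intertwining identity combined with invertibility of $L(b)$ delivers, and it is the technical heart of the proof.
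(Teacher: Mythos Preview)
Your argument is correct and follows the same route as the paper, which records the corollary as an immediate consequence of \cite[Lemma I.4.10]{Ka95} together with the preceding Lemma~\ref{l:perp-complemented}. The core mechanism is exactly the one you describe: pass from projections $P(b)$ onto $M(b)$ to $I-P(b)^*$ onto $M(b)^{\bot}$, and use the Kato intertwiner to manufacture the local frame $L$.

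One small point of attribution: \cite[Lemma I.4.10]{Ka95} takes as input a pair of \emph{already given} close projections and produces the invertible intertwiner; it does not by itself hand you a continuous family $b\mapsto P(b)$ from gap continuity of $b\mapsto M(b)$. That step---producing a continuous projection-valued selection for a gap-continuous family of complemented subspaces---is \cite[Lemma 0.2]{Ne68} (and is exactly what the paper packages a few lines later as Lemma~\ref{l:ck-complemented}(b)). Concretely: fix a projection $P_0$ with $\ran P_0=M(b_0)$; for $b$ near $b_0$ one has $X=M(b)\oplus\ker P_0$ by stability, and the projection onto $M(b)$ along $\ker P_0$ depends continuously on $b$. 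Once you have this $P(b)$, everything you wrote (the adjoint passage, the intertwiner $L(b)=Q(b)Q(b_0)+(I-Q(b))(I-Q(b_0))$, and the converse via \cite[Theorem IV.2.9]{Ka95}) goes through verbatim.
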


The "if" part of the following Lemma \ref{l:ck-complemented}.b is \auindex{Neubauer,\
G.}\cite[Lemma 0.2]{Ne68}.

\begin{lemma}\label{l:ck-complemented} Let $X$ be a Banach space and $B$ a $C^k$ manifold. For the $C^0$ case we need $B$ to be a
topological space only. Let $f\colon B\to\Ss(X)$ be a map. Let $b_0\in B$ be a point. Assume that
$f(b_0)$ is complemented in $X$. Then
\newline (a) $f$ is $C^k$ at $b_0$ if and only if there exist a neighborhood $U$ of $b_0$ and a $C^k$ map $P\colon U\to\Bb(X)$
such that $P(b)^2=P(b)$ and $\ran P(b)=f(b)$ for each $b\in U$, and
\newline (b) $f$ is $C^0$ at $b_0$ if and only if $f$ is continuous at $b_0$.
\end{lemma}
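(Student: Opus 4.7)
The plan is to prove both parts by explicit constructions that tie together $f$, a local $C^k$ frame $L(b)$, and a local $C^k$ projection family $P(b)$. Part (a) will be the heart of the argument, and part (b) will then reduce to (a) at regularity $k=0$ once a continuous projection family is produced.

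For the implication $(\Leftarrow)$ of part (a), given a $C^k$ projection family $P\colon U\to\Bb(X)$ with $\ran P(b)=f(b)$, I would set
\[
L(b)\ :=\ P(b)P(b_{0})\ +\ \bigl(I-P(b)\bigr)\bigl(I-P(b_{0})\bigr).
\]
Then $L(b_0)=P(b_0)^2+(I-P(b_0))^2=I$, so $L$ is $C^k$ and, by openness of $\Bb^\times(X)$, invertible on a smaller neighborhood of $b_0$. A direct check shows $L(b)$ carries $\ran P(b_0)=f(b_0)$ into $\ran P(b)=f(b)$ and $\ker P(b_0)$ into $\ker P(b)$; since $L(b)$ is an isomorphism and $X=\ran P(b_0)\oplus\ker P(b_0)=\ran P(b)\oplus\ker P(b)$, both inclusions are equalities, whence $L(b)f(b_0)=f(b)$. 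For the converse $(\Rightarrow)$ of (a), pick any bounded projection $P_0\in\Bb(X)$ with $\ran P_0=f(b_0)$ (exists since $f(b_0)\in\Ss^c(X)$) and set $P(b):=L(b)P_0L(b)^{-1}$, using that inversion is real-analytic on $\Bb^\times(X)$; then $P$ is $C^k$, $P(b)^2=P(b)$, and $\ran P(b)=L(b)f(b_0)=f(b)$.

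For part (b), the direction $(\Rightarrow)$ is already recorded as Lemma \ref{l:c0-continuous}. For $(\Leftarrow)$, assume $f$ is continuous at $b_0$ and pick a closed complement $N$ of $f(b_0)$, so that $f(b_0)\in G(X,N)$. By Lemma \ref{l:connectness-local} the set $G(X,N)$ is open in $\Ss(X)$, so continuity of $f$ at $b_0$ yields a neighborhood $U$ of $b_0$ with $f(b)\in G(X,N)$ for all $b\in U$. On $G(X,N)$, the chart \eqref{e:set-H} identifies $M\in G(X,N)$ with the operator $A_M\in\Bb(f(b_0),N)$ whose graph is $M$, and this identification is a homeomorphism; the projection onto $M=\Graph(A_M)$ along $N$ is given by $P_M(x+y)=x-A_M x+? $… more concretely by the formula $P_M=(I+A_M\pi_0)^{-1}\pi_0$ after the obvious identification, which depends continuously on $A_M$ and hence on $M$. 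Composing with $f$ produces a continuous $P\colon U\to\Bb(X)$ with $P(b)^2=P(b)$ and $\ran P(b)=f(b)$, to which the $k=0$ case of part (a) applies.

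The main obstacle is the bookkeeping in $(\Leftarrow)$ of (b): one must exhibit the projection $P(b)$ onto $f(b)$ along the fixed complement $N$ as a genuinely continuous function of $b$, not merely establish its pointwise existence. The cleanest route is to invoke the chart of Lemma \ref{l:connectness-local}, which presents $G(X,N)$ as the affine space $\{\Graph(A);A\in\Bb(f(b_0),N)\}$ with its norm topology agreeing with the gap topology, and then to note that the projection onto $\Graph(A)$ along $N$ is a rational (hence real-analytic) function of $A$. Once this continuous $P$ is in hand, both parts collapse into the single symmetric formula for $L(b)$ above, yielding simultaneously the chart description of $f$ and the equivalence of continuity with $C^0$-regularity in the sense of Definition \ref{d:smooth}.
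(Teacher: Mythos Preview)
Your argument for part (a) is correct and coincides with the paper's: the formula $L(b)=P(b)P(b_0)+(I-P(b))(I-P(b_0))$ is exactly the construction underlying \cite[Lemma I.4.10]{Ka95}, which the paper simply cites, and your converse via $P(b)=L(b)P_0L(b)^{-1}$ is identical to the paper's.

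For part (b), the paper just invokes Lemma \ref{l:c0-continuous} for one direction and Neubauer's \cite[Lemma 0.2]{Ne68} for the other, whereas you give a self-contained argument through the chart of Lemma \ref{l:connectness-local}. This is a legitimate and slightly more transparent alternative. One correction, though: your formula for the projection onto $\Graph(A_M)$ along $N$ is garbled. Writing $\pi_0$ for the projection onto $f(b_0)$ along $N$, the projection onto $\Graph(A_M)$ along $N$ is simply
\[
P_M(x+y)=x+A_Mx,\qquad\text{i.e.,}\quad P_M=\pi_0+A_M\pi_0,
\]
which is affine-linear in $A_M$; there is no inverse involved, and no need to call it ``rational''. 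This only strengthens your conclusion that $b\mapsto P_{f(b)}$ is continuous, so the argument survives, but you should fix the displayed formula.
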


\begin{proof} (a) Since $f(b_0)$ is complemented in $X$, there exists a projection $P_0\in\Bb(X)$ such that $f(b_0)=\ran P_0$.
If $f$ is $C^k$ at $b_0$, there exist a neighborhood $U$ of $b_0$ and a $C^k$ map $L\colon
U\to\Bb(X)$ such that $L(b)$ is invertible and $L(b)f(b_0)=f(b)$ for each $b\in U$. Define
$P(b):=L(b)P_0L(b)^{-1}$. Then $P\colon U\to\Bb(X)$ is of class $C^k$ and $\ran P(b)=L(b)\ran
P_0=f(b)$ for each $b\in U$.

Conversely, if there exists a neighborhood $U$ of $b_0$ and a $C^k$ map $P\colon U\to\Bb(X)$ such
that $P(b)^2=P(b)$ and $\ran P(b)=f(b)$ for each $b\in U$, there exists a neighborhood $U_1\subset
U$ of $b_0$ such that $\|P(b)-P(b_0)\|<1$. By \auindex{Kato,\ T.}\cite[Lemma I.4.10]{Ka95}, there
exist a $C^k$ map $L\colon U_1\to\Bb(X)$ such that $L(b)$ is invertible, $L(b_0)=I$, and
$L(b)P(b_0)=P(b)L(b)$ for each $b\in U_1$. So for $b\in U_1$, we have $L(b)f(b_0)=f(b)$.
\newline (b) By Lemma \ref{l:c0-continuous} and \auindex{Neubauer,\ G.}\cite[Lemma 0.2]{Ne68}.
\end{proof}

\begin{lemma}\label{l:finite-bot-ck} Let $X$ be a Banach space and $B$ a $C^k$ manifold. For the $C^0$ case we need $B$
to be a topological space only. Let $f\colon B\to\Ss(X^*)$ be a $C^k$ map. Assume that $\dim
f(b)=n<+\infty$ for each $b\in B$. Then the map $b\mapsto f(b)^{\bot}$ is of class $C^k$.
\end{lemma}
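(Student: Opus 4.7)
The plan is to reduce the statement to Lemma \ref{l:ck-complemented}.a by constructing an explicit local $C^k$ family of projections whose kernels are the pre-annihilators $f(b)^\bot \subset X$. Since the issue is local, we fix $b_0 \in B$ and work in a neighborhood of it. First I would use Definition \ref{d:smooth} applied to $f$ at $b_0$ to obtain a neighborhood $U \ni b_0$ and a $C^k$ map $L\colon U \to \Bb(X^*)$ with $L(b)$ invertible and $L(b)f(b_0) = f(b)$. Pick any algebraic basis $\varphi_1^0,\ldots,\varphi_n^0$ of $f(b_0)$ (of fixed dimension $n$) and set $\varphi_i(b) := L(b)\varphi_i^0 \in X^*$. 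Because $L(b)$ is $C^k$ in $b$ and the $\varphi_i^0$ are fixed, each $b\mapsto \varphi_i(b)$ is a $C^k$ curve in $X^*$, and $\{\varphi_1(b),\ldots,\varphi_n(b)\}$ is a basis of $f(b)$ throughout $U$.

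Next I would pass to the dual side of the picture. Since $\varphi_1^0,\ldots,\varphi_n^0$ are linearly independent in $X^*$, by a standard application of the Hahn--Banach theorem there exist vectors $x_1^0,\ldots,x_n^0 \in X$ biorthogonal to them, i.e.\ $\varphi_i^0(x_j^0) = \delta_{ij}$. Form the $n\times n$ matrix $M(b)_{ij} := \varphi_i(b)(x_j^0)$. This is $C^k$ in $b$ with $M(b_0) = I_n$, hence $M(b)$ is invertible on a possibly smaller neighborhood $U' \subset U$ of $b_0$, and $N(b) := M(b)^{-1}$ is a $C^k$ family of matrices there. Define
\[
S(b) \colon X \too X,\qquad S(b)x := \sum_{i,j=1}^n N(b)_{ji}\,\varphi_i(b)(x)\,x_j^0,
\]
and $R(b) := I - S(b)$. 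A direct check on the spanning set $\{x_k^0\}$ gives $S(b)x_k^0 = x_k^0$ using the biorthogonality $\sum_i N(b)_{ji}\varphi_i(b)(x_k^0) = \sum_i N(b)_{ji}M(b)_{ik} = \delta_{jk}$, so $S(b)$ is idempotent with $\ran S(b) = V := \Span\{x_1^0,\ldots,x_n^0\}$ and
\[
\ker S(b) \ =\ \bigcap_{i=1}^n \ker \varphi_i(b) \ =\ f(b)^\bot.
\]
Thus $R(b)$ is a projection with $\ran R(b) = f(b)^\bot$, and its dependence on $b$ is $C^k$ because it is assembled from $C^k$ ingredients: $N(b)$, the scalar functions $b\mapsto \varphi_i(b)(x)$ (for each fixed $x$, and jointly continuous in $(b,x)$ with the required smoothness in $b$), and the fixed vectors $x_j^0$.

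Finally, since $f(b)^\bot$ has codimension $n$ in $X$, it is complemented (Lemma \ref{l:basic-complemented}.b), so Lemma \ref{l:ck-complemented}.a applies and the existence of the $C^k$ family $R(b)$ on $U'$ proves that $b \mapsto f(b)^\bot$ is $C^k$ at $b_0$. Since $b_0$ was arbitrary, the global $C^k$ property follows. The only delicate point is the initial construction of a genuinely $C^k$ basis of $f(b)$ in $X^*$; once that is in hand, the rest is a finite-dimensional linear-algebra bookkeeping. In the $C^0$ case the same construction works verbatim, but Lemma \ref{l:ck-complemented}.b gives the equivalence with mere continuity, so no separate argument is needed.
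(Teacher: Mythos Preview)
Your proof is correct and follows essentially the same approach as the paper: both construct a local $C^k$ basis of $f(b)$ via the invertible family $L(b)$, choose vectors $x_j\in X$ making the pairing matrix invertible (you use Hahn--Banach to get a biorthogonal system so $M(b_0)=I_n$, the paper just asks for invertibility), build an explicit finite-rank projection with range $f(b)^\bot$, and invoke Lemma~\ref{l:ck-complemented}. The only differences are notational.
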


\begin{proof} Fix $b_0\in B$. Let $x_1^*,\ldots x_n^*$ be a base of $f(b_0)$. Since $f$ is $C^k$, there exist a
neighborhood $U$ of $b_0$ and a $C^k$ map $L\colon U\to\Bb(X)$ such that $L(b)$ is invertible and
$L(b)f(b_0)=f(b)$. Set $x_k^*(b):=L(b)x_k^*$ for $k=1,\ldots,n$ and $b\in U$. Then $x_k^*\colon
U\to X^*$ is a $C^k$ map for each $k=1,\ldots,n$.

Since $\dim f(b_0)=n$, there exist $x_1,\ldots,x_n\in X$ such that the matrix $M(b_0)$ is
invertible, where $M(b):=((x_j^*(b))(x_k))_{j,k=1,\ldots,n}$. The map $M\colon U\to\gl(n,\C)$ is of
class $C^k$. Then there exists a neighborhood $U_1\subset U$ of $b_0$ such that $\det M(b)\ne 0$.
Set $N(b,x):=((x_k^*(b))(x)x_j)_{j,k=1,\ldots,n}$. Define $P(b)\in\Bb(X)$ by
$P(b)x=x-M(b)^{-1}N(b,x)$. Then $P\colon U_1\to\Bb(X)$ is $C^k$, $P(b)^2=P(b)$, and
$f(b)^{\bot}=\ran P(b)$ for each $b\in U_1$. By Lemma \ref{l:ck-complemented}, the map $b\mapsto
f(b)^{\bot}$ is of class $C^k$.
\end{proof}

\section{Basic facts about symplectic Banach bundles}\label{ss:banach-bundles}
The central concept of this Memoir is the continuous variation (i.e., the parametrization) of
Fredholm pairs of Lagrangian subspaces in varying Hilbert or Banach spaces with varying symplectic
forms, see the preceding Sections \ref{ss:maslov-unitary} (dealing with strong symplectic Hilbert
bundles), \ref{ss:maslov-definition} and \ref{ss:cal-maslov} (dealing with symplectic Banach
bundles), \ref{ss:proof-asff} (proving the abstract desuspension spectral flow formula), and
\ref{ss:gsff} (dealing with curves of well-posed elliptic boundary value problems). The concept of
\textit{symplectic Banach bundles} is a natural generalization of the familiar concept of vector
bundles. It provides a suitable frame for making the notion of \textit{continuous variation}
rigorous. We summarize the essential properties in the following list. We refer to
\auindex{Zaidenberg,\ M.G.}\auindex{Krein,\ S.G.}\auindex{Kucment,\ P.A.}\auindex{Pankov,\
A.A.}\cite{ZKKP:1975} for more details regarding the concept of Banach bundles.

\begin{note} Typically in this Memoir the base space of the considered Banach bundles is the
interval, hence contractible. Then the total space can be written as a trivial product, i.e., the
fibres can always be identified.
\end{note}

\begin{properties}[Basic properties of Banach bundles]\label{prop:banach-bundles}
(1) Let $B$ be a topological space and  $p\colon \mathbb{X}\to B$ a Banach bundle with fibers
$p^{-1}(b)=X(b)$ for each $b\in B$. For simplicity, we shall restrict ourselves to the case
$B=[0,1]$ and write shortly $\{X(s)\}_{s\in[0,1]}$ instead of $p\colon \mathbb{X}\to [0,1]$. This
means that there exists an open covering $\{I(t)\}_{t\in A}$ of $[0,1]$, with $t\in I(t)$, and $A$
is a given subset of $[0,1]$, such that there exists a Banach isomorphism $\varphi(t,s):X(t)\to
X(s)$ for each $s\in I(t)$. It is called of class $C^k$ if $\varphi(t_2,s)^{-1}\varphi(t_1,s)$ is
of class $C^k$ for $s\in I(t_1)\cap I(t_2)$ and all $t_1,t_2\in A$.

(2) A family of forms $\{\w(s)\}_{s\in [0,1]}$ on the fibers $X(s)$ is called continuous ($C^k$) iff
all the families $\{\varphi(t,s)^*(\w(s))\}_{s\in I(t)}$ are continuous ($C^k$) for each $t\in A$.

(3) A family of closed subspaces $\{M(s)\}_{s\in [0,1]}$ of $X(s)$ is called continuous ($C^k$) iff
all the families $\{\varphi(t,s)^{-1}(M(s))\}_{s\in I(t)}$ are continuous ($C^k$) for each $t\in
A$.

(4) For simplicity we identify all the fibers $X(s)$ of $\mathbb{X}$ with one fixed Banach space
$X$. Let $\{X=X(s)^+\oplus X(s)^-\}_{s\in [0,1]}$ be a family of splittings. It is called
continuous iff the family of projections $\{P(s)\}_{s\in [0,1]}$ from $X$ onto $X(s)^+$ along
$X(s)^-$ is continuous. Then for all $s,t\in [0,1]$ with $\|P(s)-P(t)\|<1$, there exists an
invertible operator $U(t,s)$, such that $P(s)U(t,s)=U(t,s)P(t)$. Then the family $\{X,P(s)\}_{s\in
[0,1]}$ forms a bundle. So we may fix $X(s)^{\pm}=:X^{\pm}$ and $P(s)=:P$ locally.

(5) Let $\{M(s)\}_{s\in [0,1]}$ be a family of closed subspaces of a fixed Banach space $X$, as in
Property 4. For all $s$, let $M(s)$ be the graph of a suitable $U(s)\colon X^+\to X^-$. Then the
family $\{M(s)\}$ is continuous in the gap topology iff the operator family $\{U(s)\}$ is
continuous in the graph norm (by definition). If all $U(s)$ are bounded linear maps, then the
family $\{U(s)\}$ is continuous in the graph norm iff the family $\{U(s)\}$ is continuous in the
operator norm.
\end{properties}

\begin{rem}\label{r:banach-bundles}
(a) The preceding list becomes very simple in the special case of a Hilbert bundle $p\colon
\mathbb{H}\to [0,1]$, considered above in Section \ref{ss:maslov-unitary}. We identify the
underlying vector spaces of the fibers $p^{-1}(s)=H(s)=:H$ of Hilbert spaces for all $s\in [0,1]$
and require that the bounded invertible operators $A_{s,0}$ defined by
\[
\lla x,y\rra_s\ = \ \lla A_{s,0} x,y\rra_0\/, \text{ for all $x,y\in H$}
\]
form a continuous family. That is the reformulation of Property 1. It explains what we mean by
$\{H,\lla\cdot,\cdot\rra_s\}_{s\in [0,1]}$ being a continuous family of Hilbert spaces.
\newline (b) Similarly, we can reformulate Property 2 in that case: A family of symplectic forms
$\{\w(s)(x,y)=\lla J(s)x,y\rra\}$ is continuous iff the family of injective operators $\{J(s)\}$ is
continuous in the operator norm (in the case of strong symplectic forms) or in the gap topology (in
the case of weak symplectic forms). Actually, that definition generalizes to symplectic forms in
Banach bundles, namely requiring that the family of injective operators $\{J(s)\colon X(s)\to
X(s)^*\}$ given by $\w(x,y)=(J(s)(x))(y)$ is continuous.
\newline (c) For strong symplectic Hilbert bundles, the continuity of the canonical splitting
$\{H=\ker(J(s)- iI)\oplus\ker(J(s)+ iI)\}$ is just Property 3.
\newline (d) Properties 4 and 5 explain the equality of the two natural topologies of the
Fredholm Lagrangian Grassmannian in the presence of a symplectic splitting (as canonically given in
strong symplectic Hilbert spaces and assumed in \cite{BooZhu:2013}): a curve of Lagrangian
subspaces is continuous in the gap topology iff the curve of the unitary generators of the
Lagrangians is continuous.
\end{rem}

\section{Embedding Banach spaces}\label{ss:embedding-banach}

Let $j\colon W\to X$ be a Banach space embedding. In this subsection we study the continuous family
of closed subspaces in $W$ which is also closed in $X$.

\begin{proposition}\label{p:embedding-continuous1} Let $W$, $X$ be Banach spaces. Let $j\in\Bb(W,X)$ be an
injective bounded linear map. Let $M\subset W$ be such that $j(M)\in\Ss(X)$. Then the following
hold.
\newline (a) $M$ is closed in $W$, and the linear map $(j|_M)^{-1}\colon j(M)\to M$ is bounded.
\newline (b) Denote by $C(M)=\|(j|_M)^{-1}\|$. Let $N\in \Ss(W)$ be a closed linear subspace.
Assume that $M\ne \{0\}$ and $\delta(N,M)<(1+\|j\|C(M))^{-1}$. Then we have $j(N)\in\Ss(X)$ and
\begin{equation}\label{e:uniform-CN}C(N)\le C(M)(1-(1+\|j\|C(M))\delta(N,M))^{-1}.
\end{equation}
\newline (c) Under the assumptions of (b), we have
\begin{equation}\label{e:delta-embedding}
\hat\delta(j(M),j(N))\le C(M)\|j\|(1-(1+\|j\|C(M))\delta(N,M))^{-1}\hat\delta(M,N).
\end{equation}
\end{proposition}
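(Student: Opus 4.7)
The plan is to handle the three parts in sequence, with the core work packed into a single one-sided estimate of $\|j(y)\|$ from below for $y\in N$.

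For (a), I would argue that since $j$ is injective and continuous and $j(M)$ is closed in $X$, Lemma \ref{l:closed-subset} gives $M=j^{-1}(j(M))$ closed in $W$, hence a Banach space in its own right. Then $j|_M\colon M\to j(M)$ is a continuous linear bijection between Banach spaces, so the open mapping (inverse mapping) theorem yields a bounded inverse. This is short and uses no symplectic structure.

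For (b), the main technical step will be a lower estimate of $\|j(y)\|$ for $y\in N$ with $\|y\|=1$ by comparison with a nearby element of $M$. Given $\varepsilon>0$, by \cite[Lemma III.1.12]{Ka95} one can pick $z\in M$ with $\|y-z\|\le\delta(N,M)+\varepsilon$, so that $\|z\|\ge 1-\delta(N,M)-\varepsilon$ and, using part (a),
\[
\|j(z)\|\ \ge\ \frac{\|z\|}{C(M)}\ \ge\ \frac{1-\delta(N,M)-\varepsilon}{C(M)}.
\]
Combining this with $\|j(y-z)\|\le\|j\|(\delta(N,M)+\varepsilon)$ and letting $\varepsilon\to 0$ yields
\[
\|j(y)\|\ \ge\ \frac{1-(1+\|j\|C(M))\delta(N,M)}{C(M)}.
\]
Under the hypothesis $\delta(N,M)<(1+\|j\|C(M))^{-1}$ the right-hand side is strictly positive, so $j|_N$ is bounded below; standard functional analysis then gives $j(N)$ closed and $(j|_N)^{-1}$ bounded with the asserted bound on $C(N)$.

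For (c) the strategy is to estimate the two one-sided gaps $\delta(j(M),j(N))$ and $\delta(j(N),j(M))$ separately and then take the max. For $x=j(z)\in j(M)$ with $\|x\|=1$, one has $\|z\|\le C(M)$; picking $y\in N$ with $\|z-y\|\le(\delta(M,N)+\varepsilon)\|z\|$ and applying $j$ leads, after $\varepsilon\to 0$, to $\delta(j(M),j(N))\le\|j\|C(M)\delta(M,N)$. The analogous computation starting from $x=j(y)\in j(N)$ of norm one uses $\|y\|\le C(N)$ and gives $\delta(j(N),j(M))\le\|j\|C(N)\delta(N,M)$; substituting the bound on $C(N)$ from (b) produces the same denominator $1-(1+\|j\|C(M))\delta(N,M)$, and replacing $\delta(M,N),\delta(N,M)$ by $\hat\delta(M,N)$ yields \eqref{e:delta-embedding}. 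The only mildly delicate point is bookkeeping with the factor $C(N)$ in the second direction, which is where the assumption in (b) is used a second time; the rest is triangle inequality and homogeneity.
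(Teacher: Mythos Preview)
Your proposal is correct and follows essentially the same approach as the paper: part (a) via Lemma~\ref{l:closed-subset} plus the open mapping/closed graph theorem, part (b) via a lower bound on $\|j(y)\|$ obtained by comparing $y\in N$ with a nearby $z\in M$, and part (c) via the two one-sided gap estimates $\delta(j(M),j(N))\le\|j\|C(M)\delta(M,N)$ and $\delta(j(N),j(M))\le\|j\|C(N)\delta(N,M)$ followed by substitution of the bound on $C(N)$. The only cosmetic differences are that you normalize $\|y\|=1$ from the outset (the paper works with general $x$ and a $(1-\varepsilon)$-type approximation), and that the approximation $\|y-z\|\le\delta(N,M)+\varepsilon$ you use needs only the definition of infimum, not Kato's Lemma~III.1.12.
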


\begin{proof} (a) By the continuity of $j$, $M=j^{-1}j(M)$ is closed. By the closed graph theorem,
the linear map $(j|_M)^{-1}\colon j(M)\to M$ is bounded.
\newline (b) Since $M\ne \{0\}$, we have $C(M)>0$. Let $x\in N$ and $\e\in(0,1-(1+\|j\|C(M))\delta(N,M))$. Then there exists a $y\in M$ such that
\[(1-\e)(\|x\|_W-\|y\|_W)\le(1-\e)\|x-y\|_W\le d(x,M)\le\delta(N,M)\|x\|_W.\]
So we have
\begin{equation}\label{e:x-y-W}
\|x\|_W\le\frac{(1-\e)\|y\|_W}{1-\e-\delta(N,M)}.
\end{equation}
Note that
\begin{align*}
C(M)^{-1}\|y\|_W&-\|j(x)\|_X\le\|j(y)\|_X-\|j(x)\|_X\\
&\le\|j(x)-j(y)\|_X\le\|j\|\|x-y\|_W.
\end{align*}
By (\ref{e:x-y-W}) we have
\begin{align*}
\|j(x)\|_X&\ge C(M)^{-1}\|y\|_W-\|j\|\|x-y\|_W\\
&\ge\frac{C(M)^{-1}(1-\e-\delta(N,M))\|x\|_W}{1-\e}-\frac{\|j\|\delta(N,M)\|x\|_W}{1-\e}\\
&=C(M)^{-1}\|x\|_W\left(1-\frac{(1+\|j\|C(M))\delta(N,M)}{1-\e}\right).
\end{align*}
Let $\e\to 0$, and we have
\[\|j(x)\|_X\ge C(M)^{-1}\|x\|_W\left(1-(1+\|j\|C(M))\delta(N,M)\right).\]
Since $N$ is closed in Banach space $W$, we have $j(N)\in\Ss(X)$ and the equation
(\ref{e:uniform-CN}) holds.
\newline (c) By the definition of the gap we have
\begin{align}
\label{e:delta-M-N}\delta(j(M),j(N))&\le C(M)\|j\|\delta(M,N),\\
\label{e:delta-N-M}\delta(j(N),j(M))&\le C(N)\|j\|\delta(N,M).
\end{align}
Then we have
\begin{equation}\label{e:hat-delta-M-N}
\hat\delta(j(M),j(N))\le\max\{C(M),C(N)\}\|j\|\hat\delta(M,N).
\end{equation}
By (b), our result follows.
\end{proof}

\begin{corollary}\label{c:continuous-lift} Let $B$ be a topological space. Let $q\colon F\to B$
and $p\colon E\to B$ be Banach bundles with fibers $q^{-1}(b):=W(b)$ and
$p^{-1}(b):=X(b)$ for each $b\in B$ respectively. Assume that we have a Banach subbundle map
$F\to E$, and there is a family $M(b)\in\Ss(X(b))$, $b\in B$ such that
$M(b)\subset W(b)$ for each $b\in B$, and the family $M(b)\in\Ss(W(b))$, $b\in B$ is continuous.
Then the family $M(b)\in\Ss(X(b))$, $b\in B$ is continuous.
\end{corollary}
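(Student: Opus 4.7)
The plan is to reduce the claim to a direct application of Proposition \ref{p:embedding-continuous1}(c), after first unpacking the Banach subbundle data into a local trivialization that pins down a fixed Banach space embedding $W \hookrightarrow X$.

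First I would work locally: fix $b_0 \in B$ and, using the definition of Banach subbundle given by the subbundle maps $F \to \tilde F \to E$, pass to a neighborhood of $b_0$ on which all three bundles admit compatible trivializations. ``Compatible'' here means that after trivialization the fibers $W(b), \tilde W(b), X(b)$ are identified with fixed Banach spaces $W \subset \tilde W \subset X$ (with fixed bounded inclusion $j\colon W \into X$), and the continuity requirement for the family $M(b) \in \Ss(W(b))$ translates into $\hat\delta_W(M(b),M(b_0)) \to 0$ as $b \to b_0$. The target assertion then becomes $\hat\delta_X(j(M(b)),j(M(b_0))) \to 0$.

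Next I would verify the hypotheses of Proposition \ref{p:embedding-continuous1}. Since $M(b_0) \in \Ss(X(b_0))$ by assumption, part (a) of the proposition gives that $(j|_{M(b_0)})^{-1}\colon j(M(b_0))\to M(b_0)$ is bounded; in the notation of that proposition this says $C(M(b_0))<+\infty$. For $b$ sufficiently close to $b_0$, continuity of $\{M(b)\}$ in $\Ss(W)$ yields $\delta(M(b),M(b_0)) < (1+\|j\|C(M(b_0)))^{-1}$, which is exactly the smallness condition needed to invoke parts (b) and (c).

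Finally I would apply part (c) of Proposition \ref{p:embedding-continuous1} with $M := M(b_0)$ and $N := M(b)$. The estimate \eqref{e:delta-embedding} reads
\[
\hat\delta(j(M(b)),j(M(b_0))) \le C(M(b_0))\,\|j\|\bigl(1-(1+\|j\|C(M(b_0)))\,\delta(M(b),M(b_0))\bigr)^{-1}\hat\delta(M(b),M(b_0)),
\]
so the right-hand side tends to $0$ as $b \to b_0$, giving continuity at $b_0$ in $\Ss(X(b_0))$. Since $b_0$ was arbitrary, the family is continuous on all of $B$.

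The main obstacle I expect is bookkeeping rather than analysis: namely, making sure the local trivializations of $F, \tilde F, E$ really can be chosen so that the fiberwise inclusions become a single fixed embedding $j\colon W \into X$ (and not a continuously varying family of embeddings, which would require an additional perturbation argument on top of Proposition \ref{p:embedding-continuous1}(c)). This is precisely what the notion of Banach subbundle (as summarized in Properties \ref{prop:banach-bundles}) is designed to afford, but it is the step that deserves the most care; once secured, the uniform estimate \eqref{e:delta-embedding} does all the real work.
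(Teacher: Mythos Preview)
Your approach is correct and rests on the same core estimate as the paper's proof, namely Proposition~\ref{p:embedding-continuous1}(c). The difference is organizational: the paper applies the proposition \emph{twice}, first to the inclusion $W(b)\hookrightarrow \wt W(b)$ and then to $\wt W(b)\hookrightarrow X(b)$, whereas you apply it once to the composite $W(b)\hookrightarrow X(b)$ after arranging a compatible trivialization of all three bundles.

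The paper's two-step route is exactly what sidesteps the obstacle you flag at the end. Each subbundle map $F\to\wt F$ and $\wt F\to E$ separately guarantees a local trivialization in which the smaller fiber sits as a \emph{fixed} subspace of the larger; but these two trivializations of $\wt F$ need not agree, so the composite inclusion $W\hookrightarrow X$ may genuinely vary with $b$. The paper avoids having to reconcile them: continuity of $\{M(b)\}$ in $\Ss(\wt W(b))$ is trivialization-independent, so one can switch trivializations of $\wt F$ between the two steps. Your one-step argument instead needs the compatible trivialization, which can be built (extend the transition isomorphisms of $\wt W$ across a complement of $\wt W$ in $X$, using that subbundles here are complemented), but this is extra work that the two-step version simply does not require.
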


\begin{proof} By Proposition \ref{p:embedding-continuous1}.a, we have $M(b)\in\Ss(W(b))$
for each $b\in B$. By Proposition \ref{p:embedding-continuous1}.c, the family $M(b)\in X(b)$, $b\in B$ is continuous.
\end{proof}

The following corollary is the second main result of this appendix. It generalizes
\auindex{Nicolaescu,\ L.}\cite[Proposition B.1]{Ni97} and \auindex{Boo{\ss}--Bavnbek,\
B.}\auindex{Lesch,\ M.}\auindex{Zhu,\ C.}\cite[Theorem 7.16]{BoLeZh08}.

\begin{corollary}\footnote{Added in proof: By slightly different methods, a related result was obtained by M. Prokhorova \auindex{Prokhorova,\ M.}\cite{Pro17} in the meantime.}[Continuity of the operator family]\label{c:continuous-operator} Let $B$ be a topological space and
 $p\colon \mathbb{X}\to B$, $p_1\colon \mathbb{X}_1
\to B$, $q\colon \mathbb{Y}\to B$  three Banach bundles with fibers $p^{-1}(b)=X(b)$,
$p_1^{-1}(b)=D_M(b)$ and $q^{-1}(b)=Y(b)$ for each $b\in B$ respectively. Assume that
$\mathbb{X}_1$ is a subbundle of $\mathbb{X}$, and we have two continuous families
$A(b)\in\Bb(D_M(b),Y(b)))$ and $D(b)\in\Ss(D_M(b))$, $b\in B$ such that $A(b)|_{D(b)}\colon
X(b)\supset D(b)\to Y(b)$ is a closed operator for each $b\in B$. Then the family of operators
$\left\{A(b)|_{D(b)}\in\Cc(X(b),Y(b))\right\}_{b\in B}$ is continuous.
\end{corollary}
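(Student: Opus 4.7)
The plan is to establish continuity of the family $\{A(b)|_{D(b)}\}_{b\in B}$ in $\Cc(X(b),Y(b))$, which by convention means continuity of the graphs $\Graph(A(b)|_{D(b)})\subset X(b)\times Y(b)$ in the gap topology of $\Ss(X(b)\times Y(b))$. The strategy is two-stage: first prove continuity inside the smaller product bundle $\mathbb{X}_1\times\mathbb{Y}$, where the bounded operator $A(b)$ genuinely acts; then lift the continuity up to $\mathbb{X}\times\mathbb{Y}$ by Corollary \ref{c:continuous-lift}.

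For the first stage, I would realize the graph as an intersection. Working locally in a trivialization so that $D_M(b)=D_M$ and $Y(b)=Y$ are constant, consider the two closed subspaces of $D_M\times Y$
\[
M(b):=\Graph(A(b))=\{(x,A(b)x):x\in D_M\},\qquad N(b):=D(b)\times Y.
\]
The family $M(b)$ varies continuously in the gap topology because $A(b)$ varies continuously in $\Bb(D_M,Y)$; indeed a direct estimate gives $\delta(\Graph(A),\Graph(B))\le \|A-B\|$ with respect to the product norm on $D_M\times Y$. The family $N(b)$ varies continuously since $D(b)$ does. Their sum equals $D_M\times Y$ for every $b$ (because $\Graph(A(b))+\{0\}\times Y=D_M\times Y$ and $\{0\}\times Y\subset N(b)$), so in particular $M(b)+N(b)$ is trivially a continuous family of closed subspaces. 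Their intersection is precisely $\Graph(A(b)|_{D(b)})$. Invoking Proposition \ref{p:close-to} then gives that $\{\Graph(A(b)|_{D(b)})\}_{b\in B}$ is a continuous family in $\Ss(D_M(b)\times Y(b))$.

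For the second stage, the hypothesis that $A(b)|_{D(b)}$ is a closed operator in $X(b)$ yields $\Graph(A(b)|_{D(b)})\in\Ss(X(b)\times Y(b))$. Corollary \ref{c:continuous-lift}, applied to the chain of Banach subbundle maps $\mathbb{X}_1\times\mathbb{Y}\into\mathbb{X}\times\mathbb{Y}$, then lifts the continuity from the finer topology of $\Ss(D_M(b)\times Y(b))$ to the coarser topology of $\Ss(X(b)\times Y(b))$. This delivers the desired continuity of the operator family.

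The main obstacle is the sum-intersection manipulation in Stage 1: one must verify that Proposition \ref{p:close-to} is applicable to the pair $(\Graph(A(b)),D(b)\times Y)$ and that the sum $M(b)+N(b)$ is indeed the full (constant) ambient space, which requires the observation $\Graph(A(b))+\{0\}\times Y=D_M\times Y$. Once this algebraic identity is in place, intersection continuity follows from Proposition \ref{p:close-to} and the lifting step via Corollary \ref{c:continuous-lift} is automatic.
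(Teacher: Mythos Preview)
Your proof is correct but takes a different route from the paper in the first stage. The paper instead constructs an intermediate subbundle $F = \bigcup_b \Graph(A(b))$ (the full graph of the bounded operator) sitting inside $\wt F = \bigcup_b D_M(b) \times Y(b)$; since $A(b)$ varies continuously and $\Graph(A(b))$ is complemented by $\{0\}\times Y(b)$, the map $F\to\wt F$ is a genuine Banach subbundle map. The bundle isomorphism $D_M(b) \cong \Graph(A(b))$, $x \mapsto (x, A(b)x)$, then transports the continuous family $D(b)$ to a continuous family $\Graph(A(b)|_{D(b)})$ inside the fibers of $F$, and Corollary~\ref{c:continuous-lift} is applied with the full three-term chain $F \to \wt F \to \mathbb{X} \times \mathbb{Y}$.

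Your approach bypasses the construction of $F$ as a subbundle and instead writes $\Graph(A(b)|_{D(b)}) = \Graph(A(b)) \cap (D(b) \times Y)$ directly in $D_M \times Y$, observes that the sum is the whole space, and invokes Proposition~\ref{p:close-to} for intersection continuity. This is arguably more elementary: it uses only the gap-topology lemma rather than the subbundle characterization of Lemma~\ref{l:ck-complemented}. The paper's route, on the other hand, matches the three-bundle hypothesis of Corollary~\ref{c:continuous-lift} on the nose, whereas you must apply that corollary in the degenerate form $F = \wt F = \mathbb{X}_1 \times \mathbb{Y}$ (equivalently, cite Proposition~\ref{p:embedding-continuous1}.c directly for the single embedding $D_M(b) \times Y(b) \hookrightarrow X(b) \times Y(b)$). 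Both stages of your argument are sound; the sum identity you single out as the main obstacle is exactly what makes Proposition~\ref{p:close-to} applicable, and you verify it correctly.
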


\begin{proof} Set $W(b):=\Graph(A(b)|_{D_M(b)})$, $\wt W(b):=D_M(b)\times Y(b)$, $Z(b):=X(b)\times Y(b)$, $F:=\bigcup_{b\in B}W(b)$,
$\wt F:=\bigcup_{b\in B}\wt W(b)$, and $E_2:=\bigcup_{b\in B}Z(b)$. Then we have a subbundle map
$\wt F\to E_2$. Since the family $A(b)\in\Bb(D_M(b),Y(b)))$, $b\in B$ is continuous and $\tilde W(b)=W(b)\oplus\{0\}\times Y(b)$, we have subbundles $F\to \wt F$ and $\tilde F \to E_2$. Then our result follows from Corollary
\ref{c:continuous-lift}.
\end{proof}

\begin{rem}\label{r:continuous-operator} (a) By Lemma \ref{l:quotient-delta}, our condition means $P_n\to P$
in $H^{\sigma}(\Sigma)\to H^{\sigma}(\Sigma)$ (in \cite{Ni97} \auindex{Nicolaescu,\ L.}Nicolaescu
uses the notation $L_{\sigma}^2$ for the Sobolev space $H^{\sigma}(\Sigma)$) for $\sigma=1/2$. We
do not require the condition for $\sigma=0$.
\newline (b) In \auindex{Boo{\ss}--Bavnbek,\ B.}\auindex{Lesch,\ M.}\auindex{Phillips,\ J.}\cite[Theorem 3.9 (d)]{BoLePh01},
it is assumed that $P_n\to P$ in $H^{\sigma}(\Sigma)\to H^{\sigma}(\Sigma)$ for $\sigma=0$. The
proof is incomplete there. For the correct proof, see \auindex{Boo{\ss}--Bavnbek,\
B.}\auindex{Lesch,\ M.}\auindex{Zhu,\ C.}\cite[Theorem 7.16]{BoLeZh08}.
\end{rem}

\section{Compact perturbations of closed subspaces}\label{ss:compact-perturb}
Let $X$ be a Banach space and $M$ a closed subspace of $X$. In this subsection we study compact
perturbations of $M$.

We recall the notion of relative index between projections.

\begin{definition}\label{d:relative-index} Let $P,Q\in\Bb(X)$ be projections and $QP\colon \ran P\to \ran Q$ is Fredholm.
The {\em relative index} $[P-Q]$ is defined by
\begin{equation}\label{e:relative-index}
[P-Q]:=\Index(QP\colon \ran P\to \ran Q).
\end{equation}
\end{definition}

The relative index has the following properties.

\begin{lemma}\label{l:relative-index} Let $X$ be a Banach space and $P,Q,R,P_1,Q_1\in \Bb(X)$ projections.
\newline (a) we have $[P-Q]=\Index(\ran P,\ker Q)=[(I-Q)-(I-P)]$.
\newline (b) If $P-Q$ is compact, $QP\colon \ran P\to \ran Q$ is Fredholm.
\newline (c) If $P-Q$ or $Q-R$ is compact, we have $[P-Q]+[Q-R]=[P-R]$. In particular, we have $[P-Q]=-[Q-P]$ if $P-Q$ is compact.
\newline (d) If $PP_1=P_1P=0$, $QQ_1=Q_1Q=0$ and $P-Q$ (or $P_1-Q_1$) is compact, we have $[(P+P_1)-(Q+Q_1)]=[P-Q]+[P_1-Q_1]$.
\newline (e) If $T\in\Bb(X,Y)$ is invertible, we have $[TPT^{-1}-TQT^{-1}]=[P-Q]$.
\end{lemma}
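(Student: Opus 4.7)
My plan is to handle the five parts in sequence, each reducing to a computation with the Fredholm index of the defining map $QP|_{\ran P}\colon\ran P\to\ran Q$, using the decompositions $X=\ran P\oplus\ker P=\ran Q\oplus\ker Q$ throughout. For (a), I would unwind Definition~\ref{d:relative-index} directly: one checks $\ran P\cap\ker Q=\ker(QP|_{\ran P})$ and $(\ran P+\ker Q)\cap\ran Q=Q(\ran P)=\ran(QP|_{\ran P})$, whence $X/(\ran P+\ker Q)\cong\coker(QP|_{\ran P})$, yielding $[P-Q]=\Index(\ran P,\ker Q)$. The second equality $[P-Q]=[(I-Q)-(I-P)]$ then follows from the symmetry $\Index(M,N)=\Index(N,M)$ of Fredholm subspace pairs combined with $\ran(I-Q)=\ker Q$ and $\ran(I-P)=\ker P$.

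For (b) and (c), I would exploit the candidate approximate inverse $PQ\colon\ran Q\to\ran P$. A direct calculation gives $(PQ)(QP)=I_{\ran P}-P(P-Q)P|_{\ran P}$ and $(QP)(PQ)=I_{\ran Q}-Q(Q-P)Q|_{\ran Q}$, both of the form identity minus a compact operator whenever $P-Q$ is compact; Atkinson's theorem then produces (b). For additivity in (c), the identity $RP|_{\ran P}=(RQ)(QP)+R(I-Q)P|_{\ran P}$ exhibits $RP|_{\ran P}-(RQ)(QP)$ as a compact perturbation, since $(I-Q)P|_{\ran P}=(P-Q)|_{\ran P}$ is compact under the stated hypothesis; the logarithmic property of the Fredholm index applied to the composition $\ran P\xrightarrow{QP}\ran Q\xrightarrow{RQ}\ran R$ then gives $[P-R]=[P-Q]+[Q-R]$, and specializing to $R=P$ yields $[P-Q]=-[Q-P]$.

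For (d), the orthogonality $PP_1=P_1P=0$ yields $\ran(P+P_1)=\ran P\oplus\ran P_1$ (and similarly for $Q+Q_1$), so that $(Q+Q_1)(P+P_1)$ restricted to $\ran(P+P_1)$ has diagonal blocks $QP|_{\ran P}$ and $Q_1P_1|_{\ran P_1}$; the plan is to absorb the off-diagonal blocks $QP_1,Q_1P$ into compact perturbations so that the Fredholm index of the block operator equals the sum of the diagonal indices. For (e), functoriality of the Fredholm index under the invertible $T$ is the engine: part (a) gives $[TPT^{-1}-TQT^{-1}]=\Index(T\ran P,T\ker Q)$, and since $T$ is a bounded invertible map transporting the pair $(\ran P,\ker Q)$ to $(T\ran P,T\ker Q)$ without changing intersection and cokernel dimensions, this equals $\Index(\ran P,\ker Q)=[P-Q]$, from which the stated identity is to be read off via (c). The main obstacle I anticipate is part (d): the stated hypotheses alone do not force the off-diagonal blocks $QP_1,Q_1P$ to be compact, so the clean route likely goes through part (a), rewriting each of the three relative indices as an index of Fredholm subspace pairs and invoking the direct-sum compatibility of $\Index(\cdot,\cdot)$ derivable from Lemma~\ref{l:negative-index}.
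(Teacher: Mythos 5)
Your treatment of (a), (b), (c), (e) is sound and in fact more detailed than the paper's: the paper proves (a) exactly as you do (identifying $[P-Q]=\Index(\ran P,\ker Q)$ and using the symmetry of the pair index together with $\ran(I-Q)=\ker Q$, $\ker(I-P)=\ran P$), and for (b)--(e) it merely cites \cite{ZhLo99}, remarking that (c) follows from the proof there and (d) ``from the definition.'' Two small repairs to your arguments: in (c) your error term is $R(I-Q)P|_{\ran P}$, and your justification of its compactness (``$(I-Q)P|_{\ran P}=(P-Q)|_{\ran P}$'') only covers the case that $P-Q$ is compact; when only $Q-R$ is compact, write $R(I-Q)=(R-Q)(I-Q)$ to reach the same conclusion. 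In (e) your computation yields $[TPT^{-1}-TQT^{-1}]=[P-Q]$; passing to the printed right-hand side $[Q-P]$ really does need (c), i.e.\ compactness of $P-Q$, so routing it through (c) as you indicate is the correct reading.

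The genuine gap is (d). You rightly note that the stated hypotheses do not make the off-diagonal blocks $QP_1|_{\ran P_1}$, $Q_1P|_{\ran P}$ compact, but the fallback you propose --- an unconditional ``direct-sum compatibility'' of $\Index(\cdot,\cdot)$ extracted from Lemma \ref{l:negative-index} --- does not exist: that lemma gives annihilator inequalities in symplectic spaces and is irrelevant here, and no such additivity holds under the hypotheses of (d) alone. Concretely, let $X=H\oplus H$ with $Q,Q_1$ the two coordinate projections, let $S$ be the unilateral shift, set $M:=\Graph(I-S)$, $M_1:=\{(y,y)\,;\,y\in H\}$; then $M\cap M_1=\{0\}$ and $M+M_1=\{(u,v)\,;\,u-v\in\ran S\}$ is closed of codimension $1$, so one can choose a line $W$ with $X=(M+M_1)\oplus W$ and let $P,P_1$ be the (bounded) projections onto $M,M_1$ along $M_1\oplus W$, respectively $M\oplus W$. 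All hypotheses $PP_1=P_1P=0$, $QQ_1=Q_1Q=0$ hold and all three relative indices are defined, yet $[P-Q]=[P_1-Q_1]=0$ while $[(P+P_1)-(Q+Q_1)]=\Index\bigl(M+M_1\into X\bigr)=-1$. So (d) needs the compactness that is tacit in this section (and in the source the paper cites): if $P-Q$ and $P_1-Q_1$ are compact, then $QP_1=Q(P_1-Q_1)$ and $Q_1P=Q_1(P-Q)$ are compact, the block operator is a compact perturbation of the block-diagonal one, and your original block argument (additivity for block-diagonal, respectively triangular, operators plus compact-perturbation invariance of the index) finishes the proof. That --- and not a subspace-pair direct-sum formula --- is the way to close (d).
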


\begin{proof} (a) We have $\ker(QP\colon \ran P\to \ran Q)=\ran P\cap\ker Q$. Note that $\ran P+\ker Q=Q(\ran P)+\ker Q=\ran (QP)+\ker P$. Then we have
\[X/(\ran P+\ker Q)=(\ran Q+\ker Q)/(\ran P+\ker Q)\simeq\ran Q/\ran (QP).\]
So we have
\begin{align*}[P-Q]&=\Index(\ran P,\ker Q)=\Index(\ran(I-Q),\ker(I-P))\\
&=[(I-Q)-(I-P)].
\end{align*}
\newline (b)-(c) See \auindex{Zhu,\ C.}\auindex{Long,\ Y.}\cite[Lemma 2.2,2.3]{ZhLo99}.
Note that (c) follows from the proof of \auindex{Zhu,\ C.}\auindex{Long,\ Y.}\cite[Lemma
2.3]{ZhLo99}.
\newline (d) Note that $Q_1P=(Q_1-P_1)P=Q_1(P-Q)$ and $QP_1=(Q-P)P_1=Q(P_1-Q_1)$ are compact. So we have
\begin{align*}[(P+P_1)-(Q+Q_1)]&=\Index((Q+Q_1)(P+P_1):\ran(P+P_1)\to\ran(Q+Q_1))\\
&=\Index(QP+Q_1P_1:\ran(P+P_1)\to\ran(Q+Q_1))\\
&=\Index(QP:\ran P\to\ran Q)+\Index(Q_1P_1:\ran P_1\to\ran Q_1)\\
&=[P-Q]+[P_1-Q_1].
\end{align*}
\newline (e) It follows from the definition.
\end{proof}

\begin{definition}\label{d:compact-perturb} Let $X$ be a Banach space and $M$, $N$ be closed subspaces of $X$.
\newline (a) We define $M\sim^f N$ if $\dim M/(M\cap N), \dim N/(M\cap N)<+\infty$, and call $N$ a
{\em finite change} of $M$ (see \auindex{Neubauer,\ G.}\cite[p. 273]{Ne68}).
\newline (b) We define $M\sim^{c} N$ if there exist closed subspaces $M_1\subset M$, $N_1\subset N$ and
a compact operator $K\in\Bb(X)$ such that $I+K$ is invertible, $N_1=(I+K)M_1$ and $\dim M/M_1, \dim
N/N_1<+\infty$, and call $N$ a {\em compact perturbation} of $M$. In this case we define the {\em
relative index} $[M-N]:=\dim M/M_1-\dim N/N_1$.
\end{definition}

\begin{lemma}\label{l:finite-codimension} Let $X$ be a vector space and $M, N, W$ three linear subspaces. If $N+W\subset M$, we have
$\dim M/(N\cap W)\le\dim M/N+\dim M/W$.
\end{lemma}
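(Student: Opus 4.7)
The plan is to use the two standard tools for working with quotients of linear subspaces: the additivity of dimension along a chain of subspaces, and the second isomorphism theorem. Since the statement is purely algebraic, no topology or symplectic structure enters.

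First I would exploit the chain $N \cap W \subset N \subset M$. Since $N/(N\cap W)$ embeds into $M/(N\cap W)$ with quotient $M/N$, one gets the additivity
\[
\dim M/(N\cap W)\ =\ \dim M/N\ +\ \dim N/(N\cap W),
\]
valid as an equation of (possibly infinite) cardinals. This is the main structural step and the only place where one uses that $N\cap W$ sits inside both $N$ and $M$.

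Next I would apply the second isomorphism theorem to rewrite the second summand, namely $N/(N\cap W)\cong (N+W)/W$. This is where the hypothesis $N+W\subset M$ finally enters: the inclusion $(N+W)/W\hookrightarrow M/W$ is well defined and injective, hence
\[
\dim N/(N\cap W)\ =\ \dim (N+W)/W\ \le\ \dim M/W.
\]
Substituting this into the displayed equation yields
\[
\dim M/(N\cap W)\ \le\ \dim M/N\ +\ \dim M/W,
\]
which is the claim.

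There is no real obstacle: the only subtlety is handling the case when some of the dimensions are infinite, but the argument is set up as an inequality of cardinals and the exact sequence reasoning together with the isomorphism $N/(N\cap W)\cong (N+W)/W$ remains valid in that generality. So the proof reduces to two one-line invocations of classical linear algebra.
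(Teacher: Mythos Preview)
Your proof is correct and follows exactly the same route as the paper: first split $\dim M/(N\cap W)=\dim M/N+\dim N/(N\cap W)$ via the chain $N\cap W\subset N\subset M$, then rewrite $N/(N\cap W)\cong (N+W)/W$ and bound it by $\dim M/W$ using $N+W\subset M$.
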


\begin{proof} We have
\begin{align*}\dim M/(N\cap W)&=\dim M/N+\dim N/(N\cap W)\\
&=\dim M/N+\dim(N+W)/W\\
&\le\dim M/N+\dim M/W. \qedhere
\end{align*}
\exendproof

\begin{lemma}\label{l:zero-relative-index} Let $X$ be a Banach space and $M$, $M_1$, $M_2$ be closed linear subspaces.
Assume that $M_1$, $M_2$ are subspaces of $M$ with finite codimension in $M$, and there exists a
compact operator $K\in\Bb(M_1,X)$ such that $(I_{M_1}+K)M_1=M_2$ and $I_{M_1}+K\in\Bb(M_1,M_2)$ is
invertible. Then there exists an invertible operator $L\in\Bb(M)$ such that $L-I_M$ is compact and
$L|_{M_1}=I_{M_1}+K$. In particular, we have $\dim M/M_1=\dim M/M_2$.
\end{lemma}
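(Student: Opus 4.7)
The plan is to construct $L$ by directly extending $T := I_{M_1}+K$ across a closed complement, using a dimension count that will simultaneously prove the final assertion.

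First, since $M_1$ and $M_2$ are closed subspaces of finite codimension in $M$, Proposition \ref{p:finite-extension} and Lemma \ref{l:basic-complemented} give closed (in fact, finite-dimensional) complements $V_1, V_2 \subset M$ with $M = M_1 \oplus V_1 = M_2 \oplus V_2$; denote by $\pi_1\colon M \to M_1$ and $\pi_1'\colon M \to V_1$ the associated bounded projections. The compact operator $K\colon M_1 \to X$ takes values in $M_2 \subset M$ (because $T(M_1)=M_2$ and $I_{M_1}$ maps into $M$), so $\wt K := K\circ \pi_1 \in \Bb(M,M)$ is compact.

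Next, I would define a preliminary extension $L_0\in\Bb(M)$ by $L_0(m_1+v_1) := T m_1 = m_1 + Km_1$, i.e.\ $L_0 = \pi_1 + \wt K = I_M - \pi_1' + \wt K$. Since $\pi_1'$ has finite-dimensional range $V_1$ and $\wt K$ is compact, $L_0 - I_M$ is compact, so $L_0$ is Fredholm of index $0$. On the other hand, $\ker L_0 = V_1$ (because $T$ is injective), and $\ran L_0 = T(M_1) = M_2$, so $\dim M/\ran L_0 = \dim V_2$. The index identity gives
\[
0 = \Index L_0 = \dim V_1 - \dim V_2,
\]
which already establishes the last claim $\dim M/M_1 = \dim V_1 = \dim V_2 = \dim M/M_2$.

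Finally, having $\dim V_1 = \dim V_2 < \infty$, I would fix any linear isomorphism $\phi\colon V_1 \to V_2$ (automatically bounded and with bounded inverse, as both spaces are finite-dimensional) and set
\[
L(m_1 + v_1) \; :=\; T m_1 + \phi v_1, \qquad m_1\in M_1,\ v_1\in V_1.
\]
Then $L\in\Bb(M)$ is bijective, with inverse $L^{-1}(m_2+v_2) = T^{-1}m_2 + \phi^{-1}v_2$ using $M = M_2 \oplus V_2$, hence $L\in\Bb(M)$ is invertible; by construction $L|_{M_1} = T = I_{M_1}+K$; and $L - I_M = L_0 - I_M + \phi\circ \pi_1' - \pi_1'\cdot 0 $ differs from $L_0 - I_M$ only by the finite-rank operator $m_1+v_1\mapsto \phi v_1$, hence is compact.

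The only nontrivial step is step two: the point is that the naive "extension by zero'' $L_0$ is already Fredholm of index zero, which forces the codimensions of $M_1$ and $M_2$ to agree and thereby permits a clean invertible extension. Everything else is bookkeeping around the direct sum decompositions $M = M_i \oplus V_i$.
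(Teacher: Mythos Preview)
Your proof is correct and follows essentially the same approach as the paper: both choose finite-dimensional complements $V_1,V_2$ with $M=M_1\oplus V_1=M_2\oplus V_2$, extend $T=I_{M_1}+K$ by a linear map $V_1\to V_2$ to get an operator on $M$ that is a compact perturbation of $I_M$, and read off $\dim V_1=\dim V_2$ from the vanishing Fredholm index before choosing the extension to be an isomorphism. The only cosmetic difference is that you single out the case $A=0$ first (your $L_0$) to compute the dimension, whereas the paper works with a general $A\colon V_1\to V_2$ from the start and observes $\Index L=\Index A$.
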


\begin{proof} Let $V_1$, $V_2$ be finite-dimensional subspaces of $M$ such that $M=M_1\oplus V_1=M_2\oplus V_2$.
Let $A\in\gl(V_1,V_2)$ be a linear map. Set $L:=(I_{M_1}+K)\oplus A$. Then $L\in\Bb(M)$ is Fredholm
and
\[\Index L=\Index A=\dim V_1-\dim V_2.\]
For any bounded set $B$ of $M$, the sets $\{x\in M_1;x+v\in B\text{ for some }v\in V_1\}$ and
$\{v\in V_1;x+v\in B\text{ for some }x\in M_1\}$ are bounded. Since $K$ is compact, the set
$(L-I_M)(B)=\{Kx+(A-I_M)v;x\in M_1, v\in V_1, x+v\in B\}$ is a sequentially compact set. Thus $L-I_M$ is
compact and $\Index L=0$. So we have $\dim V_1-\dim V_2=\dim M/M_1-\dim M/M_2=0$. Then we can
choose $A$ such that $A$ is invertible. In this case $L$ is invertible.
\end{proof}

Now we are ready for the third main result of this appendix:

\begin{proposition}\label{p:compact-perturb} (a) The relations $\sim^f$ and $\sim^c$ are equivalence relations.
\newline (b) If $M\sim^c N$ holds, the relative index $[M-N]$ is well-defined. In the case of $[M-N]=0$,
there exists a compact operator $K\in\Bb(M,X)$ such that $(I_M+K)M=N$ and $I_M+K\in\Bb(M,N)$ is
invertible.
\newline (c) If $M\sim^c N$ and $\dim M_1,\dim N_1<+\infty$ hold, we have $[M-N]=-[N-M]$ and $[M_1-N_1]=\dim M_1-\dim N_1$.
\newline (d) If $M\sim^c N\sim^c W$ holds, we have $[M-N]+[N-W]=[M-W]$.
\newline (e) If $M\cap M_1=N\cap N_1=\{0\}$, $\dim M_1,\dim N_1<+\infty$, $M\sim^c N$ holds if and only
if $M+M_1\sim^c N+N_1$. In this case we have $[(M+M_1)-(N+N_1)]=[M-N]+[M_1-N_1]$.
\newline (f) Assume that $M\in\Ss^c(X)$. Then $M\sim^c N$ holds if and only if $N\in\Ss^c(X)$,
and there exist projections $P,Q\in\Bb(X)$ such that $P-Q$ is compact, $\ran P=M$ and $\ran Q=N$.
In this case we have $[M-N]=[P-Q]$. In the case of $[P-Q]=0$, there exists a compact operator
$K\in\Bb(X)$ such that $I+K$ is invertible and $(I+K)M=N$.
\newline (g) If $M\sim^c N$ and $M\in\Ss^c(X)$, we have $\Ff_{k+[M-N],M}(X)=\Ff_{k,N}(X)$.
\newline (h) If $M\sim^c N$, $M_1\sim^c N_1$, and $M, M_1\in\Ss^c(X)$, we have $\Index(M_1,N_1)-\Index(M,N)=[M_1-M]+[N_1-N]$.
\end{proposition}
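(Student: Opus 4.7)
The plan is to establish parts (a)--(h) in roughly the stated order, reducing later parts to earlier ones, with Lemma \ref{l:zero-relative-index} and the projection calculus of Lemma \ref{l:relative-index} serving as the two workhorses. For (a), reflexivity and symmetry of $\sim^f$ are immediate from the definition, and transitivity follows from Lemma \ref{l:finite-codimension} applied inside intersections. For $\sim^c$, reflexivity uses $K=0$; symmetry comes from the observation that if $(I+K)|_{M_1}\colon M_1\to N_1$ is invertible, then its inverse has the form $I+K'$ with $K'=-(I+K)^{-1}K$ compact; transitivity requires refining both data to common inner subspaces of finite codimension, which is possible by Proposition \ref{p:finite-extension} since intersections of finite-codimension closed subspaces remain closed of finite codimension.

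For (b), well-definedness is the main obstacle. Given two witnessing triples $(M_1,N_1,K)$ and $(M_1',N_1',K')$ for $M\sim^c N$, I would pass to a common refinement $M_2\subset M_1\cap M_1'$ of finite codimension in $M$ and its image $N_2:=(I+K)M_2\subset N_1$, and separately $N_2':=(I+K')M_2\subset N_1'$. Lemma \ref{l:zero-relative-index} applied inside $N_1$ (respectively $N_1'$) shows $\dim N_1/N_2=\dim M_1/M_2$, so both candidate values of $[M-N]$ equal $\dim M/M_2-\dim N/N_2$ with the same $N_2$ up to a finite-codimension adjustment that Lemma \ref{l:zero-relative-index} reabsorbs. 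The second claim of (b) is the direct construction: if $[M-N]=0$ then $\dim M/M_1=\dim N/N_1$, so picking an isomorphism of these finite-dimensional quotients and extending $I+K$ across them yields the desired global compact perturbation, again via Lemma \ref{l:zero-relative-index}.

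Parts (c), (d), (e) are formal consequences. For (c), inverting $I+K$ on $M_1$ realises the reverse perturbation and swaps the dimensions in the definition, while the formula for finite-dimensional subspaces comes from the choice $M_1=N_1=\{0\}$. For (d), refine the data witnessing $M\sim^c N$ and $N\sim^c W$ to a common middle subspace inside $N$ of finite codimension lying in the range of the first perturbation and in the domain of the second; composing the two invertible perturbations gives a compact perturbation from a finite-codimension subspace of $M$ onto one of $W$, and the dimension counts add. Part (e) is a direct-sum construction in which the finite-dimensional pieces contribute additively by (c).

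For (f), the forward direction extends the transport $I+K\colon M_1\to N_1$ by finite-rank maps between complements of $M_1$ in $M$ and of $N_1$ in $N$, and combines the resulting operator with any projection $P_0$ onto $M$ to produce projections $P,Q$ onto $M,N$ whose difference is compact (finite rank plus the compact $K$); the identification $[M-N]=[P-Q]$ falls out of Lemma \ref{l:relative-index}(a), since $\ker(QP|_{\ran P})=\ran P\cap\ker Q$ and $\ran Q/\ran(QP)\cong\ran Q/(\ran P\cap\ran Q)$ (up to a finite-codimensional correction) match the definition of the relative index. Part (g) then follows by replacing $M$ by $N$ through (f) and tracking the resulting shifts in $\dim(L\cap\cdot)$ and $\dim X/(L+\cdot)$ via Lemma \ref{l:finite-diff-index}, and (h) reduces to applying (g) successively in the two variables and using (d) to add the two relative-index contributions. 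The main obstacle I anticipate is the careful bookkeeping in (b) and (h): in both one must pass to common finite-codimensional subspaces and verify that all dimension counts cancel correctly, with (f) serving as the pivotal technical reduction that converts the whole discussion into the cleaner projection calculus of Lemma \ref{l:relative-index}.
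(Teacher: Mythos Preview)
Your plan for (a)--(e) and (h) matches the paper's proof closely and is correct.

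There are two genuine gaps. First, in (f) you only sketch the forward direction ($M\sim^c N\Rightarrow$ projections with compact difference) but never address the converse, which is the harder half. Given projections $P,Q$ with $P-Q$ compact, you must produce the data $M_1,N_1,K$ witnessing $M\sim^c N$. The paper does this by setting $R:=QP+(I-Q)(I-P)$, observing that $R-I=(Q-P)(2P-I)$ is compact so $R$ is Fredholm of index $0$, and then decomposing $\ran P$, $\ran Q$, $\ker P$, $\ker Q$ into finite-dimensional kernel/cokernel pieces of $R$ plus closed complements on which $R$ restricts to an isomorphism. This explicit decomposition simultaneously produces the required $L=I+K$ and the equality $[M-N]=[P-Q]$. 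Without this converse, (f) is incomplete, and since (g) and (h) rest on (f), the whole endgame is unsupported.

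Second, your route to (g) via Lemma~\ref{l:finite-diff-index} does not close. That lemma only handles finite extensions $M'\supset M$, but $M$ and $N$ are not nested; they are linked through $N_1=(I+K)M_1$. After applying Lemma~\ref{l:finite-diff-index} twice you are left with showing $\Index((I+K)M_1,W)=\Index(M_1,W)$, i.e., that a compact perturbation of one member of a Fredholm pair does not change the index --- which is essentially (g) itself. The paper avoids this circularity by working entirely in the projection calculus: pick projections $P,Q,R$ onto $M,N,W$ (via (f)), write $\Index(M,W)=[P-(I-R)]$ by Lemma~\ref{l:relative-index}(a), and then use the additivity $[P-(I-R)]=[P-Q]+[Q-(I-R)]$ from Lemma~\ref{l:relative-index}(c), which applies precisely because $P-Q$ is compact. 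You flagged Lemma~\ref{l:relative-index} as a workhorse at the outset; for (g) it is the tool, not Lemma~\ref{l:finite-diff-index}.
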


\begin{proof} (a) (i) If $M\sim^f N$, we have $N\sim^f M$. If $M\sim^f N\sim^f W$, we have
\begin{align*}
\dim (M\cap N)/(M\cap N\cap W)&=\dim(M\cap N+N\cap W)/(N\cap W)\\
&\le\dim N/(N\cap W)<+\infty.
\end{align*}
Then we have $\dim M/(M\cap N\cap W),\dim N/(M\cap N\cap W) <+\infty$. Similarly, we have $\dim
W/(M\cap N\cap W)<+\infty$ and $M\sim^f W$.

(ii) If $M\sim^{c} N$, there exist closed subspaces $M_1\subset M$, $N_1\subset N$ and a compact
operator $K\in\Bb(X)$ such that $I+K$ is invertible, $N_1=(I+K)M_1$ and $\dim M/M_1, \dim
N/N_1<+\infty$. Then $(I+K)^{-1}-I$ is compact and $M_1=(I+K)^{-1}N_1$. So we have $M\sim^{c} N$.

(iii) If $M\sim^{c} N\sim^{c} W$, there exist closed subspaces $M_1$, $N_1$, $N_2$, $W_2$ and
compact operators $K,L\in\Bb(X)$ such that $I+K$,  $I+L$ is invertible, $N_1=(I+K)M_1$,
$W_2=(I+L)N_2$ and
\[\dim M/M_1, \dim N/N_1,\dim N/N_2,\dim W/W_2<+\infty.\]
By Lemma \ref{l:finite-codimension} we have $\dim N/(N_1\cap N_2)<+\infty$. Set
\begin{align*}
M_3:&=(I+K)^{-1}(N_1\cap N_2)\subset M_1\subset M, \\
W_3:&=(I+L)(N_1\cap N_2)\subset W_2\subset W.
\end{align*}
Then we have that $(I+L)(I+K)-I$ is compact, $W_3=(I+L)(I+K)M_3$, $\dim M_1/M_3=\dim N_1/(N_1\cap
N_2)<+\infty$, and $\dim W_1/W_3=\dim N_2/(N_1\cap N_2)<+\infty.$ Thus we have $\dim M/M_3,\dim
W/W_3<+\infty$ and $M\sim^{c} W$.
\newline (b) Assume that we are given closed subspaces $M_j\subset M$, $N_j\subset N$ and
compact operators $K_j\in\Bb(X)$, $j=1,2$ such that $I+K_j$ is invertible, $N_j=(I+K_j)M_j$ and
$\dim M/M_j, \dim N/N_j<+\infty$ for $j=1,2$. Since $(I+K_2)(I+K_1)^{-1}-I$ is compact and
$(I+K_j)(M_1\cap M_2)\subset N$, by Lemma \ref{l:zero-relative-index} we have
\[\dim N/((I+K_1)(M_1\cap M_2))=\dim N/((I+K_2)(M_1\cap M_2)).\]
So we have
\begin{align*}
\dim M/M_1&-\dim M/M_2=\dim M_2/(M_1\cap M_2)-\dim M_1/(M_1\cap M_2)\\
&=\dim ((I+K_2)M_2)/((I+K_2)(M_1\cap M_2))\\
&\quad-\dim((I+K_1)M_1)/((I+K_1)(M_1\cap M_2))\\
&=\dim N/((I+K_2)M_1)-\dim N/((I+K_1)M_2)\\
&=\dim N/N_1-\dim N/N_2,
\end{align*}
and therefore $\dim M/M_1-\dim N/N_1=\dim M/M_2-\dim N/N_2$. Then $[M-N]$ is well-defined when
$M\sim^{c} N$. If $[M-N]=0$, by the proof of Lemma \ref{l:zero-relative-index} we get the desired
$K$.
\newline (c) By definition.
\newline (d) We use the notations of the proof of (a) (iii). Then we have
\begin{align*}
[M-N]&-[N-W]=(\dim M/M_3-\dim N/(N_1\cap N_2))\\
&\quad+(\dim N/(N_1\cap N_2)-\dim W/W_3)\\
&=\dim M/M_3-\dim W/W_3=[M-W].
\end{align*}
\newline (e) Since $M\sim^{c}M+M_1$ and $N\sim^{c}N+N_1$, by (b), $M\sim^c N$ holds if and only
if $M+M_1\sim^c N+N_1$. By (d), in this case we have
\begin{align*}
[(M+M_1)&-(N+N_1)]=[(M+M_1)-M]+[M-N]\\
&\quad +[N-(N+N_1)]\\
&=\dim M_1+[M-N]-\dim N_1\\
&=[M-N]+[M_1-N_1].
\end{align*}
\newline (f) (i) If $M\sim^{c} N$, there exist closed subspaces $M_1\subset M$, $N_1\subset N$
and a compact operator $K\in\Bb(X)$ such that $I+K$ is invertible, $N_1=(I+K)M_1$ and $\dim M/M_1,
\dim N/N_1<+\infty$. Then we have $\dim((I+K)M)/N_1, \dim N/N_1<+\infty$. Since $M\in\Ss^c(X)$,
$(I+K)M\in\Ss^c(X)$. By Proposition \ref{p:finite-extension}, $N_1\in\Ss^c(X)$ and $N\in\Ss^c(X)$.

Let $V_1$, $V_2$ be finite-dimensional subspaces such that $(I+K)M=N_1\oplus V_1$ and $N=V_2\oplus
N_1$. Set $W_1:=(I+K)\ker P$, $W_2:=\ker Q$ and $Y:=(N_1+W_1)\cap(N_1+W_2)$. By Lemma
\ref{l:finite-codimension} we have $\dim X/Y<+\infty$. Set $W_3:=W_1\cap(N_1+W_2)$ and
$W_4:=W_2\cap(N_1+W_1)$. Then $W_3$ and $W_4$ are closed, and we have
\[Y=N_1\oplus W_3=N_1\oplus W_4.\]
So $W_1/W_3\simeq(N_1+W_1)/Y$ and $W_2/W_4\simeq(N_1+W_2)/Y$ are finite-dimensional. Let
$V_3\subset W_1$, $V_4\subset W_2$ be finite-dimensional subspaces such that
\[W_3\oplus V_3=W_1\text{ and }W_4\oplus V_4=W_2.\]
So we have
\begin{align}
\label{e:splitting1}X&=((I+K)M)\oplus W_1=(N_1\oplus V_1)\oplus(W_3\oplus V_3)\\
\nonumber&=N\oplus W_2=(N_1\oplus V_2)\oplus(W_4\oplus V_4)\\
\label{e:splitting2}&=(N_1\oplus V_2)\oplus(W_3\oplus V_4).
\end{align}
The projection of $X$ on $(I+K)M$ defined by (\ref{e:splitting1}) is $\wt P:=(I+K)P(I+K)^{-1}$.
Denote by $\wt Q$ the projection of $X$ on $N$ defined by (\ref{e:splitting1}). Then $\ran(\wt
P-\wt Q)=\wt P(V_2\oplus V_4)$ is finite-dimensional. Since $P-\wt P$ is compact, $P-\wt Q$ is
compact.

(ii) If there exist projections $P,Q\in\Bb(X)$ such that $\ran P=M$ and $\ran Q=N$, we set
$R:=QP+(I-Q)(I-P)$. Assume that $R$ is Fredholm with index $0$. Set
\begin{align*}
V_5:&=\ker(QP\colon \ran P\to\ran Q),\\
W_6:&=\ran(QP\colon \ran P\to\ran Q),\\
V_7:&=\ker((I-Q)(I-P)\colon \ker P\to\ker Q),\\
W_8:&=\ran((I-Q)(I-P)\colon \ker P\to\ker Q).
\end{align*}
Let $W_5$, $W_7$ be closed subspaces and $V_6$, $V_8$ finite-dimensional subspaces such that
\[\ran P=V_5\oplus W_5,\ran Q=V_6\oplus W_6,\ker P=V_7\oplus W_7,\ker Q=V_8\oplus W_8.\]
Then we have a bounded invertible linear map
\[\wt R:=R|_{W_5+W_7}\colon W_5+W_7\to W_6+W_8\]
with $\wt R(W_5)=W_6$, $\wt R(W_7)=W_8$. Since $\Index R=0$, we have $\dim V_5+\dim V_7=\dim
V_6+\dim V_8$. Let $A\in\GL(V_5+V_7,V_6+V_8)$ be invertible. Set $L:=\wt R\oplus A$. Then
$L\in\Bb(X)$ is invertible, $L(W_5)=W_6$ $L(W_7)=W_8$. Note that $\dim M/W_5,N/W_6<+\infty$.

In this case, we have
\[[M-N]=\dim M/W_5-\dim N/W_6=\dim V_5-\dim V_6=[P-Q].\]

If $[P-Q]=0$, we can require $A(V_5)=V_6$, $A(V_7)=V_8$ and then $L(M)=N$.

Now assume that $P-Q$ is compact. Then $R-I=(Q-P)(2P-I)$ is compact and $\Index R=0$. So we can
apply the above argument. In this case $L-I$ is compact and our result is obtained.
\newline (g) By Lemma \ref{l:fp-complemented}, we have $\Ff_{k+[M-N],M}(X),\Ff_{k,N}(X)\subset\Ss^c(X)$.
Let $W\in\Ss^c(X)$. By (f), there exist projections $P,Q,R\in\Bb(X)$ such that $\ran P=M$,
 $\ran Q=N$, $\ran R=W$, and $P-Q$ is compact. By (f) and Lemma \ref{l:relative-index}.a,c we have
\begin{align*}
\Index(M,W)&=[P-(I-R)]=[P-Q]+[Q-(I-R)]\\
&=[M-N]+\Index(N,W),
\end{align*}
and one side of each equality is well-defined if and only if the other side is. Thus we have
$\Ff_{k+[M-N],M}(X)=\Ff_{k,N}(X)$.
\newline (h) By (g) we have
$$\Index(M_1,N_1)=\Index(M_1,N)+[N_1-N]=\Index(M,N)+[M_1-M]+[N_1-N].
$$
\end{proof}

\begin{corollary}\label{c:pi0-fpc0} Let $X$ be a Banach space with a complemented closed linear subspace $M$.
Let $P\in\Bb(X)$ be a projection onto $M$ with $N:=\ker P$. Set
\begin{align}
\Cc\Pp_0(X,M):&=\{(W\in\Ss^c(X);W\sim^c M, [W-M]=0\},\\
\Cc\Pp_0(P):&=\{W\in \Cc\Pp_0(X,M);X=W\oplus N\}.
\end{align}
Then
\newline (a) the set $\Cc\Pp_0(P)$ is an affine space (hence contractible), and
\newline (b) the set $\Cc\Pp_0(P)$ is dense in $\Cc\Pp_0(X,M)$, and the set $\Cc\Pp_0(X,M)$ is path connected.
\end{corollary}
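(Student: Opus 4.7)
The plan is to identify $\Cc\Pp_0(P)$ with the closed linear subspace $\Kk(M,N) \subset \Bb(M,N)$ of compact operators, exploiting the affine identification $G(X,N) \cong \Bb(M,N)$, $W = \Graph(A) \leftrightarrow A$, furnished by Lemma \ref{l:connectness-local}. This bijection proves (a) immediately, as $\Kk(M,N)$ is a Banach space, hence affine and contractible. For (b), I will show density by approximating an arbitrary $W = (I+K)M \in \Cc\Pp_0(X,M)$ (available via Proposition \ref{p:compact-perturb}(b)) by perturbing the $(1,1)$-block of $K$ with respect to $X = M \oplus N$, so as to force transversality to $N$.

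For the ``$\Leftarrow$'' direction of the identification in (a), given $A \in \Kk(M,N)$, extend it by zero on $N$ to obtain $\tilde A := A \circ P \in \Kk(X)$, which satisfies $\tilde A^2 = 0$; hence $I + \tilde A$ is invertible with inverse $I - \tilde A$, and $(I+\tilde A) M = \Graph(A)$, so that $W := \Graph(A) \sim^c M$ with $[W-M] = 0$, while $X = W \oplus N$ is automatic. For the ``$\Rightarrow$'' direction, given $W = \Graph(A) \in \Cc\Pp_0(P)$, Proposition \ref{p:compact-perturb}(b) supplies $K \in \Kk(X)$ with $I+K \in \GL(X)$ and $(I+K)M = W$. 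Decomposing $(I+K)m = (m + PKm) + (I-P)Km$ in $M \oplus N$ for $m \in M$, set $T := I + PK|_M \in \Bb(M)$; then $T$ factors as $P|_W \circ (I+K)|_M$, where $(I+K)|_M \colon M \to W$ is a bijection and $P|_W \colon W \to M$ is the inverse of $m \mapsto m + Am$. Thus $T$ is bijective, and being a compact perturbation of $I_M$, it lies in $\GL(M)$. Matching the $N$-component of $(I+K)m$ with the graph condition yields $A \circ T = (I-P)K|_M \in \Kk(M,N)$, whence $A = [(I-P)K|_M] \cdot T^{-1} \in \Kk(M,N)$.

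For the density step of (b), given $W = (I+K)M \in \Cc\Pp_0(X,M)$, consider the $(1,1)$-block $K_{11} := PK|_M \in \Kk(M)$. Then $I + K_{11} \colon M \to M$ is Fredholm of index $0$, and the standard fact that invertibles are dense among index-zero Fredholm operators supplies, for every $\epsilon > 0$, a finite-rank operator $F_\epsilon \in \Bb(M)$ with $\|F_\epsilon\| < \epsilon$ such that $I + K_{11} + F_\epsilon \in \GL(M)$. Extending $F_\epsilon$ by zero on $N$ produces $\tilde F_\epsilon \in \Kk(X)$ of the same norm. For small $\epsilon$, $I + K + \tilde F_\epsilon \in \GL(X)$ by openness of $\GL(X)$, so $W^\epsilon := (I + K + \tilde F_\epsilon)M \in \Cc\Pp_0(X,M)$. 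A direct calculation writes $W^\epsilon = \Graph(A^\epsilon)$ with $A^\epsilon := (I-P)K|_M \cdot (I + K_{11} + F_\epsilon)^{-1} \in \Kk(M,N)$, so by (a), $W^\epsilon \in \Cc\Pp_0(P)$. Continuity of $L \mapsto L(M)$ on $\GL(X)$ in the gap topology forces $W^\epsilon \to W$ as $\epsilon \to 0$, proving density; path-connectedness then follows by concatenating the linear path $t \mapsto (I + K + t\tilde F_\epsilon)M$, which lies in $\Cc\Pp_0(X,M)$ for small $\epsilon$, with paths inside the affine space $\Cc\Pp_0(P)$.

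The hardest step is the ``$\Rightarrow$'' direction of (a), where one must leverage the precise graph structure $W = \Graph(A)$ and the invertibility of the block $P|_W$ to upgrade compactness of $K$ on all of $X$ to compactness of the defining operator $A \colon M \to N$; the block identification $T = P|_W \circ (I+K)|_M$ is the key algebraic identity making the argument work.
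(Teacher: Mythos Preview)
Your proof is correct. For part (a), your argument coincides with the paper's: both identify $\Cc\Pp_0(P)$ with $\{\Graph(A) : A \in \Bb(M,N)\text{ compact}\}$, with your block-decomposition computation for the ``$\Rightarrow$'' direction making explicit what the paper obtains by citing Proposition~\ref{p:compact-perturb}.f. (A minor citation point: the compact $K \in \Bb(X)$ with $I+K$ invertible and $(I+K)M = W$ comes from part (f) of that proposition, not part (b), which only gives $K \in \Bb(M,X)$.)

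For part (b), the approaches genuinely diverge. The paper reuses the construction from Corollary~\ref{c:pi0-fp0}: given $W \in \Cc\Pp_0(X,M)$, it takes the finite-dimensional decomposition $W = (W\cap N)\oplus W_1$, chooses a linear isomorphism $A\colon W\cap N \to V$ onto a complement $V$ of $W+N$, and sets $c(s) := \Graph(sA)\oplus W_1$; this path starts at $W$ and lands in $G(X,N)$ for $s > 0$, with $c(s)\sim^c W$ and $[W-c(s)]=0$ checked via the finite-change relation. Your route instead perturbs the $(1,1)$-block $K_{11}$ analytically, using density of invertibles in index-zero Fredholm operators on $M$ to push $W$ into $\Cc\Pp_0(P)$. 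The paper's construction is more geometric and gives density and path-connectedness in one stroke; your argument is more operator-theoretic and makes the Fredholm mechanism visible, at the cost of handling density and path-connectedness in two separate steps.
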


\begin{proof} (a) Let $W\in \Cc\Pp_0(P)$. By Lemma \ref{l:connectness-local} we have $W=\Graph(A)$ for some $A\in\Bb(M,N)$.
Denote by $P_W$ the projection of $X$ onto $W$ along $N$, and we have $P_W(x+y)=x+Ax$ for $x\in M$,
$y\in N$. By Proposition \ref{p:compact-perturb}.f, $A$ is compact. Conversely, for a given compact
operator $A\in\Bb(M,N)$, the space $W:=\Graph(A)\in\Cc\Pp_0(P)$. So we have
\begin{equation}\label{e:cp0}
\Cc\Pp_0(P)=\{\Graph(A);A\in\Bb(M,N)\text{ is compact}\}
\end{equation}
and the set $\Cc\Pp_0(P)$ is an affine space (hence contractible).

Let $W\in\Cc\Pp_0(X,M)$. By the proof of Corollary \ref{c:pi0-fp0}, there exists a path $c\colon
[0,1]\to\Ss^c(M)$ such that $c(0)=W$, $c(s)\sim^c W$, $c(s)\in G(X,N)$ and  $[W-c(s)]=0$ for
$s\in(0,1]$. Since $W\sim^c M$ and $[W-M]=0$, by Proposition \ref{p:compact-perturb}.a,d we have
$c(s)\sim^c M$ and $[c(s)-M]=0$. So we have $c(s)\in\Cc\Pp_0(X,M)\cap G(X,N)=\Cc\Pp_0(P)$ for
$s\in(0,1]$. Our results then follow from (a).
\end{proof}


%
%
%


%
%
%

}


\bibliography{GlobalAnalysis}
\bibliographystyle{amsplain-jl}

%
\indexcomment{Except a few standard notations, all symbols are explained at their first occurrence.
We recall a few very standard notations and then we provide an index to the used more peculiar
symbols.
{\small
\begin{tabular}{rl}
$\Bb(X,X'), \Cc(X,X'),\Ff(X,X')$ & Bounded, closed, Fredholm operators\\
$C(\dots),\Ci(\dots)$ & Continuous resp. smooth functions \\
$\Ci(M;E)$ & Smooth sections of the vector bundle $E$ over $M$ \\
$\Ci_0(M;E)$ & Smooth sections with support in the interior $M\setminus \partial M$ of $M$ \\
$D^n$ & Unit ball in Euclidean $n$-space\\
$\GL(n,\Aa)$ & Invertible $n\times n$ matrices with entries in $\Aa$\\
$\gl(n,\Aa)$ & $n\times n$ matrix algebra over the algebra $\Aa$\\
$\Hh$ & Generic name for a Hilbert space\\
$I$ & Identity operator\\
$L^2(M;E)$ & $L^2$ sections of the Hermitian vector bundle $E$\\
$M, \Si$ & Generic names for Riemannian manifolds w/wo boundary\\
$\NN,\ZZ,\RR,\CC$ & Natural, integer, real, complex numbers \\
$\RR_+$ & Nonnegative real numbers \\
$S^n$ & Unit sphere in Euclidean $(n+1)$-space\\
$\Span(x_1,..,x_k)$ & Linear span\\
$\supp(f)$ & Support of the section (or distribution) $f$ \\
$V^\bot$ & Orthogonal complement of subspace in Hilbert space\\
$X$ & Generic name for a complex vector space or Banach space\\
$X^*$ & Dual space\\
$\ZZ_+$ & Nonnegative integers\\
$[\cdot]$ & Integer part of a real number
\end{tabular}
}}
\cleardoublepage
\Printindex{symbolindex}{List of Symbols}
\indexcomment{\center{Cursive numbers refer to the bibliography.}}
\cleardoublepage
\Printindex{authorindex}{Index of Names/Authors}%
\cleardoublepage
\Printindex{subjectindex}{Subject Index}
\addcontentsline{toc}{chapter}{\numberline{}Index}%

\end{document}